\def\blx@maxline{77}
\DeclareMathAlphabet{\mathpzc}{OT1}{pzc}{m}{it}
\numberwithin{equation}{section}
\tikzset{
%Define standard arrow tip
>=stealth',
%Define style for different line styles
help lines/.style={dashed, thick},
axis/.style={<->},
important line/.style={thick},
connection/.style={thick, dotted},
punkt/.style={
rectangle,
rounded corners,
draw=black, thick,
text width=4.5em,
minimum height=2em,
text centered,
},
pil/.style={
->,
thick,
gray,
shorten <=2pt,
shorten >=2pt,}
}
\newtheorem{proposition}{Proposition}[section]
\newtheorem{lemma}[proposition]{Lemma}
\newtheorem{corollary}[proposition]{Corollary}
\newtheorem{theorem}[proposition]{Theorem}
\newtheorem*{theorem*}{Theorem}
\theoremstyle{definition}
\newtheorem{definition}[proposition]{Definition}
\newtheorem{remark}[proposition]{Remark}
\newtheorem*{remark*}{Remark}
\newtheorem{example}[proposition]{Example}
\newcommand\addvmargin[1]{
\node[fit=(current bounding box),inner ysep=#1,inner xsep=0]{};
}
\newcommand\addhmargin[1]{
\node[fit=(current bounding box),inner ysep=0,inner xsep=#1]{};
}
\begin{document}
\title{Yang-Baxter random fields and stochastic vertex models}

\author[A. Bufetov]{Alexey Bufetov}\address{A. Bufetov, 
Hausdorff Center for Mathematics, University of Bonn, Bonn, D-53115 Germany}\email{alexey.bufetov@gmail.com}

\author[M. Mucciconi]{Matteo Mucciconi}\address{M. Mucciconi, 
Department of Physics,
Tokyo Institute of Technology, Tokyo, 152-8551 Japan}\email{matteomucciconi@gmail.com}

\author[L. Petrov]{Leonid Petrov}\address{L. Petrov, Department of Mathematics, University of Virginia, Charlottesville,
VA, 22904 USA,
	and
	Institute for Information Transmission
	Problems, Moscow, 117279 Russia}\email{lenia.petrov@gmail.com}

\date{}

\begin{abstract}
	Bijectivization 
	refines the Yang-Baxter equation into
	a pair of 
	local Markov moves 
	which randomly update the configuration of the vertex model.
	Employing this approach,
	we introduce new Yang-Baxter random fields of Young diagrams based on
	spin $q$-Whittaker and spin Hall-Littlewood symmetric functions.
	We match certain scalar Markovian marginals of these fields
	with 
	(1) the stochastic six vertex model;
	(2) the stochastic higher spin six vertex model;
	and (3) a new vertex model with pushing
	which generalizes the $q$-Hahn PushTASEP introduced recently in
	\cite{CMP_qHahn_Push}.
	Our matchings include models with 
	two-sided stationary initial data,
	and we obtain Fredholm determinantal expressions for 
	the $q$-Laplace transforms of the height functions of all these models.
	Moreover, we also discover
	difference operators
	acting diagonally on spin $q$-Whittaker or (stable) spin Hall-Littlewood symmetric functions.
\end{abstract}

\maketitle

\setcounter{tocdepth}{1}
\tableofcontents
\setcounter{tocdepth}{3}

\section{Introduction} 

\subsection{Overview}

The interplay between symmetric functions and probability blossomed in
the last twenty years. In particular, the frameworks of Schur
processes 
\cite{okounkov2001infinite},
\cite{okounkov2003correlation}
and Macdonald processes
\cite{BorodinCorwin2011Macdonald}
has
lead to a significant progress in understanding a
number of interesting stochastic models from the so-called 
Kardar-Parisi-Zhang 
universality class. 
More recently much attention was directed at
the role of quantum integrability (in the form of the Yang-Baxter equation /
Bethe ansatz) in the theory of symmetric functions, with further
applications to probability. 
It was discovered that combinatorial
properties (most prominently, the Cauchy identity and symmetrization
formulas) of many interesting families of
symmetric functions can be traced back to 
integrability (e.g., see \cite{Borodin2014vertex},
\cite{wheeler2015refined}).
Employing this point of view
and starting with more general solutions to Yang-Baxter equation,
\cite{Borodin2014vertex} and \cite{BorodinWheelerSpinq}
defined two families of symmetric 
functions: the spin Hall-Littlewood (sHL) 
rational symmetric functions and 
the spin $q$-Whittaker (sqW) symmetric polynomials,
which are one-parameter generalizations, respectively, of the classical Hall-Littlewood and
$q$-Whittaker symmetric functions, and obey similar
combinatorial relations. 
See
\Cref{fig:symm_functions_scheme}
for the scheme of various symmetric functions and degenerations between them.

The goal of the present paper 
is to further
study structural properties of the sHL and sqW functions 
and connect them to known and new stochastic models.
Here is a
summary of our results.
\begin{itemize}
	\item 
		Up to now, it was not clear whether new symmetric functions
		coming from integrability are
		eigenfunctions of some difference operators acting on their 
		variables.\footnote{Note, however, that these functions
			(usually taking the form
			$F_\lambda(z_1,\ldots,z_N)$)
			are eigenfunctions
			of vertex models' transfer matrices acting on their \emph{labels} $\lambda$
			(which are tuples of integers $\lambda_1\ge \ldots\ge \lambda_N$
			encoding an arrow configuration). 
			The \emph{variables} $(z_1,\ldots,z_N)$ are tuples of generic
			complex numbers, and the functions are symmetric in the $z_i$'s
			thanks to the Yang-Baxter equation.}
		The presence of such
		operators is both a key structural feature of the theory of
		Macdonald polynomials, and an extremely useful tool for
		applications in probability. 
		We present difference operators acting
		diagonally on the sHL functions
		and on the sqW functions
		which can be used to extract observables 
		($q$-moments of the first row / column)
		of the corresponding measures.
	\item 
		Based on Cauchy identities for sHL / sqW functions, we construct
		\textit{Yang-Baxter} fields of random Young diagrams associated with these
		functions. This allows to relate known stochastic vertex models (stochastic
		six vertex model \cite{BCG6V}, stochastic higher spin vertex model
		\cite{CorwinPetrov2015}, \cite{BorodinPetrov2016inhom}) 
		to sHL
		and sqW functions. 
		In more detail, we match the
		(joint) distribution of the height function in each of these vertex models
		and (joint) distribution 
		of the 
		lengths of the first row / column of Young diagrams
		from the corresponding random field.
		The (joint) distribution of the full diagrams
		is expressed through the (skew) sHL / sqW functions
		in the same manner as in a Schur / Macdonald process.
	\item
		A novel feature of this matching is that 
		we cover a more general class of \emph{two-sided stationary}
		initial conditions in stochastic vertex models.
		These 
		initial conditions depend on two extra parameters 
		(one can think that they encode
		the particle densities on the left and on the right),
		and include the step as well as the stationary translation invariant ones
		(the latter form a one-parameter subfamily).
	\item
		We define a new
		integrable stochastic vertex model with vertex weights 
		expressed through the terminating $q$-hypergeometric series
		$_4\phi_3$.
		These weights come from the R matrix 
		entering 
		the
		Yang-Baxter equation for the sqW functions. 
		The $_4\phi_3$ model generalizes 
		the $q$-Hahn PushTASEP 
		recently introduced in \cite{CMP_qHahn_Push}.
	\item 
		For the three stochastic vertex models mentioned above,
		with the general two-sided stationary initial data, we produce Fredholm
		determinantal expressions for the $q$-Laplace transform
		of the height function at a single point. 
\end{itemize}

Let us now describe our results in more detail.

\begin{figure}[htpb]
	\centering
	\includegraphics{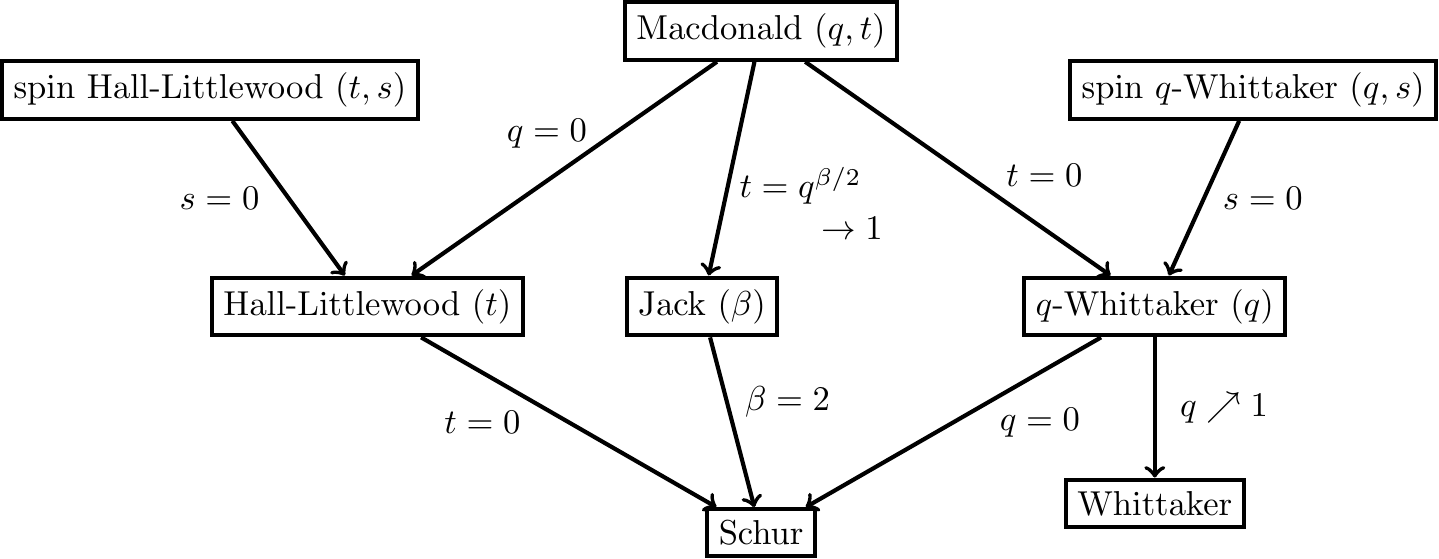}
	\caption{An hierarchy of symmetric functions 
	satisfying Cauchy type summation identities which can be utilized to define 
	random fields of Young diagrams. Arrows mean degenerations. Throughout
	the introduction and most of the text it is convenient
	to replace the parameter
	$t$ by $q$ in spin Hall-Littlewood functions.}
	\label{fig:symm_functions_scheme}
\end{figure}

\subsection{Difference operators}

The sHL functions $\mathsf{F}_\lambda(u_1, \dots, u_n)$ are rational
functions of $n$ variables parametrized by Young diagrams
$\lambda = \lambda_1 \ge \lambda_2 \ge \dots \ge \lambda_{\ell (\lambda)}
>0$, $\lambda_i\in \mathbb{Z}$. 
They can be defined by the following formula:
\begin{equation*}
\mathsf{F}_\lambda(u_1, \dots, u_n)
		:=
		\frac{(1-q)^n}{(q;q)_{n - \ell(\lambda)}}
		\sum_{\sigma \in \mathfrak{S}_n}
		\sigma \biggl\{
			\prod_{1\le i < j\le n} \frac{u_i - q u_j}{u_i - u_j}
			\prod_{i=1}^n \left( \frac{u_i - s}{1 - s u_i} \right)^{\lambda_i}
			\,
			\prod_{i=1}^{\ell (\lambda)} \frac{u_i}{u_i - s}
		\biggr\},
\end{equation*}
where 
$(q;q)_{n - \ell(\lambda)}$ is the $q$-Pochhammer symbol
(cf. \Cref{sub:notation}), 
$\mathfrak{S}_n$ is the permutation group of $n$ elements, and
$\sigma$ acts on the indices of the variables $u_i$, but not
$\lambda_i$ (if $i>\ell(\lambda)$ we have $\lambda_i=0$, 
by agreement). These functions depend on two parameters $q$ and $s$.
The functions $\mathsf{F}_\lambda(u_1, \dots, u_n)$, up
to a certain modification, were introduced in \cite{Borodin2014vertex}; the
modification first appeared in \cite{deGierWheeler2016}. 
In case $s=0$
these functions become standard Hall-Littlewood functions 
\cite[Chapter III]{Macdonald1995}, and for general $s$ many of their
properties are very similar to the ones of the standard
Hall-Littlewood functions (in particular, Cauchy identity,
symmetrization formula, interpretation as a partition function of suitably 
weighted semistandard Young tableaux). 

However, some important properties were
missing; perhaps, the most important one is the presence of difference
operators acting diagonally on $\mathsf{F}_\lambda(u_1, \dots, u_n)$.
We prove that such operators exist.
Define the (Hall-Littlewood versions of) the Macdonald operators by
\begin{equation*}
	\mathop{\mathfrak{D}_r}
	:=
	\sum_{\substack{I\subset\{1,\dots,n \}\\ |I|=r }}
	\biggl(
		\prod_{\substack{i\in I \\ j\in \{1,\dots,n \} \setminus I}}
		\frac{q u_i - u_j}{u_i - u_j}
	\biggr)\,
	T_{0,I},\qquad r=1,2,\ldots,n,
\end{equation*}
where $T_{0,I}$ is the operator setting all $u_i$, $i\in I$, to zero.
Note that the operators $\mathop{\mathfrak{D}_r}$ do not depend on
$s$ and coincide with the standard Macdonald operators. 
We prove the following result.
\begin{theorem}[\Cref{thm:sHL_eigen} in the text]
	\label{thm:diff_op_intro_theorem}
For all Young diagrams $\lambda$ and $n\in \mathbb{Z}_{\ge1}$
we have
	\begin{equation*} %\label{eq:sHL_eigen}
		\mathop{\mathfrak{D}_r} \mathsf{F}_\lambda(u_1,\ldots,u_n ) =
			e_r(1,q,\dots, q^{n-\ell(\lambda) -1})\,
			\mathsf{F}_\lambda(u_1,\ldots,u_n ),
	\end{equation*}
	where $e_r(x_1,\ldots,x_k )=\sum_{1\le i_1<\ldots<i_r\le k }x_{i_1}\ldots x_{i_r} $
	is the $r$-th elementary symmetric polynomial.
\end{theorem}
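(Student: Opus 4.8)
The plan is to reduce the claim to the ``full--rank'' case $n=\ell(\lambda)$ by means of a branching identity, and then to check that the operator $\mathfrak{D}_r$ interacts with this branching in an almost trivial way, leaving behind exactly the classical $\lambda=\varnothing$ statement. Write $\ell:=\ell(\lambda)$, and for $S\subseteq\{1,\dots,n\}$ put $\bar S:=\{1,\dots,n\}\setminus S$. The first step is to establish the branching relation
\begin{equation}\label{eq:sHL_branching_sketch}
\mathsf{F}_\lambda(u_1,\dots,u_n)=\sum_{\substack{S\subseteq\{1,\dots,n\}\\ |S|=\ell}}\Biggl(\,\prod_{\substack{a\in S\\ b\in\bar S}}\frac{u_a-qu_b}{u_a-u_b}\Biggr)\mathsf{F}_\lambda\bigl((u_a)_{a\in S}\bigr),
\end{equation}
where on the right $\mathsf{F}_\lambda$ is the sHL function in the $\ell$ variables $(u_a)_{a\in S}$, a full--rank evaluation. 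This follows from the defining symmetrization formula by factoring $\prod_{1\le i<j\le n}\frac{u_i-qu_j}{u_i-u_j}$ into the ``$\le\ell$'' block, the ``$>\ell$'' block, and the mixed factor $\prod_{i\le\ell<j}\frac{u_i-qu_j}{u_i-u_j}$; the mixed factor and the $\lambda$--carrying factors $\prod_{i\le\ell}\frac{u_i}{u_i-s}\bigl(\frac{u_i-s}{1-su_i}\bigr)^{\lambda_i}$ are invariant under permutations of $\{\ell+1,\dots,n\}$, so summing first over $\mathfrak{S}_{\{\ell+1,\dots,n\}}$ produces the constant $\sum_{w\in\mathfrak{S}_{n-\ell}}\prod_{i<j}\frac{v_{w(i)}-qv_{w(j)}}{v_{w(i)}-v_{w(j)}}=(q;q)_{n-\ell}/(1-q)^{n-\ell}$ (equivalently, $\mathsf{F}_\varnothing\equiv1$), and collecting the cosets $\mathfrak{S}_n/\mathfrak{S}_{\{\ell+1,\dots,n\}}$ by image set $S$ and recognizing the $\ell$--variable $\mathsf{F}_\lambda$ yields \eqref{eq:sHL_branching_sketch} (the normalization constants match).

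Next, a full--rank $\mathsf{F}_\lambda$ (i.e.\ with $n=\ell(\lambda)$) vanishes identically whenever any of its variables is set to $0$: every summand in its symmetrization formula carries the factor $\prod_{i=1}^{\ell}\frac{u_i}{u_i-s}$, which then acquires a zero, while the rational prefactor stays finite as long as the other variables remain generic. Consequently, applying $\mathfrak{D}_r$ to \eqref{eq:sHL_branching_sketch}, the operator $T_{0,I}$ kills the $S$--summand unless $I\cap S=\varnothing$; and when $I\subseteq\bar S$ it leaves $\mathsf{F}_\lambda\bigl((u_a)_{a\in S}\bigr)$ unchanged while turning $\prod_{a\in S,\,b\in\bar S}\frac{u_a-qu_b}{u_a-u_b}$ into $\prod_{a\in S,\,b\in\bar S\setminus I}\frac{u_a-qu_b}{u_a-u_b}$ (all factors involved are regular at $u_i=0$, $i\in I$, so $T_{0,I}$ distributes over the product). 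Therefore
\begin{equation*}
\mathfrak{D}_r\,\mathsf{F}_\lambda=\sum_{\substack{S\subseteq\{1,\dots,n\}\\ |S|=\ell}}\mathsf{F}_\lambda\bigl((u_a)_{a\in S}\bigr)\sum_{\substack{I\subseteq\bar S\\ |I|=r}}\Biggl(\prod_{\substack{i\in I\\ j\in\{1,\dots,n\}\setminus I}}\frac{qu_i-u_j}{u_i-u_j}\Biggr)\Biggl(\prod_{\substack{a\in S\\ b\in\bar S\setminus I}}\frac{u_a-qu_b}{u_a-u_b}\Biggr).
\end{equation*}

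Now split $\{1,\dots,n\}\setminus I=S\sqcup(\bar S\setminus I)$ in the first product, and write $\prod_{a\in S,\,b\in\bar S\setminus I}\frac{u_a-qu_b}{u_a-u_b}=\bigl(\prod_{a\in S,\,b\in\bar S}\frac{u_a-qu_b}{u_a-u_b}\bigr)\bigl(\prod_{a\in S,\,b\in I}\frac{u_a-u_b}{u_a-qu_b}\bigr)$ in the second. The factors indexed by pairs $(i\in I,\,a\in S)$ cancel pairwise, since $\frac{qu_i-u_a}{u_i-u_a}\cdot\frac{u_a-u_i}{u_a-qu_i}=1$. What remains is
\begin{equation*}
\mathfrak{D}_r\,\mathsf{F}_\lambda=\sum_{\substack{S\subseteq\{1,\dots,n\}\\ |S|=\ell}}\mathsf{F}_\lambda\bigl((u_a)_{a\in S}\bigr)\Biggl(\prod_{\substack{a\in S\\ b\in\bar S}}\frac{u_a-qu_b}{u_a-u_b}\Biggr)\Biggl(\ \sum_{\substack{I\subseteq\bar S\\ |I|=r}}\ \prod_{\substack{i\in I\\ j\in\bar S\setminus I}}\frac{qu_i-u_j}{u_i-u_j}\Biggr),
\end{equation*}
and the inner sum is precisely $\mathfrak{D}_r$, acting in the $m:=|\bar S|=n-\ell(\lambda)$ variables $(u_j)_{j\in\bar S}$, applied to $\mathsf{F}_\varnothing\equiv1$ — the $\lambda=\varnothing$ case of the theorem. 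Since the $\mathfrak{D}_r$ coincide with the usual (Hall--Littlewood) Macdonald operators and are independent of $s$, this is the classical identity asserting that the Macdonald operators act on the constant function $1$ by the scalar $e_r(1,q,\dots,q^{m-1})$. This scalar does not depend on $S$, so it factors out of the outer sum, and re-applying \eqref{eq:sHL_branching_sketch} gives $\mathfrak{D}_r\,\mathsf{F}_\lambda=e_r\bigl(1,q,\dots,q^{\,n-\ell(\lambda)-1}\bigr)\,\mathsf{F}_\lambda$, as claimed.

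The only genuinely computational point is the branching identity \eqref{eq:sHL_branching_sketch}: one must carefully keep track of which factors of $\prod_{i<j}\frac{u_i-qu_j}{u_i-u_j}$ survive the partial symmetrization and correctly identify the emergent $(q;q)_{n-\ell}/(1-q)^{n-\ell}$. After that the argument is essentially formal — the decisive cancellation is a single line — and the $\lambda=\varnothing$ input is classical.
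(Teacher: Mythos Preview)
Your proof is correct and follows essentially the same approach as the paper: both decompose the symmetrization sum according to the subset $S$ (the paper calls it $V$) of variables carrying the nontrivial $\lambda$-factors, observe that $T_{0,I}$ kills a summand unless $I\subseteq\bar S$, and reduce the surviving inner sum over $I$ to the classical $\lambda=\varnothing$ Hall--Littlewood eigenrelation. Your presentation via the standalone branching identity is a tidy repackaging of the same coset computation that the paper carries out in-line with its $A,B,C$ and $P(S_1,S_2)$ notation.
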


Let us now turn to the sqW functions 
$\mathbb{F}^*_{\lambda}(\xi_1,\dots,\xi_m)$.
The shortest way to define them
is via the Cauchy identity
\begin{equation}
\label{eq:Cauchy-into1}
	\sum_{\lambda} \mathbb{F}^*_{\lambda}(\xi_1,\dots,\xi_m)\,\mathsf{F}_{\lambda'} (u_1, \dots, u_n) 
	=
	\prod_{i=1}^m \prod_{j=1}^n \frac{1+\xi_i u_j}{1- u_j s},
\end{equation}
where $\lambda'$ stands for the conjugation of a Young diagram. 
(Note that the left-hand side of \eqref{eq:Cauchy-into1} depends 
on an additional quantization parameter $q$ which enters
both $\mathbb{F}^*_{\lambda}$ and $\mathsf{F}_{\lambda'}$.)
This is indeed a definition of the $\mathbb{F}^*_{\lambda}$'s,
as they can be extracted as coefficients of the expansion thanks to the
orthogonality relation
for the sHL functions
\cite{BCPS2014}
(which we recall in
\Cref{prop:orthog_FF}). 
When $s=0$, $\mathbb{F}^*_\lambda$ becomes the
usual $q$-Whittaker polynomial (i.e., Macdonald polynomial 
\cite[Chapter VI]{Macdonald1995} with $t=0$).

The functions $\mathbb{F}^*_{\lambda}(\xi_1,\dots,\xi_m)$ were in
introduced in \cite{BorodinWheelerSpinq}. They showed that for general $s$ the
family $\{ \mathbb{F}^*_{\lambda}(\xi_1,\dots,\xi_m) \}_{\lambda}$
satisfies natural properties (Cauchy identities and representations
as partition functions). 
The question about the
existence of difference operators acting diagonally on
$\mathbb{F}_\lambda^*$ was open.
We obtain one such
difference operator.
Define the operator acting on rational functions in $(\theta_1,\ldots,\theta_l )$ as follows:
\begin{equation*}
	\mathop{\mathfrak{E}}
	:=
	\sum_{j=1}^l \left( 1+ \frac{s}{\theta_j} \right)^l
	\biggl(\prod_{i \neq j} \frac{\theta_j}{\theta_j - \theta_i} \biggr)
	T_{q^{-1},\theta_j}
	+
	\frac{(-s)^l}{\theta_1 \cdots \theta_l }\, Id.
\end{equation*}
Here $Id$ is the identity operator, and $T_{q^{-1},\theta_j}$ 
acts by multiplying $\theta_j$ by $1/q$.
\begin{theorem}[\Cref{thm:sqW_eigen} in the text]
	We have
	$\mathfrak{E}\, \mathbb{F}^*_{\lambda}(\theta_1,\ldots,\theta_l )
		=
		q^{-\lambda_1} \mathbb{F}^*_\lambda(\theta_1,\ldots,\theta_l )$
		for all Young diagrams $\lambda$ and all $l\in \mathbb{Z}_{\ge1}$.
\end{theorem}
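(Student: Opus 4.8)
The plan is to deduce the eigenrelation from the Cauchy identity \eqref{eq:Cauchy-into1}, which \emph{defines} $\mathbb{F}^*_\lambda$. I write the variables of $\mathbb{F}^*_\lambda$ as $\xi_1,\dots,\xi_m$ as in \eqref{eq:Cauchy-into1}, so that in the statement $l=m$ and $\theta_i=\xi_i$, and apply $\mathfrak{E}$ in the $\xi$-variables to both sides of \eqref{eq:Cauchy-into1}, treating $u_1,\dots,u_n$ as spectators. The left-hand side becomes $\sum_\lambda (\mathfrak{E}\,\mathbb{F}^*_\lambda)(\xi)\,\mathsf{F}_{\lambda'}(u)$; since the sHL functions $\{\mathsf{F}_{\lambda'}(u_1,\dots,u_n)\}$ are linearly independent and can be separated by the orthogonality of \Cref{prop:orthog_FF} (to detect a given $\lambda$ one needs $n\ge \lambda_1$, which is where the vanishing of $\mathsf{F}_{\lambda'}(u_1,\dots,u_n)$ for $n<\lambda_1$ enters), it suffices to prove that applying $\mathfrak{E}$ in $\xi$ to the right-hand kernel
\[
	\Pi(\xi;u):=\prod_{i=1}^m\prod_{j=1}^n\frac{1+\xi_iu_j}{1-u_js}
\]
produces $\sum_\lambda q^{-\lambda_1}\,\mathbb{F}^*_\lambda(\xi)\,\mathsf{F}_{\lambda'}(u)$, where one uses $\lambda_1=\ell(\lambda')$.

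The action of $\mathfrak{E}$ on $\Pi$ is explicit: as a function of $\xi$, $\Pi$ factors as a constant times $\prod_{i=1}^m g(\xi_i)$ with $g(\xi)=\prod_{j=1}^n(1+\xi u_j)$, so the shifts $T_{q^{-1},\xi_j}$ act multiplicatively and
\[
	\mathfrak{E}_\xi\,\Pi(\xi;u)=\Phi(\xi;u)\,\Pi(\xi;u),
\]
where
\[
	\Phi(\xi;u)=\sum_{j=1}^m\Bigl(1+\tfrac{s}{\xi_j}\Bigr)^{m}\Bigl(\prod_{i\neq j}\frac{\xi_j}{\xi_j-\xi_i}\Bigr)\prod_{k=1}^{n}\frac{1+\xi_ju_k/q}{1+\xi_ju_k}+\frac{(-s)^m}{\xi_1\cdots\xi_m}.
\]
Thus the theorem reduces to the single identity $\Phi(\xi;u)\,\Pi(\xi;u)=\sum_\lambda q^{-\lambda_1}\mathbb{F}^*_\lambda(\xi)\mathsf{F}_{\lambda'}(u)$. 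To prove it I would combine \eqref{eq:Cauchy-into1} with the sHL orthogonality of \Cref{prop:orthog_FF} to write $\mathbb{F}^*_\lambda(\xi_1,\dots,\xi_m)$ as a nested contour integral with exactly $\lambda_1$ integration variables $w_1,\dots,w_{\lambda_1}$ (the inactive variables being integrated out, since $\mathsf{F}_{\lambda'}$ has only $\lambda_1$ nonzero parts), in which the dependence on $\xi$ enters solely through the product $\prod_{a=1}^{\lambda_1}\prod_{i=1}^m(1+\xi_iw_a)$. Then $\mathfrak{E}_\xi$ acts on that product alone; the sum over $j$ together with the factors $\prod_{i\neq j}\frac{\xi_j}{\xi_j-\xi_i}$ has Macdonald-operator shape, and I would resolve it by deforming each of the $\lambda_1$ contours $w_a$ by a multiplicative factor $q$. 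Each deformation contributes one factor $q^{-1}$, giving $q^{-\lambda_1}$ times the original integral, while the residues crossed in the $m$ terms of the sum should assemble exactly into the $\frac{(-s)^m}{\xi_1\cdots\xi_m}\,Id$ correction; equivalently, one checks the rational-function identity that $\Phi(\xi;u)$ equals $q^{-\lambda_1}$ modulo terms that integrate to zero against the sHL kernel.

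I expect the residue/contour bookkeeping to be the main obstacle: pinning down the correct nesting in the integral representation of $\mathbb{F}^*_\lambda$, verifying that after the shift $\xi_j\mapsto\xi_j/q$ the integrand still factorizes favourably, and confirming that the boundary contributions (poles at $w_a=0$ or $w_a=\infty$ created by the factors $1+s/\xi_j$ and $1/(\xi_1\cdots\xi_m)$) collect precisely into the stated correction term and nothing more. An alternative, more algebraic route is to look for an operator $\mathfrak{A}$ acting on the $u$-variables with $\mathfrak{A}\,\mathsf{F}_\mu(u_1,\dots,u_n)=q^{-\ell(\mu)}\mathsf{F}_\mu(u)$, whereupon the target identity becomes the operator duality $\mathfrak{E}_\xi\,\Pi=\mathfrak{A}_u\,\Pi$, checkable directly because $\Pi$ also factorizes in the $u$'s; but such an $\mathfrak{A}$ does not follow formally from \Cref{thm:diff_op_intro_theorem} and would itself have to be extracted from the symmetrization formula for $\mathsf{F}_\mu$, so the difficulty merely migrates. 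As a consistency check, at $s=0$ the operator $\mathfrak{E}$ degenerates to $\sum_{j}\bigl(\prod_{i\neq j}\tfrac{\xi_j}{\xi_j-\xi_i}\bigr)T_{q^{-1},\xi_j}$ and $\mathbb{F}^*_\lambda$ to the $q$-Whittaker polynomial $P_\lambda(\,\cdot\,;q,0)$; this is a ``reversed'' Macdonald operator whose eigenvalue on $P_\lambda$ is indeed $q^{-\lambda_1}$, which both validates the normalization and isolates the role played by $s$.
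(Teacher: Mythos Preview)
Your primary route (contour deformation applied to an integral representation of $\mathbb{F}^*_\lambda$) is not carried out; you yourself flag the residue bookkeeping as the main obstacle and leave it unresolved. More importantly, your ``alternative, more algebraic route'' is precisely the paper's proof, and you dismiss it for the wrong reason. The operator $\mathfrak{A}$ you seek \emph{does} follow formally from \Cref{thm:diff_op_intro_theorem}: since $e_1(1,q,\dots,q^{n-\ell(\mu)-1})=(1-q^{\,n-\ell(\mu)})/(1-q)$, the combination
\[
\widetilde{\mathfrak{D}}:=q^{-n}\bigl(Id+(q-1)\,\mathfrak{D}_1\bigr)
\]
satisfies $\widetilde{\mathfrak{D}}\,\mathsf{F}_\mu=q^{-\ell(\mu)}\mathsf{F}_\mu$ immediately. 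No new extraction from the symmetrization formula is needed.

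The paper then verifies the operator duality $\mathfrak{E}_\xi\,\Pi=\widetilde{\mathfrak{D}}_u\,\Pi$ exactly as you anticipated, using that $\Pi$ factorizes in both sets of variables. Each operator, acting on a product $\prod_i g(\xi_i)$ or $\prod_j h(u_j)$, admits a single contour integral representation (residues at $0$ and at the $\xi_i$'s, respectively at $0$ and the $u_j$'s); the two resulting integrals are shown equal by the substitution $w=-1/z$. After that, the Cauchy identity \eqref{eq:Cauchy-into1} together with the sHL orthogonality of \Cref{prop:orthog_FF} peels off the coefficient of $\mathsf{F}_{\lambda'}(u)$ and yields $\mathfrak{E}\,\mathbb{F}^*_\lambda=q^{-\ell(\lambda')}\mathbb{F}^*_\lambda=q^{-\lambda_1}\mathbb{F}^*_\lambda$, just as you outlined. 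So the gap in your proposal is not conceptual but the missed elementary computation that turns $\mathfrak{D}_1$ into an operator with eigenvalue $q^{-\ell(\mu)}$.
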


Note that in the classical theory, as well as in the case of sHL
functions, we have many eigenoperators
of all orders, 
rather than just
one. 
The existence of higher order eigenoperators
for the sqW functions remains open.
However, already the presence of one operator 
brings a lot from both
algebraic combinatorial and probabilistic points of view. 
In
particular, first row / column
observables of measures
based on sqW functions 
can be extracted 
and analyzed 
via the 
already standard technique introduced in 
\cite{BorodinCorwin2011Macdonald} for Macdonald measures.

\begin{remark}
	We originally arrived at eigenoperators
	for the sqW and sHL functions through
	$q$-moments of the stochastic higher spin six vertex model computed before
	\cite{CorwinPetrov2015}, \cite{BorodinPetrov2016inhom}.
	Namely, we used the matching via the Yang-Baxter field (see below in \Cref{intro:YB_fields}) 
	to recognize that
	these $q$-moments are at the same time $q$-moments
	of the measures on Young diagrams 
	expressed through the sHL and sqW functions. The difference operators
	arise by reversing the $q$-moment computations starting from known contour
	integrals. 
	However, our proofs of the eigenrelations presented in the paper
	are more direct, and use
	only
	the necessary minimum of the properties
	of the sHL and the sqW functions.
\end{remark}

\subsection{Yang-Baxter fields and matching to stochastic vertex models}
\label{intro:YB_fields}

The usefulness of symmetric functions in probabilistic
questions is greatly emphasized by the frameworks of Schur 
and Macdonald processes. 
This approach
stems from the combination of two general
ideas. 
First, asymptotic behavior of random Young
diagrams with probabilistic weights coming from a symmetric function summation
identity is often accessible via exact computations with symmetric functions.
Second, such Young
diagrams turn out to be related to many natural probabilistic
models. 
In order to quantify this relation, 
one needs to utilize certain
combinatorial structures behind the symmetric functions.

First examples of such usage involved RSK (Robinson-Schensted-Knuth)
correspondence to establish a relation between Schur functions and
models of longest increasing subsequences / last passage percolation / TASEP
\cite{baik1999distribution},
\cite{johansson2000shape}, \cite{OConnell2003}, \cite{OConnell2003Trans}. 
A bit later, a simpler construction
not based on RSK
was suggested
in \cite{BorFerr2008DF}. 
In the present paper we employ
the third type of construction introduced in \cite{BufetovPetrovYB2017}
--- the \emph{Yang-Baxter fields}.
(A more detailed historical overview of 
all these constructions is given in
\Cref{sub:F_G_fields_references}.)
We construct three Yang-Baxter fields 
based on three types of Cauchy identities for 
the sHL and sqW functions.
Let
us formulate a sample result in detail.

Fix $q \in [0,1)$, $s
\in (-1,0)$, and inhomogeneity parameters $\{ \theta_x \}_{x \in
\mathbb{Z}_{\ge 0}}$, $\{ u_y \}_{y \in \mathbb{Z}_{\ge 0}}$,
satisfying $\theta_x \in [-s,-s^{-1}]$, $u_y \in [0,1)$.
Informally, the stochastic higher spin vertex model 
\cite{CorwinPetrov2015}, \cite{BorodinPetrov2016inhom}
is a 
random collection of paths on edges of $\mathbb{Z}_{\geq 0} \times
\mathbb{Z}_{\geq 0}$ such that each vertex $(x,y)$ has one of four
possible types from Figure \ref{fig:weights_HS6VM-copy} and
contributes the weight shown there with $\theta = \theta_x$ and $u =
u_y$. We also need to prescribe (possibly random) boundary conditions
$b_i^{\mathrm{v}}\in \left\{ 0,1 \right\}$, $b_j^{\mathrm{h}}\in
\mathbb{Z}_{\ge0}$, which parametrize the number of arrows coming 
into the quadrant from the left and from below, respectively. 

\begin{figure}[htbp]
    \centering
		\includegraphics[width=.8\textwidth]{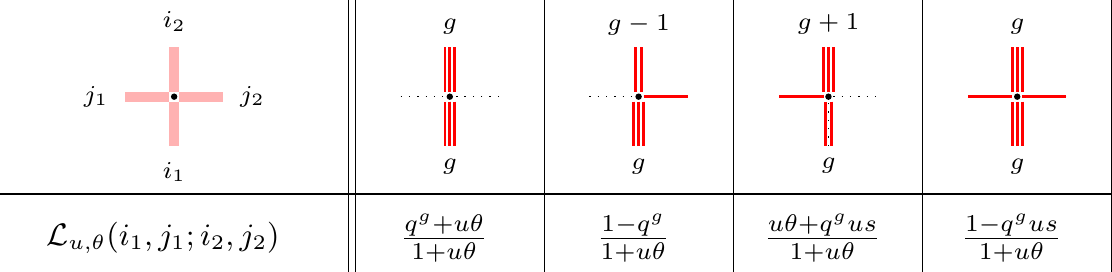}
    \caption{Stochastic vertex weights
		$\mathcal{L}_{u,\theta}(i_1,j_1;i_2,j_2)$ for the higher spin model.}
   \label{fig:weights_HS6VM-copy}
\end{figure}

In more detail, the stochastic higher spin six vertex model
is the (unique) probability
measure on the set of up-right directed paths on
$\mathbb{Z}_{\geq 0} \times
\mathbb{Z}_{\geq 0}$
(with multiple vertical paths allowed per edge, but at most
one horizontal path per edge)
satisfying:
\begin{itemize}
		\item
			Each vertex $(0,y)$ at the vertical boundary $\{ (0,y'):y'\geq 1 \}$
			emanates a path initially pointing to the right if
			$b^\mathrm{v}_y=1$;
		\item Each vertex $(x,0)$ at the horizontal boundary
			$\{ (x',0):x'\geq 1\}$ emanates
			$b^\mathrm{h}_x$
			paths initially pointing upward;
		\item For each $(x,y)$, conditioned to the path configuration at all
			vertices $(x',y')$ such that $x'+y'<x+y$, the probability of a vertex
			configuration $(i_1,j_1;i_2,j_2)$ at $(x,y)$ is given by
			$\mathcal{L}_{u_y,\theta_x}(i_1,j_1;i_2,j_2)$. Moreover, the random
			choices made at diagonally adjacent vertices
			$\ldots,(x-1,y+1),(x,y),(x+1,y-1),\ldots$ are independent under the same
			condition.
\end{itemize}

Take the \textit{step boundary conditions} $b_i^{\mathrm{v}}
\equiv 1$, $b_j^{\mathrm{h}} \equiv 0$. Let
$\mathfrak{h}^{\mathrm{HS}}(x,y)$ be the height function, which is
defined as the number of paths which go through or to the right of the
point $(x,y)$.
We are interested in the distribution of 
$\mathfrak{h}^{\mathrm{HS}}(x,y)$.

On the symmetric function side, 
let us consider a random Young diagram $\lambda^{(x,y)}$ 
with 
\begin{equation}
	\label{eq:intro:dual_measure}
		\mathrm{Prob}( \lambda^{(x,y)} = \nu ) =
		\prod_{\substack{ 1 \leq i \leq x \\ 1\leq j \leq y}} \frac{1 - u_j s}{ 1 + u_j \theta_i } \,
		\mathsf{F}_\nu(u_1, \dots, u_y)\,
		\mathbb{F}^*_{\nu'}(\theta_1, \dots, \theta_x).
	\end{equation}
Cauchy identity \eqref{eq:Cauchy-into1} implies that the sum of the above
probabilities over all $\nu$ is equal to 1, as it should be. 
The next result is a particular case of 
\Cref{thm:sHL_sqW_height_length} in the text:
\begin{theorem}
\label{th-intro-yb:1}
	For any fixed $(x,y)$ in the quadrant,
	the random variables
	$y - \ell(\lambda^{(x,y)})$ and $\mathfrak{h}^{\mathrm{HS}}(x+1,y)$ have the same distribution.
\end{theorem}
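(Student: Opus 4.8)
The plan is to realize \eqref{eq:intro:dual_measure} as the one-site marginal of a Yang--Baxter field in the sense of \cite{BufetovPetrovYB2017} (see also \Cref{sub:F_G_fields_references}) and then to identify the scalar observable $\ell(\lambda^{(x,y)})$ with the height function of the stochastic higher spin six vertex model. The Cauchy identity \eqref{eq:Cauchy-into1}, together with the orthogonality of the sHL functions recalled in \Cref{prop:orthog_FF} (\cite{BCPS2014}), is obtained by iterating a single Yang--Baxter-type ($RLL$) relation, which itself follows from the Yang--Baxter equation for the sqW/sHL functions. Bijectivizing that relation turns the telescoping proof of \eqref{eq:Cauchy-into1} into a genuine Markov coupling and produces a consistent family $\{\lambda^{(x,y)}\}_{(x,y)\in\mathbb Z_{\ge0}^2}$ of random Young diagrams such that: (i) each $\lambda^{(x,y)}$ is distributed as in \eqref{eq:intro:dual_measure}; (ii) absorbing the variable $u_{y+1}$ sends $\lambda^{(x,y)}$ to $\lambda^{(x,y+1)}$ by adding a horizontal strip, with probabilities governed by the one-variable skew sHL function $\mathsf F_{\lambda^{(x,y+1)}/\lambda^{(x,y)}}(u_{y+1})$; (iii) absorbing the variable $\theta_{x+1}$ sends $\lambda^{(x,y)}$ to $\lambda^{(x+1,y)}$ by adding a vertical strip, with probabilities governed by $\mathbb F^*_{(\lambda^{(x+1,y)})'/(\lambda^{(x,y)})'}(\theta_{x+1})$; and, crucially, (iv) for each fixed $x$ the whole nested chain $\lambda^{(x,0)}\subseteq\lambda^{(x,1)}\subseteq\cdots$ is obtained from $\lambda^{(x-1,0)}\subseteq\lambda^{(x-1,1)}\subseteq\cdots$ by a single Markov kernel $\mathbb K^{(x)}$, namely the bijectivization of the cross vertex ($R$-matrix) of the Yang--Baxter equation.

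Next I set up the dictionary. Put $\mathfrak g(x,y):=y-\ell(\lambda^{(x,y)})$; since $\mathsf F_\nu(u_1,\dots,u_y)$ vanishes unless $\ell(\nu)\le y$, we have $\mathfrak g\ge0$, and by (ii) the sequence $\{\mathfrak g(x,y)\}_y$ is non-decreasing with $\{0,1\}$-steps and $\mathfrak g(x,0)=0$, so it is the profile of a height function. At $x=0$ we have $\mathbb F^*_{\nu'}(\varnothing)=\mathbf 1_{\nu=\varnothing}$, hence $\lambda^{(0,y)}=\varnothing$ and $\mathfrak g(0,y)=y$; this matches the step boundary data $\mathfrak h^{\mathrm{HS}}(1,y)=y$, and at $y=0$ both sides vanish. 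The core of the argument is to show that, under the identification of profiles $\{\mathfrak g(x,y)\}_y\leftrightarrow\{\mathfrak h^{\mathrm{HS}}(x+1,y)\}_y$, the kernel $\mathbb K^{(x+1)}$ coincides with the column-to-column transfer matrix $\mathbb T^{(x+1)}=\prod_{y\ge1}\mathcal L_{u_y,\theta_{x+1}}$ of the vertex model, where the correspondence of local data is dictated by arrow conservation: the horizontal arrow entering/leaving column $x+1$ at height $y$ records whether the horizontal strip $\lambda^{(x,y)}/\lambda^{(x,y-1)}$ (resp.\ $\lambda^{(x+1,y)}/\lambda^{(x+1,y-1)}$) creates a new row, while the number of vertical arrows on the column-$(x+1)$ edge above height $y$ equals $\ell(\lambda^{(x+1,y)})-\ell(\lambda^{(x,y)})$, the number of rows created by absorbing $\theta_{x+1}$ up to level $y$. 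Granting $\mathbb K^{(x)}=\mathbb T^{(x)}$ for all $x$, an immediate induction on $x$ shows that $\{\mathfrak g(x,y)\}_y$ and $\{\mathfrak h^{\mathrm{HS}}(x+1,y)\}_y$ have the same law, and in particular $y-\ell(\lambda^{(x,y)})\overset{d}{=}\mathfrak h^{\mathrm{HS}}(x+1,y)$ for each fixed $(x,y)$; reading the same coupling along a down--right path yields the joint statement of \Cref{thm:sHL_sqW_height_length}.

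Two points make the reduction rigorous, and the second is the main obstacle. First one needs the \emph{Markov projection} property: although $\mathbb K^{(x)}$ acts on the full nested chain of diagrams, its effect on the first column (equivalently on $(\ell(\lambda^{(x-1,y)}))_y$) depends only on that first column, so the $\ell$-profile is itself Markovian. This is a structural feature of how bijectivization treats the cross vertex and is inherited from \cite{BufetovPetrovYB2017}; it reduces to checking that the relevant one-variable skew functions and their bijectivized decompositions factor through the first column. Second --- and this is where the real work lies --- one must match the local probabilities: expand the one-variable skew sqW function $\mathbb F^*_{\nu'/\kappa'}(\theta_{x+1})$ with $\kappa=\lambda^{(x,y)}$, $\nu=\lambda^{(x+1,y)}$ (together with the sHL prefactors from \eqref{eq:intro:dual_measure}), restrict attention to the column-$1$ content of the vertical strip $\nu/\kappa$ and to the new-row indicators of the two neighbouring horizontal strips, perform the bijectivization step that removes the dependence on the finer structure of $\nu$, and identify the resulting conditional weights with the four stochastic vertex weights $\mathcal L_{u,\theta}(i_1,j_1;i_2,j_2)$ of \Cref{fig:weights_HS6VM-copy}. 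Because $\mathcal L$ is by construction the stochastic specialization of the $R$-matrix entering the Yang--Baxter equation for the sqW functions, this identification should be a direct though bookkeeping-heavy computation --- tracking which added boxes land in column $1$ and the $q$-Pochhammer normalizations --- rather than a conceptual difficulty; carrying it out carefully is the heart of the proof.
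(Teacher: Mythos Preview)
Your overall strategy is the paper's: build the sHL/sqW Yang--Baxter field by bijectivizing the relevant Yang--Baxter equation, observe that the length $\ell(\lambda^{(x,y)})$ is a Markovian scalar marginal, and match its local transition probabilities with the stochastic weights $\mathcal{L}_{u,\theta}$ of the higher spin model. So conceptually you are on the same route.

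Where the paper is sharper is in the execution of your ``second point''. You describe the heart of the proof as expanding the one-variable skew sqW function, restricting to the first column of the vertical strip, and then ``performing the bijectivization step that removes the dependence on the finer structure of $\nu$'' --- a bookkeeping-heavy computation. The paper short-circuits this entirely. In the two-row lattice picture, the leftmost column carries infinitely many vertical arrows, so after dragging the cross through it the Yang--Baxter equation has a \emph{single} term on one side (the cross must be $(J,0;J,0)$ by arrow preservation). By Example~\ref{example:biject:A1} the bijectivization there is unique, and one reads off the closed formula
\[
\mathsf{U}^{[0]}(i_0,i_0';j_0',j_0)\;=\;\frac{w^{*,(I)}_{v,s}(\infty,0;\infty,j_0')\,w^{(J)}_{u,s}(\infty,J;\infty,j_0)\,R^{(I,J)}_{uv}(j_0,j_0';i_0',i_0)}{R^{(I,J)}_{uv}(J,0;J,0)\,w^{*,(I)}_{v,s}(\infty,0;\infty,i_0)\,w^{(J)}_{u,s}(\infty,J;\infty,i_0')}\,,
\]
cf.\ \eqref{eq:U_00}. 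Under the sHL/sqW specialization this is a ratio of six explicit weights (the boundary weights \eqref{eq:w_boundary}, \eqref{eq:W_boundary_weights} and the factorized cross weight $\mathcal{R}^*_{\theta,u,s}$ of \Cref{fig:table_R_I}), and the verification $\mathsf{U}^{[0]}_{\mathrm{sHL}(u),\mathrm{sqW}(\theta)}(i_0,1-i_0';j_0',1-j_0)=\mathcal{L}_{u,\theta}(i_0,i_0';j_0',j_0)$ is a one-line symbolic check (cf.\ the sample case in the proof of \Cref{thm:length_sHL_sHL_height_6VM}). No expansion of skew sqW functions or tracking of boxes is needed; your intuition that ``$\mathcal{L}$ is the stochastic specialization of the $R$-matrix'' is exactly the content of this formula.

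One small correction to your dictionary: the horizontal arrow at height $y$ is present precisely when the horizontal strip $\lambda^{(x,y)}/\lambda^{(x,y-1)}$ does \emph{not} create a new row (since the step in $\mathfrak{h}^{\mathrm{HS}}$ is $1-\bigl(\ell(\lambda^{(x,y)})-\ell(\lambda^{(x,y-1)})\bigr)$). This is the complementation $i_0'\mapsto 1-i_0'$, $j_0\mapsto 1-j_0$ appearing in the matching above.
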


Our \Cref{thm:sHL_sqW_height_length} contains a more general
statement. First, it 
provides a matching of the whole 
two-dimensional array $\{\mathfrak{h}^{\mathrm{HS}}(x+1,y)\}$
to an array of scalar observables of a 
Yang-Baxter field of Young diagrams
$\{\lambda^{(x,y)}\}$
which we construct. 
In particular, joint distributions of 
$\mathfrak{h}^{\mathrm{HS}}(x+1,y)$,
when $(x,y)$ follow a down-right path, 
can be accessed through a suitable analogue of 
a Schur or Macdonald process.
Second, 
\Cref{thm:sHL_sqW_height_length} 
includes more general boundary conditions for the 
vertex model, at a cost of suitably modifying the symmetric
functions in the right-hand side of
\eqref{eq:intro:dual_measure}.
Namely, we allow $b_i^{\mathrm{v}}\in\left\{ 0,1 \right\}$ to be independent
Bernoulli random variables, and 
$b_i^{\mathrm{h}}\in \mathbb{Z}_{\ge0}$ to
be independent $q$-negative binomial random variables (cf. \Cref{sub:notation} for the latter).
We call such boundary conditions of the field of Young diagrams
(\emph{two-sided}) \emph{scaled geometric}, they match with
two-sided stationary
boundary conditions in stochastic vertex models.

The matching we just described in \Cref{th-intro-yb:1}
arises in the setting of the 
Cauchy identity
\eqref{eq:Cauchy-into1} 
involving one sHL and one sqW function.
We consider two other Cauchy identities,
one with two sHL functions, and another with two sqW functions.
The vertex models and the corresponding matchings 
are described in \Cref{sub:new_YB_field_sHL_sHL}
and \Cref{sub:new_YB_field_sqW_sqW}.
In all cases we prove analogues of 
\Cref{th-intro-yb:1} (and the more general
\Cref{thm:sHL_sqW_height_length}).
The
matchings between 
stochastic vertex models with two-sided stationary 
boundary conditions and 
symmetric functions have not been known before in any of the cases.

In the sHL/sHL case, on the vertex model side we
obtain the stochastic six vertex model \cite{GwaSpohn1992}, \cite{BCG6V}
and essentially recover (a new degeneration of) the matching of 
\cite{BufetovPetrovYB2017}. 
We observe a curious property that the 
stochastic six vertex model is independent of the 
parameter $s$, while this parameter 
enters the sHL/sHL Yang-Baxter field. 
This independence of $s$
might be explained by 
\Cref{thm:diff_op_intro_theorem}:
the eigenoperators for the spin Hall-Littlewood
polynomials do not depend on $s$ either.

The extension of the sHL/sHL
field matching to the stochastic six vertex model
to the two-sided stationary boundary conditions
is new.
In the sqW/sqW situation the Yang-Baxter
field produces a new integrable stochastic vertex model
with vertex weights expressed through the
terminating $q$-hypergeometric series $_4\phi_3$.
This model generalises the 
$q$-Hahn PushTASEP 
\cite{CMP_qHahn_Push}.
We match the height function of this model 
to a field of random Young diagrams whose distributions
are expressed through a product of two sqW functions.

\subsection{Fredholm determinants for observables}

The difference operators 
$\mathfrak{D}_1$ and $\mathfrak{E}$
diagonal in the sHL or sqW functions, respectively,
allow to express (in a nested contour integral form) 
the $q$-moments of the height
function in each of the three vertex models
with step boundary conditions.
It is known
(e.g., see \cite{BorodinCorwinSasamoto2012})
that such $q$-moment formulas 
can be organized into generating series
leading to Fredholm determinantal formulas
for the $q$-Laplace
transform 
$\mathbb{E}\left( 1/(\zeta q^{\mathcal{H}(x,y)};q)_\infty \right)$.
where $\mathcal{H}(x,y)$ 
is the height function in either of the
three 
models.
This approach works well both for the stochastic six vertex and stochastic
higher spin six vertex models with step boundary conditions.

However, for the $_4\phi_3$ vertex model only finitely many of the $q$-moments exist,
and thus the generating series cannot be used.
Moreover, 
for the more general two-sided stationary
boundary conditions, explicit $q$-moments
are not known and also may not be finite. 
We overcome both these issues 
at the same time
by 
considering an analytic continuation
based on the fusion procedure for vertex models 
\cite{KulishReshSkl1981yang}
(see \cite{CorwinPetrov2015} for a stochastic interpretation of fusion).
We start with the Fredholm determinant
for the (inhomogeneous) stochastic six vertex model
with parameters $(v_x,u_y)$, where $(x,y)\in \mathbb{Z}_{\ge0}^2$.
Then we replace each $u_i$ and $v_j$ by a finite geometric sequence
$u_i,qu_i,\ldots,q^{J_i-1}u_i $
and 
$v_j,qv_j,\ldots, q^{I_j-1}v_j$.
It turns out that the resulting measure
depends on the parameters 
$(v_x,q^{I_x},u_y,q^{J_y})$ in an analytic way.
Then, taking certain specializations
of these parameters, we can get to both the sqW functions
and the two-sided stationary boundary conditions in the vertex models.
The fusion and analytic continuation from 
sHL functions to the sqW ones was first performed in 
\cite{BorodinWheelerSpinq}.

The Fredholm determinantal formula
we obtain in the sqW/sqW setting
in particular
establishes the Fredholm determinant for the
$q$-Hahn PushTASEP which was conjectured in \cite{CMP_qHahn_Push}.

Analytic continuations leading to 
Fredholm determinants for stationary 
stochastic particle systems 
were performed in 
\cite{BorodinCorwinFerrariVeto2013}
($q$-Whittaker measures and random polymers)
and 
\cite{Amol2016Stationary}
(stochastic six vertex model).
In the first reference, the continuation
significantly used the structure of the algebra of symmetric functions.
Our analytic continuation based on fusion is more similar to the approach
taken in the second reference, but due to 
connections with sHL and sqW symmetric functions, the 
argument is more straightforward.

\subsection{Notation}
\label{sub:notation}

Throughout the paper we use the 
$q$-Pochhammer
symbols
\begin{equation}
	\label{eq:q_Pochhammer}
	(a; q)_{n} =
	\begin{cases}
		1,&n=0;\\
		\prod_{i=1}^{n}(1 - a q^{i-1}), &n\ge1;\\
		\prod_{i=n}^{-1}(1-aq^{i})^{-1},&n\le -1,
	\end{cases}
	\qquad \text{and} \qquad
	(a; q)_{\infty} = \prod_{i=1}^{\infty}(1 - a q^{i-1}).
\end{equation}
We also use the notation
\begin{equation}
	\label{eq:hypergeom_series}
	\begin{split}
		\setlength\arraycolsep{1pt}{}_{k+1} \overline{ \phi}_k\left(\begin{minipage}{2.7
		cm}
		\center{$q^{-n}; a_1, \dots,  a_k$}\\\center{$b_1, \dots , b_k $}
		\end{minipage}
		\Big\vert\, q,z\right)
		&
		= 
		\setlength\arraycolsep{1pt}
		\prod_{i=1}^{k}(b_i;q)_n
		\cdot
		{}_{k+1} { \phi}_k\left(\begin{minipage}{2.7
		cm}
		\center{$q^{-n}; a_1, \dots,  a_k$}\\\center{$b_1, \dots , b_k $}
		\end{minipage}
		\Big\vert\, q,z\right)
		\\&
		= 
		\sum_{j=0}^n z^j\, \frac{(q^{-n};q)_j}{(q;q)_j} \prod_{i=1}^k (a_i;q)_j (q^j b_i; q)_{n-j}
	\end{split}
\end{equation}
for the regularized terminating $q$-hypergeometric series.

We say that a random variable $X$ has the \emph{$q$-negative binomial} distribution with parameters $(r,p)$, or $X\sim q\textnormal{-}\mathrm{NB}(r,p)$, if 
\begin{equation}
    \mathrm{ Prob }\{ X=k \}=
    p^k\frac{(r;q)_k}{(q;q)_k} \frac{(p;q)_\infty}{(pr;q)_\infty}.
\end{equation}
In case $r=0$ we say that $X$ is a \emph{$q$-Poisson} random variable of parameter $p$, or
$X\sim q\textnormal{-}\mathrm{Poi}(p)$ (sometimes this distribution is also called \emph{$q$-geometric}). 
Finally, the \emph{Bernoulli} random variable $X\sim \mathrm{Ber}(p)$, $X\in\left\{ 0,1 \right\}$, 
has $\mathrm{Prob}\{ X=1 \}=p$ and $\mathrm{Prob}\{ X=0 \}=1-p$.

\subsection{Outline}

In \Cref{sec:abstract_formalism}
we outline a general formalism
for constructing random fields 
from symmetric (rational) functions.
In \Cref{sec:summary_sHL_sqW} we recall 
the spin Hall-Littlewood and spin $q$-Whittaker
symmetric functions introduced in \cite{Borodin2014vertex}
and \cite{BorodinWheelerSpinq}, respectively.
In \Cref{sec:analytic_continuation}
we consider the general form of the skew Cauchy
equation which follows from the fused Yang-Baxter equation,
and in 
\Cref{sec:scaled_geometric} consider yet another family of its specializations
which we refer to as ``scaled geometric''.
In \Cref{sec:YB_fields_through_bijectivisation}
we apply bijectivization to the Yang-Baxter equations obtaining
local stochastic moves of Yang-Baxter type. 
In \Cref{sec:new_three_fields} we discuss the Yang-Baxter fields thus arising
together with their scalar marginals (projections).
In \Cref{sec:diff_op} we define difference operators 
acting diagonally on our symmetric functions, and study their properties.
In \Cref{sub:fredholm_determinant_for_marginal_processes}
we write down Fredholm determinantal observables for 
stochastic particle systems arising from our 
Yang-Baxter fields.
Finally, in
\Cref{app:YBE}
we list all instances of the Yang-Baxter equation
employed in the paper, and discuss the nonnegativity of 
terms entering these equations.

\subsection*{Acknowledgments}

We are grateful to Alexei Borodin for helpful discussions,
and to Ivan Corwin for valuable comments on an earlier version of 
the text.
This work has started when MM and LP participated
in the program 
``Non-equilibrium systems and special functions'' at MATRIX Institute,
and we are grateful to the organizers for hospitality and support.
The work of AB was partially supported by the 
German Research Foundation under Germany’s Excellence Strategy -- EXC 2047 ``Hausdorff Center for Mathematics''.
LP was partially supported by the NSF grant DMS-1664617.

\section{Random fields from skew Cauchy identities}
\label{sec:abstract_formalism}

In this section we describe an abstract formalism of 
random fields which is applied
to several concrete situations in the rest of the paper.

\subsection{Skew Cauchy structures}
\label{sub:F_G_skew_Cauchy_structure}

The fields we consider in this paper 
are collections of random Young diagrams
indexed by points of the two-dimensional quadrant $\mathbb{Z}_{\ge0}^2$.
A \emph{Young diagram} (=~\emph{partition}) is 
a sequence of integers
$\lambda=(\lambda_1\ge \ldots\ge \lambda_{\ell(\lambda)} > 0)$.
The quantity $\ell(\lambda)$ is called the length of the Young diagram $\lambda$.
Denote by $\mathbb{Y}$ the set of all Young diagrams
including the empty one $\lambda=\varnothing$ 
(by agreement, $\ell(\varnothing)=0$).
It is convenient to be able to
add zeros at the end of a Young diagram
$\lambda$, and to not distinguish the sequences
$(\lambda_1,\ldots,\lambda_\ell )$
and
$(\lambda_1,\ldots,\lambda_\ell,0 )$.

Assume that for every pair of Young diagrams $\lambda,\mu$ and any $k\in \mathbb{Z}_{\ge1}$ we are 
given two functions 
$\mathfrak{F}_{\lambda/\mu}(u_1,\ldots,u_k )$
and 
$\mathfrak{G}_{\lambda/\mu}(u_1,\ldots,u_k )$
(which may also depend on 
some external parameters).
This data is called a 
\emph{skew Cauchy structure}
if the functions 
satisfy the following properties:
\begin{enumerate}[label=\bf{\arabic*.}]
	\item The functions are rational in the $u_i$'s and are symmetric with respect to 
		permutations of $u_1,\ldots,u_k $. 
	\item Define relations
		$\prec_k$
		and 
		$\mathop{\dot\prec_k}$
		on $\mathbb{Y}\times\mathbb{Y}$ such that 
		\begin{equation}
			\label{eq:F_G_prec_relations}
			\mathfrak{F}_{\lambda/\mu}(u_1,\ldots,u_k )\ne 0 \quad \textnormal{iff $\mu\prec_{k}\lambda$};
			\qquad 
			\mathfrak{G}_{\lambda/\mu}(u_1,\ldots,u_k )\ne 0 \quad \textnormal{iff $\mu\mathop{\dot\prec_{k}}\lambda$}.
		\end{equation}
		Moreover, for each $\lambda$ the sets $\left\{ \mu\colon \mu\prec_k\lambda \right\}$
		and $\left\{ \rho\colon \rho\mathop{\dot\prec_k} \lambda \right\}$ are finite.
		By agreement, we extend these relations to $k=0$
		and set 
		$\mathfrak{F}_{\lambda/\mu}(\varnothing)=\mathfrak{G}_{\lambda/\mu}(\varnothing)
		=
		\mathbf{1}_{\lambda=\mu}$.\footnote{Throughout the paper $\mathbf{1}_{A}$ denotes the indicator of $A$.}
	\item (Branching rules)
		For each $1\le m\le k-1$ we have
		\begin{equation}
		\label{eq:F_G_branching}
			\mathfrak{F}_{\lambda/\mu}(u_1,\ldots,u_k )=
			\sum_{\varkappa}\mathfrak{F}_{\lambda/\varkappa}(u_1,\ldots,u_m )\mathfrak{F}_{\varkappa/\mu}(u_{m+1},\ldots,u_k ),
		\end{equation}
		and the same branching rule 
		for $\mathfrak{G}_{\lambda/\mu}$
		(obtained by 
		replacing each $\mathfrak{F}$ above by $\mathfrak{G}$) holds, too.
		Note that the sum over $\varkappa$ above is finite. 
	\item (Skew Cauchy identity)
		There exists a rational function $\Pi(u;v)$
		and
		a subset $\mathsf{Adm}\subseteq \mathbb{C}^2$ 
		such that for all $(u,v)\in \mathsf{Adm}$ one has
		(see \Cref{fig:quadruplet} for the illustration)
		\begin{equation}
			\label{eq:F_G_single_skew_Cauchy}
			\sum_{\nu}
			\mathfrak{F}_{\nu/\mu}(u)
			\mathfrak{G}_{\nu/\lambda}(v)
			=
			\Pi(u;v)
			\sum_{\varkappa}\mathfrak{F}_{\lambda/\varkappa}(u)\mathfrak{G}_{\mu/\varkappa}(v).
		\end{equation}
		Note that the sum over $\varkappa$ in the right-hand side is finite 
		while the sum over $\nu$ in the left-hand side might be infinite.
		The set $\mathsf{Adm}$ corresponds to pairs $(u,v)$ for which the infinite sum converges.
	\item (Nonnegativity)
		There exist two sets 
		$\mathsf{P},\dot{\mathsf{P}}\subseteq \mathbb{C}$ such that
		\begin{equation*}
			\mathfrak{F}_{\lambda/\mu}(u_1,\ldots,u_k )\ge0, \quad u_i\in \mathsf{P}\textnormal{ for all $i$};
			\qquad 
			\mathfrak{G}_{\lambda/\mu}(v_1,\ldots,v_k )\ge0, \quad v_j\in \dot{\mathsf{P}}\textnormal{ for all $j$}.
		\end{equation*}
\end{enumerate}

\begin{figure}[htpb]
	\centering
	\includegraphics{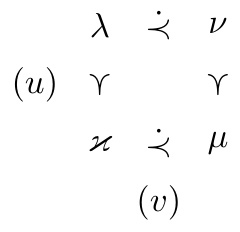}
	\caption{An illustration of relations between the four diagrams
	$\lambda,\mu,\varkappa$, and $\nu$
	in the skew Cauchy identity \eqref{eq:F_G_single_skew_Cauchy}.
	The variable $u$ should be thought of corresponding to the vertical
	direction, and $v$ corresponds to the horizontal one.
	Here we are using the shorthand notation $\mathop{\prec}=\mathop{\prec_1}$ 
	and $\mathop{\dot\prec}=\mathop{\dot\prec_1}$.}
	\label{fig:quadruplet}
\end{figure}

\begin{remark}\label{rmk:generic}
	The functions $\mathfrak{F}_{\lambda/\mu}$
	and 
	$\mathfrak{G}_{\lambda/\mu}$ are rational thus
	might be undefined for special values of the variables $u_i$ or the 
	external parameters. Therefore, all statements in this section should be understood
	in the sense of generic variables and parameters (i.e., outside vanishing sets of some algebraic expressions).
\end{remark}

The branching rules \eqref{eq:F_G_branching}
imply that for any $\mu,\lambda$
the function $\mathfrak{F}_{\lambda/\mu}(u_1,\ldots,u_k)$ vanishes
unless there exists a sequence of Young diagrams $\{\varkappa^{(i)}\}$ with
\begin{equation*}
	\mu\prec_1 \varkappa^{(1)}\prec_1 \varkappa^{(2)}\prec_1 \ldots \prec_1 \varkappa^{(k-1)}\prec_1\lambda.
\end{equation*}
If $\mathfrak{F}_{\lambda/\mu}(u)\ne 0$ for all pairs $\mu\prec_1\lambda$, then
we can replace the relation $\prec_k$
by the existence of a sequence $\varkappa^{(i)}$ as above, 
and \eqref{eq:F_G_prec_relations} will continue to hold.
A similar remark is valid for $\mathop{\dot\prec_k}$, too.

Note also that the skew Cauchy identity for single variables
\eqref{eq:F_G_single_skew_Cauchy} together with \eqref{eq:F_G_branching}
implies the skew Cauchy identity for any number of variables:
\begin{equation}
	\label{eq:F_G_multivar_skew_Cauchy}
		\sum_{\nu}
		\mathfrak{F}_{\nu/\mu}(u_1,\ldots,u_n )
		\mathfrak{G}_{\nu/\lambda}(v_1,\ldots,v_m )
		=
		\prod_{i=1}^{n}\prod_{j=1}^{m}\Pi(u_i;v_j)
		\sum_{\varkappa}\mathfrak{F}_{\lambda/\varkappa}(u_1,\ldots,u_n )\mathfrak{G}_{\mu/\varkappa}(v_1,\ldots,v_m ),
\end{equation}
where $(u_i,v_j)\in \mathsf{Adm}$ for all $i,j$.

\begin{example}
	The prototypical example of a skew Cauchy structure
	is given by the \emph{Schur symmetric polynomials}
	\cite[Chapter I]{Macdonald1995}:
	\begin{equation*}
		\mathfrak{F}_{\lambda/\mu}(u_1,\ldots,u_k )
		=
		\mathfrak{G}_{\lambda/\mu}(u_1,\ldots,u_k )
		=
		s_{\lambda/\mu}(u_1,\ldots,u_k ),
	\end{equation*}
	where $s_{\lambda/\mu}$
	is the 
	skew Schur polynomial.
	The relations
	$\mu\prec_1\lambda$ and $\mu\mathop{\dot \prec_1}\lambda$
	are the same and mean interlacing:
	\begin{equation*}
		\mu\prec\lambda
		\qquad 
		\Leftrightarrow
		\qquad 
		\lambda_1\ge \mu_1\ge \lambda_2\ge \mu_2\ge \ldots. 
	\end{equation*}
	The factor in the right-hand side of the skew Cauchy identity
	is $\Pi(u;v)=\dfrac1{1-uv}$, and the convergence in the left-hand side holds with
	$\mathsf{Adm}=\left\{ (u,v)\colon |uv|<1 \right\}$.
	The nonnegativity sets are $\mathsf{P}=\dot{\mathsf{P}}=\mathbb{R}_{\ge0}$,
	and
	the fact that $s_{\lambda/\mu}(u_1,\ldots,u_k )\ge0$ for $u_i\ge0$ 
	follows from the combinatorial formula for the skew Schur polynomials
	representing them as generating functions of semistandard Young
	tableaux of the skew shape $\lambda/\mu$.
	
	This Schur skew Cauchy structure will serve 
	as a running example throughout this section. 
	In the rest of the paper we consider 
	other skew Cauchy structures 
	associated with spin Hall-Littlewood and 
	spin $q$-Whittaker functions.
\end{example}

\subsection{Gibbs measures}
\label{sub:Gibbs_measures_F_G}

Through the branching rules,
each family of functions $\mathfrak{F}_{\lambda/\mu}$ and $\mathfrak{G}_{\lambda/\mu}$
leads to a version of a Gibbs property.
This property also depends on a choice of parameters
$u_1,u_2,\ldots \in \mathsf{P}$ 
or 
$v_1,v_2,\ldots \in \dot{\mathsf{P}}$,
respectively,
which we assume fixed.

\begin{definition}[Gibbs measures]
	\label{def:F_G_Gibbs}
	A probability measure on a
	(finite or infinite)
	sequence of 
	Young diagrams
	\begin{equation*}
		\lambda^{(0)}\prec_1\lambda^{(1)}\prec_1 \ldots\prec_1\lambda^{(n)}\prec_1\ldots 
	\end{equation*}
	is called \emph{$\mathfrak{F}$-Gibbs}
	(with parameters $u_i$) if for all $m,n$ with $0\le m<n-1$, the 
	conditional distribution of 
	$\lambda^{(m+1)},\ldots,\lambda^{(n-1)} $ given $\tau=\lambda^{(m)}$ and $\rho=\lambda^{(n)}$ has the form
	\begin{equation*}
		\frac{1}{Z}\,\mathfrak{F}_{\lambda^{(m+1)}/\tau}(u_{m+1})
		\mathfrak{F}_{\lambda^{(m+2)}/\lambda^{(m+1)}}(u_{m+2})
		\ldots 
		\mathfrak{F}_{\rho/\lambda^{(n-1)}}(u_n),
	\end{equation*}
	and, in particular, is independent of $\lambda^{(i)}$ with $i<m$ or $i>n$.
	The normalizing constant has the form
	$Z=\mathfrak{F}_{\rho/\tau}(u_{m+1},\ldots,u_n )$ by \eqref{eq:F_G_branching}.
	Note that the set of sequences $\lambda^{(m)}\prec_1\lambda^{(m+1)}\prec_1\ldots \prec_1\lambda^{(n)}$
	with fixed $\lambda^{(m)}$ and $\lambda^{(n)}$ is finite, so there are no convergence issues in defining $Z$.
	
	The \emph{$\mathfrak{G}$-Gibbs} property is defined in a similar way.
\end{definition}

\begin{example}
	In the Schur case with $u_i\equiv u$ for all $i$, the Gibbs property
	reduces to the 
	one with uniform conditional probabilities.
	That is, a measure on 
	an interlacing sequence of diagrams
	$\varnothing\prec \lambda^{(1)}\prec \lambda^{(2)}\prec \ldots $
	is (uniform) Gibbs if,
	conditioned on any $\lambda^{(n)}=\rho$, 
	the distribution of $\lambda^{(1)},\ldots,\lambda^{(n-1)} $ is 
	uniform among all 
	sequences of Young diagrams satisfying the interlacing constraints.
	% The classification of uniform Gibbs measures
	% is equivalent to a representation-theoretic
	% problem of classifying extreme characters
	% of the infinite-dimensional unitary group
	% (see
	% \cite{BorodinOlsh2011GT},
	% \cite{Petrov2012GT}
	% for details and background).
\end{example}

\subsection{Random fields associated to a skew Cauchy structure}
\label{sub:F_G_random_fields}

Fix a skew Cauchy structure $(\mathfrak{F},\mathfrak{G})$
and 
parameters
\begin{equation}\label{eq:F_G_field_parameters_u_v}
	u_1,u_2,\ldots; \ v_1,v_2,\ldots ,
	\quad
	\textnormal{such that}
	\quad
	\textnormal{$(u_x,v_y)\in \mathsf{Adm}$, $u_x\in \mathsf{P}$, $v_y\in \dot{\mathsf{P}}$ for all $x,y$.}
\end{equation}
A random field 
corresponding to this data is a family
of random Young diagrams $\boldsymbol \lambda=\{\lambda^{(x,y)}\}$ 
indexed by points of the quadrant $(x,y)\in \mathbb{Z}_{\ge0}^{2}$
with a certain spatial dependence structure
determined by the functions $\mathfrak{F}_{\nu/\mu}$ and $\mathfrak{G}_{\nu/\mu}$.
We begin by describing the appropriate class of boundary conditions.

\begin{definition}[Gibbs boundary conditions]
	\label{def:F_G_boundary_conditions}
	A random two-sided sequence of Young diagrams
	\begin{equation}
		\label{eq:F_G_field_boundary_conditions}
		\boldsymbol \tau=\bigl(\ldots\succ_1 
		\tau^{(0,3)}\succ_1 
		\tau^{(0,2)} \succ_1 \tau^{(0,1)} \succ_1
		\tau^{(0,0)} \mathop{\dot\prec_1} \tau^{(1,0)}
		\mathop{\dot\prec_1}\tau^{(2,0)}
		\mathop{\dot\prec_1}
		\tau^{(3,0)}
		\mathop{\dot\prec_1}
		\ldots \bigr)
	\end{equation}
	is called an 
	\emph{$(\mathfrak{F},\mathfrak{G})$-Gibbs boundary condition}
	(or a \emph{Gibbs boundary condition}, for short)
	if
	the sequences
	$\{\tau^{(0,y)}\}_{y\ge0}$
	and 
	$\{\tau^{(x,0)}\}_{x\ge0}$
	are $\mathfrak{F}$-Gibbs and $\mathfrak{G}$-Gibbs,
	respectively
	(in the sense of
	\Cref{def:F_G_Gibbs}, with parameters \eqref{eq:F_G_field_parameters_u_v}),
	and, moreover, the sequences
	$\{\tau^{(0,y)}\}_{y\ge1}$
	and 
	$\{\tau^{(x,0)}\}_{x\ge1}$
	are conditionally independent given 
	$\tau^{(0,0)}$.
\end{definition}
For a Gibbs boundary condition $\boldsymbol \tau$ denote
\begin{equation}\label{eq:Z_boundary_values}
	Z^{(x,y)}_{\mathrm{boundary}}:=\sum_{\tau^{(0,0)}}
	\mathfrak{F}_{\tau^{(0,y)}/\tau^{(0,0)}}(u_1,\ldots,u_y )
	\mathfrak{G}_{\tau^{(x,0)}/\tau^{(0,0)}}(v_1,\ldots,v_x ), \qquad 
	(x,y)\in \mathbb{Z}_{\ge0}^{2}.
\end{equation}
This quantity is random and depends on 
$\tau^{(x,0)}$ and
$\tau^{(0,y)}$.

We will mostly deal with the following particular case of Gibbs boundary conditions:
\begin{definition}[Step-type boundary conditions]
	\label{def:F_G_step_type_boundary}
	A Gibbs boundary condition $\boldsymbol \tau$ is called \emph{step-type}
	in the \emph{vertical} (resp., \emph{horizontal}) direction if 
	the $\mathfrak{F}$-Gibbs distribution of the sequence
	$\{\tau^{(0,y)}\}_{y\ge0}$
	(resp., the $\mathfrak{G}$-Gibbs distribution of $\{\tau^{(x,0)}\}_{x\ge0}$)
	is supported on a single sequence.
	That is, the boundary diagrams
	are nonrandom but
	the Gibbs property still holds.

	A \emph{step-type boundary condition} $\boldsymbol \tau$
	is the one which is step-type in both horizontal and 
	vertical directions. For such boundary conditions the 
	quantity 
	$Z^{(x,y)}_{\mathrm{boundary}}$
	\eqref{eq:Z_boundary_values}
	is not random and
	is readily written down (e.g., in some examples
	$\tau^{(0,y)}=\tau^{(x,0)}=\tau^{(0,0)}=\varnothing$).
	See \Cref{sub:F_G_fields_references}
	below for the origin of the term ``step''.
\end{definition}

For $(x,y)\in \mathbb{Z}_{\ge0}^2$
denote the northwest and the southeast quadrants by
\begin{equation*}
	\mathrm{NW}_{(x,y)}:=\{(m,n)\in \mathbb{Z}_{\ge0}^{2}\colon 
	m\le x,\ n\ge y \},\qquad 
	\mathrm{SE}\,_{(x,y)}:=\{(m,n)\in \mathbb{Z}_{\ge0}^{2}\colon 
	m\ge x,\ n\le y \}.
\end{equation*}
We are now in a position to formulate the main definition of the 
section:
\begin{definition}[Random fields]
	\label{def:F_G_field}
	A family of random Young diagrams
	$\boldsymbol \lambda=
	\{\lambda^{(x,y)}\colon (x,y)\in \mathbb{Z}_{\ge0}^{2}\}$
	is called a 
	\emph{random field} 
	associated with the 
	skew Cauchy structure $(\mathfrak{F},\mathfrak{G})$
	and parameters \eqref{eq:F_G_field_parameters_u_v}
	with a Gibbs boundary condition $\boldsymbol \tau$
	if:
	\begin{enumerate}[label=\bf{\arabic*.}]
		\item \label{enum:F_G_field_1} The diagrams satisfy 
			$\lambda^{(x,y)}\prec_1 \lambda^{(x,y+1)}$ and 
			$\lambda^{(x,y)}\mathop{\dot\prec}_1\lambda^{(x+1,y)}$
			for all $(x,y)\in \mathbb{Z}_{\ge0}^{2}$.
		\item The diagrams at the boundary of the quadrant 
			$\mathbb{Z}_{\ge0}^2$ agree with $\boldsymbol \tau$:
			$\lambda^{(x,0)}=\tau^{(x,0)}$, 
			$\lambda^{(0,y)}=\tau^{(0,y)}$ for all $x,y\ge 0$.
		\item \label{enum:F_G_field_3}
			For all $(x,y)\in \mathbb{Z}_{\ge0}^{2}$,
			let us use the shorthand notation
			\begin{equation}
				\label{eq:F_G_quadruplet_notation}
				\varkappa=\lambda^{(x,y)},\qquad 
				\mu=\lambda^{(x+1,y)},\qquad 
				\lambda=\lambda^{(x,y+1)},\qquad 
				\nu=\lambda^{(x+1,y+1)}
			\end{equation}
			(which matches \Cref{fig:quadruplet}).
			We require that 
			\begin{equation}
				\label{eq:F_G_conditioning}
				\begin{split}
					\mathop{\mathrm{Prob}}
					\bigl( \varkappa\mid \lambda^{(m,n)}\colon (m,n)\in 
					\mathrm{NW}_{(x,y+1)}\cup \mathrm{SE}\,_{(x+1,y)} \bigr)
					&=
					\mathop{\mathrm{Prob}}\left( \varkappa\mid \lambda,\mu \right)
					=
					\frac{\mathfrak{F}_{\lambda/\varkappa}(u_{y+1})\mathfrak{G}_{\mu/\varkappa}(v_{x+1})}{Z^{(x,y)}_{\llcorner}}\,,
					\\
					\mathop{\mathrm{Prob}}
					\bigl( \nu\mid \lambda^{(m,n)}\colon (m,n)\in 
					\mathrm{NW}_{(x,y+1)}\cup \mathrm{SE}\,_{(x+1,y)} \bigr)
					&=
					\mathop{\mathrm{Prob}}\left( \nu\mid \lambda,\mu \right)
					=
					\frac{\mathfrak{F}_{\nu/\mu}(u_{y+1})\mathfrak{G}_{\nu/\lambda}(v_{x+1})}{Z^{(x,y)}_{\urcorner}},
				\end{split}
			\end{equation}
			where $Z^{(x,y)}_{\llcorner}$ and $Z^{(x,y)}_{\urcorner}$ are the normalizing constants.
			The skew Cauchy identity \eqref{eq:F_G_single_skew_Cauchy}
			implies that 
			$Z^{(x,y)}_{\urcorner}=\Pi(u_{y+1};v_{x+1})\,Z^{(x,y)}_{\llcorner}$.
	\end{enumerate}
	See \Cref{fig:F_G_field} for an illustration.
	Observe that the restrictions
	on the Young diagrams
	in Condition~\ref{enum:F_G_field_1}
	follow from Condition~\ref{enum:F_G_field_3}.
	Note also that a 
	random field is not determined uniquely by the above conditions.
	We discuss this
	in \Cref{sub:F_G_transitions} below.
\end{definition}

\begin{figure}[htpb]
	\centering
	\includegraphics[width=.5\textwidth]{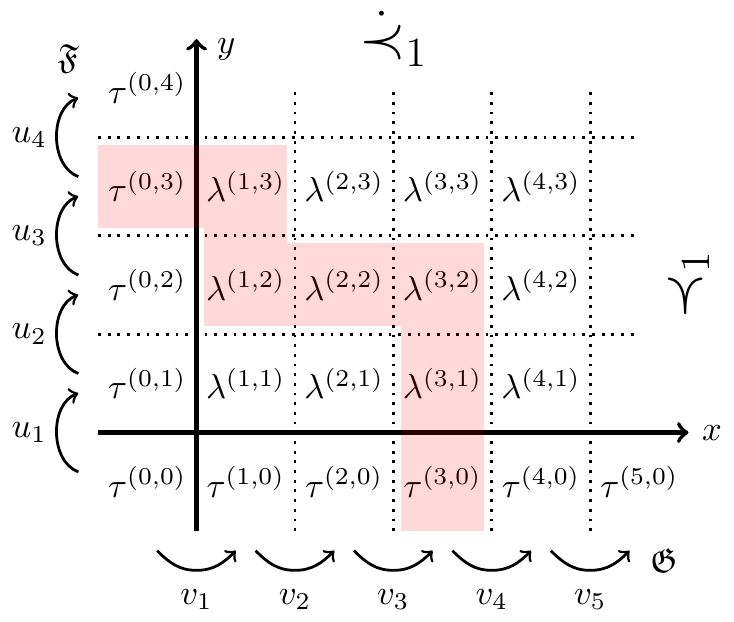}
	\caption{A random field of Young diagrams
		$\{ \lambda^{(x,y)} \}$ 
		with boundary conditions $\boldsymbol \tau$,
		and an example of a down-right path.}
	\label{fig:F_G_field}
\end{figure}

\begin{definition}
	\label{def:F_G_down-right}
	A collection $\left\{ (x_i,y_i) \right\}_{i=1}^{L}\subset\mathbb{Z}_{\ge0}^{2}$, where $L\ge1$, is called a
	\emph{down-right path} if 
	$x_1=0$, $y_L=0$, and 
	the difference between consecutive vertices $(x_{i+1},y_{i+1})-
		(x_i,y_i)$ is either $(0,-1)$ or $(1,0)$ for all $i$.
\end{definition}

\begin{proposition}
	\label{prop:F_G_processes}
	Let $\boldsymbol \lambda$ be a field. Then the joint distribution 
	of the Young diagrams along each down-right path $\left\{ (x_i,y_i) \right\}_{i=1}^{L}$
	conditioned on 
	$\tau^{(0,y_1)}$ and $\tau^{(x_L,0)}$
	has the form
	\begin{equation}
		\label{eq:F_G_processes}
		\frac{1}{Z_{\mathrm{path}}} \,
		\prod_{i\colon y_{i+1}=y_i-1}
		\mathfrak{F}_{\lambda^{(x_{i},y_{i})}/\lambda^{(x_{i+1},y_{i+1})}}(u_{y_i})
		\prod_{i\colon x_{i+1}=x_i+1}
		\mathfrak{G}_{\lambda^{(x_{i+1},y_{i+1})}/\lambda^{(x_{i},y_{i})}}(v_{x_{i+1}}).
	\end{equation}
	The normalizing constant has the form
	\begin{equation*}
		Z_{\mathrm{path}}=Z^{(x_L,y_1)}_{\mathrm{boundary}}
		\prod_{\textnormal{$(x,y)$ below the path}}\Pi(u_y;v_x).
	\end{equation*}
\end{proposition}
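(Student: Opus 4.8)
The plan is to induct on down-right paths, the induction parameter being the number of unit cells lying below the path (equivalently the staircase area between the path and the two axes). Every down-right path with at least one cell below it is obtained from one with exactly one fewer such cell by a single \emph{corner push}: at a vertex $(a+1,b+1)$ where the path makes a right step followed by a down step, and such that the cell with corners $(a,b),(a+1,b),(a,b+1),(a+1,b+1)$ lies below the path, one replaces $(a+1,b+1)$ by $(a,b)$, i.e.\ exchanges the local pattern $\cdots(a,b+1)\to(a+1,b+1)\to(a+1,b)\cdots$ with $\cdots(a,b+1)\to(a,b)\to(a+1,b)\cdots$. So it suffices to treat the minimal path and one corner push. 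The minimal path $\gamma_0=\bigl((0,y_1),(0,y_1-1),\dots,(0,0),(1,0),\dots,(x_L,0)\bigr)$ lies along the axes and has no cell below it, so there $Z_{\mathrm{path}}$ should be $Z^{(x_L,y_1)}_{\mathrm{boundary}}$; the diagrams on $\gamma_0$ are exactly the boundary diagrams, and for $\gamma_0$ the formula \eqref{eq:F_G_processes} asserts precisely that, conditionally on $\tau^{(0,y_1)}$ and $\tau^{(x_L,0)}$, the joint law of $\{\tau^{(0,y)}\}$ and $\{\tau^{(x,0)}\}$ is proportional to $\prod_{y=1}^{y_1}\mathfrak{F}_{\tau^{(0,y)}/\tau^{(0,y-1)}}(u_y)\prod_{x=1}^{x_L}\mathfrak{G}_{\tau^{(x,0)}/\tau^{(x-1,0)}}(v_x)$. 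This is the content of \Cref{def:F_G_boundary_conditions} (the $\mathfrak{F}$- and $\mathfrak{G}$-Gibbs properties give the two products, and the conditional independence given $\tau^{(0,0)}$ lets one multiply), and by the branching rule \eqref{eq:F_G_branching} the normalization of this product --- summed first over all intermediate diagrams and then over $\tau^{(0,0)}$ --- equals $\sum_{\tau^{(0,0)}}\mathfrak{F}_{\tau^{(0,y_1)}/\tau^{(0,0)}}(u_1,\dots,u_{y_1})\,\mathfrak{G}_{\tau^{(x_L,0)}/\tau^{(0,0)}}(v_1,\dots,v_{x_L})=Z^{(x_L,y_1)}_{\mathrm{boundary}}$ as defined in \eqref{eq:Z_boundary_values}.

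For the inductive step, let $\gamma$ be obtained from $\gamma''$ by pushing the corner at the cell with bottom-left vertex $(a,b)$, and set $\varkappa=\lambda^{(a,b)}$, $\mu=\lambda^{(a+1,b)}$, $\lambda=\lambda^{(a,b+1)}$, $\nu=\lambda^{(a+1,b+1)}$ as in \Cref{fig:quadruplet}. The paths $\gamma''$ and $\gamma$ share every vertex except $(a,b)\in\gamma''$ versus $(a+1,b+1)\in\gamma$, and the shared vertices split into a part $N\subset\mathrm{NW}_{(a,b+1)}$ and a part $S\subset\mathrm{SE}\,_{(a+1,b)}$, which in particular contain the two path-endpoints and the vertices carrying $\lambda$ and $\mu$. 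Both conditional laws --- of the diagrams along $\gamma''$, and along $\gamma$, given the path-endpoint diagrams --- restrict to one and the same law $\mathbb{P}_0$ on the diagrams indexed by $N\cup S$, since each is recovered from $\mathbb{P}_0$ by adjoining the conditional law of the single remaining diagram ($\varkappa$, resp.\ $\nu$) given $N\cup S$. Because $N\cup S\subset\mathrm{NW}_{(a,b+1)}\cup\mathrm{SE}\,_{(a+1,b)}$ and the conditional laws in Condition~\ref{enum:F_G_field_3} of \Cref{def:F_G_field} depend only on $\lambda,\mu$, the tower property shows these conditional laws are $\mathfrak{F}_{\lambda/\varkappa}(u_{b+1})\mathfrak{G}_{\mu/\varkappa}(v_{a+1})/Z^{(a,b)}_{\llcorner}$ and $\mathfrak{F}_{\nu/\mu}(u_{b+1})\mathfrak{G}_{\nu/\lambda}(v_{a+1})/Z^{(a,b)}_{\urcorner}$. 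Comparing the first of these with the inductive hypothesis \eqref{eq:F_G_processes} for $\gamma''$ determines $\mathbb{P}_0$; plugging this into $\mathbb{P}_\gamma=\mathbb{P}_0\otimes(\text{cond.\ law of }\nu)$ and using $Z^{(a,b)}_{\urcorner}=\Pi(u_{b+1};v_{a+1})\,Z^{(a,b)}_{\llcorner}$ --- the single-variable skew Cauchy identity \eqref{eq:F_G_single_skew_Cauchy} --- reassembles exactly \eqref{eq:F_G_processes} for $\gamma$, with $Z_{\mathrm{path}}$ multiplied by the one extra factor $\Pi(u_{b+1};v_{a+1})$ attached to the newly exposed cell. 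Iterating back to $\gamma_0$ yields $Z_{\mathrm{path}}=Z^{(x_L,y_1)}_{\mathrm{boundary}}\prod_{(x,y)\text{ below the path}}\Pi(u_y;v_x)$.

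The algebra above is routine; the point demanding care is the measure-theoretic bookkeeping that a corner push changes the law along the path only through the single diagram swapped out. This rests on the two complementary facts that the region $\mathrm{NW}_{(a,b+1)}\cup\mathrm{SE}\,_{(a+1,b)}$ \emph{does} contain all of $N\cup S$ (so Condition~\ref{enum:F_G_field_3}, via the tower property, controls the relevant conditional law) but \emph{does not} contain $\varkappa$ or $\nu$ (so $\mathbb{P}_{\gamma''}$ and $\mathbb{P}_\gamma$ really do share the restriction $\mathbb{P}_0$). One should also confirm at the start that ``number of cells below the path'' is a well-founded induction parameter, that an inverse corner push is always available when this number is positive, and that it decreases by exactly one under such a move, so that every down-right path is reached from $\gamma_0$.
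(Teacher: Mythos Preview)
Your proof is correct and follows essentially the same approach as the paper: induction on the number of cells below the path via corner flips $\llcorner\to\urcorner$, with the axis path as base case (handled by the Gibbs boundary property) and the inductive step driven by Condition~\ref{enum:F_G_field_3} of \Cref{def:F_G_field} together with the skew Cauchy relation $Z^{(a,b)}_{\urcorner}=\Pi(u_{b+1};v_{a+1})Z^{(a,b)}_{\llcorner}$. Your treatment of the inductive step is in fact more explicit than the paper's---the tower-property argument showing that the conditional law of the swapped diagram given $N\cup S$ coincides with that given all of $\mathrm{NW}\cup\mathrm{SE}$ is exactly the point the paper's one-line ``Conditioned on $\lambda,\mu$, the Young diagram $\varkappa$ is independent of the diagram along the rest of the path'' is gesturing at. (One cosmetic slip: in your first paragraph the described ``corner push'' replaces $(a+1,b+1)$ by $(a,b)$, which \emph{decreases} the cell count, while in the inductive step ``$\gamma$ obtained from $\gamma''$ by pushing the corner'' increases it; the direction is reversed between the two paragraphs, but this is purely terminological and the argument is unaffected.)
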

\begin{proof}
	This follows by induction on flipping the down-right path using 
	elementary steps $\llcorner\to\urcorner$ (i.e., by replacing the down-right corners
	by the right-down ones). 
	The induction base is the path which first makes only down steps to $(0,0)$
	and then only right steps. For this path the statement follows from
	the Gibbs property of the boundary condition (\Cref{def:F_G_boundary_conditions}).

	The inductive step uses \eqref{eq:F_G_conditioning}.
	Let us fix some $\llcorner$ corner 
	$(x,y)$ in the path and use the 
	notation of \eqref{eq:F_G_conditioning}. 
	Conditioned on $\lambda,\mu$, the Young diagram
	$\varkappa$ is independent of the diagram along the rest of the path.
	Using the induction assumption and \eqref{eq:F_G_conditioning}
	to
	replace the two factors
	corresponding to $(\lambda,\varkappa,\mu)$ in \eqref{eq:F_G_processes}
	by the ones corresponding to $(\lambda,\nu,\mu)$, 
	we obtain the desired joint distribution along the modified down-right path.
\end{proof}
For the special choice 
of the
path which 
first makes only right steps and then only down steps,
we obtain
with the help of the branching
\eqref{eq:F_G_branching}:
\begin{corollary}
	\label{cor:F_G_measure}
	For any $x,y\ge1$ we have
	\begin{equation}
		\label{eq:F_G_measure}
		\mathop{\mathrm{Prob}}(\lambda^{(x,y)}\mid \tau^{(0,y)},\tau^{(x,0)})=
		\frac{
			\mathfrak{F}_{\lambda^{(x,y)}/\tau^{(x,0)}}(u_1,\ldots,u_y )
			\mathfrak{G}_{\lambda^{(x,y)}/\tau^{(0,y)}}(v_1,\ldots,v_x )
		}
		{Z^{(x,y)}}.
	\end{equation}
	The normalizing constant has the form
	\begin{equation*}
		Z^{(x,y)}=Z^{(x,y)}_{\mathrm{boundary}}\,
		\prod_{i=1}^{x}\prod_{j=1}^{y}\Pi(u_y;v_x).
	\end{equation*}
\end{corollary}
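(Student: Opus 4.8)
The plan is to specialize \Cref{prop:F_G_processes} to the particular down-right path
$\gamma$ that goes right from $(0,y)$ to $(x,y)$ and then down from $(x,y)$ to $(x,0)$,
and then collapse the product over the path into a single pair of skew functions by
repeated use of the branching rule. First I would observe that this $\gamma$ is a legitimate
down-right path in the sense of \Cref{def:F_G_down-right}: its first vertex has
$x$-coordinate $0$, its last vertex has $y$-coordinate $0$, and each step is either
$(1,0)$ or $(0,-1)$. Its vertices are
$(0,y),(1,y),\ldots,(x,y),(x,y-1),\ldots,(x,0)$, so the horizontal steps occur at
$i$ with $x_{i+1}=x_i+1$ and $y_i=y$ (contributing the factors
$\mathfrak{G}_{\lambda^{(i+1,y)}/\lambda^{(i,y)}}(v_{i+1})$ for $i=0,\ldots,x-1$),
and the vertical steps occur at $i$ with $y_{i+1}=y_i-1$ and $x_i=x$ (contributing
$\mathfrak{F}_{\lambda^{(x,y_i)}/\lambda^{(x,y_i-1)}}(u_{y_i})$ for $y_i=y,y-1,\ldots,1$).
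Here the endpoints are $\lambda^{(0,y)}=\tau^{(0,y)}$ and $\lambda^{(x,0)}=\tau^{(x,0)}$
by Condition~2 of \Cref{def:F_G_field}, while the apex diagram $\lambda^{(x,y)}$ is the
one we want.

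Next I would apply \Cref{prop:F_G_processes} verbatim to this path: conditioned on
$\tau^{(0,y)}$ and $\tau^{(x,0)}$, the joint law of
$\bigl(\lambda^{(1,y)},\ldots,\lambda^{(x,y)},\lambda^{(x,y-1)},\ldots\bigr)$ is
proportional to
\begin{equation*}
	\prod_{i=0}^{x-1}\mathfrak{G}_{\lambda^{(i+1,y)}/\lambda^{(i,y)}}(v_{i+1})
	\cdot
	\prod_{j=1}^{y}\mathfrak{F}_{\lambda^{(x,j)}/\lambda^{(x,j-1)}}(u_{j}).
\end{equation*}
To get the marginal law of $\lambda^{(x,y)}=:\mu$ alone, I would sum over all the
intermediate diagrams. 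The sum over the "staircase" $\tau^{(0,y)}=\lambda^{(0,y)}\mathop{\dot\prec_1}\lambda^{(1,y)}\mathop{\dot\prec_1}\cdots\mathop{\dot\prec_1}\lambda^{(x,y)}=\mu$
collapses, by the $\mathfrak{G}$-branching rule \eqref{eq:F_G_branching} applied
$x-1$ times, to the single term $\mathfrak{G}_{\mu/\tau^{(0,y)}}(v_1,\ldots,v_x)$;
symmetrically, the sum over
$\mu=\lambda^{(x,y)}\succ_1\lambda^{(x,y-1)}\succ_1\cdots\succ_1\lambda^{(x,0)}=\tau^{(x,0)}$
collapses, by the $\mathfrak{F}$-branching rule applied $y-1$ times, to
$\mathfrak{F}_{\mu/\tau^{(x,0)}}(u_1,\ldots,u_y)$. (These two sums are over disjoint
blocks of variables and factor apart, since in the displayed product every
$\mathfrak{G}$-factor involves only the diagrams $\lambda^{(i,y)}$ and every
$\mathfrak{F}$-factor involves only the $\lambda^{(x,j)}$.) This yields exactly the
numerator of \eqref{eq:F_G_measure}.

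Finally I would read off the normalizing constant. \Cref{prop:F_G_processes} gives
$Z_{\mathrm{path}}=Z^{(x_L,y_1)}_{\mathrm{boundary}}\prod_{(a,b)\text{ below }\gamma}\Pi(u_b;v_a)$,
and for our $\gamma$ the endpoints are $(x_L,y_1)=(x,y)$, while the set of lattice
points strictly below $\gamma$ is precisely $\{(a,b):1\le a\le x,\ 1\le b\le y\}$, so
the product is $\prod_{i=1}^{x}\prod_{j=1}^{y}\Pi(u_j;v_i)$, matching the claimed
$Z^{(x,y)}=Z^{(x,y)}_{\mathrm{boundary}}\prod_{i=1}^{x}\prod_{j=1}^{y}\Pi(u_y;v_x)$
(the index names under the product are immaterial). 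Passing from the conditional law
of the whole path to the conditional law of $\mu=\lambda^{(x,y)}$ only changes the
normalization by summing the numerator over $\mu$, which does not affect the stated
form. I do not anticipate a genuine obstacle here; the only thing to be careful about
is bookkeeping — making sure the partition of the path's factors into a $\mathfrak{G}$-block
depending on $(\lambda^{(i,y)})_i$ and an $\mathfrak{F}$-block depending on
$(\lambda^{(x,j)})_j$ is clean enough that the two telescoping branching sums genuinely
decouple, and that the "points below the path" count comes out to the full rectangle.
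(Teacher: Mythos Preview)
Your proposal is correct and follows essentially the same approach as the paper: apply \Cref{prop:F_G_processes} to the down-right path that goes right from $(0,y)$ to $(x,y)$ and then down to $(x,0)$, and collapse the resulting product via the branching rules \eqref{eq:F_G_branching}. The paper's proof is just the one-line remark preceding the corollary, so your write-up simply spells out the details (including the observation that the factors split cleanly into a $\mathfrak{G}$-block and an $\mathfrak{F}$-block, and that the rectangle $\{1,\ldots,x\}\times\{1,\ldots,y\}$ is exactly the set of points below this path).
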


Note that for the step-type boundary conditions $\boldsymbol \tau$
there is no need to condition on the boundary values
$\tau^{(0,y)}$ and $\tau^{(x,0)}$ in 
\Cref{prop:F_G_processes} and \Cref{cor:F_G_measure}.
For general Gibbs boundary conditions we 
have the following Gibbs preservation property:

\begin{proposition}
	\label{prop:F_G_preservation_Gibbs_measures}
	For any $(x,y)\in \mathbb{Z}_{\ge0}^2$,
	the two-sided sequence
	\begin{equation*}
		\ldots
		\succ_1
		\lambda^{(x,y+2)}
		\succ_1
		\lambda^{(x,y+1)}
		\succ_1
		\lambda^{(x,y)}
		\mathop{\dot\prec_1} 
		\lambda^{(x+1,y)}
		\mathop{\dot\prec_1} 
		\lambda^{(x+2,y)}
		\mathop{\dot\prec_1} 
		\ldots 
	\end{equation*}
	is an $(\mathfrak{F},\mathfrak{G})$-Gibbs boundary condition
	in the sense of \Cref{def:F_G_boundary_conditions}.
\end{proposition}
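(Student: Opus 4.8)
The plan is to show that the two-sided sequence along the ``staircase boundary'' through $(x,y)$ inherits the $\mathfrak{F}$-Gibbs and $\mathfrak{G}$-Gibbs properties along its two arms, together with the conditional independence of the two arms given $\lambda^{(x,y)}$. First I would reduce everything to statements about \emph{down-right paths}. Observe that the vertical arm $\{\lambda^{(x,y')}\}_{y'\ge y}$ together with the horizontal arm $\{\lambda^{(x',y)}\}_{x'\ge x}$ lies on a down-right path in the sense of \Cref{def:F_G_down-right} (extended to the portion of the quadrant to the right of and above $(x,y)$); more precisely, for each $N$ the finite collection consisting of $\lambda^{(x,y+N)},\ldots,\lambda^{(x,y)},\ldots,\lambda^{(x+N,y)}$ sits on a down-right path when we truncate the field to the box $[0,x+N]\times[0,y+N]$. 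The key point is that \Cref{prop:F_G_processes} gives the joint law of the diagrams along \emph{any} down-right path in closed product form \eqref{eq:F_G_processes}, and this applies in particular to the path whose ``corner'' is exactly at $(x,y)$.

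Next I would extract the conditional law of the staircase sequence from \eqref{eq:F_G_processes}. Along the path with corner at $(x,y)$, conditioned on the boundary diagrams $\tau^{(0,y+N)}$ and $\tau^{(x+N,0)}$, the joint distribution of $\bigl(\lambda^{(x,y+N)},\ldots,\lambda^{(x,y)},\lambda^{(x+1,y)},\ldots,\lambda^{(x+N,y)}\bigr)$ is proportional to
\begin{equation*}
	\prod_{j=1}^{N}\mathfrak{F}_{\lambda^{(x,y+j-1)}/\lambda^{(x,y+j)}}(u_{y+j})
	\cdot
	\prod_{i=1}^{N}\mathfrak{G}_{\lambda^{(x+i,y)}/\lambda^{(x+i-1,y)}}(v_{x+i}),
\end{equation*}
since $Z_{\mathrm{path}}$ does not depend on the interior diagrams. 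This product factorizes into a part depending only on the vertical arm (and $\lambda^{(x,y)}$) and a part depending only on the horizontal arm (and $\lambda^{(x,y)}$). Hence, conditionally on $\lambda^{(x,y)}$ and on the boundary data, the vertical arm $\{\lambda^{(x,y')}\}_{y'\in[y,y+N]}$ and the horizontal arm $\{\lambda^{(x+i,y)}\}_{i\in[1,N]}$ are independent, and the conditional law of each arm is exactly of the product form appearing in \Cref{def:F_G_Gibbs}, i.e. it is $\mathfrak{F}$-Gibbs (resp. $\mathfrak{G}$-Gibbs) with the shifted parameters $u_{y+1},u_{y+2},\ldots$ (resp. $v_{x+1},v_{x+2},\ldots$). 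Since this holds for every $N$ and the Gibbs property of \Cref{def:F_G_Gibbs} is a family of conditions involving only finitely many consecutive diagrams at a time, it passes to the infinite two-sided sequence; conditional independence of the two infinite arms given $\lambda^{(x,y)}$ likewise follows by taking $N\to\infty$ using that it holds for every finite truncation. This is precisely the content of \Cref{def:F_G_boundary_conditions} with $(0,0)$ there playing the role of $(x,y)$ here.

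One subtlety to address carefully is that \Cref{prop:F_G_processes} is stated for down-right paths starting on the vertical axis ($x_1=0$) and ending on the horizontal axis ($y_L=0$), whereas the staircase through $(x,y)$ does not touch the axes. The clean fix is to \emph{extend} the chosen path at both ends: prepend the segment of the vertical boundary $\lambda^{(0,y+N)}\succ_1\cdots\succ_1\lambda^{(x,y+N)}$ — wait, this runs in the wrong coordinate; rather, prepend the vertical segment $\{\lambda^{(0,y')}\}$ up to height $y+N$ followed by the horizontal run to $x$, and append from $(x+N,y)$ a horizontal run and then a vertical drop to the axis, so that the result is a genuine down-right path from the $y$-axis to the $x$-axis passing through the corner $(x,y)$. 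Applying \eqref{eq:F_G_processes} to this extended path and then marginalizing (summing out the prepended/appended diagrams, which only affects the normalization and the conditioning variables) recovers the displayed product formula above. The main obstacle, then, is bookkeeping: being careful that ``conditioned on $\tau^{(0,y_1)}$ and $\tau^{(x_L,0)}$'' in \Cref{prop:F_G_processes} is compatible with the Gibbs structure of $\boldsymbol\tau$ so that after marginalization one indeed obtains an honest conditional law given only $\lambda^{(x,y)}$, and that the $N\to\infty$ limit of conditional laws on finitely many coordinates is unambiguous — both of which are straightforward once one notes that the Gibbs property is intrinsically a local (finite-window) condition, so no genuine limiting argument beyond consistency of finite marginals is required.
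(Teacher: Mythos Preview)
Your proposal is correct and follows the same approach as the paper, which simply states that the result ``immediately follows from \Cref{prop:F_G_processes}.'' You have essentially unpacked that one-line proof: apply \eqref{eq:F_G_processes} to a down-right path with corner at $(x,y)$, observe the product factorizes across the two arms, and read off the $\mathfrak{F}$- and $\mathfrak{G}$-Gibbs properties together with conditional independence given $\lambda^{(x,y)}$; your care about extending the path to the axes and about the finite-window nature of the Gibbs condition is appropriate, though the paper evidently regards all of this as routine.
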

\begin{proof}
	Immediately follows from \Cref{prop:F_G_processes}.
\end{proof}

\begin{example}
	In the Schur case the distributions of 
	\Cref{prop:F_G_processes} and \Cref{cor:F_G_measure}
	become the Schur processes and the Schur measures
	introduced in \cite{okounkov2003correlation}
	and \cite{okounkov2001infinite},
	respectively (see also \cite{borodin2005eynard}).
	Early examples of random fields in this case were
	based on Robinson-Schensted-Knuth correspondences.
	Other approaches 
	were suggested more recently
	in, e.g., \cite{BorFerr2008DF},
	\cite{warrenwindridge2009some},
	\cite{BorodinPetrov2013NN}. See
	\Cref{sub:F_G_fields_references} for more historical discussion.
\end{example}

\subsection{Transition probabilities as bijectivizations of the skew Cauchy identity}
\label{sub:F_G_transitions}

Let us fix a skew Cauchy structure $(\mathfrak{F},\mathfrak{G})$,
parameters \eqref{eq:F_G_field_parameters_u_v},
and a Gibbs boundary condition $\boldsymbol \tau$.
\Cref{def:F_G_boundary_conditions}
\emph{does not characterize uniquely} a random field $\boldsymbol \lambda$
corresponding to this data.
Namely, consider any quadruple of neighboring
Young diagrams \eqref{eq:F_G_quadruplet_notation} 
(related as in \Cref{fig:quadruplet})
corresponding to $(x,y)\in \mathbb{Z}_{\ge0}^2$.
Given $\lambda,\mu$, condition \eqref{eq:F_G_conditioning}
characterizes the marginal distributions of 
$\varkappa$ and $\nu$ separately. 
One readily sees that picking 
any joint distribution of $(\varkappa,\nu)$ given $\lambda,\mu$
with required marginals $\varkappa$ and $\nu$
produces a valid random field $\boldsymbol \lambda$
(and this choice can be made independently
at every location $(x,y)$ in the quadrant).
Therefore, one has to employ 
additional considerations to pick 
random fields with interesting properties,
for example, 
possessing scalar Markovian marginals
(see \Cref{sub:F_G_scalar_marginals} below).

It is convenient to encode the choice of the joint distribution of $(\varkappa,\nu)$ given $\lambda$ and $\mu$
in an equivalent form of conditional probabilities. 
This leads to the following definition:
\begin{definition}
	\label{def:F_G_fwd_bwd_transition_probabilities}
	Let $u,v\in \mathbb{C}$ be such that
	$(u,v)\in \mathsf{Adm}$, $u\in \mathsf{P}$, $v\in \dot{\mathsf{P}}$.
	The functions 
	\begin{equation*}
		\mathsf{U}^{\mathrm{fwd}}_{u,v}(\varkappa\to\nu\mid \lambda,\mu),\qquad 
		\mathsf{U}^{\mathrm{bwd}}_{u,v}(\nu\to \varkappa\mid \lambda,\mu)
	\end{equation*}
	on quadruples of diagrams as in \Cref{fig:quadruplet}
	are called, respectively, 
	the \emph{forward} and the \emph{backward} 
	\emph{transition probabilities} if:
	\begin{enumerate}[label=\bf{\arabic*.}]
		\item The functions are nonnegative and 
			sum to one over the second argument:
			\begin{equation}
				\label{eq:F_G_U_sum_to_one}
				\begin{split}
					\sum_{\nu}\mathsf{U}_{u,v}^{\mathrm{fwd}}(\varkappa\to \nu\mid \lambda,\mu)=1
					& \qquad \textnormal{for all triples $\lambda\succ_1 \varkappa\mathop{\dot{\prec}_1}\mu$}
					,\\
					\sum_{\varkappa}\mathsf{U}_{u,v}^{\mathrm{bwd}}(\nu\to\varkappa\mid \lambda,\mu)=1
					&\qquad \textnormal{for all triples
					$\lambda \mathop{\dot{\prec}_1}\nu\succ_1\mu$.}
				\end{split}
			\end{equation}
			We will interpret $\mathsf{U}^{\mathrm{fwd}}(\varkappa\to\nu\mid \lambda,\mu)$ 
			as a conditional distribution of $\nu$ given $\lambda\succ_1\varkappa\mathop{\dot{\prec}_1} \mu$,
			and $\mathsf{U}^{\mathrm{bwd}}$ as the opposite conditional distribution.
		\item The functions satisfy the \emph{reversibility condition}
			\begin{equation}
				\label{eq:F_G_reversibility}
				\mathsf{U}_{u,v}^{\mathrm{fwd}}(\varkappa\to\nu\mid \lambda,\mu)
				\cdot
				\Pi(u;v)\mathfrak{F}_{\lambda/\varkappa}(u)\mathfrak{G}_{\mu/\varkappa}(v)
				=
				\mathsf{U}_{u,v}^{\mathrm{bwd}}(\nu\to\varkappa \mid \lambda,\mu)
				\cdot
				\mathfrak{F}_{\nu/\mu}(u)\mathfrak{G}_{\nu/\lambda}(v).
			\end{equation}
	\end{enumerate}
\end{definition}
Summing both sides 
of \eqref{eq:F_G_reversibility}
over $\varkappa$ and $\nu$ produces the skew
Cauchy identity \eqref{eq:F_G_single_skew_Cauchy}.
Therefore, choosing 
transition probabilities
$\mathsf{U}^{\mathrm{fwd}}_{u,v}$
and $\mathsf{U}^{\mathrm{bwd}}_{u,v}$
corresponds to a refinement (``\emph{bijectivization}'') 
of the skew Cauchy identity (for a general discussion of
bijectivization, see \Cref{sub:bij_summation_citation_from_BP2017} below).
In the following sections we build
bijectivizations of various concrete skew Cauchy identities
out of bijectivizations of the Yang-Baxter equations.

\begin{remark}
	Summing \eqref{eq:F_G_reversibility}
	over $\varkappa$, we get
	\begin{equation}
		\label{eq:F_G_reversibility_nonsymmetric_form}
		\Pi(u;v)
		\sum_{\varkappa}
		\mathsf{U}^{\mathrm{fwd}}_{u,v}(\varkappa\to\nu\mid \lambda,\mu)\cdot
		\mathfrak{F}_{\lambda/\varkappa}(u)\mathfrak{G}_{\mu/\varkappa}(v)
		=\mathfrak{F}_{\nu/\mu}(u)\mathfrak{G}_{\nu/\lambda}(v)
	\end{equation}
	This identity 
	was used in \cite{BorodinPetrov2013NN}
	and \cite{MatveevPetrov2014}
	as a starting point 
	to construct random fields associated with $q$-Whittaker functions. The advantage of 
	\eqref{eq:F_G_reversibility} 
	compared with
	\eqref{eq:F_G_reversibility_nonsymmetric_form} is that the former is more symmetric 
	and does not involve summation. 
\end{remark}

\begin{remark}[Borodin--Ferrari random fields]
	\label{rmk:F_G_Borodin-Ferrari_Fields}
	The
	existence of at least one random
	field corresponding 
	to a skew Cauchy structure
	$(\mathfrak{F},\mathfrak{G})$
	is evident from the above discussion.
	An explicit
	basic construction of a field
	was suggested in \cite{BorFerr2008DF}
	based on an idea of \cite{DiaconisFill1990}.
	Namely, if $\mathsf{U}^{\mathrm{fwd}}(\varkappa\to\nu\mid \lambda,\mu)$
	is \emph{independent} of $\varkappa$, then by \eqref{eq:F_G_reversibility_nonsymmetric_form}
	it must have the form
	\begin{equation*}
		\mathsf{U}_{u,v}^{\mathrm{fwd}}(\varkappa\to\nu\mid \lambda,\mu)=
		\frac{\mathfrak{F}_{\nu/\mu}(u)\mathfrak{G}_{\nu/\lambda}(v)}
		{\Pi(u;v)\sum_{\hat\varkappa}
		\mathfrak{F}_{\lambda/\hat\varkappa}(u)\mathfrak{G}_{\mu/\hat\varkappa}(v)}
	\end{equation*}
	if there exists $\hat \varkappa$ such that 
	$\lambda\succ_1\hat\varkappa\mathop{\dot{\prec}_1}\mu$.
	% We call the field corresponding to this choice of $\mathsf{U}^{\mathrm{fwd}}$
	% the \emph{Borodin--Ferrari field}.
	Though this construction of a random field is rather simple and
	works in full generality for an arbitrary skew Cauchy structure,
	it does not produce all known examples of fields 
	with scalar Markovian marginals. 
	See \Cref{sub:F_G_fields_references}
	below for more discussion.
\end{remark}

Using just the forward transition probabilities,
start with \emph{arbitrary} fixed (not necessarily Gibbs)
boundary values $\lambda^{(x,0)}=\tau^{(x,0)}$ and $\lambda^{(0,y)}=\tau^{(0,y)}$, $x,y\ge0$,
and define a family of random Young diagrams
$\{\lambda^{(x,y)}\}$ indexed by the quadrant as follows.
By induction on $x+y=n$, assume that the Young diagrams
with $x+y\le n-1$ are determined.
Then, independently for each $(x,y)$ with $x+y=n$ and $x,y\ge1$
sample $\lambda^{(x,y)}$
having the distribution 
$\mathsf{U}^{\mathrm{fwd}}_{u_y,v_x}(\lambda^{(x-1,y-1)}\to \lambda^{(x,y)}\mid \lambda^{(x,y+1)},\lambda^{(x+1,y)})$,
where $\lambda^{(x-1,y-1)},\lambda^{(x,y+1)}$ and $\lambda^{(x+1,y)}$
are already determined. 
The next proposition immediately follows from the definitions:

\begin{proposition}
	\label{prop:F_G_from_U_to_fields}
	If the boundary condition $\boldsymbol \tau$
	in the above construction is Gibbs, then
	the resulting collection of random Young diagrams $\{\lambda^{(x,y)}\}$,
	$(x,y)\in \mathbb{Z}_{\ge0}$
	forms a random field in the sense of \Cref{def:F_G_field}.
\end{proposition}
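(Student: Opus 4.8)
The plan is to show that the random collection $\{\lambda^{(x,y)}\}$ produced by the inductive sampling procedure satisfies the three conditions of \Cref{def:F_G_field}. Condition~2 is immediate: we start from the prescribed boundary values $\lambda^{(x,0)}=\tau^{(x,0)}$, $\lambda^{(0,y)}=\tau^{(0,y)}$, and never alter them. Condition~1 follows once Condition~3 is established, exactly as noted after \Cref{def:F_G_field}: the relation $\mathsf{U}^{\mathrm{fwd}}_{u,v}(\varkappa\to\nu\mid\lambda,\mu)\ne0$ already forces $\lambda\mathop{\dot\prec_1}\nu\succ_1\mu$ via \eqref{eq:F_G_reversibility} and the support properties \eqref{eq:F_G_prec_relations}, so a diagram sampled from $\mathsf{U}^{\mathrm{fwd}}$ automatically respects the interlacing of $\mathbf{1}$. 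Thus the only real content is Condition~3, the two displayed conditional-probability formulas in \eqref{eq:F_G_conditioning}.

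First I would set up an induction on the anti-diagonal $n=x+y$. The inductive hypothesis is that, for every down-right path passing through the ``staircase'' bounding the region $\{x+y\le n-1\}$, the joint law of the diagrams along that path has the product form \eqref{eq:F_G_processes} from \Cref{prop:F_G_processes}, with the stated normalization; equivalently (and this is the cleaner formulation to carry through the induction) that the law of $\{\lambda^{(m,n')}:m+n'\le n-1\}$ is the one obtained by multiplying the boundary Gibbs weight by $\prod\mathfrak{F}\cdot\prod\mathfrak{G}$ over the already-revealed quadruples, normalized. The base case $n\le 1$ is precisely the Gibbs property of $\boldsymbol\tau$ from \Cref{def:F_G_boundary_conditions}. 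For the inductive step I would add the $(x,y)$ with $x+y=n$, $x,y\ge1$, one at a time (they are sampled independently given the previous anti-diagonals), each via $\mathsf{U}^{\mathrm{fwd}}_{u_y,v_x}(\varkappa\to\lambda^{(x,y)}\mid\lambda^{(x,y+1)},\lambda^{(x+1,y)})$ with $\varkappa=\lambda^{(x-1,y-1)}$. Here I must be slightly careful about what ``$\lambda^{(x,y+1)}$ and $\lambda^{(x+1,y)}$ are already determined'' means: $\lambda^{(x,y+1)}$ lies on anti-diagonal $n+1$ and $\lambda^{(x+1,y)}$ on anti-diagonal $n+1$ as well; they are \emph{not} yet sampled. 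The resolution is that the sampling is organized so that at step $n$ we use as conditioning the diagrams on the \emph{boundary staircase} $\mathrm{NW}_{(x,y+1)}\cup\mathrm{SE}_{(x+1,y)}$, which for the inductive construction are really the already-known diagrams $\lambda^{(0,y'')}$ and $\lambda^{(x'',0)}$ together with whatever has been filled in; one verifies that $\mathsf{U}^{\mathrm{fwd}}$ depends only on $(\varkappa,\lambda,\mu)$, and that marginalizing the joint law over the not-yet-revealed diagrams leaves exactly the pair $(\lambda,\mu)$ relevant. Concretely: given the diagrams on anti-diagonal $\le n-1$, the conditional law of $\lambda^{(x,y)}$ depends only on $\varkappa=\lambda^{(x-1,y-1)}$, and the right-hand equality in \eqref{eq:F_G_conditioning} is obtained by combining the inductive product form with \eqref{eq:F_G_reversibility} summed appropriately; the left-hand equality (the Markov/locality statement) then follows from the product structure, since conditioning on the larger $\sigma$-algebra $\mathrm{NW}_{(x,y+1)}\cup\mathrm{SE}_{(x+1,y)}$ adds only factors that do not involve $\varkappa$ or $\nu$ and hence cancel in the conditional probability.

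The cleanest route, and the one I would actually write, avoids re-deriving everything and instead observes that the inductive construction is tautologically a bijectivization: by \eqref{eq:F_G_U_sum_to_one} the forward transition is a genuine Markov kernel, so the procedure defines \emph{some} random field-like object; the reversibility relation \eqref{eq:F_G_reversibility}, summed over $\varkappa$, gives \eqref{eq:F_G_reversibility_nonsymmetric_form}, which is exactly the consistency identity guaranteeing that pushing the Gibbs boundary weight forward under $\mathsf{U}^{\mathrm{fwd}}$ reproduces the correct weight on the next staircase. Iterating this along anti-diagonals and using that the boundary is Gibbs yields that every intermediate staircase carries the product measure, which is Condition~3 together with \ref{enum:F_G_field_1}–\ref{enum:F_G_field_3}. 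So the proposition reduces to: (i) $\mathsf{U}^{\mathrm{fwd}}$ is a Markov kernel — immediate from \eqref{eq:F_G_U_sum_to_one}; (ii) one forward step intertwines consecutive-staircase product measures — immediate from \eqref{eq:F_G_reversibility_nonsymmetric_form}; (iii) the base staircase is the product measure — this is the hypothesis that $\boldsymbol\tau$ is Gibbs. I expect the main obstacle to be purely bookkeeping: correctly phrasing the $\sigma$-algebra at each anti-diagonal so that the Markov property \eqref{eq:F_G_conditioning} (left equality) is manifest rather than merely plausible — in particular checking that the diagrams on which one conditions in the construction generate the same information, after marginalization, as conditioning on $\lambda^{(x,y+1)}$ and $\lambda^{(x+1,y)}$. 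Everything else is a direct unwinding of \eqref{eq:F_G_reversibility}, \eqref{eq:F_G_branching}, and \Cref{def:F_G_boundary_conditions}, so I would keep the write-up to a short paragraph: ``immediately follows from the definitions,'' spelled out via the two bullet points above.
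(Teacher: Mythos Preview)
Your approach is correct and considerably more detailed than the paper's own proof, which consists of the single sentence ``immediately follows from the definitions.'' Your final paragraph --- (i) $\mathsf{U}^{\mathrm{fwd}}$ is a Markov kernel by \eqref{eq:F_G_U_sum_to_one}; (ii) one forward step pushes the product measure on a down-right staircase to the product measure on the next staircase, by \eqref{eq:F_G_reversibility_nonsymmetric_form}; (iii) the base staircase is product by the Gibbs hypothesis on $\boldsymbol\tau$ --- is exactly the right unwinding, and is presumably what the paper intends by ``the definitions.''

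The place where you got tangled is not a gap in your reasoning but a typo in the paper. The construction preceding the proposition reads ``sample $\lambda^{(x,y)}$ having the distribution $\mathsf{U}^{\mathrm{fwd}}_{u_y,v_x}(\lambda^{(x-1,y-1)}\to \lambda^{(x,y)}\mid \lambda^{(x,y+1)},\lambda^{(x+1,y)})$.'' Comparing with the quadruplet notation \eqref{eq:F_G_quadruplet_notation}, the conditioning diagrams must be $\lambda^{(x-1,y)}$ and $\lambda^{(x,y-1)}$ (both on anti-diagonal $n-1$, hence already determined), not $\lambda^{(x,y+1)}$ and $\lambda^{(x+1,y)}$. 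Once this is corrected, the issue you flag --- that the conditioning diagrams appear to lie on anti-diagonal $n+1$ --- disappears, and your ``resolution'' paragraph can simply be deleted. The induction on anti-diagonals then goes through cleanly: at step $n$ you sample each $\lambda^{(x,y)}$ with $x+y=n$ from $\mathsf{U}^{\mathrm{fwd}}$ conditioned on its three already-known neighbours at $(x-1,y-1)$, $(x-1,y)$, $(x,y-1)$, and the reversibility identity shows this reproduces the correct product weight on the new staircase.
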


Therefore, random fields associated with a skew Cauchy structure
$(\mathfrak{F}, \mathfrak{G})$
correspond to forward transition probabilities, and vice versa.
Moreover, the probabilities $\mathsf{U}^{\mathrm{fwd}}_{u,v}$
allow to construct a joint distribution on Young diagrams
$\{\lambda^{(x,y)}\}$ indexed by points of the quadrant
$\mathbb{Z}_{\ge0}^{2}$
starting from arbitrary boundary values. However, the Gibbs property on the
boundary is needed for \Cref{prop:F_G_processes} describing joint distributions
of the Young diagrams along down-right paths.
We will not consider non-Gibbs boundary conditions in the present paper.

\subsection{Scalar marginals}
\label{sub:F_G_scalar_marginals}

Let $\boldsymbol \lambda$ be a random field in the sense of \Cref{def:F_G_field}
and $\mathsf{h}\colon \mathbb{Y}\to \mathbb{Z}$ be a function.
When the scalar random variables $\{\mathsf{h}(\lambda^{(x,y)})\}$
indexed by $(x,y)\in \mathbb{Z}_{\ge0}^{2}$
evolve (in the sense of \emph{forward} steps)
independently of the rest of $\boldsymbol \lambda$, 
we call $\mathsf{h}(\boldsymbol \lambda)$ a 
\emph{scalar} (\emph{Markovian}) \emph{marginal} of a field $\boldsymbol \lambda$.

In detail, this independence means the following.
For a finitely supported function $F$ on $\mathbb{Z}$ 
we can write for any field $\boldsymbol \lambda$:
\begin{equation}
	\label{eq:F_G_U_scalar_transition_prob_factorization}
	\sum_{\nu\in \mathbb{Y}}
	F(\mathsf{h}(\nu))\,
	\mathsf{U}^{\mathrm{fwd}}_{u,v}(\varkappa\to\nu\mid \lambda,\mu)
	=
	\sum_{n\in \mathbb{Z}}
	F(n)
	\Bigg(
		\sum_{\nu\colon \mathsf{h}(\nu)=n}
		\mathsf{U}^{\mathrm{fwd}}_{u,v}(\varkappa\to \nu\mid \lambda,\mu)
	\Bigg).
\end{equation}
We say that the random field $\boldsymbol \lambda$ is
\emph{$\mathsf{h}$-adapted} if 
the quantity in the parentheses above
\begin{equation}
	\label{eq:F_G_U_scalar_transition_prob}
	\mathsf{U}^{[\mathsf{h}]}_{u,v}(k\to n\mid \ell,m)
	:=
	\sum_{\nu\colon \mathsf{h}(\nu)=n}
	\mathsf{U}^{\mathrm{fwd}}_{u,v}(\varkappa\to \nu\mid \lambda,\mu)
\end{equation}
depends on $\lambda,\varkappa,\mu$
only through 
$\ell=\mathsf{h}(\lambda)$, 
$k=\mathsf{h}(\varkappa)$, 
and $m=\mathsf{h}(\mu)$. 
The function $\mathsf{U}^{[\mathsf{h}]}_{u,v}$ is nonnegative
and 
$\sum_{n\in \mathbb{Z}}
\mathsf{U}^{[\mathsf{h}]}_{u,v}(k\to n\mid \ell,m)=1$
for all 
$\ell,k,m$ such that there 
exists at least one triple 
$\lambda\succ_1 \varkappa \mathop{\dot{\prec}_1} \mu$.
In words, to sample $\nu$ knowing $\lambda,\varkappa,\mu$
we first look at $\ell,k,m$ and sample $n=\mathsf{h}(\nu)$
independently of any other information about the diagrams $\lambda,\varkappa,\mu$, 
and then sample the rest of the diagram $\nu$.

For a $\mathsf{h}$-adapted field $\boldsymbol \lambda$, the joint distribution of the 
scalar quantities $\mathsf{h}(\lambda^{(x,y)})$, $(x,y)\in \mathbb{Z}_{\ge0}^{2}$
(forming the scalar marginal of $\boldsymbol \lambda$ corresponding to $\mathsf{h}$),
can be described using 
\eqref{eq:F_G_U_scalar_transition_prob}
as forward transition probabilities:
\begin{multline*}
	\mathop{\mathrm{Prob}}
	\left( 
		\mathsf{h}(\lambda^{(x+1,y+1)})=n
		\,\Big\vert \,
		\mathsf{h}(\lambda^{(x,y+1)})=\ell,
		\mathsf{h}(\lambda^{(x,y)})=k,
		\mathsf{h}(\lambda^{(x+1,y)})=m,
	\right)
	\\=
	\mathsf{U}^{[\mathsf{h}]}_{u_{y+1},v_{x+1}}(k\to n\mid \ell,m).
\end{multline*}
Note that while
for a scalar marginal $\mathsf{h}$
the forward transition probabilities 
factorize as in 
\eqref{eq:F_G_U_scalar_transition_prob_factorization}--\eqref{eq:F_G_U_scalar_transition_prob},
the backward ones do not have to factorize in the same way.

\begin{remark}
	One can take 
	an arbitrary set instead of $\mathbb{Z}$ as the target of $\mathsf{h}$
	as this is essentially the index set of equivalence classes of Young diagrams. 
	In the rest of the paper we mostly focus on 
	integer-valued scalar Markovian marginals, but also mention their
	higher-dimensional (multilayer) extensions obtained by 
	refining these equivalence classes.
\end{remark}

Scalar marginals in the Schur case (our running example) are discussed in the
next \Cref{sub:F_G_fields_references}.

\subsection{Existing constructions of random fields}
\label{sub:F_G_fields_references}

This subsection is a brief review of known 
random fields associated with 
skew Cauchy structures 
corresponding to various families of symmetric functions
(see \Cref{fig:symm_functions_scheme} for the hierarchy of symmetric functions
we mention below).

Constructing probability measures on Young diagrams related to the Schur symmetric functions
by means of Markov dynamics on Young tableaux goes back at least to 
\cite{Vershik1986}. The first such mechanism employed in many 
well-known developments in Integrable Probability
starting from \cite{baik1999distribution} and
\cite{johansson2000shape} is the Robinson-Schensted-Knuth (RSK) correspondence.
In particular, the RSK gives rise to a 
random field of Young diagrams associated with Schur functions
whose scalar marginal field is identified with the 
Totally Asymmetric Simple Exclusion Process (TASEP).\footnote{To make a precise identification with the 
standard continuous-time TASEP one has to perform a Poisson-like limit transition which makes
one of the field's discrete coordinates $\mathbb{Z}_{\ge0}$ into continuous $\mathbb{R}_{\ge0}$.
If one makes both coordinates continuous, then the field's scalar marginal can be linked
to the distribution of the length of the longest increasing subsequence of a random permutation.
Besides certain simplification of stochastic mechanisms,
such continuous limits do not introduce any significant changes
into the structure of the fields.
In the present paper we focus only on the fully discrete picture.} 
The distributions in TASEP started from a special initial configuration
called ``step'' (when the particles occupy the negative half-line while the positive half-line is empty)
are then related to the Schur measures and processes introduced in 
\cite{okounkov2001infinite}, 
\cite{okounkov2003correlation}.
The corresponding field of random Young diagrams in this case has step-type 
Gibbs boundary condition in the sense of our \Cref{def:F_G_step_type_boundary}.
Further applications of RSK and its tropical version to particle systems, last passage percolation models,
and random polymers were developed in 
\cite{OConnell2003},
\cite{OConnell2003Trans},
\cite{BBO2004}, \cite{Chhaibi2013},
\cite{Oconnell2009_Toda},
\cite{COSZ2011},
\cite{OSZ2012}, and related works.

Another mechanism of constructing random fields associated with Schur polynomials
was suggested in \cite{BorFerr2008DF}, see also \cite{Borodin2010Schur}. 
(We outlined this construction in \Cref{rmk:F_G_Borodin-Ferrari_Fields}.)
This mechanism was later employed in 
\cite{BorodinCorwin2011Macdonald}
to discover the (continuous-time) $q$-deformation of the TASEP as a scalar marginal in a field
associated with the $q$-Whittaker functions.
The integrable structure of the $q$-TASEP is based on the 
$q$-difference operators diagonal in the $q$-Whittaker polynomials
(these are the $t=0$ 
Macdonald difference operators \cite[Chapter VI.3]{Macdonald1995}).
It soon became apparent, however, that Borodin--Ferrari random fields
cannot produce all known integrable stochastic particle systems
on the line as their Markovian marginals. Early examples of stochastic particle 
systems not coming out of Borodin--Ferrari fields include the 
discrete-time $q$-TASEPs suggested in \cite{BorodinCorwin2013discrete}. 

This issue motivated the search for other constructions of 
random fields, and resulted in discovery of 
$q$-Whittaker and Hall-Littlewood randomizations of the 
RSK correspondence 
\cite{OConnellPei2012}, 
\cite{BorodinPetrov2013NN},
\cite{BufetovPetrov2014}, 
\cite{MatveevPetrov2014},
\cite{BufetovMatveev2017}.
On the $q$-Whittaker side, this 
brought new $q$-TASEPs and $q$-PushTASEPs
whose distributions are expressed through the $q$-Whittaker measures and processes.
The Hall-Littlewood side brought the integrable structure of Hall-Littlewood measures
and processes to the stochastic six vertex model and the ASEP (i.e, TASEP with left and right jumps allowed).

In parallel to these developments a new extension of the 
$q$-TASEP called the $q$-Hahn TASEP was invented
\cite{Povolotsky2013},
\cite{Corwin2014qmunu}. 
Further investigation of this process has 
led to the systematic development of the
spin Hall-Littlewood (sHL) symmetric rational functions
and the associated stochastic vertex models
\cite{BCPS2014},
\cite{Borodin2014vertex},
\cite{CorwinPetrov2015},
\cite{BorodinPetrov2016_Hom_Lectures},
\cite{BorodinPetrov2016inhom}.
In particular, the Yang-Baxter equation for the 
higher spin six vertex model
implies the skew Cauchy identity for the sHL functions. 
Recently, the spin $q$-Whittaker (sqW) symmetric polynomials
were introduced in \cite{BorodinWheelerSpinq}
as the dual complement (which for $s=0$ reduces to the $q\leftrightarrow t$ Macdonald involution) 
of the sHL ones.

These new skew Cauchy structures called for extending the random field constructions which would 
bring interesting scalar marginals.
In 
\cite{BufetovPetrovYB2017}
this was performed in the sHL setting 
based on a new idea of bijectivization of the Yang-Baxter equation 
(we recall it in \Cref{sec:YB_fields_through_bijectivisation} below).
This idea allowed to bypass technical 
difficulties associated with randomizing the RSKs
and, on the other hand, by design has produced a
scalar marginal of the sHL Yang-Baxter field
which is a new dynamical extension of the stochastic six vertex 
model.\footnote{Similar stochastic vertex models 
from Yang-Baxter equations are developed in \cite{ABB2018stochasticization},
but without connecting them to random fields or symmetric functions.}
In this paper we complete the picture by 
constructing Yang-Baxter fields associated
with two other skew Cauchy structures corresponding to the
sqW/sHL and the 
sqW/sqW skew Cauchy identities
(see \Cref{sec:new_three_fields}),
and 
find that their 
scalar marginals are related to 
the stochastic higher spin six vertex model of
\cite{CorwinPetrov2015}, \cite{BorodinPetrov2016inhom}
and to the $q$-Hahn PushTASEP recently 
introduced in \cite{CMP_qHahn_Push}. 
In \Cref{sec:diff_op}
we employ the former connection to discover new
difference operators
acting diagonally on sqW
or stable sHL functions.

\begin{remark}
	One can also define the notion of a random field of Young diagrams associated
	with Macdonald or Jack symmetric functions since they, too, satisfy skew Cauchy identities. 
	However, due to the more complicated ``nonlocal'' structure of the Jack and Macdonald Pieri rules 
	compared to the $q$-Whittaker or Hall-Littlewood 
	ones,\footnote{The Pieri coefficients of the $q$-Whittaker and Hall-Littlewood 
		functions involve products of only nearest neighbor terms (properly understood), while in the 
	Jack and Macdonald cases the products are over all pairs of indices.}
	it seems unlikely that there exist Jack or Macdonald random fields with scalar
	Markovian marginals. In this paper we do not focus on this question.
\end{remark}

\section{\texorpdfstring{Spin Hall-Littlewood and spin $q$-Whittaker functions}{Spin Hall-Littlewood and spin q-Whittaker functions}} 
\label{sec:summary_sHL_sqW}

In this section we review the main properties of the stable spin
Hall-Littlewood and spin $q$-Whittaker symmetric functions
\cite{Borodin2014vertex}, \cite{BorodinWheelerSpinq} which lead to skew
Cauchy structures. These functions are defined as partition functions of
certain ensembles of lattice paths realized through a vertex model formalism.
We fix the main ``\emph{quantization}'' parameter
$q\in(0,1)$. 
In contrast with \Cref{fig:symm_functions_scheme},
throughout the text we use $q$ to denote the quantization parameter
in both spin Hall-Littlewood and spin $q$-Whittaker functions, 
which is convenient when considering 
Yang-Baxter fields based on both families.

\subsection{Young diagrams as arrow configurations}

We represent Young diagrams $\lambda=(\lambda_1\ge \ldots\ge \lambda_{\ell(\lambda)}>0 )$ as
configurations of vertical arrows on $\mathbb{Z}_{\ge0}$.
Let $\lambda$ be written in the multiplicative notation as 
$\lambda=1^{l_1}2^{l_2}\ldots $, where $l_i$ is the number of parts
of $\lambda$ which are equal to $i$. By definition, the arrow configuration
corresponding to $\lambda$, denoted by $|\lambda \rangle$,
contains $l_i$ vertical arrows at location $i$. 
The number of vertical arrows at $0$ is 
assumed infinite which reflects the fact that one can append Young diagrams
by zeros without changing them. 
See
\Cref{fig:arrow_lambda}, left, for an illustration.

\begin{figure}[htpb]
	\centering
	\includegraphics{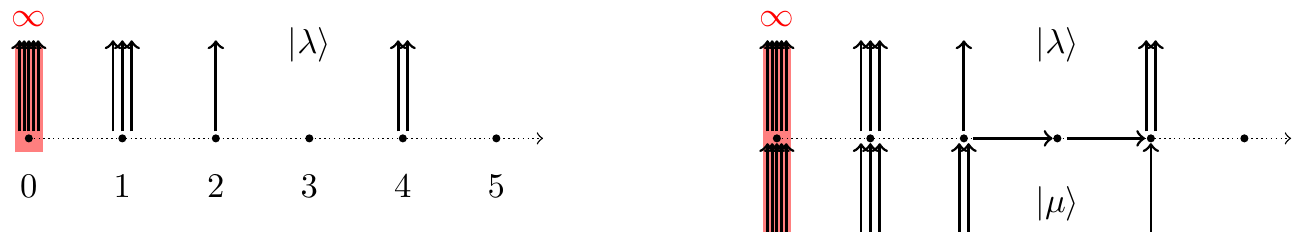}
	\caption{Left: Configuration $|\lambda \rangle$ of vertical arrows corresponding to 
		the Young diagram $\lambda=(4,4,2,1,1,1)$.
		Right: Interlacing of $\lambda$ with $\mu=(4,2,2,1,1,1)$.}
	\label{fig:arrow_lambda}
\end{figure}

\subsection{Stable spin Hall-Littlewood functions} 
\label{sub:sHL}

The first collection of vertex weights we work with is given in 
\Cref{fig:table_w}. Along with $q$,
these weights depend on two quantities $u,s\in \mathbb{C}$, which are called
the \emph{spectral} and the \emph{spin} parameters, respectively.
The weights $w_{u,s}$ satisfy the Yang-Baxter equation, see 
\Cref{app:YBE}.

\begin{figure}[htbp]
  \centering
  \begin{tabular}{c||c|c|c|c}
    \begin{tikzpicture}[baseline=0]
    	\draw[fill] (0,0) circle [radius=0.025];
        \fill[red!30] (-0.04,-0.05) rectangle (0.04,-0.5) node[black, below]{\scriptsize{$i_1$}};
        \fill[red!30] (0.04,0.05) rectangle (-0.04,0.5) node[black,above]{ \scriptsize{$i_2$} };
        \fill[red!30] (0.5,0.04) rectangle (0.05,-0.04) node[black,xshift=0.67cm]{ \scriptsize{$j_2$} };
        \fill[red!30] (-0.05,0.04) rectangle (-0.5,-0.04) node[black,left]{ \scriptsize{$j_1$} };
    	\addvmargin{1mm}
    \end{tikzpicture}
     & \begin{tikzpicture}[baseline=0]
    	\draw[fill] (0,0) circle [radius=0.025];
		\draw [red] (0.04,-0.5) -- (0.04,-0.05);
    	\draw [red] (0,-0.5) -- (0,-0.05);
    	\draw [red] (-0.04,-0.5) -- (-0.04,-0.05);
        \node [below] at (0,-0.5){\scriptsize{$g$}};
        \draw [dotted] (-0.5,0) -- (-0.1,0);
        \draw [dotted] (0.1,0) -- (0.5, 0);
        \draw [red] (0.04,0.05) -- (0.04,0.5);
        \draw [red] (0,0.05) -- (0,0.5);
        \draw [red] (-0.04,0.05) -- (-0.04,0.5);
        \node [above] at (0,0.5){\scriptsize{$g$}};
        \addvmargin{1mm}
  \end{tikzpicture}  &  \begin{tikzpicture}[baseline=0]
        \draw[fill] (0,0) circle [radius=0.025];
        \draw [red] (-0.04,-0.5) -- (-0.04,-0.05);
        \draw [red] (0,-0.5) -- (0,-0.05);
        \draw [red] (0.04,-0.5) -- (0.04,-0.05);
        \node [below] at (0,-0.5){\scriptsize{$g+1$}};
        \draw [dotted] (-0.5,0) -- (-0.1,0);
        \draw [red] (0.05,0) -- (0.5, 0);
        \draw [red] (0.025,0.05) -- (0.025,0.5);
        \draw [red] (-0.025,0.05) -- (-0.025,0.5);
        \node [above] at (0,0.5){\scriptsize{$g$}};
        \addvmargin{1mm}
  \end{tikzpicture} & \begin{tikzpicture}[baseline=0]
        \draw[fill] (0,0) circle [radius=0.025];
        \draw [red] (0,-0.5) -- (0,-0.05);
        \draw [red] (0.04,-0.5) -- (0.04,-0.05);
        \draw [red] (-0.04,-0.5) -- (-0.04,-0.05);
        \node [below] at (0,-0.5){\scriptsize{$g$}};
        \draw [red] (-0.5,0) -- (-0.05,0);
        \draw [red,] (0.05,0) -- (0.5, 0);
        \draw [red] (0,0.05) -- (0,0.5);
        \draw [red] (0.04,0.05) -- (0.04,0.5);
        \draw [red] (-0.04,0.05) -- (-0.04,0.5);
        \node [above] at (0,0.5){\scriptsize{$g$}};
        \addvmargin{1mm}
  \end{tikzpicture} & \begin{tikzpicture}[baseline=0]
        \draw[fill] (0,0) circle [radius=0.025];
        \draw [red] (-0.025,-0.5) -- (-0.025,-0.05);
        \draw [red] (0.025,-0.5) -- (0.025,-0.05);
        \node [below] at (0.1,-0.5){\scriptsize{$g$}};
        \draw [red] (-0.5,0) -- (-0.05,0);
        \draw [dotted] (0.1,0) -- (0.5, 0);
        \draw [red] (0,0.05) -- (0,0.5);
        \draw [red] (0.04,0.05) -- (0.04,0.5);
        \draw [red] (-0.04,0.05) -- (-0.04,0.5);
        \node [above] at (0,0.5){\scriptsize{$g+1$}};
        \addvmargin{1mm}
  \end{tikzpicture}\\
    \hhline{-----}
    \begin{minipage}{3cm}
    \centering
    \vspace{.1cm}
    \small{$w_{u,s}(i_1,j_1; i_2,j_2)$}
    \vspace{.1cm}
    \end{minipage}
    &
    \begin{minipage}{2cm}
    \centering
    \vspace{.1cm}
    $\frac{1- s u q^{g} }{1- s u}$
    \vspace{.1cm}
    \end{minipage}
    &
    \begin{minipage}{2cm}
    \centering
    \vspace{.1cm}
    $\frac{ u (1- s^2 q^g)}{1- s u}$
    \vspace{.1cm}
    \end{minipage}
    &
    \begin{minipage}{2cm}
    \centering
    \vspace{.1cm}
    $\frac{ u - s q^{g}}{1 - s u}$
    \vspace{.1cm}
    \end{minipage}
    & 
    \begin{minipage}{2cm}
    \centering
    \vspace{.1cm}
    $\frac{1- q^{g+1}}{1- s u}$
    \vspace{.1cm}
    \end{minipage}
    \\
  \end{tabular}
  \caption{In the top row we see all acceptable configurations of arrows entering 
	and exiting a vertex; below we reported the corresponding vertex weights $w_{u,s}(i_1, j_1; i_2, j_2)$.} 
	\label{fig:table_w}
\end{figure}
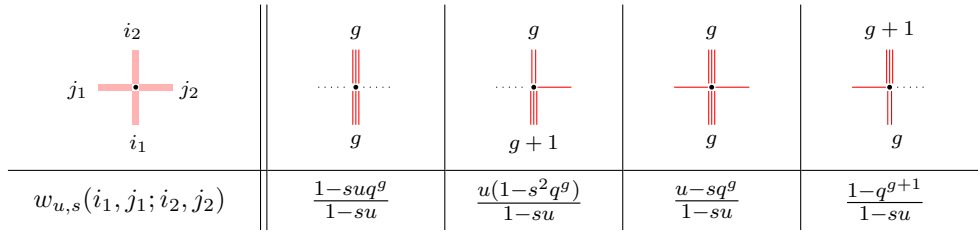

For vertices at the left
boundary we set
\begin{equation} 
	\label{eq:w_boundary}
	w_{u,s} \biggl(\begin{tikzpicture}[baseline=-2.5pt]
    	\draw[fill] (0,0) circle [radius=0.025];
			\node at (0,.3) {$\infty$};
			\node at (0,-.3) {$\infty$};
        \draw [red] (0.1,0) -- (0.5, 0);
        \addvmargin{1mm}
        \addhmargin{1mm}
  \end{tikzpicture} \biggr) = w_{u,s}(\infty,\varnothing; \infty,1) =u, 
	\qquad 
	w_{u,s} \biggl(\begin{tikzpicture}[baseline=-2pt]
    	\draw[fill] (0,0) circle [radius=0.025];
        \draw [dotted] (0.1,0) -- (0.5, 0);
			\node at (0,.3) {$\infty$};
			\node at (0,-.3) {$\infty$};
        \addvmargin{1mm}
        \addhmargin{1mm}
  \end{tikzpicture} \biggr) = w_{u,s}(\infty,\varnothing; \infty,0) =1,
 %  \\
 % & \qquad \qquad \qquad \qquad \qquad w_{u_t,s_0} \left(\begin{tikzpicture}[baseline=0]
 %      \draw[fill] (0,0) circle [radius=0.025];
 %    %\draw [red] (0.04,-0.5) -- (0.04,-0.05);
 %      %\draw [dotted] (0,-0.5) -- (0,-0.05);
 %      %\draw [red] (-0.04,-0.5) -- (-0.04,-0.05);
 %        %\node [below] at (0,-0.5){\scriptsize{$g$}};
 %        \draw [dotted] (-0.5,0) -- (-0.1,0);
 %        \draw [dotted] (0.1,0) -- (0.5, 0);
 %        %\draw [red] (0.04,0.05) -- (0.04,0.5);
 %        \draw [dotted] (0,0.05) -- (0,0.5);
 %        %\draw [red] (-0.04,0.05) -- (-0.04,0.5);
 %        %\node [above] at (0,0.5){\scriptsize{$g$}};
 %        \addvmargin{1mm}
 %        \addhmargin{2mm}
 %  \end{tikzpicture} \right ) = w_{u_t,s_0}(\varnothing,0; 0,0) =1.
\end{equation}
Both in \Cref{fig:table_w} and in \eqref{eq:w_boundary}, 
we attribute weight zero to all configurations which are not listed.
In particular, the following \emph{arrow conservation property} holds:
\begin{equation}
	\label{eq:sHL_arrow_conservation}
	w_{u,s}(i_1,j_1;i_2,j_2)=0
	\qquad \textnormal{unless $i_1+j_1=i_2+j_2$}.
\end{equation}

\begin{definition}[Interlacing]
	\label{def:interlacing}
	Fix $\mu,\lambda\in \mathbb{Y}$.
	We say that $\mu$ and $\lambda$ \emph{interlace}
	(notation $\mu\prec \lambda$) if there exists
	a configuration of finitely many horizontal arrows between $|\mu \rangle$ and $|\lambda \rangle$
	as in \Cref{fig:arrow_lambda}, right, such that the arrow
	conservation property holds at each vertex.\footnote{
	If such a horizontal arrow configuration 
	exists, then it is unique. The restriction that there are only finitely
	many horizontal arrows ensures that the configuration on the 
	far right is empty.}
	In detail,
	$\mu\prec\lambda$
	if either of the two hold:
	\begin{equation}
		\label{eq:interlace_sHL_def}
		\begin{split}&
			\ell(\lambda)=\ell(\mu)\text{ and }
			\mu_{\ell(\mu)} 
			\le 
			\lambda_{\ell(\lambda)}
			\le \ldots
			\le
			\lambda_2\le \mu_1\le \lambda_1,
			\\&
			\ell(\lambda)=\ell(\mu)+1\text{ and }
			\lambda_{\ell(\lambda)}
			\le \mu_{\ell(\mu)} 
			\le 
			\lambda_{\ell(\lambda)-1}
			\le \ldots
			\le
			\lambda_2\le \mu_1\le \lambda_1.
		\end{split}
	\end{equation}
	Note that for each $\lambda\in \mathbb{Y}$, 
	the number of $\mu$ such that $\mu\prec \lambda$ 
	is finite.
\end{definition}

\begin{definition}
	\label{def:ssHL}
	For $\mu,\lambda\in \mathbb{Y}$ with $\mu\prec \lambda$,
	a \emph{stable spin Hall-Littlewood function} in one variable,
	denoted by $\mathsf{F}_{\lambda/\mu}(u)$,
	is defined as the total weight (=~product of individual vertex weights) of the 
	unique configuration of arrows 
	between $|\mu \rangle$ and $|\lambda \rangle$ as in \Cref{fig:arrow_lambda}, right.
	Here the individual vertex weights are the $w_{u,s}$'s
	from \Cref{fig:table_w}, and the left boundary weights are \eqref{eq:w_boundary}.
	If $\mu\not\prec\lambda$, we set $\mathsf{F}_{\lambda/\mu}(u)=0$.

	In the sequel we will mostly omit the word ``stable'' (cf. \Cref{sub:rmk_non_stable} 
	on connections to the non-stable functions which were originally defined
	in \cite{Borodin2014vertex}), and will also abbreviate the name 
	to simply the \emph{sHL functions}.
\end{definition}

Define the functions with multiple variables inductively via the branching rule
(cf. \eqref{eq:F_G_branching}):
\begin{equation} \label{eq:F_stable_branching_rule}
	\mathsf{F}_{\lambda/\mu}(u_1, \dots, u_k ) = \sum_{\nu}
	\mathsf{F}_{\lambda/ \nu} (u_k)\, \mathsf{F}_{\nu / \mu}(u_1, \dots, u_{k-1}). 
\end{equation}
That is, $\mathsf{F}_{\lambda/\mu}(u_1,\ldots,u_k )$ is a partition function 
of ensembles of up-right paths as in \Cref{fig:paths_up_right}, left, 
with height $k$, spectral parameters $u_1,\ldots,u_k $
corresponding to horizontal slices, 
and boundary conditions $|\mu \rangle$, $0^\infty$, 
$|\lambda \rangle$ and empty at the bottom, left, up, and right, respectively.
The fact that the paths are directed up-right 
corresponds to the arrow conservation property \eqref{eq:sHL_arrow_conservation}.
Note that 
$\mathsf{F}_{\lambda/\mu}(u_1,\ldots,u_k )$
vanishes unless $0\le \ell(\lambda)-\ell(\mu)\le k$, 
but this condition is not sufficient.

The Yang-Baxter equation implies that
$\mathsf{F}_{\lambda/\mu}(u_1,\ldots,u_k )$
is symmetric with respect to permutations of the $u_i$'s, see, e.g.,
\cite[Theorem 3.6]{Borodin2014vertex}.
These functions also satisfy the \emph{stability property}
\begin{equation}\label{eq:sHL_stability}
	\mathsf{F}_{\lambda/\mu}(u_1,\ldots,u_k,0 )=
	\mathsf{F}_{\lambda/\mu}(u_1,\ldots,u_k ).
\end{equation}
For $\mu=\varnothing$, the stable spin Hall-Littlewood functions
admit an explicit 
symmetrization
formula \cite[(45)]{BorodinWheelerSpinq}
which we recall and use in \Cref{sec:diff_op}.
When $s=0$, the stable spin Hall-Littlewood functions
become the usual Hall-Littlewood symmetric polynomials
\cite[Chapter III]{Macdonald1995}.

\begin{figure}[htbp]
\centering
\includegraphics{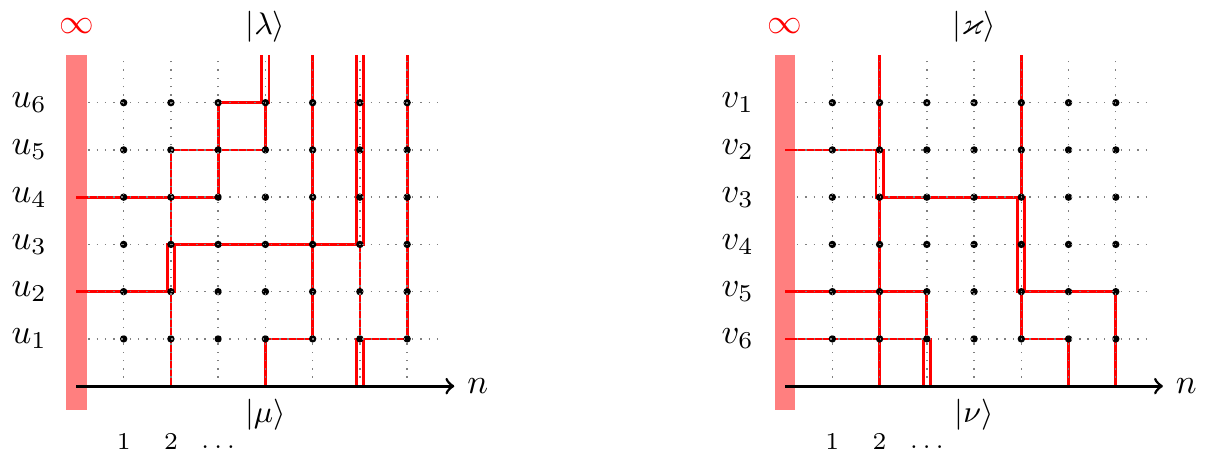}
\caption{Examples of configurations of up-right and down-right paths used in the definitions of $\mathsf{F}_{\lambda/\mu}$ and $\mathsf{F}^*_{\nu/\varkappa}$, respectively.}
\label{fig:paths_up_right}
\end{figure}

\subsection{Remark. Relations to non-stable sHL functions}
\label{sub:rmk_non_stable}

The spin Hall-Littlewood functions
were originally introduced in \cite{Borodin2014vertex} in their non-stable
version which we denote by $\mathsf{F}^{\textnormal{non-st}}_{\lambda/\mu}$.
The stable modification appeared in \cite{deGierWheeler2016}
and \cite{BorodinWheelerSpinq}.
The non-stable sHL functions differ by the boundary condition on the left:
a new horizontal arrow enters at each horizontal slice and each vertical edge on the leftmost column hosts only finitely many arrows.

In detail, the definition of 
$\mathsf{F}^{\textnormal{non-st}}_{\lambda/\mu}$
depends on the number of zero parts in $\lambda=0^{l_0}1^{l_1}2^{l_2}\ldots $
and $\mu=0^{m_0}1^{m_1}2^{m_2}\ldots $, and 
$\mathsf{F}^{\textnormal{non-st}}_{\lambda/\mu}(u)$ vanishes 
unless $l_0+l_1+\ldots=1+m_0+m_1+\ldots$.
When the latter condition holds, we define the single-variable function
$\mathsf{F}^{\textnormal{non-st}}_{\lambda/\mu}(u)$
as the weight of the unique configuration as in \Cref{def:ssHL},
but now 
the horizontal arrow \emph{must} enter at the leftmost boundary,
and the vertex weight at the zeroth column is 
$w_{u,s}(m_0,1;l_0,m_0+1-l_0)$. The multivariable version is 
defined using the branching rule exactly as in \eqref{eq:F_stable_branching_rule}.

There are two possible ways one could specialize the non-stable sHL
functions to obtain our $\mathsf{F}_{\lambda/\mu}$. 
The first is to send both
$l_0$ and $m_0$, the numbers of zeros in $\lambda$ and $\mu$, to infinity. 
By looking at
the weight of the leftmost vertices we see that
\begin{equation*}
	w_{u,s}(m_0,1;l_0,j) \xrightarrow[m_0,l_0\to \infty]{} \frac{u^j}{1 - s u},\qquad j\in \{ 0,1 \},
\end{equation*}
and therefore we obtain
\begin{equation}
	\label{eq:sHL_from_non_stable_1}
	\mathsf{F}_{\lambda/\mu}(u_1,\ldots,u_k ) 
	=
	\prod_{i=1}^{k}(1- s u_i) \times \lim_{m_0,l_0 \to \infty} \mathsf{F}_{\lambda\cup 0^{l_0}/\mu\cup 0^{m_0}}^{\textnormal{non-st}}(u_1,\ldots,u_k ).
\end{equation}
Here $\lambda\cup 0^{l_0}$ means adding $l_0$ zeros to the Young diagram $\lambda$
(which had no zeros originally),
and similarly for $\mu\cup 0^{m_0}$.

Another way is to consider the inhomogeneous vertex model as in \cite{BorodinPetrov2016inhom}
with the spin parameter $s_n$, $n\in \mathbb{Z}_{\ge0}$,
depending on the horizontal coordinate $n$ in \Cref{fig:paths_up_right}.
Taking $\mathsf{F}_{\lambda/\mu}^{\textnormal{non-st}}$
and setting
$s_0=0$ and $s_n=s$, $n>0$, 
from \Cref{fig:table_w} we see that
\begin{equation*}
	w_{u,0}(i_1,1;i_2,0)= 1 - q^{i_2} \qquad \text{and} \qquad w_{u,0}(i_1, 1; i_2, 1) = u.
\end{equation*}
Therefore, 
we obtain 
\begin{equation}
	\label{eq:sHL_from_non_stable_2}
	\mathsf{F}_{\lambda/\mu}(u_1,\dots ,u_k)
	= 
	\frac{1}{(q;q)_{k - \ell(\lambda) +\ell(\mu)}}  \,
	\mathsf{F}^{\textnormal{non-st}}_{\lambda\cup 0^{k-\ell(\lambda)+\ell(\mu)}/\mu}(u_1,\dots ,u_k) \Big\vert_{s_0=0},
\end{equation}
where we assume that $\mu,\lambda$ had no zeros originally.
Equality \eqref{eq:sHL_from_non_stable_2} is particularly useful when 
adapting the results about the non-stable sHL functions 
(like symmetrization formulas or integral
representations \cite{Borodin2014vertex}, \cite{BorodinPetrov2016inhom})
to the stable case.

\subsection{Dual stable spin Hall-Littlewood functions}
\label{sub:dual_sHL}

Let us introduce the dual weights to $w_{u,s}$ from \Cref{fig:table_w} as follows:
\begin{equation} 
	\label{eq:w_w_tilde_relation}
	w^*_{v,s}(i_1,j_1;i_2,j_2) = \frac{(s^2;q)_{i_1}(q;q)_{i_2}}{(q;q)_{i_1}(s^2;q)_{i_2}}\, w_{v,s}(i_2,j_1;i_1,j_2).
\end{equation}
The arrow conservation law \eqref{eq:sHL_arrow_conservation}
implies that $w_{v,s}^*(i_1,j_1;i_2,j_2)$ vanishes unless $i_2+j_1=i_1+j_2$,
and as a result the corresponding
vertex model produces configurations of directed down-right paths 
(see \Cref{fig:paths_up_right}, right).
The explicit form of the weights
$w_{v,s}^*$ is given in \Cref{fig:table_w_tilde}.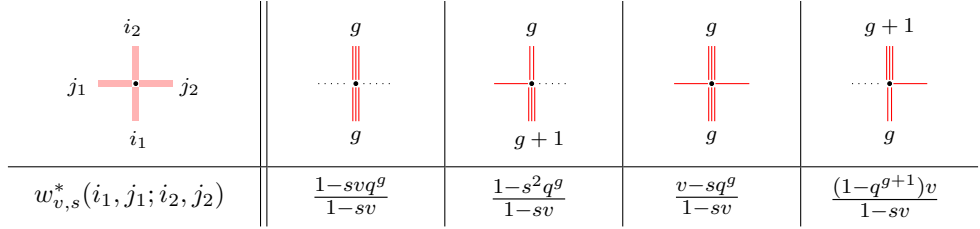
\begin{figure}[htbp]
  \centering
  \begin{tabular}{c||c|c|c|c}
    \begin{tikzpicture}[baseline=0]
    	\draw[fill] (0,0) circle [radius=0.025];
        \fill[red!30] (-0.04,-0.05) rectangle (0.04,-0.5) node[black, below]{\scriptsize{$i_1$}};
        \fill[red!30] (0.04,0.05) rectangle (-0.04,0.5) node[black,above]{ \scriptsize{$i_2$} };
        \fill[red!30] (0.5,0.04) rectangle (0.05,-0.04) node[black,xshift=0.67cm]{ \scriptsize{$j_2$} };
        \fill[red!30] (-0.05,0.04) rectangle (-0.5,-0.04) node[black,left]{ \scriptsize{$j_1$} };
        %\draw [red] (0.04,-0.5) -- (0.04,-0.05);
    	%\draw [red] (0,-0.5) -- (0,-0.05);
    	%\draw [red] (-0.04,-0.5) -- (-0.04,-0.05);
    	%\node [below] at (0,-0.5){\scriptsize{$g$}};
    	%\draw [dotted] (-0.5,0) -- (-0.1,0);
   		%\draw [dotted] (0.1,0) -- (0.5, 0);
    	%\draw [red] (0.04,0.05) -- (0.04,0.5);
    	%\draw [red] (0,0.05) -- (0,0.5);
    	%\draw [red] (-0.04,0.05) -- (-0.04,0.5);
    	%\node [above] at (0,0.5){\scriptsize{$g$}};
    	\addvmargin{1mm}
    \end{tikzpicture}
     & \begin{tikzpicture}[baseline=0]
    	\draw[fill] (0,0) circle [radius=0.025];
		\draw [red] (0.04,-0.5) -- (0.04,-0.05);
    	\draw [red] (0,-0.5) -- (0,-0.05);
    	\draw [red] (-0.04,-0.5) -- (-0.04,-0.05);
        \node [below] at (0,-0.5){\scriptsize{$g$}};
        \draw [dotted] (-0.5,0) -- (-0.1,0);
        \draw [dotted] (0.1,0) -- (0.5, 0);
        \draw [red] (0.04,0.05) -- (0.04,0.5);
        \draw [red] (0,0.05) -- (0,0.5);
        \draw [red] (-0.04,0.05) -- (-0.04,0.5);
        \node [above] at (0,0.5){\scriptsize{$g$}};
        \addvmargin{1mm}
  \end{tikzpicture}  &  \begin{tikzpicture}[baseline=0]
        \draw[fill] (0,0) circle [radius=0.025];
        \draw [red] (-0.025,0.5) -- (-0.025,0.05);
        \draw [red] (0.025,0.5) -- (0.025,0.05);
        \node [below] at (0.1,-0.5){\scriptsize{$g+1$}};
        \draw [red] (-0.5,0) -- (-0.05,0);
        \draw [dotted] (0.1,0) -- (0.5, 0);
        \draw [red] (0,-0.05) -- (0,-0.5);
        \draw [red] (0.04,-0.05) -- (0.04,-0.5);
        \draw [red] (-0.04,-0.05) -- (-0.04,-0.5);
        \node [above] at (0,0.5){\scriptsize{$g$}};
        \addvmargin{1mm}
  \end{tikzpicture} & \begin{tikzpicture}[baseline=0]
        \draw[fill] (0,0) circle [radius=0.025];
        \draw [red] (0,-0.5) -- (0,-0.05);
        \draw [red] (0.04,-0.5) -- (0.04,-0.05);
        \draw [red] (-0.04,-0.5) -- (-0.04,-0.05);
        \node [below] at (0,-0.5){\scriptsize{$g$}};
        \draw [red] (-0.5,0) -- (-0.05,0);
        \draw [red,] (0.05,0) -- (0.5, 0);
        \draw [red] (0,0.05) -- (0,0.5);
        \draw [red] (0.04,0.05) -- (0.04,0.5);
        \draw [red] (-0.04,0.05) -- (-0.04,0.5);
        \node [above] at (0,0.5){\scriptsize{$g$}};
        \addvmargin{1mm}
  \end{tikzpicture} & \begin{tikzpicture}[baseline=0]
        \draw[fill] (0,0) circle [radius=0.025];
        \draw [red] (-0.04,0.5) -- (-0.04,0.05);
        \draw [red] (0,0.5) -- (0,0.05);
        \draw [red] (0.04,0.5) -- (0.04,0.05);
        \node [below] at (0,-0.5){\scriptsize{$g$}};
        \draw [dotted] (-0.5,0) -- (-0.1,0);
        \draw [red] (0.05,0) -- (0.5, 0);
        \draw [red] (0.025,-0.05) -- (0.025,-0.5);
        \draw [red] (-0.025,-0.05) -- (-0.025,-0.5);
        \node [above] at (0,0.5){\scriptsize{$g+1$}};
        \addvmargin{1mm}
  \end{tikzpicture}\\
    \hhline{-----}
   
    \begin{minipage}{3cm}
    \centering
    \vspace{.1cm}
    \small{$w^*_{v,s}(i_1,j_1; i_2,j_2)$}
    \vspace{.1cm}
    \end{minipage}
    &
    \begin{minipage}{2cm}
    \centering
    \vspace{.1cm}
    $\frac{1- s v q^{g} }{1- s v}$
    \vspace{.1cm}
    \end{minipage}
    &
    \begin{minipage}{2cm}
    \centering
    \vspace{.1cm}
    $\frac{ 1- s^2 q^g }{1- s v}$
    \vspace{.1cm}
    \end{minipage}
    &
    \begin{minipage}{2cm}
    \centering
    \vspace{.1cm}
    $\frac{ v - s q^{g}}{1 - s v}$
    \vspace{.1cm}
    \end{minipage}
    & 
    \begin{minipage}{2cm}
    \centering
    \vspace{.1cm}
    $\frac{(1- q^{g+1})v}{1- s v}$
    \vspace{.1cm}
    \end{minipage}
    \\
  \end{tabular}
  \caption{In the top row we see all acceptable configurations of paths entering and exiting a vertex; 
		below we reported the corresponding vertex weights 
		$w^*_{v,s}(i_1, j_1; i_2, j_2)$.} 
		\label{fig:table_w_tilde}
\end{figure}
The weights $w^*_{v,s}$ at the left boundary
are given by the same formulas as in
\eqref{eq:w_boundary}.

The weights $w^*_{v,s}$ can be obtained from 
$w_{u,s}$ by substituting $u$ with $1/v$, swapping the values
of both horizontal edge indices $j_1$ and $j_2$ (that is if $j_1=0$, then
we change its value 1 and vice versa, and the same for $j_2$), and multiplying
the result by $(v-s)/(1-vs)$. 
This swapping construction of the dual weights was instrumental 
in deriving Cauchy identities for the sHL functions
from the Yang-Baxter equation \cite{Borodin2014vertex}
(a bijectivization of this argument appeared in \cite{BufetovPetrovYB2017}). 
In the present paper we employ a more direct approach with down-right paths which is
better suited for the generalization to spin $q$-Whittaker functions.
The Yang-Baxter equation connecting $w_{u,s}$ and $w^*_{v,s}$
is recorded in \Cref{app:YBE}.

\begin{definition}
	Fix $\varkappa,\nu\in \mathbb{Y}$ with $\varkappa\prec \nu$
	and place the arrow configuration $|\nu \rangle$ 
	\emph{under} $|\varkappa \rangle$. Then there exists a unique
	configuration of horizontal arrows between 
	$|\varkappa \rangle$ and $|\nu \rangle$.
	By definition,
	a \emph{dual stable sHL function} in one variable,
	denoted by $\mathsf{F}_{\nu/\varkappa}^*(v)$,
	is the total weight of this horizontal arrow configuration,
	where the individual vertex weights are the $w^*_{v,s}$'s from \Cref{fig:table_w_tilde}, and 
	the left boundary weights are the same as in \eqref{eq:w_boundary}.
	If $\varkappa\not\prec\nu$, we set $\mathsf{F}^*_{\nu/\varkappa}(v)=0$.
\end{definition}

The multivariable generalization $\mathsf{F}^*_{\nu/\varkappa}(v_1,\ldots,v_k )$ 
is defined via the
branching rule exactly as in \eqref{eq:F_stable_branching_rule}.
It is the partition function of ensembles of down-right paths
as in \Cref{fig:paths_up_right}, right, of height $k$, 
spectral parameters $v_1,\ldots,v_k $ corresponding to horizontal slices,
and boundary conditions $|\varkappa \rangle$, 
$0^\infty$, $|\nu \rangle$, and empty
at the bottom, left, top, and right, respectively.

The relation
\eqref{eq:w_w_tilde_relation} 
between $w^*_{v,s}$ and $w_{u,s}$
implies that 
\begin{equation}
	\label{eq:sHL_sHL_star_relation}
	\frac{\mathsf{c}(\lambda)}{\mathsf{c}(\mu)} 
	\,
	\mathsf{F}_{\lambda/ \mu}(u_1, \dots, u_k) =
	\mathsf{F}^*_{\lambda/ \mu}(u_1, \dots, u_k),
\end{equation}
where the factor $\mathsf{c}$ is
\begin{equation*}
	\mathsf{c}(\mu) =  
	\prod_{i \geq 1} \frac{(s^2 ;q)_{m_i}}{(q;q)_{m_i}}, 
	\qquad \text{for }\mu=1^{m_1} 2^{m_2} \ldots.
\end{equation*}
The symmetry of $\mathsf{F}^*_{\lambda/\mu}(v_1,\ldots,v_k )$
in the $v_i's$ follows from the symmetry of $\mathsf{F}_{\lambda/\mu}$.
The dual sHL function also satisfies the same stability property 
\eqref{eq:sHL_stability} as the non-dual one.

\subsection{The sHL/sHL skew Cauchy structure}
\label{sub:sHL_sHL_structure}

One of the main consequences of the Yang-Baxter equation (either 
\Cref{prop:YBE_rww} or \Cref{prop:sHL_YBE})
is the skew Cauchy identity
for the sHL functions:
\begin{theorem}
	[{\cite{Borodin2014vertex}, \cite{BorodinPetrov2016inhom}, \cite[Section 7.4]{BorodinWheelerSpinq}}]
	\label{thm:skew_Cauchy_sHL_sHL}
	For any two Young diagrams $\lambda,\mu$ and generic parameters $u,v\in \mathbb{C}$
	(cf. \Cref{rmk:generic})
	such that
	$\bigl|(u-s)(v-s)\bigr|<\bigl|(1-su)(1-sv)\bigr|$,
	we have
	\begin{equation}
		\label{eq:skew_Cauchy_sHL_sHL}
		\sum_{\nu} \mathsf{F}^*_{\nu / \lambda}(v) \,\mathsf{F}_{\nu / \mu}(u)
		=
		\frac{ 1 - q u v }{ 1 - u v } 
		\sum_{\varkappa} \mathsf{F}_{\lambda / \varkappa}(u)
		\,\mathsf{F}^*_{\mu / \varkappa}(v).
	\end{equation}
\end{theorem}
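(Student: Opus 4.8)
The identity \eqref{eq:skew_Cauchy_sHL_sHL} is the concrete instance, for the pair $(\mathsf F,\mathsf F^*)$, of the abstract skew Cauchy identity \eqref{eq:F_G_single_skew_Cauchy}, and the plan is to derive it from the Yang-Baxter equation relating $w_{u,s}$ and $w^*_{v,s}$ (either \Cref{prop:YBE_rww} or \Cref{prop:sHL_YBE}) by the standard ``train'' argument. First I would realize the left-hand side of \eqref{eq:skew_Cauchy_sHL_sHL} as a two-row vertex-model partition function on the columns $\mathbb{Z}_{\ge0}$: the bottom row carries the weights $w_{u,s}$ (up-right paths), the top row carries $w^*_{v,s}$ (down-right paths), the bottom vertical boundary profile is $|\mu\rangle$, the top one is $|\lambda\rangle$, all rightmost vertical edges are empty, and the leftmost column $0$ carries the frozen boundary weights \eqref{eq:w_boundary} in each row. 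Summing over the vertical line shared by the two rows and applying the branching rule \eqref{eq:F_stable_branching_rule} separately in each row factors this partition function as $\sum_\nu \mathsf F^*_{\nu/\lambda}(v)\,\mathsf F_{\nu/\mu}(u)$; arrow conservation \eqref{eq:sHL_arrow_conservation} guarantees the shared profile is always a partition, so the index genuinely runs over all $\nu$.

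Next I would attach the crossing vertex $R_{u,v}$ of the two horizontal rapidity lines (the $R$-matrix recorded in \Cref{app:YBE}) at the far right of this strip, where both of its horizontal edges are empty and it sits in a trivial state. Using the Yang-Baxter equation I would transport this crossing leftward column by column; each elementary step past a column with $g$ vertical arrows is one instance of the Yang-Baxter equation, the internal sum in each step being finite by \eqref{eq:sHL_arrow_conservation}. As the crossing moves to the left of a given column the two rapidity lines exchange vertical position there, so that the bottom row becomes a $w^*_{v,s}$ row and the top row a $w_{u,s}$ row, and the shared middle vertical line gets re-routed from a $\nu$-type profile to a $\varkappa$-type profile. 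After the crossing has passed all columns $j\ge1$ the strip has its rows fully swapped, and the branching rule rewrites the surviving partition function as $\sum_\varkappa \mathsf F_{\lambda/\varkappa}(u)\,\mathsf F^*_{\mu/\varkappa}(v)$. Comparing the two evaluations gives \eqref{eq:skew_Cauchy_sHL_sHL} with $\Pi(u;v)$ equal to the scalar produced when the crossing is dragged through the distinguished frozen column $0$.

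The main obstacle is exactly that last scalar. Column $0$ is the ``$g=\infty$'' end of the model --- the shared vertical line carries infinitely many arrows there --- so the move at column $0$ is not a clean swap but must be taken in a limit $g\to\infty$ of the finite-$g$ relation; carrying this out reduces to a terminating $q$-hypergeometric summation over the ways the crossing reshuffles the arrows at position $0$, which in the limit telescopes to $\frac{1-quv}{1-uv}$ (the identity $\frac1{1-uv}=\sum_{n\ge0}(uv)^n$ making this summation visible). This is also where the \emph{stable} boundary conditions \eqref{eq:w_boundary}, rather than the non-stable ones, enter essentially, and where the hypothesis $\bigl|(u-s)(v-s)\bigr|<\bigl|(1-su)(1-sv)\bigr|$ is used: it is precisely the condition making both the $g\to\infty$ summation and the outer sum over $\nu$ (whose terms with first part $\nu_1$ decay like $\bigl|\tfrac{(u-s)(v-s)}{(1-su)(1-sv)}\bigr|^{\nu_1}$) absolutely convergent, so that all the rearrangements above are legitimate for generic parameters. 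As a cross-check, and as an alternative route, one can instead specialize the classical non-stable sHL Cauchy identity of \cite{Borodin2014vertex} via \eqref{eq:sHL_from_non_stable_1} and pass to the limit $m_0,l_0\to\infty$, which reproduces the same $\Pi$.
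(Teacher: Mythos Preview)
Your approach --- realize both sides as two-row partition functions and pass from one to the other by repeatedly pushing an $R$-matrix crossing through the columns via the Yang--Baxter equation --- is precisely the paper's route. The paper packages it through the bijectivization formalism (\Cref{prop:reversibility_sHL_sHL}: attach the cross at the \emph{left} in the state $(1,0;1,0)$, drag it to the right where it exits in the empty state, then sum over $\varkappa,\nu$), while you attach at the right and drag left; the two directions are equivalent.

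Your account of column~$0$ is overcomplicated, though. There is no ``terminating $q$-hypergeometric summation'' that ``telescopes'', and the geometric series $\sum(uv)^n$ plays no role: in the $I=J=1$ situation all horizontal occupations are $\le1$, so the Yang--Baxter move through the $g=\infty$ column is a finite identity among the boundary weights \eqref{eq:w_boundary}, and the prefactor is simply the cross weight $R_{uv}(1,0;1,0)=\tfrac{1-quv}{1-uv}$ read off from \Cref{fig:table_R} (this is the $I=J=1$ instance of \Cref{prop:emergence_of_the_cross_vertex_weight}). Your identification of the decay $\bigl|\tfrac{(u-s)(v-s)}{(1-su)(1-sv)}\bigr|^{\nu_1}$ governing the $\nu$-sum is correct, and that is the only place the hypothesis on $u,v$ enters.
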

We recall a ``bijective'' proof of \Cref{thm:skew_Cauchy_sHL_sHL}
in \Cref{sub:new_YB_field_sHL_sHL} below
which follows the approach
of \cite{BufetovPetrovYB2017}.
This identity
together with the branching rules for the sHL functions 
lead to the first of the skew Cauchy structures we consider in the paper:

\begin{definition}
	\label{def:sHL_sHL_structure}
	The families of functions
	\begin{equation*}
		\mathfrak{F}_{\lambda/\mu}(u_1,\ldots,u_k )=\mathsf{F}_{\lambda/\mu}(u_1,\ldots,u_k ) ,
		\qquad 
		\mathfrak{G}_{\lambda/\mu}(v_1,\ldots,v_k )=\mathsf{F}^*_{\lambda/\mu}(v_1,\ldots,v_k )
	\end{equation*}
	form a skew Cauchy structure in the sense of \Cref{sub:F_G_skew_Cauchy_structure}
	with the following identifications:
	\begin{enumerate}[label=\bf{(\roman*)}]
	%\begin{enumerate}[$\bullet$]
		\item The relations $\mu\prec_k\lambda$ and $\mu\mathop{\dot{\prec}_k}\lambda$ are the same and
			mean the existence of a sequence of Young diagrams 
			$\mu\prec \varkappa^{(1)}\prec \ldots\prec \varkappa^{(k-1)}\prec \lambda$,
			where $\prec$ is the interlacing relation \eqref{eq:interlace_sHL_def}.
		\item The skew Cauchy identity holds with 
			\begin{equation}
				\label{eq:sHL_Pi}
				\mathsf{Adm}=\left\{ (u,v)\colon \bigl|(u-s)(v-s)\bigr|<\bigl|(1-su)(1-sv)\bigr| \right\},
				\qquad 
				\Pi(u;v)=\dfrac{1-quv}{1-uv}.
			\end{equation}
		\item 
			Let us choose the external parameters $q\in(0,1)$, $s\in(-1,0)$, 
			and take $\mathsf{P}=\dot{\mathsf{P}}=[0,1]$.
			Then the 
			probability weights based on $\mathsf{F}_{\lambda/\mu}(u_1,\ldots,u_k)$ and $\mathsf{F}^*_{\lambda/\mu}(v_1,\ldots,v_k )$
			with $u_i,v_j\in [0,1]$ are nonnegative 
			due to the nonnegativity of all the vertex weights
			in \Cref{fig:table_w,fig:table_w_tilde}.
	\end{enumerate}
	We call this the \emph{sHL/sHL skew Cauchy structure}.
\end{definition}

\begin{remark}
	\label{rmk:nonnegativity_and_Adm_in_sHL_sHL}
	When $u,v\in[0,1)$ and $s\in (-1,0)$, one can 
	check that $(u,v)\in \mathsf{Adm}$.
\end{remark}

\subsection{\texorpdfstring{Spin $q$-Whittaker polynomials}{Spin q-Whittaker polynomials}}
\label{sub:sqW_polynomials}

Along with the sHL functions we will work with the spin $q$-Whittaker (sqW)
polynomials introduced in \cite{BorodinWheelerSpinq} which we recall here.
We start by defining the vertex weights $W_{\xi,s}$ as
\begin{equation} 
	\label{eq:Whit_W}
	W_{\xi,s}(i_1,j_1;i_2, j_2) 
	= 
	\mathbf{1}_{i_1 + j_1 = i_2 + j_2} \, \mathbf{1}_{i_1 \geq j_2}\,  \xi^{j_2}\, 
	\frac{(- s/\xi;q)_{j_2} (- s \xi ; q )_{i_1 - j_2} (q;q)_{i_2} }{(q;q)_{j_2} (q;q)_{i_1 - j_2} (s^2;q)_{i_2} },
\end{equation}
where $i_1,j_1,i_2,j_2\in \mathbb{Z}_{\ge0}$.
Note that in contrast with $w_{u,s}$ and $w^*_{v,s}$ 
used in the definition of 
the sHL functions, here the number of horizontal arrows $j_1,j_2$
can be arbitrary and not just zero or one.

The dual version of the weight $W_{\xi,s}$ is given by 
\begin{equation} 
	\label{eq:W_W_tilde_relation}
	W^*_{\theta,s}(i_1,j_1;i_2,j_2) 
	= 
	\frac{(s^2;q)_{i_1}(q;q)_{i_2}}{(q;q)_{i_1}(s^2;q)_{i_2}}
	\,
	W_{\theta,s}(i_2,j_1;i_1,j_2),
\end{equation}
which is the same relation as between $w$ and $w^*$ \eqref{eq:w_w_tilde_relation}.
The weights $W^*_{\theta,s}$ vanish unless $i_2+j_1=i_1+j_2$, therefore
the dual vertex model will have down-right paths. 
The dependence of both $W_{\xi,s}$ and $W^*_{\theta,s}$ on their respective spectral parameters
$\xi,\theta$ is polynomial.

As explained in \Cref{app:YBE}, there exists a close relation between the weights $W$ and the weights $w$:
the former can be obtained from the latter through a procedure called \emph{fusion}.
The fusion consists in collapsing multiple $w$-weighted rows of vertices
with spectral parameters forming a geometric progression with ratio $q$
Fusion originated in \cite{KulishReshSkl1981yang}
and was employed in
\cite{Borodin2014vertex}, 
\cite{CorwinPetrov2015}, 
\cite{BorodinPetrov2016inhom},
\cite{BorodinWheelerSpinq}
in connection with stochastic vertex models.
In particular, the 
weights $W_{\xi,s}$ and $W^*_{\theta,s}$ satisfy the Yang-Baxter equation
listed in \Cref{app:YBE}.

Define the left boundary weights for $j\in \mathbb{Z}_{\ge0}$
by
\begin{equation}
	\label{eq:W_boundary_weights}
		W_{\xi,s} \biggl(\begin{tikzpicture}[baseline=-2.5pt]
    	\draw[fill] (0,0) circle [radius=0.025];
			\node at (0,.3) {$\infty$};
			\node at (0,-.3) {$\infty$};
			\draw [red] (0.1,0) --++ (0.4, 0) node[right, black] {$j$};
			\draw [red] (0.1,0.05) --++ (0.4, 0);
			\draw [red] (0.1,-0.05) --++ (0.4, 0);
        \addvmargin{1mm}
        \addhmargin{1mm}
	  \end{tikzpicture} \biggr) = 
		W_{\xi,s}^* \biggl(\begin{tikzpicture}[baseline=-2.5pt]
    	\draw[fill] (0,0) circle [radius=0.025];
			\node at (0,.3) {$\infty$};
			\node at (0,-.3) {$\infty$};
			\draw [red] (0.1,0) --++ (0.4, 0) node[right, black] {$j$};
			\draw [red] (0.1,0.05) --++ (0.4, 0);
			\draw [red] (0.1,-0.05) --++ (0.4, 0);
        \addvmargin{1mm}
        \addhmargin{1mm}
	  \end{tikzpicture} \biggr) = 
		\xi^j\,\frac{(-s/\xi;q)_j}{(q;q)_j}.
	\qquad 
\end{equation}

\begin{definition}[Column interlacing]
	\label{def:transposed_interlacing}
	Fix $\mu,\lambda\in \mathbb{Y}$.
	We write
	$\mu\mathop{\prec'}\lambda$
	and say that $\mu$ and $\lambda$ \emph{column-interlace}
	if there exists a configuration of 
	finitely many horizontal arrows between $|\mu \rangle$ and $|\lambda \rangle$
	(located one under another as in \Cref{fig:arrow_lambda})
	such that at each vertex 
	$(i_1,j_1;i_2,j_2)$
	the arrow conservation property 
	$i_1+j_1=i_2+j_2$
	holds, and, moreover, $j_2\le i_1$. Note that now we allow arbitrarily many horizontal arrows per edge.
	(If such a horizontal arrow configuration exists, then it is unique.)
	One can check that
	$\mu\mathop{\prec'}\lambda$
	if and only if $\mu'\prec \lambda'$,
	where $\mu'$ and $\lambda'$ stand for \emph{transposed Young diagrams}:
	\begin{equation*}
		\lambda'_j:=\#\left\{ i\colon \lambda_i\ge j \right\}.
	\end{equation*}
\end{definition}

\begin{definition}
	\label{def:sqW_polynomial}
	For $\mu,\lambda\in \mathbb{Y}$ with $\mu\prec'\lambda$,
	a \emph{spin $q$-Whittaker polynomial} in one variable,
	denoted by $\mathbb{F}_{\lambda'/\mu'}(\xi)$, is defined as the total weight of
	the unique configuration of arrows
	between $|\mu \rangle$ and $|\lambda \rangle$.
	Here the individual vertex weights are $W_{\xi,s}$
	\eqref{eq:Whit_W}, \eqref{eq:W_boundary_weights}.
	If $\mu\not\prec'\lambda$, we set 
	$\mathbb{F}_{\lambda'/\mu'}(\xi)=0$.

	We will abbreviate the name and call $\mathbb{F}_{\lambda'/\mu'}$
	simply the (skew) \emph{sqW polynomial}.
	Note that it is indexed by the transposed Young diagrams
	for consistency with the $s=0$ situation when 
	$\mathbb{F}_{\lambda'/\mu'}$
	turns into the more common skew $q$-Whittaker polynomial
	which is a $t=0$ degeneration of the corresponding Macdonald polynomial
	\cite{Macdonald1995}, \cite{BorodinCorwin2011Macdonald}.
\end{definition}

The \emph{dual sqW polynomials} $\mathbb{F}_{\nu/\varkappa}^*(\theta)$ are defined 
in a similar manner, up to placing 
$|\nu \rangle$~\emph{under}~$|\varkappa \rangle$,
and using the dual vertex weights $W^*_{\theta,s}$
\eqref{eq:W_W_tilde_relation}, \eqref{eq:W_boundary_weights}.
We have (cf. \eqref{eq:sHL_sHL_star_relation})
\begin{equation}
	\label{eq:sqW_sqW_star_relation}
	\frac{\mathsf{c}(\nu)}{\mathsf{c}(\varkappa)}
	\,
	\mathbb{F}_{\nu'/\varkappa'}(\xi_1,\ldots,\xi_k )
	=
	\mathbb{F}_{\nu'/\varkappa'}^*(\xi_1,\ldots,\xi_k ).
\end{equation}

The multivariable polynomials
$\mathbb{F}_{\lambda/\mu}(\xi_1,\ldots,\xi_k)$
and 
$\mathbb{F}^*_{\nu/\varkappa}(\theta_1,\ldots,\theta_k)$
are defined via the branching rules exactly as in \eqref{eq:F_stable_branching_rule}.
One can view them as partition functions
of path ensembles similarly to the ones in 
\Cref{fig:paths_up_right},
but with multiple horizontal arrows allowed per edge.
The Yang-Baxter equation implies that 
$\mathbb{F}_{\lambda/\mu}(\xi_1,\ldots,\xi_k)$
and 
$\mathbb{F}^*_{\nu/\varkappa}(\theta_1,\ldots,\theta_k)$
are symmetric in their respective variables.
They also satisfy the following stability property:
\begin{equation*}
	\mathbb{F}_{\lambda/\mu}(\xi_1,\ldots,\xi_{k-1},-s )=\mathbb{F}_{\lambda/\mu}(\xi_1,\ldots,\xi_{k-1} )
\end{equation*}
(and similarly for $\mathbb{F}_{\nu/\varkappa}^*$),
which follows from the vanishing of the vertex weight
$W_{-s,s}$.
Note that here we are substituting $(-s)$ 
for one of the variables
in contrast with 
the sHL functions where we 
substituted $0$
\eqref{eq:sHL_stability}.

\subsection{The sHL/sqW skew Cauchy structure}
\label{sub:sHL_sqW_structure}

The Yang-Baxter equation for the weights
$(w^*_{v,s},W_{\xi,s})$, see \Cref{prop:sHL_sqW_YBE},
implies the following
``dual'' 
skew Cauchy identity for the sHL and sqW functions:

\begin{theorem}[{\cite[Section 7.3]{BorodinWheelerSpinq}}]
	\label{thm:sHL_sqW_skew_Cauchy}
	For any two Young diagrams $\lambda,\mu$, 
	and
	generic $\xi,u\in \mathbb{C}$ (cf. \Cref{rmk:generic}; in particular, $u\ne s^{-1}$) we have
	\begin{equation}
		\label{eq:sHL_sqW_skew_Cauchy}
		\begin{split}
			\sum_{\nu}\mathbb{F}_{\nu'/\lambda'}(\xi)\,\mathsf{F}^*_{\nu/\mu}(u)
			&=
			\frac{1+u\xi}{1-us}\sum_{\varkappa}
			\mathbb{F}_{\mu'/\varkappa'}(\xi)\,\mathsf{F}^*_{\lambda/\varkappa}(u)
			;\\
			\sum_{\nu}\mathbb{F}^*_{\nu'/\lambda'}(\xi)\,\mathsf{F}_{\nu/\mu}(u)
			&=
			\frac{1+u\xi}{1-us}\sum_{\varkappa}
			\mathbb{F}^*_{\mu'/\varkappa'}(\xi)\,\mathsf{F}_{\lambda/\varkappa}(u)
			.
		\end{split}
	\end{equation}
\iffalse
\begin{equation}
		\label{eq:sHL_sqW_skew_Cauchy}
		\begin{aligned}
			\sum_{\nu}\mathbb{F}_{\nu'/\lambda'}(x)\,\mathsf{F}^*_{\nu/\mu}(u)
			=
			\frac{1+ux}{1-us}\sum_{\varkappa}
			\mathbb{F}_{\mu'/\varkappa'}(x)\,\mathsf{F}^*_{\lambda/\varkappa}(u)
			;\\
			\sum_{\nu}\mathbb{F}^*_{\nu'/\lambda'}(x)\,\mathsf{F}_{\nu/\mu}(u)
			=
			\frac{1+ux}{1-us}\sum_{\varkappa}
			\mathbb{F}^*_{\mu'/\varkappa'}(x)\,\mathsf{F}_{\lambda/\varkappa}(u)
			.
		\end{aligned}
	\end{equation}
\fi	
	Note that the sums over $\nu$ and $\varkappa$ in both sides are 
	actually finite, so there are no convergence issues.
	The above two identities are equivalent: One can multiply the first
	one by $\mathsf{c}(\mu)/\mathsf{c}(\lambda)$ and redistribute the factors
	to get the second one.
\end{theorem}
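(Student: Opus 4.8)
The plan is to deduce \eqref{eq:sHL_sqW_skew_Cauchy} from the Yang--Baxter equation for the pair $(w^*_{v,s},W_{\xi,s})$ of \Cref{prop:sHL_sqW_YBE} by the standard ``moving the $R$-vertex through a two-row strip'' argument, following \cite{Borodin2014vertex}, \cite{BufetovPetrovYB2017} and \cite[Section 7.3]{BorodinWheelerSpinq}. It suffices to prove the first identity, since multiplying it by $\mathsf{c}(\mu)/\mathsf{c}(\lambda)$ and regrouping the $\mathsf{c}$-factors as $\mathsf{c}(\mu)/\mathsf{c}(\lambda)=\bigl(\mathsf{c}(\mu)/\mathsf{c}(\nu)\bigr)\bigl(\mathsf{c}(\nu)/\mathsf{c}(\lambda)\bigr)$ (and similarly with $\varkappa$) turns it, via \eqref{eq:sHL_sHL_star_relation} and \eqref{eq:sqW_sqW_star_relation}, into the second one.

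First I would realize both sides, stripped of the scalar prefactor, as partition functions on a strip of height two over columns $0,1,\dots,N$. Reading from bottom to top, the left-hand side is the partition function $Z^{\mathrm L}$ of the strip with bottom vertical boundary $|\lambda\rangle$, a horizontal row of $W_{\xi,s}$-vertices, the intermediate (summed) vertical state $|\nu\rangle$, a horizontal row of $w^*_{u,s}$-vertices, and top vertical boundary $|\mu\rangle$, with left boundary weights \eqref{eq:w_boundary} and \eqref{eq:W_boundary_weights} and all horizontal edges to the right of column $N$ empty. Likewise the sum over $\varkappa$ is the partition function $Z^{\mathrm R}$ of the same strip with the two horizontal rows interchanged (the $w^*_{u,s}$-row now lying below the $W_{\xi,s}$-row), again with bottom boundary $|\lambda\rangle$ and top boundary $|\mu\rangle$. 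Since $\lambda,\mu$ are fixed finite diagrams, a nonzero term in either sum forces the corresponding diagram ($\nu$ or $\varkappa$) to lie in a fixed finite box determined by $\lambda$ and $\mu$: e.g.\ $\mathsf{F}^*_{\nu/\mu}(u)\ne0$ and $\mathbb{F}_{\nu'/\lambda'}(\xi)\ne0$ impose $\mu\prec\nu$ and $\lambda\prec'\nu$, hence $\ell(\nu)\le\ell(\mu)+1$ and $\nu_1\le\lambda_1+1$. Thus both sums are finite and, for $N$ large enough, the truncation does not affect $Z^{\mathrm L}$ or $Z^{\mathrm R}$.

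Next I would attach the cross-vertex $R_{\xi,u}$ from \Cref{prop:sHL_sqW_YBE}, intertwining the horizontal $W_{\xi,s}$-line with the horizontal $w^*_{u,s}$-line, to the strip underlying $Z^{\mathrm R}$. Placed just to the left of column $0$, the $R$-vertex meets only the trivial left boundary, so, after absorbing it into the two left-boundary weights, it contributes the state-independent scalar $\tfrac{1+u\xi}{1-us}$: the strip-with-$R$ partition function equals $\tfrac{1+u\xi}{1-us}\,Z^{\mathrm R}$. Applying the Yang--Baxter relation once for each column $0,1,\dots,N$ transports the $R$-vertex past column $N$ without changing the partition function; there both of its vertical legs and both of its right horizontal legs are empty, so that vertex evaluates to $1$ and may be erased. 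The lattice that remains is exactly the one computing $Z^{\mathrm L}$, whence $\tfrac{1+u\xi}{1-us}\,Z^{\mathrm R}=Z^{\mathrm L}$, which is the first identity of \eqref{eq:sHL_sqW_skew_Cauchy}.

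The main obstacle is the bookkeeping at the two ends of the strip: one must identify, from the explicit fused $R$-weights of \Cref{app:YBE}, which entry of $R_{\xi,u}$ survives when it is absorbed into the left boundary and check that its value is precisely $\tfrac{1+u\xi}{1-us}$ (not some $\nu$-dependent weight), and that the all-empty entry surviving at the right end equals $1$ in the chosen normalization. A secondary, $W$-specific point is that a horizontal edge of the $W_{\xi,s}$-row may carry several arrows, so one must ensure the horizontal line ``saturates'' before column $N$; this again follows from the finiteness bounds on $\nu$ (and $\varkappa$) above. An alternative route that avoids the $R$-vertex is to obtain \eqref{eq:sHL_sqW_skew_Cauchy} from the sHL/sHL identity of \Cref{thm:skew_Cauchy_sHL_sHL} by fusion, specializing $v\mapsto(v,qv,\dots,q^{J-1}v)$ and collapsing the $J$ rows into a single $W$-row; but making that collapse rigorous requires the fusion identities between $w$- and $W$-weights, for which the $R$-vertex argument is in any case the cleanest packaging.
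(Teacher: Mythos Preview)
Your approach is correct: it is the classical Yang--Baxter ``zipper'' argument from \cite{Borodin2014vertex} and \cite[Section~7.3]{BorodinWheelerSpinq} (the very reference cited in the statement). The paper takes a genuinely different route. Rather than dragging the cross through the strip and comparing partition functions, the paper first \emph{bijectivizes} the local Yang--Baxter equation \eqref{eq:YBE_W_w} into forward and backward transition probabilities $\mathsf{U}^{\mathrm{fwd}},\mathsf{U}^{\mathrm{bwd}}$ (\Cref{def:U_fwd_U_bwd}), establishes the pointwise reversibility relation \eqref{eq:reversibility_sHL_sqW}, and only then sums over $\varkappa$ and $\nu$ to recover the skew Cauchy identity. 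The underlying mechanism is the same cross-dragging you describe, but the paper's packaging yields strictly more: the Markov operators that define the sHL/sqW Yang--Baxter field come out of the same argument. Your direct version is cleaner if one only wants the identity and is closer to the original proof in the cited reference; the paper's version is the refinement that powers \Cref{sec:new_three_fields}.

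Two small corrections to your write-up. First, the left boundary is not ``trivial'': column $0$ carries infinitely many vertical arrows. What makes your factorization work is that the boundary weights \eqref{eq:w_boundary}, \eqref{eq:W_boundary_weights} (as limits with $\infty$ vertical arrows, cf.\ \eqref{eq:w_fused_infinity_normalized}) do not depend on their left horizontal input; hence the cross contribution separates as $\sum_{k_1,k_2}\mathcal{R}_{\xi,u,s}(0,0;k_2,k_1)$, which equals $\tfrac{1+u\xi}{1-us}$ by \Cref{prop:emergence_of_the_cross_vertex_weight} specialized to $I=1$, $z=su$, $q^{J}=-\xi/s$. Second, the cross vertex has four diagonal (not ``vertical'') legs; at the far right all four are empty and $\mathcal{R}_{\xi,u,s}(0,0;0,0)=1$ by normalization.
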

We give a ``bijective'' proof of \Cref{thm:sHL_sqW_skew_Cauchy}
in \Cref{sub:new_YB_field_sHL_sqW} below. This
leads to the following definition:
\begin{definition}
	\label{def:sHL_sqW_struture}
	The families of functions
	$\mathfrak{F}_{\lambda/\mu}(u_1,\ldots u_k)=\mathsf{F}^*_{\lambda/\mu}(u_1,\ldots,u_k )$
	and $\mathfrak{G}_{\lambda/\mu}(\xi_1,\ldots,\xi_k )=\mathbb{F}_{\lambda'/\mu'}(\xi_1,\ldots,\xi_k )$
	form a skew Cauchy structure in the sense
	of \Cref{sub:F_G_skew_Cauchy_structure}
	with the following identifications:
	\begin{enumerate}[label=\bf{(\roman*)}]
		\item The relations $\prec_k$,
			$\mathop{\dot{\prec_k}}$
			on $\mathbb{Y}\times\mathbb{Y}$
			are 
			\begin{equation*}
				\begin{split}
					\mu\prec_k\lambda
					\qquad \Leftrightarrow \qquad 
					&
					\exists \,\varkappa^{(i)}\in \mathbb{Y}\colon
					\mu\prec\varkappa^{(1)}\prec\ldots \prec \varkappa^{(k-1)}\prec\lambda;
					\\
					\mu\mathop{\dot{\prec}_k}\lambda
					\qquad \Leftrightarrow \qquad 
					&
					\exists\, \rho^{(j)}\in \mathbb{Y}\colon
					\mu\prec' \rho^{(1)}\prec' \ldots\prec' \rho^{(k-1)}\prec'\lambda ,
				\end{split}
			\end{equation*}
			where $\prec$ and $\prec'$ are the usual and the column interlacing
			relations
			(\Cref{def:interlacing,def:transposed_interlacing}).
		\item \label{item:sHL_sqW_Pi} The skew Cauchy identity
			holds with $\mathsf{Adm}=\left\{ (u,\xi)\in \mathbb{C}^2\colon u\ne s^{-1} \right\}$
			and 
			\begin{equation}
				\label{eq:sHL_sqW_Pi}
				\Pi(u;\xi)=\dfrac{1+u\xi}{1-su}.
			\end{equation}
		\item The external parameters of the functions are $q\in(0,1)$ and $s\in(-1,0)$,
			and the nonnegativity sets for the spectral parameters are
			$\mathsf{P}=[0,1]$, $\dot{\mathsf{P}}=[-s,-s^{-1}]$.
			Then the probability weights based on $\mathsf{F}^*_{\lambda/\mu}(u_1,\ldots,u_k )$
			and $\mathbb{F}_{\lambda'/\mu'}(\xi_1,\ldots,\xi_k )$
			are nonnegative for $u_i\in \mathsf{P}$, $\xi_j\in \dot{\mathsf{P}}$
			due to the nonnegativity of the vertex weights in
			\Cref{fig:table_w_tilde} and \eqref{eq:Whit_W}.
	\end{enumerate}
	We call this the \emph{sHL/sqW skew Cauchy structure}. 
\end{definition}
\begin{remark}
	\label{rmk:sHL_sqW_conjugation_does_not_hurt}
	\Cref{def:sHL_sqW_struture} is based on the first
	of the skew Cauchy identities \eqref{eq:sHL_sqW_skew_Cauchy}.
	One readily sees that taking
	the second of these identities
	leads to the same notion of a random field associated
	with the other skew Cauchy structure.
	In other words, one can understand skew Cauchy structures
	up to ``gauge transformations'' of the form
	$(\mathfrak{F}_{\lambda/\mu},\mathfrak{G}_{\nu/\varkappa})\mapsto
	\bigl(
		\frac{c(\lambda)}{c(\mu)}\,\mathfrak{F}_{\lambda/\mu},
		\frac{c(\varkappa)}{c(\nu)}\,\mathfrak{G}_{\nu/\varkappa}
	\bigr)$,
	where $c(\cdot)$ is nowhere vanishing.
	The same remark applies to the two other skew Cauchy structures --- it does not
	matter which of the two families of functions carries the ``$*$''.
\end{remark}

\subsection{The sqW/sqW skew Cauchy structure}
\label{sub:sqW_sqW_structure}
The spin $q$-Whittaker polynomials also satisfy the following
skew Cauchy identity which follows from the Yang-Baxter
equation \eqref{eq:YBE_W_W}:
\begin{theorem}[{\cite[Section 7.1]{BorodinWheelerSpinq}}]
	\label{thm:sqW_sqw_skew_Cauchy_identity}
	For any two Young diagrams $\lambda,\mu$ and parameters $\xi,\theta\in \mathbb{C}$
	with $|\xi \theta|<1$ we have
	\begin{equation}
		\sum_{\nu} \mathbb{F}_{\nu / \lambda}(\xi) \,\mathbb{F}^*_{\nu / \mu} (\theta) 
		=
		\frac{(- s \xi ;q)_\infty (- s \theta ;q)_\infty}{(s^2;q)_\infty ( \xi \theta ; q)_\infty}
		\sum_{\varkappa} \,\mathbb{F}_{\mu / \varkappa}(\xi)
		\mathbb{F}^*_{\lambda / \varkappa} (\theta). 
		\label{eq:sqW_sqw_skew_Cauchy_identity}
	\end{equation}
\end{theorem}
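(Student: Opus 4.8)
The plan is to derive \eqref{eq:sqW_sqw_skew_Cauchy_identity} from the Yang--Baxter equation \eqref{eq:YBE_W_W}, which relates $W_{\xi,s}$, $W^*_{\theta,s}$ and the associated cross (``$R$-matrix'') vertex, by the same ``drag the crossing of the two spectral lines across the lattice'' argument used to prove \Cref{thm:skew_Cauchy_sHL_sHL} (see \Cref{sub:new_YB_field_sHL_sHL}) and \Cref{thm:sHL_sqW_skew_Cauchy} (see \Cref{sub:new_YB_field_sHL_sqW}); the sqW/sqW case is carried out in \Cref{sub:new_YB_field_sqW_sqW}. Concretely, I would build a single partition function on a two-row lattice over columns $x=0,1,2,\dots$, one row carrying the weights $W_{\xi,s}$ and the other $W^*_{\theta,s}$, with outer boundary states $|\lambda\rangle$ and $|\mu\rangle$ placed so as to match \Cref{fig:quadruplet}, all right-infinite horizontal edges empty, and the left boundary at column $0$ given by the weights \eqref{eq:W_boundary_weights}. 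Inserting the $R$-vertex so that it crosses the two spectral lines, the identity will follow by comparing the two evaluations of this partition function obtained when the crossing is at the far right versus all the way at the left boundary.

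When the crossing sits at the far right, both of its legs along the rows are forced to be empty, so it contributes only a scalar, and the two rows, glued along their common middle vertical state $\nu$, factorize into a $W_{\xi,s}$-row carrying $|\lambda\rangle$ below a $W^*_{\theta,s}$-row carrying $|\mu\rangle$; summing over $\nu$ reproduces the left-hand side $\sum_{\nu}\mathbb{F}_{\nu/\lambda}(\xi)\,\mathbb{F}^*_{\nu/\mu}(\theta)$ of \eqref{eq:sqW_sqw_skew_Cauchy_identity}. Applying \eqref{eq:YBE_W_W} column by column moves the crossing leftward; each step is a local equality of partition functions, so the total (summed over all internal edges) partition function is unchanged. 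Once the crossing has passed every column it reaches the left boundary, where the roles of the two spectral lines have been interchanged, and the lattice again factorizes --- now into a $W^*_{\theta,s}$-row carrying $|\lambda\rangle$ below a $W_{\xi,s}$-row carrying $|\mu\rangle$, glued along a new middle state $\varkappa$ --- producing the sum $\sum_{\varkappa}\mathbb{F}_{\mu/\varkappa}(\xi)\,\mathbb{F}^*_{\lambda/\varkappa}(\theta)$ times the residual weight of the crossing absorbed into the boundary.

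That residual weight is the proportionality constant, and identifying it as $\frac{(-s\xi;q)_\infty(-s\theta;q)_\infty}{(s^2;q)_\infty(\xi\theta;q)_\infty}$ is where the real work lies. At the left boundary the crossing vertex no longer sees empty legs: its legs carry the arrow content injected by \eqref{eq:W_boundary_weights} along the two semi-infinite vertical lines of column $0$, so its effective weight is an infinite sum over those arrow occupations. Evaluating this sum and combining it with the two boundary injection weights is a $q$-series computation that collapses, via the $q$-binomial (equivalently $q$-Gauss) summation, to the claimed infinite $q$-Pochhammer ratio; this is precisely the point at which the hypothesis $|\xi\theta|<1$ enters, simultaneously guaranteeing convergence of the $\nu$-sum on the left-hand side and of this boundary $q$-series. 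I expect this last step --- pinning down the limiting (``fully fused'') weight of the $R$-vertex at the semi-infinite boundary column and rigorously justifying the interchange of the possibly infinite $\nu$-summation with the finitely many Yang--Baxter moves --- to be the main obstacle; the purely algebraic bookkeeping of the YBE moves themselves is routine once the lattice and the cross-vertex weights from \Cref{app:YBE} are fixed.
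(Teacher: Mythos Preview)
Your proposal is correct and matches the paper's approach: the paper derives the identity via the bijectivization machinery of \Cref{sec:YB_fields_through_bijectivisation} applied to the Yang--Baxter equation \eqref{eq:YBE_W_W}, which amounts precisely to the two-row cross-dragging argument you describe (see the reversibility relation \eqref{eq:reversibility_sqW_sqW} in \Cref{sub:new_YB_field_sqW_sqW}, whose proof template is \Cref{prop:reversibility_sHL_sHL}, and sum it over $\varkappa$ and $\nu$). The boundary cross weight you flag as the main obstacle is computed in closed form in \Cref{prop:emergence_of_the_cross_vertex_weight} (by fusion from the $I=J=1$ case rather than a direct $q$-Gauss summation, though either works), and the convergence/interchange issue you raise is handled in \Cref{prop:U_well_defined}.
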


We give a ``bijective'' proof of \Cref{thm:sqW_sqw_skew_Cauchy_identity} 
in \Cref{sub:new_YB_field_sqW_sqW} below. This identity 
motivates the definition of the third 
skew Cauchy structure we consider in the present paper:
\begin{definition}
	\label{def:sqW_sqW_structure}
	The families of functions
	\begin{equation*}
		\mathfrak{F}_{\lambda/\mu}(\theta_1,\ldots,\theta_k )=\mathbb{F}^*_{\lambda/\mu}(\theta_1,\ldots,\theta_k ),\qquad 
		\mathfrak{G}_{\lambda/\mu}(\xi_1,\ldots,\xi_k )=\mathbb{F}_{\lambda/\mu}(\xi_1,\ldots,\xi_k )
	\end{equation*}
	form a skew Cauchy structure in the sense of \Cref{sub:F_G_skew_Cauchy_structure} under the following identifications:
	\begin{enumerate}[label=\bf{(\roman*)}]
		\item The relations $\mu\prec_k\lambda$ and $\mu\mathop{\dot{\prec}_k}\lambda$ are the same and 
			mean the existence of $\varkappa^{(i)}$
			such that $\mu\prec\varkappa^{(1)}\prec\ldots\prec \varkappa^{(k-1)}\prec\lambda $,
			where $\prec$ is the interlacing relation \eqref{eq:interlace_sHL_def}.
		\item The skew Cauchy identity holds with $\mathsf{Adm}=\left\{ (\theta,\xi)\in \mathbb{C}^2\colon |\xi\theta|<1 \right\}$ and
			\begin{equation}
				\label{eq:sqW_Pi}
				\Pi(\theta;\xi)=\dfrac{(-s\xi;q)_{\infty}(-s\theta;q)_{\infty}}{(s^2;q)_{\infty}( \xi \theta;q)_{\infty}}.
			\end{equation}
			Both $\mathsf{Adm}$ and $\Pi$ are symmetric in $\xi$ and $\theta$ so the order is not essential. We
			write $(\theta,\xi)$ to match with the notation of \Cref{sub:F_G_skew_Cauchy_structure}.
		\item The external parameters are $q\in(0,1)$ and $s\in (-1,0)$,
			and
			$\mathsf{P}=\dot{\mathsf{P}}=[-s,-s^{-1}]$. 
			Indeed, 
			$\mathbb{F}^*_{\lambda/\mu}(\theta_1,\ldots,\theta_k )$ and
			$\mathbb{F}_{\lambda/\mu}(\xi_1,\ldots,\xi_k )$
			evaluated 
			at $\xi_i,\theta_j\in [-s,-s^{-1}]$
			are nonnegative due to the nonnegativity of the vertex weights
			\eqref{eq:Whit_W}, \eqref{eq:W_W_tilde_relation}.
	\end{enumerate}
	We call this the \emph{sqW/sqW skew Cauchy structure}. 
\end{definition}

\section{Fusion and analytic continuation}
\label{sec:analytic_continuation}

\subsection{A common generalization of skew Cauchy identities}

The skew Cauchy identities from \Cref{sec:summary_sHL_sqW}
admit a common generalization which can be viewed as an analytic continuation.
In \cite{Borodin2014vertex}, \cite{BorodinPetrov2016inhom}, \emph{principal
specializations} of non-stable spin Hall-Littlewood functions
were considered. They are obtained by 
setting spectral parameters to finite geometric progressions of ratio $q$.
In our context, define 
\begin{align}
    \mathfrak{F}^{(J_1, \dots J_n)}_{\lambda / \mu} (u_1, \dots, u_n) 
		&= 
		\mathsf{F}_{\lambda / \mu}(u_1, q u_1, \dots, q^{J_1-1}u_1, \dots, u_n, q u_n, \dots, q^{J_n-1}u_n) \label{eq:F_principal_spec}
    \\
    \mathfrak{G}^{(I_1, \dots I_m)}_{\lambda / \mu} (v_1, \dots, v_m) 
		&= 
		\mathsf{F}^*_{\lambda / \mu}(v_1, q v_1, \dots, q^{I_1-1}v_1, \dots, v_m, q v_m, \dots, q^{I_m-1}v_m). \label{eq:G_principal_spec}
\end{align}
It is a consequence of the fusion procedure dating back to 
\cite{KulishReshSkl1981yang}
that we can view $\mathfrak{F}^{(J_1, \dots
J_n)}_{\lambda / \mu} (u_1, \dots, u_n)$ as a partition function in a ``smaller'' vertex
model obtained by attaching together $n$ (instead of $J_1+\ldots+J_n$) rows with fused weights
$w^{(J_k)}_{u_k,s}$,
where $k=1,\dots, n$
and
\begin{equation}
	\label{eq:w_fused_J_text}
\begin{split}
	w_{u,s}^{(J)} (i_1,j_1;i_2,j_2) &= \mathbf{1}_{i_1+j_1=i_2+j_2}\,
	\frac{(-1)^{i_1+j_2}q^{\frac{1}{2} i_1(i_1-1+2 j_1)} s^{j_2-i_1} u^{i_1} (u/s;q)_{j_1-i_2} (q;q)_{j_1} }{(q;q)_{i_1} (q;q)_{j_2} (s u;q)_{j_1+i_1}}\\
&
\qquad \qquad \times \setlength\arraycolsep{1pt}
{}_4 \overline{ \phi}_3\left(\begin{minipage}{4cm}
\center{$q^{-i_1}; q^{-i_2},  s u q^J,  qs/u$}\\\center{$s^2,q^{1+j_2-i_1}, q^{1-i_2-j_2+J}$}
\end{minipage}
\Big\vert\, q,q\right),
\end{split}
\end{equation}
where $\setlength\arraycolsep{1pt}{}_4 \overline{ \phi}_3$ is the regularized $q$-hypergeometric series
\eqref{eq:hypergeom_series}.
\begin{remark}
	Note that
	\cite[(31)]{BorodinWheelerSpinq}
	gives a slightly different formula 
	for $w^{(J)}$. However, these two expressions are the 
	same, and the discrepancy in the multiplicative prefactor
	is compensated by the fact that the $_4\overline{\phi}_3$ is 
	not symmetric in the first two arguments.
\end{remark}

Analogously, 
$\mathfrak{G}^{(I_1, \dots I_m)}_{\lambda / \mu} (v_1, \dots, v_m)$ are
partition functions of a vertex model with fused weights $w^{*,(I_k)}_{v_k,s}$,
where
\begin{equation}
	\label{eq:w_fused_I_dual_text}
	w^{*,(I)}_{v,s}(i_1,j_1;i_2,j_2) 
	= 
	\frac{(s^2;q)_{i_1} (q;q)_{i_2} }{ (q;q)_{i_1} (s^2;q)_{i_2} }
	\,
	w^{(I)}_{v,s}(i_2,j_1;i_1;j_2).
\end{equation}
As usual, at the leftmost column of these lattices
we place infinitely many vertical paths. 
More details on the fused weights can be found in
\Cref{app:fusion}.

The weights $w^{(J)}$,
$w^{*,(I)}$ degenerate both to $w, w^*$ and $W, W^*$,
see \Cref{fig:table_positivity_YBE}
below for exact details.
Thus,
\eqref{eq:F_principal_spec},
\eqref{eq:G_principal_spec} interpolate between the spin Hall-Littlewood
functions and the spin $q$-Whittaker functions. 
These functions
satisfy the following
general skew Cauchy identity which we state for an appropriate ``analytic'' range of
parameters:
\begin{theorem} \label{thm:skew_Cauchy_general}
	Fix $m,n\in \mathbb{Z}_{\ge0}$.
	Take $q\in(0,1)$, and let $s \neq 0, u_i, q^{J_i}, v_j, q^{I_j}$ 
	be complex parameters satisfying 
	\begin{equation} \label{eq:condition_parameters_general_skew_Cauchy}
		|s|,|u_k|,|v_l|,|q^{J_k} u_k|,|q^{I_l} v_l|,
		\left| \frac{q^i u_k - s}{1 - q^i s u_k} \right|, 
		\left| \frac{q^i v_l - s}{1 - q^i s v_l} \right|< \delta \qquad \text{for all }k,l,i,
	\end{equation}
	for sufficiently small $\delta >0$ which might depend on $m,n$,
	but not on the other parameters of the 
	functions.\footnote{Here and below in this section
		one can think of 
		$q^{J_k}$ and $q^{I_l}$ as separate symbols independent of $q$,
		because the fused weights $w^{(J)}$ and $w^{*,(I)}$
		depend on $q^{J}$ and $q^{I}$
		in a rational way.
		When $J$ a positive integer, 
		$q^J$ is equal to the $J$-th power of $q$, but we're free to assign
		an arbitrary value to $q^{J}$, for $J$ not necessarily a positive 
		integer (and same for $q^I$).}
	Then we have
	\begin{equation} \label{eq:skew_Cauchy_general}
			\begin{split}
					&\sum_\nu \mathfrak{F}^{(J_1,\dots J_n)}_{\nu / \mu} (u_1, \dots, u_n)
					\mathfrak{G}^{(I_1,\dots I_m)}_{\nu / \lambda} (v_1, \dots, v_m) 
					\\
					&\hspace{10pt} = 
					\prod_{k=1}^{n}\prod_{l=1}^{m}
				\frac{(u_k v_l q^{I_l};q)_\infty (u_k v_l q^{J_k};q)_\infty}
				{(u_k v_l;q)_\infty (u_k v_l q^{I_l + J_k};q)_\infty} \,
				\sum_{\varkappa} \mathfrak{F}_{\lambda/\varkappa}^{(J_1, \dots, J_n)}(u_1, \dots, u_n) 
				\mathfrak{G}^{(I_1, \dots, I_m)}_{\mu / \varkappa} (v_1, \dots, v_m).
			\end{split}
	\end{equation}
\end{theorem}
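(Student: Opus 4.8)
\emph{Proof strategy.} The plan is to prove \eqref{eq:skew_Cauchy_general} first in the special case where all exponents $J_k$ and $I_l$ are positive integers --- where it is a direct specialization of the already-established sHL skew Cauchy identity --- and then to pass to general complex $q^{J_k},q^{I_l}$ by analytic continuation, exploiting that $\{q^N:N\in\mathbb{Z}_{\ge1}\}$ accumulates at $0$ because $q\in(0,1)$.

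\emph{The integer case.} When $J_k,I_l\in\mathbb{Z}_{\ge1}$, the functions in \eqref{eq:F_principal_spec}--\eqref{eq:G_principal_spec} are genuine evaluations of the multivariable sHL functions $\mathsf{F}_{\lambda/\mu}$ and $\mathsf{F}^*_{\lambda/\mu}$ at $N:=J_1+\dots+J_n$ and $M:=I_1+\dots+I_m$ spectral parameters. I would substitute the geometric progressions $u_k,qu_k,\dots,q^{J_k-1}u_k$ and $v_l,qv_l,\dots,q^{I_l-1}v_l$ directly into the multivariable sHL/sHL skew Cauchy identity \eqref{eq:F_G_multivar_skew_Cauchy} for the structure of \Cref{def:sHL_sHL_structure} (which is the branching iterate of \Cref{thm:skew_Cauchy_sHL_sHL}). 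For $\delta$ small, condition \eqref{eq:condition_parameters_general_skew_Cauchy} forces every pair $(q^a u_k,q^b v_l)$ into the admissible set $\mathsf{Adm}$, so the left-hand sum converges. The prefactor that \eqref{eq:F_G_multivar_skew_Cauchy} produces is $\prod_{k,l}\prod_{a=0}^{J_k-1}\prod_{b=0}^{I_l-1}\frac{1-q^{a+b+1}u_k v_l}{1-q^{a+b}u_k v_l}$; summing the inner product over $b$ telescopes it to $\frac{1-q^{a+I_l}u_k v_l}{1-q^a u_k v_l}$, and then summing over $a$ telescopes to $\frac{(q^{I_l}u_k v_l;q)_{J_k}}{(u_k v_l;q)_{J_k}}$, which, upon rewriting each finite $q$-Pochhammer symbol as a ratio of infinite ones, equals exactly the product in \eqref{eq:skew_Cauchy_general}. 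This settles the integer case.

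\emph{Analytic continuation and the main difficulty.} By fusion (see \Cref{app:fusion}), $\mathfrak{F}^{(J_1,\dots,J_n)}_{\lambda/\mu}$ and $\mathfrak{G}^{(I_1,\dots,I_m)}_{\lambda/\mu}$ are partition functions assembled from the fused weights $w^{(J_k)}_{u_k,s}$ and $w^{*,(I_l)}_{v_l,s}$ of \eqref{eq:w_fused_J_text}--\eqref{eq:w_fused_I_dual_text}, which are rational in $q^{J_k}$ and $q^{I_l}$. Consequently both sides of \eqref{eq:skew_Cauchy_general}, viewed as functions of $(q^{J_1},\dots,q^{J_n},q^{I_1},\dots,q^{I_m})$ on the region carved out by \eqref{eq:condition_parameters_general_skew_Cauchy}, are analytic: the right-hand side because the $\varkappa$-sum is finite and each summand is rational, and the left-hand side because --- and this is the crux --- the terms $\mathfrak{F}^{(J)}_{\nu/\mu}(u)\,\mathfrak{G}^{(I)}_{\nu/\lambda}(v)$ decay geometrically in $|\nu|$, uniformly over the parameter region, so the series is a locally uniform limit of analytic functions. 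The exponent bound \eqref{eq:condition_parameters_general_skew_Cauchy} is calibrated precisely for this: the quantities $|(q^i u_k-s)/(1-q^i s u_k)|$ are the per-column weights that accumulate along the large parts of $\nu$ (they appear raised to the power $\nu_i$ in the symmetrization formula for $\mathsf{F}_\nu$), so bounding all of them by $\delta<1$ makes the summand $O(\delta^{|\nu|/C})$; carrying this estimate out carefully --- e.g.\ from the symmetrization formula for $\mathsf{F}_{\nu/\varnothing}$ together with the branching rule \eqref{eq:F_stable_branching_rule}, or from the vertex-model expansion of the fused weights --- is where the real work lies, everything else being bookkeeping. Granting the analyticity, since the region is open and contains the locus $q^{J_1}=\dots=q^{J_n}=q^{I_1}=\dots=q^{I_m}=0$ near which the values $\{q^N\}_{N\ge1}$ accumulate, the identity \eqref{eq:skew_Cauchy_general} --- already valid on the lattice of positive-integer exponents --- extends to all admissible parameters by the identity theorem, applied one variable at a time.
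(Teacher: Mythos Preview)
Your proposal is correct and follows essentially the same route as the paper: establish the integer case from the sHL/sHL skew Cauchy identity, observe that the fused weights are rational in $q^{J_k},q^{I_l}$ so each side extends analytically, and invoke the identity theorem using the accumulation of $\{q^N\}_{N\ge 1}$ at $0$. Your explicit telescoping of the prefactor and the one-variable-at-a-time continuation are exactly what is needed. The paper isolates the ``real work'' you flag --- absolute summability of the left-hand side uniformly on \eqref{eq:condition_parameters_general_skew_Cauchy} --- as a separate proposition (\Cref{prop:sHL_absolute_integrability}) and proves it not via the symmetrization formula (which only covers $\mu=\varnothing$) but via your second suggestion, the vertex-model expansion: it bounds each fused weight $w^{(J)}_{u,s}(i_1,j_1;i_2,j_2)$ by $C\delta^{j_2}$ and iterates through the branching rule.
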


\begin{remark}
    This identity immediately
		degenerates to 
		the skew Cauchy identities
		\eqref{eq:skew_Cauchy_sHL_sHL}, 
		\eqref{eq:sHL_sqW_skew_Cauchy}, and \eqref{eq:sqW_sqw_skew_Cauchy_identity} 
		after specializing the parameters
		$u_k,v_l$ and $q^{J_k},q^{I_l}$
		as in \Cref{fig:table_positivity_YBE} below.
\end{remark}

The proof of \Cref{thm:skew_Cauchy_general} requires an absolute convergence 
result for spin Hall-Littlewood functions 
with principal specializations:
\begin{proposition} \label{prop:sHL_absolute_integrability}
	Fix $n\in \mathbb{Z}_{\ge1}$.
	Let $q\in(0,1)$.
	Take $s \neq 0, u_i, q^{J_i}$ to be complex parameters satisfying 
	\begin{equation}
			|s|,|u_k|,|q^{J_k} u_k|,\left| \frac{q^i u_k - s}{1 - q^i s u_k} \right|< \delta \qquad \text{for all }k,i,
	\end{equation}
	for some $\delta=\delta_n>0$ (which might depend on $n$). Then
for all Young diagrams $\mu$ we have 
\begin{equation}\label{eq:sHL_absolute_integrability}
    \sum_{\lambda\colon \mu \subseteq \lambda } 
		\left| \mathfrak{F}^{(J_1,\dots J_n)}_{\lambda / \mu} (u_1, \dots, u_n) \right| < \infty.
\end{equation}
\end{proposition}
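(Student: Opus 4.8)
The plan is to reduce, via the branching rule, to an estimate for a single fused horizontal slice, where one exploits that enlarging a Young diagram by one box costs a uniformly small factor under the stated hypotheses. First I would use \eqref{eq:F_stable_branching_rule} together with the product structure of the principal specialization to write
\[
	\mathfrak F^{(J_1,\dots,J_n)}_{\lambda/\mu}(u_1,\dots,u_n)
	=\sum_{\mu=\nu^{(0)},\,\nu^{(1)},\,\dots,\,\nu^{(n)}=\lambda}\ \prod_{k=1}^{n}\mathfrak F^{(J_k)}_{\nu^{(k)}/\nu^{(k-1)}}(u_k),
\]
where $\mathfrak F^{(J)}_{\nu/\varkappa}(u)$ is the partition function of a single slice with the fused weight $w^{(J)}_{u,s}$ of \eqref{eq:w_fused_J_text}; for $J\in\mathbb{Z}_{\ge 1}$ it equals $\mathsf F_{\nu/\varkappa}(u,qu,\dots,q^{J-1}u)$, which vanishes unless $\ell(\nu)\le\ell(\varkappa)+J$. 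Taking absolute values and summing over $\lambda$ yields the nested bound
\[
	\sum_{\lambda\supseteq\mu}\bigl|\mathfrak F^{(J_1,\dots,J_n)}_{\lambda/\mu}(u_1,\dots,u_n)\bigr|
	\le\sum_{\nu^{(1)}}\bigl|\mathfrak F^{(J_1)}_{\nu^{(1)}/\mu}(u_1)\bigr|
	\sum_{\nu^{(2)}}\bigl|\mathfrak F^{(J_2)}_{\nu^{(2)}/\nu^{(1)}}(u_2)\bigr|\cdots
	\sum_{\nu^{(n)}}\bigl|\mathfrak F^{(J_n)}_{\nu^{(n)}/\nu^{(n-1)}}(u_n)\bigr|,
\]
so it is enough to bound each inner sum $\sum_{\nu}\bigl|\mathfrak F^{(J)}_{\nu/\varkappa}(u)\bigr|$ by an expression growing in $\ell(\varkappa)$ at most like $C^{\,\ell(\varkappa)}$ times a polynomial: since along any chain the lengths $\ell(\nu^{(k)})\le\ell(\mu)+J_1+\dots+J_k$ stay bounded, the nested sum then collapses to a finite product.

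For the single-slice estimate, when $J\in\mathbb{Z}_{\ge 1}$ I would apply branching once more to reduce to a single variable $v$ (one of the $q^{j}u_k$). A nonzero $\mathsf F_{\nu/\varkappa}(v)$ with $\nu\neq\varkappa$ is the weight of the unique arrow configuration in which one horizontal arrow is injected at the left boundary, or created from a vertical arrow at a column $c_0$, then travels to the right and turns up at a column $c_1>c_0$; thus $\nu$ arises from $\varkappa$ by adjoining (resp. transporting) one arrow to column $c_1$ (resp. from $c_0$ to $c_1$), and $|\nu|-|\varkappa|$ equals $c_1$ (resp. $c_1-c_0$), while $\mathsf F_{\varkappa/\varkappa}(v)$ is just a finite product of ``do-nothing'' weights. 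Reading off \Cref{fig:table_w} and \eqref{eq:w_boundary}: the injection weight $v$ and every vertex at an otherwise-empty column through which the horizontal arrow passes have modulus $<\delta$ under the hypotheses (the latter being, after specialization, a ratio $|(q^{i}v-s)/(1-q^{i}sv)|$ bounded by $\delta$), the finitely many vertices carrying a vertical arrow and the turn-up vertex have modulus bounded by a constant, and vertices where nothing happens at an empty column contribute exactly $1$. Hence $\bigl|\mathsf F_{\nu/\varkappa}(v)\bigr|\le C^{\,\ell(\varkappa)}\delta^{\,|\nu|-|\varkappa|}$ for a constant $C=C(\delta)$, and since for fixed $\varkappa$ there are at most $\ell(\varkappa)+1$ partitions $\nu$ with a prescribed value of $|\nu|-|\varkappa|$ (one per source column $c_0$, counting injection), $\sum_{\nu}\bigl|\mathsf F_{\nu/\varkappa}(v)\bigr|\le C^{\,\ell(\varkappa)}(\ell(\varkappa)+1)\sum_{N\ge0}\delta^{N}<\infty$. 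Composing $J$ slices (the length increasing by at most one per slice) gives the required bound for $\mathfrak F^{(J)}_{\nu/\varkappa}(u)$, and the nested sum then proves \eqref{eq:sHL_absolute_integrability} for integer exponents.

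For complex $q^{J}$ the configuration picture degenerates --- $w^{(J)}_{u,s}(i_1,j_1;i_2,j_2)$ from \eqref{eq:w_fused_J_text} no longer vanishes for large horizontal multiplicities --- so I would instead estimate $w^{(J)}_{u,s}$ directly from its closed form. Its prefactor carries $s^{\,j_2-i_1}u^{\,i_1}q^{\frac{1}{2}i_1(i_1-1+2j_1)}$ and the regularized ${}_{4}\overline{\phi}_{3}$ is a finite sum of $i_1+1$ terms, so the smallness of $|s|,|u|,|q^{J}u|$ and of the ratios in the hypotheses, combined with the super-exponential $q$-powers, should yield a bound of the shape $|w^{(J)}_{u,s}(i_1,j_1;i_2,j_2)|\le C\,\delta_0^{\,j_2}$ in the ``bulk'' (where $i_1=0$ and ${}_{4}\overline{\phi}_{3}=1$) together with a bound uniform in $i_1$ at the finitely many columns where $\varkappa$ has vertical arrows; after that the geometric summation over horizontal profiles goes through as before, with $\delta_n$ absorbing the $n$-dependence of the nested product. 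I expect this weight bound to be the main obstacle: one must verify that the Gaussian-type $q$-powers coming from the terminating $q$-hypergeometric series dominate the factor $|s|^{-i_1}$ and the $q$-Pochhammer symbols with arguments such as $q^{1+j_2-i_1}$ that become large when $i_1>j_2$ --- an elementary but delicate estimate on $q$-shifted factorials.
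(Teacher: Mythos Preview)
Your high-level plan --- branch into single fused slices and bound each by $\delta^{|\lambda|-|\mu|}$ times something controlled by $\mu$, then iterate --- is the paper's strategy, and your nested-sum inequality is exactly its starting point. But two concrete gaps remain.

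First, the single-variable picture is not ``one horizontal arrow injected at $c_0$, absorbed at $c_1$'': for $\varkappa\prec\nu$ the horizontal occupancy can switch on and off several times (e.g.\ $\varkappa=(3,1)$, $\nu=(4,2)$ has two disjoint segments $1\to2$ and $3\to4$), so your count ``at most $\ell(\varkappa)+1$ partitions $\nu$ with $|\nu|-|\varkappa|=N$'' is false already for $\ell(\varkappa)=2$. Second --- and this is the real obstruction --- your iteration relies on $\ell(\nu^{(k)})\le\ell(\mu)+J_1+\dots+J_k$ staying bounded, but that holds only for integer $J_k$. For generic $q^{J_k}$ (which is the whole point of the proposition; the integer case is already covered by the unfused Cauchy identity) the weight $w^{(J)}_{u,s}(i_1,j_1;i_2,j_2)$ does not vanish for large $i_2$, so the lengths $\ell(\nu^{(k)})$ are unbounded and the scheme ``bound each slice by $C^{\ell}$ with $\ell$ bounded'' collapses. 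The paper handles this by proving directly from the ${}_4\overline\phi_3$ expansion that $|w^{(J)}_{u,s}(i_1,j_1;i_2,j_2)|\le(\min(i_1,j_2)+1)\,C\,\delta^{j_2}$ (\Cref{lemma:bound_w_J_1}; the $i_1>j_2$ case you flag is dealt with by rewriting the negative-index $q$-Pochhammer), whence $|\mathfrak F^{(J)}_{\lambda/\mu}(u)|\le C^{M(\lambda,\mu)}\prod_{i\ge1}(m_i(\mu)+1)\,\delta^{|\lambda|-|\mu|}$ (\Cref{lemma:bound_sHL}). A separate summation lemma (\Cref{lemma:sum_for_bound}) then shows $\sum_{\nu\supseteq\mu}C^{M(\nu,\mu)}\prod_{i\ge1}(m_i(\nu)+1)\,A^{\ell(\nu)}\,\delta^{|\nu|-|\mu|}\le C_1 A_1^{\ell(\mu)}$ by decomposing $\nu$ into the part lying over $\mu$ and a free part born at column $0$; the exponential factor $A_1^{\ell}$ is absorbed at the next iteration by shrinking $\delta$, which is precisely why $\delta_n$ depends on $n$.
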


The proof of \Cref{prop:sHL_absolute_integrability} will be given later in \Cref{sub:absolute_summability}. 
First we use its result to justify the general Cauchy identity \eqref{eq:skew_Cauchy_general}:

\begin{proof}[Proof of \Cref{thm:skew_Cauchy_general} modulo \Cref{prop:sHL_absolute_integrability}]
	By \Cref{thm:skew_Cauchy_sHL_sHL}, identity
	\eqref{eq:skew_Cauchy_general} holds in case $J_1, \dots,
	J_n, I_1, \dots, I_m$ are positive integers. 
	Functions
	$\mathfrak{F}^{(J_1, \dots, J_n)}_{\lambda / \mu},
	\mathfrak{G}^{(I_1, \dots, I_m)}_{\lambda / \mu}$ 
	are finite sums of
	finite products of weights $w^{(J_k)}_{u_k,s}, w^{*,(I_l)}_{v_l,s}$ which
	are
	rational
	functions of $q^{J_k},q^{I_l}$. 
	Therefore,
	$\mathfrak{F}^{(J_1, \dots, J_n)}_{\lambda / \mu},
	\mathfrak{G}^{(I_1, \dots, I_m)}_{\lambda / \mu}$ 
	admit an
	extension to generic complex numbers 
	$q^{J_k},q^{I_l}$.
	This
	implies that the right-hand side of \eqref{eq:skew_Cauchy_general}
	extends to $q^{J_k}, q^{I_l}$ in a complex region, too,
	since the sum over $\varkappa$ is finite (it ranges over $\varkappa
	\subseteq \mu , \lambda$). 
	
	The summation in the left-hand side of
	\eqref{eq:skew_Cauchy_general} has infinitely many terms as the only
	condition on $\nu$ is that $\mu, \lambda \subseteq \nu$. Therefore,
	to show that it can be extended to parameters $q^{J_k},q^{I_l}$ in
	a complex region we need a result of absolute convergence of the sum
	over $\nu$. Under assumptions
	\eqref{eq:condition_parameters_general_skew_Cauchy}, this is a
	consequence of Proposition \ref{prop:sHL_absolute_integrability}.
	Therefore, the left-hand side of \eqref{eq:skew_Cauchy_general} can
	be extended, too.
	
	The equality between the two sides of \eqref{eq:skew_Cauchy_general}
	in a wider region 
	\eqref{eq:condition_parameters_general_skew_Cauchy}
	follows because these expressions agree for infinitely many values of 
	$J_k,I_l$, namely, positive integers: if $|u_k|<\delta$, then $|u_k q^{J_k}|<\delta$ for 
	all $J_k\in \mathbb{Z}_{\ge1}$. This completes the proof.
\end{proof}

Despite the fact that the general skew Cauchy identity 
(\Cref{thm:skew_Cauchy_general})
offers a unified description of all skew
Cauchy structures we study, throughout the text we still consider
possible degenerations separately. 
There are several reasons for this.
First, the spin Hall-Littlewood and the spin $q$-Whittaker functions 
are more basic from an
algebraic standpoint (see, e.g., \Cref{sec:diff_op} where we 
describe difference operators diagonalized by these functions). 
Second, when $u,q^J,v,q^I$ are general
parameters, it is difficult to give a probabilistic interpretation of
the random fields --- the positivity of the measure obtained by 
multiplying $\mathfrak{F}$ and $\mathfrak{G}$ 
is in general not guaranteed. 

\subsection{Absolute summability}
\label{sub:absolute_summability}

We now turn to the proof of the absolute summability result of 
\Cref{prop:sHL_absolute_integrability}.
This proof requires explicit expressions for the fused weights
which can be found in \Cref{app:YBE}.
We begin with a number of preliminary estimates,
and assume that $q\in(0,1)$ throughout the subsection.

\begin{lemma} \label{lemma:bound_w_J_1}
Consider the fused weights $w^{(J)}_{u,s}(i_1,j_1;i_2,j_2)$ defined in \eqref{eq:w_fused_J_text}, with $\gamma=q^J\in\mathbb{C}$
and
$i_1,j_1,i_2,j_2\in \mathbb{Z}_{\ge0}$.
Let $s\neq 0$ and $\delta = \max \{ |s|, |u|, |\gamma u| \} <1$. Then 
\begin{equation} \label{eq:w_J_bound}
	\left| w^{(J)}_{u,s}(i_1,j_1;i_2,j_2) \right| \le (\min\{i_1,j_2\}+1) \, C \delta^{j_2}, 
\end{equation}
where $C$ is a positive constant independent of the vertex configuration $(i_1,j_1;i_2,j_2)$.
\end{lemma}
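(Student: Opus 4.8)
The plan is to estimate $w^{(J)}_{u,s}(i_1,j_1;i_2,j_2)$ directly from the explicit formula \eqref{eq:w_fused_J_text}. We may assume arrow conservation $i_1+j_1=i_2+j_2$ (otherwise the weight is $0$ and there is nothing to prove) and set $d:=j_2-i_1=j_1-i_2$. Expanding the regularized $_4\overline{\phi}_3$ of \eqref{eq:w_fused_J_text} according to its definition \eqref{eq:hypergeom_series} presents $w^{(J)}_{u,s}(i_1,j_1;i_2,j_2)$ as a sum of $i_1+1$ terms indexed by $r=0,1,\ldots,i_1$.

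First I would pin down the combinatorial factor $\min\{i_1,j_2\}+1$. In \eqref{eq:hypergeom_series} the middle lower parameter $b_2=q^{1+j_2-i_1}$ produces, in the $r$-th term, the Pochhammer $(q^{r+1+j_2-i_1};q)_{i_1-r}$, which contains the factor $1-q^{0}$ — and so vanishes — as soon as $r\le i_1-j_2-1$. Hence only the indices $r$ with $\max\{0,i_1-j_2\}\le r\le i_1$ survive, and a direct count gives exactly $\min\{i_1,j_2\}+1$ of them (some of these may vanish as well, e.g.\ the term is $0$ unless $r\le i_2$ because of $(q^{-i_2};q)_r$, but only the upper bound on the number of surviving terms is needed).

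The core of the argument is to bound each surviving term by $C\delta^{j_2}$ with $C$ independent of the configuration. Most Pochhammers are harmless: the $(q;q)_\bullet$'s combine into a product of $q$-binomial-type ratios bounded by a fixed power of $(q;q)_\infty^{-1}$, while $(s^2;q)_\bullet$, $(suq^J;q)_\bullet$, $(su;q)_\bullet^{-1}$ are bounded by $(-\delta^2;q)_\infty$, resp.\ $(\delta^2;q)_\infty^{-1}$, since $|s^2|,|suq^J|,|su|\le\delta^2<1$. The delicate part is the interplay of the remaining factors — the prefactor $q^{\frac12 i_1(i_1-1+2j_1)}s^{j_2-i_1}u^{i_1}(u/s;q)_{d}$ and, inside \eqref{eq:hypergeom_series}, the factors $q^{r}$, $(q^{-i_1};q)_r$, $(q^{-i_2};q)_r$, $(qs/u;q)_r$, $(\gamma q^{r+1-i_2-j_2};q)_{i_1-r}$. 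Each of these that carries negative $q$-exponents is first ``reflected'': $(q^{-n};q)_k=(-1)^kq^{\binom k2-nk}\tfrac{(q;q)_n}{(q;q)_{n-k}}$ turns $(q^{-i_1};q)_r$, $(q^{-i_2};q)_r$ into bounded $(q;q)$-ratios times the nonpositive powers $q^{\binom r2-i_1 r}$, $q^{\binom r2-i_2 r}$; any negative-exponent factor of $(\gamma q^{r+1-i_2-j_2};q)_{i_1-r}$ is rewritten as $-\gamma q^{e}(1-\gamma^{-1}q^{-e})$, splitting off a nonpositive power of $q$ and a power of $\gamma$ (absorbed into $C$) while leaving bounded residual factors; and for $d<0$ one uses $(u/s;q)_{d}=(-s/u)^{-d}q^{(-d)(1-d)/2}(qs/u;q)_{-d}^{-1}$, whose denominator partially cancels the factor $(qs/u;q)_r$ present in every surviving term (recall $r\ge-d=i_1-j_2$ there). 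After these rewrites every negative power of $s$ and every power of $u$ and of $s/u$ must be shown to cancel, and all the nonpositive powers of $q$ that were split off must cancel against $q^{\frac12 i_1(i_1-1+2j_1)}q^{r}$; the net outcome — which one checks directly when $i_1=0$ or $j_2=0$ — is that only a nonnegative power of $q$ (hence $\le1$) survives, multiplied by a product of exactly $j_2$ factors, each of the form $q^\ell u-s$ or $u$ for some $\ell\ge0$. At this point the hypothesis \eqref{eq:condition_parameters_general_skew_Cauchy} is used in its full strength, not merely as $|s|,|u|<\delta$: since $|q^\ell u-s|<\delta\,|1-q^\ell su|\le\delta(1+q^\ell\delta^2)$ and $|u|\le\delta$, this product is $<\delta^{j_2}\prod_{\ell\ge0}(1+q^\ell\delta^2)=\delta^{j_2}(-\delta^2;q)_\infty$.

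Summing the at most $\min\{i_1,j_2\}+1$ surviving terms, each bounded by a constant times $\delta^{j_2}$, yields \eqref{eq:w_J_bound}. The main obstacle is exactly the bookkeeping of the previous paragraph — verifying that every negative power of $s$, $u$, $s/u$ and $q$ genuinely cancels and that precisely $j_2$ small factors remain, uniformly in $r$ and in the configuration. It is cleanest to carry this out in the two cases $j_2\ge i_1$ (where $d\ge0$ and one uses $(u/s;q)_d=(-1)^d s^{-d}\prod_{\ell=0}^{d-1}(q^\ell u-s)$ directly) and $j_2<i_1$ (where the $(qs/u;q)$-cancellation just described is what supplies the missing small factors).
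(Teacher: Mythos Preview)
Your overall strategy---expand the regularized ${}_4\overline\phi_3$, count the surviving terms via the vanishing of $(q^{r+1+j_2-i_1};q)_{i_1-r}$, and bound each survivor by $C\delta^{j_2}$---is exactly the paper's, and your count $\min\{i_1,j_2\}+1$ is correct. The gap is in the bookkeeping for the $\gamma$-dependent piece.

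You write that each negative-exponent factor of $(\gamma q^{r+1-i_2-j_2};q)_{i_1-r}$ is rewritten as $-\gamma q^e(1-\gamma^{-1}q^{-e})$, with the power of $\gamma$ ``absorbed into $C$'' and the residual $(1-\gamma^{-1}q^{-e})$ bounded. Neither claim holds under the lemma's hypotheses: only $|s|,|u|,|\gamma u|<\delta$ are assumed, so $|\gamma|$ itself is unbounded above (take $u$ small) and $|\gamma^{-1}|$ is unbounded as well (take $\gamma=q^J$ with $J$ a large integer). Hence $\gamma^{\text{(number of factors)}}$ cannot go into a configuration-independent constant, and $(1-\gamma^{-1}q^{-e})$ is not uniformly bounded. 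Consequently your endgame---that the surviving small factors are exactly $j_2$ terms of the form $q^\ell u-s$ or $u$---cannot be right: the $\gamma$-dependence has to show up somewhere.

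The fix (and what the paper does) is to pair the $\gamma$'s with leftover $u$'s rather than with a constant. Concretely, after using arrow conservation to rewrite, one arrives at
\[
s^{j_2-i_1}(u/s;q)_{j_2-i_1}\cdot u^{k}(qs/u;q)_k\cdot u^{i_1-k}\gamma^{i_1-k}(q^{j_1}/\gamma;q)_{i_1-k}
\]
(in the case $j_2\ge i_1$; the other case is similar). The three blocks are products of $j_2-i_1$, $k$, and $i_1-k$ linear factors, namely $s-q^\ell u$, $u-q^\ell s$, and $u\gamma-uq^{j_1+\ell}$; each is bounded by $\delta(1+q^\ell)$ using only $|s|,|u|,|u\gamma|<\delta$, and $\prod_{\ell\ge0}(1+q^\ell)$ is a genuine constant. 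This is also why you do not need (and should not invoke) the ratio condition $|q^\ell u-s|<\delta|1-q^\ell su|$ from \eqref{eq:condition_parameters_general_skew_Cauchy}: that hypothesis is not part of this lemma, and the cruder bound $|q^\ell u-s|\le(1+q^\ell)\delta$ already suffices.
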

\begin{proof}
	Expand $w^{(J)}$ combining \eqref{eq:w_fused_J_text} and \eqref{eq:hypergeom_series} as
	\begin{equation} \label{eq:w_J_expansion}
	\begin{split}
			&\frac{(-1)^{i_1+j_2}q^{\frac{1}{2} i_1(i_1-1+2 j_1)} s^{j_2-i_1} u^{i_1} (u/s;q)_{j_1-i_2} (q;q)_{j_1} }{(q;q)_{i_1} (q;q)_{j_2} (s u;q)_{j_1+i_1}}
			\\
			&
			\hspace{10pt}\times
			\sum_{k=0}^{i_1} \frac{q^k}{(q;q)_k}\,
			(q^{-i_1};q )_k( q^{-i_2};q )_k( su \gamma;q )_k( qs/u ;q )_k
			\\&
			\hspace{80pt}\times
			(q^k s^2;q )_{i_1-k}( q^{1+j_2-i_1+k};q )_{i_1-k}( \gamma q^{1-i_2-j_2+k};q )_{i_1-k}.
	\end{split}
	\end{equation}
First, the factor
\begin{equation*}
    \frac{(-1)^{i_1+j_2} (q;q)_{j_1} (su \gamma ;q)_k (q^k s^2;q)_{i_1-k} }{(q;q)_{i_1} (q;q)_{j_2} (s u;q)_{j_1+i_1} (q;q)_k}
\end{equation*}
is bounded in absolute value by a constant independent of $i_1,j_1,i_2,j_2$. The $q$-Pochhammer symbol $(q^{1+j_2-i_1+k};q)_{i_1-k}$ vanishes unless $k \ge i_1 - j_2 $ and its contribution is bounded in absolute value by 1.
The remaining factors are
\begin{equation*}
	q^{\frac{1}{2} i_1(i_1-1+2 j_1)} s^{j_2-i_1} u^{i_1} (u/s;q)_{j_1-i_2}
	q^k
	(q^{-i_1};q )_k( q^{-i_2};q )_k( qs/u ;q )_k
	( \gamma q^{1-i_2-j_2+k};q )_{i_1-k}.
\end{equation*}
Rewrite
\begin{equation} \label{eq:w_J_expansion_1}
    q^{i_1 j_1} (q^{-i_2};q)_k (\gamma q^{1-i_2-j_2+k};q)_{i_1-k} 
		=
		\prod_{l=0}^{k-1} (q^{j_1} - q^{-i_1+j_2 +l}) \prod_{l=0}^{i_1 - k -1}(q^{j_1} - \gamma q^{-l} ),
		%LP: checked
\end{equation}
where we used the arrow conservation property, and
\begin{equation} \label{eq:w_J_expansion_2}
    q^{i_1(i_1 - 1)/2 + k} (q^{-i_1};q)_k 
		\prod_{l=0}^{i_1 - k -1}(q^{j_1} - \gamma q^{-l} ) 
		=
		(-1)^{i_1} \gamma^{i_1-k} (q^{i_1-k+1};q)_k (q^{j_1}/\gamma;q)_{i_1 -k}. 
		%LP: checked
\end{equation}
The factors 
$(-1)^{i_1} (q^{i_1-k+1};q)_k$ are bounded in the absolute value.
By substituting \eqref{eq:w_J_expansion_1}, \eqref{eq:w_J_expansion_2} into \eqref{eq:w_J_expansion},
we see that it remains to 
address the term
\begin{equation} \label{eq:w_J_expansion_3}
	s^{j_2-i_1} u^{i_1} (u/s;q)_{j_2-i_1} (qs/u;q)_{k}  \gamma^{i_1-k} (q^{j_1}/\gamma ; q)_{i_1-k} 
	\prod_{l=0}^{k-1}(q^{j_1} - q^{j_2 - i_1 +l}).
\end{equation}
We consider two cases based on the sign of $j_2-i_1$.

\medskip\noindent
\textbf{Case $j_2 \ge i_1$.} 
The factor $\prod_{l=0}^{k-1}(q^{j_1} - q^{j_2 - i_1 +l})$ in \eqref{eq:w_J_expansion_3}
is bounded by a constant independent of the vertex configuration. The remaining terms are
\begin{equation*}
	s^{j_2-i_1}(u/s;q)_{j_2-i_1}\cdot
	u^{i_1} (qs/u;q)_{k}\cdot \gamma^{i_1-k} (q^{j_1}/\gamma ; q)_{i_1-k}.
\end{equation*}
Distributing the factors $s$, $u$, and $\gamma$ into the $q$-Pochhammer
symbols, we can bound the above expression by $\mathrm{const}\cdot \delta^{j_2}$,
where $j_2$ is the total number of terms in the product.
Note that the estimate works uniformly for small $s,u,\gamma$, too.

\medskip\noindent
\textbf{Case $i_1>j_2$.}
Rewriting the $q$-Pochhammer symbol with the negative 
index (cf. \eqref{eq:q_Pochhammer}) and using the fact that $i_1-j_2 \le k \le i_1$, 
we have
\begin{equation*}
	\eqref{eq:w_J_expansion_3}
	=
	(-1)^{i_1-j_2} u^{j_2} (q^{i_1 - j_2 + 1}s/u ;q)_{k-i_1+j_2} \gamma^{i_1 - k}(q^{j_1}/\gamma;q)_{i_1-k} \, 
	q^{\textstyle\binom{i_1 - j_2 +1}{2}} 
	\prod_{l=0}^{k-1}(q^{j_1} - q^{j_2-i_1+l}).
	%LP: checked
\end{equation*}
The term $(-1)^{i_1-j_2} q^{\binom{i_1 - j_2 +1}{2}} \prod_{l=0}^{k-1}(q^{j_1} - q^{j_2-i_1+l})$ 
is bounded.
The contribution of the term
\begin{equation*}
    u^{j_2} (q^{i_1 - j_2 + 1}s/u ;q)_{k-i_1+j_2} \gamma^{i_1 - k}(q^{j_1}/\gamma;q)_{i_1-k}
\end{equation*}
is bounded by $\mathrm{const} \cdot \delta^{j_2}$
similarly to the previous case.

\medskip 

We see that \eqref{eq:w_J_expansion} can be written as a sum of terms bounded by $\mathrm{const} \cdot \delta^{j_2}$.
Because the number of terms is 
\begin{equation*}
	\le \min\{i_1,i_2\}+1-\max\{0,i_1-j_2\}\le \min\{i_1,j_2\}+1,
\end{equation*}
we get the desired bound.
\end{proof}

\begin{lemma} \label{lemma:bound_w_J_2}
Let 
\begin{equation}
 \sup_{n\in \mathbb{Z}_{\ge 0}} \left| \frac{q^n u - s}{1 - q^n s u} \right| < \delta < 1.
\end{equation}
Then 
\begin{equation}
    |w^{(J)}_{u,s}(0,j_1;i_2,j_2)| \le C(i_2) \, \delta^{j_2},
\end{equation}
where $C(0)=1$ and $C=\sup_k\{C(k)\}<\infty$.
\end{lemma}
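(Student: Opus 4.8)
The plan is to evaluate the fused weight \eqref{eq:w_fused_J_text} explicitly at $i_1 = 0$. In that case the leading parameter of the regularized terminating series is $q^{-i_1} = 1$, so by the defining formula \eqref{eq:hypergeom_series} the sum ${}_4\overline{\phi}_3$ reduces to its $j = 0$ term, which equals $1$. Arrow conservation forces $j_1 = i_2 + j_2$, and the combinatorial prefactor collapses (using $(q;q)_{i_1} = 1$, $u^{i_1} = 1$, $q^{\frac12 i_1(i_1 - 1 + 2j_1)} = 1$, and $(-1)^{j_2} s^{j_2}(u/s;q)_{j_2} = \prod_{l=0}^{j_2-1}(uq^l - s)$) to
\begin{equation*}
	w^{(J)}_{u,s}(0,j_1;i_2,j_2) = \mathbf{1}_{j_1 = i_2 + j_2}\,
	\frac{\prod_{l=0}^{j_2-1}(uq^l - s)}{(q;q)_{j_2}}\cdot\frac{(q;q)_{j_1}}{(su;q)_{j_1}}.
\end{equation*}
Notice $q^J$ has disappeared, which is to be expected: a fused row carrying no incoming vertical arrows is insensitive to fusion.

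Next I would pair the factor $\prod_{l=0}^{j_2-1}(uq^l - s)$ with the first $j_2$ terms of $(su;q)_{j_1} = \prod_{l=0}^{j_1-1}(1 - suq^l)$, rewriting the above as
\begin{equation*}
	w^{(J)}_{u,s}(0,j_1;i_2,j_2) = \mathbf{1}_{j_1 = i_2 + j_2}\,
	\Biggl(\prod_{l=0}^{j_2-1}\frac{uq^l - s}{1 - suq^l}\Biggr)\,
	\frac{(q;q)_{j_1}/(q;q)_{j_2}}{\prod_{l=j_2}^{j_1-1}(1 - suq^l)}.
\end{equation*}
By the hypothesis, the first parenthesis is bounded in absolute value by $\delta^{j_2}$. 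Denote by $C(i_2)$ the supremum over $j_2 \ge 0$ of the absolute value of the remaining factor (with $j_1 = j_2 + i_2$); when $i_2 = 0$ both the numerator ratio and the denominator product are empty, so $C(0) = 1$, and in general $|w^{(J)}_{u,s}(0,j_1;i_2,j_2)| \le C(i_2)\,\delta^{j_2}$ provided $C(i_2) < \infty$.

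It remains to show $C := \sup_k C(k) < \infty$. Since $q \in (0,1)$, the numerator ratio $(q;q)_{j_1}/(q;q)_{j_2} = \prod_{l=j_2+1}^{j_1}(1 - q^l)$ lies in $[(q;q)_\infty, 1]$ and is in particular $\le 1$. For the denominator, the hypothesis implicitly requires $1 - suq^l \ne 0$ for all $l \ge 0$; together with $suq^l \to 0$ and $\bigl|\log|1 - x|\bigr| \le 2|x|$ for $|x| \le \tfrac12$, this makes $S := \sum_{l \ge 0}\bigl|\log|1 - suq^l|\bigr|$ a convergent series (finitely many possibly large terms followed by a geometric tail). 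Hence for every window $\prod_{l=j_2}^{j_1 - 1}|1 - suq^l| \ge e^{-S} > 0$ uniformly in $i_2$ and $j_2$, so $C(i_2) \le e^S$ for all $i_2$ and $C \le e^S < \infty$. Combining the two factors yields $|w^{(J)}_{u,s}(0,j_1;i_2,j_2)| \le C(i_2)\,\delta^{j_2}$, which is the claim.

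The one point requiring care is this last uniform bound on $\prod_{l=j_2}^{j_1-1}|1 - suq^l|$: the product over any window $[j_2, j_1)$ is bounded below by the convergent infinite product $\prod_{l \ge 0}|1 - suq^l|$, so lengthening the window (i.e.\ increasing $i_2$) cannot push it to zero --- the nonvanishing of each factor, guaranteed by the hypothesis, is exactly what is needed. Everything else is elementary manipulation of $q$-Pochhammer symbols.
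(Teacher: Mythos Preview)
Your proof is correct and follows essentially the same approach as the paper's: set $i_1=0$, observe that the terminating ${}_4\overline{\phi}_3$ collapses to $1$, and estimate the remaining prefactor using the definition of $\delta$. The paper's proof is a one-line sketch of exactly this; you have filled in all the details, including the explicit verification that $C(0)=1$ and the uniform lower bound on $\prod_{l=j_2}^{j_1-1}|1-suq^l|$ via the convergent series $S=\sum_{l\ge 0}|\log|1-suq^l||$.
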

\begin{proof}
    This bound follows from \eqref{eq:w_fused_J_text}: 
		after setting $i_1=0$, the $q$-hypergeometric 
		series disappears, and we 
		use the definition of $\delta$ to estimate the prefactor.
\end{proof}

For a Young diagram $\lambda$, let 
$m_i(\lambda)$ be the number of parts in $\lambda$ which are equal to $i$.

\begin{lemma} \label{lemma:bound_sHL}
Let $s \neq 0, u,q^J$ be complex parameters such that
\begin{equation}
	|s|,|u|,|q^{J} u|,\left| \frac{q^i u - s}{1 - q^i s u} \right|< \delta, \qquad \text{for all }i,
\end{equation}
for some $\delta \in (0,1)$. Then there exists $C\ge 1$ such that
for any two Young diagrams $\mu,\lambda$ we have
\begin{equation}
    \left| \mathfrak{F}^{(J)}_{\lambda / \mu}(u) \right| \leq C^{M(\lambda,\mu)} \prod_{i \ge 1} (m_i(\mu) + 1) \delta^{|\lambda| - |\mu|},
\end{equation}
where
\begin{equation}
	M(\lambda,\mu) = 1 + \#\{ i\colon m_i(\mu) \neq 0 \text{ or } m_i(\lambda) \neq 0 \}.
\end{equation}
\end{lemma}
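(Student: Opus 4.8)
The plan is to realize $\mathfrak{F}^{(J)}_{\lambda/\mu}(u)=\mathsf{F}_{\lambda/\mu}(u,qu,\dots,q^{J-1}u)$ as a single product of fused vertex weights and to bound each factor with the per-vertex estimates of \Cref{lemma:bound_w_J_1} and \Cref{lemma:bound_w_J_2}. By the fusion construction (see \Cref{app:fusion}), this function is the partition function of one fused row: infinitely many vertical arrows on the left, $|\mu\rangle$ along the bottom, $|\lambda\rangle$ along the top, and no horizontal arrows escaping to the right. Let $j^{(i)}$ be the number of horizontal arrows on the edge between lattice sites $i-1$ and $i$. Arrow conservation at site $i\ge 1$ reads $m_i(\mu)+j^{(i)}=m_i(\lambda)+j^{(i+1)}$, and since $j^{(i)}\to 0$ this forces $j^{(i)}=\lambda'_i-\mu'_i$ for all $i\ge 1$; in particular the horizontal configuration is \emph{unique} and
\begin{equation*}
	\mathfrak{F}^{(J)}_{\lambda/\mu}(u)=w^{(J),\mathrm{bd}}_{u,s}\bigl(j^{(1)}\bigr)\prod_{i\ge 1}w^{(J)}_{u,s}\bigl(m_i(\mu),\,\lambda'_i-\mu'_i;\;m_i(\lambda),\,\lambda'_{i+1}-\mu'_{i+1}\bigr),
\end{equation*}
where $w^{(J),\mathrm{bd}}_{u,s}$ denotes the fused left-boundary weight. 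If $\mu\not\subseteq\lambda$, or if one of the fused weights above vanishes, then $\mathfrak{F}^{(J)}_{\lambda/\mu}(u)=0$ and the bound is trivial, so I will assume $j^{(i)}=\lambda'_i-\mu'_i\ge 0$ for all $i$.

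Next I would split the sites into groups. Call $i\ge 1$ \emph{trivial} if $m_i(\mu)=m_i(\lambda)=0$; then $j^{(i)}=j^{(i+1)}=:j$ and, by \eqref{eq:w_fused_J_text}, the corresponding factor equals $w^{(J)}_{u,s}(0,j;0,j)=\prod_{l=0}^{j-1}\frac{q^l u-s}{1-q^l s u}$ (the ${}_{4}\overline{\phi}_{3}$ series collapses to $1$), whose absolute value is at most $\delta^{\,j}$ by the hypothesis --- equivalently, this is \Cref{lemma:bound_w_J_2} with $C(0)=1$. For each of the at most $M(\lambda,\mu)-1$ \emph{special} sites $i\ge 1$ (those equal to a part of $\mu$ or of $\lambda$) I would apply \Cref{lemma:bound_w_J_1} when $m_i(\mu)>0$ and \Cref{lemma:bound_w_J_2} when $m_i(\mu)=0<m_i(\lambda)$; the common hypothesis of the present lemma ensures both apply, and in either case the factor is bounded by $\bigl(m_i(\mu)+1\bigr)\,C\,\delta^{\,\lambda'_{i+1}-\mu'_{i+1}}$ with $C$ the maximum of $1$, the constant of \Cref{lemma:bound_w_J_1}, and $\sup_k C(k)$ from \Cref{lemma:bound_w_J_2}. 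Finally, the left-boundary vertex emits $j^{(1)}=\lambda'_1-\mu'_1$ arrows, and an elementary estimate of $w^{(J),\mathrm{bd}}_{u,s}$ --- distributing the parameters $s,u$, both of modulus $<\delta$, into its $q$-Pochhammer factors exactly as in the proofs of \Cref{lemma:bound_w_J_1} and \Cref{lemma:bound_w_J_2}, and using $j^{(1)}\le J$ --- gives $\bigl|w^{(J),\mathrm{bd}}_{u,s}(j^{(1)})\bigr|\le C\,\delta^{\,j^{(1)}}$ after enlarging $C$; this boundary factor accounts for the ``$+1$'' in $M(\lambda,\mu)$.

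Multiplying these estimates over all sites and the boundary, and using
\begin{equation*}
	j^{(1)}+\sum_{i\ge 1}\bigl(\lambda'_{i+1}-\mu'_{i+1}\bigr)=\sum_{i\ge 1}\bigl(\lambda'_i-\mu'_i\bigr)=|\lambda|-|\mu|,
\end{equation*}
one obtains $\bigl|\mathfrak{F}^{(J)}_{\lambda/\mu}(u)\bigr|\le C^{M(\lambda,\mu)}\prod_{i\ge 1}\bigl(m_i(\mu)+1\bigr)\,\delta^{\,|\lambda|-|\mu|}$, the product being finite since $m_i(\mu)=0$ for all but finitely many $i$. The main obstacle is the first step: one must extract from the explicit fused weights of \Cref{app:fusion} that the single-variable function is genuinely a \emph{product} over a uniquely determined arrow configuration (rather than a sum over configurations) and that the fused left-boundary weight decays like $\delta^{\,j^{(1)}}$ with a constant not depending on $\lambda$ or $\mu$. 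Once these two points are settled the rest is bookkeeping; the only thing to watch is that every factor depending on $\lambda$ or $\mu$ has been absorbed into one of $\prod_{i}\bigl(m_i(\mu)+1\bigr)$, $C^{M(\lambda,\mu)}$, or $\delta^{\,|\lambda|-|\mu|}$.
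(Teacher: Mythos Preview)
Your proof is correct and follows essentially the same approach as the paper's: write $\mathfrak{F}^{(J)}_{\lambda/\mu}(u)$ as a product over the unique fused-row configuration, bound the boundary vertex via \eqref{eq:w_fused_infinity}, and bound each bulk vertex via \Cref{lemma:bound_w_J_1} or \Cref{lemma:bound_w_J_2}, distinguishing trivial sites (where $C(0)=1$) from special ones. One minor remark: the boundary estimate does not actually need ``$j^{(1)}\le J$'' (which is meaningless for generic $q^J$); rewriting the explicit boundary weight \eqref{eq:w_fused_infinity} as $\prod_{k=0}^{j^{(1)}-1}(uq^k-uq^J)\big/(q;q)_{j^{(1)}}$ and using $|u|,|uq^J|<\delta$ together with $\prod_{k\ge 0}(1+q^k)<\infty$ already gives the bound $C\,\delta^{\,j^{(1)}}$ uniformly in $j^{(1)}$.
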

\begin{proof}
	It suffices to assume that $\mu\subseteq \lambda$ (i.e., $\mu_i\le \lambda_i$ for all $i$),
	otherwise the skew function vanishes.
	We have
	\begin{equation}
			\mathfrak{F}^{(J)}_{\lambda / \mu}(u) = \sum_{j_0,j_1, \dots \ge 0} 
			w^{(J)}_{u,s} \biggl(\begin{tikzpicture}[baseline=-2.5pt]
		\draw[fill] (0,0) circle [radius=0.025];
		\node at (0,.3) {$\infty$};
		\node at (0,-.3) {$\infty$};
		\draw [red] (0.1,0) --++ (0.4, 0) node[right, black] {$j_0$};
		\draw [red] (0.1,0.05) --++ (0.4, 0);
		\draw [red] (0.1,-0.05) --++ (0.4, 0);
			\addvmargin{1mm}
			\addhmargin{1mm}
	\end{tikzpicture} \biggr) \prod_{l \ge 1} w^{(J)}_{u,s} (m_l(\mu),j_{l-1} ; m_l(\lambda), j_l),
	\end{equation}
	where
	the infinite sum has only one nonzero term due to arrow preservation.
	From \eqref{eq:w_fused_infinity} we have for the leftmost vertex
	\begin{equation}
			\left| w^{(J)}_{u,s} \biggl(\begin{tikzpicture}[baseline=-2.5pt]
		\draw[fill] (0,0) circle [radius=0.025];
		\node at (0,.3) {$\infty$};
		\node at (0,-.3) {$\infty$};
		\draw [red] (0.1,0) --++ (0.4, 0) node[right, black] {$j$};
		\draw [red] (0.1,0.05) --++ (0.4, 0);
		\draw [red] (0.1,-0.05) --++ (0.4, 0);
			\addvmargin{1mm}
			\addhmargin{1mm}
	\end{tikzpicture} \biggr) \right| \le C \delta^j.
	\end{equation}
	\Cref{lemma:bound_w_J_1,lemma:bound_w_J_2} 
	provide estimates for the remaining 
	vertex weights: they are all bounded by 
	$C \delta^{j_l}$, except if both $m_l(\mu)=m_l(\lambda)=0$ when the bound is given by $\delta^{j_l}$. 
\end{proof}

\begin{lemma} \label{lemma:sum_for_bound}
	Let $0<\delta<1$,
	$C\ge 1$, $1\le A< \delta^{-1}$, and $M(\lambda, \mu)$ be as in \Cref{lemma:bound_sHL}. Then for any 
Young diagram $\mu$ we have
\begin{equation} \label{eq:sum_for_bound}
    \sum_{\nu\colon \mu \subseteq \nu} C^{M(\nu,\mu)} 
		\delta^{|\nu| - |\mu|} A^{\ell(\nu)}
		\prod_{i\ge 1} (m_i(\nu) + 1 )  \le C_1 A_1^{\ell(\mu)},
\end{equation}
where $C_1,A_1 \ge 1$ are constants.
\end{lemma}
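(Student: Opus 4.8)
The plan is to turn the sum into a finite product of convergent generating series, one factor for each ``run'' of the conjugate partition, the number of runs being controlled by $\ell(\mu)$. Throughout, $\nu$ ranges over partitions with $\mu\subseteq\nu$ (otherwise the summand vanishes). Write $D(\cdot)$ for the set of distinct part sizes, so $M(\nu,\mu)=1+|D(\mu)\cup D(\nu)|$ and $|D(\mu)|\le\ell(\mu)$ give $C^{M(\nu,\mu)}\le C^{1+\ell(\mu)}C^{|D(\nu)|}$. The factor $C^{|D(\nu)|}$ can be absorbed into $\prod_i(m_i(\nu)+1)=\prod_{i\in D(\nu)}(m_i(\nu)+1)$, which becomes $\prod_i\phi(m_i(\nu))$ with $\phi(0)=1$ and $\phi(m)=C(m+1)$ for $m\ge1$. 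So it suffices to bound $\Sigma:=\sum_{\nu\supseteq\mu}\prod_i\phi(m_i(\nu))\,A^{\ell(\nu)}\delta^{|\nu|-|\mu|}$ by a constant times $A_1^{\ell(\mu)}$.

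First I would pass to the conjugate coordinates $a_j:=\nu'_j-\mu'_j\ge0$, which are eventually $0$ and satisfy $a_{j+1}-a_j\le m_j(\mu)$ (so that $\nu'$ stays weakly decreasing), and for which $\ell(\nu)=\ell(\mu)+a_1$, $|\nu|-|\mu|=\sum_j a_j$, and $m_i(\nu)=m_i(\mu)+a_i-a_{i+1}$. The structural observation is that $a$ can weakly increase only at the $r:=d(\mu)\le\ell(\mu)$ positions of $D(\mu)$; hence on the $r+1$ consecutive intervals into which $D(\mu)$ cuts $\mathbb{Z}_{\ge1}$, the sequence $a$ is weakly decreasing, i.e. it is encoded by a tuple of partitions $\beta^{(0)},\dots,\beta^{(r)}$ (each run read left to right). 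Dropping the length bounds on the $\beta^{(j)}$ and the constraints linking consecutive runs only enlarges the sum. On each run the ``interior'' multiplicities $m_i(\nu)$ are successive differences of the relevant $\beta^{(j)}$, while the $\le r$ ``boundary'' part sizes (those in $D(\mu)$) each contribute a factor bounded by $C(m_i(\mu)+1)(a_i+1)$, with $a_i+1\le|\beta^{(j)}|+1$ for the adjacent run. Pulling the $\mu$-only factors $C^{1+\ell(\mu)}$, $C^{r}$, $\prod_i(m_i(\mu)+1)\le 2^{\ell(\mu)}$ and $A^{\ell(\mu)}$ out in front, $\Sigma$ is bounded by their product times $\prod_{j=0}^{r}(\text{a per-run sum})$.

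For a single run, parametrize the partition $\beta$ by its successive differences $d_p=\beta_p-\beta_{p+1}\ge0$, so that $|\beta|=\sum_p p\,d_p$ and $\beta_1=\sum_p d_p$; using $\sum_\beta x^{|\beta|}\prod_p g(d_p)=\prod_{p\ge1}\sum_{d\ge0}g(d)x^{pd}$, the per-run sum takes the shape $\prod_{p\ge1}\bigl(1+\sum_{d\ge1}\phi(d)\,P(p,d)\,\delta^{pd}\bigr)$ for the non-initial runs, and the same with an extra factor $A^{d}$ inside the bracket (coming from $A^{\beta^{(0)}_1}=\prod_p A^{d_p}$) for the initial run; here $P$ is a fixed polynomial absorbing the boundary corrections $|\beta|+1$. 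The whole argument reduces to the convergence of these infinite products. For a non-initial run the $p=1$ factor is a series of geometric ratio $\delta<1$, and for $p\ge2$ the ratio $\delta^p$ is smaller; for the initial run the $p=1$ factor has ratio $A\delta$, which is $<1$ \emph{precisely by the hypothesis} $A<\delta^{-1}$, and for $p\ge2$ the ratio $A\delta^p\le A\delta$ is again smaller. The polynomial corrections are harmless, and the products over $p$ converge since their logarithms are dominated by $\sum_p(\text{poly})\delta^p<\infty$ (resp. with $A\delta^p$). Thus each run contributes at most a constant depending only on $C,A,\delta$, say $G_1$ (and $G_0$ for the initial one), giving $\Sigma\le C\,G_0\,(2AC^2G_1)^{\ell(\mu)}$, i.e. the claim with $C_1=C\,G_0\ge1$ and $A_1=2AC^2G_1\ge1$.

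The main obstacle — and the only genuinely delicate point — is that one must \emph{not} estimate any $\prod_i(x_i+1)$ by $2^{\sum_i x_i}$: doing so would force $\delta$, or $2C\delta$, or $2A\delta$, to be small, which is not assumed. Instead these products are kept intact and read off generating series whose radius of convergence is governed directly by $\delta$ (and by $A\delta$ for the factor tracking $\ell(\nu)$); this, together with the run decomposition that converts the constraint $\mu\subseteq\nu$ into a product over $O(\ell(\mu))$ independent partitions, is what makes the bound come out as $C_1A_1^{\ell(\mu)}$ rather than something depending on $|\mu|$.
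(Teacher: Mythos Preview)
Your argument is correct and takes a genuinely different route from the paper's.

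The paper decomposes $\nu=\varkappa\cup\eta$ as a multiset union: in the row-of-vertices picture, $\eta$ consists of the paths that continue the parts of $\mu$ (so $\ell(\eta)=\ell(\mu)$ and $\mu\subseteq\eta$), while $\varkappa$ consists of the paths newly emitted from the leftmost column. Over-summing with $\varkappa$ and $\eta$ independent, the $\varkappa$-sum is handled by fixing $\varkappa_1$ and summing multiplicities at each site, yielding a single convergent infinite product, and the $\eta$-sum is further over-bounded by $\ell(\mu)$ \emph{independent} one-particle sums $\sum_{k\ge\mu_i}(\cdots)\delta^{k-\mu_i}$, each a geometric series. This is short and leans on the vertex-model path picture.

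You instead work in conjugate coordinates $a_j=\nu'_j-\mu'_j$, observe that $a$ can increase only at the $r=|D(\mu)|\le\ell(\mu)$ positions of $D(\mu)$, and split $a$ into $r+1$ weakly decreasing ``runs'', each an independent partition after the linking constraints are dropped. The per-run generating series $\prod_{p\ge1}\sum_{d\ge0}\phi(d)\,x^{pd}$ (with $x=\delta$ for non-initial runs and $x=A\delta$ for the initial one) then converges precisely because $A\delta<1$. This is purely combinatorial and never invokes the path interpretation; it also makes the role of the hypothesis $A<\delta^{-1}$ completely transparent.

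One imprecision worth flagging: you say the boundary correction $|\beta|+1$ is absorbed into a ``fixed polynomial $P(p,d)$'', but $|\beta|+1=\sum_p p\,d_p+1$ does not itself factor over $p$. What makes your product formula go through is a further over-bound such as $|\beta|+1\le\prod_p(p\,d_p+1)$ (or bound $a_{i_k}+1\le\beta_1+1\le\prod_p(d_p+1)$ directly), after which each factor of the infinite product is $1+O(p\,\delta^p)$ and $\sum_p p\,\delta^p<\infty$ gives convergence. You clearly intend this, but it should be stated explicitly.
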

\begin{proof}
    The sum over $\nu$ can be visualized as a sum over path configurations in a row of vertices 
		as in \Cref{fig:black_blue_paths}.\begin{figure}
        \centering
        \includegraphics{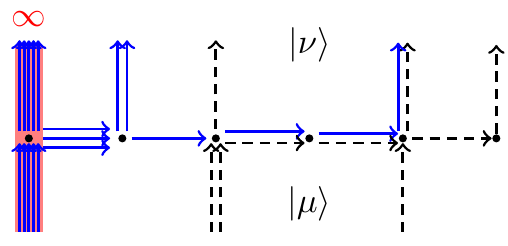}
				\caption{The decomposition of the Young diagram $\nu$ as a free superposition of $\eta$ (black dashed paths) and $\varkappa$ (blue solid paths) used in the proof of \Cref{lemma:sum_for_bound}.}
        \label{fig:black_blue_paths}
    \end{figure}
		We distinguish the paths coming from the
		configuration $\mu$ (black dashed in \Cref{fig:black_blue_paths})
		and when they originate at the leftmost vertex (blue solid in 
		\Cref{fig:black_blue_paths}). Calling $\varkappa$ the Young diagram
		generated by the blue paths and $\eta$ the Young diagram generated by the
		black paths we can write $\nu = \varkappa \cup \eta$ (this
		decomposition is not unique). The sum in the left-hand side of
		\eqref{eq:sum_for_bound} is dominated by a sum where $\varkappa$
		and $\eta$ vary independently, and therefore we have
    \begin{equation} \label{eq:sum_for_bound_1}
    \begin{split}
			\mathrm{lhs}\,\eqref{eq:sum_for_bound} \le & \biggl(
				\sum_{\varkappa} C^{M(\varkappa,\varnothing)}\delta^{|\varkappa|} 
				A^{\ell(\varkappa)}
				\prod_{i\ge 1}
			(m_i(\varkappa) + 1 )
		\biggr) 
        \\
        & \qquad \times
				\biggl( \sum_{\substack{\eta\colon \mu \subseteq \eta \\ \ell(\eta)
				= \ell(\mu) } } C^{M(\eta,\mu)}  
				\delta^{|\eta| - |\mu|}
				A^{\ell(\mu)}
				\prod_{i\ge 1} (m_i(\eta) + 1) 
			 \biggr).
    \end{split}
    \end{equation}
		We estimate separately the two factors in
		\eqref{eq:sum_for_bound_1}, starting with the first one. 
		Since
		$\ell(\varkappa)=\sum_i m_i(\varkappa)$ and
		$|\varkappa|=\sum_{i\ge 1} i\, m_i(\varkappa)$, 
		the summation over
		$\varkappa$ can be rewritten as follows.
		Separate the term $\varkappa=\varnothing$.
		In the remaining sum, first select $\varkappa_1\ge1$ and its multiplicity $r\ge1$;
		then for each $i=1,\ldots,\varkappa_1-1 $, select a multiplicity $m_i\ge0$.
		Summing over all these possibilities, we have 
    \begin{equation*}
        C + 
				\sum_{\varkappa_1\ge 1 } C 
				\sum_{r\ge1}\delta^{r \varkappa_1}C(r+1)A^{r}
				\prod_{i=1}^{\varkappa_1 - 1} 
				\left( \sum_{m_i \ge 0} C^{\mathbf{1}_{m_i>0}} (m_i + 1) \delta^{im_i}A^{m_i}  \right).
    \end{equation*}
		Simplifying the geometric summations and using the fact that $A\delta<1$, we 
		can reduce the above sum to
		\begin{equation*}
			C+C \sum_{\varkappa_1\ge1}\frac{AC \delta^{\varkappa_1}(2-A \delta^{\varkappa_1})}
			{(1-A \delta^{\varkappa_1})^2}
			\prod_{i=1}^{\varkappa_1-1}\left( 1-C\left( 1-\frac{1}{(1-A \delta^i)^2} \right) \right)
		\end{equation*}
		For all $i\ge i_0$, where $i_0$ depends on $C,A,\delta$
		the $i$-th term in the product is less than $\delta^{-1}$ (because $\delta<1$ and the term
		goes to $1$ as $i\to\infty$).
		This implies that the above sum is convergent and thus is estimated from above by a constant.

		We now address the second factor in the right-hand side of
		\eqref{eq:sum_for_bound_1}. We can again bound the sum
		over $\eta$ by a superposition of $\ell(\mu)$
		noninteracting paths starting at $\mu_i$.
		This
		implies that the sum over $\eta$ in \eqref{eq:sum_for_bound_1} is
		dominated by 
    \begin{equation*}
        \prod_{i =1}^{\ell (\mu)} \sum_{k_i \ge \mu_i} C^{M(k_i, \mu_i)} 
				2 A \delta^{k_i - \mu_i} = \left[ 2 C^2 A \left( 1 + C \frac{\delta}{1-\delta} \right) \right]^{\ell (\mu) }.
    \end{equation*}
    This completes the proof.
\end{proof}
\begin{proof}[Proof of \Cref{prop:sHL_absolute_integrability}]
		We first expand $\mathfrak{F}^{(J_1,\dots
		J_n)}_{\lambda / \mu}(u_1,\dots, u_n)$ in
		\eqref{eq:sHL_absolute_integrability}. Utilizing the branching
		rule and the triangle inequality, we can estimate
    \begin{equation} \label{eq:sHL_abs_int_tr_ineq}
			\mathrm{lhs}\,\eqref{eq:sHL_absolute_integrability} 
			\le
        \sum_{\substack{\lambda^1,\dots\lambda^n\colon\\
        \mu \subseteq \lambda^1 \subseteq\cdots \subseteq \lambda^n}} \prod_{i=1}^n \left| \mathfrak{F}^{(J_i)}_{\lambda^i / \lambda^{i-1}} (u_i) \right|.
    \end{equation}
    In order to evaluate the previous nested summation we start from the most external term. 
		For fixed $\lambda^{n-1}$ we have
    \begin{equation*}
        \sum_{\lambda^n\colon\lambda^{n-1} \subseteq \lambda^n} \left| \mathfrak{F}^{(J_n)}_{\lambda^n / \lambda^{n-1}} (u_n) \right| \le \prod_{i \ge 1} (m_i(\lambda^{n-1}) + 1) \sum_{\lambda^n\colon\lambda^{n-1} \subseteq \lambda^n} C^{M(\lambda^n,\lambda^{n-1})} \delta_1^{|\lambda^n| - |\lambda^{n-1}|},
    \end{equation*}
		where we used bound of Lemma \ref{lemma:bound_sHL}
		for some $\delta_1\in(0,1)$.
		We can further
		estimate the sum over $\lambda^n$ 
		with the help of \Cref{lemma:sum_for_bound}, and obtain the bound
		$\prod_{i \ge 1} (m_i(\lambda^{n-1}) + 1) C_1 A_1^{\ell
		(\lambda^{n-1})}$.
		Replacing $\delta_1$ by a smaller value $0<\delta_2<A_1^{-1}$ if needed
		and 
		multiplying this bound by the next term $\mathfrak{F}^{(J_{n-1})}_{\lambda^{n-1}/\lambda^{n-2}}(u_{n-1})$
		in \eqref{eq:sHL_abs_int_tr_ineq},
		we can apply \Cref{lemma:bound_sHL} and then sum over $\lambda^{n-1}$
		with the help of \Cref{lemma:sum_for_bound}.
		Iterating this procedure finitely many times, 
		we get the desired statement with a sufficiently small $\delta_n>0$.
\end{proof}

\section{Scaled geometric specializations}
\label{sec:scaled_geometric}

In this section we introduce a third specialization --- the scaled geometric one ---
of the general fused functions from the previous section. 
This specialization
allows to include into our analysis
stochastic particle systems with more general initial
configurations.

\subsection{Definition of scaled geometric specializations}
\label{sub:scaled_geometric_spec}

In Definitions \ref{def:sHL_sHL_structure}, \ref{def:sHL_sqW_struture},
\ref{def:sqW_sqW_structure} we provided examples of skew Cauchy structures
where the positivity of the measure 
obtained by 
multiplying $\mathfrak{F}$ and $\mathfrak{G}$ 
can be established (under certain restrictions on parameters). 
We now introduce yet another specialization of
\eqref{eq:F_principal_spec}, \eqref{eq:G_principal_spec} which admits a
meaningful probabilistic interpretation --- it corresponds to 
two-sided stationary
initial conditions for stochastic particle systems on the line
arising as marginals of our Yang-Baxter fields.

\begin{definition}[\cite{BorodinPetrov2016inhom}]
The \emph{scaled geometric} specialization with parameter $\alpha$ of the spin Hall-Littlewood function is given by 
\begin{equation}
    \widetilde{\mathsf{F}}_{\lambda / \mu}(\alpha) : = \lim_{\epsilon \to 0} \mathsf{F}_{\lambda / \mu}(-\alpha \epsilon, -\alpha \epsilon q, \dots, -\alpha \epsilon q^{J-1} ) \Big |_{q^J=1/\epsilon}.
\end{equation}
The dual analog of $\widetilde{\mathsf{F}}$ is given by the conjugation $\widetilde{\mathsf{F}}^*_{\lambda / \mu}(\beta) = \frac{\mathsf{c}(\lambda)}{\mathsf{c}(\mu)}\, \widetilde{\mathsf{F}}_{\lambda/\mu}(\beta)$ as in \eqref{eq:sHL_sHL_star_relation}.
The skew functions in multiple variables $\widetilde{\mathsf{F}}_{\lambda/\mu}(\alpha_1,\ldots,\alpha_N )$
are defined in a standard way using the branching rules as in \eqref{eq:F_G_branching}, and similarly for $\widetilde{\mathsf{F}}^*_{\lambda/\mu}$. 
\end{definition}

The functions $\widetilde{\mathsf{F}}_{\lambda/\mu}, \widetilde{\mathsf{F}}^*_{\lambda/\mu}$ 
also admit a lattice construction with the vertex weights
\begin{equation*}
    \widetilde{w}_{\alpha,s} : = \lim_{\epsilon \to 0} w^{(J)}_{-\alpha \epsilon,s} \Big |_{q^J = 1/\epsilon}, 
		\qquad 
		\widetilde{w}^*_{\beta,s} : = \lim_{\epsilon \to 0} w^{*,(I)}_{-\beta \epsilon,s} \Big |_{q^I = 1/\epsilon}.
\end{equation*}
The expressions for these weights are given in \Cref{app:YBE_scaled_geometric}.
The functions $\widetilde{\mathsf{F}}_{\lambda/\mu}, \widetilde{\mathsf{F}}^*_{\lambda/\mu}$ vanish
unless $\mu\subseteq\lambda$ (which means that $\mu_i\le \lambda_i$ for all $i$).

By adding the scaled geometric specialization
to our symmetric functions, we can define \emph{mixed specializations}
$\mathsf{F}_{\lambda/\mu}(u_1,\ldots,u_n;\widetilde \alpha_{1},\ldots,\widetilde{\alpha}_N)$
and 
$\mathbb{F}_{\lambda'/\mu'}(\xi_1,\ldots,\xi_n; \widetilde\alpha_1,\ldots,\widetilde\alpha_N )$. 
They are obtained through the branching rules as
\begin{align*}
	\mathsf{F}_{\lambda/\mu}(u_1,\ldots,u_n;\widetilde \alpha_{1},\ldots,\widetilde{\alpha}_N)
	&=
	\sum_{\varkappa}
	\widetilde{\mathsf{F}}_{\lambda/\varkappa}(\alpha_1,\ldots,\alpha_N )
	\,\mathsf{F}_{\varkappa/\mu}(u_1,\ldots,u_n ),
	\\
	\mathbb{F}_{\lambda'/\mu'}(\xi_1,\ldots,\xi_n; \widetilde\alpha_1,\ldots,\widetilde\alpha_N )
	&=
	\sum_{\varkappa}
	\widetilde{\mathsf{F}}_{\lambda/\varkappa}(\alpha_1,\ldots, \alpha_N)
	\,\mathbb{F}_{\varkappa'/\mu'}(\xi_1,\ldots,\xi_n ),
\end{align*}
and similarly for the dual functions.
By the Yang-Baxter equations (\Cref{app:YBE_scaled_geometric}),
each of these functions is separately 
symmetric 
in 
the two sets of variables.
We will also sometimes use the notation $\mathrm{sHL}(u), \mathrm{sqW}(\xi)$, and $\mathrm{sg}(\alpha)$
to denote the three types of specializations of the 
general symmetric functions \eqref{eq:F_principal_spec}--\eqref{eq:G_principal_spec}.

\subsection{Skew Cauchy structures with mixed specializations} \label{sub:mixed_skew_Cauchy_structures}

The scaled geometric specializations allow to generalize the skew Cauchy identities
considered in \Cref{sec:summary_sHL_sqW}. 
Let us first define the corresponding 
parameter sets $\mathsf{Adm}$ for which the sums in the Cauchy identities
converge.
\begin{definition}[Admissible parameters]
	\label{def:adm_rho}
	Let $\rho$
	be one of the specializations 
	$\mathrm{sHL}(u),\mathrm{sqW}(\xi),\mathrm{sg}(\alpha)$
	and 
	$\rho^*$ be one of 
	$\mathrm{sHL}(v),\mathrm{sqW}(\theta),\mathrm{sg}(\beta)$.
	We define $\mathsf{Adm}(\rho,\rho^*)$ to be symmetric in $\rho\leftrightarrow\rho^*$
	(with the corresponding renaming of the parameters),
	and: 
	\begin{enumerate}[label=\bf{\arabic*.}]
		\item If neither of $\rho$ and $\rho^*$ is scaled geometric, then
			$\mathsf{Adm}(\rho,\rho^*)$ is given in 
			\Cref{def:sHL_sHL_structure,def:sHL_sqW_struture,def:sqW_sqW_structure}
			in the sHL/sHL, sHL/sqW, and sqW/sqW cases, respectively.
		\item In the remaining cases we have
			\begin{equation}
					\mathsf{Adm}(\mathrm{sg}(\alpha);\rho^*) = \begin{cases}
						\{ (\alpha,v)\in \mathbb{C}^2 : |s(s-v)|<|1-sv| \},
						\qquad & \text{if } \rho^*= \mathrm{sHL}(v);\\
						%checked; but this in fact is always true if v>0
						\{ (\alpha, \theta)\in \mathbb{C}^2 : |\alpha \theta|<1 \}, 
						\qquad & \text{if } \rho^*= \mathrm{sqW}(\theta);\\
						%checked
						\{ (\alpha, \beta)\in \mathbb{C}^2 : |\alpha \beta|<1 \}, 
						\qquad & \text{if } \rho^*=\mathrm{sg}(\beta).
						%checked
					\end{cases}
			\end{equation}
	\end{enumerate}
\end{definition}

We call a specialization $\rho$ \emph{compatible} with 
sHL functions if $\rho=\mathrm{sHL}(u)$ or $\mathrm{sg}(\alpha)$,
and similarly $\rho$ is compatible with sqW functions if
$\rho=\mathrm{sqW}(\xi)$ or $\mathrm{sg}(\alpha)$.

\begin{theorem} \label{thm:skew_Cauchy_mixed_spec}
		Let the $\mathfrak{F}_{\lambda / \mu }$ be either
		$\mathsf{F}_{\lambda / \mu}$ or $\mathbb{F}_{\lambda ' / \mu '}$ and let
		$\rho$ be a specialization compatible
		with $\mathfrak{F}$. Analogously, 
		let the function $\mathfrak{G}_{\lambda / \mu}$ be either $\mathsf{F}^*_{\lambda / \mu}$ or
		$\mathbb{F}^*_{\lambda ' / \mu '}$, and let $\rho^*$ be 
		compatible with $\mathfrak{G}$. 
		Then for the parameters belonging to $\mathsf{Adm}(\rho,\rho^*)$ we have
    \begin{equation} \label{eq:skew_Cauchy_mixed}
        \sum_{\nu} 
				\mathfrak{F}_{\nu / \mu}(\rho)
				\mathfrak{G}_{\nu / \lambda}(\rho^*) 
				=
				\Pi(\rho ; \rho^*) 
				\sum_{\varkappa} 
				\mathfrak{F}_{\lambda / \varkappa}(\rho) 
				\mathfrak{G}_{\mu / \varkappa} (\rho^*).
    \end{equation}
    The right-hand side $\Pi(\rho; \rho^*)=\Pi(\rho^*;\rho)$ 
		in the sHL/sHL, sHL/sqW, and sqW/sqW cases was described above in
		\Cref{def:sHL_sHL_structure,def:sHL_sqW_struture,def:sqW_sqW_structure},
		respectively,
		and in the remaining cases it is given by 
		(observe that \eqref{eq:skew_Cauchy_mixed}
		does not change if we switch $\rho\leftrightarrow\rho^*$):
    \begin{equation}
			\label{eq:sg_Pi}
        \Pi(\mathrm{sg}(\alpha);\rho^*) = 
				\begin{cases}
					1 + \alpha v, \qquad & \text{if } \rho^*= \mathrm{sHL}(v);\\
					(-s \alpha;q)_{\infty} / ( \alpha \theta ;q )_\infty, \qquad & \text{if } \rho^*= \mathrm{sqW}(\theta);\\
					1/(\alpha \beta ; q)_{\infty}, \qquad & \text{if } \rho^*=\mathrm{sg}(\beta).
        \end{cases}
    \end{equation}
\end{theorem}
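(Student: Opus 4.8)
The idea is to derive every case from the general fused skew Cauchy identity (\Cref{thm:skew_Cauchy_general}), specializing its parameters and, when a scaled geometric component is present, passing to a limit. By the branching rules \eqref{eq:F_G_branching} for all functions involved — which upgrade a single-variable identity to the multivariable one exactly as in \eqref{eq:F_G_multivar_skew_Cauchy} — it suffices to treat one variable/parameter on each side. When neither $\rho$ nor $\rho^{*}$ is scaled geometric, \eqref{eq:skew_Cauchy_mixed} is literally \Cref{thm:skew_Cauchy_sHL_sHL}, \Cref{thm:sHL_sqW_skew_Cauchy} or \Cref{thm:sqW_sqw_skew_Cauchy_identity} (equivalently, the specialization of \Cref{thm:skew_Cauchy_general} indicated in \Cref{fig:table_positivity_YBE}), so there is nothing new, and the matching of $\mathsf{Adm}$ and of $\Pi$ is a direct check. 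It remains to handle the cases in which at least one of $\rho,\rho^{*}$ is $\mathrm{sg}(\alpha)$; by the $\rho\leftrightarrow\rho^{*}$ symmetry of \eqref{eq:skew_Cauchy_mixed} we may assume $\rho=\mathrm{sg}(\alpha)$, and since on $\mathfrak{F}$ the scaled geometric specialization is built from the sHL-type weights $\widetilde w_{\alpha,s}$ whether $\mathfrak{F}$ is $\mathsf{F}$ or $\mathbb{F}$, we may take $\mathfrak{F}=\mathsf{F}$, so that $\mathfrak{F}_{\lambda/\mu}(\mathrm{sg}(\alpha))=\widetilde{\mathsf{F}}_{\lambda/\mu}(\alpha)$.

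By definition $\widetilde{\mathsf{F}}_{\lambda/\mu}(\alpha)=\lim_{\epsilon\to0}\mathfrak{F}^{(J)}_{\lambda/\mu}(-\alpha\epsilon)\big|_{q^{J}=1/\epsilon}$, and if $\rho^{*}=\mathrm{sg}(\beta)$ we likewise put $v=-\beta\epsilon$, $q^{I}=1/\epsilon$ (otherwise $(v,q^{I})$ is the sHL- or sqW-specialization of \Cref{fig:table_positivity_YBE}). Plugging these into \Cref{thm:skew_Cauchy_general} gives, for every small $\epsilon>0$,
\begin{equation*}
	\sum_{\nu}\mathfrak{F}^{(J)}_{\nu/\mu}(-\alpha\epsilon)\big|_{q^{J}=1/\epsilon}\,\mathfrak{G}_{\nu/\lambda}(\rho^{*})
	=\Pi_{\epsilon}\sum_{\varkappa}\mathfrak{F}^{(J)}_{\lambda/\varkappa}(-\alpha\epsilon)\big|_{q^{J}=1/\epsilon}\,\mathfrak{G}_{\mu/\varkappa}(\rho^{*}),
\end{equation*}
where $\Pi_{\epsilon}$ is the explicit $q$-Pochhammer prefactor of \eqref{eq:skew_Cauchy_general}. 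The sum over $\varkappa$ on the right is finite ($\varkappa\subseteq\mu,\lambda$), each $\mathfrak{F}^{(J)}_{\lambda/\varkappa}(-\alpha\epsilon)|_{q^{J}=1/\epsilon}\to\widetilde{\mathsf{F}}_{\lambda/\varkappa}(\alpha)$ by definition, and a direct computation shows $\Pi_{\epsilon}\to\Pi(\mathrm{sg}(\alpha);\rho^{*})$ with the values in \eqref{eq:sg_Pi}: e.g.\ for $\rho^{*}=\mathrm{sHL}(v)$ one has $uv,uvq^{I}\to0$, $uvq^{J}\to-\alpha v$, $uvq^{I+J}\to-\alpha vq$, so $\Pi_{\epsilon}\to(-\alpha v;q)_{\infty}/(-\alpha vq;q)_{\infty}=1+\alpha v$, and the cases $\rho^{*}=\mathrm{sqW}(\theta)$, $\rho^{*}=\mathrm{sg}(\beta)$ give $(-s\alpha;q)_{\infty}/(\alpha\theta;q)_{\infty}$ and $1/(\alpha\beta;q)_{\infty}$ in the same way. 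So everything reduces to interchanging $\lim_{\epsilon\to0}$ with the infinite sum over $\nu$ on the left.

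This interchange is the heart of the argument and the step I expect to be the main obstacle. The crucial observation is that along the scaled geometric limit $q^{J}=1/\epsilon\to\infty$, yet $q^{J}u=-\alpha$ stays bounded, $u=-\alpha\epsilon\to0$, and $\sup_{i}\bigl|(q^{i}u-s)/(1-q^{i}su)\bigr|\to|s|$ uniformly in $i$ — these are exactly the quantities controlling the estimates of \Cref{sub:absolute_summability}. Hence \Cref{lemma:bound_w_J_1,lemma:bound_w_J_2}, and therefore \Cref{lemma:bound_sHL}, apply with a $\delta\in(0,1)$ and a constant $C$ \emph{uniform for all sufficiently small $\epsilon$}, provided $|\alpha|<1$ and $s\in(-1,0)$; this yields $\bigl|\mathfrak{F}^{(J)}_{\nu/\mu}(-\alpha\epsilon)|_{q^{J}=1/\epsilon}\bigr|\le C^{M(\nu,\mu)}\prod_{i}(m_{i}(\mu)+1)\,\delta^{|\nu|-|\mu|}$ with $C,\delta$ independent of $\epsilon$. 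Combined with an analogous bound for $|\mathfrak{G}_{\nu/\lambda}(\rho^{*})|$ (geometric in $|\nu|$ up to polynomial factors — the same estimate in the sg case, the standard single-variable bound on $\mathsf{F}^{*}$ or $\mathbb{F}^{*}$ otherwise), \Cref{lemma:sum_for_bound} used exactly as in the proof of \Cref{prop:sHL_absolute_integrability} produces an $\epsilon$-independent summable majorant of the left-hand side summands. Dominated convergence then passes the limit inside the sum and gives \eqref{eq:skew_Cauchy_mixed} whenever $|\alpha|<1$ (and the $\rho^{*}$-parameter lies in the corresponding small range).

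Finally, to reach all of $\mathsf{Adm}(\rho,\rho^{*})$ — which in the scaled geometric cases need not force $|\alpha|<1$ (e.g.\ $\{|\alpha\beta|<1\}$, or no constraint on $\alpha$ at all when $\rho^{*}=\mathrm{sHL}(v)$) — I would argue by analytic continuation: the right-hand side of \eqref{eq:skew_Cauchy_mixed} is a finite sum of functions polynomial in $\alpha$ (and rational in the spectral parameters) times the explicit $\Pi$, hence analytic on $\mathsf{Adm}(\rho,\rho^{*})$, while the left-hand side converges locally uniformly there by the same estimates (now with $\delta$ only required to be $<1$ on the domains of \Cref{def:adm_rho}) and so is analytic too; agreeing on the open subset where the limit argument applied, the two sides agree on the connected domain $\mathsf{Adm}(\rho,\rho^{*})$. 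General mixed specializations then follow by iterating this single-pair identity through the branching rules.
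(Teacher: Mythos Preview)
Your proposal is correct and follows the same high-level route as the paper: obtain \eqref{eq:skew_Cauchy_mixed} as a specialization/limit of the general fused identity \eqref{eq:skew_Cauchy_general}. The only real difference is in how the convergence of the infinite left-hand side is justified. The paper's proof is terse: it simply says \eqref{eq:skew_Cauchy_mixed} is obtained by specializing \eqref{eq:skew_Cauchy_general}, cites \cite{Borodin2014vertex}, \cite{BorodinPetrov2016inhom}, \cite{BorodinWheelerSpinq} for the convergence conditions $\mathsf{Adm}(\rho,\rho^{*})$, and forward-references the bijectivization argument in \Cref{prop:U_well_defined} as an alternative way to see where those conditions come from. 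You instead give a self-contained argument: uniform-in-$\epsilon$ bounds via \Cref{lemma:bound_w_J_1}--\Cref{lemma:bound_sHL} to pass the $\epsilon\to0$ limit through the sum by dominated convergence, followed by analytic continuation to the full $\mathsf{Adm}(\rho,\rho^{*})$. Both achieve the same end; yours is more explicit (and pedagogically cleaner, since it uses only machinery already set up in \Cref{sub:absolute_summability}), while the paper's is shorter and leans on the probabilistic reinterpretation of convergence through the Markov operators $\mathsf{U}^{\mathrm{fwd}}$. Your analytic-continuation step to reach all of $\mathsf{Adm}(\rho,\rho^{*})$ --- especially in the $\mathrm{sg}/\mathrm{sHL}$ case where $\alpha$ is unconstrained --- is a bit compressed, but that is precisely the part the paper also outsources to the literature.
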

\begin{proof}
	The skew Cauchy identity
	\eqref{eq:skew_Cauchy_mixed}
	is obtained by suitably specializing \eqref{eq:skew_Cauchy_general}.
	The convergence conditions 
	$\mathsf{Adm}(\rho,\rho^*)$
	for the infinite sum in the left-hand side of \eqref{eq:skew_Cauchy_general}
	(the right-hand side is always finite)
	can be found in the existing literature 
	\cite{Borodin2014vertex}, 
	\cite{BorodinPetrov2016inhom},
	\cite{BorodinWheelerSpinq}.
	Through the bijectivization which we discuss in \Cref{sec:YB_fields_through_bijectivisation}
	below, the convergence of the left-hand side of \eqref{eq:skew_Cauchy_general}
	is equivalent to the fact that
	the transition probabilities
	$\mathsf{U}^{\mathrm{fwd}}$
	do not assign any probability weight 
	to Young diagrams $\nu$ with infinite first row $\nu_1$ or infinite first column $\nu_1'$.
	In \Cref{prop:U_well_defined} we revisit 
	the origin of the conditions $\mathsf{Adm}(\rho,\rho^*)$
	from this perspective.
\end{proof}

This theorem leads to the following additional skew Cauchy structures
which we now describe in a unified way:

\begin{definition}
\label{def:mixed_skew_Cauchy_structure}
	Let $\mathfrak{F}_{\lambda/\mu}$ be either $\mathsf{F}_{\lambda/\mu}$ or $\mathbb{F}_{\lambda'/\mu'}$, 
	and specializations $\rho_1,\rho_2,\ldots $ be compatible with $\mathfrak{F}$. 
	Also let
	$\mathfrak{G}_{\lambda/\mu}$ be either $\mathsf{F}^*_{\lambda/\mu}$ or $\mathbb{F}^*_{\lambda'/\mu'}$,
	and $\rho_1^*,\rho_2^*,\ldots $ be compatible with $\mathfrak{G}$. 
	Then
	$\mathfrak{F}_{\lambda / \mu}(\rho_1, \dots, \rho_k)$, $\mathfrak{G}_{\lambda
	/ \mu}(\rho_1^*, \dots, \rho_k^*)$ form a skew Cauchy structure in the sense
	of Section \ref{sub:F_G_skew_Cauchy_structure} with the following
	identifications:
	\begin{enumerate}[label=\bf{(\roman*)}]
		\item For any specialization $\rho$ set
			\begin{equation}
					\mu \prec_\rho \mu = 
					\begin{cases}
					\mu \prec \lambda \qquad & \text{if } \rho = \mathrm{sHL},\\
					\mu' \prec \lambda' \qquad & \text{if } \rho = \mathrm{sqW},\\
					\mu \subseteq \lambda \qquad & \text{if } \rho = \mathrm{sg}.\\
					\end{cases}
			\end{equation}
			Then, $\mu \prec_k \lambda$ means the existence of a sequence of
			Young diagrams 
			\begin{equation*}
				\mu \prec_{\rho_1} \nu^{(1)} \prec_{\rho_2} \cdots
				\prec_{\rho_{k-1}} \nu^{(k-1)} \prec_{\rho_{k}} \lambda,
			\end{equation*}
			and 
			$\mu \mathop{\dot{\prec}_k} \lambda$ is defined in the same way 
			with replacing each $\rho_i$ by $\rho_i^*$.
			
	\item The skew Cauchy identity \eqref{eq:skew_Cauchy_mixed} holds for 
		each choice of specializations, with the convergence conditions $\mathsf{Adm}(\rho;\rho^*)$ 
		and the function 
		$\Pi(\rho;\rho^*)$ described above in this subsection.

			\item The external parameters are $q \in (0,1)$ and $s \in (-1,0)$. The
				nonnegativity sets are $\mathsf{P}_{\mathrm{sHL}} =[0,1]$,
				$\mathsf{P}_{\mathrm{sqW}} =[-s,-s^{-1}]$, $\mathsf{P}_{\mathrm{sg}}=[0,-s^{-1}]$, respectively,
				which follows from the nonnegativity of the corresponding vertex weights
				(about the scaled geometric weights, see \Cref{prop:w_tilde_positivity}).
	\end{enumerate}
\end{definition}

We employ this general \emph{mixed skew Cauchy structure} in \Cref{sec:new_three_fields}
below to connect symmetric functions with stochastic particle systems (more precisely, 
stochastic vertex models) having a variety of 
initial conditions.

\section{Yang-Baxter fields through bijectivization}
\label{sec:YB_fields_through_bijectivisation}

In this section we recall the notion of \emph{bijectivization} of summation
identities \cite{BufetovPetrovYB2017}
and show
how to use this procedure to build a random field of 
Young diagrams.\footnote{As far as we know, dynamics coming from
certain straightforward bijectivizations 
of the Yang-Baxter equation were 
used by 
\cite{Manolescu_Grimmett_bond}, 
\cite{Sportiello-private}
for simulations, but without connections to Cauchy identities.}
Our main ingredient is the 
Yang-Baxter equation in its general form with four parameters
$u,v,q^J,q^I$.
In
\Cref{sec:new_three_fields} below we
examine the most interesting degenerations corresponding to particular 
skew Cauchy structures from \Cref{sec:summary_sHL_sqW}.

\subsection{Bijectivization of summation identities}
\label{sub:bij_summation_citation_from_BP2017}
Consider two nonempty finite or countable sets $A$ and $B$,
and assume that to each one of their
elements it is associated a nonzero
weight\footnote{If for some $a_0\in A$ we 
have $\mathbf{w}(a_0)=0$, by replacing $A$ with $A\setminus\left\{ a_0 \right\}$ we can continue to 
assume that all weights are nonzero, and analogously for $B$.} 
$\mathbf{w}$ in such a way that
\begin{equation} \label{sum identity}
    \sum_{a \in A}\mathbf{w}(a) = \sum_{b \in B} \mathbf{w}(b).
\end{equation}
Here and below in the countable case we assume that all infinite sums converge absolutely.
\begin{definition}[\cite{BufetovPetrovYB2017}]
	\label{def:bijectivization}
	A \emph{bijectivization} of the summation identity \eqref{sum identity} is a pair of
	families of transition weights
	$(\mathbf{p}^{\mathrm{fwd}},\mathbf{p}^{\mathrm{bwd}})$ satisfying the
	properties:
	\begin{enumerate}[label=\bf{\arabic*.}]
			\item The forward transition weights sum to one:
			\begin{equation}\label{p fwd sum-to1}
					\sum_{b \in B} \mathbf{p}^{\mathrm{fwd}}(a,b)=1 \qquad \text{for all } a \in A
			\end{equation}
			\item The backward transition weights sum to one:
			\begin{equation}\label{p bwd sum-to1}
					\sum_{a \in A} \mathbf{p}^{\mathrm{bwd}}(b,a)=1 \qquad \text{for all } b \in B
			\end{equation}
			\item The transition weights satisfy the reversibility condition
			\begin{equation} \label{rev cond}
				\mathbf{w}(a)\, \mathbf{p}^{\mathrm{fwd}}(a,b) = 
				\mathbf{w}(b)\, \mathbf{p}^{\mathrm{bwd}}(b,a) \qquad \text{for all } a\in A,\, b\in B.
			\end{equation}
	\end{enumerate}

	If 
	$\mathbf{w}(a), \mathbf{w}(b)>0$ for all $a\in A$, $b\in B$,
	and the transition weights
	$\mathbf{p^{\mathrm{fwd}}}, \mathbf{p^{\mathrm{bwd}}}$ are nonnegative,
	the bijectivization is called \emph{stochastic}.
\end{definition}

On one hand, 
bijectivizations may be viewed as \emph{refinements} 
of the summation identity \eqref{sum identity}
since \eqref{p fwd sum-to1}--\eqref{rev cond} 
imply 
\begin{equation*}
    \sum_{a \in A}\mathbf{w}(a) 
		=
		\sum_{a \in A, b \in B} \mathbf{w}(a) \mathbf{p}^{\mathrm{fwd}}(a,b)
		= 
		\sum_{a \in A,b \in B} \mathbf{w}(b) \mathbf{p}^{\mathrm{bwd}} (b,a) 
		=
		\sum_{b \in B} \mathbf{w}(b).
\end{equation*}
On the other hand, stochastic bijectivizations exactly
correspond to \emph{couplings} between the
probability distributions
$P_A(a)=\mathbf{w}(a)\left( \sum_{a'\in A}\mathbf{w}(a') \right)^{-1}$
and 
$P_B(b)=\mathbf{w}(b)\left( \sum_{b'\in B}\mathbf{w}(b') \right)^{-1}$.
Recall that a coupling is a probability distribution 
$P(a,b)$ on $A\times B$ whose marginals on $A$ and $B$ are $P_A(a)$ and $P_B(b)$, respectively.
The correspondence is given by 
\begin{equation*}
	P(a,b)=
	\frac{\mathbf{w}(a)\,\mathbf{p}^{\mathrm{fwd}}(a,b)}{\sum_{a'\in A}\mathbf{w}(a')}
	=
	\frac{\mathbf{w}(b)\,\mathbf{p}^{\mathrm{bwd}}(b,a)}{\sum_{b'\in B}\mathbf{w}(b')},
\end{equation*}
where the second equality follows from 
\eqref{rev cond} and \eqref{sum identity}.

\begin{remark}
	Forward and backward transition probabilities of a random field
	of Young diagrams
	(\Cref{def:F_G_fwd_bwd_transition_probabilities}) are a
	particular case of the above \Cref{def:bijectivization}
	as they correspond to bijectivizations of the 
	identity
	\eqref{eq:F_G_single_skew_Cauchy}.
	For the skew Cauchy structures described in
	\Cref{sec:summary_sHL_sqW}, however, 
	Cauchy identities follow from the 
	more elementary Yang-Baxter equations,
	and we use the latter to construct bijectivizations
	as building blocks for
	transition probabilities of random fields of Young diagrams.
\end{remark}

When both $|A|>1$ and $|B|>1$, one can readily see that a bijectivization is not unique.

\begin{example} \label{example:biject:A1}
	Assume that the set $A$, in \eqref{sum identity}, consists of the singleton $\{a\}$.
	Then the bijectivization of the identity
	\begin{equation*}
			\mathbf{w}(a) = \sum_{b \in B} \mathbf{w}(b), 
	\end{equation*}
	is unique and is given by
	\begin{equation} \label{biject A=1}
			\mathbf{p}^{\mathrm{fwd}}(a,b)=\frac{\mathbf{w}(b)}{\mathbf{w}(a)}, \qquad \mathbf{p}^{\mathrm{bwd}}(b,a) = 1.
	\end{equation}
	Moreover, in case all weights are positive, \eqref{biject A=1} constitutes a stochastic bijectivization.
\end{example}
\Cref{example:biject:A1}, despite its simplicity, constitutes the only
explicit stochastic bijectivization we will make use of throughout the rest
of the paper. In any other case we only need the existence of a stochastic bijectivization:

\begin{proposition}
	\label{prop:Bij_exists}
	Assume that in
	\eqref{sum identity} we have
	$\mathbf{w}(a),\mathbf{w}(b)\ge 0$ for all $a\in A$ and $b\in B$, and, moreover, 
	the sums in both sides of \eqref{sum identity} are positive.
	Then a stochastic bijectivization $(\mathbf{p}^{\mathrm{fwd}},\mathbf{p}^{\mathrm{bwd}})$
	exists.
\end{proposition}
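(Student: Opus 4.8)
The statement to prove is Proposition~\ref{prop:Bij_exists}: existence of a stochastic bijectivization whenever all weights are nonnegative and both sides of \eqref{sum identity} are positive.

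Let me think about how I'd prove this.

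We have $\sum_{a \in A} \mathbf{w}(a) = \sum_{b \in B} \mathbf{w}(b) =: S > 0$. We want $\mathbf{p}^{\mathrm{fwd}}(a,b) \geq 0$, $\mathbf{p}^{\mathrm{bwd}}(b,a) \geq 0$, with $\sum_b \mathbf{p}^{\mathrm{fwd}}(a,b) = 1$ for all $a$, $\sum_a \mathbf{p}^{\mathrm{bwd}}(b,a) = 1$ for all $b$, and the reversibility $\mathbf{w}(a)\mathbf{p}^{\mathrm{fwd}}(a,b) = \mathbf{w}(b)\mathbf{p}^{\mathrm{bwd}}(b,a)$.

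The slickest approach: define the joint "coupling" measure directly and read off the transition weights. Set
$$P(a,b) := \frac{\mathbf{w}(a)\,\mathbf{w}(b)}{S}.$$
This is manifestly nonnegative, and its $A$-marginal is $\sum_b P(a,b) = \mathbf{w}(a) S / S = \mathbf{w}(a)$, while its $B$-marginal is $\mathbf{w}(b)$. (In the countable case the sums converge absolutely since $\sum_a \mathbf{w}(a) = S < \infty$.) Now I want to recover $\mathbf{p}^{\mathrm{fwd}}$ and $\mathbf{p}^{\mathrm{bwd}}$ from $P$. The natural definition is $\mathbf{p}^{\mathrm{fwd}}(a,b) := P(a,b)/\mathbf{w}(a)$ and $\mathbf{p}^{\mathrm{bwd}}(b,a) := P(a,b)/\mathbf{w}(b)$. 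Here is where I must be careful: the hypotheses do not say $\mathbf{w}(a) > 0$ for every $a$—only that the total $S$ is positive. But the footnote to the definition of bijectivization already allows us to discard elements of zero weight: replace $A$ by $A \setminus \{a : \mathbf{w}(a) = 0\}$ and $B$ likewise. After this reduction all individual weights are strictly positive, $S$ is unchanged, and the identity \eqref{sum identity} still holds with at least one term on each side (since $S > 0$). So I'll open the proof by invoking this reduction.

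With all weights positive, the definitions $\mathbf{p}^{\mathrm{fwd}}(a,b) = \mathbf{w}(b)/S$ and $\mathbf{p}^{\mathrm{bwd}}(b,a) = \mathbf{w}(a)/S$ are well-defined and nonnegative. Then $\sum_{b} \mathbf{p}^{\mathrm{fwd}}(a,b) = S/S = 1$ for each $a$, and $\sum_a \mathbf{p}^{\mathrm{bwd}}(b,a) = S/S = 1$ for each $b$; this is where positivity of $S$ is used to divide. Finally the reversibility condition \eqref{rev cond} reads $\mathbf{w}(a)\cdot \mathbf{w}(b)/S = \mathbf{w}(b)\cdot\mathbf{w}(a)/S$, which holds trivially. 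All three requirements of Definition~\ref{def:bijectivization} are met, and since all weights are positive and the transition weights are nonnegative, this is a stochastic bijectivization. I don't foresee a genuine obstacle here—the only subtlety is the reduction to positive weights, which the framework already sanctions, and making sure absolute convergence of the relevant sums is noted in the countable case (it follows immediately from $S < \infty$). So this is essentially a one-paragraph argument once the reduction is stated.
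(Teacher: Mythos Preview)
Your proof is correct and takes essentially the same approach as the paper: both construct the stochastic bijectivization from the product (independent) coupling, setting $\mathbf{p}^{\mathrm{fwd}}(a,b)=\mathbf{w}(b)/S$ and $\mathbf{p}^{\mathrm{bwd}}(b,a)=\mathbf{w}(a)/S$. Your treatment is slightly more explicit about the reduction to strictly positive weights and about absolute convergence in the countable case, but the core idea is identical.
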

Recall that if some weights are zero, we exclude the corresponding
elements from $A$ and~$B$.
\begin{proof}[Proof of \Cref{prop:Bij_exists}]
	As an example of a stochastic bijectivization we can take the one corresponding to the 
	coupling which is the product measure, $P=P_A\otimes P_B$. In other words,
	we can take $\mathbf{p}^{\mathrm{fwd}}(a,b)$ to be independent of $a$,
	and similarly for $\mathbf{p}^{\mathrm{bwd}}(b,a)$. Then \eqref{rev cond} implies
	\begin{equation*}
		\mathbf{p}^{\mathrm{fwd}}(a,b)=\frac{\mathbf{w}(b)}{\sum_{b'\in B}\mathbf{w}(b')},
		\qquad 
		\mathbf{p}^{\mathrm{bwd}}(b,a)=\frac{\mathbf{w}(a)}{\sum_{a'\in A}\mathbf{w}(a')},
	\end{equation*}
	and so a stochastic bijectivization exists.
\end{proof}

\subsection{Bijectivization of the Yang-Baxter equation}

Let us now consider bijectivizations of the general fused Yang-Baxter
equation (reproduced from \Cref{prop:YBE_general_fused} in \Cref{app:YBE})
\begin{equation}
	\label{eq:fused_YBE_text}
	\begin{split}
		&\sum_{k_1,k_2,k_3}R_{uv}^{(I,J)}(i_2, i_1; k_2, k_1) \,
		w^{*,(I)}_{v,s} (i_3, k_1; k_3, j_1) \,
		w^{(J)}_{u,s}(k_3,k_2; j_3,j_2) \\
		&\hspace{50pt} = 
		\sum_{k_1,k_2,k_3} w^{*,(I)}_{v,s} (k_3, i_1; j_3, k_1) \,
		w^{(J)}_{u,s}(i_3,i_2; k_3,k_2)\, R_{uv}^{(I,J)}(k_2, k_1; j_2, j_1),
	\end{split}
\end{equation}
where the weights
$w^{(J)}, w^{*,(I)}$ and $R^{(I,J)}$ are defined in \eqref{eq:w_fused_J}, \eqref{eq:w_fused_I_dual}, \eqref{eq:R_fused_I_J}, respectively.

\begin{figure}[ht]
    \centering
		\includegraphics[width=.97\textwidth]{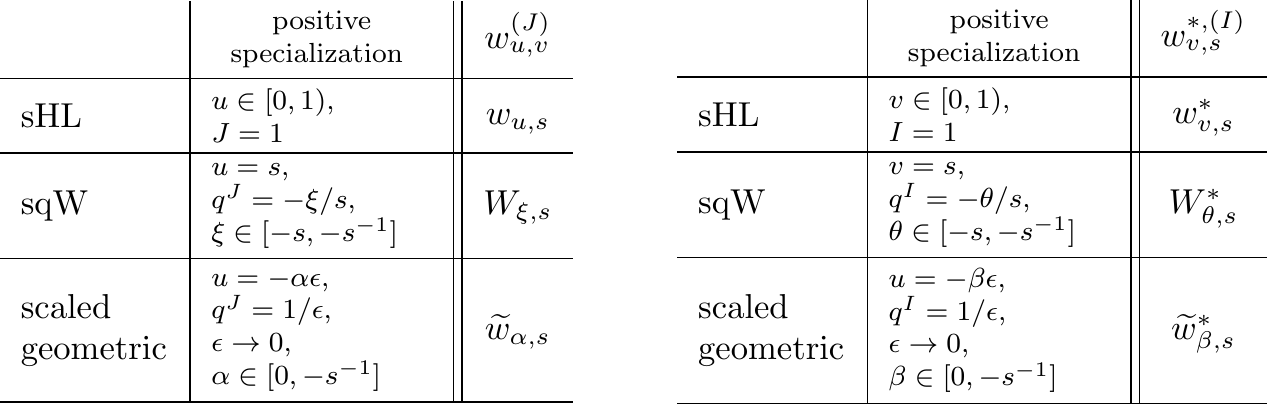}
		\caption{Positive specializations of the Yang-Baxter equation
			\eqref{eq:fused_YBE_text} we consider are obtained by combining a
			specialization from left panel with a specialization from the right panel.
			The other parameters are $q\in(0,1)$ 
			and $s\in(-1,0)$, but when both specializations are sqW,
			we impose the additional restriction $s\ge-\sqrt q$.}
    \label{fig:table_positivity_YBE}
\end{figure}

Equation \eqref{eq:fused_YBE_text}
implies all the other Yang-Baxter equations we use,
%\footnote{See \Cref{prop:sHL_YBE,prop:sHL_sqW_YBE} and also \Cref{sub:YBE_nonnegativity} on the nonnegativity of terms.We assume that $s\in(-1,0)$ and $q\in(0,1)$ throughout.} 
by properly specializing the 
parameters $u,q^J,v,q^I$. For certain degenerations of
weights $w^{(J)}_{u,s}$, $w^{(I)}_{v,s}$, $R^{(I,J)}_{uv}$ we can establish
their nonnegativity, and hence construct stochastic bijectivizations of
\eqref{eq:fused_YBE_text} using \Cref{prop:Bij_exists}. 
The list of the
\emph{positive specializations} we employ is summarized in 
\Cref{fig:table_positivity_YBE}, while the proofs of their nonnegativity are
given in \Cref{sub:YBE_nonnegativity}. 
For unified 
notation
here and in \Cref{sub:cross_multiple_dragging,sub:YBF_and_its_marginals}
below 
we use the vertex weights
$R^{(I,J)}_{uv}, w^{(J)}_{u,s},w^{*,(I)}_{v,s}$ 
assuming that they are nonnegative 
(under one of the parameter choices 
in \Cref{fig:table_positivity_YBE}). 

\iffalse
\begin{equation}
	\label{eq:YBE_nonnegativity_conditions_text}
	\begin{minipage}[tb]{.9\textwidth}
		\begin{enumerate}[label=\bf{\arabic*.}]
			\item The sHL/sHL case $I=J=1$. 
				The terms in the Yang-Baxter equation 
				are nonnegative when
				$u,v\in[0,1)$.
			\item The sHL/sqW case $J=1$, $v=s$, and $q^I=-\theta/s$.
				The terms in the Yang-Baxter equation are nonnegative when $u\in[0,1)$ and 
				$\theta\in[-s,-s^{-1}]$.
			\item The sqW/sqW case $u=v=s$, $q^J=-\xi/s$, $q^I=-\theta/s$.
				The terms in the Yang-Baxter equation are nonnegative when $\xi,\theta\in[-s,-s^{-1}]$,
				and in addition $s\ge -\sqrt q$.
			\item The mixed/sHL case $J=1$, $q^I= 1/\epsilon$, $v=-\beta \epsilon$ and $\epsilon \to 0$. The terms in the Yang-Baxter equation are nonnegative when $u\in[0,1)$, $\beta \in [0, -s^{-1}]$. 
			\item The mixed/sqW case $q^J=-\xi/s$, $u=s$, $q^I= 1/\epsilon$, $v=-\beta \epsilon$ and $\epsilon \to 0$. The terms in the Yang-Baxter equation are nonnegative when $\xi \in [-s, -s^{-1}]$, $\beta \in [0, -s^{-1}]$, and in addition $s\ge -\sqrt q$.
			\item The mixed/mixed case $q^J=q^I=1/\epsilon$, $u=-\alpha \epsilon$, $v=-\beta \epsilon$ and $\epsilon\to 0$. The terms in the Yang-Baxter equation are nonnegative if $\alpha,\beta \in [0, -s^{-1}]$.
		\end{enumerate}
	\end{minipage}
\end{equation}
\fi

Graphically, we can interpret each summand in the left and right
hand sides of \eqref{eq:fused_YBE_text}
as a weight we attribute to arrangements of paths
across configurations of three vertices with fixed occupation numbers
$i_1,i_2,i_3,j_1,j_2,j_3$ at external edges. 
The global weight of 3-vertex
configurations depends on $R^{(I,J)}_{uv},
w^{(J)}_{u,s},w^{*,(I)}_{v,s}$, 
and
is assigned according to \Cref{fig:IJ_YBE_illustration_S4}. 
In the same figure, $\mathbf{p}^{\mathrm{fwd}}$ and
$\mathbf{p}^{\mathrm{bwd}}$ denote forward and backward transition weights of a
bijectivization of \eqref{eq:fused_YBE_text}.

\begin{figure}[h]
		\centering
		\includegraphics{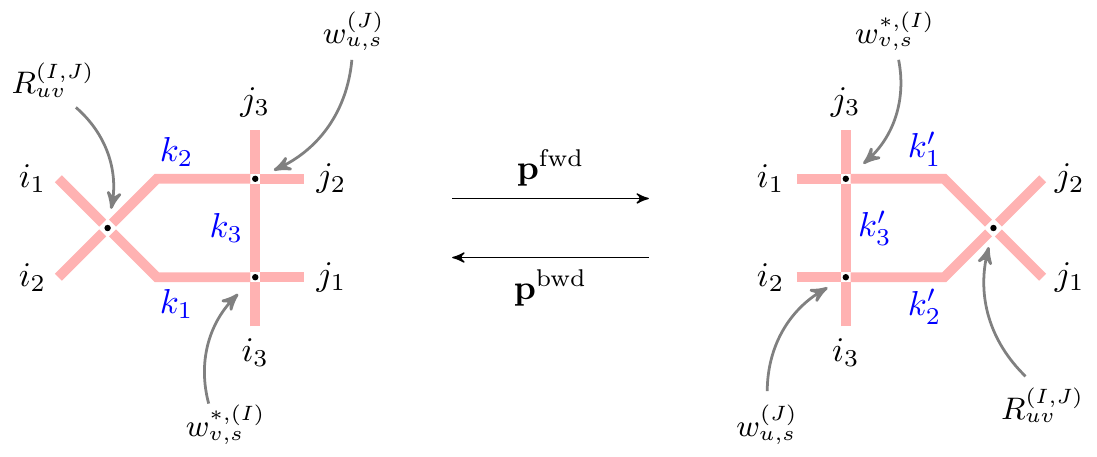}
		\caption{{A graphical representation of the Yang-Baxter Equation \eqref{eq:fused_YBE_text}}
		and its bijectivization.} 
		\label{fig:IJ_YBE_illustration_S4}
\end{figure}

For simplicity we do not include the external occupation numbers 
$i_1,i_2,i_3,j_1,j_2,j_3\in \mathbb{Z}_{\ge0}$
in the notation
$\mathbf{p}^{\mathrm{fwd}}$ and $\mathbf{p}^{\mathrm{bwd}}$.
Let us extend the
definition of $\mathbf{p}^{\mathrm{fwd}},\mathbf{p}^{\mathrm{bwd}}$ by setting
\begin{equation}
	\mathbf{p}^{\mathrm{fwd}} 
	\Bigg( 
		\begin{tikzpicture}[baseline=5,scale=0.5]
			\draw[line width = 1mm,red!30] (-3.5,0) -- (-3.05,0.45); 
			\draw[line width = 1mm,red!30] (-3.5,1) -- (-3.05,0.55);
			\draw[fill] (-3,0.5) circle [radius=0.025];
			\draw[line width = 1mm,red!30] (-2.95,0.55) -- (-2.5,1) -- (-1.55,1);
			\draw[line width = 1mm,red!30] (-2.95,0.45) -- (-2.5,0) -- (-1.55,0);
			\draw[line width = 1mm,red!30] (-1.45,0) -- (-1,0);
			\draw[line width = 1mm,red!30] (-1.5,-0.5) -- (-1.5, -0.05);
			\draw[line width = 1mm,red!30] (-1.5,0.05) -- (-1.5, 0.95);
			\draw[line width = 1mm,red!30] (-1.45,1) -- (-1, 1);
			\draw[line width = 1mm,red!30] (-1.5,1.05) -- (-1.5, 1.5);
			\draw[fill] (-1.5,1) circle [radius=0.025];
			\draw[fill] (-1.5,0) circle [radius=0.025];
			\node[left] at (-3.5,0) {\tiny{$i_2$}};
			\node[left] at (-3.5,1) {\tiny{$i_1$}};
			\node[right] at (-1.5,-0.6) {\tiny{$i_3$}};
			\node[right] at (-1,0) {\tiny{$j_1$}};
			\node[right] at (-1,1) {\tiny{$j_2$}};
			\node[right] at (-1.5,1.6) {\tiny{$j_3$}};
			\node[below] at (-2.3,0) {\tiny{$k_1$}};
			\node[above] at (-2.3,1) {\tiny{$k_2$}};
			\node[left] at (-1.5,0.5) {\tiny{$k_3$}};
		\end{tikzpicture} , 
		\begin{tikzpicture}[baseline=5,scale=0.5]
			\draw[line width = 1mm, red!30] (1,1) -- (1.45,1);
			\draw[line width = 1mm, red!30] (1.55,1) -- (2.5,1) -- (2.95,0.55);
			\draw[line width = 1mm, red!30] (3.05,0.45) -- (3.5,0);
			\draw[line width = 1mm,red!30] (1, 0) -- (1.45,0.0);
			\draw[line width = 1mm,red!30] (1.55,0) -- (2.5,0) -- (2.95,0.45);
			\draw[line width = 1mm,red!30] (3.05, 0.55) -- (3.5,1);
			\draw[line width = 1mm,red!30] (1.5, -0.5) -- (1.5,-0.05);
			\draw[line width = 1mm,red!30] (1.5, 0.05) -- (1.5,0.95);
			\draw[line width = 1mm,red!30] (1.5, 1.05) -- (1.5,1.5);
			\draw[fill] (1.5,1) circle [radius=0.025];
			\draw[fill] (1.5,0) circle [radius=0.025];
			\draw[fill] (3,0.5) circle [radius=0.025];
			\node[left] at (1,1) {\tiny{$i_1'$}};
			\node[left] at (1,0) {\tiny{$i_2'$}};
			\node[left] at (1.5, -0.6) {\tiny{$i_3'$}};
			\node[right] at (3.5, 0) {\tiny{$j_1'$}};
			\node[right] at (3.5, 1) {\tiny{$j_2'$}};
			\node[left] at (1.5, 1.6) {\tiny{$j_3'$}};
			\node[below] at (2.3,0) {\tiny{$k_1'$}};
			\node[above] at (2.3,1) {\tiny{$k_2'$}};
			\node[right] at (1.5,0.5) {\tiny{$k_3'$}};
			\addvmargin{1mm} 
		\end{tikzpicture} 
	\Bigg) 
	= 
	0,
\end{equation}
whenever $(i_1,i_2,i_3,j_1,j_2,j_3) \neq (i_1',i_2',i_3',j_1',j_2',j_3')$, and
analogously for $\mathbf{p}^{\mathrm{bwd}}$. 
Thus, we will view $\mathbf{p}^{\mathrm{fwd}}$ as the probability of a 
Markov transition of
pushing the cross
through a column of two vertices in the right direction, and similarly
$\mathbf{p}^{\mathrm{bwd}}$ corresponds to pushing the cross to the left.
These transitions do not change the external occupation numbers 
$(i_1,i_2,i_3,j_1,j_2,j_3)$, but $\mathbf{p}^{\mathrm{fwd}}$
changes fixed occupation numbers $(k_1,k_2,k_3)$
into \emph{random} $(k_1',k_2',k_3')$, and similarly $\mathbf{p}^{\mathrm{bwd}}$
maps $(k_1',k_2',k_3')$ into random $(k_1,k_2,k_3)$.

\subsection{Dragging a cross through multiple columns. Yang-Baxter fields} 
\label{sub:cross_multiple_dragging}

We now want to bring our discussion a step forward and push
the cross through multiple columns of vertices, from the leftmost one to
the right (and vice versa),
sequentially utilizing the transition probabilities
$\mathbf{p}^{\mathrm{fwd}}$ and $\mathbf{p}^{\mathrm{bwd}}$ 
associated with the
vertex weights 
$w^{(J)}_{u,s}$, $w^{*,(I)}_{v,s}$, and $R^{(I,J)}_{uv}$
which are nonnegative in one of the cases given in Figure
\ref{fig:table_positivity_YBE}.

\begin{figure}[t]
    \centering
		\includegraphics{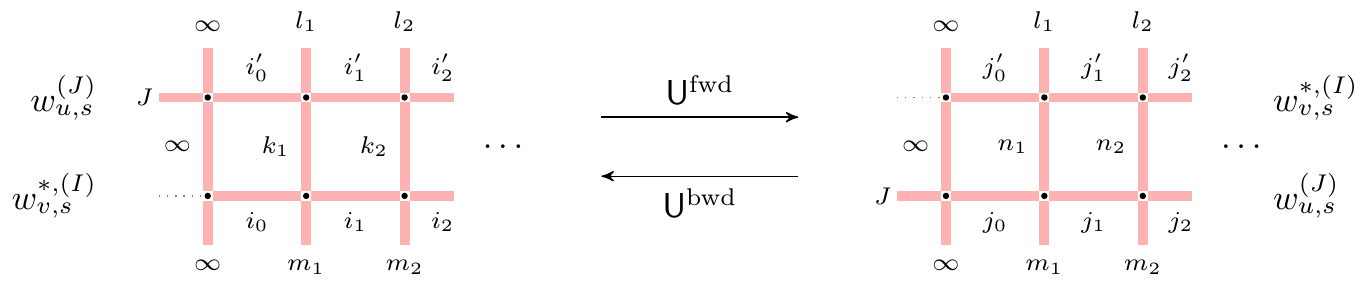}
    \caption{Graphical 
		representation of a transition between two-row path configurations.}
    \label{fig:transition_nu_kappa}
\end{figure}
We consider the 
lattice composed of two infinite rows, that is,
the vertices are indexed by the lattice
$\mathbb{Z}_{\geq 0} \times \{1,2\}$. 
The rows carry vertex weights 
$w^{(J)}_{u,s}$ and $w^{*,(I)}_{v,s}$
(see \Cref{fig:transition_nu_kappa} for an illustration).
As boundary conditions
for the paths
flowing through the lattice
we take:
\begin{equation}
	\label{eq:path_conf_boundary_conditions}
	\begin{minipage}[tb]{.9\textwidth}
		\begin{itemize}
				\item infinitely many paths flow in the vertical direction in the 0-th column;
				\item at the 0-th column no paths enter from the left into the vertex carrying 
					the weight $w^{*,(I)}_{v,s}$, 
					while $J$ paths
					enter from the left into the vertex in the 0-th column
					carrying the weight $w^{(J)}_{u,s}$; 
				\item paths do not stay horizontal forever, that is, at the far right 
					the path configuration must be empty.
		\end{itemize}
	\end{minipage}
\end{equation}
\begin{remark}
	\label{rmk:sqW_spec_careful}
	Under the sqW or scaled geometric
	specializations treating $q^J$ as an independent variable,
	the term ``$J$ paths'' in \eqref{eq:path_conf_boundary_conditions}
	should be understood formally
	and all the vertex weights should undergo these 
	specializations together (see \Cref{rmk:sqW_spec_careful_S6_concrete}
	below for a detailed explanation of this procedure). 
	In the rest of the present section we continue to 
	employ the unified notation for
	all the cases.
\end{remark}

The numbers of vertical arrows in the path configurations 
in \Cref{fig:transition_nu_kappa} 
are encoded by triples 
of 
Young diagrams $\lambda,\varkappa,\mu$ (left)
and
$\lambda,\nu,\mu$ (right), as the 
horizontal edges' occupation numbers are then uniquely determined 
by the arrow preservation. In detail, we have
\begin{equation} \label{eq:Young_diag_mult_notation}
	\lambda= 1^{l_1}2^{l_2} \dots,
	\qquad 
	\mu= 1^{m_1}2^{m_2} \dots, 
	\qquad 
	\varkappa= 1^{k_1}2^{k_2} ,\dots,
	\qquad 
	\nu= 1^{n_1}2^{n_2} \dots.
\end{equation} 
Let us record the corresponding horizontal occupation
numbers by sequences 
$\{ i_h, i_h' \}_{h\ge0}$ 
(for $\lambda,\varkappa,\mu$)
and 
$\{ j_h, j_h' \}_{h\ge0}$
(for $\lambda,\nu,\mu$).

\begin{definition}[Markov operators on Young diagrams]
	\label{def:U_fwd_U_bwd} 
	With the above notation, we define the Markov operators 
	$\mathsf{U}^{\mathrm{fwd}}$
	and 
	$\mathsf{U}^{\mathrm{bwd}}$
	as follows. For 
	$\mathsf{U}^{\mathrm{fwd}}$,
	attach the cross vertex 
	\begin{tikzpicture}[baseline=-3,scale=0.5]
		\draw[fill] (0,0) circle [radius=0.025];
			\draw[dotted, gray] (0.05,-0.05) -- (0.5, -0.5);
			\draw[line width = 1mm, red!30] (0.05, 0.05) -- (0.5, 0.5) node[black, right]{\scriptsize{$J$}};
			\draw[dotted, gray] (-0.05, 0.05) -- (-0.5, 0.5);
			\draw[line width = 1mm, red!30] (-0.05, -0.05) -- (-0.5, -0.5) node[black, left]{\scriptsize{$J$}};
			\addvmargin{1mm}
	\end{tikzpicture}
	to the leftmost column in the configuration
	encoded by $(\lambda,\varkappa,\mu)$, 
	and drag the cross all the way to the right using the 
	transition probabilities $\mathbf{p}^{\mathrm{fwd}}$.
	An intermediate step is displayed in
	\Cref{fig: cross push}. 
	The definition of $\mathsf{U}^{\mathrm{bwd}}$ involves 
	dragging the cross to the left 
	using the 
	transition probabilities $\mathbf{p}^{\mathrm{bwd}}$, and starting from the
	empty cross vertex far to the right.
	In detail,
	\begin{align}
			\label{eq:U_fwd_def}		
			\mathsf{U}^{\mathrm{fwd}}(\varkappa\to\nu\mid \lambda,\mu) = 
			\prod_{h=0}^{\infty}
			\mathbf{p}^{\mathrm{fwd}} \Bigg( \begin{tikzpicture}[baseline=5,scale=0.5]
			\draw[line width = 1mm,red!30] (-3.5,0) -- (-3.05,0.45); 
			\draw[line width = 1mm,red!30] (-3.5,1) -- (-3.05,0.55);
			\draw[fill] (-3,0.5) circle [radius=0.025];
			\draw[line width = 1mm,red!30] (-2.95,0.55) -- (-2.5,1) -- (-1.55,1);
			\draw[line width = 1mm,red!30] (-2.95,0.45) -- (-2.5,0) -- (-1.55,0);
			\draw[line width = 1mm,red!30] (-1.45,0) -- (-1,0);
			\draw[line width = 1mm,red!30] (-1.5,-0.5) -- (-1.5, -0.05);
			\draw[line width = 1mm,red!30] (-1.5,0.05) -- (-1.5, 0.95);
			\draw[line width = 1mm,red!30] (-1.45,1) -- (-1, 1);
			\draw[line width = 1mm,red!30] (-1.5,1.05) -- (-1.5, 1.5);
			\draw[fill] (-1.5,1) circle [radius=0.025];
			\draw[fill] (-1.5,0) circle [radius=0.025];
			\node[left] at (-3.5,0) {\tiny{$j_{h-1}$}};
			\node[left] at (-3.5,1) {\tiny{$j_{h-1}'$}};
			\node[below, yshift=0.1cm] at (-1.5,-0.6) {\tiny{$m_h$}};
			\node[right] at (-1,0) {\tiny{$i_h$}};
			\node[right] at (-1,1) {\tiny{$i_h'$}};
			\node[above, yshift=-0.1cm] at (-1.5,1.6) {\tiny{$l_h$}};
			\node[below, xshift=-0.1cm] at (-2.3,0) {\tiny{$i_{h-1}$}};
			\node[above, xshift=-0.1cm] at (-2.3,1) {\tiny{$i_{h-1}'$}};
			\node[left] at (-1.5,0.5) {\tiny{$k_h$}};
			\end{tikzpicture} , \begin{tikzpicture}[baseline=5,scale=0.5]
			\draw[line width = 1mm, red!30] (1,1) -- (1.45,1);
			\draw[line width = 1mm, red!30] (1.55,1) -- (2.5,1) -- (2.95,0.55);
			\draw[line width = 1mm, red!30] (3.05,0.45) -- (3.5,0);
			\draw[line width = 1mm,red!30] (1, 0) -- (1.45,0.0);
			\draw[line width = 1mm,red!30] (1.55,0) -- (2.5,0) -- (2.95,0.45);
			\draw[line width = 1mm,red!30] (3.05, 0.55) -- (3.5,1);
			\draw[line width = 1mm,red!30] (1.5, -0.5) -- (1.5,-0.05);
			\draw[line width = 1mm,red!30] (1.5, 0.05) -- (1.5,0.95);
			\draw[line width = 1mm,red!30] (1.5, 1.05) -- (1.5,1.5);
			\draw[fill] (1.5,1) circle [radius=0.025];
			\draw[fill] (1.5,0) circle [radius=0.025];
			\draw[fill] (3,0.5) circle [radius=0.025];
			\node[left] at (1,1) {\tiny{$j_{h-1}'$}};
			\node[left] at (1,0) {\tiny{$j_{h-1}$}};
			\node[below, yshift=0.1cm] at (1.5, -0.6) {\tiny{$m_h$}};
			\node[right] at (3.5, 0) {\tiny{$i_h$}};
			\node[right] at (3.5, 1) {\tiny{$i_h'$}};
			\node[above, yshift=-0.1cm] at (1.5, 1.6) {\tiny{$l_h$}};
			\node[below] at (2.3,0) {\tiny{$j_h$}};
			\node[above] at (2.3,1) {\tiny{$j_h'$}};
			\node[right] at (1.5,0.5) {\tiny{$n_h$}};
			\addvmargin{1mm}
			\end{tikzpicture} \Bigg);
			\\[-7pt]
			\label{eq:U_bwd_def}
			\mathsf{U}^{\mathrm{bwd}}(\nu\to\varkappa\mid \lambda,\mu) = 
			\prod_{h=0}^{\infty}
			\mathbf{p}^{\mathrm{bwd}} \Bigg(
			\begin{tikzpicture}[baseline=5,scale=0.5]
			\draw[line width = 1mm, red!30] (1,1) -- (1.45,1);
			\draw[line width = 1mm, red!30] (1.55,1) -- (2.5,1) -- (2.95,0.55);
			\draw[line width = 1mm, red!30] (3.05,0.45) -- (3.5,0);
			\draw[line width = 1mm,red!30] (1, 0) -- (1.45,0.0);
			\draw[line width = 1mm,red!30] (1.55,0) -- (2.5,0) -- (2.95,0.45);
			\draw[line width = 1mm,red!30] (3.05, 0.55) -- (3.5,1);
			\draw[line width = 1mm,red!30] (1.5, -0.5) -- (1.5,-0.05);
			\draw[line width = 1mm,red!30] (1.5, 0.05) -- (1.5,0.95);
			\draw[line width = 1mm,red!30] (1.5, 1.05) -- (1.5,1.5);
			\draw[fill] (1.5,1) circle [radius=0.025];
			\draw[fill] (1.5,0) circle [radius=0.025];
			\draw[fill] (3,0.5) circle [radius=0.025];
			\node[left] at (1,1) {\tiny{$j_{h-1}'$}};
			\node[left] at (1,0) {\tiny{$j_{h-1}$}};
			\node[below, yshift=0.1cm] at (1.5, -0.6) {\tiny{$m_h$}};
			\node[right] at (3.5, 0) {\tiny{$i_h$}};
			\node[right] at (3.5, 1) {\tiny{$i_h'$}};
			\node[above, yshift=-0.1cm] at (1.5, 1.6) {\tiny{$l_h$}};
			\node[below] at (2.3,0) {\tiny{$j_h$}};
			\node[above] at (2.3,1) {\tiny{$j_h'$}};
			\node[right] at (1.5,0.5) {\tiny{$k_h$}};
			\addvmargin{1mm}\end{tikzpicture}
			,
			\begin{tikzpicture}[baseline=5,scale=0.5]
				\draw[line width = 1mm,red!30] (-3.5,0) -- (-3.05,0.45); 
			\draw[line width = 1mm,red!30] (-3.5,1) -- (-3.05,0.55);
			\draw[fill] (-3,0.5) circle [radius=0.025];
			\draw[line width = 1mm,red!30] (-2.95,0.55) -- (-2.5,1) -- (-1.55,1);
			\draw[line width = 1mm,red!30] (-2.95,0.45) -- (-2.5,0) -- (-1.55,0);
			\draw[line width = 1mm,red!30] (-1.45,0) -- (-1,0);
			\draw[line width = 1mm,red!30] (-1.5,-0.5) -- (-1.5, -0.05);
			\draw[line width = 1mm,red!30] (-1.5,0.05) -- (-1.5, 0.95);
			\draw[line width = 1mm,red!30] (-1.45,1) -- (-1, 1);
			\draw[line width = 1mm,red!30] (-1.5,1.05) -- (-1.5, 1.5);
			\draw[fill] (-1.5,1) circle [radius=0.025];
			\draw[fill] (-1.5,0) circle [radius=0.025];
			\node[left] at (-3.5,0) {\tiny{$j_{h-1}$}};
			\node[left] at (-3.5,1) {\tiny{$j_{h-1}'$}};
			\node[below, yshift=0.1cm] at (-1.5,-0.6) {\tiny{$m_h$}};
			\node[right] at (-1,0) {\tiny{$i_h$}};
			\node[right] at (-1,1) {\tiny{$i_h'$}};
			\node[above, yshift=-0.1cm] at (-1.5,1.6) {\tiny{$l_h$}};
			\node[below, xshift=-0.1cm] at (-2.3,0) {\tiny{$i_{h-1}$}};
			\node[above, xshift=-0.1cm] at (-2.3,1) {\tiny{$i_{h-1}'$}};
			\node[left] at (-1.5,0.5) {\tiny{$n_h$}};
		\end{tikzpicture} \Bigg),
	\end{align}
	where $j_{-1}=J$, $j_{-1}'=0$, and $i_h=i_h'=0$ for all sufficiently large $h$.
	All terms
	$\mathbf{p}^{\mathrm{fwd}}$ and $\mathbf{p}^{\mathrm{bwd}}$
	in the infinite products 
	\eqref{eq:U_fwd_def}, \eqref{eq:U_bwd_def}
	belong to $[0,1]$. 
\end{definition}
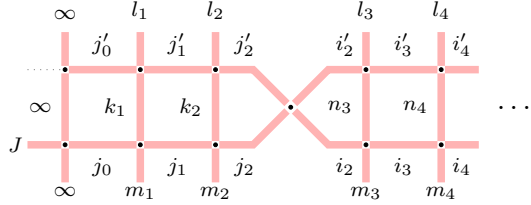
\begin{figure}[htbp]
		\centering
		\begin{tikzpicture}
		
			\foreach \n in {-3,...,-1}{
				\foreach \t in {0,1}{
					\draw[fill] (\n,\t) circle[radius=0.025]; 
				}
			}
		\draw[line width = 1mm, red!30] (-3.5,0) -- (-3.05,0) node[near start, left, black] {\scriptsize{$J$}};
		\draw[dotted, gray] (-3.5,1) -- (-3.05,1);
		\draw[line width = 1mm, red!30] (-3,-0.5) -- (-3,-0.05) node[near start, below, black] {\scriptsize{$\infty$}};
		\draw[line width = 1mm, red!30] (-3,0.05) -- (-3,0.95) node[midway, left, black] {\scriptsize{$\infty$}};
		\draw[line width = 1mm, red!30] (-3,1.05) -- (-3,1.5) node[above, black] {\scriptsize{$\infty$}};
		
		\draw[line width = 1mm, red!30] (-2.95,1) -- (-2.05,1) node[midway,above, black] {\scriptsize{$j_0'$}};
		\draw[line width = 1mm, red!30] (-2.95,0) -- (-2.05,0)
		node[midway,below, black] {\scriptsize{$j_0$}};
		\draw[line width = 1mm, red!30] (-2,-0.5) -- (-2,-0.05) node[near start, below, black] {\scriptsize{$m_1$}};
		\draw[line width = 1mm, red!30] (-2,0.05) -- (-2,0.95) node[midway, left, black] {\scriptsize{$k_1$}};
		\draw[line width = 1mm, red!30] (-2,1.05) -- (-2,1.5) node[above, black] {\scriptsize{$l_1$}};
		
		\draw[line width = 1mm, red!30] (-1.95,1) -- (-1.05,1)
		node[midway,above, black] {\scriptsize{$j_1'$}};
		\draw[line width = 1mm, red!30] (-1.95,0) -- (-1.05,0) node[midway,below, black] {\scriptsize{$j_1$}};
		\draw[line width = 1mm, red!30] (-1,-0.5) -- (-1,-0.05) node[near start, below, black] {\scriptsize{$m_2$}};
		\draw[line width = 1mm, red!30] (-1,0.05) -- (-1,0.95) node[midway, left, black] {\scriptsize{$k_2$}};
		\draw[line width = 1mm, red!30] (-1,1.05) -- (-1,1.5) node[above, black] {\scriptsize{$l_2$}};
		
		\draw[line width = 1mm, red!30] (-0.95,1) -- (-0.5,1) node[near end, above, black] {\scriptsize{$j_2'$}} -- (-0.05,0.55);
		\draw[line width = 1mm, red!30] (-0.95,0) -- (-0.5,0) node[near end, below, black] {\scriptsize{$j_2$}} -- (-0.05,0.45);
		
		\draw[fill] (0,0.5) circle[radius=0.025];
		
		\foreach \n in {1,...,2}{
			\foreach \t in {0,1}{
				\draw[fill] (\n,\t) circle[radius=0.025]; 
			}
		}
		
		\draw[line width = 1mm, red!30] (0.05,0.55) -- (0.5,1) -- (0.95,1) node[midway, above, black] {\scriptsize{$i_2'$}};
		\draw[line width = 1mm, red!30] (0.05,0.45) -- (0.5,0) -- (0.95,0) node[midway, below, black] {\scriptsize{$i_2$}};
		
		\draw[line width = 1mm, red!30] (1,-0.5) -- (1,-0.05) node[near start, below, black] {\scriptsize{$m_3$}};
		\draw[line width = 1mm, red!30] (1,0.05) -- (1,0.95) node[midway, left, black] {\scriptsize{$n_3$}};
		\draw[line width = 1mm, red!30] (1,1.05) -- (1,1.5) node[above, black] {\scriptsize{$l_3$}};
		
		\draw[line width = 1mm, red!30] (1.05,1) -- (1.95,1)
		node[midway,above, black] {\scriptsize{$i_3'$}};
		\draw[line width = 1mm, red!30] (1.05,0) -- (1.95,0) node[midway,below, black] {\scriptsize{$i_3$}};
		
		\draw[line width = 1mm, red!30] (2,-0.5) -- (2,-0.05) node[near start, below, black] {\scriptsize{$m_4$}};
		\draw[line width = 1mm, red!30] (2,0.05) -- (2,0.95) node[midway, left, black] {\scriptsize{$n_4$}};
		\draw[line width = 1mm, red!30] (2,1.05) -- (2,1.5) node[above, black] {\scriptsize{$l_4$}};
		
		\draw[line width = 1mm, red!30] (2.05,1) -- (2.5,1)
		node[midway,above, black] {\scriptsize{$i_4'$}};
		\draw[line width = 1mm, red!30] (2.05,0) -- (2.5,0) node[midway,below, black] {\scriptsize{$i_4$}};
		
		\node at (3,0.5) {$\dots$};
		
		\end{tikzpicture}
			\caption{{As the cross moves to the right,
			it randomly updates values of vertical occupation numbers $k_h$ to $n_h$
			(in $\mathsf{U}^{\mathrm{fwd}}$). 
			Horizontal occupation numbers are updated accordingly.}}
		\label{fig: cross push}
\end{figure}

Because the definition of $\mathsf{U}^{\mathrm{fwd}}$ and 
$\mathsf{U}^{\mathrm{bwd}}$ involves Yang-Baxter equations with 
infinitely many paths, we have to make sure that 
the corresponding infinite sums converge.
Recall the sets $\mathsf{Adm}(\rho,\rho^*)$
from \Cref{def:adm_rho} and the restrictions on 
parameters in \Cref{fig:table_positivity_YBE}
leading to positive specializations.
\begin{proposition}
	For each of the 9 pairs of specializations 
	$(\rho,\rho^*)$
	from Figure \ref{fig:table_positivity_YBE}
	(when $\rho$ and $\rho^*$ correspond to 
	$w_{u,s}^{(J)}$ and $w_{v,s}^{*,(I)}$, respectively)
	when the parameters 
	belong to $\mathsf{Adm}(\rho,\rho^*)$,
	one can choose bijectivizations 
	$\mathbf{p}^{\mathrm{fwd}}$ and $\mathbf{p}^{\mathrm{bwd}}$
	such that the 
	Markov operators
	$\mathsf{U}^{\mathrm{fwd}}$ and 
	$\mathsf{U}^{\mathrm{bwd}}$
	are well-defined by the infinite
	products 
	\eqref{eq:U_fwd_def} and \eqref{eq:U_bwd_def}.
	That is,
	\begin{equation*}
		\sum_{\nu}
		\mathsf{U}^{\mathrm{fwd}}(\varkappa\to\nu\mid \lambda,\mu)=1,
		\qquad 
		\sum_{\varkappa}
		\mathsf{U}^{\mathrm{bwd}}(\nu\to\varkappa\mid \lambda,\mu)=1,
	\end{equation*}
	where the sums are taken over all 
	path configurations 
	as in \Cref{fig:transition_nu_kappa}
	(with $\varkappa$ and $\nu$ encoding the left and right pictures, respectively)
	with the boundary conditions \eqref{eq:path_conf_boundary_conditions}.
	\label{prop:U_well_defined}
\end{proposition}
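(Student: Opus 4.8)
The plan is to deduce the statement from two facts already available: that the Yang--Baxter equation admits stochastic bijectivizations in each of the nine positive cases of \Cref{fig:table_positivity_YBE} (\Cref{prop:Bij_exists}), so every factor $\mathbf{p}^{\mathrm{fwd}},\mathbf{p}^{\mathrm{bwd}}$ lies in $[0,1]$; and that the infinite skew Cauchy sum $\sum_{\nu}\mathfrak{F}_{\nu/\mu}(\rho)\,\mathfrak{G}_{\nu/\lambda}(\rho^*)$ converges, equal to $\Pi(\rho;\rho^*)\sum_{\varkappa}\mathfrak{F}_{\lambda/\varkappa}(\rho)\mathfrak{G}_{\mu/\varkappa}(\rho^*)$, precisely when $(\rho,\rho^*)\in\mathsf{Adm}(\rho,\rho^*)$, which is \Cref{thm:skew_Cauchy_mixed_spec} and, underneath it, the absolute summability \Cref{prop:sHL_absolute_integrability}. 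First I would note that for a \emph{fixed} target $\nu$ with $\nu_1,\nu_1'<\infty$ all but finitely many factors in \eqref{eq:U_fwd_def} equal $1$ --- past the columns occupied by $\lambda,\mu,\varkappa$ and past the point where the cross has deposited its last arrow the local pictures are empty --- so $\mathsf{U}^{\mathrm{fwd}}(\varkappa\to\nu\mid\lambda,\mu)\in[0,1]$ is well defined, and likewise $\mathsf{U}^{\mathrm{bwd}}(\nu\to\varkappa\mid\lambda,\mu)$.

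The two directions behave differently. For $\mathsf{U}^{\mathrm{bwd}}$ the cross enters empty from the far right and moves toward column $0$, where it exits; by arrow conservation its load stays bounded and it makes only finitely many nontrivial transitions, so the backward drag is a finite absorbing Markov chain and $\sum_{\varkappa}\mathsf{U}^{\mathrm{bwd}}(\nu\to\varkappa\mid\lambda,\mu)=1$ for every $\nu$ (in the sqW and scaled geometric cases understood formally as in \Cref{rmk:sqW_spec_careful}). For $\mathsf{U}^{\mathrm{fwd}}$ there is no boundary to the right, so the drag might not terminate: truncating it at columns $0,\dots,N-1$ gives an honest Markov chain of total mass $1$, and since a configuration once completed stays completed, letting $N\to\infty$ yields only $\sum_{\nu}\mathsf{U}^{\mathrm{fwd}}(\varkappa\to\nu\mid\lambda,\mu)\le 1$, with equality exactly when no probability escapes to Young diagrams with $\nu_1=\infty$ or $\nu_1'=\infty$.

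To obtain equality I would telescope the reversibility relation \eqref{rev cond} column by column along the drag. The $R$-matrix weights between consecutive columns cancel in pairs, and the products of the vertex weights $w^{(J)}_{u,s},w^{*,(I)}_{v,s}$ converge --- by the definition of the (mixed) skew functions as partition functions, with absolute convergence from \Cref{prop:sHL_absolute_integrability} --- to $\mathfrak{F}_{\lambda/\varkappa}(\rho)\mathfrak{G}_{\mu/\varkappa}(\rho^*)$ and $\mathfrak{F}_{\nu/\mu}(\rho)\mathfrak{G}_{\nu/\lambda}(\rho^*)$, the leftmost cross weight and the entering datum of $J$ paths together contributing exactly the prefactor $\Pi(\rho;\rho^*)$ of \eqref{eq:skew_Cauchy_mixed}. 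This gives the abstract reversibility \eqref{eq:F_G_reversibility},
\begin{equation*}
\mathsf{U}^{\mathrm{fwd}}(\varkappa\to\nu\mid\lambda,\mu)\,\Pi(\rho;\rho^*)\,\mathfrak{F}_{\lambda/\varkappa}(\rho)\,\mathfrak{G}_{\mu/\varkappa}(\rho^*)=\mathsf{U}^{\mathrm{bwd}}(\nu\to\varkappa\mid\lambda,\mu)\,\mathfrak{F}_{\nu/\mu}(\rho)\,\mathfrak{G}_{\nu/\lambda}(\rho^*),
\end{equation*}
valid for all $(\varkappa,\nu)$.

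Finally I would sum this identity over $\varkappa$ --- a finite sum, as $\varkappa\subseteq\lambda$ and $\varkappa\subseteq\mu$ --- using $\sum_{\varkappa}\mathsf{U}^{\mathrm{bwd}}=1$, and then over $\nu$. By \Cref{thm:skew_Cauchy_mixed_spec} the resulting right-hand side equals the finite quantity $\Pi(\rho;\rho^*)\sum_{\varkappa}\mathfrak{F}_{\lambda/\varkappa}(\rho)\mathfrak{G}_{\mu/\varkappa}(\rho^*)$, and on the left, all terms being nonnegative, Tonelli allows interchanging the sums, producing $\Pi(\rho;\rho^*)\sum_{\varkappa}\mathfrak{F}_{\lambda/\varkappa}(\rho)\mathfrak{G}_{\mu/\varkappa}(\rho^*)\bigl(\sum_{\nu}\mathsf{U}^{\mathrm{fwd}}(\varkappa\to\nu\mid\lambda,\mu)\bigr)$. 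Comparing, and recalling $\sum_{\nu}\mathsf{U}^{\mathrm{fwd}}\le 1$ together with the fact that $\mathfrak{F}_{\lambda/\varkappa}(\rho)\mathfrak{G}_{\mu/\varkappa}(\rho^*)>0$ exactly for the $\varkappa$ admissible in \eqref{eq:U_fwd_def}, forces $\sum_{\nu}\mathsf{U}^{\mathrm{fwd}}(\varkappa\to\nu\mid\lambda,\mu)=1$ for every such $\varkappa$, which is the assertion. The main obstacle I expect is the telescoping step --- the careful bookkeeping of the boundary data and of the normalization $\Pi(\rho;\rho^*)$, and the passage to the limit in the infinite products of weights, which is where \Cref{prop:sHL_absolute_integrability} re-enters and where the sqW and scaled geometric specializations must be treated in the formal/limiting sense; the conceptual heart, however, is the exclusion of escape to Young diagrams with infinite first row or column, which works precisely because the parameters lie in $\mathsf{Adm}(\rho,\rho^*)$.
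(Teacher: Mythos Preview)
Your route differs from the paper's and contains a genuine simplification. The paper proceeds in three steps: (1) the backward operator sums to one because only finitely many $\varkappa$ are possible; (2) one can \emph{choose} a bijectivization in which, at every empty column past $\max(\lambda_1,\mu_1)$, the cross continues carrying $j\ge1$ arrows with probability strictly less than $1$ --- checked case by case by exhibiting a nonzero term with $b>0$ on the right of the local Yang--Baxter identity; (3) the column-$0$ transition (through the column of infinitely many vertical arrows) is stochastic --- again checked case by case, using that the cross weight is bounded for fixed output and that the boundary contributions decay like $(\xi\theta)^{j_0}$, $(\xi\beta)^{j_0}$, $(\alpha\beta)^{j_0}$, whose summability is exactly $\mathsf{Adm}(\rho,\rho^*)$. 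Your argument keeps (1), and replaces (2) entirely by the reversibility telescoping together with the skew Cauchy identity of \Cref{thm:skew_Cauchy_mixed_spec} (established independently of bijectivization, so there is no circularity). If the gap below is filled, your argument in fact shows that \emph{every} stochastic bijectivization at the bulk columns yields $\sum_\nu\mathsf{U}^{\mathrm{fwd}}=1$, not just a specifically constructed one --- stronger than what the paper claims.

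The gap is that you do not carry out (3). Your inequality $\sum_\nu\mathsf{U}^{\mathrm{fwd}}(\varkappa\to\nu\mid\lambda,\mu)\le 1$ via truncation presupposes that $\sum_{j_0,j_0'}\mathbf{p}^{\mathrm{fwd}}_0=1$ at column $0$. When at least one of $\rho,\rho^*$ is sHL this sum is finite (arrow conservation of the cross leaves at most two terms) and there is nothing to check, but in the four cases sqW/sqW, sqW/sg, sg/sqW, sg/sg it is an infinite series whose convergence is precisely the content of the paper's Step~3. \Cref{prop:Bij_exists} does not help: it \emph{assumes} the summation identity already holds, which at column $0$ is exactly what must be verified (indeed the very definition of bijectivization in the countable case presupposes absolute convergence). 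Nor does \Cref{prop:sHL_absolute_integrability}, which bounds a different sum --- over output Young diagrams of the skew functions --- under different (smallness) hypotheses on the parameters, not the column-$0$ YBE sum under $\mathsf{Adm}$. Without $\sum_{j_0,j_0'}\mathbf{p}^{\mathrm{fwd}}_0=1$ you lose $S(\varkappa)\le1$, and the averaged identity $\sum_\varkappa S(\varkappa)\,\mathfrak{F}_{\lambda/\varkappa}\mathfrak{G}_{\mu/\varkappa}=\sum_\varkappa\mathfrak{F}_{\lambda/\varkappa}\mathfrak{G}_{\mu/\varkappa}$ alone does not pin down $S(\varkappa)$ term by term. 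In your own language: the skew Cauchy argument excludes escape to $\nu_1=\infty$, but the exclusion of escape to $\nu_1'=\ell(\nu)=\infty$ is the column-$0$ computation you have not supplied.
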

This implies in particular that the Markov operator
$\mathsf{U}^{\mathrm{fwd}}$
does not produce path configurations with infinitely
long horizontal paths on the right or infinitely many vertical paths in any column 
except the leftmost one.
\begin{proof}[Proof of \Cref{prop:U_well_defined}]
	\textbf{Step 1.}
	The backward transition probabilities
	$\mathsf{U}^{\mathrm{bwd}}(\nu\to\varkappa\mid \lambda,\mu)$
	sum to one over $\varkappa$
	because for fixed $\lambda,\nu,\mu$ the 
	number of possible configurations $\varkappa$
	is finite in all the cases 
	considered in \Cref{sub:sHL_sHL_structure,sub:sHL_sqW_structure,sub:sqW_sqW_structure}.
	Therefore, only finitely many factors in the products \eqref{eq:U_bwd_def}
	differ from $1$. As the individual pieces $\mathbf{p}^{\mathrm{bwd}}$ sum to one
	over all possible outcomes,
	we see that the backward operator $\mathsf{U}^{\mathrm{bwd}}$ is well-defined.

	\medskip\noindent\textbf{Step 2.}
	We will now show that 
	there exists a bijectivization 
	$\mathbf{p}^{\mathrm{fwd}}$ such that
	for all $j\ge 1$ we have
	\begin{equation}
		\label{eq:p_fwd_less_than_1}
		\mathbf{p}^{\mathrm{fwd}} \Bigg( \begin{tikzpicture}[baseline=5,scale=0.5]
		\draw[line width = 1mm,red!30] (-3.5,0) -- (-3.05,0.45); 
		\draw[line width = 1mm,red!30] (-3.5,1) -- (-3.05,0.55);
		\draw[fill] (-3,0.5) circle [radius=0.025];
		\draw[dotted, gray] (-2.95,0.55) -- (-2.5,1) -- (-1.55,1);
		\draw[dotted, gray] (-2.95,0.45) -- (-2.5,0) -- (-1.55,0);
		\draw[dotted, gray] (-1.45,0) -- (-1,0);
		\draw[dotted, gray] (-1.5,-0.5) -- (-1.5, -0.05);
		\draw[dotted, gray] (-1.5,0.05) -- (-1.5, 0.95);
		\draw[dotted, gray] (-1.45,1) -- (-1, 1);
		\draw[dotted, gray] (-1.5,1.05) -- (-1.5, 1.5);
		\draw[fill] (-1.5,1) circle [radius=0.025];
		\draw[fill] (-1.5,0) circle [radius=0.025];
		\node[left] at (-3.5,0) {\tiny{$j$}};
		\node[left] at (-3.5,1) {\tiny{$j$}};
		\node[below, yshift=0.1cm] at (-1.5,-0.6) {};
		\node[right] at (-1,0) {};
		\node[right] at (-1,1) {};
		\node[above, yshift=-0.1cm] at (-1.5,1.6) {};
		\node[below, xshift=-0.1cm] at (-2.3,0) {};
		\node[above, xshift=-0.1cm] at (-2.3,1) {};
		\node[left] at (-1.5,0.5) {};
		\end{tikzpicture} , \begin{tikzpicture}[baseline=5,scale=0.5]
		\draw[line width = 1mm, red!30] (1,1) -- (1.45,1);
		\draw[line width = 1mm, red!30] (1.55,1) -- (2.5,1) -- (2.95,0.55);
		\draw[dotted, gray] (3.05,0.45) -- (3.5,0);
		\draw[line width = 1mm,red!30] (1, 0) -- (1.45,0.0);
		\draw[line width = 1mm,red!30] (1.55,0) -- (2.5,0) -- (2.95,0.45);
		\draw[dotted, gray] (3.05, 0.55) -- (3.5,1);
		\draw[dotted, gray] (1.5, -0.5) -- (1.5,-0.05);
		\draw[dotted, gray] (1.5, 0.05) -- (1.5,0.95);
		\draw[dotted, gray] (1.5, 1.05) -- (1.5,1.5);
		\draw[fill] (1.5,1) circle [radius=0.025];
		\draw[fill] (1.5,0) circle [radius=0.025];
		\draw[fill] (3,0.5) circle [radius=0.025];
		\node[left] at (1,1) {\tiny{$j$}};
		\node[left] at (1,0) {\tiny{$j$}};
		\node[below, yshift=0.1cm] at (1.5, -0.6) {};
		\node[right] at (3.5, 0) {};
		\node[right] at (3.5, 1) {};
		\node[above, yshift=-0.1cm] at (1.5, 1.6) {};
		\node[below] at (2.3,0) {\tiny{$j$}};
		\node[above] at (2.3,1) {\tiny{$j$}};
		\node[right] at (1.5,0.5) {};
	\addvmargin{1mm}\end{tikzpicture} \Bigg) <1.
	\end{equation}
	This condition ensures that all probability mass
	is concentrated on triples 
	$(\lambda,\nu,\mu)$ with boundary conditions \eqref{eq:path_conf_boundary_conditions}, 
	and no positive probability is assigned under $\mathsf{U}^{\mathrm{fwd}}$ to 
	configurations with infinitely long horizontal paths.
	Indeed, 
	if there are $j$ paths escaping
	to the right past $\max(\mu_1,\lambda_1)$, 
	then 
	due to \eqref{eq:p_fwd_less_than_1}
	after a random geometric number of 
	cross draggings to the right
	there will remain $j-1$ paths, and so on
	until the configuration of paths far to the right becomes empty.

	The Yang-Baxter equation with the boundary conditions corresponding to \eqref{eq:p_fwd_less_than_1}
	has the form
	\begin{equation}
		\label{eq:p_fwd_less_than_1_proof}
		\sum_{a=0}^{j}
		\mathrm{weight}
		\Bigg(
			\begin{tikzpicture}[baseline=5,scale=0.5]
				\draw[line width = 1mm,red!30] (-3.5,0) -- (-3.05,0.45); 
				\draw[line width = 1mm,red!30] (-3.5,1) -- (-3.05,0.55);
				\draw[fill] (-3,0.5) circle [radius=0.025];
				\draw[line width = 1mm,red!30] (-2.95,0.55) -- (-2.5,1) -- (-1.55,1);
				\draw[line width = 1mm,red!30] (-2.95,0.45) -- (-2.5,0) -- (-1.55,0);
				\draw[dotted, gray] (-1.45,0) -- (-1,0);
				\draw[dotted, gray] (-1.5,-0.5) -- (-1.5, -0.05);
				\draw[line width = 1mm,red!30] (-1.5,0.05) -- (-1.5, 0.95);
				\draw[dotted, gray] (-1.45,1) -- (-1, 1);
				\draw[dotted, gray] (-1.5,1.05) -- (-1.5, 1.5);
				\draw[fill] (-1.5,1) circle [radius=0.025];
				\draw[fill] (-1.5,0) circle [radius=0.025];
				\node[left] at (-3.5,0) {\tiny{$j$}};
				\node[left] at (-3.5,1) {\tiny{$j$}};
				\node[right] at (-1.5,.5) {\tiny{$a$}};
				\node[above] at (-2,1) {\tiny{$a$}};
				\node[below] at (-2,0) {\tiny{$a$}};
				\node[below, yshift=0.1cm] at (-1.5,-0.6) {};
				\node[right] at (-1,0) {};
				\node[right] at (-1,1) {};
				\node[above, yshift=-0.1cm] at (-1.5,1.6) {};
				\node[below, xshift=-0.1cm] at (-2.3,0) {};
				\node[above, xshift=-0.1cm] at (-2.3,1) {};
				\node[left] at (-1.5,0.5) {};
			\end{tikzpicture}
		\Bigg)
		=
		\sum_{b=0}^{j}
		\mathrm{weight}
		\Bigg(
			\begin{tikzpicture}[baseline=5,scale=0.5]
				\draw[line width = 1mm, red!30] (1,1) -- (1.45,1);
				\draw[line width = 1mm, red!30] (1.55,1) -- (2.5,1) -- (2.95,0.55);
				\draw[dotted, gray] (3.05,0.45) -- (3.5,0);
				\draw[line width = 1mm,red!30] (1, 0) -- (1.45,0.0);
				\draw[line width = 1mm,red!30] (1.55,0) -- (2.5,0) -- (2.95,0.45);
				\draw[dotted, gray] (3.05, 0.55) -- (3.5,1);
				\draw[dotted, gray] (1.5, -0.5) -- (1.5,-0.05);
				\draw[line width = 1mm, red!30] (1.5, 0.05) -- (1.5,0.95);
				\draw[dotted, gray] (1.5, 1.05) -- (1.5,1.5);
				\draw[fill] (1.5,1) circle [radius=0.025];
				\draw[fill] (1.5,0) circle [radius=0.025];
				\draw[fill] (3,0.5) circle [radius=0.025];
				\node[left] at (1,1) {\tiny{$j$}};
				\node[left] at (1,0) {\tiny{$j$}};
				\node [right] at (1.5,.5) {\tiny{$b$}};
				\node[below, yshift=0.1cm] at (1.5, -0.6) {};
				\node[right] at (3.5, 0) {};
				\node[right] at (3.5, 1) {};
				\node[above, yshift=-0.1cm] at (1.5, 1.6) {};
				\node[below] at (2.45,0) {\tiny{$j-b$}};
				\node[above] at (2.45,1) {\tiny{$j-b$}};
				\node[right] at (1.5,0.5) {};
				\addvmargin{1mm}
			\end{tikzpicture}
		\Bigg).
	\end{equation}
	It is possible to choose a bijectivization 
	satisfying \eqref{eq:p_fwd_less_than_1}
	if
	%here I try to shorten the proof - MM
	%ok - L.
	at least one term in the right-hand side of \eqref{eq:p_fwd_less_than_1_proof}
	corresponding to some $b>0$ does not vanish.
	\iffalse
	\begin{itemize}
		\item 
			either there are at least two nonzero terms in the right-hand side of
			\eqref{eq:p_fwd_less_than_1_proof};
		\item 
			or the term corresponding to $b=0$ in the right-hand side 
			of \eqref{eq:p_fwd_less_than_1_proof} vanishes.
	\end{itemize}
	\fi
	These terms are given by 
	\begin{equation*}
		\mathrm{weight}
		\Bigg(
			\begin{tikzpicture}[baseline=5,scale=0.5]
				\draw[line width = 1mm, red!30] (1,1) -- (1.45,1);
				\draw[line width = 1mm, red!30] (1.55,1) -- (2.5,1) -- (2.95,0.55);
				\draw[dotted, gray] (3.05,0.45) -- (3.5,0);
				\draw[line width = 1mm,red!30] (1, 0) -- (1.45,0.0);
				\draw[line width = 1mm,red!30] (1.55,0) -- (2.5,0) -- (2.95,0.45);
				\draw[dotted, gray] (3.05, 0.55) -- (3.5,1);
				\draw[dotted, gray] (1.5, -0.5) -- (1.5,-0.05);
				\draw[line width = 1mm, red!30] (1.5, 0.05) -- (1.5,0.95);
				\draw[dotted, gray] (1.5, 1.05) -- (1.5,1.5);
				\draw[fill] (1.5,1) circle [radius=0.025];
				\draw[fill] (1.5,0) circle [radius=0.025];
				\draw[fill] (3,0.5) circle [radius=0.025];
				\node[left] at (1,1) {\tiny{$j$}};
				\node[left] at (1,0) {\tiny{$j$}};
				\node [right] at (1.5,.5) {\tiny{$b$}};
				\node[below, yshift=0.1cm] at (1.5, -0.6) {};
				\node[right] at (3.5, 0) {};
				\node[right] at (3.5, 1) {};
				\node[above, yshift=-0.1cm] at (1.5, 1.6) {};
				\node[below] at (2.45,0) {\tiny{$j-b$}};
				\node[above] at (2.45,1) {\tiny{$j-b$}};
				\node[right] at (1.5,0.5) {};
				\addvmargin{1mm}
			\end{tikzpicture}
		\Bigg)
		=w_{u,s}^{(J)}(0,j;b,j-b)
		w_{v,s}^{*,(I)}(b,j;0,j-b)
		R_{uv}^{(I,J)}(j-b,j-b;0,0).
	\end{equation*}
	We now consider the cases of Figure \ref{fig:table_positivity_YBE}
	separately. In the cases involving the sHL specialization, the only
	allowed positive $j$ 
	is $j=1$, and the positivity of the term corresponding to $b=1$ can
	be checked by writing down all possible cases:
	\begin{equation*}
	    \begin{array}{rclrcl}
	    \mathrm{sHL/sHL} &:&  \dfrac{(1-q)(1-s^2)}{(1-su) (1-sv)}, 
	    	\\
		\mathrm{sHL/sqW} &:&  \dfrac{1-q}{1-su }, 
		&
		\qquad \qquad \mathrm{sqW/sHL} &:& \dfrac{1-q}{1-sv },
		\rule{0pt}{25pt}
			\\
		\mathrm{sHL/sg} &:&  \dfrac{(1-q)(1-s^2)}{1-su },
		&
		\qquad \qquad \mathrm{sg/sHL} &:& \dfrac{(1-q)(1-s^2)}{1-sv }.
	    \rule{0pt}{25pt}
	    \end{array}
	\end{equation*}
	All these expressions are positive under the positivity conditions
	from \Cref{fig:table_positivity_YBE}.
	\iffalse
	\begin{equation*}
		\begin{array}{rclrcl}
			w_{u,s}(0,1;1,0)&=&\dfrac{1-q}{1-su},
			&\qquad 
			w_{u,s}(0,1;0,1)&=&\dfrac{u-s}{1-su},
			\rule{0pt}{22pt}
			\\
			w^*_{v,s}(0,1;0,1)&=&\dfrac{v-s}{1-sv},
			&\qquad 
			w^*_{v,s}(1,1;0,0)&=&\dfrac{1-s^2}{1-sv},
			\rule{0pt}{22pt}
			\\
			R_{uv}(0,0;0,0)&=&1
			,&
			\qquad 
			R_{uv}(1,1;0,0)&=&\dfrac{1-q}{1-uv},
			\rule{0pt}{22pt}
			\\
			W^*_{\theta,s}(1,1;0,0)&=&1,
			&
			\qquad 
			W^*_{\theta,s}(0,1;0,1)&=&0,
			\rule{0pt}{16pt}
			\\
			\mathcal{R}^*_{\theta,u,s}(0,0;0,0)&=&1,
			\rule{0pt}{16pt}
			\\
			W^*_{y,s}(1,1;0,0)&=&1,
			&
			\qquad 
			W^*_{y,s}(0,1;0,1)&=&0,
			\rule{0pt}{16pt}
		\end{array}
	\end{equation*}
	\fi
	
	Next, in the sqW/sqW, case, the number of paths $j\ge1$ can be arbitrary, but the product
	$W_{x,s}(0,j;b,j-b)W^*_{y,s}(b,j;0,j-b)$ vanishes unless $b=j$, and
	for $b=j$ it is positive.
	The same is true for the
	sqW/sg and sg/sqW cases. Finally, in the sg/sg case all
	factors of the form
	$\widetilde{w}_{\alpha,s}(0,j;b,j-b)
	\widetilde{w}^*_{\beta,s}(b,j;0,j-b)
	R^{(\mathrm{sg,sg})}_{\alpha,\beta}(j-b,j-b;0,0)$
	are strictly positive.

	\medskip\noindent\textbf{Step 3.}
	Now let us check that
	after dragging the cross through the 
	leftmost column containing infinitely many vertical paths,
	the probability to get infinitely many horizontal 
	paths is zero.
	Clearly, infinitely many horizontal paths might occur only 
	if neither of the specializations $\rho,\rho^*$ is sHL.
	Overall, we need to show that
	\begin{equation}
		\label{eq:p_fwd_sum_to_one_in_leftmost}
		\sum_{j_0,j_0'}
		\mathbf{p}^{\mathrm{fwd}} \Bigg( \begin{tikzpicture}[baseline=5,scale=0.5]
				\draw[line width = 1mm,red!30] (-3.5,0) -- (-3.05,0.45); 
		\draw[dotted, gray] (-3.5,1) -- (-3.05,0.55);
		\draw[fill] (-3,0.5) circle [radius=0.025];
		\draw[line width = 1mm,red!30] (-2.95,0.55) -- (-2.5,1) -- (-1.55,1);
		\draw[dotted, gray] (-2.95,0.45) -- (-2.5,0) -- (-1.55,0);
		\draw[line width = 1mm,red!30] (-1.45,0) -- (-1,0);
		\draw[line width = 1mm,red!30] (-1.5,-0.5) -- (-1.5, -0.05);
		\draw[line width = 1mm,red!30] (-1.5,0.05) -- (-1.5, 0.95);
		\draw[line width = 1mm,red!30] (-1.45,1) -- (-1, 1);
		\draw[line width = 1mm,red!30] (-1.5,1.05) -- (-1.5, 1.5);
		\draw[fill] (-1.5,1) circle [radius=0.025];
		\draw[fill] (-1.5,0) circle [radius=0.025];
		\node[left] at (-3.5,0) {\tiny{$J$}};
		\node[below, yshift=0.1cm] at (-1.5,-0.6) {\tiny{$\infty$}};
		\node[right] at (-1,0) {\tiny{$i_0$}};
		\node[right] at (-1,1) {\tiny{$i_0'$}};
		\node[above, yshift=-0.1cm] at (-1.5,1.6) {\tiny{$\infty$}};
		\node[above, xshift=-0.1cm] at (-2.3,1) {\tiny{$J$}};
		\node[left] at (-1.5,0.5) {\tiny{$\infty$}};
			\end{tikzpicture} , \begin{tikzpicture}[baseline=5,scale=0.5]
				\draw[dotted, gray] (1,1) -- (1.45,1);
		\draw[line width = 1mm, red!30] (1.55,1) -- (2.5,1) -- (2.95,0.55);
		\draw[line width = 1mm, red!30] (3.05,0.45) -- (3.5,0);
		\draw[line width = 1mm,red!30] (1, 0) -- (1.45,0.0);
		\draw[line width = 1mm,red!30] (1.55,0) -- (2.5,0) -- (2.95,0.45);
		\draw[line width = 1mm,red!30] (3.05, 0.55) -- (3.5,1);
		\draw[line width = 1mm,red!30] (1.5, -0.5) -- (1.5,-0.05);
		\draw[line width = 1mm,red!30] (1.5, 0.05) -- (1.5,0.95);
		\draw[line width = 1mm,red!30] (1.5, 1.05) -- (1.5,1.5);
		\draw[fill] (1.5,1) circle [radius=0.025];
		\draw[fill] (1.5,0) circle [radius=0.025];
		\draw[fill] (3,0.5) circle [radius=0.025];
		\node[left] at (1,0) {\tiny{$J$}};
		\node[below, yshift=0.1cm] at (1.5, -0.6) {\tiny{$\infty$}};
		\node[right] at (3.5, 0) {\tiny{$i_0$}};
		\node[right] at (3.5, 1) {\tiny{$i_0'$}};
		\node[above, yshift=-0.1cm] at (1.5, 1.6) {\tiny{$\infty$}};
		\node[below] at (2.3,0) {\tiny{$j_0$}};
		\node[above] at (2.3,1) {\tiny{$j_0'$}};
		\node[right] at (1.5,0.5) {\tiny{$\infty$}};
		\addvmargin{1mm}\end{tikzpicture} \Bigg)=1.
	\end{equation}
	Considering the corresponding Yang-Baxter equation,
	we see that its left-hand side converges thanks to 
	\Cref{prop:emergence_of_the_cross_vertex_weight},
	because the weights of the other two vertices do not depend on the 
	input from the left
	(cf. \eqref{eq:W_boundary_weights}, \eqref{eq:w_tilde_infinity}).
	The right-hand side of this Yang-Baxter equation contains terms of the form
	$w^{*,(I)}_{v,s} (\infty, 0; \infty, j_0')\,
	w^{(J)}_{u,s} (\infty, J; \infty, j_0)\,
	R^{(I,J)}_{uv} (j_0, j_0'; i_0', i_0)$.
	In the sqW/sqW, sqW/sg, and sg/sg cases, the 
	cross vertex weights \eqref{eq:Whittaker_cross_weight}, 
	\eqref{eq:R_sqW_sg}, and \eqref{eq:R_sg_sg}
	are bounded for fixed $i_0, i_0'$. The 
	contribution
	from the other two vertices 
	regulating the convergence of the right-hand side of the Yang-Baxter equation
	amounts to $(\xi\theta)^{j_0}$, $(\xi\beta)^{j_0}$, or $(\alpha\beta)^{j_0}$,
	respectively. 
	The conditions $\mathsf{Adm}(\rho,\rho^*)$ in these cases precisely mean
	that the products of spectral parameters are less than one, so the series converge.
	One then can choose a bijectivization such that \eqref{eq:p_fwd_sum_to_one_in_leftmost}
	holds. This completes the proof.
\end{proof}

\Cref{def:U_fwd_U_bwd} and \Cref{prop:U_well_defined}
thus produce 
``natural'' Markov operators\footnote{These 
operators are not determined uniquely
(except in their action in the 0-th column, cf. \Cref{sub:YBF_and_its_marginals}
below).}
$\mathsf{U}^{\mathrm{fwd}}$ and
$\mathsf{U}^{\mathrm{bwd}}$
associated with each of our skew Cauchy structures.
Denote by $\mathfrak{F}_{\lambda/\varkappa}(\rho)$ and
$\mathfrak{G}_{\mu/\varkappa}(\rho^*)$ the
partition functions of the one-row configurations
in the top and the bottom rows, respectively,
in \Cref{fig:transition_nu_kappa}, left. 
By their very construction through local bijectivizations,
these Markov operators satisfy the reversibility condition
for all $\lambda,\mu,\varkappa,\nu$:
\begin{equation*}
	\mathsf{U}^{\mathrm{fwd}}(\varkappa\to\nu\mid \lambda,\mu)
	\cdot
	\Pi(\rho;\rho^*)\mathfrak{F}_{\lambda/\varkappa}(\rho)\mathfrak{G}_{\mu/\varkappa}(\rho^*)
	=
	\mathsf{U}^{\mathrm{bwd}}(\nu\to\varkappa \mid \lambda,\mu)
	\cdot
	\mathfrak{F}_{\nu/\mu}(\rho)\mathfrak{G}_{\nu/\lambda}(\rho^*).
\end{equation*}
Here $\Pi(\rho;\rho^*)$ is defined in 
\Cref{thm:skew_Cauchy_mixed_spec},
which can be viewed as the properly specialized
term 
$
\dfrac{(uv q^I;q)_{\infty} (uv q^J ; q)_\infty}
{(uv;q)_\infty (uvq^{I+J}; q)_\infty}
$
from the right-hand side of the Cauchy equation
\eqref{eq:skew_Cauchy_mixed}.
Moreover, this quantity is also identified with the 
weight of the cross vertex 
$(J,0;J,0)$
attached to the configuration in 
\Cref{fig:transition_nu_kappa}, left,
before dragging the cross to the right
(see \Cref{prop:emergence_of_the_cross_vertex_weight} for the last equality).

Thus, we have constructed \emph{Yang-Baxter random fields
of Young diagrams}, which are illustrated in \Cref{fig:YBE_field}.
Before discussing concrete details in each of the
different cases in \Cref{sec:new_three_fields} below, in the next \Cref{sub:YBF_and_its_marginals}
we look at 
scalar marginals of our random fields.

\begin{figure}[ht]
    \centering
    \begin{tikzpicture}
    \begin{scope}[shift = {(3.5,0)}]
        \node at (1,0.5) {$\varkappa$};
        \node at (3,0.5) {$\mu$};
        \node at (1,2.5) {$\lambda$};
        \node at (3,2.5) {$\nu$};
        \draw[dotted] (1.2,0.5) -- (2.8,0.5);
        \draw[dotted] (1,0.7) -- (1,2.3);
        \draw[dotted] (1.2,2.5) -- (2.8,2.5);  
        \draw[dotted] (3,0.7) -- (3,2.3);
        \node[above] at (1.65,1.85) {\scriptsize{$\mathsf{U}^{\mathrm{fwd}}$}};
        \node[below] at (2.3,1) {\scriptsize{$\mathsf{U}^{\mathrm{bwd}}$}};
        \node[] (p1) at (3, 2.5) {};
        \node[] (p2) at ( 1 , 0.5 ) {};
        \node[above=-0.3cm of p2] {} edge[pil,bend left = 30] (p1);
        \node[above=-0.3cm of p1] {} edge[pil,bend left = 30] (p2);
    \end{scope}
    \begin{scope}[shift = {(-3.5,0)}]
        \draw[thick, gray, ->] (0,0) -- (3.5,0);
        \draw[thick, gray, ->] (0,0) -- (0,3.5);
        \foreach \n in {0,...,3}{
            \foreach \t in {0,...,3}{
                \draw[fill] (\n,\t) circle[radius=0.025]; 
            }
        }
        \foreach \n in {0,...,3}{
            \draw[dotted] (\n, 0) -- (\n,3);
            \draw[dotted] (0,\n) -- (3,\n);
        }
        \foreach \n in {1,...,3}{
            \node[below] at (\n,0) {\scriptsize{$\varnothing$}};
            \node[left] at (0,\n) {\scriptsize{$\varnothing$}};
        }
        \node[below left] at (0,0) {\scriptsize{$\varnothing$}};
        \node[below] at (1,1) {\scriptsize{$\lambda^{(1,1)}$}};
        \node[below] at (2,1) {\scriptsize{$\lambda^{(2,1)}$}};
        \node[below] at (1,2) {\scriptsize{$\lambda^{(1,2)}$}};
        \node[below] at (2,2) {\scriptsize{$\lambda^{(2,2)}$}};
        \node[below] at (3,1) {\scriptsize{$\lambda^{(3,1)}$}};
        \node[below] at (1,3) {\scriptsize{$\lambda^{(1,3)}$}};
    \end{scope}
    \end{tikzpicture}
		\caption{Yang-Baxter field and forward and backward Markov transition
			operators
			$\mathsf{U}^{\mathrm{fwd}}$ and $\mathsf{U}^{\mathrm{bwd}}$.}
    \label{fig:YBE_field}
\end{figure}
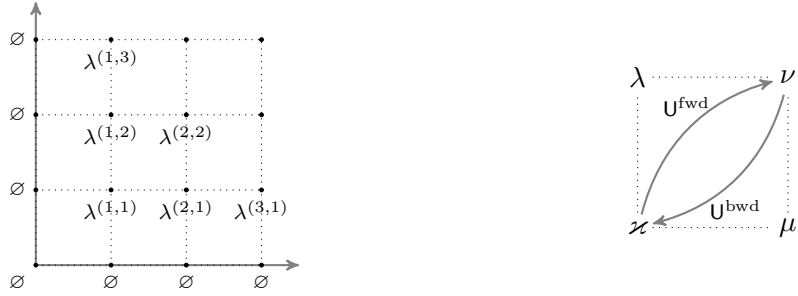

\subsection{Marginals}
\label{sub:YBF_and_its_marginals}

We now apply the discussion of \Cref{sub:F_G_scalar_marginals}
to the Yang-Baxter fields constructed 
above. 
Due to the sequential left-to-right update rule 
in the definition of 
$\mathsf{U}^{\mathrm{fwd}}$,
there is a number of marginals
$\mathsf{h}$ to which our fields $\boldsymbol \lambda$ are adapted to. 

Fix $h\ge2$.
For a Young diagram $\eta=1^{n_1} 2^{n_2} \ldots$ 
introduce the decomposition
\begin{equation} \label{eq:decomp_partition}
	\eta = (\eta^{[<h]}, \eta^{[\geq h]}),
	\qquad 
	\eta^{[<h]} = 1^{n_1} \ldots (h-1)^{n_{h-1}}, \qquad \eta^{[\geq h]} = h^{n_h} (h+1)^{n_{h+1}} \ldots,
\end{equation}
where $\eta^{[<h]}$ and $\eta^{[\ge h]}$ are two new Young 
diagrams.

\begin{proposition}
	\label{prop:exist_marginals}
	Let $\mathsf{h}$ be either of the following functions
	on the set of Young diagrams:
	\begin{itemize}
		\item $\mathsf{h}(\eta)=\ell(\eta)$;
		\item 
			$\mathsf{h}(\eta) 
			=
			\bigl(\eta^{[<h]},\ell(\eta^{[\geq h]})\bigr)$ 
			for some $h\ge2$.
	\end{itemize}
	Then each of the Yang-Baxter fields $\boldsymbol\lambda$ 
	constructed in \Cref{sub:cross_multiple_dragging}
	is adapted to $\mathsf{h}$ 
	in the sense of \Cref{sub:F_G_scalar_marginals}.
\end{proposition}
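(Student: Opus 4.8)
Recall from \Cref{sub:F_G_scalar_marginals} that $\boldsymbol\lambda$ is $\mathsf h$-adapted precisely when
$\mathsf U^{[\mathsf h]}_{u,v}(k\to n\mid\ell,m)=\sum_{\nu\colon\mathsf h(\nu)=n}\mathsf U^{\mathrm{fwd}}(\varkappa\to\nu\mid\lambda,\mu)$
depends on $(\varkappa,\lambda,\mu)$ only through $\bigl(\mathsf h(\varkappa),\mathsf h(\lambda),\mathsf h(\mu)\bigr)=(k,\ell,m)$; the normalization $\sum_n\mathsf U^{[\mathsf h]}=1$ is then automatic. The plan is to exploit the column-by-column structure of $\mathsf U^{\mathrm{fwd}}$ from \Cref{def:U_fwd_U_bwd}: dragging the cross from the leftmost column to the right is the composition, over columns $h=0,1,2,\dots$, of the local moves $\mathbf p^{\mathrm{fwd}}$, and the move at column $h$ only consults the pair of horizontal occupation numbers $(j_{h-1},j_{h-1}')$ arriving from column $h-1$ and the (fixed) vertical occupations $m_h,k_h,l_h$ of $\mu,\varkappa,\lambda$, producing the new vertical occupation $n_h$ of $\nu$ together with $(j_h,j_h')$. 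Thus $\mathsf U^{\mathrm{fwd}}$ is an inhomogeneous, formally infinite Markov chain whose state at the interface between columns $h$ and $h+1$ is $(j_h,j_h')$ and which records the $n_h$'s. Since the fused weights $w^{(J)}_{u,s},w^{*,(I)}_{v,s},R^{(I,J)}_{uv}$ all conserve arrows, fixing the vertical occupations determines all horizontal ones; hence the interface pair $(j_{h-1},j_{h-1}')$ is a deterministic function of the restrictions $\nu^{[<h]},\mu^{[<h]},\lambda^{[<h]}$ and of the boundary charge $J$.

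It suffices to treat the second family of functions $\mathsf h$, since $\mathsf h=\ell$ is its $h=1$ instance ($\eta^{[<1]}=\varnothing$, $\eta^{[\ge 1]}=\eta$). Fix $h\ge 1$ and split the cross-drag into the passage through columns $0,\dots,h-1$ followed by the passage through columns $\ge h$; by the Markov property these stages are conditionally independent given the interface pair $(j_{h-1},j_{h-1}')$. The first stage consults only $\lambda^{[<h]},\mu^{[<h]},\varkappa^{[<h]}$ (and $J$) and produces $\nu^{[<h]}$ in full; by the previous paragraph $(j_{h-1},j_{h-1}')$ is a function of $\nu^{[<h]},\mu^{[<h]},\lambda^{[<h]}$. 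Since $\eta^{[<h]}$ is exactly the first coordinate of $\mathsf h(\eta)$, the joint law of $\bigl(\nu^{[<h]},(j_{h-1},j_{h-1}')\bigr)$ already depends on $(\varkappa,\lambda,\mu)$ only through their $\mathsf h$-values. It therefore remains to show that, conditionally on $(j_{h-1},j_{h-1}')$, the law of $\ell(\nu^{[\ge h]})=\sum_{l\ge h}n_l$ depends on $\{m_l,k_l,l_l\}_{l\ge h}$ only through the three totals $\ell(\mu^{[\ge h]}),\ell(\varkappa^{[\ge h]}),\ell(\lambda^{[\ge h]})$. After a column shift this is a single assertion about the cross-drag run over columns $1,2,\dots$ from an arbitrary interface input: \emph{the distribution of $\ell(\nu)$ depends on $(\varkappa,\lambda,\mu)$ only through $\ell(\varkappa),\ell(\lambda),\ell(\mu)$}.

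This last assertion is the crux, and I expect it --- rather than the structural reduction above --- to be the only substantial difficulty. The plan is to carry out an induction on the (finitely many relevant) columns, tracking the pair $\bigl(\text{running length deficit }\textstyle\sum_{l\le h}(n_l-k_l),\ \text{carried horizontal count}\bigr)$ and showing that its one-column transition, after summing over the horizontal output and over the admissible values of $n_h$ compatible with a prescribed increment, is a kernel that involves the column data $m_h,k_h,l_h$ only through length increments, and that the several such contributions recombine into a kernel expressed purely in those length variables. Concretely this must be verified in each of the nine positive specializations of \Cref{fig:table_positivity_YBE}, using the explicit fused weights of \Cref{app:YBE} and the bijectivizations supplied by \Cref{prop:Bij_exists}; the resulting length-marginal kernels are the column update rules of the stochastic vertex models of \Cref{sec:new_three_fields}, which serves both as a guide for the computation and as a consistency check.
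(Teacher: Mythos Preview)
Your structural decomposition --- first $h$ columns, then the rest, linked by the interface state $(j_{h-1},j_{h-1}')$ --- is the same as the paper's. But you have misidentified where the content lies. What you call the ``crux'' is in fact a one-line consequence of arrow conservation, not a computation requiring induction or a case split over the nine specializations. In the $\nu$-configuration (columns the cross has already passed) the bottom row carries $w^{(J)}$, so $m_c+j_{c-1}=n_c+j_c$ for every $c\ge1$; telescoping from $c=h$ to $\infty$ and using $j_\infty=0$ (Proposition~\ref{prop:U_well_defined}) gives $\ell(\nu^{[\ge h]})=j_{h-1}+\ell(\mu^{[\ge h]})$ \emph{deterministically}. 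So conditionally on $(j_{h-1},j_{h-1}')$ there is nothing probabilistic left to analyse about $\ell(\nu^{[\ge h]})$; summing the remaining $\mathbf p^{\mathrm{fwd}}$ factors over all $\nu^{[\ge h]}$ of that length just gives~$1$. This is exactly what the paper means by ``the quantities $\ell(\varkappa^{[\ge h]}),\ldots$ encode the numbers of paths flowing through the horizontal edges.''

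There is also a gap in your setup. The local move $\mathbf p^{\mathrm{fwd}}$ at column $c$ does \emph{not} consult only $(j_{c-1},j_{c-1}')$ and $(m_c,k_c,l_c)$: looking at the diagrams in \eqref{eq:U_fwd_def}, the bijectivization is indexed also by the external right edges $(i_c,i_c')$, and these equal $\bigl(\ell(\mu^{[>c]})-\ell(\varkappa^{[>c]}),\,\ell(\lambda^{[>c]})-\ell(\varkappa^{[>c]})\bigr)$ by arrow conservation in the original $(\lambda,\varkappa,\mu)$ picture. This is precisely how the second coordinate $\ell(\cdot^{[\ge h]})$ of $\mathsf h$ enters the first-stage analysis; your assertion that the first stage ``consults only $\lambda^{[<h]},\mu^{[<h]},\varkappa^{[<h]}$'' is therefore false, and so is the claim that $(j_{h-1},j_{h-1}')$ is a deterministic function of $\nu^{[<h]},\mu^{[<h]},\lambda^{[<h]},J$ (column~$0$ carries $\infty$ vertical arrows, so arrow conservation there does not pin down $(j_0,j_0')$, which is a genuine random output). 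Once you record that the first $h$ moves depend on $(\varkappa,\lambda,\mu)$ only through $\mathsf h(\varkappa),\mathsf h(\lambda),\mathsf h(\mu)$ --- because the only inputs are the $[<h]$ multiplicities and the interface flows --- and combine this with the deterministic identity above, the proof is complete.
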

\begin{proof}
	Let first $h>1$.
	From the definition of
	$\mathsf{U}^{\mathrm{fwd}}$ 
	\eqref{eq:U_fwd_def}
	we see that the random moves of the first $h$
	columns of vertices are independent of those taking place in columns to their
	right.
	Therefore, 
	summing $\mathsf{U}^{\mathrm{fwd}}(\varkappa\to \nu\mid \lambda,\mu)$
	over $\nu$ with fixed $\nu^{[<h]}$ and $\ell(\nu)$, 
	we see that the result is independent
	of $\varkappa^{[\ge h]},\nu^{[\ge h]},\lambda^{[\ge h]},\mu^{[\ge h]}$.
	The quantities 
	$\ell(\varkappa^{[\ge h]}),
	\ell(\nu^{[\ge h]}),
	\ell(\lambda^{[\ge h]}),
	\ell(\mu^{[\ge h]})$
	encode the numbers 
	of paths
	flowing through the horizontal edges between columns $h$ and $h+1$
	(recall that $\lambda$ and $\mu$ are fixed throughout the random update).
	This proves the statement for $\mathsf{h}(\eta)=\bigl(\eta^{[<h]},
	\ell(\eta^{[\geq h]})\bigr)$. 
	
	When $h=0$, that is, when we consider the marginal
	move at the leftmost column, we only record the number of arrows entering the
	lattice (evolving in a marginally Markovian manner), which are
	simply the lengths of $\varkappa,\nu,\lambda,\mu$. This establishes
	the remaining case $\mathsf{h}(\eta)=\ell(\eta)$.
\end{proof}

Let us denote the transition probabilities of the 
marginal processes afforded by \Cref{prop:exist_marginals}
by
$\mathsf{U}^{[0]}$
and
$\mathsf{U}^{[<h]}$,
respectively.

\begin{figure}[htbp]
    \centering
    \includegraphics[scale=1.2]{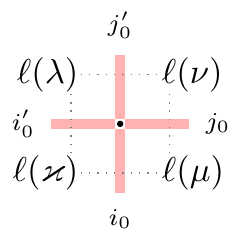}
		\caption{Evolution of the lengths of Young
			diagrams under $\mathsf{U}^{[0]}$. 
			Moving around the vertex along dotted lines 
			changes the 
			length of a Young diagram
			by the 
			occupation number of the edge
			we cross, cf. \eqref{eq:occupation_numbers_lengths}.}
    \label{fig:length_partitions}
\end{figure}

We will mostly be interested in 
the simplest case
$\mathsf{U}^{[0]}$.
One readily sees that 
the action of the Markov operator
$\mathsf{U}^{[0]}$
is encoded as the 
evolution
of the horizontal occupation numbers 
$\{i_0, i_0'\} \to \{j_0,j_0'\}$,
where 
\begin{equation}
	\label{eq:occupation_numbers_lengths}
	i_0=\ell(\mu)-\ell(\varkappa),\quad 
	i_0'=\ell(\lambda)-\ell(\varkappa);
	\qquad 
	j_0=\ell(\nu)-\ell(\mu),\quad 
	j_0'=\ell(\nu)-\ell(\lambda)
\end{equation}
(see \Cref{fig:transition_nu_kappa,fig:length_partitions} for an illustration).
Therefore, let us use the notation 
$\mathsf{U}^{[0]}(i_0, i_0'; j_0, j_0')$
for these transition probabilities. We have
\begin{equation*}
    \mathsf{U}^{[0]}(i_0, i_0'; j_0, j_0') = 
 \mathbf{p}^{\mathrm{fwd}} \Bigg( \begin{tikzpicture}[baseline=5,scale=0.5]
    \draw[line width = 1mm,red!30] (-3.5,0) -- (-3.05,0.45); 
\draw[dotted, gray] (-3.5,1) -- (-3.05,0.55);
\draw[fill] (-3,0.5) circle [radius=0.025];
\draw[line width = 1mm,red!30] (-2.95,0.55) -- (-2.5,1) -- (-1.55,1);
\draw[dotted, gray] (-2.95,0.45) -- (-2.5,0) -- (-1.55,0);
\draw[line width = 1mm,red!30] (-1.45,0) -- (-1,0);
\draw[line width = 1mm,red!30] (-1.5,-0.5) -- (-1.5, -0.05);
\draw[line width = 1mm,red!30] (-1.5,0.05) -- (-1.5, 0.95);
\draw[line width = 1mm,red!30] (-1.45,1) -- (-1, 1);
\draw[line width = 1mm,red!30] (-1.5,1.05) -- (-1.5, 1.5);
\draw[fill] (-1.5,1) circle [radius=0.025];
\draw[fill] (-1.5,0) circle [radius=0.025];
\node[left] at (-3.5,0) {\tiny{$J$}};
\node[below, yshift=0.1cm] at (-1.5,-0.6) {\tiny{$\infty$}};
\node[right] at (-1,0) {\tiny{$i_0$}};
\node[right] at (-1,1) {\tiny{$i_0'$}};
\node[above, yshift=-0.1cm] at (-1.5,1.6) {\tiny{$\infty$}};
\node[above, xshift=-0.1cm] at (-2.3,1) {\tiny{$J$}};
\node[left] at (-1.5,0.5) {\tiny{$\infty$}};
  \end{tikzpicture} , \begin{tikzpicture}[baseline=5,scale=0.5]
    \draw[dotted, gray] (1,1) -- (1.45,1);
\draw[line width = 1mm, red!30] (1.55,1) -- (2.5,1) -- (2.95,0.55);
\draw[line width = 1mm, red!30] (3.05,0.45) -- (3.5,0);
\draw[line width = 1mm,red!30] (1, 0) -- (1.45,0.0);
\draw[line width = 1mm,red!30] (1.55,0) -- (2.5,0) -- (2.95,0.45);
\draw[line width = 1mm,red!30] (3.05, 0.55) -- (3.5,1);
\draw[line width = 1mm,red!30] (1.5, -0.5) -- (1.5,-0.05);
\draw[line width = 1mm,red!30] (1.5, 0.05) -- (1.5,0.95);
\draw[line width = 1mm,red!30] (1.5, 1.05) -- (1.5,1.5);
\draw[fill] (1.5,1) circle [radius=0.025];
\draw[fill] (1.5,0) circle [radius=0.025];
\draw[fill] (3,0.5) circle [radius=0.025];
\node[left] at (1,0) {\tiny{$J$}};
\node[below, yshift=0.1cm] at (1.5, -0.6) {\tiny{$\infty$}};
\node[right] at (3.5, 0) {\tiny{$i_0$}};
\node[right] at (3.5, 1) {\tiny{$i_0'$}};
\node[above, yshift=-0.1cm] at (1.5, 1.6) {\tiny{$\infty$}};
\node[below] at (2.3,0) {\tiny{$j_0$}};
\node[above] at (2.3,1) {\tiny{$j_0'$}};
\node[right] at (1.5,0.5) {\tiny{$\infty$}};
%\node[above,blue] at (2.5,1){\scriptsize $k'$};
 \addvmargin{1mm}\end{tikzpicture} \Bigg).
\end{equation*}
This probability is determined uniquely 
because the right-hand 
side of the Yang-Baxter 
equation contains a single
summand corresponding to the
state $(J,0;J,0)$ of the cross vertex
(this is enforced by our arrow preservation conventions).
Thus, by 
Example \ref{example:biject:A1},
this probability 
can be written as a ratio of weights of 3-vertex configurations as follows:
\begin{equation} \label{eq:U_00}
		\mathsf{U}^{[0]} (i_0, i_0'; j_0', j_0) 
		= 
		\frac{ w^{*,(I)}_{v,s} (\infty, 0; \infty, j_0')\,
		w^{(J)}_{u,s} (\infty, J; \infty, j_0)\,
		R^{(I,J)}_{uv} (j_0, j_0'; i_0', i_0) }
		{ R^{(I,J)}_{uv} (J, 0; J, 0)\,
		w^{*,(I)}_{v,s} (\infty, 0; \infty, i_0)\,
		w^{(J)}_{u,s} (\infty, J; \infty, i_0') }.
\end{equation}
This expression vanishes unless $i_0+j_0=i_0'+j_0'$.
The quantity $R^{(I,J)}_{uv} (J, 0; J, 0)$ in the denominator has an explicit
form \eqref{eq:sum_cross_R}.

Formula \eqref{eq:U_00} also appeared in the recent work \cite{ABB2018stochasticization}
under the name of ``stochasticization''
of the solution of a Yang-Baxter equation.  
In this paper we 
explicitly link stochasticizations 
to known stochastic vertex models
(including the stochastic six vertex model 
\cite{GwaSpohn1992}, \cite{BCG6V},
the higher spin stochastic six vertex model
\cite{Borodin2014vertex}, \cite{CorwinPetrov2015},
\cite{BorodinPetrov2016inhom},
and a pushing system introduced recently in \cite{CMP_qHahn_Push}), and 
show the existence of the corresponding full Yang-Baxter
fields.
The latter 
further connects observables of stochastic vertex models to 
probability distributions based on
spin Hall-Littlewood and spin $q$-Whittaker functions.

The other marginals $\mathsf{U}^{[<h]}$,
$h\ge2$, lead to multilayer versions of 
stochastic vertex models.
A multilayer version of the stochastic six vertex 
model was introduced recently in \cite{BufetovMatveev2017} (and another such system was 
constructed in \cite{BufetovPetrovYB2017} using Yang-Baxter fields).
Multilayer systems are much less explicit and are not determined uniquely 
due to the non-uniqueness of $\mathsf{U}^{\mathrm{fwd}}$.
They deserve their own study, and 
in the present paper we mostly focus on $\mathsf{U}^{[0]}$.

\section{Three Yang-Baxter fields}
\label{sec:new_three_fields}

\subsection{Preliminaries}
\label{sub:preliminaries}

In this section we present detailed descriptions of the 
Yang-Baxter
fields associated with the 
skew Cauchy structures defined in \Cref{sec:summary_sHL_sqW}
(with the step or scaled geometric boundary conditions).
We also discuss the 
scalar marginals 
$\mathsf{h}(\lambda^{(x,y)})=\ell(\lambda^{(x,y)})$ of these Yang-Baxter fields.
Let us first make two general remarks.

\begin{remark}
	The definitions of the Yang-Baxter fields
	in this section
	involve non-unique local
	bijectivizations, and 
	so these fields are not defined in a unique way.
	However, all our statements hold for any such choice of a bijectivization.
	Moreover, 
	the distributions of the scalar marginals
	$\ell(\lambda^{(x,y)})$
	do not depend on the choice of a bijectivization.
	Thus, for shortness we will use the term
	``the Yang-Baxter field'' to refer to any random field of Young diagrams
	coming from bijectivizations of the Yang-Baxter equations.
\end{remark}

\begin{remark}[On stationary boundary conditions]
	The (two-sided) scaled geometric
	boundary conditions
	for our Yang-Baxter fields 
	match (in scalar marginals viewed as stochastic particle systems
	on the line)
	to initial conditions 
	composed of two half-stationary pieces 
	(of possibly different densities)
	glued together at the 
	origin. For example, for the stochastic six vertex model (as well as for ASEP and TASEP)
	on the line,
	the stationary initial data is the product Bernoulli one, 
	and so the two-sided stationary initial condition 
	is composed of two product Bernoulli configurations of arbitrary 
	densities on the half-lines.
	When the densities match and the systems' parameters are homogeneous
	(i.e., independent of $x,y$), this initial data is indeed stationary 
	under the stochastic evolution on the line.

	However, one can check that for the full Yang-Baxter fields, the
	scaled geometric boundary conditions \emph{do not contain} a subfamily
	of boundary conditions remaining stationary under the evolution
	of the full Young diagrams. Therefore, we distinguish
	the terms ``scaled geometric'' and ``two-sided stationary''
	boundary conditions --- the former refers to full Yang-Baxter fields,
	and the latter --- to stochastic particle systems
	arising as one-dimensional marginals.
\end{remark}

\subsection{The sHL/sHL Yang-Baxter field 
and the stochastic six vertex model} 
\label{sub:new_YB_field_sHL_sHL}

We first discuss the simplest case, the sHL/sHL skew Cauchy structure,
and relate the corresponding field to the stochastic six vertex model of 
\cite{GwaSpohn1992}, \cite{BCG6V}. 
The case of step boundary conditions
essentially parallels \cite{BufetovPetrovYB2017}
(without the dynamic modification of the six vertex model
because here we work with the stable sHL functions instead of the 
non-stable ones).
Formulas for observables and asymptotics
in the six vertex model with two-sided stationary
boundary conditions were studied in \cite{Amol2016Stationary}, 
but its connection to symmetric functions is~new.

\subsubsection{Step boundary conditions}

The sHL/sHL case is obtained by setting $I=J=1$
in \Cref{sec:YB_fields_through_bijectivisation},
and the field depends on the 
parameters
$q\in(0,1)$, $s\in(-1,0)$, and 
$u_y,v_x\in[0,1)$, $x,y\in \mathbb{Z}_{\ge1}$. 
The Yang-Baxter equation corresponding to this skew Cauchy structure is now 
\eqref{eq:sHL_YBE}. 
The reversibility 
property of backward and forward operators takes the following form:

\begin{proposition}\label{prop:reversibility_sHL_sHL}
For any four Young diagrams $\mu, \varkappa, \lambda, \nu$ we have
\begin{multline} \label{eq:reversibility_sHL_sHL}
	\frac{1 - q u v}{1 - u v}
	\,
	\mathsf{U}_{\mathrm{sHL}(u),\mathrm{sHL}(v)}^{\mathrm{fwd}}
	(\varkappa \to \nu \mid \lambda, \mu) 
		\,
		\mathsf{F}_{\lambda / \varkappa}(u) 
		\,
		\mathsf{F}^*_{\mu / \varkappa}(v) 
		\\=
		\mathsf{U}_{\mathrm{sHL}(u),\mathrm{sHL}(v)}^{\mathrm{bwd}}
		(\nu \to \varkappa \mid \lambda, \mu)
		\,
		\mathsf{F}^*_{\nu / \lambda} (v)
		\,
		\mathsf{F}_{\nu / \mu} (u).
\end{multline}
Summing \eqref{eq:reversibility_sHL_sHL} 
over both $\varkappa$ and $\nu$, we obtain
the skew Cauchy identity for the stable 
sHL functions (\Cref{thm:skew_Cauchy_sHL_sHL}).
\end{proposition}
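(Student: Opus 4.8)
The plan is to obtain \eqref{eq:reversibility_sHL_sHL} as the $I=J=1$ specialization of the general reversibility relation for Yang-Baxter fields recorded at the end of \Cref{sub:cross_multiple_dragging}, and then to sum it to recover \Cref{thm:skew_Cauchy_sHL_sHL}. First I would observe that both sides of \eqref{eq:reversibility_sHL_sHL} vanish unless the four diagrams satisfy the interlacing pattern of \Cref{fig:quadruplet}: indeed $\mathsf{U}^{\mathrm{fwd}}(\varkappa\to\nu\mid\lambda,\mu)$ is supported on $\lambda\succ\varkappa\prec\mu$, $\lambda\prec\nu\succ\mu$ (and likewise $\mathsf{U}^{\mathrm{bwd}}$), while $\mathsf{F}_{\lambda/\varkappa}(u)$, $\mathsf{F}^*_{\mu/\varkappa}(v)$ vanish unless $\varkappa$ interlaces both $\lambda$ and $\mu$; so one may assume the interlacing constraints hold. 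Next I would spell out the specialization: with $I=J=1$ the fused weights $w^{(J)}_{u,s}$, $w^{*,(I)}_{v,s}$ reduce to the sHL weights $w_{u,s}$, $w^*_{v,s}$ of \Cref{fig:table_w} and \Cref{fig:table_w_tilde}, nonnegative for $u,v\in[0,1)$ (the sHL/sHL entry of \Cref{fig:table_positivity_YBE}); the Yang-Baxter equation in use is \eqref{eq:sHL_YBE}; the one-row partition functions $\mathfrak{F}_{\lambda/\varkappa}$, $\mathfrak{G}_{\mu/\varkappa}$ defined at the end of \Cref{sub:cross_multiple_dragging} become exactly $\mathsf{F}_{\lambda/\varkappa}(u)$, $\mathsf{F}^*_{\mu/\varkappa}(v)$; the operators $\mathsf{U}^{\mathrm{fwd}}$, $\mathsf{U}^{\mathrm{bwd}}$ are well defined by \Cref{prop:U_well_defined} (cf. \Cref{rmk:nonnegativity_and_Adm_in_sHL_sHL}); and the prefactor $\Pi(\rho;\rho^*)$ equals $\tfrac{1-quv}{1-uv}$ by \eqref{eq:sHL_Pi}. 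Feeding all of this into the general reversibility relation yields \eqref{eq:reversibility_sHL_sHL} verbatim.

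For completeness I would also recall the mechanism behind that general relation in the present case. Attaching the cross in its starting state to the left of the $0$-th column of the two-row strip built on the middle diagram $\varkappa$ and dragging it rightward, $\mathsf{U}^{\mathrm{fwd}}(\varkappa\to\nu\mid\lambda,\mu)$ is the column-by-column product of the local weights $\mathbf{p}^{\mathrm{fwd}}$ of \eqref{eq:U_fwd_def} (only finitely many nontrivial, by \Cref{prop:U_well_defined}), and $\mathsf{U}^{\mathrm{bwd}}(\nu\to\varkappa\mid\lambda,\mu)$ the product of the $\mathbf{p}^{\mathrm{bwd}}$ of \eqref{eq:U_bwd_def}. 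Multiplying the column-$h$ instances of the local reversibility \eqref{rev cond} for the bijectivization of \eqref{eq:sHL_YBE}, the three-vertex weights telescope --- the $R$-weight of the cross between columns $h$ and $h+1$ occurs once in the ``after'' configuration at column $h$ and once in the ``before'' configuration at column $h+1$ and thus cancels --- so the surviving factors on the left are the weight of the cross in its initial position, equal to $\tfrac{1-quv}{1-uv}$ by \Cref{prop:emergence_of_the_cross_vertex_weight}, times the partition functions $\mathsf{F}_{\lambda/\varkappa}(u)$ and $\mathsf{F}^*_{\mu/\varkappa}(v)$ of the two rows of the un-updated strip; and on the right, the weight of the cross in its final (empty) position, equal to $1$, times $\mathsf{F}_{\nu/\mu}(u)$ and $\mathsf{F}^*_{\nu/\lambda}(v)$. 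This is exactly \eqref{eq:reversibility_sHL_sHL}. Finally, to get \Cref{thm:skew_Cauchy_sHL_sHL} I would sum \eqref{eq:reversibility_sHL_sHL} over $\varkappa$ and over $\nu$ and use $\sum_\nu\mathsf{U}^{\mathrm{fwd}}=1$, $\sum_\varkappa\mathsf{U}^{\mathrm{bwd}}=1$ (\Cref{prop:U_well_defined}); the identity collapses to $\tfrac{1-quv}{1-uv}\sum_\varkappa\mathsf{F}_{\lambda/\varkappa}(u)\,\mathsf{F}^*_{\mu/\varkappa}(v)=\sum_\nu\mathsf{F}^*_{\nu/\lambda}(v)\,\mathsf{F}_{\nu/\mu}(u)$, which is \eqref{eq:skew_Cauchy_sHL_sHL}.

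The main obstacle is organizational rather than analytic. One must run the telescoping carefully, tracking which three-vertex configuration each $R$-weight and each $w$/$w^*$-weight belongs to and the horizontal occupation numbers on the seams between successive columns, so that the cancellation of intermediate cross weights and the factorization of the surviving product into the two single-row functions $\mathsf{F}$ and $\mathsf{F}^*$ are manifest; and one must verify that the sHL/sHL degeneration ($I=J=1$, ordinary interlacing $\prec$, prefactor $\tfrac{1-quv}{1-uv}$, left-boundary weights \eqref{eq:w_boundary}) really does reproduce the abstract data of \Cref{sub:F_G_skew_Cauchy_structure}. Since convergence of the infinite products defining $\mathsf{U}^{\mathrm{fwd}}$, $\mathsf{U}^{\mathrm{bwd}}$ is already covered by \Cref{prop:U_well_defined}, no additional estimates are needed.
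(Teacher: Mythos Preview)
Your proposal is correct and follows essentially the same approach as the paper: interpret $\mathsf{F}_{\lambda/\varkappa}(u)\,\mathsf{F}^*_{\mu/\varkappa}(v)$ as the weight of a two-row path configuration, attach the cross with $R_{uv}(1,0;1,0)=\tfrac{1-quv}{1-uv}$ on the left, and drag it rightward using the local reversibility \eqref{rev cond} of the bijectivization so that intermediate cross weights cancel and one is left with the right-hand side. The only organizational difference is that you first invoke the general reversibility relation at the end of \Cref{sub:cross_multiple_dragging} and then specialize, whereas the paper proves the sHL/sHL case directly here (it is in fact the first instance) and records the general version afterwards; the underlying argument is identical.
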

\begin{proof}
		The product $\mathsf{F}_{\lambda / \varkappa}(u) \mathsf{F}^*_{\mu /
		\varkappa}(v)$ is the weight of a configuration $\mu \succ \varkappa \prec
		\lambda$ in a vertex model obtained attaching a $w_{u,s}$-weighted row of
		vertices on top of a $w^*_{v,s}$-weighted row of vertices. The
		leftmost column is occupied by infinitely many paths. The factor
		$(1-quv)/(1-uv)$ is the $R_{uv}$ weight of a cross
		\begin{tikzpicture}[baseline=-5,scale=0.5]
    	\draw[fill] (0,0) circle [radius=0.025];
        \draw[dotted, gray] (0.05,-0.05) -- (0.5, -0.5);
        \draw[ red] (0.05, 0.05) -- (0.5, 0.5);
        \draw[dotted, gray] (-0.05, 0.05) -- (-0.5, 0.5);
        \draw[red] (-0.05, -0.05) -- (-0.5, -0.5);
        \addvmargin{1mm}
    \end{tikzpicture}
    attached at the left of the lattice. 
		Now we employ the definition of $\mathsf{U}^{\mathrm{fwd}}_{\mathrm{sHL}(u),\mathrm{sHL}(v)}$
		and drag the cross all the way to the right, replacing 
		$\varkappa$ by the random $\nu$. 
		This procedure, along with the local
		reversibility condition of the bijectivization 
		leaves
		us with the right-hand side of the desired identity
		\eqref{eq:reversibility_sHL_sHL}.
\end{proof}

The step boundary conditions are 
$\lambda^{(0,y)}=\lambda^{(x,0)}=0^{\infty}=\varnothing$,
and using the forward transition operators
$\mathsf{U}^{\mathrm{fwd}}_{\mathrm{sHL}(u_y),\mathrm{sHL}(v_x)}$
as described in 
\Cref{sub:F_G_transitions}, 
we generate the \emph{the sHL/sHL Yang-Baxter field}
$\boldsymbol \lambda = \{ \lambda^{(x,y)} : x,y \in
\mathbb{Z}_{\geq 0} \}$.
Its distributions are related to the sHL functions:
\begin{proposition}
	The single-point distributions in the sHL/sHL Yang-Baxter
	field with the step boundary conditions have the form
	\begin{equation*}
			\mathrm{Prob}\bigl( \lambda^{(x,y)} = \nu \bigr) 
			= 
			\prod_{\substack{ 1 \leq i \leq x\\1\leq j \leq y}}
			\frac{1 - u_j v_i}{ 1 - q u_j v_i } 
			\,
			\mathsf{F}_\nu(u_1, \dots, u_y)
			\,
			\mathsf{F}^*_\nu(v_1, \dots, v_x),
	\end{equation*}
	where $\nu$ is an arbitrary fixed Young diagram.
	Moreover, joint distributions 
	in this field
	along down-right paths are expressed through 
	products of skew sHL functions as in
	\Cref{prop:F_G_processes}.
\end{proposition}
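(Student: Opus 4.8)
The plan is to deduce both assertions from the abstract formalism of \Cref{sec:abstract_formalism} applied to the sHL/sHL skew Cauchy structure of \Cref{def:sHL_sHL_structure}, once it is checked that the Markov operators $\mathsf{U}^{\mathrm{fwd}}_{\mathrm{sHL}(u),\mathrm{sHL}(v)}$ and $\mathsf{U}^{\mathrm{bwd}}_{\mathrm{sHL}(u),\mathrm{sHL}(v)}$ constructed in \Cref{sec:YB_fields_through_bijectivisation} qualify as forward and backward transition probabilities in the sense of \Cref{def:F_G_fwd_bwd_transition_probabilities}. Everything else is then a matter of unwinding the definitions.

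First I would verify the three defining properties of the pair $(\mathsf{U}^{\mathrm{fwd}},\mathsf{U}^{\mathrm{bwd}})$. Nonnegativity: for $I=J=1$ and $u,v\in[0,1)$ the vertex weights $w_{u,s}$, $w^*_{v,s}$, $R_{uv}$ entering the Yang-Baxter equation \eqref{eq:sHL_YBE} are nonnegative (the sHL/sHL line of \Cref{fig:table_positivity_YBE}), so by \Cref{prop:Bij_exists} a stochastic bijectivization $(\mathbf{p}^{\mathrm{fwd}},\mathbf{p}^{\mathrm{bwd}})$ exists and the infinite products \eqref{eq:U_fwd_def}, \eqref{eq:U_bwd_def} have nonnegative terms. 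The sum-to-one property is \Cref{prop:U_well_defined}, applicable since $(u_y,v_x)\in\mathsf{Adm}$ by \Cref{rmk:nonnegativity_and_Adm_in_sHL_sHL}. The reversibility condition \eqref{eq:F_G_reversibility}, with $\Pi(u;v)=(1-quv)/(1-uv)$ and $\mathfrak{F}_{\lambda/\varkappa}=\mathsf{F}_{\lambda/\varkappa}$, $\mathfrak{G}_{\mu/\varkappa}=\mathsf{F}^*_{\mu/\varkappa}$, is exactly \Cref{prop:reversibility_sHL_sHL}. Hence $(\mathsf{U}^{\mathrm{fwd}},\mathsf{U}^{\mathrm{bwd}})$ are transition probabilities for the sHL/sHL skew Cauchy structure.

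Next, the step boundary conditions $\lambda^{(0,y)}=\lambda^{(x,0)}=\varnothing$ form a step-type boundary condition in the sense of \Cref{def:F_G_boundary_conditions} and \Cref{def:F_G_step_type_boundary}: each boundary sequence is the constant sequence $\varnothing\prec_1\varnothing\prec_1\cdots$, and since $\mathsf{F}_{\varnothing/\varnothing}=\mathsf{F}^*_{\varnothing/\varnothing}=1$ the $\mathfrak{F}$- and $\mathfrak{G}$-Gibbs properties hold trivially (the conditional laws are point masses). By \Cref{prop:F_G_from_U_to_fields}, running the forward operators $\mathsf{U}^{\mathrm{fwd}}_{\mathrm{sHL}(u_y),\mathrm{sHL}(v_x)}$ from this boundary condition produces a random field $\boldsymbol\lambda$ in the sense of \Cref{def:F_G_field}. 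Then \Cref{prop:F_G_processes} immediately gives the joint law along any down-right path as a product of skew sHL and dual sHL functions, which is the second assertion.

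Finally, for the single-point formula I would specialize \Cref{cor:F_G_measure}: the boundary condition being step-type, there is no conditioning, and
\[
\mathrm{Prob}\bigl(\lambda^{(x,y)}=\nu\bigr)=\frac{\mathsf{F}_{\nu/\varnothing}(u_1,\ldots,u_y)\,\mathsf{F}^*_{\nu/\varnothing}(v_1,\ldots,v_x)}{Z^{(x,y)}},\qquad Z^{(x,y)}=Z^{(x,y)}_{\mathrm{boundary}}\prod_{i=1}^{x}\prod_{j=1}^{y}\Pi(u_j;v_i).
\]
Here $Z^{(x,y)}_{\mathrm{boundary}}=\mathsf{F}_{\varnothing/\varnothing}(u_1,\ldots,u_y)\,\mathsf{F}^*_{\varnothing/\varnothing}(v_1,\ldots,v_x)=1$ by \eqref{eq:Z_boundary_values}, $\mathsf{F}_{\nu/\varnothing}=\mathsf{F}_\nu$, $\mathsf{F}^*_{\nu/\varnothing}=\mathsf{F}^*_\nu$, and $\Pi(u_j;v_i)=(1-qu_jv_i)/(1-u_jv_i)$, so the normalizing constant equals $\prod_{i,j}\frac{1-qu_jv_i}{1-u_jv_i}$ and its reciprocal is precisely the claimed prefactor. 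I do not expect a genuine obstacle; the only point needing care is confirming that the partition functions of the two one-row configurations in \Cref{fig:transition_nu_kappa}, left, are indeed $\mathsf{F}_{\lambda/\varkappa}(u)$ and $\mathsf{F}^*_{\mu/\varkappa}(v)$ (so that \Cref{prop:reversibility_sHL_sHL} genuinely instantiates the abstract reversibility \eqref{eq:F_G_reversibility}), which is where the identification of the sHL/sHL field with the generic construction of \Cref{sec:YB_fields_through_bijectivisation} is pinned down.
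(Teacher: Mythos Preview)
Your proposal is correct and follows exactly the route the paper sets up: the proposition is stated without its own proof because it is a direct instantiation of \Cref{cor:F_G_measure} and \Cref{prop:F_G_processes} once the operators $(\mathsf{U}^{\mathrm{fwd}},\mathsf{U}^{\mathrm{bwd}})$ are recognized (via \Cref{prop:reversibility_sHL_sHL} and \Cref{prop:U_well_defined}) as forward/backward transition probabilities for the sHL/sHL skew Cauchy structure with step-type boundary. Your verification of each hypothesis and the computation of the normalizing constant are all in order; the point you flag about the one-row partition functions being $\mathsf{F}_{\lambda/\varkappa}(u)$ and $\mathsf{F}^*_{\mu/\varkappa}(v)$ is precisely the identification recorded at the end of \Cref{sub:cross_multiple_dragging}.
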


\subsubsection{Scaled geometric boundary conditions}
\label{ssub:sHL_sHL_two_sided}

Fix additional parameters 
$\alpha,\beta \in [0,-s^{-1}]$,
and 
consider specializations $\rho^{\mathrm{v}}_i$, $\rho^{\mathrm{h}}_i$, $i=-1,0,1,\ldots $:
\begin{equation*}
	\rho^{\mathrm{v}}_{-1}=\mathrm{sg}(\alpha),\quad  
	\rho^{\mathrm{h}}_{-1}=\mathrm{sg}(\beta),
	\qquad 
	\rho^{\mathrm{v}}_y=\mathrm{sHL}(u_y),\quad
	\rho^{\mathrm{h}}_x=\mathrm{sHL}(v_x),\quad 
	x,y\ge0.
\end{equation*}
Let $\boldsymbol \eta$ be the Yang-Baxter field on the lattice $\mathbb{Z}_{\geq -1} \times \mathbb{Z}_{\geq -1}$ generated by the Markov transition operators 
$\mathsf{U}_{\rho^{\mathrm{v}}_y, \rho^{\mathrm{h}}_x}^{\mathrm{fwd}}$. 

\begin{remark}
	\label{rmk:sqW_spec_careful_S6_concrete}
	In defining forward transition operators 
	for scaled geometric (or later
	spin $q$-Whittaker) specializations
	by dragging the 
	cross vertex $(J,0;J,0)$ 
	to the right (as explained in \Cref{sub:cross_multiple_dragging})
	we encounter the issue 
	that the 
	number of paths 
	$J$ should be specialized via $q^J=1/\epsilon$, $\epsilon\to0$,
	and so the vertex $(J,0;J,0)$ no longer makes direct sense.
	However, by 
	\Cref{prop:emergence_of_the_cross_vertex_weight}
	we explicitly know the weight of $(J,0;J,0)$,
	which is equal to
	\begin{equation*}
		R_{uv}^{(I,J)}(J,0;J,0)=
		\frac{(uv q^I;q)_{\infty} (uv q^J ; q)_\infty}
		{(uv;q)_\infty (uvq^{I+J}; q)_\infty}.
	\end{equation*}
	This expression can readily be taken to the 
	scaled geometric or the spin $q$-Whittaker specialization
	(cf. \Cref{fig:table_positivity_YBE}
	for explicit forms of the specializations).
	Therefore, in choosing the bijectivization
	of the Yang-Baxter equation
	in the leftmost column we can still appeal to 
	\Cref{example:biject:A1},
	and conclude that the bijectivization is unique.
\end{remark}

Restricting the field $\boldsymbol \eta$ to the nonnegative quadrant, denote
$\boldsymbol \lambda = \boldsymbol \eta
\vert_{_{\mathbb{Z}_{\geq 0} \times \mathbb{Z}_{\geq 0}}}$.
We call $\boldsymbol\lambda$ the \emph{sHL/sHL Yang-Baxter field
with $(\alpha, \beta)$-scaled geometric boundary
conditions}. 

\begin{proposition} 
	The single-point distributions 
	in the sHL/sHL Yang-Baxter field with the $(\alpha,\beta)$-scaled geometric
	boundary conditions are given by 
	\begin{equation*}
			\mathrm{Prob}\{ \lambda^{(x,y)} = \nu \} = 
			\frac{(\alpha \beta ; q)_\infty}{\prod\limits_{j=1}^y (1+ u_j \beta) 
			\prod\limits_{i=1}^x (1+ v_i \alpha)}\,
			\prod_{\substack{1\le i \le x\\ 1\le j \le y}} 
			\frac{1-u_j v_i}{1-q u_j v_i}\,
			\mathsf{F}_\nu(u_1, \dots, u_y; \widetilde{\alpha}) 
			\mathsf{F}^*_\nu(v_1, \dots, v_x; \widetilde{\beta}).
	\end{equation*}
	Joint distributions 
	in this field
	along down-right paths are expressed through 
	products of skew functions similarly to 
	\Cref{prop:F_G_processes}.
\end{proposition}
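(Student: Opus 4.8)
The plan is to identify $\boldsymbol\eta$ (hence its restriction $\boldsymbol\lambda$) as a random field in the sense of \Cref{def:F_G_field} for the mixed sHL/sHL skew Cauchy structure of \Cref{def:mixed_skew_Cauchy_structure}, and then to read the single-point law off \Cref{cor:F_G_measure}. By construction the enlarged quadrant $\mathbb{Z}_{\ge -1}^2$ carries the step-type boundary condition $\eta^{(-1,y)}=\eta^{(x,-1)}=\varnothing$ together with the sequences of specializations $\rho^{\mathrm v}_{-1}=\mathrm{sg}(\alpha)$, $\rho^{\mathrm v}_y=\mathrm{sHL}(u_y)$ for $y\ge 0$ and $\rho^{\mathrm h}_{-1}=\mathrm{sg}(\beta)$, $\rho^{\mathrm h}_x=\mathrm{sHL}(v_x)$ for $x\ge 0$, and the field is generated from the forward operators $\mathsf U^{\mathrm{fwd}}_{\rho^{\mathrm v}_y,\rho^{\mathrm h}_x}$ exactly as in \Cref{prop:F_G_from_U_to_fields}. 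What has to be checked is that these operators genuinely come from bijectivizations of the relevant Yang-Baxter equations: the reversibility identity between $\mathsf U^{\mathrm{fwd}}$ and $\mathsf U^{\mathrm{bwd}}$ holds for each pair of specializations that meet in $\boldsymbol\eta$ by the general construction of \Cref{sec:YB_fields_through_bijectivisation} (in the pure sHL/sHL case this is \Cref{prop:reversibility_sHL_sHL}, and one repeats the drag-the-cross argument verbatim with the $\mathrm{sg}$ specializations inserted), and the operators are well defined, i.e.\ assign no mass to Young diagrams with an infinite row or column, by \Cref{prop:U_well_defined}. The latter is where one uses $(\alpha,\beta)\in\mathsf{Adm}(\mathrm{sg},\mathrm{sg})$, i.e.\ $|\alpha\beta|<1$; the remaining pairwise admissibility conditions $(\alpha,v_i)$, $(u_j,\beta)$, $(u_j,v_i)$ hold automatically for $s\in(-1,0)$, $u_j,v_i\in[0,1)$ (for instance $|s(s-v)|<|1-sv|$ reduces to $s^2<1$, cf.\ \Cref{def:adm_rho}).

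Granting this, \Cref{cor:F_G_measure}, applied to $\boldsymbol\eta$ (which has genuinely step-type boundary, so no conditioning on random boundary diagrams is needed), yields for $x,y\ge 0$
\[
	\mathrm{Prob}\{\lambda^{(x,y)}=\nu\}
	=\frac{\mathfrak F_{\nu/\varnothing}(\mathrm{sg}(\alpha),\mathrm{sHL}(u_1),\dots,\mathrm{sHL}(u_y))\;
	\mathfrak G_{\nu/\varnothing}(\mathrm{sg}(\beta),\mathrm{sHL}(v_1),\dots,\mathrm{sHL}(v_x))}
	{Z^{(x,y)}_{\mathrm{boundary}}\;\textstyle\prod\Pi},
\]
where $Z^{(x,y)}_{\mathrm{boundary}}=1$ since the boundary diagrams are all empty and $\mathfrak F_{\varnothing/\varnothing}=\mathfrak G_{\varnothing/\varnothing}=1$, and where I have already used the stability property \eqref{eq:sHL_stability} to discard the spurious $\mathrm{sHL}(0)$ slice appearing for indexing reasons on the enlarged lattice. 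By the very definition of the mixed specializations through the branching rules, the numerator equals $\mathsf F_\nu(u_1,\dots,u_y;\widetilde\alpha)\,\mathsf F^*_\nu(v_1,\dots,v_x;\widetilde\beta)$, which is precisely the pair of symmetric functions in the statement.

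It remains to evaluate $\prod\Pi$, the product of $\Pi(\rho^{\mathrm v};\rho^{\mathrm h})$ over all pairs of one vertical- and one horizontal-direction slice lying below $(x,y)$; since $\Pi(\rho;\rho^*)$ depends only on the two specializations, the product factorizes according to the type of the crossing, using \eqref{eq:sg_Pi} and \Cref{def:sHL_sHL_structure}: the unique $\mathrm{sg}(\alpha)\times\mathrm{sg}(\beta)$ crossing contributes $(\alpha\beta;q)_\infty^{-1}$, the crossings $\mathrm{sg}(\alpha)\times\mathrm{sHL}(v_i)$ contribute $\prod_{i=1}^x(1+\alpha v_i)$, the crossings $\mathrm{sHL}(u_j)\times\mathrm{sg}(\beta)$ contribute $\prod_{j=1}^y(1+u_j\beta)$, and the crossings $\mathrm{sHL}(u_j)\times\mathrm{sHL}(v_i)$ contribute $\prod_{i,j}\frac{1-qu_jv_i}{1-u_jv_i}$. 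Dividing by this product reproduces exactly the prefactor $(\alpha\beta;q)_\infty / \bigl(\prod_{j}(1+u_j\beta)\prod_{i}(1+v_i\alpha)\bigr)\cdot\prod_{i,j}\frac{1-u_jv_i}{1-qu_jv_i}$ in the statement. The joint law along down-right paths then follows from \Cref{prop:F_G_processes} applied to the same mixed structure, once one notes via \Cref{prop:F_G_preservation_Gibbs_measures} that the restriction $\boldsymbol\lambda=\boldsymbol\eta|_{\mathbb{Z}_{\ge 0}^2}$ inherits Gibbs boundary conditions.

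The main obstacle is the first paragraph. One must be sure that the scaled geometric specialization at the $(-1)$-st slices is compatible with the "drag the cross $(J,0;J,0)$" mechanism defining $\mathsf U^{\mathrm{fwd}}$, even though under $\mathrm{sg}$ this vertex only makes formal sense ($q^J=1/\epsilon$, $\epsilon\to0$). This is handled exactly as in \Cref{rmk:sqW_spec_careful} and \Cref{rmk:sqW_spec_careful_S6_concrete}: the weight of the cross $(J,0;J,0)$ is the explicit rational expression of \Cref{prop:emergence_of_the_cross_vertex_weight}, which does admit the scaled geometric specialization (and equals the $\Pi$ factor), so the bijectivization in the leftmost column is still the unique one given by \Cref{example:biject:A1}, and the rest of the argument proceeds as above. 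Beyond this point the proof is routine bookkeeping of indices and of the normalization.
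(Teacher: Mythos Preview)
Your proof is correct and is exactly the argument the paper leaves implicit: the proposition is stated without proof in the paper, being a direct instance of \Cref{cor:F_G_measure} applied to the mixed skew Cauchy structure of \Cref{def:mixed_skew_Cauchy_structure} on the enlarged quadrant $\mathbb{Z}_{\ge -1}^2$ with step boundary, together with the $\Pi$-factors from \eqref{eq:sg_Pi} and \eqref{eq:sHL_Pi}. Your handling of the scaled-geometric cross via \Cref{rmk:sqW_spec_careful_S6_concrete} and \Cref{prop:emergence_of_the_cross_vertex_weight}, and of well-definedness via \Cref{prop:U_well_defined}, is precisely what the paper has set up for this purpose.
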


\subsubsection{Stochastic six vertex model}
\label{ssub:6v}

We now turn to the scalar Markov marginal 
$\mathsf{h}(\lambda^{(x,y)})=\ell(\lambda^{(x,y)})$
of the sHL/sHL Yang-Baxter field, and match it to the 
stochastic six vertex model.

\begin{figure}[htbp]
    \centering
		\includegraphics[width=.9\textwidth]{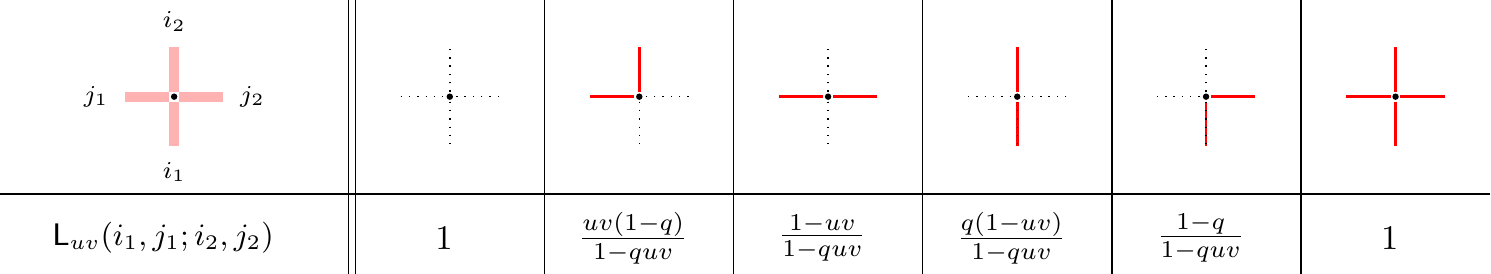}
    \caption{The vertex weights 
		$\mathsf{L}_{uv}(i_1, j_1; i_2, j_2)$
		in the stochastic six vertex model.
		This parametrization of the vertex
		weights follows, e.g.,
		\cite{BufetovMatveev2017}.}
    \label{fig:weights_6VM}
\end{figure}

\begin{definition}[\cite{GwaSpohn1992}, \cite{BCG6V}] \label{def:6vm}
	Fix $q \in (0,1)$ and $u_y,v_x$ such that $0<u_y v_x<1$ for all $x,y \in
	\mathbb{Z}_{\geq 1}$. 
	Consider the stochastic vertex weights
	$\mathsf{L}_{u_yv_x}$ given in \Cref{fig:weights_6VM}. 
	Let us also fix the boundary conditions
	$B^\mathrm{h}=\{b^\mathrm{h}_1,b^\mathrm{h}_2, \dots\}$ and
	$B^\mathrm{v}=\{b^\mathrm{v}_1,b^\mathrm{v}_2, \dots\}$,
	where $b_i^{\mathrm{h}}, b_j^{\mathrm{v}}\in\left\{ 0,1 \right\}$.
	The (inhomogeneous)
	\emph{stochastic six vertex model} with 
	these boundary conditions
	is the (unique) probability measure on the set of up-right
	directed paths on the lattice $\mathbb{Z}_{\geq 0} \times \mathbb{Z}_{\geq 0}$ 
	(with at most one path per vertical or horizontal edge)
	satisfying:
\begin{itemize}
    \item 
			Each vertex $(0,y)$ at the vertical boundary $\{ (0,y'):y'\geq 1 \}$
			emanates a path initially pointing to the right if $b^\mathrm{v}_y=1$;
		\item 
			Each vertex $(x,0)$ at the horizontal boundary $\{ (x',0):x'\geq 1 \}$
			emanates a path initially pointing upward if 
			$b^\mathrm{h}_x=1$;
		\item 
			For each $(x,y)$, conditioned to the path configuration at all vertices
			$(x',y')$ such that $x'+y'<x+y$, the probability of a vertex
			configuration $(i_1,j_1;i_2,j_2)$
			at $(x,y)$
			is given by
			$\mathsf{L}_{u_yv_x}(i_1,j_1;i_2,j_2)$.
			Moreover, the random choices
			made at diagonally adjacent vertices
			$\ldots,(x-1,y+1),(x,y),(x+1,y-1),\ldots$
			are independent under the same condition.
\end{itemize}
In particular, the \emph{step boundary conditions}
correspond to 
\begin{equation} \label{eq:step_bc}
    b^\mathrm{h}_x = 0 \qquad \text{and} \qquad b^\mathrm{v}_y = 1, \qquad \text{for all }x,y\geq 1.
\end{equation}
\end{definition}

Path configurations in the six vertex model with the step boundary conditions are
conveniently encoded by a height function.
Namely, let $\mathfrak{h}^{\mathrm{6V}}(x,y)$
denote the number of paths which pass weakly to the right of the vertex $(x,y)$.
See \Cref{fig:height_6VM_length_sHL_sHL}, right, for an illustration.
The next theorem is a version of 
\cite[Proposition~7.3]{BufetovPetrovYB2017} adapted to our 
boundary conditions in the Yang-Baxter field.

Let $\boldsymbol\lambda=\{\lambda^{(x,y)}\}$
be the sHL/sHL Yang-Baxter field with the step boundary
conditions,
and
$\mathfrak{h}^{\mathrm{6V}}(x,y)$ be the six vertex
height function with the 
step boundary conditions \eqref{eq:step_bc}.

\begin{theorem} \label{thm:length_sHL_sHL_height_6VM}
		The two random fields $\{ y - \ell (\lambda^{(x,y)}):x,y \in
		\mathbb{Z}_{\geq 0} \}$ 
		and $\{ \mathfrak{h}^{\mathrm{6V}}(x+1,y) :x,y \in
		\mathbb{Z}_{\geq 0} \}$ are equal in distribution.
\end{theorem}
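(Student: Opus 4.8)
The plan is to pass to the scalar marginal $\ell(\lambda^{(x,y)})$, identify its one–vertex transition kernel with the stochastic six vertex weights, and then match boundary data through a $45^{\circ}$–rotated dictionary between length–increments and arrow occupations. By \Cref{prop:exist_marginals} applied with $\mathsf{h}(\eta)=\ell(\eta)$, the field $\{\ell(\lambda^{(x,y)})\}$ is Markov along anti-diagonals with the one–vertex transition probability $\mathsf{U}^{[0]}$ of \eqref{eq:U_00}, taken here in the sHL/sHL case $I=J=1$; since the sHL/sHL Yang-Baxter field has step boundary conditions $\lambda^{(x,0)}=\lambda^{(0,y)}=\varnothing$, the marginal has boundary values $\ell(\lambda^{(x,0)})=\ell(\lambda^{(0,y)})=0$. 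It then suffices to prove two things: \textbf{(i)} that $\mathsf{U}^{[0]}$ for $I=J=1$ coincides (up to a harmless transposition of arguments) with the vertex weight $\mathsf{L}_{u_y v_x}$ of \Cref{fig:weights_6VM}; and \textbf{(ii)} that the resulting vertex model on the length–increments of the field is, after rotating by $45^{\circ}$ and shifting the $x$–coordinate by one, the stochastic six vertex model with the step boundary conditions \eqref{eq:step_bc}.

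For \textbf{(i)}, set $I=J=1$, so that the fused weights degenerate to the bare ones $w^{(1)}_{u,s}=w_{u,s}$ and $w^{*,(1)}_{v,s}=w^{*}_{v,s}$, while $R^{(1,1)}_{uv}$ is the rank–one $R$–matrix. The leftmost–column factors $w^{(1)}_{u,s}(\infty,1;\infty,j_0)$, $w^{*,(1)}_{v,s}(\infty,0;\infty,j_0')$ and their analogues in the denominator of \eqref{eq:U_00} are read off from \eqref{eq:w_boundary}, and $R^{(1,1)}_{uv}(1,0;1,0)$ from \eqref{eq:sum_cross_R}. Substituting and simplifying, the $s$–dependent prefactors of $w_{u,s}$ and $w^{*}_{v,s}$ cancel against the denominator — this is exactly the source of the $s$–independence observed in the introduction — leaving precisely the stochasticization of $R^{(1,1)}_{uv}$; comparing entry by entry with \Cref{fig:weights_6VM}, and using \eqref{eq:occupation_numbers_lengths} together with \Cref{fig:length_partitions} to read off the geometry of the cross, identifies $\mathsf{U}^{[0]}(i_0,i_0';j_0',j_0)$ with $\mathsf{L}_{u_y v_x}(i_0',i_0;j_0,j_0')$, i.e. with $\mathsf{L}$ after transposing the two incoming and the two outgoing occupation numbers.

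For \textbf{(ii)}, introduce the height function $\mathfrak{H}(x,y):=y-\ell(\lambda^{(x,y)})$ of the marginal field. By interlacing $\ell(\lambda^{(x+1,y)})-\ell(\lambda^{(x,y)})\in\{0,1\}$ and $\ell(\lambda^{(x,y+1)})-\ell(\lambda^{(x,y)})\in\{0,1\}$, so all nearest–neighbour increments of $\mathfrak{H}$ lie in $\{0,1\}$ and $\mathfrak{H}$ is a genuine height function; by \eqref{eq:occupation_numbers_lengths} its four increments around the plaquette with corners $\lambda^{(x,y)},\lambda^{(x+1,y)},\lambda^{(x,y+1)},\lambda^{(x+1,y+1)}$ are exactly the $W,S,E,N$ arrow occupations of a single six vertex vertex, updated with weight $\mathsf{U}^{[0]}=\mathsf{L}$ by step (i). On the boundary, $\mathfrak{H}(x,0)=0$ matches $b^{\mathrm{h}}\equiv 0$, while $\mathfrak{H}(0,y)=y$ (all horizontal edges on the line $x=\tfrac{1}{2}$ occupied) matches $b^{\mathrm{v}}\equiv 1$, the leftmost column of the field lattice carrying the cross playing the role of six vertex column $1$; combined with \Cref{prop:U_well_defined} (no infinitely long horizontal paths), this yields $\mathfrak{H}(x,y)=\mathfrak{h}^{\mathrm{6V}}(x+1,y)$ as deterministic functions of matched configurations. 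An induction on anti-diagonals $x+y=n$, using that in both models the diagonally adjacent updates are conditionally independent, then shows that $\{\mathfrak{H}(x,y)\}$ and $\{\mathfrak{h}^{\mathrm{6V}}(x+1,y)\}$ agree in distribution, which is the assertion.

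The main obstacle is the bookkeeping in step (ii): pinning down simultaneously the transposition of arguments relating $\mathsf{U}^{[0]}$ to $\mathsf{L}$, the orientation conventions of $\mathfrak{H}$, and the one–unit shift of the $x$–coordinate, so that the step boundary conditions of the field translate exactly into those of the six vertex model. The algebraic identity in step (i) is a routine degeneration of \eqref{eq:U_00} once these conventions are fixed, and the overall scheme parallels \cite[Proposition~7.3]{BufetovPetrovYB2017} (with the stable sHL functions in place of the non-stable ones, so that no dynamic deformation of the six vertex model appears).
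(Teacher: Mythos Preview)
Your overall strategy is exactly the paper's: use \Cref{prop:exist_marginals} to reduce to the scalar marginal $\ell(\lambda^{(x,y)})$, identify the one-vertex kernel $\mathsf{U}^{[0]}$ (at $I=J=1$) with the stochastic six vertex weight $\mathsf{L}_{uv}$, and then match boundary data. However, the explicit identity you state in step~(i) is incorrect, and this is not merely a typo.

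You claim $\mathsf{U}^{[0]}(i_0,i_0';j_0',j_0)=\mathsf{L}_{uv}(i_0',i_0;j_0,j_0')$, a pure transposition of incoming and outgoing pairs. This is ruled out already by the conservation laws: from \eqref{eq:U_00} and the $R_{uv}$ weights one has $i_0+j_0=i_0'+j_0'$ on the $\mathsf{U}^{[0]}$ side, while $\mathsf{L}_{uv}(i_0',i_0;j_0,j_0')$ would require $i_0'+i_0=j_0+j_0'$. Concretely,
\[
\mathsf{U}^{[0]}(0,0;1,1)=\frac{v\cdot u\cdot R_{uv}(1,1;0,0)}{R_{uv}(1,0;1,0)\cdot 1\cdot 1}=\frac{uv(1-q)}{1-quv}\neq 0,
\]
whereas $\mathsf{L}_{uv}(0,0;1,1)=0$ by arrow conservation. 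The correct relation, which the paper verifies, is
\[
\mathsf{U}^{[0]}_{\mathrm{sHL}(u),\mathrm{sHL}(v)}(i_0,i_0';j_0',j_0)=\mathsf{L}_{uv}(i_0,\,1-i_0';\,j_0',\,1-j_0),
\]
i.e.\ a \emph{complementation} $a\mapsto 1-a$ on the two indices $i_0',j_0$ (precisely the $0\leftrightarrow 1$ swap on horizontal edges that produces $R_z$ from $r_z$, cf.\ the discussion before \Cref{prop:sHL_YBE}).

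This complementation is in fact what your step~(ii) secretly needs. With $\mathfrak{H}=y-\ell$, the $y$-direction increments of $\mathfrak{H}$ are $1-i_0'$ and $1-j_0$ (these become the horizontal six-vertex arrows $j_1,j_2$), while the $x$-direction \emph{decrements} of $\mathfrak{H}$ are $i_0$ and $j_0'$ (these become the vertical arrows $i_1,i_2$). So your claim that ``all nearest-neighbour increments of $\mathfrak{H}$ lie in $\{0,1\}$'' is also not right --- the horizontal increments lie in $\{-1,0\}$ --- and the dictionary to six-vertex edges is not the straight identification you describe. Once step~(i) is replaced by the complementation identity above, the boundary matching $\mathfrak{H}(x,0)=0\leftrightarrow b^{\mathrm{h}}\equiv 0$ and $\mathfrak{H}(0,y)=y\leftrightarrow b^{\mathrm{v}}\equiv 1$ goes through, and the induction along anti-diagonals finishes the proof exactly as in the paper.
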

\begin{proof}
		Recall that the Markov evolution of this scalar marginal
		$\ell(\lambda^{(x,y)})$
		corresponds to the quantities
		$\mathsf{U}^{[0]}(i_0,i_0';j_0',j_0)$ \eqref{eq:U_00} 
		with $I=J=1$ and $i_0,i_0',j_0,j_0'\in\left\{ 0,1 \right\}$
		which can be readily written down using
		\eqref{eq:w_boundary}, 
		\eqref{eq:w_w_tilde_relation},
		and
		\Cref{fig:table_R}.
		Comparing these quantities
		to the stochastic six vertex weights 
		$\mathsf{L}_{uv}$ in \Cref{fig:weights_6VM},
		while taking into account
		the relation 
		between 
		$i_0,i_0',j_0,j_0'$
		and $\ell(\lambda^{(x,y)})$
		\eqref{eq:occupation_numbers_lengths},
		and all the transformations in the statement, we see that 
		$\mathsf{U}^{[0]}_{\mathrm{sHL}(u),\mathrm{sHL}(v)}(i_0,1-i_0';j_0',1-j_0)
		=
		\mathsf{L}_{uv}(i_0,i_0';j_0',j_0)$ 
		for all
		$i_0,i_0',j_0,j_0'\in\left\{ 0,1 \right\}$.

		Let us consider one 
		case of 
		$(i_0,i_0';j_0,j_0')=(0,1;1,0)$ for illustration.
		From 
		\Cref{fig:weights_6VM}
		we have $\mathsf{L}_{uv}(0,1;1,0)=\frac{uv(1-q)}{1-quv}$.
		Then 
		\begin{equation*}
			\begin{split}
				\mathsf{U}^{[0]}_{\mathrm{sHL}(u),\mathrm{sHL}(v)}
				(0,0;1,1)&=
				\frac{ w^{*}_{v,s} (\infty, 0; \infty, 1)\,
				w_{u,s} (\infty, 1; \infty, 1)\,
			R^{(1,1)}_{uv} (1,1; 0, 0) }
				{ R^{(1,1)}_{uv} (1, 0; 1, 0)\,
				w^{*}_{v,s} (\infty, 0; \infty, 0)\,
				w_{u,s} (\infty, 1; \infty, 0) }
				\\&=
				\frac{ v\cdot
				u\cdot
				\frac{1-q}{1-uv} }
				{\frac{1-q uv}{1-uv}
					\cdot 1\cdot 1
				}=\frac{uv(1-q)}{1-quv},
			\end{split}
		\end{equation*}
		as desired.
		All other cases are analogous.
\end{proof}
\begin{figure}[htbp]
    \centering
		\includegraphics[width=.323\textwidth]{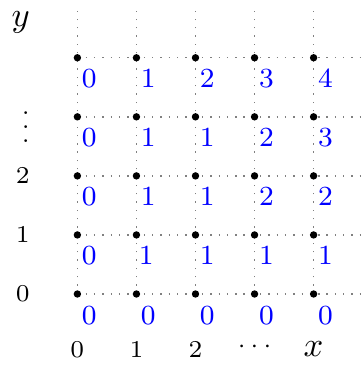}
		\qquad \qquad 
		\includegraphics[width=.36\textwidth]{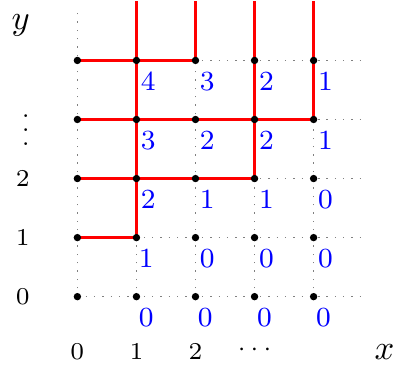}
		\caption{Left: 
		the scalar marginal
		$\ell(\lambda^{(x,y)})$ of the sHL/sHL Yang-Baxter field.
		Right: 
		the corresponding
		realization of the stochastic six vertex model with the step boundary conditions.
		The height function 
		$\mathfrak{h}^{\mathrm{6V}}$
		is indicated, too.}
    \label{fig:height_6VM_length_sHL_sHL}
\end{figure}
\begin{remark}
While \Cref{thm:length_sHL_sHL_height_6VM}
essentially follows from
\cite{BufetovPetrovYB2017}, let us emphasize
what is different here.
\Cref{thm:length_sHL_sHL_height_6VM}
connects the ordinary stochastic six vertex model
with stable spin Hall-Littlewood polynomials.
On the other hand,
previously the same
stochastic six vertex model
was matched to Hall-Littlewood processes and measures
\cite{BorodinBufetovWheeler2016},
\cite{BufetovMatveev2017},
and the 
non-stable spin Hall-Littlewood polynomials
gave rise to a dynamic version of the stochastic six 
vertex model \cite{BufetovPetrovYB2017}.
Therefore, formally 
\Cref{thm:length_sHL_sHL_height_6VM}
is a new statement.

It might seem surprising that the 
stochastic six vertex model $\mathfrak{h}^{\mathrm{6V}}$
is independent of $s$, while 
the field $\lambda^{(x,y)}$ depends on $s$.
A posteriori this might be explained by the fact that the
$s$-dependent stable spin Hall-Littlewood 
functions are eigenfunctions
of the same first $q=0$ Macdonald difference
operator as their $s=0$ versions, the classical 
Hall-Littlewood polynomials.
About difference operators see 
\Cref{sec:diff_op} below.
\end{remark}

Let us now turn to the marginal 
of the sHL/sHL field with the two-sided scaled geometric
boundary conditions described in \Cref{ssub:sHL_sHL_two_sided}. 
First, consider the behavior of $\ell(\lambda^{(x,y)})$ at the boundary:
\begin{proposition}
	\label{prop:sHL_sHL_stat_bc}
Consider the transition probabilities $\mathsf{U}^{[0]}$ given in \eqref{eq:U_00}. Then we have
\begin{align}
	 & \mathsf{U}^{[0]}_{\mathrm{sg}(\alpha),\mathrm{sHL}(v)} (0, i_0'; j_0', j_0)  
	 = 
	 \mathbf{1}_{i_0' + j_0' = j_0} \frac{ (v \alpha)^{j_0'}}{1 + v \alpha}, 
	 \label{eq:U00_bernoulli_horiz}\\
	 & \mathsf{U}^{[0]}_{\mathrm{sHL}(u),\mathrm{sg}(\beta)} (i_0, 0; j_0', j_0)  
	 = 
	 \mathbf{1}_{i_0 + j_0 = j_0'} \frac{ (u \beta)^{j_0}}{1 + u \beta}, 
	 \label{eq:U00_bernoulli_vert} \\
	 & \mathsf{U}^{[0]}_{\mathrm{sg}(\alpha),\mathrm{sg}(\beta)} (0, 0; j_0', j_0) 
	  =
	   \mathbf{1}_{j_0' = j_0} \frac{ (\alpha \beta)^{j_0}}{(q;q)_{j_0}} (\alpha \beta ; q)_\infty.
	    \label{eq:U00_qPoisson}
\end{align}
\end{proposition}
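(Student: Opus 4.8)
The plan is to evaluate the ratio formula \eqref{eq:U_00} for $\mathsf{U}^{[0]}$ directly in each of the three scaled geometric specializations. Recall that \eqref{eq:U_00} expresses $\mathsf{U}^{[0]}(i_0,i_0';j_0',j_0)$ as the quotient of $w^{*,(I)}_{v,s}(\infty,0;\infty,j_0')\,w^{(J)}_{u,s}(\infty,J;\infty,j_0)\,R^{(I,J)}_{uv}(j_0,j_0';i_0',i_0)$ by $R^{(I,J)}_{uv}(J,0;J,0)\,w^{*,(I)}_{v,s}(\infty,0;\infty,i_0)\,w^{(J)}_{u,s}(\infty,J;\infty,i_0')$, and that it vanishes unless $i_0+j_0=i_0'+j_0'$. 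The first step is to specialize each of these six factors. By \Cref{prop:emergence_of_the_cross_vertex_weight} the normalizing weight $R^{(I,J)}_{uv}(J,0;J,0)=\frac{(uvq^I;q)_\infty(uvq^J;q)_\infty}{(uv;q)_\infty(uvq^{I+J};q)_\infty}$ tends, under the scaled geometric limits $q^J=1/\epsilon,\ u=-\alpha\epsilon,\ \epsilon\to0$ (and dually $q^I=1/\epsilon',\ v=-\beta\epsilon'$) of \Cref{fig:table_positivity_YBE}, to exactly the factor $\Pi(\mathrm{sg}(\alpha);\rho^*)$ listed in \eqref{eq:sg_Pi}; this is the meaning of ``dragging the cross $(J,0;J,0)$'' in the scaled geometric setting, as prescribed in \Cref{rmk:sqW_spec_careful_S6_concrete}. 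Meanwhile the frozen column weights $w^{(J)}_{u,s}(\infty,J;\infty,\cdot)$ and $w^{*,(I)}_{v,s}(\infty,0;\infty,\cdot)$ converge to the left-boundary values of $\widetilde w_{\alpha,s}$ and $\widetilde w^*_{\beta,s}$ (or, in the $\mathrm{sHL}$ direction with $I=1$, stay equal to the boundary values $w^*_{v,s}(\infty,\varnothing;\infty,j)=v^{\,j}$, $j\in\{0,1\}$, of \eqref{eq:w_boundary}), and the crossing $R^{(I,J)}_{uv}(j_0,j_0';i_0',i_0)$ converges to the appropriate scaled geometric $R$-weight from \Cref{app:YBE_scaled_geometric}. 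All these objects have explicit closed forms.

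With these inputs each identity reduces to a short $q$-Pochhammer simplification. In all three cases one sets the pre-update occupation of the $\mathrm{sHL}$ row equal to $0$ (i.e.\ $i_0=0$ for \eqref{eq:U00_bernoulli_horiz}, $i_0'=0$ for \eqref{eq:U00_bernoulli_vert}, and $i_0=i_0'=0$ for \eqref{eq:U00_qPoisson}), which makes the corresponding sHL/sg left-boundary factor in the denominator equal to $1$ and turns the arrow-conservation constraint $i_0+j_0=i_0'+j_0'$ into exactly the indicators appearing in the statement. For \eqref{eq:U00_bernoulli_horiz} one then plugs in $\Pi(\mathrm{sg}(\alpha);\mathrm{sHL}(v))=1+\alpha v$, the value $w^*_{v,s}(\infty,0;\infty,j_0')=v^{\,j_0'}$ with $j_0'\in\{0,1\}$ (the $\mathrm{sHL}$ row carries at most one horizontal arrow), and the explicit left-boundary values of $\widetilde w_{\alpha,s}$ at outputs $j_0=i_0'+j_0'$ and $i_0'$ together with the explicit crossing weight; the ratio is expected to telescope to $(v\alpha)^{j_0'}/(1+\alpha v)$. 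Identity \eqref{eq:U00_bernoulli_vert} then follows by the same computation with the roles of the two rows exchanged ($u\leftrightarrow v$, $\alpha\leftrightarrow\beta$). For \eqref{eq:U00_qPoisson} the denominator is $\Pi(\mathrm{sg}(\alpha);\mathrm{sg}(\beta))=1/(\alpha\beta;q)_\infty$, and in the numerator the two scaled geometric left-boundary column weights supply the powers $\alpha^{j_0}$, $\beta^{j_0}$ together with $q$-Pochhammer symbols which should cancel against those in the $\mathrm{sg}/\mathrm{sg}$ crossing weight $R^{(\mathrm{sg},\mathrm{sg})}(j_0,j_0;0,0)$, leaving $(\alpha\beta)^{j_0}/(q;q)_{j_0}$ and hence the claimed $q$-Poisson weight once $(\alpha\beta;q)_\infty$ is moved to the numerator. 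As a consistency check, $\sum_{j_0'\in\{0,1\}}(v\alpha)^{j_0'}=1+\alpha v$ and $(\alpha\beta;q)_\infty\sum_{j_0\ge0}(\alpha\beta)^{j_0}/(q;q)_{j_0}=1$ by the $q$-binomial theorem, so all three right-hand sides are indeed probability distributions in the output variables, as they must be by \Cref{prop:U_well_defined}.

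The main obstacle is the passage to the scaled geometric limit itself: one cannot take $\epsilon\to0$ factor by factor in \eqref{eq:U_00}, since the individual fused column weights $w^{(J)}_{u,s}(\infty,J;\infty,\cdot)$ and the fused cross weight degenerate singularly, and the limit exists only after the grouping built into \eqref{eq:U_00}; this is precisely why the argument should lean on the closed forms provided by \Cref{prop:emergence_of_the_cross_vertex_weight}, by \eqref{eq:sg_Pi}, and by the explicit scaled geometric weights of \Cref{app:YBE_scaled_geometric} rather than on the fused formulas directly. The only other point requiring care is the label bookkeeping: matching $(i_0,i_0',j_0,j_0')$, which via \eqref{eq:occupation_numbers_lengths} record the length increments of $\varkappa,\nu,\lambda,\mu$, to the legs of the fused $R$-matrix in \eqref{eq:U_00}, and keeping track of the fact that in the $\mathrm{sHL}$ direction the horizontal occupation is confined to $\{0,1\}$, which is what turns an otherwise $q$-geometric law into the Bernoulli law of \eqref{eq:U00_bernoulli_horiz}--\eqref{eq:U00_bernoulli_vert}. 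Once these are settled the remaining algebra in each case is routine.
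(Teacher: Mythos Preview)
Your proposal is correct and follows essentially the same route as the paper's proof: both evaluate the ratio \eqref{eq:U_00} by inserting the explicit boundary weights \eqref{eq:w_boundary}, \eqref{eq:w_tilde_infinity}, the scaled geometric cross weights from \Cref{fig:table_R_sHL_sg} and \eqref{eq:R_sg_sg}, and handle the $R^{(I,J)}_{uv}(J,0;J,0)$ factor via \Cref{rmk:sqW_spec_careful_S6_concrete} and \Cref{prop:emergence_of_the_cross_vertex_weight}. Your write-up simply spells out in more detail what the paper summarizes in two sentences, and your closing consistency check (that the three right-hand sides sum to one) is a helpful sanity check not present in the original.
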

The cases \eqref{eq:U00_bernoulli_horiz}, \eqref{eq:U00_bernoulli_vert}
correspond to the bottom and the left boundaries, respectively, and 
\eqref{eq:U00_qPoisson} arises in the bottom left corner.
Observe that \eqref{eq:U00_qPoisson} defines the $q$-Poisson
distribution (cf. \Cref{sub:notation}).
\begin{proof}[Proof of \Cref{prop:sHL_sHL_stat_bc}]
    This follows by combining 
		\eqref{eq:U_00}
		with the formulas for the boundary weights
		\eqref{eq:w_boundary}, \eqref{eq:w_tilde_infinity}
		and the cross vertex weights (\Cref{fig:table_R_sHL_sg} and \eqref{eq:R_sg_sg}).
		To specialize the
		factor corresponding to $R^{(I,J)}_{u,v}(J,0;J,0)$
		one should use \Cref{rmk:sqW_spec_careful_S6_concrete}.
\end{proof}

Now take the stochastic six vertex model
with independent Bernoulli boundary conditions
(we call these the \emph{two-sided stationary} or \emph{$(\alpha,\beta)$-stationary boundary conditions}):
\begin{equation} \label{eq:6vm_bernoulli_bc}
    b^{\mathrm{h}}_x \sim \mathrm{Ber}\left( \frac{v_x \alpha}{1+ v_x \alpha} \right) 
		\qquad 
		\text{and} \qquad b^{\mathrm{v}}_y \sim \mathrm{Ber}\left( \frac{1}{1+ u_y \beta} \right).
\end{equation}
That is, given a realization of these random variables,
we then consider the stochastic six vertex model with these boundary
conditions according to 
\Cref{def:6vm}.
While the random path configuration in this model is well-defined, 
it cannot be encoded by the height
function $\mathfrak{h}^{\mathrm{6V}}$
in the same way as for the step boundary conditions.
Indeed, if $\alpha>0$, the 
number of paths to the right of any vertex $(x,y)$ is almost surely infinite.
Let us thus introduce the
\emph{centered height function}
\begin{equation}
	\label{eq:centered_height_function}
	\mathcal{H}^{\mathrm{6V}}(x,y)
	=
	\#\{\textnormal{occupied horizontal edges}\}
	-
	\#\{\textnormal{occupied vertical edges}\},
\end{equation}
where we count the edges along a directed up-right 
sequence of cells in the lattice, for example, moving 
$(\frac{1}{2},\frac{1}{2}) \to
(x+\frac{1}{2},\frac{1}{2}) \to (x+\frac{1}{2}, y+\frac{1}{2})$
along straight lines. In other words,
$\mathcal{H}^{\mathrm{6V}}$
has the same gradient as 
$\mathfrak{h}^{\mathrm{6V}}$,
but the constant is defined by 
$\mathcal{H}^{\mathrm{6V}}(0,0)=0$.
The next lemma is a straightforward observation:

\begin{lemma}
	The centered height function 
	$\mathcal{H}^{\mathrm{6V}}(x,y)$ well-defined
	and almost sure finite for all $(x,y)\in \mathbb{Z}_{\ge0}\times \mathbb{Z}_{\ge0}$.
	For $\alpha=\beta=0$ the boundary conditions \eqref{eq:6vm_bernoulli_bc}
	reduce to the step boundary conditions \eqref{eq:step_bc},
	and in this case we have
	$\mathcal{H}^{\mathrm{6V}}(x,y) = \mathfrak{h}^{\mathrm{6V}}(x+1,y)$
	for all $x,y$.
\end{lemma}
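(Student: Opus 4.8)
The plan is to verify the three assertions in turn: that $\mathcal{H}^{\mathrm{6V}}$ is independent of the chosen directed up-right sequence of cells, that it is (deterministically, hence almost surely) finite, and that it coincides with $\mathfrak{h}^{\mathrm{6V}}(x+1,\,\cdot\,)$ once $\alpha=\beta=0$. For path-independence I would fix an arbitrary realization of the six vertex configuration. Any two up-right cell paths with the same endpoints are connected by elementary flips that replace a ``right-then-up'' pair of cell-steps around a single lattice vertex $(a,b)$ (with $a,b\ge1$) by the ``up-then-right'' pair around the same vertex. The right-then-up route crosses the vertical edge below $(a,b)$, of occupation $i_1$, and the horizontal edge to the right of $(a,b)$, of occupation $j_2$, so its contribution to $\#\{\text{occupied horizontal}\}-\#\{\text{occupied vertical}\}$ is $j_2-i_1$; the up-then-right route crosses the horizontal edge to the left, of occupation $j_1$, and the vertical edge above, of occupation $i_2$, contributing $j_1-i_2$. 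These are equal exactly because of the arrow conservation law $i_1+j_1=i_2+j_2$, which holds at every vertex of the stochastic six vertex model. Thus $\mathcal{H}^{\mathrm{6V}}(x,y)$ is well defined, and consistent with the normalization $\mathcal{H}^{\mathrm{6V}}(0,0)=0$ in \eqref{eq:centered_height_function}.

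For finiteness, evaluating along the canonical corner path from $(\tfrac12,\tfrac12)$ to $(x+\tfrac12,\tfrac12)$ to $(x+\tfrac12,y+\tfrac12)$ one crosses exactly $x$ vertical lattice edges and $y$ horizontal lattice edges, each of occupation $0$ or $1$, so $-x\le \mathcal{H}^{\mathrm{6V}}(x,y)\le y$ for \emph{every} realization. Together with the fact (recalled just before the lemma) that the six vertex measure with the boundary data \eqref{eq:6vm_bernoulli_bc} is a genuine probability measure, this gives almost sure finiteness.

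For the last assertion, when $\alpha=\beta=0$ the boundary variables in \eqref{eq:6vm_bernoulli_bc} become $b^{\mathrm{h}}_x\sim\mathrm{Ber}(0)$ and $b^{\mathrm{v}}_y\sim\mathrm{Ber}(1)$, i.e.\ the deterministic step data \eqref{eq:step_bc}. Under step boundary conditions every vertical edge $\{(i,0),(i,1)\}$ in the bottom row has occupation $b^{\mathrm{h}}_i=0$, so evaluating $\mathcal{H}^{\mathrm{6V}}(x,y)$ along the canonical corner path kills the horizontal (rightward) segment's contribution entirely, and $\mathcal{H}^{\mathrm{6V}}(x,y)$ becomes the number of $j\in\{1,\dots,y\}$ for which the horizontal edge $\{(x,j),(x+1,j)\}$ is occupied. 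By up-right monotonicity each path crosses the vertical line separating columns $x$ and $x+1$ at most once, and when it does so it passes through exactly one such edge; hence this count equals the number of paths that cross that line at height at most $y$, which is precisely the number of paths lying weakly to the right of the vertex $(x+1,y)$, i.e.\ $\mathfrak{h}^{\mathrm{6V}}(x+1,y)$.

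The argument is elementary throughout; there is no real obstacle, only bookkeeping. The two points that need care are keeping track of which lattice edges are counted as ``horizontal'' and which as ``vertical'' in $\mathcal{H}^{\mathrm{6V}}$ (a rightward cell-step crosses a vertical edge, an upward cell-step a horizontal one), and the off-by-one shift in the identification — the canonical path through $x+\tfrac12$ probes the edges immediately to the right of column $x$, which is why $\mathcal{H}^{\mathrm{6V}}(x,y)$ matches $\mathfrak{h}^{\mathrm{6V}}$ at $(x+1,y)$ rather than at $(x,y)$.
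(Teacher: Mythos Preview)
The paper does not prove this lemma at all; it is introduced with the sentence ``The next lemma is a straightforward observation'' and left at that. Your argument is correct and supplies exactly the details one would expect: path-independence from arrow conservation, deterministic boundedness $-x\le \mathcal{H}^{\mathrm{6V}}(x,y)\le y$, and the identification with $\mathfrak{h}^{\mathrm{6V}}(x+1,y)$ via the vanishing of the bottom-row contribution under step boundary data. There is nothing to compare approaches against.
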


The centered height function with the two-sided Bernoulli boundary conditions
\eqref{eq:6vm_bernoulli_bc}
can be identified with a marginal of the sHL/sHL Yang-Baxter
field $\boldsymbol\lambda$
with scaled geometric boundary conditions.

\begin{theorem} \label{thm:sHL_sHL_mixed_height_6VM}
	Let $\mathcal{M}$ be the $q$-Poisson random variable with parameter $\alpha\beta$ independent of the stochastic six vertex model
	with $(\alpha,\beta)$-stationary boundary conditions.
	The two random fields $\{ y - \ell (\lambda^{(x,y)})
	: x,y \in \mathbb{Z}_{\ge 0} \}$
	and $\{\mathcal{H}^{\mathrm{6V}}(x,y) - \mathcal{M} : x,y \in \mathbb{Z}_{\ge 0} \}$
	are
	equal in distribution.
\end{theorem}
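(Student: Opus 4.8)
The plan is to bootstrap from the step-boundary result \Cref{thm:length_sHL_sHL_height_6VM}, splitting the sHL/sHL field with $(\alpha,\beta)$-scaled geometric boundary conditions into a ``bulk'' that reproduces the stochastic six vertex model and a single ``corner'' that produces the extra shift $\mathcal{M}$. First, recall from \Cref{prop:exist_marginals} that the array $\{\ell(\lambda^{(x,y)})\}$ is a scalar Markovian marginal of $\boldsymbol\lambda$: it is generated anti-diagonal by anti-diagonal, the update producing $\lambda^{(x,y)}$ being governed by the kernel $\mathsf{U}^{[0]}$ of \eqref{eq:U_00}, whose four arguments are the length differences along the four edges of the corresponding cell as in \eqref{eq:occupation_numbers_lengths}. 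Consequently the law of $\{y-\ell(\lambda^{(x,y)})\}$ is pinned down by three pieces of data: (i) the law of $\ell(\lambda^{(0,0)})$; (ii) the updates along the two axes, which carry one $\mathrm{sHL}$ and one $\mathrm{sg}$ specialization; (iii) the bulk updates, carrying two $\mathrm{sHL}$ specializations.

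Next I would extract $\mathcal{M}$. Every argument of $\mathsf{U}^{[0]}$ is a difference of lengths, so all of the above updates depend on the array only through its increments; moreover, by \eqref{eq:U00_bernoulli_horiz}--\eqref{eq:U00_bernoulli_vert} the axis increments are Bernoulli variables whose law does not depend on the current length. Hence $\ell(\lambda^{(0,0)})$ is independent of the recentered array $\{\ell(\lambda^{(x,y)})-\ell(\lambda^{(0,0)})\}$ and enters $\{y-\ell(\lambda^{(x,y)})\}$ purely as a global additive shift. By \eqref{eq:U00_qPoisson} applied at the corner cell (all of whose other three corners carry $\varnothing$, so the inputs $i_0,i_0'$ vanish), $\ell(\lambda^{(0,0)})$ has the $q$-Poisson law with parameter $\alpha\beta$. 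Write $\mathcal{M}:=\ell(\lambda^{(0,0)})$ and $G(x,y):=y-\ell(\lambda^{(x,y)})+\mathcal{M}$, so that $G(0,0)=0$, $G$ is independent of $\mathcal{M}$, and $\{y-\ell(\lambda^{(x,y)})\}=\{G(x,y)-\mathcal{M}\}$.

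The remaining step is to identify $\{G(x,y)\}$ with the centered six vertex height function $\mathcal{H}^{\mathrm{6V}}$ of \eqref{eq:centered_height_function} under the $(\alpha,\beta)$-stationary boundary conditions \eqref{eq:6vm_bernoulli_bc}. Along the bottom axis, \eqref{eq:U00_bernoulli_horiz} makes the increment of $\ell$ (hence of $-G$, since $y$ is constant there) a $\mathrm{Ber}\!\bigl(\tfrac{v_x\alpha}{1+v_x\alpha}\bigr)$ variable, matching the contribution of $b^{\mathrm{h}}_x$; along the left axis, \eqref{eq:U00_bernoulli_vert} makes the increment of $\ell$ a $\mathrm{Ber}\!\bigl(\tfrac{u_y\beta}{1+u_y\beta}\bigr)$ variable, so the increment of $G=y-\ell+\mathcal{M}$ is the complementary $\mathrm{Ber}\!\bigl(\tfrac{1}{1+u_y\beta}\bigr)$, matching $b^{\mathrm{v}}_y$; these axis increments and the corner value come from independent applications of the forward rule at distinct cells, hence are mutually independent, exactly as the boundary Bernoullis in \eqref{eq:6vm_bernoulli_bc}. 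In the bulk, the identity $\mathsf{U}^{[0]}_{\mathrm{sHL}(u),\mathrm{sHL}(v)}(i_0,1-i_0';j_0',1-j_0)=\mathsf{L}_{uv}(i_0,i_0';j_0',j_0)$ established inside the proof of \Cref{thm:length_sHL_sHL_height_6VM} says precisely that, under the dictionary ``length increment $=$ occupation on horizontal lattice edges, length increment $=1-\text{occupation}$ on vertical lattice edges'', the local update of $\ell$ is that of a vertex weighted by $\mathsf{L}_{u_yv_x}$; equivalently $G$ has the increments of $\mathcal{H}^{\mathrm{6V}}$. Combining the three steps yields $\{y-\ell(\lambda^{(x,y)})\}\stackrel{d}{=}\{\mathcal{H}^{\mathrm{6V}}(x,y)-\mathcal{M}\}$ with $\mathcal{M}\sim q\textnormal{-}\mathrm{Poi}(\alpha\beta)$ independent of the six vertex model.

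The hard part will be the last step's bookkeeping: keeping the correspondence between length increments and six vertex edge occupations consistent across the corner, the two boundary axes, and the bulk simultaneously — in particular checking that it is $y-\ell$, not $\ell$ itself, that carries six vertex dynamics, because the ``$+1$'' per upward step is exactly what flips the complementation occurring on vertical lattice edges. Everything else follows directly from \Cref{thm:length_sHL_sHL_height_6VM}, \Cref{prop:sHL_sHL_stat_bc}, and the translation invariance of $\mathsf{U}^{[0]}$ in the lengths.
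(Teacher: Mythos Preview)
Your argument is correct. It is precisely the direct weight-matching approach that the paper itself mentions just before its proof (``\ldots follows in essentially the same way as \Cref{thm:length_sHL_sHL_height_6VM} by matching the value of vertex weights \ldots and probability laws of entries $b^{\mathrm h}_x,b^{\mathrm v}_y$ with those given by $\mathsf U^{[0]}$'') but then deliberately does not spell out. Instead, the paper proves the theorem by an analytic-continuation argument: it places a step-boundary sHL/sHL field on the shifted lattice $\mathbb Z_{\ge -I_0+1}\times\mathbb Z_{\ge -J_0+1}$ with principally specialized parameters $u_0,qu_0,\dots,q^{J_0-1}u_0$ and $v_0,qv_0,\dots,q^{I_0-1}v_0$, invokes \Cref{thm:length_sHL_sHL_height_6VM} there, identifies the corner contribution $\mathcal M_{I_0,J_0}$ via the fused $\mathsf U^{[0]}$ at the $(I_0,J_0)$-block, and finally sends $u_0,q^{J_0},v_0,q^{I_0}$ to the scaled-geometric limit. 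Your route is more elementary and self-contained for this particular theorem; the paper's route is chosen because the same analytic-continuation template is reused verbatim for the sHL/sqW and sqW/sqW fields (\Cref{thm:sHL_sqW_height_length,thm:sqW_sqW_length}) and again in \Cref{sub:fredholm_determinant_for_marginal_processes} to pass Fredholm determinants to stationary boundary data, where a direct matching is not available.

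One small wording issue: the sentence ``these axis increments and the corner value come from independent applications of the forward rule at distinct cells, hence are mutually independent'' is not literally correct as stated, since the forward rule at cell $(x,0)$ takes $\ell(\eta^{(x-1,0)})$ as input. The independence you need is exactly what you already observed a few lines earlier: in \eqref{eq:U00_bernoulli_horiz}--\eqref{eq:U00_bernoulli_vert} the law of the output increment does not depend on the input $i_0'$ (resp.\ $i_0$), so by induction the axis increments are jointly independent of each other and of $\mathcal M$. With that clarification your argument is complete.
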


\Cref{thm:sHL_sHL_mixed_height_6VM} 
follows in essentially the same way as
\Cref{thm:length_sHL_sHL_height_6VM} by matching the value of
vertex weights $\mathsf{L}_{u_y,v_x}$ and probability laws of entries
$b^{\mathrm{h}}_x, b^{\mathrm{v}}_x$ with those given $\mathsf{U}^{[0]}$
(on the boundary this follows from \Cref{prop:sHL_sHL_stat_bc};
in fact, the 
structure of the concrete formulas
\eqref{eq:U00_bernoulli_horiz},
\eqref{eq:U00_bernoulli_vert}
is essential for the independent boundary conditions).
Let us present a slightly different argument that uses analytic
continuation. This alternative approach is useful 
in other situations (\Cref{sub:new_YB_field_sHL_sqW,sub:new_YB_field_sqW_sqW})
and also in 
\Cref{sub:fredholm_determinant_for_marginal_processes}
for computation of observables of models with two-sided stationary boundary conditions.

\begin{proof}[Proof of \Cref{thm:sHL_sHL_mixed_height_6VM}]
    Consider the sHL/sHL Yang-Baxter field 
		with the usual step boundary conditions, 
		and with shifted indices:
    \begin{equation*}
			\boldsymbol \mu = \{ \mu^{(x,y)} : (x,y) \in \mathbb{Z}_{\ge -I_0 +1} \times \mathbb{Z}_{\ge -J_0 +1} \}.
    \end{equation*}
		Here 
		$I_0,J_0$ are positive integers.
		For $\boldsymbol\mu$ we take the following specializations:
    \begin{equation} \label{eq:parameters_u0_v0_principal}
        u_0, q u_0, \dots q^{J_0-1} u_0, u_1, u_2, \dots \qquad \text{and} \qquad v_0, q v_0, \dots q^{I_0-1}v_0, v_1,v_2, \dots.
    \end{equation}
		
    Call $\boldsymbol \nu = \boldsymbol \mu\vert_{_{\mathbb{Z}_{\geq 0} \times \mathbb{Z}_{\geq 0}}}$ 
		the restriction of $\boldsymbol \mu$ to the nonnegative quadrant.
		Then $\boldsymbol \nu$ is a field
		of random Young diagrams associated to the sHL/sHL skew Cauchy structure with Gibbs
		boundary conditions (\Cref{def:F_G_boundary_conditions}).
		That is, for all $x,y$, the boundary Young
		diagrams $\nu^{(0,y)}, \dots, \nu^{(0,0)}, \dots,
		\nu^{(x,0)}$ are distributed with law
    \begin{equation}
        \frac{1}{Z_{\mathrm{boundary}}^{(x,y)}} 
				\prod_{j=1}^y \mathsf{F}_{\nu^{(0,j)} / \nu^{(0,j-1)} }(u_j) \mathfrak{G}^{(I_0)}_{\nu^{(0,y)}}(v_0)
        \prod_{i=1}^x \mathsf{F}^*_{\nu^{(i,0)} / \nu^{(i-1,0)} }(v_i) \mathfrak{F}^{(J_0)}_{\nu^{(x,0)}}(u_0),
    \end{equation}
    recalling the notation introduced in Section \ref{sec:analytic_continuation}.
	The vertex weights in the definition of the principal specialization of the sHL functions $ \mathfrak{G}_{\nu^{(0,y)}}$ and $\mathfrak{F}_{\nu^{(x,0)}}$ depend on the parameters $u_0,q^{J_0},v_0,q^{I_0}$ in a rational way. 
	%Moreover, using the explicit formula \cite[(45)]{BorodinWheelerSpinq} (recalled as \eqref{eq:sHL_symmetric_sum} below) for the sHL functions, we see that $\mathsf{F}_{\nu^{(x,0)}}$ depends on $\nu^{(x,0)}$ as a finite rational combination of the powers $\left(\frac{u_0q^j-s}{1-su_0q^j}\right)^{\nu^{(x,0)}_{i}}$, $0\le j\le J_0-1$ (and similarly for $\mathsf{F}^*_{\nu^{(0,y)}}$).
	Therefore, for any bounded complex-valued cylindric function $f : \boldsymbol \nu \mapsto f(\boldsymbol \nu)$,\footnote{Here ``cylindric'' means that the function depends on $\boldsymbol\nu$ only through the diagrams $\nu^{(x_i,y_i)}$, where $(x_i,y_i)$ run over a finite set (and the set may depend on $f$).} the expected value $\mathbb{E}_{\boldsymbol \nu}(f)$ is a holomorphic function of $u_0, q^{J_0}, v_0, q^{I_0}$ when 
	these parameters are in a small neighborhood of zero. 

    Let $\boldsymbol\lambda$ be the sHL/sHL field with $(\alpha,\beta)$-scaled geometric boundary conditions.
	The above argument shows that the probability of any event depending on a finite region in the field $\boldsymbol\lambda$ is equal to the scaled geometric degeneration 
		\begin{equation*}
			u_0=-\epsilon \alpha, \quad q^{J_0}=1/\epsilon,\qquad 
			v_0=-\epsilon \beta, \quad q^{I_0}=1/\epsilon,\qquad 
			\epsilon\to0,
		\end{equation*}
		of the probability of the same event
		in which the field $\boldsymbol\lambda$ is replaced by $\boldsymbol\nu$.

		Consider now the stochastic six vertex model with the step boundary conditions on
		the shifted lattice $\mathbb{Z}_{\ge -I_0+1} \times \mathbb{Z}_{\ge -J_0+1}$,
		which corresponds to the field $\boldsymbol\mu$ (with parameters \eqref{eq:parameters_u0_v0_principal}).
		Refer to its height function by
		$\mathfrak{h}^{\mathrm{6V}(I_0,J_0)}$. 
		By
		\Cref{thm:length_sHL_sHL_height_6VM}, 
		we have equality in distribution
    \begin{equation} \label{eq:h_y_J0_l}
			\mathfrak{h}^{\mathrm{6V}(I_0,J_0)}(x,y) \stackrel{d}{=} y + J_0 - \ell (\nu^{(x,y)}) \qquad \text{for all } x,y \geq 0
    \end{equation}
		(here $y+J_0$ is simply the shifted vertical coordinate, and $\nu^{(x,y)}=\mu^{(x,y)}$ for $x,y\ge0$).
		Next, let
		$\mathcal{H}^{\mathrm{6V}(I_0,J_0)}(x,y)$ be the centered height function of the restriction of the above vertex model
		to the nonnegative quadrant $\mathbb{Z}_{\ge 0}^2$.
		Then 
    \begin{equation} \label{eq:h_J0_H_M}
			\mathcal{H}^{6\mathrm{V}(I_0,J_0)}(x,y)=
			\mathfrak{h}^{\mathrm{6V}(I_0,J_0)}(x,y) - J_0 
			+ 
			\mathcal{M}_{I_0, J_0} 
			\qquad \text{for all } x,y \geq 0,
    \end{equation}
		where $\mathcal{M}_{I_0,J_0}$ is the random variable counting the number of
		paths originating from the segment $\{-I_0+1\} \times [-J_0 +1, 0]$ and
		vertically crossing the segment $[-I_0 +1, 0] \times \{0 \}$. Combining
		\eqref{eq:h_y_J0_l} and \eqref{eq:h_J0_H_M}, we find that 
		\begin{equation}\label{eq:sHL_sHL_stable_proof_1}
        y - \ell (\nu^{(x,y)}) \stackrel{d}{=} \mathcal{H}^{6\mathrm{V}(I_0,J_0)}(x,y) - \mathcal{M}_{I_0, J_0} \qquad \text{for all }x,y \ge 0.
    \end{equation}

    The probability law of $\mathcal{M}_{I_0, J_0}$ 
		is found from the
		sHL/sHL
		field $\boldsymbol\mu$ which has step boundary conditions
		(hence we can use \Cref{thm:length_sHL_sHL_height_6VM}).
		On the other hand, the update in the initial
		$(I_0,J_0)$ part of $\boldsymbol\mu$
		is restated as a single forward transition
		in the $(I_0,J_0)$-fused field (considered in \Cref{sec:YB_fields_through_bijectivisation} above).
		Therefore, the law of 
		$\mathcal{M}_{I_0, J_0}$ 
		is given by 
		\eqref{eq:U_00}
		with parameters $u_0,q^{J_0},v_0,q^{I_0}$:
    \begin{equation}\label{eq:lenght_sHL_sHL_centered_H}
			\mathrm{Prob} \{ \mathcal{M}_{I_0, J_0} = k \} = \mathsf{U}^{[0]}_{u_0,v_0}(0,0;k,k) \qquad \text{for } k=0 \dots,I_0.
    \end{equation}
		Under the scaled geometric specializations to both $\alpha$ and $\beta$, this distribution becomes
		$q\textnormal{-}\mathrm{Poi}(\alpha\beta)$, cf. \eqref{eq:U00_qPoisson}.
		Taking the scaled geometric specializations in 
		\eqref{eq:sHL_sHL_stable_proof_1},
		we obtain the desired matching between the centered height function 
		$\mathcal{H}^{6\mathrm{V}}$ of the stochastic six vertex model 
		with the $(\alpha,\beta)$-stationary boundary conditions
		and the marginal of the field $\boldsymbol\lambda$.
\end{proof}

\subsection{The sHL/sqW Yang-Baxter field} \label{sub:new_YB_field_sHL_sqW}

Here we consider the Yang-Baxter field 
associated with the dual Cauchy identity between
the sHL and the sqW functions. 
The marginal of the field is the stochastic higher spin six vertex model.
We consider both step and two-sided stationary boundary conditions in the vertex model.
The model with the step boundary conditions was extensively studied starting from
\cite{CorwinPetrov2015}, 
\cite{BorodinPetrov2016inhom}.
Different formulas for observables in the two-sided stationary case
leading to asymptotic results
were obtained recently in \cite{imamura2019stationary}
by a different method.

\subsubsection{Step boundary conditions}

The sHL/sqW field corresponds to setting $v=s$ and $q^I=-\theta/s$ in the notation of \Cref{sec:YB_fields_through_bijectivisation}.
The parameters are $q\in(0,1)$, $s\in(-1,0)$, 
$u\in [0,1)$, $\theta\in[-s,-s^{-1}]$.
The Yang-Baxter equation governing the vertex weights is \eqref{eq:YBE_W_w}.\footnote{Equivalently, 
one could consider $u=s$, $q^J=-\xi/s$, and take $(\xi,v)$ as the parameters.
This leads to a straightforward rewriting of some of the formulas, 
but produces the same marginal process (
cf. \Cref{rmk:sHL_sqW_conjugation_does_not_hurt}).
Therefore, we only consider one of the two dual cases.}
The reversibility condition of the forward and backward transition operators is 
proven in the same way as \Cref{prop:reversibility_sHL_sHL}, and is
given as follows:
\begin{proposition}
For any four Young diagrams $\mu, \varkappa, \lambda, \nu$ we have
\begin{multline} \label{eq:reversibility_sHL_sqW}
	\frac{1 + u \theta}{1 - u s}\,
	\mathsf{U}^{\mathrm{fwd}}_{\mathrm{sHL}(u),\mathrm{sqW}(\theta)} (\varkappa \to \nu \mid \lambda,\mu) \,
	\,
	\mathsf{F}_{\lambda / \varkappa}(u) \mathbb{F}^*_{\mu' / \varkappa'} (\theta) 
	\\=
	\mathsf{U}^{\mathrm{bwd}}_{\mathrm{sHL}(u),\mathrm{sqW}(\theta)} ( \nu \to \varkappa \mid \lambda,\mu) \,
	\,
	\mathbb{F}^*_{\nu'  /\lambda'}(\theta) \mathsf{F}_{\nu/\mu}(u)
\end{multline}
Summing \eqref{eq:reversibility_sHL_sqW} 
over both $\varkappa$ and $\nu$, we obtain
the skew Cauchy identity of
\Cref{thm:sHL_sqW_skew_Cauchy}.
\end{proposition}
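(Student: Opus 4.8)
The plan is to deduce \eqref{eq:reversibility_sHL_sqW} from the general reversibility identity for Yang--Baxter fields recorded just after \Cref{prop:U_well_defined} in \Cref{sec:YB_fields_through_bijectivisation}, specialized to the sHL/sqW case ($J=1$, $v=s$, $q^I=-\theta/s$), reproving it by the same cross-dragging argument used for \Cref{prop:reversibility_sHL_sHL}. First I would identify the ingredients of \eqref{eq:reversibility_sHL_sqW} with partition functions of the two-row vertex model of \Cref{fig:transition_nu_kappa}: with $J=1$ the fused weight $w^{(1)}_{u,s}$ is just $w_{u,s}$, so a row of such vertices with boundary data $\varkappa$ below and $\lambda$ above (one path entering from the left, infinitely many vertical paths in the zeroth column, empty configuration far to the right) has partition function $\mathsf{F}_{\lambda/\varkappa}(u)$; with $v=s$ and $q^I=-\theta/s$ the fused dual weight $w^{*,(I)}_{v,s}$ degenerates to $W^*_{\theta,s}$ (see \Cref{fig:table_positivity_YBE} and \Cref{app:YBE}), so the adjacent row with boundary data $\varkappa$ above and $\mu$ below has partition function $\mathbb{F}^*_{\mu'/\varkappa'}(\theta)$, the passage to transposed diagrams being exactly the one built into \Cref{def:sqW_polynomial} together with the prefactors of \eqref{eq:W_W_tilde_relation} and \eqref{eq:sqW_sqW_star_relation}. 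Hence $\mathsf{F}_{\lambda/\varkappa}(u)\,\mathbb{F}^*_{\mu'/\varkappa'}(\theta)$ is the weight of the left configuration in \Cref{fig:transition_nu_kappa}.

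Next I would note that $\tfrac{1+u\theta}{1-us}=\Pi(\mathrm{sHL}(u);\mathrm{sqW}(\theta))$ by \eqref{eq:sHL_sqW_Pi}, and that by \Cref{prop:emergence_of_the_cross_vertex_weight} this equals the weight $R^{(I,J)}_{uv}(J,0;J,0)$ of a cross vertex in state $(J,0;J,0)$ under the same specialization; attaching such a cross at the left of the two-row lattice multiplies its weight by $\tfrac{1+u\theta}{1-us}$. By the very definition of $\mathsf{U}^{\mathrm{fwd}}_{\mathrm{sHL}(u),\mathrm{sqW}(\theta)}$ (\Cref{def:U_fwd_U_bwd}), one then drags this cross column by column all the way to the right using the local forward probabilities $\mathbf{p}^{\mathrm{fwd}}$ of the bijectivization of the Yang--Baxter equation \eqref{eq:YBE_W_w}, which replaces $\varkappa$ by the random diagram $\nu$; the infinite product converges by \Cref{prop:U_well_defined}. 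At each column the local reversibility condition \eqref{rev cond} trades a $\mathbf{p}^{\mathrm{fwd}}$-weighted piece carrying the pre-drag weight for a $\mathbf{p}^{\mathrm{bwd}}$-weighted piece carrying the post-drag weight, and telescoping these identities over all columns turns the left-hand side of \eqref{eq:reversibility_sHL_sqW} into $\mathsf{U}^{\mathrm{bwd}}_{\mathrm{sHL}(u),\mathrm{sqW}(\theta)}(\nu\to\varkappa\mid\lambda,\mu)$ times the weight of the right configuration of \Cref{fig:transition_nu_kappa}, with the now-empty cross sitting at the far right, which factorizes as $\mathbb{F}^*_{\nu'/\lambda'}(\theta)\,\mathsf{F}_{\nu/\mu}(u)$, as required.

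For the ``moreover'' part, I would sum \eqref{eq:reversibility_sHL_sqW} first over $\nu$, using $\sum_\nu \mathsf{U}^{\mathrm{fwd}}_{\mathrm{sHL}(u),\mathrm{sqW}(\theta)}(\varkappa\to\nu\mid\lambda,\mu)=1$ from \Cref{prop:U_well_defined}, and then over $\varkappa$, using $\sum_\varkappa \mathsf{U}^{\mathrm{bwd}}_{\mathrm{sHL}(u),\mathrm{sqW}(\theta)}(\nu\to\varkappa\mid\lambda,\mu)=1$, to get
\begin{equation*}
	\frac{1+u\theta}{1-us}\sum_{\varkappa}\mathsf{F}_{\lambda/\varkappa}(u)\,\mathbb{F}^*_{\mu'/\varkappa'}(\theta)
	=
	\sum_{\nu}\mathbb{F}^*_{\nu'/\lambda'}(\theta)\,\mathsf{F}_{\nu/\mu}(u),
\end{equation*}
which is the second of the identities \eqref{eq:sHL_sqW_skew_Cauchy} with $\xi=\theta$; the first follows from it by multiplying by $\mathsf{c}(\mu)/\mathsf{c}(\lambda)$ and redistributing factors, exactly as noted in \Cref{thm:sHL_sqW_skew_Cauchy}.

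The step I expect to be the main obstacle is the bookkeeping in the first paragraph: checking that, under $J=1$, $v=s$, $q^I=-\theta/s$, the fused weights $w^{(J)}_{u,s}$, $w^{*,(I)}_{v,s}$ and $R^{(I,J)}_{uv}$ reduce precisely to the weights entering the Yang--Baxter equation \eqref{eq:YBE_W_w} used to build the bijectivization, and that the down-right path partition function of the $W^*_{\theta,s}$-row --- with all its $(s^2;q)_i$-type normalizations coming from \eqref{eq:W_W_tilde_relation} --- is genuinely the transposed-index object $\mathbb{F}^*_{\mu'/\varkappa'}(\theta)$ rather than a version differing by a $\mathsf{c}$-factor. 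Once these identifications are pinned down, the remainder is the same telescoping over columns as for \Cref{prop:reversibility_sHL_sHL}, and the convergence of all infinite sums is already supplied by \Cref{prop:U_well_defined}.
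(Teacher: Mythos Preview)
Your proposal is correct and follows essentially the same approach as the paper: the paper does not give a separate proof of this proposition but simply says it ``is proven in the same way as \Cref{prop:reversibility_sHL_sHL}'', and your argument is precisely that cross-dragging proof specialized to $J=1$, $v=s$, $q^I=-\theta/s$. Your bookkeeping concern about $\mathbb{F}^*_{\mu'/\varkappa'}(\theta)$ is harmless, since by the definition of the dual sqW polynomials (just after \eqref{eq:W_W_tilde_relation}) the $W^*_{\theta,s}$-row partition function is exactly $\mathbb{F}^*_{\mu'/\varkappa'}(\theta)$ with no extra $\mathsf{c}$-factor needed.
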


The \emph{sHL/sqW Yang-Baxter field} 
$\boldsymbol\lambda=\{ \lambda^{(x,y)} \}$
depends on the parameters
$u_y\in[0,1)$, $\theta_x\in[-s,-s^{-1}]$, $x,y\in \mathbb{Z}_{\ge1}$,
and is generated from the step boundary conditions 
$\lambda^{(x,0)}=\lambda^{(0,y)}=0^{\infty}=\varnothing$ 
by applying the forward transition operators 
$\mathsf{U}^{\mathrm{fwd}}_{\mathrm{sHL}(u_y),\mathrm{sqW}(\theta_x)}$.

\begin{proposition} \label{prop:sHL_sqW_YBF}
	The single-point point distributions in the 
	sHL/sqW field with the step boundary conditions have the form
	\begin{equation*}
		\mathrm{Prob}( \lambda^{(x,y)} = \nu ) =
		\prod_{\substack{ 1 \leq i \leq x \\ 1\leq j \leq y}} \frac{1 - u_j s}{ 1 + u_j \theta_i } \,
		\mathsf{F}_\nu(u_1, \dots, u_y)\,
		\mathbb{F}^*_{\nu'}(\theta_1, \dots, \theta_x).
	\end{equation*}
The joint distributions along down-right paths are expressed through the 
skew functions as in \Cref{prop:F_G_processes}.
\end{proposition}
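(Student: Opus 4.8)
The plan is to obtain \Cref{prop:sHL_sqW_YBF} from the abstract formalism of \Cref{sec:abstract_formalism}, once it is recognized that the sHL/sqW Yang-Baxter field falls under \Cref{def:F_G_field} for the skew Cauchy structure of \Cref{def:sHL_sqW_struture}. First I would record that the forward operators $\mathsf{U}^{\mathrm{fwd}}_{\mathrm{sHL}(u_y),\mathrm{sqW}(\theta_x)}$ produced by dragging the cross (\Cref{sub:cross_multiple_dragging}) are nonnegative, sum to one over $\nu$ by \Cref{prop:U_well_defined}, and satisfy the reversibility relation \eqref{eq:reversibility_sHL_sqW} against $\mathfrak{F}_{\lambda/\mu}=\mathsf{F}^*_{\lambda/\mu}$, $\mathfrak{G}_{\lambda/\mu}=\mathbb{F}_{\lambda'/\mu'}$ and $\Pi(u;\theta)=(1+u\theta)/(1-su)$; hence they are precisely the forward transition probabilities of \Cref{def:F_G_fwd_bwd_transition_probabilities}. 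Since $\lambda^{(x,0)}=\lambda^{(0,y)}=\varnothing$ is a step-type (in particular Gibbs) boundary condition, \Cref{prop:F_G_from_U_to_fields} shows that $\boldsymbol\lambda$ is a random field.

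Next I would apply \Cref{cor:F_G_measure}. All boundary diagrams being empty, $\tau^{(0,0)}=\varnothing$ and $Z^{(x,y)}_{\mathrm{boundary}}=\mathfrak{F}_{\varnothing/\varnothing}\mathfrak{G}_{\varnothing/\varnothing}=1$, so $Z^{(x,y)}=\prod_{i=1}^{x}\prod_{j=1}^{y}\Pi(u_j;\theta_i)=\prod_{i,j}\frac{1+u_j\theta_i}{1-su_j}$, giving
\[
	\mathrm{Prob}(\lambda^{(x,y)}=\nu)=\prod_{\substack{1\le i\le x\\ 1\le j\le y}}\frac{1-su_j}{1+u_j\theta_i}\;\mathsf{F}^*_\nu(u_1,\dots,u_y)\,\mathbb{F}_{\nu'}(\theta_1,\dots,\theta_x).
\]
To match the stated form, I would invoke the conjugation identities \eqref{eq:sHL_sHL_star_relation} and \eqref{eq:sqW_sqW_star_relation} specialized at the empty diagram: $\mathsf{F}^*_\nu=\mathsf{c}(\nu)\mathsf{F}_\nu$ and $\mathbb{F}_{\nu'}=\mathsf{c}(\nu)^{-1}\mathbb{F}^*_{\nu'}$ (using $\mathsf{c}(\varnothing)=1$), so the factors $\mathsf{c}(\nu)^{\pm1}$ cancel and the product $\mathsf{F}^*_\nu(u)\mathbb{F}_{\nu'}(\theta)$ equals $\mathsf{F}_\nu(u)\mathbb{F}^*_{\nu'}(\theta)$, as claimed. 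The assertion about down-right paths is then just \Cref{prop:F_G_processes} applied to the sHL/sqW skew Cauchy structure; as the boundary data is step-type no conditioning is needed, and one may redistribute the $\mathsf{c}$-factors among consecutive skew factors to write the answer in whichever gauge is convenient, cf. \Cref{rmk:sHL_sqW_conjugation_does_not_hurt}.

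The only nonroutine ingredient is the input from \Cref{sec:YB_fields_through_bijectivisation}: that the specialization $v=s$, $q^I=-\theta/s$ of the cross-dragging construction yields a well-defined stochastic operator satisfying \eqref{eq:reversibility_sHL_sqW}. This needs the care with the formal ``$J$ paths'' entering the leftmost column under the sqW specialization described in \Cref{rmk:sqW_spec_careful} and \Cref{rmk:sqW_spec_careful_S6_concrete}; granting it, the remainder is bookkeeping with the Cauchy kernel $\Pi$ and the factor $\mathsf{c}$. I expect the $\mathsf{c}$-cancellation to be the one spot where a careless computation could leave a spurious $\mathsf{c}(\nu)$, so I would carry it out explicitly rather than appeal to the gauge remark alone.
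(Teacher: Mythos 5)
Your proof is correct and follows essentially the same route as the paper's (implicit) argument: the field is a random field in the sense of \Cref{def:F_G_field} for the sHL/sqW skew Cauchy structure with step-type Gibbs boundary, and the formula is \Cref{cor:F_G_measure} with $Z^{(x,y)}_{\mathrm{boundary}}=1$ and $\Pi(u;\theta)=\frac{1+u\theta}{1-su}$. The only cosmetic point is that \eqref{eq:reversibility_sHL_sqW} is stated in the gauge $(\mathsf{F},\mathbb{F}^*)$, so one may read off $\mathsf{F}_\nu\,\mathbb{F}^*_{\nu'}$ directly without passing through the $(\mathsf{F}^*,\mathbb{F})$ gauge of \Cref{def:sHL_sqW_struture}; your explicit $\mathsf{c}(\nu)$-cancellation correctly accounts for this equivalence (cf. \Cref{rmk:sHL_sqW_conjugation_does_not_hurt}), so nothing is lost.
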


\subsubsection{Scaled geometric boundary conditions}
\label{ssub:sHL_sqW_two_sided}

Take additional parameters $\alpha,\beta\in[0,-s^{-1}]$,
and consider specializations
\begin{equation*}
	\rho^{\mathrm{v}}_{-1}=\mathrm{sg}(\alpha),\qquad 
	\rho^{\mathrm{h}}_{-1}=\mathrm{sg}(\beta),\qquad 
	\rho^{\mathrm{v}}_y=\mathrm{sHL}(u_y),\qquad 
	\rho^{\mathrm{h}}_x=\mathrm{sqW}(\theta_x).
\end{equation*}
Let $\boldsymbol \eta$ be the Yang-Baxter field on the lattice 
$\mathbb{Z}_{\geq -1} \times \mathbb{Z}_{\geq -1}$ 
generated by the forward transition probabilities constructed using the specializations
(dragging the cross vertex through the leftmost column
should be understood as in \Cref{rmk:sqW_spec_careful_S6_concrete}).
Restricting this field to the nonnegative quadrant,
$\boldsymbol \lambda = \boldsymbol \eta
\vert_{_{\mathbb{Z}_{\geq 0} \times \mathbb{Z}_{\geq 0}}}$,
we get the \emph{sHL/sqW field with the two-sided scaled geometric} (or \emph{$(\alpha,\beta)$-scaled geometric}) \emph{boundary conditions}.

\begin{proposition} \label{prop:sHL_sqW_stat_bc}
	For the field $\boldsymbol\lambda$ defined above we have
	\begin{equation*}
			\mathrm{Prob}\{ \lambda^{(x,y)} = \nu \} 
			=
			\frac{(\alpha \beta ; q)_\infty}
			{\prod\limits_{j=1}^y (1+u_j\beta)}
			\prod_{i=1}^{x}
			\frac{(\alpha \theta_i ; q)_\infty}
			{(-s \alpha ;q)_\infty}
			\prod_{\substack{1\le i \le x\\ 1\le j \le y}}
			\frac{1-u_j s}{1+ u_j \theta_i} \,
			\mathsf{F}_\nu(u_1, \dots, u_y; \widetilde{\alpha}) \,
			\mathbb{F}^*_{\nu'}(\theta_1, \dots, \theta_x; \widetilde{\beta}).
	\end{equation*}
	Joint distributions in
	$\boldsymbol\lambda$
	along down-right paths are expressed similarly to 
	\Cref{prop:F_G_processes}.
\end{proposition}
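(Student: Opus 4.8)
The plan is to read off the formula directly from \Cref{cor:F_G_measure}, applied to the sHL/sqW skew Cauchy structure enriched with scaled geometric specializations as in \Cref{def:mixed_skew_Cauchy_structure}. By its very definition in \Cref{ssub:sHL_sqW_two_sided}, the field $\boldsymbol\lambda$ is the restriction to $\mathbb{Z}_{\ge0}\times\mathbb{Z}_{\ge0}$ of a Yang--Baxter field $\boldsymbol\eta$ on the shifted lattice $\mathbb{Z}_{\ge-1}\times\mathbb{Z}_{\ge-1}$ whose boundary diagrams along $x=-1$ and $y=-1$ are all empty (hence step-type), whose $(-1)$-st vertical and horizontal slices carry the specializations $\mathrm{sg}(\alpha)$ and $\mathrm{sg}(\beta)$, and whose remaining slices carry $\mathrm{sHL}(u_y)$ and $\mathrm{sqW}(\theta_x)$. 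Since $\boldsymbol\eta$ has a step-type boundary condition, one has $Z^{(x,y)}_{\mathrm{boundary}}=1$ and \Cref{cor:F_G_measure} gives that $\lambda^{(x,y)}=\eta^{(x,y)}$ is distributed as
\[
\frac{1}{Z^{(x,y)}}\,\mathfrak F_{\nu/\varnothing}\bigl(\widetilde\alpha;u_1,\dots,u_y\bigr)\,\mathfrak G_{\nu/\varnothing}\bigl(\widetilde\beta;\theta_1,\dots,\theta_x\bigr),
\qquad
Z^{(x,y)}=\prod\Pi(\,\cdot\,;\,\cdot\,),
\]
where $(\mathfrak F,\mathfrak G)$ are the sHL/sqW structure functions and the product runs over all $(x+1)(y+1)$ pairs consisting of one vertical and one horizontal specialization.

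It then remains to unpack the two ingredients. By the definition of mixed specializations in \Cref{sub:scaled_geometric_spec}, $\mathfrak F_{\nu/\varnothing}(\widetilde\alpha;u_1,\dots,u_y)=\mathsf F^*_\nu(u_1,\dots,u_y;\widetilde\alpha)$ and $\mathfrak G_{\nu/\varnothing}(\widetilde\beta;\theta_1,\dots,\theta_x)=\mathbb F_{\nu'}(\theta_1,\dots,\theta_x;\widetilde\beta)$; and from \eqref{eq:sHL_sHL_star_relation}, \eqref{eq:sqW_sqW_star_relation} (with $\mathsf c(\varnothing)=1$ and the fact that the scaled geometric dual uses the same $\mathsf c$-conjugation) one gets $\mathsf F^*_\nu(\cdots;\widetilde\alpha)=\mathsf c(\nu)\mathsf F_\nu(\cdots;\widetilde\alpha)$ and $\mathbb F^*_{\nu'}(\cdots;\widetilde\beta)=\mathsf c(\nu)\mathbb F_{\nu'}(\cdots;\widetilde\beta)$, so the $\mathsf c(\nu)$ factors cancel and the product of symmetric functions above equals $\mathsf F_\nu(u_1,\dots,u_y;\widetilde\alpha)\,\mathbb F^*_{\nu'}(\theta_1,\dots,\theta_x;\widetilde\beta)$ — this harmless redistribution of $\mathsf c$ is the gauge freedom noted in \Cref{rmk:sHL_sqW_conjugation_does_not_hurt}. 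For the normalization, split the product over the four kinds of pairs and use \eqref{eq:sHL_sqW_Pi}, \eqref{eq:sg_Pi}: the single $(\mathrm{sg}(\alpha),\mathrm{sg}(\beta))$ pair contributes $1/(\alpha\beta;q)_\infty$; each $(\mathrm{sg}(\alpha),\mathrm{sqW}(\theta_i))$ pair contributes $(-s\alpha;q)_\infty/(\alpha\theta_i;q)_\infty$; each $(\mathrm{sHL}(u_j),\mathrm{sg}(\beta))$ pair contributes $1+u_j\beta$; and each $(\mathrm{sHL}(u_j),\mathrm{sqW}(\theta_i))$ pair contributes $(1+u_j\theta_i)/(1-su_j)$. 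Taking the reciprocal of the full product $Z^{(x,y)}$ reproduces exactly the prefactor
\[
(\alpha\beta;q)_\infty\,\prod_{i=1}^{x}\frac{(\alpha\theta_i;q)_\infty}{(-s\alpha;q)_\infty}\,\prod_{j=1}^{y}\frac{1}{1+u_j\beta}\,\prod_{\substack{1\le i\le x\\ 1\le j\le y}}\frac{1-u_js}{1+u_j\theta_i}
\]
in the statement. The joint distribution along a down-right path follows from \Cref{prop:F_G_processes} in the same manner.

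The routine but essential checks are that the parameter ranges $q\in(0,1)$, $s\in(-1,0)$, $\alpha,\beta\in[0,-s^{-1}]$, $u_y\in[0,1)$, $\theta_x\in[-s,-s^{-1}]$, together with the admissibility constraints of \Cref{def:adm_rho}, make all of the above legitimate: nonnegativity of the vertex weights (so that one genuinely obtains a probability distribution, cf. \Cref{def:mixed_skew_Cauchy_structure} and \Cref{prop:U_well_defined}), and absolute convergence of the sum over $\nu$ so that the normalization is finite and equals $1$, which is precisely the mixed skew Cauchy identity \eqref{eq:skew_Cauchy_mixed}. The main (modest) obstacle is keeping the index bookkeeping on the shifted lattice straight — namely verifying that the $y+1$ vertical specializations relevant to $\eta^{(x,y)}$ are $\mathrm{sg}(\alpha),\mathrm{sHL}(u_1),\dots,\mathrm{sHL}(u_y)$ and the $x+1$ horizontal ones are $\mathrm{sg}(\beta),\mathrm{sqW}(\theta_1),\dots,\mathrm{sqW}(\theta_x)$; there is no new analytic difficulty, since this is the $(\mathrm{sg},\mathrm{sg})$-enriched counterpart of the step-boundary case \Cref{prop:sHL_sqW_YBF}. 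As an alternative route, useful as a consistency check and parallel to the proof of \Cref{thm:sHL_sHL_mixed_height_6VM}, one can instead obtain the formula by analytic continuation: start from the step-boundary sHL/sqW field on a shifted lattice in which the extra sHL parameter $u_0$ is replaced by a geometric string $u_0,qu_0,\dots,q^{J_0-1}u_0$ and the extra sqW slice by a principal specialization, use rationality of the fused weights $w^{(J)},w^{*,(I)}$ in $q^{J_0},q^{I_0}$, and pass the formula of \Cref{prop:sHL_sqW_YBF} through the scaled geometric limit $q^{J_0}=q^{I_0}=1/\epsilon$, $\epsilon\to0$.
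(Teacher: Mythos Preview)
Your argument is correct and is exactly the proof the paper has in mind: the proposition is stated without proof because it is an immediate instance of \Cref{cor:F_G_measure} applied to the shifted field $\boldsymbol\eta$ with step-type boundary, using the mixed skew Cauchy structure of \Cref{def:mixed_skew_Cauchy_structure}. Your computation of the normalizing constant from the four families of $\Pi$-factors \eqref{eq:sHL_sqW_Pi}, \eqref{eq:sg_Pi} and your use of the $\mathsf{c}(\nu)$ gauge (\Cref{rmk:sHL_sqW_conjugation_does_not_hurt}) to pass from $\mathsf{F}^*_\nu\cdot\mathbb{F}_{\nu'}$ to $\mathsf{F}_\nu\cdot\mathbb{F}^*_{\nu'}$ are both accurate.
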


\subsubsection{Stochastic higher spin six vertex model}
\label{ssub:shv}

The Markovian marginal of the sHL/sqW 
field can be mapped to a known stochastic vertex model
which we now recall.
Let the vertex weights $\mathcal{L}_{u_y,\theta_x}(i_1,j_1;i_2,j_2)$,
$i_1,i_2\in \mathbb{Z}_{\ge0}$,
$j_1,j_2\in \left\{ 0,1 \right\}$,
be given in
\Cref{fig:weights_HS6VM}.
They are stochastic for our values of parameters
in the sense that
$\sum_{i_2,j_2}\mathcal{L}_{u_y,\theta_x}(i_1,j_1;i_2,j_2)=1$
for all $i_1,j_1$.

\begin{figure}[htbp]
    \centering
		\includegraphics[width=.8\textwidth]{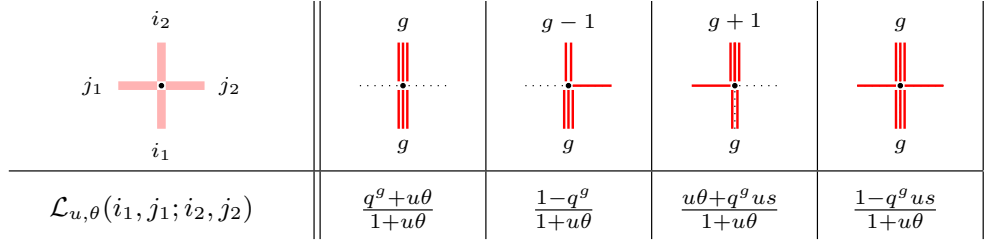}
    \caption{Stochastic vertex weights 
		$\mathcal{L}_{u,\theta}(i_1,j_1;i_2,j_2)$.
		This parametrization of the weights differs
		from the ones employed in \cite{CorwinPetrov2015}
		or \cite{BorodinPetrov2016inhom}, but all these parametrizations
		are related to each other via simple changes of variables.}
    \label{fig:weights_HS6VM}
\end{figure}

\begin{definition}[\cite{CorwinPetrov2015}, \cite{BorodinPetrov2016inhom}] 
	\label{def:hs6vm}
	The (inhomogeneous) \emph{stochastic higher spin six vertex model}
	with the boundary conditions 
	$B^{\mathrm{h}} = \{ b^{\mathrm{h}}_1, b^{\mathrm{h}}_2, \dots \}$ and
	$B^{\mathrm{v}} = \{ b^{\mathrm{v}}_1, b^{\mathrm{v}}_2, \dots \}$,
	$b_i^{\mathrm{v}}\in \left\{ 0,1 \right\}$,
	$b_j^{\mathrm{h}}\in \mathbb{Z}_{\ge0}$,
	is the (unique) probability
	measure on the set of up-right directed paths on 
	$\mathbb{Z}_{\geq 0} \times
	\mathbb{Z}_{\geq 0}$
	(with multiple vertical paths allowed per edge, but at most 
	one horizontal path per edge)
	satisfying:
	\begin{itemize}
			\item 
				Each vertex $(0,y)$ at the vertical boundary $\{ (0,y'):y'\geq 1 \}$
				emanates a path initially pointing to the right if
				$b^\mathrm{v}_y=1$;
			\item Each vertex $(x,0)$ at the horizontal boundary
				$\{ (x',0):x'\geq 1\}$ emanates 
				$b^\mathrm{h}_x$
				paths initially pointing upward;
			\item For each $(x,y)$, conditioned to the path configuration at all
				vertices $(x',y')$ such that $x'+y'<x+y$, the probability of a vertex
				configuration $(i_1,j_1;i_2,j_2)$ at $(x,y)$ is given by
				$\mathcal{L}_{u_y,\theta_x}(i_1,j_1;i_2,j_2)$. Moreover, the random
				choices made at diagonally adjacent vertices
				$\ldots,(x-1,y+1),(x,y),(x+1,y-1),\ldots$ are independent under the same
				condition.
	\end{itemize}
	In particular, the \emph{step boundary conditions} correspond to
	paths entering horizontally at each location and no paths entering through the 
	bottom boundary
	\eqref{eq:step_bc}, exactly as in the stochastic six vertex case
	considered in \Cref{sub:new_YB_field_sHL_sHL}.
\end{definition}
% \begin{remark}
%   The inhomogeneous version of the stochastic higher spin six vertex model
%   considered in \cite{BorodinPetrov2016inhom}
%   allows the parameter $s$ to depend on the $x$ coordinate in the lattice
%   $\mathbb{Z}_{\ge0}^2$.
%   As such, our weights $\mathcal{L}_{u_y,\theta_x}$ are less general than the ones in
%   \cite{BorodinPetrov2016inhom}.
% \end{remark}

Similarly to the six vertex case, let us encode the 
configuration of paths by the centered height function $\mathcal{H}^{\mathrm{HS}}(x,y)$,
see \eqref{eq:centered_height_function}. That is, 
$\mathcal{H}^{\mathrm{HS}}(0,0)=0$, and the stochastic higher spin six
vertex model paths serve as level 
lines for $\mathcal{H}^{\mathrm{HS}}$. 
For the step boundary conditions we have
$\mathcal{H}^{\mathrm{HS}}(x,y)=\mathfrak{h}^{\mathrm{HS}}(x+1,y)$, 
where $\mathfrak{h}^{\mathrm{HS}}(x,y)$ is the 
number of paths passing weakly to the right of the point $(x,y)\in \mathbb{Z}_{\ge0}^{2}$.

The next proposition suggests the appropriate choice of the 
two-sided stationary boundary conditions for the stochastic higher spin six vertex model:

\begin{proposition}
	\label{prop:two_sided_sHL_sqW}
Consider the transition probabilities $\mathsf{U}^{[0]}$ given in \eqref{eq:U_00}. Then 
\begin{equation}
	\label{eq:U_00_sg_sqW_computation}
	\mathsf{U}^{[0]}_{\mathrm{sg}(\alpha), 
	\mathrm{sqW}(\theta)} (0, i_0'; j_0', j_0)  
	= 
	\mathbf{1}_{i_0' + j_0' = j_0} (\alpha \theta)^{j_0'} \,
	\frac{ (-s/\theta;q)_{j_0'} }{ (q;q)_{j_0'} } \frac{( \alpha \theta ; q )_\infty }{ ( -s \alpha ; q )_\infty }.
\end{equation}
\end{proposition}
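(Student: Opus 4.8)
The plan is to evaluate \eqref{eq:U_00}, the universal ratio formula for $\mathsf{U}^{[0]}$, in the specialization at hand, exactly as in the proof of \Cref{prop:sHL_sHL_stat_bc}. In $\mathsf{U}^{[0]}_{\mathrm{sg}(\alpha),\mathrm{sqW}(\theta)}$ the top row (bearing $w^{(J)}_{u,s}$) is taken scaled geometric, $q^{J}=1/\epsilon$, $u=-\alpha\epsilon$, $\epsilon\to0$, so that $w^{(J)}_{u,s}\to\widetilde{w}_{\alpha,s}$, while the bottom row (bearing $w^{*,(I)}_{v,s}$) is taken spin $q$-Whittaker, $v=s$, $q^{I}=-\theta/s$, so that $w^{*,(I)}_{v,s}\to W^{*}_{\theta,s}$; all limits are understood through the prescription of \Cref{rmk:sqW_spec_careful_S6_concrete}, which is legitimate because the fused weights depend rationally on $q^{J}$. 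Setting $i_0=0$ in \eqref{eq:U_00} gives
\begin{equation*}
	\mathsf{U}^{[0]}_{\mathrm{sg}(\alpha),\mathrm{sqW}(\theta)}(0,i_0';j_0',j_0)
	=
	\frac{W^{*}_{\theta,s}(\infty,0;\infty,j_0')\;\widetilde{w}_{\alpha,s}(\infty,\infty;\infty,j_0)\;R_{\alpha,\theta}(j_0,j_0';i_0',0)}
	{\Pi(\mathrm{sg}(\alpha);\mathrm{sqW}(\theta))\;W^{*}_{\theta,s}(\infty,0;\infty,0)\;\widetilde{w}_{\alpha,s}(\infty,\infty;\infty,i_0')},
\end{equation*}
where $R_{\alpha,\theta}$ denotes the scaled geometric / spin $q$-Whittaker degeneration of $R^{(I,J)}_{uv}$ (see \Cref{app:YBE} and \Cref{app:YBE_scaled_geometric}), and where we used \Cref{prop:emergence_of_the_cross_vertex_weight} to replace the denominator weight $R^{(I,J)}_{uv}(J,0;J,0)$ by $\Pi(\mathrm{sg}(\alpha);\mathrm{sqW}(\theta))$.

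Three of the five surviving factors are immediate. From the boundary weights \eqref{eq:W_boundary_weights} one reads off $W^{*}_{\theta,s}(\infty,0;\infty,0)=1$ and $W^{*}_{\theta,s}(\infty,0;\infty,j_0')=\theta^{j_0'}\,(-s/\theta;q)_{j_0'}/(q;q)_{j_0'}$. From \eqref{eq:sg_Pi} we have $\Pi(\mathrm{sg}(\alpha);\mathrm{sqW}(\theta))=(-s\alpha;q)_\infty/(\alpha\theta;q)_\infty$. Finally, as noted right after \eqref{eq:U_00}, the expression vanishes unless $i_0+j_0=i_0'+j_0'$; with $i_0=0$ this reads $j_0=i_0'+j_0'$, which accounts for the indicator $\mathbf{1}_{i_0'+j_0'=j_0}$ in the statement, and we assume this relation henceforth. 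Substituting, and using $\theta^{j_0'}\alpha^{j_0'}=(\alpha\theta)^{j_0'}$, the asserted formula becomes equivalent to the single identity
\begin{equation*}
	\frac{\widetilde{w}_{\alpha,s}(\infty,\infty;\infty,j_0)\;R_{\alpha,\theta}(j_0,j_0';i_0',0)}{\widetilde{w}_{\alpha,s}(\infty,\infty;\infty,i_0')}
	=
	\alpha^{j_0'}.
\end{equation*}

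The remaining identity is a routine $q$-Pochhammer computation. One substitutes the explicit leftmost-column weight $\widetilde{w}_{\alpha,s}(\infty,\infty;\infty,j)=\lim_{\epsilon\to0}w^{(J)}_{-\alpha\epsilon,s}(\infty,J;\infty,j)\,\big|_{q^{J}=1/\epsilon}$ from \eqref{eq:w_tilde_infinity} and the explicit degenerate cross weight $R_{\alpha,\theta}(j_0,j_0';i_0',0)$ from the appendix (cf.\ \eqref{eq:R_sqW_sg}); with $i_0=0$ the cross weight simplifies, and the $s$-dependent prefactors together with all the $q$-Pochhammer symbols recombine across the three factors so that only the power $\alpha^{j_0-i_0'}=\alpha^{j_0'}$ survives. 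Combined with the three factors identified above, this yields exactly $\mathsf{U}^{[0]}_{\mathrm{sg}(\alpha),\mathrm{sqW}(\theta)}(0,i_0';j_0',j_0)=\mathbf{1}_{i_0'+j_0'=j_0}\,(\alpha\theta)^{j_0'}\,\frac{(-s/\theta;q)_{j_0'}}{(q;q)_{j_0'}}\,\frac{(\alpha\theta;q)_\infty}{(-s\alpha;q)_\infty}$. The only delicate point is precisely this last recombination: one must read the two scaled geometric boundary weights and the degenerate cross weight correctly off the appendix formulas and take the $\epsilon\to0$ limits consistently across all factors; everything else is the same bookkeeping already carried out in the sHL/sHL case.
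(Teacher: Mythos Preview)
Your proof is correct and follows essentially the same approach as the paper's: specialize \eqref{eq:U_00} using the boundary weights \eqref{eq:W_boundary_weights}, \eqref{eq:w_tilde_infinity}, the cross weight \eqref{eq:R_sqW_sg}, and \Cref{rmk:sqW_spec_careful_S6_concrete} for the $R^{(I,J)}_{uv}(J,0;J,0)$ factor. Your write-up is in fact more detailed than the paper's one-sentence proof; the key simplification you allude to is that in $R^{(\mathrm{sqW,sg})}_{\alpha,\theta}(j_0,j_0';i_0',0)$ the parameter $q^{-j_2}=1$ kills all but the $k=0$ term of the ${}_3\overline{\phi}_2$, leaving $(q^{1+i_0'};q)_{j_0'}=(q;q)_{j_0}/(q;q)_{i_0'}$, which exactly cancels the $(q;q)$ factors from the two $\widetilde{w}_{\alpha,s}$ boundary weights.
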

\begin{proof}
	This follows by 
	specializing \eqref{eq:U_00} and using the expressions for the boundary weights $W^*,\widetilde{w}$
	(\eqref{eq:W_boundary_weights} and \eqref{eq:w_tilde_infinity}, respectively),
	and the cross weights
	$R^{\mathrm{sqW,sg}}_{\alpha, \theta}$ \eqref{eq:R_sqW_sg}.
	The quantity $R^{(I,J)}_{uv}(J,0;J,0)$ should be specialized
	as described in \Cref{rmk:sqW_spec_careful_S6_concrete}.
\end{proof}

Let us define the 
stochastic higher spin six vertex model
with \emph{two-sided stationary} (or $(\alpha,\beta)$-stationary) boundary conditions
by taking independent random variables on the boundary distributed as
(recall the notation in \Cref{sub:notation})
\begin{equation} \label{eq:hs6vm_NB_Ber_bc}
	b_x^{\mathrm{h}} \sim q\textnormal{-}\mathrm{NB}(-s/\theta_x, \alpha \theta_x) \qquad \text{and} \qquad
	b_y^{\mathrm{v}} \sim \mathrm{Ber}\left( \frac{ 1 }{ 1+ u_x \beta } \right).
\end{equation}
Let $\mathcal{H}^{\mathrm{HS}}(x,y)$ be the corresponding centered height function.

For $\alpha=\beta=0$ the 
boundary conditions \eqref{eq:hs6vm_NB_Ber_bc} reduce to the step one.
When $\beta$ depends on $\alpha$ in a certain way,
the stationarity of the boundary conditions \eqref{eq:hs6vm_NB_Ber_bc}
under the homogeneous stochastic higher spin six vertex
model was checked in a continuous-time degeneration
in \cite[Appendix B.2]{BorodinPetrov2016Exp},
see also \cite{imamura2019stationary} for the full statement and further discussion.

The next result is the analogue of both
\Cref{thm:length_sHL_sHL_height_6VM,thm:sHL_sHL_mixed_height_6VM}
from the stochastic six vertex case.
Let $\boldsymbol\lambda$ be the sHL/sqW Yang-Baxter field with the
$(\alpha,\beta)$-scaled geometric boundary conditions.
\begin{theorem}
	\label{thm:sHL_sqW_height_length}
	Let $\mathcal{M}$ be the $q$-Poisson random variable with parameter $\alpha\beta$
	independent of the stochastic higher spin six vertex model with 
	$(\alpha,\beta)$-stationary boundary conditions.
	Then the two random fields
	$\left\{ y-\ell(\lambda^{(x,y)})\colon x,y\in \mathbb{Z}_{\ge0} \right\}$
	and $\{ \mathcal{H}^{\mathrm{HS}}(x,y)-\mathcal{M}\colon x,y\in \mathbb{Z}_{\ge0} \}$
	are equal in distribution.
\end{theorem}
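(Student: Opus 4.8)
The statement is the sHL/sqW analogue of \Cref{thm:length_sHL_sHL_height_6VM,thm:sHL_sHL_mixed_height_6VM}, and the plan is to prove it by the same two moves: first a direct identification of the length-marginal transition kernel with the vertex weights of \Cref{fig:weights_HS6VM} in the step case, and then an analytic continuation in fused spectral parameters that takes care of the two-sided boundary. The one genuinely new computation compared with the sHL/sHL situation is that the boundary kernel on the horizontal side is the $q$-negative binomial one of \eqref{eq:hs6vm_NB_Ber_bc}, which is precisely what \Cref{prop:two_sided_sHL_sqW} (formula \eqref{eq:U_00_sg_sqW_computation} with $i_0'=0$) provides.

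\emph{Step boundary case ($\alpha=\beta=0$).} Here $\mathcal{M}$ is the $q$-Poisson variable of parameter $0$, i.e.\ $\mathcal{M}\equiv0$, and $\mathcal{H}^{\mathrm{HS}}(x,y)=\mathfrak{h}^{\mathrm{HS}}(x+1,y)$. I would compute the length-marginal kernel $\mathsf{U}^{[0]}_{\mathrm{sHL}(u),\mathrm{sqW}(\theta)}$ from \eqref{eq:U_00} with $J=1$, $v=s$, $q^{I}=-\theta/s$ (the sHL/sqW specialization of \Cref{fig:table_positivity_YBE}), using the boundary weights \eqref{eq:w_boundary}, \eqref{eq:W_boundary_weights} and the corresponding cross weight, and then translate occupation numbers into lengths via \eqref{eq:occupation_numbers_lengths} together with the height reflection $\ell\mapsto y-\ell$. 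Comparing with \Cref{fig:weights_HS6VM} shows that the two kernels agree, which (since the update rules and the diagonal-independence structure are identical, cf.\ \Cref{prop:exist_marginals}) yields $\{y-\ell(\lambda^{(x,y)})\}\stackrel{d}{=}\{\mathfrak{h}^{\mathrm{HS}}(x+1,y)\}$; this is a finite case check carried out exactly as at the end of the proof of \Cref{thm:length_sHL_sHL_height_6VM}. (Equivalently, this case follows from \Cref{thm:length_sHL_sHL_height_6VM} by fusing the horizontal direction, replacing $v_x$ by $v_x,qv_x,\dots,q^{I_x-1}v_x$ and then setting $v_x=s$, $q^{I_x}=-\theta_x/s$, and using that the fused stochastic six vertex model is the stochastic higher spin six vertex model.)

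\emph{General case.} I would copy the analytic-continuation scheme of the proof of \Cref{thm:sHL_sHL_mixed_height_6VM}. Fix positive integers $I_0,J_0$ and consider the sHL/sqW Yang-Baxter field with step boundary conditions on the shifted lattice $\mathbb{Z}_{\ge -I_0+1}\times\mathbb{Z}_{\ge -J_0+1}$, where the $J_0$ bottom rows carry the principal specialization $u_0,qu_0,\dots,q^{J_0-1}u_0$ and the $I_0$ left columns carry $v_0,qv_0,\dots,q^{I_0-1}v_0$ (in the general fused language of \Cref{sec:analytic_continuation}; the columns $x\ge1$ retain the sqW specialization). Its restriction $\boldsymbol\nu$ to $\mathbb{Z}_{\ge0}^2$ is a field with Gibbs boundary conditions whose boundary weights $\mathfrak{F}^{(J_0)}(u_0)$, $\mathfrak{G}^{(I_0)}(v_0)$ are rational in $u_0,q^{J_0},v_0,q^{I_0}$; by \Cref{thm:skew_Cauchy_general}, \Cref{prop:sHL_absolute_integrability} and \Cref{prop:U_well_defined}, expectations of cylindric functions of $\boldsymbol\nu$ are holomorphic near the origin in these variables, so the $(\alpha,\beta)$-scaled geometric field $\boldsymbol\lambda$ is obtained from $\boldsymbol\nu$ by the degeneration $u_0=-\epsilon\alpha$, $q^{J_0}=1/\epsilon$, $v_0=-\epsilon\beta$, $q^{I_0}=1/\epsilon$, $\epsilon\to0$ (the cross vertex $(J,0;J,0)$ being handled as in \Cref{rmk:sqW_spec_careful_S6_concrete}). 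Applying the step case on the shifted lattice gives $\mathfrak{h}^{\mathrm{HS}(I_0,J_0)}(x,y)\stackrel{d}{=}(y+J_0)-\ell(\nu^{(x,y)})$ for $x,y\ge0$; writing the centered height function of the restriction to $\mathbb{Z}_{\ge0}^2$ as $\mathcal{H}^{\mathrm{HS}(I_0,J_0)}(x,y)=\mathfrak{h}^{\mathrm{HS}(I_0,J_0)}(x,y)-J_0+\mathcal{M}_{I_0,J_0}$, with $\mathcal{M}_{I_0,J_0}$ the number of paths issued from $\{-I_0+1\}\times[-J_0+1,0]$ crossing $[-I_0+1,0]\times\{0\}$, one obtains $y-\ell(\nu^{(x,y)})\stackrel{d}{=}\mathcal{H}^{\mathrm{HS}(I_0,J_0)}(x,y)-\mathcal{M}_{I_0,J_0}$. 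The law of $\mathcal{M}_{I_0,J_0}$ is a single forward transition in the $(I_0,J_0)$-fused field, hence equals $\mathsf{U}^{[0]}_{u_0,v_0}(0,0;k,k)$ from \eqref{eq:U_00}, which under the degeneration converges to $q\textnormal{-}\mathrm{Poi}(\alpha\beta)$ by \eqref{eq:U00_qPoisson} (the corner sees only the $\mathrm{sg}/\mathrm{sg}$ specialization, exactly as in the sHL/sHL case). Meanwhile the boundary data of the restricted higher spin model converge, by \eqref{eq:U00_bernoulli_vert} on the vertical boundary (still an sHL direction with $J=1$) and by \Cref{prop:two_sided_sHL_sqW} on the horizontal boundary, to the $(\alpha,\beta)$-stationary boundary conditions \eqref{eq:hs6vm_NB_Ber_bc}; passing to the limit $\epsilon\to0$ in the displayed identity, with $\mathcal{M}_{I_0,J_0}$ becoming independent of the vertex model in the limit (as in \Cref{thm:sHL_sHL_mixed_height_6VM}), yields the asserted equality of fields.

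\emph{Main obstacle.} The delicate point is the analytic-continuation step: the sHL/sqW field already lives at the special value $v_x=s$ of the horizontal spectral parameter, so one must verify that it genuinely arises as the claimed $\epsilon\to0$ degeneration of a field with generic fused parameters. This needs the absolute-summability bound \Cref{prop:sHL_absolute_integrability} to hold uniformly on the relevant parameter window, and \Cref{prop:U_well_defined} (no probability mass escaping to Young diagrams with an infinite row or column) throughout the continuation; the remaining work --- the finite kernel identification in the step case and the decoupling of $\mathcal{M}$ in the limit --- is routine once the sHL/sHL results and \Cref{prop:two_sided_sHL_sqW} are in hand.
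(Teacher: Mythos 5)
Your proposal is correct and follows essentially the same route as the paper: the step case is handled by the direct verification $\mathsf{U}^{[0]}_{\mathrm{sHL}(u),\mathrm{sqW}(\theta)}(i_0,1-i_0';j_0',1-j_0)=\mathcal{L}_{u,\theta}(i_0,i_0';j_0',j_0)$, and the two-sided case by the same analytic continuation as in \Cref{thm:sHL_sHL_mixed_height_6VM}, using the shifted lattice with principal sHL specializations $u_0,qu_0,\dots,q^{J_0-1}u_0$ and $v_0,\dots,q^{I_0-1}v_0$, the boundary formulas \eqref{eq:U00_bernoulli_vert}, \eqref{eq:U00_qPoisson}, \eqref{eq:U_00_sg_sqW_computation}, and the identification of the corner correction $\mathcal{M}_{I_0,J_0}$ with the $q$-Poisson variable in the scaled geometric limit. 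The concerns you raise about summability and well-definedness are exactly the ingredients (\Cref{prop:sHL_absolute_integrability}, \Cref{prop:U_well_defined}) the paper relies on, so nothing is missing.
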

\begin{proof}
	To obtain the matching in the step case
	(note that when $\alpha$ or $\beta$ is zero, $\mathcal{M}=0$ almost surely),
	it suffices to 
	check that 
	\begin{equation*}
		\mathsf{U}^{[0]}_{\mathrm{sHL}(u),\mathrm{sqW}(\theta)}(i_0,1-i_0';j_0',1-j_0)
		=
		\mathcal{L}_{u,\theta}(i_0,i_0';j_0',j_0)
	\end{equation*}
	for all 
	$i_0,j_0'\in \mathbb{Z}_{\ge0}$ and $i_0',j_0 \in \left\{ 0,1 \right\}$,
	where the left-hand side
	is the specialization 
	of
	\eqref{eq:U_00}.
	This is a straightforward verification. 

	The matching result for the scaled geometric boundary conditions
	is obtained in the same way as in the proof of
	\Cref{thm:sHL_sHL_mixed_height_6VM}.
	Indeed, we can consider the field in $\mathbb{Z}_{\ge -I_0+1}\times \mathbb{Z}_{\ge -J_0+1}$
	with the extra sHL specializations 
	with the 
	parameters $u_0,qu_0,\ldots,q^{J_0-1}u_0 $ and 
	$v_0,qv_0,\ldots,q^{I_0-1} v_0$.
	The desired matching then follows from 
	the expressions
	\eqref{eq:U00_bernoulli_vert},
	\eqref{eq:U00_qPoisson}, \eqref{eq:U_00_sg_sqW_computation}
	for the corresponding specializations of $\mathsf{U}^{[0]}$,
	and analytic continuation.
\end{proof}

\subsection{The sqW/sqW Yang-Baxter field} 
\label{sub:new_YB_field_sqW_sqW}

Let us now turn to the third and final Yang-Baxter field 
associated with the sqW/sqW skew Cauchy structure.
The particle system we obtain as its marginal generalizes the 
$q$-Hahn PushTASEP introduced recently in 
\cite{CMP_qHahn_Push}.

\subsubsection{Step boundary conditions}

The sqW/sqW Yang-Baxter field depends on the parameters
$q\in(0,1)$, $s\in[-\sqrt q,0)$,
$\theta_x,\xi_y\in[-s,-s^{-1}]$.
The reversibility condition associated with the forward and backward transition
probabilities takes the following form:
\begin{proposition}
	For any four Young diagrams $\mu, \varkappa, \lambda, \nu$ we have
	\begin{multline} \label{eq:reversibility_sqW_sqW}
		\frac{(-s \xi;q)_\infty (-s \theta;q)_\infty }{(s^2;q)_\infty (\xi \theta;q)_\infty}\,
		\mathsf{U}^{\mathrm{fwd}}_{\mathrm{sqW}(\xi),\mathrm{sqW}(\theta)}
		(\varkappa \to \nu \mid \lambda,\mu) 
		\,
		\mathbb{F}_{\lambda' / \varkappa'}(\xi) 
		\,
		\mathbb{F}^*_{\mu' / \varkappa'} (\theta) 
		\\= 
		\mathsf{U}^{\mathrm{bwd}}_{\mathrm{sqW}(\xi),\mathrm{sqW}(\theta)}
		( \nu \to \varkappa \mid \lambda,\mu) 
		\,
		\mathbb{F}^*_{\nu'  /\lambda'}(\theta)
		\,
		\mathbb{F}_{\nu'/\mu'}(\xi).
	\end{multline}
	Summing \eqref{eq:reversibility_sqW_sqW} 
	over both $\varkappa$ and $\nu$, we obtain
	the skew Cauchy identity of \Cref{thm:sqW_sqw_skew_Cauchy_identity}.
\end{proposition}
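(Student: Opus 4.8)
The plan is to prove \eqref{eq:reversibility_sqW_sqW} exactly along the lines of \Cref{prop:reversibility_sHL_sHL}, now reading off the three ingredients from the spin $q$-Whittaker version of the fused Yang--Baxter equation \eqref{eq:YBE_W_W}. In fact \eqref{eq:reversibility_sqW_sqW} is the specialization of the general reversibility identity for the Markov operators $\mathsf{U}^{\mathrm{fwd}},\mathsf{U}^{\mathrm{bwd}}$ recorded at the end of \Cref{sec:YB_fields_through_bijectivisation} to the sqW/sqW positive specialization from \Cref{fig:table_positivity_YBE}, namely $u=v=s$, $q^J=-\xi/s$, $q^I=-\theta/s$; so strictly speaking no new argument is required, but I would still spell it out for completeness.

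First I would identify the prefactor. By \Cref{def:sqW_polynomial} and the branching rules, $\mathbb{F}_{\lambda'/\varkappa'}(\xi)\,\mathbb{F}^*_{\mu'/\varkappa'}(\theta)$ is the partition function of the two-row vertex configuration in \Cref{fig:transition_nu_kappa}, left, with the top row carrying weights $W_{\xi,s}$, the bottom row carrying dual weights $W^*_{\theta,s}$, infinitely many vertical paths in the $0$-th column, and $J$ horizontal paths entering the top row from the left (understood formally through $q^J=-\xi/s$, cf. \Cref{rmk:sqW_spec_careful,rmk:sqW_spec_careful_S6_concrete}). Attach the cross vertex $(J,0;J,0)$ at the left of this configuration. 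By \Cref{prop:emergence_of_the_cross_vertex_weight} its weight equals the properly specialized $R^{(I,J)}_{uv}(J,0;J,0)=\dfrac{(uvq^I;q)_\infty (uvq^J;q)_\infty}{(uv;q)_\infty (uvq^{I+J};q)_\infty}$, which under $u=v=s$, $q^J=-\xi/s$, $q^I=-\theta/s$ becomes precisely $\dfrac{(-s\xi;q)_\infty(-s\theta;q)_\infty}{(s^2;q)_\infty(\xi\theta;q)_\infty}$, i.e. the function $\Pi(\theta;\xi)$ of \eqref{eq:sqW_Pi}.

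Next I would drag the cross. Following \Cref{def:U_fwd_U_bwd}, applying $\mathsf{U}^{\mathrm{fwd}}_{\mathrm{sqW}(\xi),\mathrm{sqW}(\theta)}$ means pushing this cross column by column to the far right using the local transition probabilities $\mathbf{p}^{\mathrm{fwd}}$; at each step the local reversibility condition \eqref{rev cond} of the chosen bijectivization converts (incoming-cross weight)$\times$(weight of columns already passed) into (outgoing-cross weight)$\times$(new weight of those columns)$\times$(matching $\mathbf{p}^{\mathrm{bwd}}$ factor). Multiplying these telescoping identities over all columns -- the infinite product converges by \Cref{prop:U_well_defined}, which in the sqW/sqW case is exactly what the hypothesis $|\xi\theta|<1$ provides -- and collecting the $\mathbf{p}^{\mathrm{bwd}}$ factors into $\mathsf{U}^{\mathrm{bwd}}$ via \eqref{eq:U_bwd_def}, one arrives at \eqref{eq:reversibility_sqW_sqW}: the final configuration carries the empty cross far to the right, encodes $\mu\succ\nu\prec\lambda$, and has weight $\mathbb{F}^*_{\nu'/\lambda'}(\theta)\,\mathbb{F}_{\nu'/\mu'}(\xi)$. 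Finally, summing \eqref{eq:reversibility_sqW_sqW} over both $\varkappa$ and $\nu$ and using $\sum_\nu\mathsf{U}^{\mathrm{fwd}}=\sum_\varkappa\mathsf{U}^{\mathrm{bwd}}=1$ (again \Cref{prop:U_well_defined}) yields $\Pi(\theta;\xi)\sum_{\varkappa}\mathbb{F}_{\lambda'/\varkappa'}(\xi)\mathbb{F}^*_{\mu'/\varkappa'}(\theta)=\sum_{\nu}\mathbb{F}^*_{\nu'/\lambda'}(\theta)\mathbb{F}_{\nu'/\mu'}(\xi)$, which after reindexing $\lambda'\mapsto\lambda$, $\mu'\mapsto\mu$, $\varkappa'\mapsto\varkappa$, $\nu'\mapsto\nu$ and the conjugation \eqref{eq:sqW_sqW_star_relation} is \Cref{thm:sqW_sqw_skew_Cauchy_identity}. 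The main obstacle here is not the algebra but the conservation/convergence statement packaged in \Cref{prop:U_well_defined} (no probability mass escapes to configurations with infinitely many horizontal paths on the right), for which the restrictions $s\in[-\sqrt q,0)$, $\xi,\theta\in[-s,-s^{-1}]$ and $|\xi\theta|<1$ are precisely what is needed; granting that proposition, the present statement is a transcription of the sHL/sHL argument.
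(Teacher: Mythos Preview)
Your proposal is correct and follows essentially the same approach as the paper: the paper does not give a separate proof for this proposition but treats it as the sqW/sqW instance of the general reversibility identity established at the end of \Cref{sub:cross_multiple_dragging} (which in turn is the telescoping of the local reversibility \eqref{rev cond} along the cross-dragging of \Cref{def:U_fwd_U_bwd}), exactly as you have spelled out. One small slip: the configuration obtained after dragging encodes $\mu\prec'\nu\succ'\lambda$ rather than $\mu\succ\nu\prec\lambda$, but this does not affect the argument.
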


% Note that
% the forward transition operator
% $\mathsf{U}^{\mathrm{fwd}}_{\mathrm{sqW}(\xi),\mathrm{sqW}(\theta)}$
% is constructed
% using \Cref{rmk:sqW_spec_careful_S6_concrete}
% because we are specializing $J$.
% In particular, the factor
% $\frac{(-s \xi;q)_\infty (-s \theta;q)_\infty }{(s^2;q)_\infty (\xi \theta;q)_\infty}$
% in the left-hand side is
% not the weight of a single cross vertex, but is instead
% the sum
% $\sum_{k_1,k_2}
% \mathbb{R}_{\xi,\theta,s}(a_2,a_1;k_2,k_1)$
% which is independent of $a_1,a_2$.
% Here
% $\mathbb{R}_{\xi,\theta,s}$
% is the sqW/sqW cross vertex weight \eqref{eq:Whittaker_cross_weight}.

The \emph{sqW/sqW Yang-Baxter field} with the step boundary conditions
$\boldsymbol\lambda$ is, by definition, generated from the boundary conditions 
$\lambda^{(x,0)}=\lambda^{(0,y)}=0^\infty=\varnothing$
by applying the forward transition operators
$\mathsf{U}^{\mathrm{fwd}}_{\mathrm{sqW}(\xi_y),\mathrm{sqW}(\theta_x)}$.

\begin{proposition}
	The single-point distributions in the sqW/sqW field $\boldsymbol\lambda$
	with the step boundary conditions have the form
	\begin{equation*}
		\mathrm{Prob}\bigl( \lambda^{(x,y)} = \nu \bigr) 
		=
		\prod_{\substack{ 1 \leq i \leq x \\ 1\leq j \leq y}} 
		\frac{(s^2;q)_\infty(\xi_i  \theta_j ;q)_\infty}{(-s \xi_i;q)_\infty (-s \theta_j;q)_\infty } 
		\,
		\mathbb{F}_{\nu'}(\xi_1, \dots, \xi_y)
		\,
		\mathbb{F}^*_{\nu'}(\theta_1, \dots, \theta_x).
	\end{equation*}
	The joint distributions in $\boldsymbol\lambda$ along
	down-right paths are expressed through the 
	skew sqW functions as in \Cref{prop:F_G_processes}.
\end{proposition}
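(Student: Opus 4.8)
The plan is to recognize this proposition as a direct instance of the abstract machinery of \Cref{sec:abstract_formalism}, applied to the sqW/sqW skew Cauchy structure of \Cref{def:sqW_sqW_structure} with step-type boundary data. By that definition the pair $(\mathbb{F}^*,\mathbb{F})$ is a genuine skew Cauchy structure, with $\Pi(\theta;\xi)$ given by \eqref{eq:sqW_Pi} and admissibility region $\mathsf{Adm}=\{|\xi\theta|<1\}$. Hence the only thing that needs checking is that the Markov operators $\mathsf{U}^{\mathrm{fwd}}_{\mathrm{sqW}(\xi_y),\mathrm{sqW}(\theta_x)}$ built in \Cref{def:U_fwd_U_bwd} (by bijectivizing the Yang-Baxter equation \eqref{eq:YBE_W_W} and dragging the cross across all columns) are valid forward transition probabilities in the sense of \Cref{def:F_G_fwd_bwd_transition_probabilities}; everything else is then automatic.

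First I would verify the three requirements of \Cref{def:F_G_fwd_bwd_transition_probabilities} for $\mathsf{U}^{\mathrm{fwd}}$ (and the companion $\mathsf{U}^{\mathrm{bwd}}$). Nonnegativity is the positivity of the sqW/sqW specialization of the fused vertex weights recorded in \Cref{fig:table_positivity_YBE}, valid precisely when $s\in[-\sqrt q,0)$; the fact that the forward (resp.\ backward) weights sum to one over the outgoing configuration is exactly \Cref{prop:U_well_defined} in the sqW/sqW case (this is where the condition $|\xi_i\theta_j|<1$ enters, guaranteeing no mass escapes to Young diagrams with infinitely long rows); and the reversibility condition \eqref{eq:F_G_reversibility} is nothing but the identity \eqref{eq:reversibility_sqW_sqW} proved in the proposition just above, upon reading $\Pi(\rho;\rho^*)\,\mathfrak{F}_{\lambda/\varkappa}\,\mathfrak{G}_{\mu/\varkappa}$ as the left-hand coefficient and $\mathfrak{F}_{\nu/\mu}\,\mathfrak{G}_{\nu/\lambda}$ as the right-hand one.

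Given this, the step boundary data $\lambda^{(x,0)}=\lambda^{(0,y)}=\varnothing$ is a step-type Gibbs boundary condition in the sense of \Cref{def:F_G_step_type_boundary} (the two boundary sequences are deterministic, hence trivially Gibbs), so \Cref{prop:F_G_from_U_to_fields} shows $\{\lambda^{(x,y)}\}$ is a random field in the sense of \Cref{def:F_G_field}, and \Cref{cor:F_G_measure} gives
\[
\mathrm{Prob}(\lambda^{(x,y)}=\nu)=\frac{\mathfrak{F}_{\nu/\varnothing}(\xi_1,\dots,\xi_y)\,\mathfrak{G}_{\nu/\varnothing}(\theta_1,\dots,\theta_x)}{Z^{(x,y)}},\qquad
Z^{(x,y)}=Z^{(x,y)}_{\mathrm{boundary}}\prod_{i=1}^{x}\prod_{j=1}^{y}\Pi(\xi_j;\theta_i).
\]
For step boundary conditions $Z^{(x,y)}_{\mathrm{boundary}}=\mathfrak{F}_{\varnothing/\varnothing}\,\mathfrak{G}_{\varnothing/\varnothing}=1$. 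Unwinding the identifications of \Cref{def:sqW_sqW_structure} together with the labelling convention of \Cref{def:sqW_polynomial} (sqW polynomials are indexed by transposed diagrams) turns $\mathfrak{F}_{\nu/\varnothing}$, $\mathfrak{G}_{\nu/\varnothing}$ into $\mathbb{F}_{\nu'}(\xi_1,\dots,\xi_y)$, $\mathbb{F}^*_{\nu'}(\theta_1,\dots,\theta_x)$, and, with $\Pi$ as in \eqref{eq:sqW_Pi}, the prefactor $1/Z^{(x,y)}$ becomes exactly $\prod_{i,j}\frac{(s^2;q)_\infty(\xi_i\theta_j;q)_\infty}{(-s\xi_i;q)_\infty(-s\theta_j;q)_\infty}$, i.e.\ the claimed formula; the down-right path statement then follows from \Cref{prop:F_G_processes}. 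Equivalently, the normalisation can be read off directly from the multivariable skew Cauchy identity \eqref{eq:F_G_multivar_skew_Cauchy} — the $(y,x)$-variable version of \Cref{thm:sqW_sqw_skew_Cauchy_identity} with $\mu=\lambda=\varnothing$, whose right-hand $\varkappa$-sum collapses to the single term $\varkappa=\varnothing$.

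The only genuine subtlety — and the point to be most careful with — is convergence: the sum over $\nu$ of $\mathbb{F}_{\nu'}(\xi_\bullet)\mathbb{F}^*_{\nu'}(\theta_\bullet)$ must be finite, which by \Cref{thm:sqW_sqw_skew_Cauchy_identity} (equivalently \Cref{prop:U_well_defined}) requires $|\xi_i\theta_j|<1$ for all relevant $i,j$, so that the field really does place no mass on diagrams with infinitely long rows. One should therefore either impose this restriction on the spectral parameters directly, or deduce the identity on the full positivity range by the analytic-continuation argument of the proof of \Cref{thm:sHL_sHL_mixed_height_6VM} (start from sHL parameters, where convergence is automatic, and specialise). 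Apart from this, the remaining work is purely bookkeeping: matching which spectral family occupies which slot in \Cref{cor:F_G_measure}, and carrying the transposition conventions of \Cref{def:sqW_polynomial} consistently through the identification.
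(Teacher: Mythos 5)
Your proposal is correct and follows essentially the route the paper intends: the proposition is an immediate instance of \Cref{cor:F_G_measure} (and \Cref{prop:F_G_processes}) once the sqW/sqW forward transition operators constructed in \Cref{sec:YB_fields_through_bijectivisation} are recognized as forward transition probabilities — nonnegativity from the positive specialization, sum-to-one from \Cref{prop:U_well_defined}, reversibility from \eqref{eq:reversibility_sqW_sqW} — with $Z^{(x,y)}_{\mathrm{boundary}}=1$ for step boundary data and $\Pi$ as in \eqref{eq:sqW_Pi}. Your remark on the admissibility condition $|\xi_j\theta_i|<1$ is consistent with the general setup \eqref{eq:F_G_field_parameters_u_v}, which already builds this requirement into the choice of parameters, so no extra argument is needed beyond what you give.
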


\subsubsection{Scaled geometric boundary conditions}
\label{ssub:sqW_sqW_two_sided}

Let $\alpha,\beta\in[0,-s^{-1}]$ be additional parameters.
The \emph{sqW/sqW Yang-Baxter field with two-sided scaled geometric}
(or \emph{$(\alpha,\beta)$-scaled geometric})
\emph{boundary conditions}
is constructed exactly as in 
\Cref{ssub:sHL_sHL_two_sided,ssub:sHL_sqW_two_sided}
by adding scaled geometric specializations
to both boundaries of the quadrant $\mathbb{Z}_{\ge0}^{2}$.

\begin{proposition}
	The single-point distributions in the sqW/sqW Yang-Baxter field $\boldsymbol\lambda$
	with $(\alpha,\beta)$-scaled geometric boundary conditions are given by 
	\begin{multline*}
		\mathrm{Prob}\bigl( \lambda^{(x,y)} = \nu \bigr) 
		=
		(\alpha\beta;q)_\infty
		\prod_{i=1}^{x}
		\frac{(\alpha \theta_i;q)_\infty}{(-s\alpha;q)_\infty}
		\prod_{j=1}^{y}
		\frac{(\beta \xi_j;q)_\infty}{(-s\beta;q)_\infty}
		\\\times
		\prod_{\substack{ 1 \leq i \leq x \\ 1\leq j \leq y}} 
		\frac{(s^2;q)_\infty(\xi_i  \theta_j ;q)_\infty}{(-s \xi_i;q)_\infty (-s \theta_j;q)_\infty } 
		\,
		\mathbb{F}_{\nu'}(\xi_1, \dots, \xi_y;\widetilde\alpha)
		\,
		\mathbb{F}^*_{\nu'}(\theta_1, \dots, \theta_x;\widetilde \beta).
	\end{multline*}
	The joint distributions 
	along down-right paths
	are expressed as in 
	\Cref{prop:F_G_processes}.
\end{proposition}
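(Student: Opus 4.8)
The plan is to read off the claimed formula from the abstract single-point identity \Cref{cor:F_G_measure}, applied to the mixed skew Cauchy structure of \Cref{def:mixed_skew_Cauchy_structure} that combines the sqW specializations $\mathrm{sqW}(\xi_j),\mathrm{sqW}(\theta_i)$ with two scaled geometric ones $\mathrm{sg}(\alpha),\mathrm{sg}(\beta)$. First I would realize $\boldsymbol\lambda$ as the nonnegative-quadrant restriction of a \emph{step-type} field for this structure on a lattice enlarged by one extra row (carrying $\mathrm{sg}(\alpha)$) and one extra column (carrying $\mathrm{sg}(\beta)$), so that on the enlarged boundary all diagrams equal $\varnothing$ and the $(\mathfrak{F},\mathfrak{G})$-Gibbs property holds vacuously there (each $\mathfrak{F}_{\varnothing/\varnothing}=\mathfrak{G}_{\varnothing/\varnothing}=1$). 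Treating the scaled geometric strip as a single genuine row/column is legitimate: the weights $\widetilde{w}_{\alpha,s},\widetilde{w}^*_{\beta,s}$ satisfy the required Yang--Baxter equations (\Cref{app:YBE_scaled_geometric}), the cross-dragging operators $\mathsf{U}^{\mathrm{fwd}},\mathsf{U}^{\mathrm{bwd}}$ are well defined in the $\mathrm{sg}/\mathrm{sqW}$ and $\mathrm{sg}/\mathrm{sg}$ cases by \Cref{prop:U_well_defined}, and then \Cref{prop:F_G_from_U_to_fields} produces a bona fide random field in the sense of \Cref{def:F_G_field}. (Equivalently one may run the analytic-continuation argument used for \Cref{thm:sHL_sHL_mixed_height_6VM}, starting from a fused $(I_0,J_0)$ principal specialization and letting $q^{I_0},q^{J_0}\to\infty$; the two routes agree, since the mixed structure is itself obtained by exactly that continuation in \Cref{sec:analytic_continuation,sec:scaled_geometric}.)

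\textbf{The computation.} With this realization \Cref{cor:F_G_measure} gives
\[
\mathrm{Prob}\bigl(\lambda^{(x,y)}=\nu\bigr)=\frac{\mathfrak{F}_{\nu/\varnothing}(\widetilde\alpha;\xi_1,\dots,\xi_y)\,\mathfrak{G}_{\nu/\varnothing}(\widetilde\beta;\theta_1,\dots,\theta_x)}{Z^{(x,y)}},
\]
and the branching rule for mixed specializations (\Cref{sub:scaled_geometric_spec}) identifies $\mathfrak{F}_{\nu/\varnothing}(\widetilde\alpha;\xi_1,\dots,\xi_y)=\mathbb{F}_{\nu'}(\xi_1,\dots,\xi_y;\widetilde\alpha)$ and $\mathfrak{G}_{\nu/\varnothing}(\widetilde\beta;\theta_1,\dots,\theta_x)=\mathbb{F}^*_{\nu'}(\theta_1,\dots,\theta_x;\widetilde\beta)$, the choice of which family carries the ``$*$'' being immaterial by \Cref{rmk:sHL_sqW_conjugation_does_not_hurt}. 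It remains to compute $Z^{(x,y)}$: the enlarged boundary being all-$\varnothing$ gives $Z^{(x,y)}_{\mathrm{boundary}}=1$, so $Z^{(x,y)}$ is the product of the kernels $\Pi(\rho;\rho^*)$ over all $(x+1)(y+1)$ crossings of a vertical strip with a horizontal strip weakly below the right-then-down path. Splitting this into the four crossing types and inserting $\Pi(\mathrm{sqW}(\xi);\mathrm{sqW}(\theta))=\frac{(-s\xi;q)_\infty(-s\theta;q)_\infty}{(s^2;q)_\infty(\xi\theta;q)_\infty}$ (\Cref{def:sqW_sqW_structure}) together with $\Pi(\mathrm{sg}(\alpha);\mathrm{sqW}(\theta))=\frac{(-s\alpha;q)_\infty}{(\alpha\theta;q)_\infty}$, $\Pi(\mathrm{sg}(\beta);\mathrm{sqW}(\xi))=\frac{(-s\beta;q)_\infty}{(\beta\xi;q)_\infty}$, $\Pi(\mathrm{sg}(\alpha);\mathrm{sg}(\beta))=\frac{1}{(\alpha\beta;q)_\infty}$ from \eqref{eq:sg_Pi} reproduces exactly the reciprocal of the prefactor in the statement; the fused cross weight $R^{(I,J)}_{uv}(J,0;J,0)$ entering $Z^{(x,y)}$ is specialized via \Cref{rmk:sqW_spec_careful_S6_concrete}. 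The joint-distribution claim along down-right paths is then \Cref{prop:F_G_processes} for the same structure.

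\textbf{Where the effort lies.} I do not expect a real obstacle. Convergence of the $\nu$-sum and the exact value of $Z^{(x,y)}$ are guaranteed, for parameters inside $\mathsf{Adm}(\rho;\rho^*)$ of \Cref{def:adm_rho}, by the general mixed skew Cauchy identity \Cref{thm:skew_Cauchy_mixed_spec}; the positivity of the scaled geometric weights and the well-definedness of $\mathsf{U}^{\mathrm{fwd}},\mathsf{U}^{\mathrm{bwd}}$ have also been established already. The only care needed is bookkeeping: keeping the off-by-one index conventions straight (the $\mathrm{sg}$ strips sit immediately before $\xi_1$ and $\theta_1$, as in the shifted-lattice setup of \Cref{ssub:sHL_sHL_two_sided}), matching the four $\Pi$-blocks to the four product factors, and tracking the transpose/conjugation labels --- precisely the routine steps behind the earlier analogous single-point propositions.
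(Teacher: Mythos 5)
Your proposal is correct and is essentially the argument the paper intends: the field is realized as the restriction of a step-type field on the enlarged lattice carrying the two scaled geometric specializations, and the formula is then an immediate application of \Cref{cor:F_G_measure} (and \Cref{prop:F_G_processes} for down-right paths) with the normalizing constant assembled from the kernels $\Pi(\mathrm{sqW};\mathrm{sqW})$, $\Pi(\mathrm{sg};\mathrm{sqW})$, $\Pi(\mathrm{sg};\mathrm{sg})$ of \Cref{thm:skew_Cauchy_mixed_spec}. Your bookkeeping of the four $\Pi$-blocks and of the mixed specializations $\mathbb{F}_{\nu'}(\xi_1,\dots,\xi_y;\widetilde\alpha)$, $\mathbb{F}^*_{\nu'}(\theta_1,\dots,\theta_x;\widetilde\beta)$ reproduces the stated prefactor exactly (up to the harmless relabeling of indices in the double product), so nothing is missing.
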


\subsubsection{Stochastic vertex model with $_4\phi_3$ weights}
\label{ssub:phi_vertex}

The scalar marginal
$\{\ell(\lambda^{(x,y)})\}$
of the sqW/sqW Yang-Baxter field gives rise to a 
new vertex model
which is related to
the $q$-Hahn PushTASEP
from \cite{CMP_qHahn_Push} (we discuss this connection in \Cref{ssub:qHahn_Push}
below). To formulate the vertex model, let us first write down the 
quantities 
\eqref{eq:U_00} under the two sqW specializations:
\begin{equation} \label{eq:PushTASEP_rate}
\begin{split}
	&
	\mathbb{L}_{\xi,\theta}(i_1,j_1;i_2,j_2)
	:=
	\mathsf{U}_{\mathrm{sqW}(\xi),\mathrm{sqW}(\theta)}^{[0]}
	(i_1,j_1;i_2,j_2) 
	\\&
	\hspace{40pt} = 
	\mathbf{1}_{i_1 + j_2 = i_2 + j_1 }\,  
	\frac{ \xi^{i_2} s^{i_1} \theta^{i_2-i_1} \, q^{ j_1 j_2 +\frac{1}{2}i_1(i_1 -1) }
	\, (-s/\theta;q)_{i_2} (-s/\xi;q)_{j_2} }
	{ (-s/\theta;q)_{i_1} (-s/\xi;q)_{j_1} 
	(q;q)_{j_2} (-q/(s\xi);q)_{j_2 -i_2} }
	\\
	&
	\hspace{80pt}
	\times \frac{(s^2 q^{i_1+j_2};q)_\infty 
	(\theta  \xi; q)_\infty}{(-s \xi;q)_\infty (-s \theta ; q)_\infty} 
	\,{}_4 \overline{ \phi}_3
	\left(\begin{minipage}{5.2cm}
	\center{$q^{-j_1}; q^{-j_2}, -s\theta,  -q/(s\xi)$}
	\\
	\center{$-s/\xi,q^{1+i_1-j_1}, -\theta q^{1-j_2-i_1}/s$}
	\end{minipage} \Big\vert\, q,q\right),
\end{split}
\end{equation}
where $i_1,j_1,i_2,j_2\in \mathbb{Z}_{\ge0}$.

\begin{lemma}
	Let $q\in(0,1)$, $s\in[-\sqrt q,0)$,
	$\xi,\theta\in[-s,-s^{-1}]$.
	Then $\mathbb{L}_{\xi,\theta}(i_1,j_1;i_2,j_2)\ge0$
	for all 
	$i_1,j_1,i_2,j_2\in \mathbb{Z}_{\ge0}$.
	Moreover,
	$\sum_{i_2,j_2}\mathbb{L}_{\xi,\theta}(i_1,j_1;i_2,j_2)=1$
	for all $i_1,j_1\in \mathbb{Z}_{\ge0}$.
\end{lemma}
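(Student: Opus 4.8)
The plan is to identify $\mathbb{L}_{\xi,\theta}$ with an object whose positivity and stochasticity have already been set up abstractly. By definition $\mathbb{L}_{\xi,\theta}=\mathsf{U}^{[0]}_{\mathrm{sqW}(\xi),\mathrm{sqW}(\theta)}$, i.e.\ it is the specialization $u=v=s$, $q^{J}=-\xi/s$, $q^{I}=-\theta/s$ (the sqW/sqW entry of \Cref{fig:table_positivity_YBE}) of the ratio \eqref{eq:U_00}; the closed form \eqref{eq:PushTASEP_rate} is just what one gets after substituting the explicit fused weights $w^{(J)}_{u,s}$, $w^{*,(I)}_{v,s}$, $R^{(I,J)}_{uv}$ from \Cref{app:YBE} into that ratio. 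So the lemma is the instance ``$(\rho,\rho^{*})=(\mathrm{sqW},\mathrm{sqW})$'' of the general assertion that the marginal operators $\mathsf{U}^{[0]}$ are stochastic, specialized to the positive range of \Cref{fig:table_positivity_YBE}.

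For nonnegativity I would argue as follows. In the sqW/sqW positive specialization --- which is exactly the hypothesis $s\in[-\sqrt q,0)$, $\xi,\theta\in[-s,-s^{-1}]$ --- the fused weights $w^{(J)}_{u,s}$, $w^{*,(I)}_{v,s}$ and the cross weights $R^{(I,J)}_{uv}$, specialized as above, are all nonnegative; this is collected in \Cref{fig:table_positivity_YBE} and proved in \Cref{sub:YBE_nonnegativity}, and it is precisely here that the extra restriction $s\ge-\sqrt q$ is needed. Now \eqref{eq:U_00} writes $\mathbb{L}_{\xi,\theta}(i_{1},j_{1};i_{2},j_{2})$ as a product of three such nonnegative weights divided by the single weight $R^{(I,J)}_{uv}(J,0;J,0)$; by \Cref{prop:emergence_of_the_cross_vertex_weight} and \Cref{def:sqW_sqW_structure} the latter equals $\Pi(\mathrm{sqW}(\xi);\mathrm{sqW}(\theta))=\dfrac{(-s\xi;q)_{\infty}(-s\theta;q)_{\infty}}{(s^{2};q)_{\infty}(\xi\theta;q)_{\infty}}$, which one checks is strictly positive on the relevant parameter square by inspecting the sign of each $q$-Pochhammer factor. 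Hence $\mathbb{L}_{\xi,\theta}\ge 0$. An alternative that sidesteps the sign bookkeeping: $\mathbb{L}_{\xi,\theta}$ is a fusion/analytic-continuation limit of principally specialized $\mathsf{U}^{[0]}$'s built out of the manifestly nonnegative sHL weights of \Cref{fig:table_w,fig:table_w_tilde}, and nonnegativity of $\mathsf{U}^{[0]}$ passes to the limit.

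For the stochasticity, fix $i_{1},j_{1}$; by arrow conservation $i_{1}+j_{2}=i_{2}+j_{1}$ the sum over $(i_{2},j_{2})$ runs over a one-parameter family, and it amounts to summing the numerator of \eqref{eq:U_00}. By the fused Yang--Baxter equation \eqref{eq:fused_YBE_text} evaluated with the boundary data ``infinitely many vertical paths in the leftmost column, cross entering with $J$ horizontal paths'' --- this is exactly the situation of \eqref{eq:p_fwd_sum_to_one_in_leftmost} --- that sum equals the right-hand side of the same Yang--Baxter equation, which by arrow conservation is a single term with the cross frozen at state $(J,0;J,0)$, namely the denominator of \eqref{eq:U_00}; this is the mechanism of \Cref{example:biject:A1}. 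Therefore $\sum_{i_{2},j_{2}}\mathbb{L}_{\xi,\theta}(i_{1},j_{1};i_{2},j_{2})=1$, provided the (infinite) left-hand side of this Yang--Baxter equation converges absolutely. The convergence is Step~3 in the proof of \Cref{prop:U_well_defined}: the tail in the output index is dominated by $(\xi\theta)^{j_{2}}$ times a factor coming from the bounded cross weight \eqref{eq:Whittaker_cross_weight}. \emph{The main obstacle is this last point}: one must make the tail estimate of the explicit ${}_{4}\overline\phi_{3}$-weight uniform in the output index and confirm that $\Pi$ stays away from zero on the closed square $[-s,-s^{-1}]^{2}$, including the boundary values where several $q$-Pochhammer symbols degenerate. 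I would handle this by leaning on the estimates already in place --- \Cref{lemma:bound_w_J_1}, \Cref{lemma:bound_w_J_2}, and Step~3 of \Cref{prop:U_well_defined} --- rather than grinding through the closed-form expression.
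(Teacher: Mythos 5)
Your argument is correct and is essentially the paper's own proof: nonnegativity is read off from the positivity of the specialized weights established in \Cref{sub:YBE_nonnegativity} (in particular \Cref{prop:mathbbR_nonnegative}) together with the positivity of the normalizing factor, and the sum-to-one identity is exactly the Yang-Baxter equation \eqref{eq:YBE_W_W} applied in the leftmost column ($i_3=j_3=\infty$) combined with \Cref{prop:emergence_of_the_cross_vertex_weight}, which forces the single cross state $(J,0;J,0)$ as in \Cref{example:biject:A1}. Your added care about the convergence of the infinite sum over outputs (Step~3 of \Cref{prop:U_well_defined}, i.e.\ the admissibility constraint on $\xi\theta$) and about degenerate boundary values of the parameters is a caveat the paper's two-line proof leaves implicit, but it does not change the route.
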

\begin{proof}
	The nonnegativity follows from \Cref{sub:YBE_nonnegativity}
	(in particular, from \Cref{prop:mathbbR_nonnegative}).
	The fact that the weights sum to one is the consequence of the
	Yang-Baxter equation \eqref{eq:YBE_W_W} in the leftmost column, where $i_3=j_3=\infty$,
	together with \Cref{prop:emergence_of_the_cross_vertex_weight}.
\end{proof}

The weights $\mathbb{L}_{\xi,\theta}(i_1,j_1;i_2,j_2)$ 
give rise to a 
stochastic vertex model. Because the arrow preservation
property for these weights reads $i_1+j_2=i_2+j_1$, 
the paths in this stochastic vertex model are directed \emph{up-left}.
\begin{definition}
	\label{def:4phi3_vertex_model}
	Let $\xi_y,\theta_x\in[-s,-s^{-1}]$, $x,y\in \mathbb{Z}_{\ge1}$.
	The (inhomogeneous) \emph{$_4\phi_3$ stochastic vertex model} with the 
	boundary conditions 
	$\{b_1^{\mathrm{h}},b_2^{\mathrm{h}},\ldots \}$
	and
	$\{b_1^{\mathrm{v}},b_2^{\mathrm{v}},\ldots \}$,
	$b_i^{\mathrm{h}},b_j^{\mathrm{v}}\in \mathbb{Z}_{\ge0}$, 
	is the (unique) probability distribution 
	on the set of up-left directed paths on
	$\mathbb{Z}_{\ge0}\times \mathbb{Z}_{\ge0}$
	(with arbitrary nonnegative number of paths allowed per edge, 
	see \Cref{fig:4phi3_vertex_model} for an illustration),
	satisfying:
	\begin{itemize}
		\item The number of paths entering 
			at each location $(x,0)$ on the horizontal boundary
			is equal to $b_x^{\mathrm{h}}$, 
			$x\in \mathbb{Z}_{\ge1}$;
		\item The number of paths exiting
			at each location $(0,y)$ on the vertical boundary
			is equal to $b_y^{\mathrm{v}}$, 
			$y\in \mathbb{Z}_{\ge1}$;
		\item For each $(x,y)$, conditioned on
			the path configuration at all vertices $(x',y')$
			such that $x'+y'<x+y$, the probability of the configuration 
			$(i_1,j_1;i_2,j_2)$ at $(x,y)$ is given by
			$\mathbb{L}_{\xi_y,\theta_x}(i_1,j_1;i_2,j_2)$.
			Moreover, the random choices made at diagonally 
			adjacent vertices $\ldots,(x-1,y+1),(x,y),(x+1,y-1),\ldots$
			are independent under the same condition.
	\end{itemize}
	In particular, the \emph{step boundary conditions}
	correspond to taking $b_i^{\mathrm{h}}=b_i^{\mathrm{v}}=0$
	for all $i\in \mathbb{Z}_{\ge1}$.
\end{definition}

\begin{figure}[htpb]
	\centering
	\includegraphics[width=.35\textwidth]{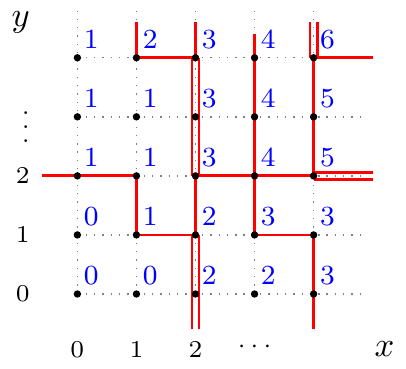}
	\caption{A path configuration in the $_4\phi_3$
	stochastic vertex model, and the corresponding height function.
	(The boundary configuration in the figure are not step.
	Unlike for the two previous vertex models,
	here the step boundary conditions would mean
	no paths crossing the boundary.)}
	\label{fig:4phi3_vertex_model}
\end{figure}

\begin{remark}
	The up-left direction of paths in the $_4\phi_3$ vertex model
	of \Cref{def:4phi3_vertex_model}
	should be contrasted with up-right paths in the stochastic
	vertex model 
	(\Cref{ssub:6v})
	and the stochastic higher spin six vertex model
	(\Cref{ssub:shv}).
	Note however that in the latter two models the number $j$ of 
	paths per horizontal edge is at most one, and 
	so the operation $j\mapsto 1-j$ applied at each horizontal 
	edge turns up-right paths into
	up-left ones. In the sqW/sqW setting the number of paths per horizontal edge 
	can be arbitrary, so the model with the weights $\mathbb{L}_{\xi,\theta}$
	cannot be 
	mapped to a model with up-right directed paths.
\end{remark}

For arbitrary boundary conditions, the configuration of the 
paths is encoded by the height function 
$\mathbb{H}^{\phi}(x,y)$, $x,y\in \mathbb{Z}_{\ge0}$, 
which counts the number of paths which between $(0,0)$ and 
$(x,y)$ (including the paths that pass through $(x,y)$, too).
In other words, paths are the level lines of $\mathbb{H}^{\phi}$.
An example is given in \Cref{fig:4phi3_vertex_model}.
Clearly, $\mathbb{H}^{\phi}(x,y)$ is almost surely finite 
at each $(x,y)$.

\Cref{prop:two_sided_sHL_sqW}
expressing $\mathsf{U}^{[0]}_{\mathrm{sg}(\alpha),\mathrm{sqW}(\theta)}$
on the bottom boundary of $\mathbb{Z}_{\ge0}^{2}$
as the $q$-negative binomial distribution 
$q\textnormal{-}\mathrm{NB}(-s/\theta,\alpha\theta)$
suggests the 
two-sided stationary boundary conditions
for the $_4\phi_3$ stochastic vertex model.
Moreover, on the left boundary, by 
the symmetry \eqref{eq:symmetry_R_fused},
$\mathsf{U}^{[0]}_{\mathrm{sqW}(\xi),\mathrm{sg}(\beta)}$
leads to 
$q\textnormal{-}\mathrm{NB}(-s/\xi,\beta\xi)$.
Therefore, we define the \emph{two-sided}
(or $(\alpha,\beta)$) \emph{stationary} boundary conditions
by taking independent 
\begin{equation*}
	b_x^{\mathrm{h}}
	\sim
	q\textnormal{-}\mathrm{NB}(-s/\theta_x,\alpha\theta_x),
	\qquad 
	b_y^{\mathrm{v}}\sim
	q\textnormal{-}\mathrm{NB}(-s/\xi_y,\beta\xi_y).
\end{equation*}
The step boundary condition
arises when $\alpha=\beta=0$,
and thus $b_x^{\mathrm{h}}=b_y^{\mathrm{v}}=0$ for all $x,y$
(note that this meaning of ``step'' here differs from the 
two previous stochastic vertex models).

Recall the $q$-Poisson distribution (\Cref{sub:notation}).
Let $\boldsymbol\lambda$ be the sqW/sqW Yang-Baxter field
with $(\alpha,\beta)$-scaled geometric boundary conditions.

\begin{theorem}
	\label{thm:sqW_sqW_length}
	Let $\mathcal{M}$ be the $q$-Poisson random variable with parameter
	$\alpha\beta$ which is
	independent of the $_4\phi_3$ stochastic vertex model.
	Then the two random fields
	$\{\ell(\lambda^{(x,y)})\colon x,y\in \mathbb{Z}_{\ge0}\}$ and 
	$\{\mathbb{H}^{\phi}(x,y)+\mathcal{M}\colon x,y\in \mathbb{Z}_{\ge0}\}$ 
	have the same distribution.
\end{theorem}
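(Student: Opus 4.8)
The plan is to follow the two-step pattern used for the six vertex and higher spin cases (\Cref{thm:length_sHL_sHL_height_6VM,thm:sHL_sHL_mixed_height_6VM,thm:sHL_sqW_height_length}): first establish the matching for step boundary conditions by a direct comparison of transition kernels, then upgrade to the $(\alpha,\beta)$-scaled geometric boundary via fusion and analytic continuation, with the additional $q$-Poisson variable $\mathcal{M}$ emerging from the corner of an auxiliary fused field.

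\textbf{Step 1 (step boundary conditions).} Here $\alpha=\beta=0$, so $\mathcal{M}=0$ almost surely and it suffices to prove $\ell(\lambda^{(x,y)})\stackrel{d}{=}\mathbb{H}^{\phi}(x,y)$ as random fields. By \Cref{prop:exist_marginals} the sqW/sqW Yang-Baxter field $\boldsymbol{\lambda}$ is adapted to $\mathsf{h}(\eta)=\ell(\eta)$, and the kernel of the resulting marginal process is $\mathsf{U}^{[0]}_{\mathrm{sqW}(\xi_y),\mathrm{sqW}(\theta_x)}$, which by \eqref{eq:PushTASEP_rate} is exactly the vertex weight $\mathbb{L}_{\xi_y,\theta_x}$ of the $_4\phi_3$ model; the lemma preceding \Cref{thm:sqW_sqW_length} guarantees that it is nonnegative and stochastic. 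Using the dictionary \eqref{eq:occupation_numbers_lengths} between the increments of $\ell(\lambda^{(\cdot,\cdot)})$ around a lattice cell and the horizontal occupation numbers $(i_0,i_0';j_0,j_0')$ --- together with the fact that for these weights arrow conservation reads $i_1+j_2=i_2+j_1$, so the increments must be read along an \emph{up-left} path --- one identifies the array $\{\ell(\lambda^{(x,y)})\}$ with the height function $\mathbb{H}^{\phi}$ of the $_4\phi_3$ model (\Cref{def:4phi3_vertex_model}) with $b^{\mathrm{h}}\equiv b^{\mathrm{v}}\equiv 0$, the boundary condition $\lambda^{(x,0)}=\lambda^{(0,y)}=\varnothing$ matching the normalization $\mathbb{H}^{\phi}(0,0)=0$ and the absence of boundary input. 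This is the analogue of the $\alpha=\beta=0$ part of \Cref{thm:sHL_sqW_height_length} and amounts to a bookkeeping verification.

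\textbf{Step 2 (scaled geometric boundary conditions).} Imitating the proof of \Cref{thm:sHL_sHL_mixed_height_6VM}, fix positive integers $I_0,J_0$ and consider the sqW/sqW Yang-Baxter field $\boldsymbol{\mu}$ with step boundary conditions on the enlarged lattice $\mathbb{Z}_{\ge -I_0+1}\times\mathbb{Z}_{\ge -J_0+1}$, where the $J_0$ adjoined rows carry the principal (integer-fused) specialization with parameters $u_0,qu_0,\dots,q^{J_0-1}u_0$ and the $I_0$ adjoined columns carry $v_0,qv_0,\dots,q^{I_0-1}v_0$. Its restriction $\boldsymbol{\nu}=\boldsymbol{\mu}\vert_{\mathbb{Z}_{\ge0}^2}$ is a field with a Gibbs boundary condition (\Cref{def:F_G_boundary_conditions}); since the fused weights are rational in $q^{J_0},q^{I_0}$, the law of $\boldsymbol{\nu}$ restricted to any finite set of vertices (and, by \Cref{prop:F_G_processes}, along down-right paths) is holomorphic in $(u_0,q^{J_0},v_0,q^{I_0})$ near the origin, and the scaled geometric limit $u_0=-\epsilon\alpha$, $q^{J_0}=1/\epsilon$, $v_0=-\epsilon\beta$, $q^{I_0}=1/\epsilon$, $\epsilon\to 0$ --- whose well-definedness in the leftmost column is handled through \Cref{example:biject:A1} and \Cref{rmk:sqW_spec_careful_S6_concrete} --- recovers $\boldsymbol{\lambda}$ with $(\alpha,\beta)$-scaled geometric boundary conditions. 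Applying Step~1 to $\boldsymbol{\mu}$ identifies $\{\ell(\mu^{(x,y)})\}$ with the height function of a $_4\phi_3$-type model on the enlarged lattice whose two boundary strips are the $(J_0,I_0)$-fused vertices; peeling off the contribution $\mathcal{M}_{I_0,J_0}$ of the paths that originate in the corner rectangle and enter the quadrant yields $\ell(\nu^{(x,y)})=\mathbb{H}^{\phi}_{I_0,J_0}(x,y)+\mathcal{M}_{I_0,J_0}$, where $\mathbb{H}^{\phi}_{I_0,J_0}$ is the corner-centered height function of the restriction to $\mathbb{Z}_{\ge0}^2$. Restating the single update in the corner rectangle as one forward transition in the $(I_0,J_0)$-fused field gives $\mathrm{Prob}\{\mathcal{M}_{I_0,J_0}=k\}=\mathsf{U}^{[0]}_{u_0,v_0}(0,0;k,k)$, which by \eqref{eq:U00_qPoisson} tends to $q\textnormal{-}\mathrm{Poi}(\alpha\beta)$ under the scaled geometric specialization; simultaneously, in the same limit the adjoined rows (resp.\ columns) feed into $\mathbb{Z}_{\ge0}^2$ the independent inputs $b^{\mathrm{h}}_x\sim q\textnormal{-}\mathrm{NB}(-s/\theta_x,\alpha\theta_x)$ (resp.\ $b^{\mathrm{v}}_y\sim q\textnormal{-}\mathrm{NB}(-s/\xi_y,\beta\xi_y)$) by \Cref{prop:two_sided_sHL_sqW}, the symmetry \eqref{eq:symmetry_R_fused}, and the product form of \eqref{eq:U_00_sg_sqW_computation}. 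Passing to the limit in the identity for $\boldsymbol{\nu}$ gives the asserted equality in distribution.

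\textbf{Main obstacle.} The analytic-continuation scaffolding is already available from the earlier proofs and the step case is essentially definitional, so the genuinely new content sits in Step~2 and is of two kinds: (i) the occupation-number bookkeeping that converts $\ell(\lambda^{(x,y)})$ into the \emph{up-left} $_4\phi_3$ height function with the correct additive $+\mathcal{M}$ offset --- the up-left direction makes the role of the boundary arrows and the signs differ from the (up-right) six vertex and higher spin cases; and (ii) checking that one and the same scaled geometric degeneration of the corner strips produces \emph{both} the $q$-negative binomial boundary data and the independent $q$-Poisson offset $\mathcal{M}$, which relies on the explicit product structure of \eqref{eq:U_00_sg_sqW_computation} and \eqref{eq:U00_qPoisson} and not on any soft argument.
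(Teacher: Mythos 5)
Your proposal is correct and follows essentially the same route as the paper: the step case is the definitional matching of $\mathsf{U}^{[0]}_{\mathrm{sqW}(\xi),\mathrm{sqW}(\theta)}$ with $\mathbb{L}_{\xi,\theta}$, and the general case is the same fusion/analytic-continuation argument on the enlarged lattice $\mathbb{Z}_{\ge -I_0+1}\times\mathbb{Z}_{\ge -J_0+1}$, with the corner paths producing the $q$-Poisson variable $\mathcal{M}$ via \eqref{eq:U00_qPoisson} and the adjoined strips degenerating to the $q$-negative binomial boundary inputs. The only cosmetic difference is bookkeeping (you write $\ell(\nu^{(x,y)})=\mathbb{H}^{\phi}_{I_0,J_0}(x,y)+\mathcal{M}_{I_0,J_0}$ with the restricted height function, while the paper keeps the height function on the enlarged lattice), which carries the same content.
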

\begin{proof}
	This is proven similarly to 
	\Cref{thm:sHL_sHL_mixed_height_6VM,thm:sHL_sqW_height_length}
	using analytic continuation. 
	Namely, one starts with the Yang-Baxter field 
	$\boldsymbol\mu$
	in $\mathbb{Z}_{\ge -I_0+1}\times
	\mathbb{Z}_{\ge -J_0+1}$, where the positive
	coordinates $\mathbb{Z}_{\ge1}\times \mathbb{Z}_{\ge1}$ carry the 
	sqW specializations $\{\xi_y\}$ and $\{\theta_x\}$,
	and the extra nonpositive coordinates carry the sHL
	specializations with parameters
	$u_0,qu_0,\ldots,q^{J_0-1}u_0 $ and 
	$v_0,qv_0,\ldots,q^{I_0-1}v_0 $,
	respectively.
	The resulting field 
	depends on 
	$u_0,q^{J_0},v_0,q^{I_0}$
	in an analytic manner, and one can then take 
	$u_0,q^{J_0},v_0,q^{I_0}$
	to the
	scaled geometric specializations.
	
	Before the analytic continuation
	we know that $\ell(\mu^{(x,y)})$ 
	is equal in distribution to the 
	height function $\mathbb{H}^{\phi}_{I_0,J_0}(x,y)$,
	which is defined in the same way as $\mathbb{H}^{\phi}$, but in
	$\mathbb{Z}_{\ge -I_0+1}\times
	\mathbb{Z}_{\ge -J_0+1}$. 
	The 
	number of
	paths originating from the segment $\{-I_0+1\} \times [-J_0 +1, 0]$ and
	vertically crossing the segment $[-I_0 +1, 0] \times \{0 \}$
	becomes, after the scaled
	geometric specializations, the desired $q$-Poisson random variable
	$\mathcal{M}$.
	Therefore, after the specializations
	$\mathbb{H}^{\phi}_{I_0,J_0}(x,y)$
	turns into
	$\mathbb{H}^{\phi}(x,y)+\mathcal{M}$ for all $x,y\in \mathbb{Z}_{\ge0}$.
	On the other hand, 
	$\ell(\mu^{(x,y)})$ becomes $\ell(\lambda^{(x,y)})$ for all
	$x,y\in \mathbb{Z}_{\ge0}$.
	This completes the proof.
\end{proof}

\subsubsection{Connection to PushTASEPs}
\label{ssub:qHahn_Push}

Take arbitrary stochastic
vertex weights $\mathfrak{L}_{(x,y)}(i_1,j_1;i_2,j_2)$,
$x,y\in \mathbb{Z}_{\ge1}$,
$i_1,j_1,i_2,j_2\in \mathbb{Z}_{\ge0}$,
which vanish
unless $i_1+j_2=i_2+j_1$, and 
construct from them a stochastic vertex model 
with up-left paths as in \Cref{def:4phi3_vertex_model}.
We also assume that boundary conditions $b_x^{\mathrm{h}}$
and $b_y^{\mathrm{v}}$, $x,y\in \mathbb{Z}_{\ge1}$, are fixed.\footnote{If these boundary conditions are 
random, then they should be independent of the evolution of the stochastic vertex model.
Therefore, we can first sample the boundary conditions and then proceed with the discussion
conditioned on the values of $b_x^{\mathrm{h}}$, $b_y^{\mathrm{v}}$.}
Path configurations in this vertex model
can be equivalently viewed as trajectories in a 
stochastic particle system on the line with a pushing mechanism.

Indeed, consider the discrete
time dynamics $\mathbf{y}(t)$
on the space of configurations
\begin{equation*}
	\{\mathbf{y}=(y_1>y_2>\ldots )\colon y_i\in \mathbb{Z} \}
\end{equation*}
defined as follows (see \Cref{fig:push_dynamics_length_sqW_sqW}
for an illustration):
\begin{enumerate}[label=\bf{\arabic*.}]
		\item \label{item:PushTASEP_ic} 
			At time $t=0$ we have $y_1(0)=-1$
			and $y_{k}(0)-y_{k+1}(0)-1 = b^{\mathrm{h}}_k$,
			$k\in \mathbb{Z}_{\ge1}$;

		\item \label{item:first_particle_PushTASEP} At each 
			discrete time step $t-1\to t$, $t\in \mathbb{Z}_{\ge1}$, 
			the first particle's location is updated as 
			$y_1(t)=y_1(t-1)-b_t^{\mathrm{v}}$
			(i.e., it jumps by $b_t^{\mathrm{v}}$ to the left);

		\item \label{item:PushTASEP_particle_jump} 
			At each discrete time step $t-1\to t$,
			$t\in \mathbb{Z}_{\ge1}$, 
			the locations of the subsequent particles are 
			updated sequentially. For $i=2,3,\ldots$,
			after the $(i-1)$-st particle
			has moved such that $y_{i-1}(t) = y_{i-1}(t-1) - l$,
			and if the gap was 
			$y_{i-1}(t-1) - y_{i}(t-1) -1 =g$,
			then the 
			$i$-th particle jumps by $L$ to the left with
			probability $\mathfrak{L}_{(i,t)}(g, l; g+L-l, L )$.
\end{enumerate}
The fact that it must be $L\ge l-g$ in the update implies that 
the dynamics preserves the order of the particles. 
Namely, if the jump $l$ of the $(i-1)$-st
particle is longer than the gap, then the $i$-th particle
is pushed to the left. Therefore, the dynamics $\mathbf{y}(t)$
has a built-in pushing mechanism.

\begin{figure}
    \centering
		\includegraphics[width=.3\textwidth]{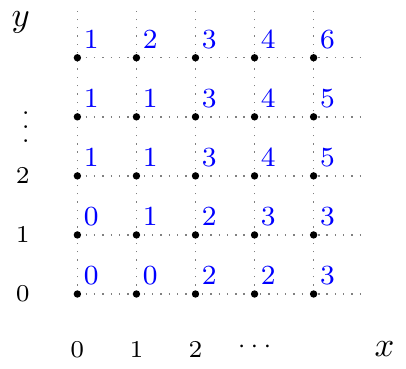}
		\qquad 
		\includegraphics[width=.63\textwidth]{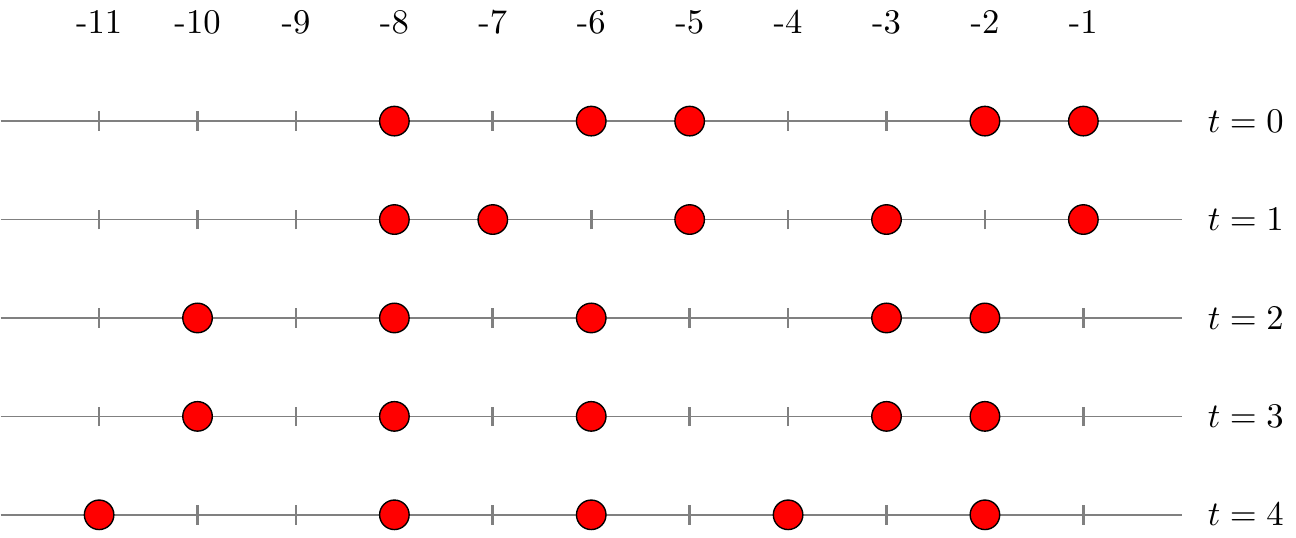}
    \caption{The height function in the 
			vertex model (left) and the corresponding realization of the 
		pushing dynamics (right).}
    \label{fig:push_dynamics_length_sqW_sqW}
\end{figure}

At each discrete time step the dynamics $\mathbf{y}(t)$
might perform an infinite number of jumps. However, due to the sequential
update structure, the evolution of the first $N$ particles $y_1>\ldots>y_N $
is always well-defined, and thus one can define the whole 
dynamics $\mathbf{y}(t)$ via Kolmogorov's extension theorem.

Particle systems with pushing mechanism
have been studied for a long time.
The first example is the PushTASEP 
(also known as the ``long-range TASEP'', or as a degenerate
particular case of the Toom's interface model)
\cite{Spitzer1970}, \cite{derrida1991dynamics}.
The PushTASEP admits many deformations, 
most recent of which is the $q$-Hahn PushTASEP
introduced in \cite{CMP_qHahn_Push}
(see also section 3.2.1 in the latter paper for references to 
known intermediate degenerations).
Recall that the $q$-Hahn PushTASEP 
depends on three parameters
$q\in(0,1)$, $\mu\in(0,1)$, and $\nu\in(-1, \min(\mu,\sqrt q)]$.

\begin{proposition}
	\label{prop:qhahn_push}
	For $\alpha=0$, $\beta=1$, $\xi_y=\mu$, $\theta_x=1$
	for all $x,y\in \mathbb{Z}_{\ge1}$, and 
	$s=-\nu$, the particle system corresponding
	to the $_4\phi_3$ stochastic vertex model
	(i.e., with $\mathfrak{L}_{(i,t)}=\mathbb{L}_{\mu,1}$
	given by \eqref{eq:PushTASEP_rate})
	and $(\alpha,\beta)$-stationary boundary conditions
	coincides with the $q$-Hahn PushTASEP
	from \textnormal{\cite{CMP_qHahn_Push}}
	with the step initial configuration
	$y_i(0)=-i$, $i\in \mathbb{Z}_{\ge1}$.
\end{proposition}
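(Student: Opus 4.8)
The plan is to match the particle system of \Cref{ssub:qHahn_Push} with $\mathfrak{L}_{(i,t)}=\mathbb{L}_{\mu,1}$ against the $q$-Hahn PushTASEP of \cite{CMP_qHahn_Push} by checking separately the initial data, the driving of the leftmost particle, and the bulk jump kernel. For the initial data: with $\alpha=0$, each horizontal boundary variable has law $b_x^{\mathrm h}\sim q\textnormal{-}\mathrm{NB}(-s/\theta_x,\alpha\theta_x)=q\textnormal{-}\mathrm{NB}(\nu,0)$, which by the formula in \Cref{sub:notation} (with second parameter $0$) is the point mass at $0$. Hence item~\ref{item:PushTASEP_ic} of the construction gives $y_k(0)-y_{k+1}(0)-1=0$, i.e. $y_k(0)=-k$ for all $k$, which is exactly the step configuration appearing in the statement.

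Next, setting $\beta=1$, $\xi_y=\mu$ and $s=-\nu$ turns the vertical boundary variables into independent, identically distributed $b_t^{\mathrm v}\sim q\textnormal{-}\mathrm{NB}(-s/\xi_t,\beta\xi_t)=q\textnormal{-}\mathrm{NB}(\nu/\mu,\mu)$. I would then recall from \cite{CMP_qHahn_Push} that in the $q$-Hahn PushTASEP the leftmost particle performs independent jumps with precisely this $q$-negative binomial law, so that item~\ref{item:first_particle_PushTASEP} reproduces the correct evolution of $y_1(t)$.

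The core step is the bulk kernel. After substituting $\xi=\mu$, $\theta=1$, $s=-\nu$ into \eqref{eq:PushTASEP_rate} and performing the change of variables $(i_1,j_1;i_2,j_2)=(g,l;g+L-l,L)$ dictated by item~\ref{item:PushTASEP_particle_jump}, one must show the result equals the $q$-Hahn PushTASEP transition probability $\mathfrak{L}(g,l;g+L-l,L)$. Since $\theta=1$ forces $-s/\theta=-s\theta=\nu$, a number of $q$-Pochhammer factors in the prefactor of \eqref{eq:PushTASEP_rate} coalesce; the essential point is to evaluate the terminating series ${}_4\overline{\phi}_3$ at these special parameters. I expect that for $\theta=1$ two of its parameters differ by an integer power of $q$, so that the ${}_4\overline{\phi}_3$ collapses---either directly into a ratio of $q$-Pochhammer symbols through the terminating $q$-Saalsch\"utz summation (the series being balanced and terminating), or first into a lower ${}_3\overline{\phi}_2$ which is then summed by $q$-Chu--Vandermonde. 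Combining this with the prefactor and cancelling $q$-Pochhammer symbols should yield the product (Povolotsky) form of the $q$-Hahn weight, which finishes the identification.

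The main obstacle is this last ${}_4\overline{\phi}_3\to$ product evaluation: one must select the correct $q$-hypergeometric summation identity and track carefully the signs, the powers of $q$, and the negative-index $q$-Pochhammer conventions of \eqref{eq:q_Pochhammer}--\eqref{eq:hypergeom_series}, as well as verify that the resulting formula matches the exact normalization of the $q$-Hahn PushTASEP weights in \cite{CMP_qHahn_Push} (possibly after a harmless reindexing such as $j\mapsto m-j$).
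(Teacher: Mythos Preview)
Your treatment of the initial data and of the first particle is correct and matches the paper's argument: $\alpha=0$ forces $b_x^{\mathrm h}\equiv 0$ (hence the step configuration $y_k(0)=-k$), and $\beta=1$, $\xi_y=\mu$, $s=-\nu$ gives $b_t^{\mathrm v}\sim q\textnormal{-}\mathrm{NB}(\nu/\mu,\mu)$, which is exactly $\varphi_{q,\mu,\nu}(\cdot\mid\infty)$ in \cite{CMP_qHahn_Push}.

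The bulk step, however, rests on a misconception. You expect the ${}_4\overline{\phi}_3$ in \eqref{eq:PushTASEP_rate} to collapse, via $q$-Saalsch\"utz or $q$-Chu--Vandermonde, to the ``Povolotsky product form'' of the $q$-Hahn weight. But the transition kernel of the $q$-Hahn \emph{Push}TASEP in \cite{CMP_qHahn_Push} is \emph{not} a product: it is itself presented there as a terminating ${}_4\phi_3$. (The factorized Povolotsky weight $\varphi_{q,\mu,\nu}$ governs the ordinary $q$-Hahn TASEP without pushing; once pushing is allowed, the one-step law of $L$ given $(g,l)$ genuinely involves a ${}_4\phi_3$.) For generic $g,l,L$ the series in \eqref{eq:PushTASEP_rate} is balanced but does \emph{not} sum in closed form, so the collapse you are hoping for will not occur.

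The paper's argument is accordingly much shorter: one specializes $\xi=\mu$, $\theta=1$, $s=-\nu$ in \eqref{eq:PushTASEP_rate} and compares the resulting ${}_4\overline{\phi}_3$ expression \emph{directly} with the ${}_4\phi_3$ formula for the $q$-Hahn PushTASEP jump probabilities in \cite{CMP_qHahn_Push}. The matching is then a bookkeeping of parameters and prefactors between two ${}_4\phi_3$'s, not a summation identity.
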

\begin{proof}
	This is obtained in a straightforward way 
	by matching the 
	formulas
	from \cite{CMP_qHahn_Push}
	expressing transition probabilities
	in the $q$-Hahn PushTASEP through the $_4\phi_3$
	$q$-hypergeometric functions with the expression 
	\eqref{eq:PushTASEP_rate}.
	For $\alpha=0$, there are no vertex model paths entering 
	through the bottom boundary.
	Then the boundary conditions on the left are random and independent
	with the distribution 
	$q\textnormal{-}\mathrm{NB}(-s/\xi,\beta\xi)
	=q\textnormal{-}\mathrm{NB}(\nu/\mu,\mu)$,
	which is exactly the jumping 
	distribution of the 
	$q$-Hahn PushTASEP 
	first particle 
	(denoted by 
	$\varphi_{q,\mu,\nu}(\cdot\mid \infty)$
	in \cite{CMP_qHahn_Push}).
\end{proof}

We see that the $_4\phi_3$ stochastic vertex model
from \Cref{ssub:phi_vertex}
in a particular case becomes the $q$-Hahn PushTASEP.
Note also that to match the jumping distribution of the first particle
we needed to employ the independent negative binomial
boundary conditions on the left (vertical) boundary. 
(This effect is also present in the stochastic higher spin six
vertex model, cf. \cite{OrrPetrov2016}.)
The pushing particle system corresponding to the
step boundary conditions in the $_4\phi_3$ stochastic vertex model
is more general than the $q$-Hahn PushTASEP.
Namely, the former can essentially be viewed as the $q$-Hahn PushTASEP 
conditioned on the event that the first particle $y_1$ never
jumps.

\section{Difference operators} 
\label{sec:diff_op}

In this Section we prove that the (stable) spin Hall-Littlewood and
the spin $q$-Whittaker functions are eigenfunctions of certain
($q$-)difference operators acting on symmetric functions.
In this section we denote the quantization parameter
in the sHL functions by $t$ instead of $q$
because the sHL eigenoperators
are the same as in the Macdonald case
(recall that for $s=0$, the sHL functions become the usual
Hall-Littlewood symmetric polynomials, which are the $q=0$
degenerations of the Macdonald symmetric polynomials).

\subsection{Eigenrelations for the spin Hall-Littlewood functions} 
\label{sub:eigenrelation_sHL}

Consider the space of symmetric rational functions
in $u_1,\ldots,u_n $.
Let the operator $T_{q,u_i}$ on this space be
\begin{equation}\label{eq:q_shift_operator}
    T_{q,u_i} f(u_1,\dots, u_n) 
		=
		f(u_1, \dots, u_{i-1},qu_i,u_{i+1}, \dots u_n),
\end{equation}
that is, it acts by multiplying the variable $u_i$ by $q$.
In this subsection we will use the $q=0$ version, $T_{0,u_i}$.
Note that this operator acts only on rational functions whose 
denominators do not contain positive powers of $u_i$.

\begin{definition}[Hall-Littlewood difference operators]
	For $1 \le r \le n$, let the $r$-th Hall-Littlewood difference
	operator be
\begin{equation} \label{eq:Hall_Littlewood_operator}
	\mathop{\mathfrak{D}_r} 
	:=
	\sum_{\substack{I\subset\{1,\dots,n \}\\ |I|=r }} 
	\biggl(
		\prod_{\substack{i\in I \\ j\in \{1,\dots,n \} \setminus I}} 
		\frac{t u_i - u_j}{u_i - u_j}
	\biggr)\,
	T_{0,I},
\end{equation}
with $T_{0,I}=\prod_{i \in I} T_{0,u_i}$.
\end{definition}
The Hall-Littlewood operators are the $q=0$ cases of the Macdonald
difference operators 
\cite[Chapter VI.3]{Macdonald1995}
(the latter are obtained by taking $T_{q,u_i}$
in \eqref{eq:Hall_Littlewood_operator} instead
of $T_{0,u_i}$). 
The operators $\mathop{\mathfrak{D}_r}$ are diagonal 
in the Hall-Littlewood symmetric polynomials 
$\mathsf{F}_\lambda\vert_{_{s=0}}$:\footnote{We have 
	$\mathsf{F}_\lambda(u_1,\ldots,u_n )\vert_{_{s=0}}=Q_\lambda(u_1,\ldots,u_n;t)$ in the standard 
notation of \cite[Chapter III]{Macdonald1995}.}
\begin{equation} \label{eq:Hall_Littlewood_eigen}
	\mathop{\mathfrak{D}_r} \mathsf{F}_\lambda(u_1,\ldots,u_n)
	\vert_{_{s=0}}
	= 
	e_r(1,t,\dots, t^{n-\ell(\lambda) -1})\,
	\mathsf{F}_\lambda(u_1,\ldots,u_n )\vert_{_{s=0}},
\end{equation}
where the eigenvalues
are given in terms of
$e_r (u_1,\dots u_n)$, 
the $r$-th elementary symmetric polynomial:
\begin{equation}
	e_r(z_1,\ldots,z_N )=
	\sum_{1\leq i_1<\cdots <i_r \leq N} z_{i_1} \cdots z_{i_r}.
\end{equation}
In particular, $e_r(z_1,\ldots,z_N )=0$ if $r>N$.

In the following Theorem we extend 
\eqref{eq:Hall_Littlewood_eigen} to the 
spin Hall-Littlewood symmetric functions:
\begin{theorem} \label{thm:sHL_eigen}
	For all Young diagrams $\lambda$ and $n\in \mathbb{Z}_{\ge1}$
	we have
	\begin{equation} \label{eq:sHL_eigen}
		\mathop{\mathfrak{D}_r} \mathsf{F}_\lambda(u_1,\ldots,u_n ) = 
			e_r(1,t,\dots, t^{n-\ell(\lambda) -1})\,
			\mathsf{F}_\lambda(u_1,\ldots,u_n ).
	\end{equation}
\end{theorem}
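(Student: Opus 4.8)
The plan is to deduce the eigenrelation \eqref{eq:sHL_eigen} from the known Macdonald/Hall-Littlewood eigenrelation \eqref{eq:Hall_Littlewood_eigen} together with the symmetrization formula for $\mathsf{F}_\lambda$. First I would recall from \cite[(45)]{BorodinWheelerSpinq} the explicit symmetrization
\begin{equation*}
	\mathsf{F}_\lambda(u_1,\dots,u_n)
	=
	\frac{(1-t)^n}{(t;t)_{n-\ell(\lambda)}}
	\sum_{\sigma\in\mathfrak{S}_n}
	\sigma\biggl\{
		\prod_{1\le i<j\le n}\frac{u_i-tu_j}{u_i-u_j}
		\prod_{i=1}^n\Bigl(\frac{u_i-s}{1-su_i}\Bigr)^{\lambda_i}
		\prod_{i=1}^{\ell(\lambda)}\frac{u_i}{u_i-s}
	\biggr\},
\end{equation*}
which for $s=0$ collapses to the usual Hall-Littlewood $Q_\lambda$. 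The key structural observation is that $\mathsf{F}_\lambda$ has the same ``shape'' as $Q_\lambda$: it is a symmetrization of the factor $\prod_{i<j}\frac{u_i-tu_j}{u_i-u_j}$ times a product $\prod_i \phi(u_i)^{\lambda_i}$ of functions of the individual variables (here $\phi(u)=\frac{u-s}{1-su}$), dressed by the extra factor $\prod_{i\le\ell(\lambda)}\frac{u_i}{u_i-s}$ which only affects the first $\ell(\lambda)$ slots. The Hall-Littlewood case is $\phi(u)=u$.

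The heart of the argument is a lemma computing the action of $\mathfrak{D}_r$ on any symmetrized expression of this form. I would first understand $\mathfrak{D}_r$ on a ``monomial-type'' building block: since $T_{0,I}$ sets $u_i=0$ for $i\in I$, and since $\phi(0)=-s\ne\infty$ while $\frac{u_i}{u_i-s}\big|_{u_i=0}=0$, applying $T_{0,I}$ annihilates any term in which $I$ meets $\{1,\dots,\ell(\lambda)\}$ — unless $\lambda_i=0$ there, but those slots are exactly $i>\ell(\lambda)$. So effectively $\mathfrak{D}_r$ only sets to zero variables carrying a trivial $\lambda$-exponent, i.e. $u_i$ with $i>\ell(\lambda)$. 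On such variables $\phi(u_i)^{\lambda_i}=\phi(u_i)^0=1$ regardless of the value of $s$, so the $s$-dependent factor $\bigl(\frac{u_i-s}{1-su_i}\bigr)^{\lambda_i}$ plays no role at all. This is precisely the mechanism by which $s$ disappears, and it reduces the computation of $\mathfrak{D}_r\mathsf{F}_\lambda$ to exactly the same Vandermonde-type residue bookkeeping that yields \eqref{eq:Hall_Littlewood_eigen} in the $s=0$ case. Concretely, after inserting the symmetrization and using that $\mathfrak{D}_r$ commutes with the outer symmetrization (the coefficient $\prod_{i\in I,j\notin I}\frac{tu_i-u_j}{u_i-u_j}$ and the operator $T_{0,I}$ together transform covariantly under permutations, just as in \cite[Chapter VI.3--VI.4]{Macdonald1995}), one is left to verify the scalar identity term by term, and the extra factor $\prod_{i\le\ell(\lambda)}\frac{u_i}{u_i-s}$ is untouched by every surviving $T_{0,I}$.

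I expect the main obstacle to be the careful vanishing/pole analysis: one must check that $\mathfrak{D}_r$ is actually well-defined on $\mathsf{F}_\lambda$ (the operator $T_{0,u_i}$ requires no positive powers of $u_i$ in denominators) and that the cross terms $\frac{u_i-tu_j}{u_i-u_j}$ with one index in $I$ behaving like a pole at $u_i=u_j$ cancel correctly after symmetrization, so that the naive term-by-term evaluation is legitimate. A clean way to organize this is to argue by analytic continuation: both sides of \eqref{eq:sHL_eigen} are rational in $s$ (indeed $\mathfrak{D}_r$ is $s$-independent and $\mathsf{F}_\lambda$ is rational in $s$), and the eigenrelation holds at $s=0$; but the eigenvalue $e_r(1,t,\dots,t^{n-\ell(\lambda)-1})$ is independent of $s$, so it suffices to show $\mathfrak{D}_r\mathsf{F}_\lambda/\mathsf{F}_\lambda$ is independent of $s$. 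That independence follows from the observation above that $s$ enters $\mathsf{F}_\lambda$ only through factors attached to variables that are never sent to $0$ by a surviving $T_{0,I}$. I would make this precise by writing $\mathsf{F}_\lambda = \sum_\sigma \sigma\{\,G\cdot H\,\}$ where $G=\prod_{i<j}\frac{u_i-tu_j}{u_i-u_j}\prod_{i\le\ell(\lambda)}\frac{u_i}{u_i-s}\prod_{i\le\ell(\lambda)}\phi(u_i)^{\lambda_i}$ involves only $u_1,\dots,u_{\ell(\lambda)}$ in its $s$-dependence, and $H=1$; then checking that $\mathfrak{D}_r$ acts by sending index subsets $I\subseteq\{\ell(\lambda)+1,\dots,n\}$ to zero only, reducing to the known Hall-Littlewood identity applied with $\ell(\lambda)$ ``frozen'' variables. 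The remaining bookkeeping is the standard Macdonald-operator computation, which I would cite rather than redo.
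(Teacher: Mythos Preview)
Your proposal is correct and follows essentially the same route as the paper: start from the symmetrization formula, observe that $T_{0,I}\sigma\{C\}=0$ unless $I\subseteq\sigma(\{\ell(\lambda)+1,\dots,n\})$ (your ``$T_{0,I}$ annihilates terms where $I$ meets the active slots''), note that on the surviving terms the $s$-dependent factors in $B$ and $C$ are untouched, and reduce to the Hall--Littlewood eigenrelation on the remaining $n-\ell(\lambda)$ variables. The paper carries this out by an explicit $V_\sigma/W_\sigma$ split of the double sum over $(\sigma,I)$ and then invokes the $\lambda=\varnothing$ case (their Lemma~\ref{lemma:Hall_Littlewood_trivial}) on the $W_\sigma$-block, which is exactly your ``Hall--Littlewood identity with $\ell(\lambda)$ frozen variables''; your analytic-continuation-in-$s$ detour is unnecessary once that structural argument is in place.
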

\begin{remark}
	Certain difference operators acting diagonally on the
	non-stable spin Hall-Littlewood symmetric functions
	were considered in
	\cite{Dimitrov2018GUE}.
\end{remark}

In order to prove \Cref{thm:sHL_eigen} we make use of two preliminary
lemmas. The first one is an explicit expression for the sHL function
$\mathsf{F}_\lambda$ as a sum over the symmetric group
$\mathfrak{S}_n$:
\begin{lemma}
	For any Young diagram $\lambda$ such that $n\ge \ell ( \lambda )$, we have
	\begin{equation} \label{eq:sHL_symmetric_sum}
		\mathsf{F}_\lambda(u_1, \dots, u_n) 
		=
		\frac{(1-t)^n}{(t;t)_{n - \ell(\lambda)}} 
		\sum_{\sigma \in \mathfrak{S}_n} 
		\sigma \biggl\{ 
			\prod_{1\le i < j\le n} \frac{u_i - t u_j}{u_i - u_j} 
			\prod_{i=1}^n \left( \frac{u_i - s}{1 - s u_i} \right)^{\lambda_i} 
			\,
			\prod_{i=1}^{\ell (\lambda)} \frac{u_i}{u_i - s} 
		\biggr\}.
	\end{equation}
	Here the symmetric group acts on the 
	indices of the variables $u_i$, but not 
	on $\lambda_i$.
\end{lemma}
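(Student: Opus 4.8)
The plan is to deduce \eqref{eq:sHL_symmetric_sum} from the corresponding symmetrization formula for the \emph{non-stable} spin Hall--Littlewood functions, which is established in \cite{Borodin2014vertex} (and in the inhomogeneous form in \cite{BorodinPetrov2016inhom}), together with the degeneration \eqref{eq:sHL_from_non_stable_2}. Up to the change of notation $t\leftrightarrow q$ and the normalization convention for the empty diagram, \eqref{eq:sHL_symmetric_sum} is also \cite[(45)]{BorodinWheelerSpinq} and could be quoted directly; below I indicate a self-contained route. Recall that, with $\mu=\varnothing$ and $k=n$, \eqref{eq:sHL_from_non_stable_2} reads
\[
	\mathsf{F}_\lambda(u_1,\dots,u_n)=\frac{1}{(t;t)_{n-\ell(\lambda)}}\,
	\mathsf{F}^{\mathrm{non\text{-}st}}_{\lambda\cup 0^{\,n-\ell(\lambda)}}(u_1,\dots,u_n)\Big|_{s_0=0},
\]
so it suffices to expand the right-hand side, using that for $n\ge\ell(\lambda)$ the diagram $\nu=\lambda\cup 0^{\,n-\ell(\lambda)}$ has exactly $n$ parts counting zeros (the minimal requirement for the non-stable function in $n$ variables to be defined).

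First I would recall the explicit expansion of the inhomogeneous non-stable function over $\mathfrak{S}_n$ from \cite{Borodin2014vertex}, \cite{BorodinPetrov2016inhom}: with column-$c$ spin parameter $s_c$,
\[
	\mathsf{F}^{\mathrm{non\text{-}st}}_{\nu}(u_1,\dots,u_n)
	=\frac{(1-t)^n}{\prod_{c\ge 0}(t;t)_{m_c(\nu)}}\sum_{\sigma\in\mathfrak{S}_n}
	\sigma\Bigl\{\prod_{1\le i<j\le n}\frac{u_i-tu_j}{u_i-u_j}\ \prod_{i=1}^n\varphi(u_i;\nu_i)\Bigr\},
\]
where $m_c(\nu)$ is the multiplicity of the part $c$ and $\varphi(u;m)$ is the single-row weight accumulated by an arrow that enters at the left boundary and turns upward at column $m$, expressed through $s_0,\dots,s_m$; this is the coordinate Bethe ansatz computation of \cite{Borodin2014vertex}, which I take as input. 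Next I would specialize $s_0=0$, $s_c=s$ for $c\ge1$, and $\nu=\lambda\cup 0^{\,n-\ell(\lambda)}$. For the empty parts $i>\ell(\lambda)$ one has $\nu_i=0$ and $\varphi(u_i;0)$ reduces to the left-boundary weight $1$ (since $s_0=0$); for $i\le\ell(\lambda)$ the weight factors as
\[
	\varphi(u_i;\lambda_i)=u_i\cdot\Bigl(\tfrac{u_i-s}{1-su_i}\Bigr)^{\lambda_i-1}\cdot\tfrac{1-t}{1-su_i}
	=(1-t)\,\tfrac{u_i}{u_i-s}\,\Bigl(\tfrac{u_i-s}{1-su_i}\Bigr)^{\lambda_i},
\]
the three factors being the left-boundary weight $u$, the pass-through weights at columns $1,\dots,\lambda_i-1$, and the turning weight at column $\lambda_i$ (all read off from \Cref{fig:table_w} and \eqref{eq:w_boundary}). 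Pulling the $(1-t)$'s out (one from each of the $\ell(\lambda)$ turning vertices, one from each of the $n-\ell(\lambda)$ non-creating variables) produces the prefactor $(1-t)^n$ and the factor $\prod_{i=1}^{\ell(\lambda)}\frac{u_i}{u_i-s}$ inside the symmetrization. Finally, using $\prod_{c\ge0}(t;t)_{m_c(\nu)}=(t;t)_{n-\ell(\lambda)}\prod_{c\ge1}(t;t)_{m_c(\lambda)}$ together with the $(t;t)_{n-\ell(\lambda)}^{-1}$ coming from \eqref{eq:sHL_from_non_stable_2}, and absorbing the stabilizer factor $\prod_{c\ge1}(t;t)_{m_c(\lambda)}$ (the standard Hall--Littlewood bookkeeping: the subsum of $\sigma$ fixing $\lambda$ acts trivially on the summand) gives exactly \eqref{eq:sHL_symmetric_sum}.

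The main obstacle is this last bit of bookkeeping: one must pin down the precise form of the inhomogeneous non-stable weight $\varphi(u;m)$ and verify the three-factor decomposition after $s_0=0$, and then reconcile the interplay between the normalization $\prod_c(t;t)_{m_c(\nu)}^{-1}$, the extra $(t;t)_{n-\ell(\lambda)}^{-1}$ from the stabilization \eqref{eq:sHL_from_non_stable_2}, and the over-/under-counting implicit in symmetrizing over all of $\mathfrak{S}_n$. A self-contained alternative that avoids quoting the inhomogeneous non-stable formula is to prove \eqref{eq:sHL_symmetric_sum} by induction on $n$: writing $G_\lambda(u_1,\dots,u_n)$ for the right-hand side, note that $G_\lambda$ is manifestly symmetric, that $G_\lambda(u_1)=\mathsf{F}_\lambda(u_1)$ (a one-vertex-row check against \Cref{def:ssHL}, which in particular is nonzero only for $\ell(\lambda)\le1$), and then verify the Pieri-type recursion $G_\lambda(u_1,\dots,u_n)=\sum_{\nu\prec\lambda}\mathsf{F}_{\lambda/\nu}(u_n)\,G_\nu(u_1,\dots,u_{n-1})$, which by the branching rule \eqref{eq:F_stable_branching_rule} and the inductive hypothesis identifies $G_\lambda$ with $\mathsf{F}_\lambda$. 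Here $\mathsf{F}_{\lambda/\nu}(u_n)$ is the explicit single-variable skew weight, and the recursion is a residue/partial-fraction computation in the variable $u_n$ of the kind standard for Hall--Littlewood symmetrization formulas (cf.\ \cite[Ch.\ III]{Macdonald1995}); the delicate point there is again tracking the $(t;t)$-normalizations as $\ell(\nu)$ ranges over $\{\ell(\lambda)-1,\ell(\lambda)\}$.
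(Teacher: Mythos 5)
Your main route is exactly the paper's proof: the paper simply cites the symmetrization formula for the non-stable functions from \cite{Borodin2014vertex} (Theorem 5.1) and degenerates it via \eqref{eq:sHL_from_non_stable_2}, noting also that the stable formula already appears in \cite{deGierWheeler2016} and \cite{BorodinWheelerSpinq}, so your primary argument coincides with the paper's. One caution on the bookkeeping you flag (which the paper does not carry out): the non-stable symmetrization formula in the cited normalization of $\mathsf{F}^{\mathrm{non\text{-}st}}_{\nu}$ carries no multiplicity denominators $\prod_{c}(t;t)_{m_c(\nu)}$ at all (it is a $Q$-type, not $P$-type, normalization), so the ``stabilizer absorption'' of $\prod_{c\ge1}(t;t)_{m_c(\lambda)}$ you invoke is neither valid nor needed — only the factor $(t;t)_{n-\ell(\lambda)}$ from \eqref{eq:sHL_from_non_stable_2} enters.
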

\begin{proof}
	This is a corollary of \cite[Theorem 5.1]{Borodin2014vertex}
	which gives an 
	analogous expression for the
	non-stable spin Hall-Littlewood function. 
	The degeneration to the
	stable case is obtained as 
	in~\eqref{eq:sHL_from_non_stable_2}. 
	Symmetrization formula \eqref{eq:sHL_symmetric_sum}
	for the stable case appeared
	earlier in \cite{deGierWheeler2016} and 
	\cite{BorodinWheelerSpinq}.
\end{proof}

\begin{lemma} \label{lemma:Hall_Littlewood_trivial}
We have
\begin{equation}
    \mathop{\mathfrak{D}_r} 
		\biggl( \sum_{\sigma \in \mathfrak{S}_n} 
		\sigma \biggl\{ \prod_{1\le i < j\le n} \frac{u_i - t u_j}{u_i - u_j} \biggr\} \biggr) 
		=
		e_r(1, \dots, t^{n-1}) \sum_{\sigma \in \mathfrak{S}_n} 
		\sigma \biggl\{ \prod_{1\le i < j\le n} \frac{u_i - t u_j}{u_i - u_j} \biggr\}.
\end{equation}
\end{lemma}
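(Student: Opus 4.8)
The plan is to recognize the symmetrized sum in the statement as (a constant multiple of) the empty-diagram spin Hall-Littlewood function, and then to read off the claim from the classical Hall-Littlewood eigenrelation \eqref{eq:Hall_Littlewood_eigen}. First I would specialize the symmetrization formula \eqref{eq:sHL_symmetric_sum} to $\lambda=\varnothing$: then $\ell(\lambda)=0$, both $\prod_{i=1}^{n}\bigl(\frac{u_i-s}{1-su_i}\bigr)^{\lambda_i}$ and $\prod_{i=1}^{\ell(\lambda)}\frac{u_i}{u_i-s}$ are empty products, and one obtains
\[
\sum_{\sigma\in\mathfrak{S}_n}\sigma\Bigl\{\prod_{1\le i<j\le n}\frac{u_i-tu_j}{u_i-u_j}\Bigr\}
=\frac{(t;t)_n}{(1-t)^n}\,\mathsf{F}_\varnothing(u_1,\dots,u_n),
\]
where the right-hand side may be evaluated at arbitrary $s$ since the left-hand side does not involve $s$. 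Next I would note that $\mathsf{F}_\varnothing\equiv 1$: in the vertex-model definition of $\mathsf{F}_{\lambda/\mu}$ the configuration contributing to $\mathsf{F}_{\varnothing/\varnothing}(u)$ is empty, every relevant vertex weight from \Cref{fig:table_w} (taken at $g=0$ with no arrows) equals $1$, and the left boundary weight $w_{u,s}(\infty,\varnothing;\infty,0)$ equals $1$ by \eqref{eq:w_boundary}; the branching rule \eqref{eq:F_stable_branching_rule} then gives $\mathsf{F}_\varnothing(u_1,\dots,u_n)=1$ for every $n$. Hence the sum in the lemma is the constant $\tfrac{(t;t)_n}{(1-t)^n}$.

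Since $\mathfrak{D}_r$ is a linear operator and the sum equals this constant as a rational function, the left side of the lemma equals $\tfrac{(t;t)_n}{(1-t)^n}\,\mathfrak{D}_r(1)$, so it only remains to show $\mathfrak{D}_r(1)=e_r(1,t,\dots,t^{n-1})$. Applying $T_{0,I}$ to the constant $1$ gives $1$, so $\mathfrak{D}_r(1)=\sum_{|I|=r}\prod_{i\in I,\,j\notin I}\frac{tu_i-u_j}{u_i-u_j}$, and the needed identity is exactly \eqref{eq:Hall_Littlewood_eigen} specialized to $\lambda=\varnothing$ (for which $\mathsf{F}_\varnothing|_{s=0}=Q_\varnothing=1$ and $\ell(\varnothing)=0$). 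Multiplying the resulting relation $\mathfrak{D}_r(1)=e_r(1,t,\dots,t^{n-1})$ back by $\tfrac{(t;t)_n}{(1-t)^n}$ yields the lemma.

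There is essentially no serious obstacle here, in keeping with the name of the statement; the only care needed is in two bookkeeping points: that $\mathsf{F}_\varnothing|_{s=0}$ is the Hall-Littlewood polynomial $Q_\varnothing=1$ (so that \eqref{eq:Hall_Littlewood_eigen} applies verbatim), and that the $q=0$ shift $T_{0,u_i}$ in \eqref{eq:Hall_Littlewood_operator} is only ever applied to functions regular at $u_i=0$, which is automatic once the sum has been replaced by its constant value. If one wanted a self-contained argument not citing \eqref{eq:Hall_Littlewood_eigen}, one could instead prove directly that $\mathfrak{D}_r(1)$ is independent of the $u$'s: it is symmetric and homogeneous of degree $0$, its only candidate poles lie along the diagonals $u_a=u_b$, and pairing each subset $I$ with $(I\setminus\{a\})\cup\{b\}$ cancels those residues, so the expression is a constant; its value is then obtained by sending $|u_1|\gg\cdots\gg|u_n|$, which turns the sum into a finite $t$-count identified with $e_r(1,t,\dots,t^{n-1})$ by the same degeneration underlying \eqref{eq:Hall_Littlewood_eigen}.
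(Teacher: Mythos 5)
Your proof is correct and follows essentially the same route as the paper: the paper also treats the lemma as the $\lambda=\varnothing$ case of \eqref{eq:Hall_Littlewood_eigen}, noting that the symmetrized sum is the constant $(t;t)_n/(1-t)^n$ (cited from Macdonald, Chapter III.1, (1.4)), which is exactly the fact you rederive via $\mathsf{F}_\varnothing\equiv 1$ in the symmetrization formula.
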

\begin{proof}
    This is the $\lambda=\varnothing$ 
		case of the known Hall-Littlewood relation 
		\eqref{eq:Hall_Littlewood_eigen}. 
		Notice that the symmetrized sum 
		in fact does not depend on the variables
		$u_1,\ldots,u_n $:
		\begin{equation*}
			\sum_{\sigma \in \mathfrak{S}_n} 
			\sigma 
			\biggl\{ \prod_{1\le i < j\le n} 
				\frac{u_i - t u_j}{u_i - u_j}
			\biggr\}
			=
			\frac{(t;t)_n}{(1-t)^n},
		\end{equation*}
		see
		\cite[Chapter III.1, formula (1.4)]{Macdonald1995}.
\end{proof}
\begin{proof}[Proof of Theorem \ref{thm:sHL_eigen}]
	For a fixed Young diagram $\lambda$ we define
	\begin{equation*}
			A=\prod_{1\le i < j\le n} \frac{u_i - t u_j}{u_i - u_j}, \qquad 
			B=\prod_{i=1}^n \left( \frac{u_i - s}{1 - s u_i} \right)^{\lambda_i}, \qquad 
			C= \prod_{i=1}^{\ell (\lambda)} \frac{u_i}{u_i - s}.
	\end{equation*}
	With this notation, using \eqref{eq:sHL_symmetric_sum},
	the left-hand side of \eqref{eq:sHL_eigen} can be written as
	\begin{equation} \label{eq:sHL_eigen_lhs}
			c_\lambda \sum_{\substack{ I \subset \{1,\dots,n \} \\ |I| = r}} 
			\prod_{\substack{i\in I \\ j\in \{1,\dots,n\} \setminus I }} 
			\frac{t u_i - u_j}{u_i - u_j} 
			\,
			T_{0,I}
			\sum_{\sigma \in \mathfrak{S}_n} \sigma \left\{ ABC \right\},
	\end{equation}
	where $c_\lambda = (1-t)^n/(t;t)_{n-\ell (\lambda)}$. We first observe that 
	\begin{equation*}
		T_{0,I}  \sigma \{ C \} = 
			\begin{cases}
			\sigma\{ C \} \qquad & \text{if }I\subseteq \{ \sigma_{\ell (\lambda)+1}, \dots, \sigma_n \},\\
			0 \qquad & \text{otherwise}.
			\end{cases}
	\end{equation*}
	Therefore, we can reduce the sum over the symmetric group in
	\eqref{eq:sHL_eigen_lhs} to permutations $\sigma$ such that
	$I\subseteq \sigma(\{\ell (\lambda)+1 ,\dots, n \})$. 
	Moreover, we see that the claim of \Cref{thm:sHL_eigen} follows
	for 
	$r>n-\ell(\lambda)$ since both sides of \eqref{eq:sHL_eigen} vanish.
	Thus we will now assume that $r \le n - \ell(\lambda)$.
    
	For a given permutation $\sigma$ define the ordered sets
	$V_\sigma,W_\sigma$ as
	\begin{align*}
		V_\sigma &= \sigma (\{ 1,\dots, \ell (\lambda) \})
		=
		\left\{ v_1,\ldots,v_{\ell(\lambda )}  \right\}, 
		\\
		W_\sigma &= \sigma (\{ \ell (\lambda) + 1,\dots, n \})
		=
		\{w_1,\ldots,w_{n-\ell(\lambda)} \}=
		I\cup K,
	\end{align*}
	and rewrite \eqref{eq:sHL_eigen_lhs} as
	\begin{equation}
		\label{eq:sHL_eigen_lhs_1}
			c_\lambda \sum_{\substack{ I \subset \{1,\dots,n \} \\ |I| = r}} 
			\sum_{\substack{ \sigma \in \mathfrak{S}_n \\ I \subseteq W_\sigma}} 
			\sigma \left\{ BC \right\} \prod_{\substack{i\in I \\ j\in \{1,\dots,n\} \setminus I }} 
			\frac{t u_i - u_j}{u_i - u_j}\, T_{0,I} \sigma \left\{ A \right\},
	\end{equation}
	where we used the fact that $\sigma\{ BC \}$ only depends on variables $u_j$ for $j \in V_\sigma$. 
	We now focus on the remaining factors.
	For two disjoint or coinciding 
	ordered sets $S_1,S_2$ denote $P(S_1,S_2):=\prod_{i\in S_1,\,j\in S_2}\frac{u_i-tu_j}{u_i-u_j}$.
	When $S_1=S_2$, the product is only over $i<j$.
	We have
	\begin{align*}
		&\prod_{\substack{i\in I \\ j\in \{1,\dots,n\} \setminus I }} 
		\frac{t u_i - u_j}{u_i - u_j}\, T_{0,I} \sigma \left\{ A \right\}
		=
		P(I^c,I)
		P(V_\sigma,V_\sigma)P(V_\sigma,K)
		\\&\hspace{190pt}\times
		T_{0,I}
		\Bigl( P(I,K)P(I,I)P(V_\sigma,I)P(K,I)P(K,K) \Bigr)
		\\&\hspace{15pt}=
		P(V_\sigma,V_\sigma)
		P(V_\sigma,W_\sigma)
		P(K,I)
		\,T_{0,I}
		\Bigl( P(I,K)P(I,I)P(K,K) \Bigr)
		\\&\hspace{15pt}=
		\prod_{1\le i<j\le \ell(\lambda)}
		\frac{u_{v_i}-tu_{v_j}}{u_{v_i}-u_{v_j}}
		\prod_{i\in V_\sigma,\, j\in W_\sigma}
		\frac{u_i-tu_j}{u_i-u_j}
		\prod_{i\in W_\sigma\setminus I, j\in I}\frac{u_i-tu_j}{u_i-u_j}
		\,
		T_{0,I}
		\prod_{1\le i<j\le n-\ell(\lambda)}\frac{u_{w_i}-tu_{w_j}}{u_{w_i}-u_{w_j}}.
		% LP: checked and clarified the computations
	\end{align*}
	In the above calculation we used the fact that $T_{0,I}$ acts
	on $P(S, I)$, $S\ne I$, by turning it into one. The action $T_{0,I}P(I,I)$
	does not make sense before the symmetrization (i.e., summation over $\sigma$),
	and so we do not apply $T_{0,I}$ to this expression just yet.
	In the last line, 
	the first two products
	are independent of $I$ and of the ordering of
	$W_\sigma$, and the last two products are independent of the ordering
	of $V_\sigma$. Therefore,
	we can rearrange the two summations in
	\eqref{eq:sHL_eigen_lhs_1} as
	\begin{equation} \label{eq:sHL_eigen_lhs_bis}
			\begin{split}
				&c_\lambda \sum_{\substack{V \subseteq \{1,\dots,n\} \\ |V|=\ell (\lambda),\, W=V^c }} 
				\sum_{\tau \in \mathfrak{S}_{\ell(\lambda)} } 
				\prod_{\substack{v\in V\\ w\in W}} \frac{u_v - t u_w}{u_v - u_w} \, 
				\tau \biggl\{ \prod_{1 \le i < j \le \ell(\lambda)}
					\frac{ u_{v_i } - t u_{v_j } }{ u_{v_i} - u_{ v_j } }
					\,BC \biggr\}
					\\
					&
					\hspace{80pt} \times \sum_{\substack{I\subset W \\ |I|=r}} \,
					\prod_{\substack{i \in I \\ w \in W \setminus I}} \frac{t u_i - u_w}{u_i - u_w} \, 
					T_{0,I} 
					\sum_{\sigma \in \mathfrak{S}_{n-\ell(\lambda)}} 
					\sigma \biggl\{
						\prod_{1 \le i <j \le n-\ell(\lambda)} \frac{u_{w_i} - t u_{w_j} }{u_{w_i} - u_{w_j}}
					\biggr\}.
			\end{split}
	\end{equation}
	The permutations 
	$\tau,\sigma$ permute the variables $u_{v_i},u_{w_j}$, $v_i\in V$, $w_j\in W$, acting
	respectively on indices $i$ and $j$. 
	We can now employ \Cref{lemma:Hall_Littlewood_trivial} 
	to transform the second line of \eqref{eq:sHL_eigen_lhs_bis} into
	\begin{equation}
			e_r(1,t,\dots, t^{n - \ell(\lambda) -1}) 
			\sum_{\sigma \in \mathfrak{S}_{n-\ell(\lambda)}} 
			\sigma 
			\biggr\{ 
				\prod_{1 \le i <j \le n-\ell(\lambda)} \frac{u_{w_i} - t u_{w_j} }{u_{w_i} - u_{w_j}}
			\biggr\}.
	\end{equation}
	Therefore, 
	\begin{equation}
			\begin{split}
				\text{lhs \eqref{eq:sHL_eigen} } & = e_r(1,t,\dots, t^{n - \ell(\lambda) -1})\,
				c_\lambda \sum_{\substack{V \subseteq \{1,\dots,n\} \\ |V|=\ell (\lambda),\, W=V^c } }\;
				\sum_{ \substack{ \tau \in \mathfrak{S}_{\ell(\lambda)}
				\\
				\sigma \in \mathfrak{S}_{n-\ell(\lambda)} } } 
				\prod_{\substack{v\in V\\ w\in W}} \frac{u_v - t u_w}{u_v - u_w}
					\\
					& \qquad 
					\times \sigma \biggl\{ 
						\prod_{1 \le i <j \le n-\ell(\lambda)} \frac{u_{w_i} - t u_{w_j} }{u_{w_i} - u_{w_j}}
					\biggr\}\;
					\tau \biggl\{ 
						\prod_{1 \le i < j \le \ell(\lambda)} \frac{ u_{v_i } - t u_{v_j } }{ u_{v_i} - u_{ v_j } }\, BC 
					\biggr\}.
			\end{split}
	\end{equation}
	The summations in the right-hand side of this last expression (along
	with the factor $c_\lambda$) are easily rearranged into the
	symmetrized sum \eqref{eq:sHL_symmetric_sum} producing the spin Hall
	Littlewood function $\mathsf{F}_\lambda$.
\end{proof}

\subsection{Orthogonality}
The spin Hall-Littlewood functions enjoy the following orthogonality:

\begin{proposition}\label{prop:orthog_FF}
	For any Young diagrams $\lambda,\mu$, we have
\begin{equation} \label{eq:orthog_FF}
	\frac{(t;t)_{n - \ell(\lambda)}}
	{(1-t)^nn!}
	\oint_{\gamma} 
	\frac{dz_1}{ 2 \pi \mathrm{i} z_1}
	\cdots 
	\oint_{\gamma} 
	\frac{dz_n}{ 2 \pi \mathrm{i} z_n}
	\prod_{1\le i \neq j\le n} \frac{z_i - z_j}{z_i - t z_j}\,
	\mathsf{F}_\lambda(z_1, \dots,z_n)
	\mathsf{F}^*_\mu(1/z_1, \dots,1/z_n)
	= 
	\mathbf{1}_{\lambda = \mu},  
\end{equation}
where $\gamma$ is a positively oriented contour encircling $0$, $t^{k} s$ for all $k\geq 0$,
and the contour $t\gamma$ (its image under the multiplication by $t$), but not the point $s^{-1}$.
\end{proposition}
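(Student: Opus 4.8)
The plan is to evaluate the $n$-fold nested contour integral directly, first collapsing the symmetrization in $\mathsf F_\lambda$ against the symmetric measure so that the combinatorial prefactor disappears, and then computing the resulting integral by iterated residues with an induction on $n$. The key preliminary observation is that the weight $\prod_{1\le i\ne j\le n}\frac{z_i-z_j}{z_i-tz_j}$ is symmetric in $z_1,\dots,z_n$ (each unordered pair $\{i,j\}$ contributes both $\frac{z_i-z_j}{z_i-tz_j}$ and $\frac{z_j-z_i}{z_j-tz_i}$), and so is the product $\mathsf F_\lambda(z)\,\mathsf F^*_\mu(1/z)$.

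\textbf{Step 1: prefactor reduction.} Write $\mathsf F_\lambda$ through the symmetrization formula \eqref{eq:sHL_symmetric_sum} and $\mathsf F^*_\mu=\mathsf c(\mu)\,\mathsf F_\mu$, which is the $\mu/\varnothing$ case of \eqref{eq:sHL_sHL_star_relation} (using $\mathsf c(\varnothing)=1$). Since the measure and $\mathsf F^*_\mu(1/z)$ are symmetric, each of the $n!$ permutation terms in \eqref{eq:sHL_symmetric_sum} contributes the same amount after a change of variables, which cancels the factor $\frac{(t;t)_{n-\ell(\lambda)}}{(1-t)^nn!}$ against $\frac{(1-t)^n}{(t;t)_{n-\ell(\lambda)}}\cdot n!$. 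Using the partial cancellation $\prod_{i\ne j}\frac{z_i-z_j}{z_i-tz_j}\cdot\prod_{i<j}\frac{z_i-tz_j}{z_i-z_j}=\prod_{i<j}\frac{z_j-z_i}{z_j-tz_i}$, the left-hand side of \eqref{eq:orthog_FF} reduces to
\[
\mathsf c(\mu)\oint_{\gamma}\!\cdots\!\oint_{\gamma}\ \prod_{1\le i<j\le n}\frac{z_j-z_i}{z_j-tz_i}\ \prod_{i=1}^n\Bigl(\tfrac{z_i-s}{1-sz_i}\Bigr)^{\lambda_i}\prod_{i=1}^{\ell(\lambda)}\frac{z_i}{z_i-s}\ \mathsf F_\mu(1/z_1,\dots,1/z_n)\ \prod_{i=1}^n\frac{dz_i}{2\pi\mathrm i\,z_i}.
\]

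\textbf{Step 2: iterated residues and induction on $n$.} I would then integrate out one variable at a time, say $z_1$ first. Its contour $\gamma$ encircles $0$, the points $t^ks$ ($k\ge 0$) and the curve $t\gamma$, while the pole $z_1=1/s$ coming from $\mathsf F_\mu(1/z)$ stays outside and is excluded. The relevant residues are at $z_1=0$ (controlled by the behaviour of $\mathsf F_\mu(1/z_1)$ near $z_1=0$, i.e. by the largest part of $\mu$), at $z_1=s$ (present when $\ell(\lambda)\ge 1$, matching a part of $\lambda$ against one of $\mu$), and at $z_1=tz_j$, $j\ge 2$, coming from the kernel. Using the branching rule \eqref{eq:F_stable_branching_rule} for $\mathsf F_\mu(1/z)$ one shows that the $z_1=tz_j$ residues either vanish or feed back into an $(n-1)$-dimensional integral of exactly the same shape (with the contour now enclosing the appropriate poles), so that the induction hypothesis applies; the base case $n=1$ is an elementary residue computation that already produces the multiplicity factors making up $\mathsf c(\mu)^{-1}$. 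Tracking the contributions through all $n$ steps, a nonzero term survives only if the multiset of parts of $\lambda$ and of $\mu$ agree at every stage — forcing $\lambda=\mu$ — and on the diagonal the accumulated $t$-Pochhammer ratios assemble into $\mathsf c(\mu)^{-1}$, cancelling the prefactor $\mathsf c(\mu)$ and giving $1$.

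\textbf{Main obstacle and alternative.} The delicate part is precisely this residue bookkeeping: organizing the iterated residues so that all ``off-diagonal'' ($\lambda\ne\mu$) contributions provably cancel, and checking on the diagonal that the multiplicity factors reproduce $\mathsf c(\mu)^{-1}$ exactly — the genuinely new feature over the classical $s=0$ Hall--Littlewood case being the extra poles at $z_i=s$ and the factor $\mathsf c(\mu)$. If the direct computation turns out to be unwieldy, I would instead deduce the statement from the known orthogonality for the \emph{non-stable} spin Hall--Littlewood functions (\cite{BCPS2014}, \cite{Borodin2014vertex}) via the dictionary \eqref{eq:sHL_from_non_stable_2}, which expresses $\mathsf F_\lambda$ and $\mathsf F^*_\mu$ as a padded-with-zeros, $s_0=0$ specialization of the non-stable functions, so that \eqref{eq:orthog_FF} becomes a specialization of the non-stable orthogonality.
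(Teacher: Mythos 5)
Your closing ``alternative'' is in fact the paper's entire proof: the authors deduce \eqref{eq:orthog_FF} directly from the orthogonality of the \emph{non-stable} spin Hall--Littlewood functions (\cite[Corollary 7.5]{BorodinPetrov2016inhom}, with the homogeneous case going back to \cite{BCPS2014}) via the specialization \eqref{eq:sHL_from_non_stable_2}, using also $\mathsf{F}^*_\mu=\mathsf{c}(\mu)\mathsf{F}_\mu$ from \eqref{eq:sHL_sHL_star_relation}. If you carry that reduction out --- padding $\lambda$ and $\mu$ with zeros, setting the inhomogeneous spin parameter $s_0=0$, and checking that the factors $(q;q)_{n-\ell(\lambda)}$ and $\mathsf{c}(\mu)$ match the prefactor in \eqref{eq:orthog_FF} and that the contour conditions survive the limit --- you have a complete argument, and it coincides with the intended one.

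Your primary route, by contrast, is not yet a proof: Step 2 asserts precisely the two statements that carry all the difficulty (that every off-diagonal contribution cancels, and that on the diagonal the accumulated residues assemble into $\mathsf{c}(\mu)^{-1}$) without an argument, and you acknowledge as much. Beyond the missing bookkeeping, the inductive claim that the residues at $z_1=tz_j$ ``feed back into an $(n-1)$-dimensional integral of exactly the same shape'' is doubtful as stated: such a residue substitutes $z_1=tz_j$ into the factors $\bigl(\tfrac{z_1-s}{1-sz_1}\bigr)^{\lambda_1}$ and into the kernel, producing terms like $\bigl(\tfrac{tz_j-s}{1-stz_j}\bigr)^{\lambda_1}$ whose pole structure in $z_j$ (poles at $t^{k}s$, $k\ge1$, which is exactly the new feature compared with $s=0$) is not of the form appearing in an $(n-1)$-variable integral of the original type, so one must genuinely prove that these terms vanish or cancel after symmetrization rather than invoke ``the same shape.'' So either this residue analysis must be done in full, or you should promote your fallback --- the reduction to the known non-stable orthogonality --- to the main proof, which is what the paper does.
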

\begin{proof}
	This is consequence of \cite[Corollary 7.5]{BorodinPetrov2016inhom},
	where an analogous result is stated for the non-stable spin
	Hall-Littlewood functions with inhomogeneous parameters
	(the corresponding homogeneous result goes back to \cite{BCPS2014}).
	The desired orthogonality
	relation \eqref{eq:orthog_FF} then follows with the help 
	of the limit
	\eqref{eq:sHL_from_non_stable_2}.   
\end{proof}

The orthogonality property
\eqref{eq:orthog_FF} 
resembles the orthogonality of the Hall-Littlewood polynomials
with respect to the Macdonald's torus scalar product 
\cite[Chapter VI, (9.10)]{Macdonald1995}
at $q=0$. The general $(q,t)$ scalar product is
\begin{equation}
	\label{eq:Mac_orthog}
	\langle f,g \rangle_n:=\frac{1}{(2\pi\mathrm{i})^n n!}\int_{\mathbb{T}^n}
	f(z)\overline{g(z)}\prod_{1\le i\ne j\le n}
	\frac{(z_i/z_j;q)_{\infty}}{(tz_i/z_j;q)_{\infty}}\prod_{i=1}^{n}\frac{dz_i}{z_i},
\end{equation}
where the integration is over the $n$-dimensional torus in $\mathbb{C}^n$
(i.e., over the positively oriented unit circles).
In the presence of the additional spin parameter $s$, the Hall-Littlewood orthogonality 
extends to \eqref{eq:orthog_FF}.

The usual $q$-Whittaker polynomials
are orthogonal with respect to \eqref{eq:Mac_orthog} with $t=0$.
At present it is not clear how to extend this orthogonality property
to the spin $q$-Whittaker polynomials. 
This problem could be related to the following observation.
While all the $q=0$ Macdonald difference operators 
are diagonal in the spin Hall-Littlewood functions (as 
well as in the usual Hall-Littlewood polynomials), 
the situation on the spin $q$-Whittaker side is more complicated.
In the next subsection we discuss an $s$-deformation of the 
first $t=0$ Macdonald $q$-difference operator which acts diagonally on the 
spin $q$-Whittaker polynomials.

\subsection{Eigenrelation for the spin $q$-Whittaker polynomials}

Fix $l\in \mathbb{Z}_{\ge1}$.
Define the operator acting on rational functions in $(\theta_1,\ldots,\theta_l )$:
\begin{equation}
	\mathop{\mathfrak{E}}
	:=
	\sum_{j=1}^l \left( 1+ \frac{s}{\theta_j} \right)^l 
	\biggl(\prod_{i \neq j} \frac{\theta_j}{\theta_j - \theta_i} \biggr)
	T_{q^{-1},\theta_j} 
	+
	\frac{(-s)^l}{\theta_1 \cdots \theta_l }\, Id.
\end{equation}
Here $T_{q^{-1},\theta_j}$ are the shifts \eqref{eq:q_shift_operator}, and $Id$ is
the identity operator.

\begin{theorem}\label{thm:sqW_eigen}
For any Young diagram $\lambda$, we have
\begin{equation} \label{eq:sqW_eigen}
	\mathfrak{E}\, \mathbb{F}_{\lambda}(\theta_1,\ldots,\theta_l ) 
	=
	q^{-\lambda_1} \mathbb{F}_\lambda(\theta_1,\ldots,\theta_l ).
\end{equation}
\end{theorem}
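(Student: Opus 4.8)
The proof of \Cref{thm:sqW_eigen} should follow the same strategy as the proof of \Cref{thm:sHL_eigen}: express $\mathbb{F}_\lambda$ via a symmetrization formula over $\mathfrak{S}_l$, apply the operator $\mathfrak{E}$ term by term, and recognize that the result reassembles into $q^{-\lambda_1}\mathbb{F}_\lambda$. The key input is the symmetrization formula for the spin $q$-Whittaker polynomial. Since $\mathbb{F}_{\lambda'/\varnothing}$ is obtained from the sHL function by fusion (principal specialization $\xi,q\xi,\dots$), formula \cite[(45)]{BorodinWheelerSpinq} gives an explicit expression of the shape
\begin{equation*}
	\mathbb{F}_\lambda(\theta_1,\dots,\theta_l)
	=
	\mathrm{const}_\lambda
	\sum_{\sigma\in\mathfrak{S}_l}
	\sigma\biggl\{
		\prod_{1\le i<j\le l}\frac{\theta_i - q\theta_j}{\theta_i - \theta_j}
		\prod_{i=1}^{l}\Bigl(\text{factor depending on }\theta_i\text{ and }\lambda_i\Bigr)
		\cdot(\text{boundary factors})
	\biggr\},
\end{equation*}
where the per-variable factor involves $q$-Pochhammer symbols in $(-s/\theta_i;q)$ and powers reflecting the parts $\lambda_i$. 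I would first write this formula precisely (as a lemma, citing \cite{BorodinWheelerSpinq} and the fusion relation, analogous to the sHL lemma \eqref{eq:sHL_symmetric_sum}), paying attention to the convention that $\mathbb{F}_\lambda$ is indexed so that the variable-dependent exponent in slot $i$ is governed by $\lambda_i$ (not $\lambda'_i$).

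**Main steps.** With the symmetrization formula in hand, I would (1) apply $T_{q^{-1},\theta_j}$ to the $j$-th summand group and compute how it acts on the Vandermonde-type prefactor $\prod_{i<j}\frac{\theta_i-q\theta_j}{\theta_i-\theta_j}$ and on the per-variable factors; the shift $\theta_j\mapsto q^{-1}\theta_j$ interacts cleanly with the $q$-Pochhammer factors, shifting $\lambda_j$ effectively. (2) The coefficient $(1+s/\theta_j)^l\prod_{i\ne j}\frac{\theta_j}{\theta_j-\theta_i}$ in front of $T_{q^{-1},\theta_j}$ is designed precisely to cancel the spurious poles introduced by the shift and to convert the boundary/normalization factors correctly. (3) The extra additive term $\frac{(-s)^l}{\theta_1\cdots\theta_l}\,Id$ accounts for the "trivial" configuration, i.e. the contribution where no path exits at the $q$-level-line corresponding to $\lambda_1$; equivalently it is the analogue of the $\varkappa$-empty term. (4) After summing over $j$, all the variable-dependent residues at $\theta_i=\theta_j$ cancel (an $\mathfrak{S}_l$-symmetry argument, exactly as in the cancellation of $P(I,I)$ before symmetrization in the sHL proof), leaving a single copy of $q^{-\lambda_1}\mathbb{F}_\lambda$.

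**The analogue of \Cref{lemma:Hall_Littlewood_trivial}.** I expect to need the $\lambda=\varnothing$ base case as a standalone identity:
\begin{equation*}
	\mathfrak{E}\biggl(\sum_{\sigma\in\mathfrak{S}_l}\sigma\Bigl\{\prod_{1\le i<j\le l}\frac{\theta_i-q\theta_j}{\theta_i-\theta_j}\cdot(\text{boundary factors})\Bigr\}\biggr)
	=
	q^{0}\cdot(\text{same}),
\end{equation*}
which pins down that the eigenvalue for $\lambda=\varnothing$ is $q^{-\lambda_1}=q^0=1$ and checks that the coefficients in $\mathfrak{E}$ (including the $(-s)^l/(\theta_1\cdots\theta_l)$ term) are correctly normalized. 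This can be verified either directly via a partial-fractions computation in one of the $\theta_j$ or, more slickly, by degenerating to $s=0$ where $\mathbb{F}_\varnothing\equiv 1$, $\mathfrak{E}$ becomes the first $t=0$ Macdonald $q$-difference operator, and the relation is classical \cite[Chapter VI.3]{Macdonald1995}; then one extends to general $s$ by tracking the $s$-dependence, which is polynomial/rational.

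**Main obstacle.** The hard part will be step (2)–(4): verifying that the coefficient $(1+s/\theta_j)^l\prod_{i\ne j}\frac{\theta_j}{\theta_j-\theta_i}$ together with the $Id$ term exactly produces the clean eigenvalue $q^{-\lambda_1}$ with no leftover $\theta$-dependence. Concretely, one must show that a certain sum of rational functions (the images of the per-variable factors under the shift, weighted by the $\mathfrak{E}$-coefficients) telescopes. Because only $\lambda_1$ — the largest part, i.e. the number of columns — appears in the eigenvalue, the mechanism must be that the shift $\theta_j\mapsto q^{-1}\theta_j$ only "sees" the first column of the vertex-model configuration; making this rigorous requires either a careful residue computation or reformulating $\mathfrak{E}$ acting on the vertex-model partition function and using the Yang-Baxter/exchange relations to peel off the leftmost column of the lattice (the column recording the multiplicity $\lambda'_1$ of parts $\ge 1$, whose horizontal extent is $\lambda_1$). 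I would attempt the residue approach first: fix all $\theta_i$, $i\ne j$, regard both sides as rational functions of $\theta_j$, and match residues at $\theta_j=\theta_i$ (forced to agree by antisymmetry-type cancellation after symmetrization) and at $\theta_j=0,\infty$, where the boundary and $Id$ terms dominate. If that becomes unwieldy, I would fall back on an inductive argument on $|\lambda|$ using the branching rule \eqref{eq:F_stable_branching_rule} and the known action of $\mathfrak{E}$ in one variable.
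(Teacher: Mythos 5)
There is a genuine gap, and it sits at the foundation of your plan. The symmetrization formula you propose to start from does not exist in the form you assume: formula (45) of \cite{BorodinWheelerSpinq}, which this paper cites, is the symmetrization formula for the \emph{stable spin Hall-Littlewood} functions, not for the spin $q$-Whittaker polynomials. Already at $s=0$ the $q$-Whittaker polynomials (Macdonald polynomials at $t=0$) do not admit a Hall-Littlewood-type sum over $\mathfrak{S}_l$ with the factor $\prod_{i<j}\frac{\theta_i-q\theta_j}{\theta_i-\theta_j}$; such formulas are special to the Hall-Littlewood ($q=0$ Macdonald) side. Moreover, in the fusion picture the variables $\theta_i$ of $\mathbb{F}_\lambda$ enter through the specialization $u=s$, $q^{J}=-\theta_i/s$ of the \emph{fused} weights, i.e.\ through the ``$q^{J}$'' slots rather than as spectral parameters of single rows, so there is no direct way to inherit the sHL symmetrization formula variable-by-variable. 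On top of this, even granting some explicit formula, the steps you yourself identify as the crux — showing that the coefficients $(1+s/\theta_j)^l\prod_{i\ne j}\frac{\theta_j}{\theta_j-\theta_i}$ together with the $\frac{(-s)^l}{\theta_1\cdots\theta_l}\,Id$ term telescope to the single eigenvalue $q^{-\lambda_1}$ — are left as intentions (``I would attempt\dots, if that becomes unwieldy I would fall back\dots''), so the argument is a plan rather than a proof, and the hardest point (why only $\lambda_1$ survives) is exactly what remains unestablished.

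For comparison, the paper avoids any direct computation with $\mathbb{F}_\lambda$. It first proves a duality relation (\Cref{lemma:sqW_duality_eigen}): both $\mathfrak{E}$ (acting in the $\theta$'s) and the operator $\widetilde{\mathfrak{D}}=q^{-n}(Id+(q-1)\mathfrak{D}_1)$ (acting in the $u$'s) admit single contour-integral representations on functions of product form, and applying them to the Cauchy kernel $\Pi(u_1,\dots,u_n;\theta_1,\dots,\theta_l)=\prod_{i,j}\frac{1+u_i\theta_j}{1-su_i}$ gives two integrals that match under $w=-1/z$, so $\mathfrak{E}\,\Pi=\widetilde{\mathfrak{D}}\,\Pi$. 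Expanding $\Pi$ by the sHL/sqW Cauchy identity, using the sHL eigenrelation (\Cref{thm:sHL_eigen}) on the $u$-side, and then invoking the orthogonality of the sHL functions (\Cref{prop:orthog_FF}) to compare coefficients term by term yields $\mathfrak{E}\,\mathbb{F}^*_{\mu'}=q^{-\ell(\mu)}\mathbb{F}^*_{\mu'}$, i.e.\ the eigenvalue $q^{-\lambda_1}$, and the statement for $\mathbb{F}_\lambda$ follows since $\mathbb{F}$ and $\mathbb{F}^*$ differ by a $\lambda$-dependent nonzero constant. In particular the appearance of only $\lambda_1$ in the eigenvalue, which is mysterious from your direct approach, is transparent there: it is just $\ell(\mu)$ seen from the dual side. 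If you want to salvage a direct approach, you would first need to locate (or prove) a genuine explicit formula for $\mathbb{F}_\lambda$ and then actually carry out the residue bookkeeping; as written, the proposal does not constitute a proof.
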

\begin{remark}
	When $s=0$, the operator $\mathfrak{E}$ reduces to the first Macdonald operator 
	with $t=0$ and the parameter $q^{-1}$ instead of $q$.
	Then \eqref{eq:sqW_eigen} turns into the known eigenrelation for
	the usual $q$-Whittaker polynomials (cf. the operators
	$\tilde D_n^r$ with $r=1$
	in \cite[Section 2.2.3]{BorodinCorwin2011Macdonald}).
	It is not clear whether there exist appropriate $s$-deformations
	of the higher $t=0$ Macdonald difference operators 
	which would be diagonal in the spin $q$-Whittaker polynomials.
\end{remark}

\Cref{thm:sqW_eigen} follows from a ``duality'' relation for 
$\mathfrak{E}$ in
\Cref{lemma:sqW_duality_eigen} below.
Define
\begin{equation} \label{eq:D_tilde}
    \widetilde{\mathop{\mathfrak{D}}} := q^{-n} \left( Id + (q-1) \mathop{\mathfrak{D}_1} \right),
\end{equation}
where $\mathop{\mathfrak{D}_1}$ is the first Hall-Littlewood operator \eqref{eq:Hall_Littlewood_operator}
with parameter $t$ replaced by $q$.
\begin{lemma}
	\label{lemma:sqW_duality_eigen}
	Consider the function 
	\begin{equation} \label{eq:sHL_sqW_partition_function}
			\Pi(u_1, \dots, u_n; \theta_1, \dots, \theta_l)
			=
			\prod_{i=1}^n \prod_{j=1}^l \frac{ 1 + u_i \theta_j }{ 1 - s u_i }.
	\end{equation}
	Then we have
	\begin{equation} \label{eq:equality_sHL_sqW_operator}
		\mathfrak{E}\, \Pi = \widetilde{\mathfrak{D}} \Pi,
	\end{equation}
	where the operator $\widetilde{\mathop{\mathfrak{D}}}$ acts in
	$u_1,\dots, u_n$, while $\mathop{\mathfrak{E}}$ acts in $\theta_1,
	\dots, \theta_l$.
\end{lemma}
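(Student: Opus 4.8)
The plan is to prove the lemma by a direct computation that reduces it to an elementary identity of rational functions, which is then verified by a single‑variable comparison of poles and of the value at infinity.

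\emph{Reduction to a scalar identity.} Since $\Pi$ is a nonzero rational function for generic parameters, I would divide the desired equality $\mathfrak{E}\,\Pi=\widetilde{\mathfrak{D}}\,\Pi$ by $\Pi$. One computes $T_{q^{-1},\theta_j}\Pi=\Pi\cdot\prod_{m=1}^{n}\frac{1+u_m\theta_j/q}{1+u_m\theta_j}$ and $T_{0,u_i}\Pi=\Pi\cdot\frac{(1-su_i)^{l}}{\prod_{j=1}^{l}(1+u_i\theta_j)}$ (the second is legitimate because $1-su_i$ carries no positive power of $u_i$). Substituting these, together with \eqref{eq:D_tilde} for $\widetilde{\mathfrak{D}}$ and $t=q$ in $\mathfrak{D}_1$, the lemma becomes the rational identity
\[
\begin{aligned}
&\sum_{j=1}^{l}\Bigl(\tfrac{\theta_j+s}{\theta_j}\Bigr)^{l}\prod_{i\ne j}\frac{\theta_j}{\theta_j-\theta_i}\prod_{m=1}^{n}\frac{1+u_m\theta_j/q}{1+u_m\theta_j}+\frac{(-s)^{l}}{\theta_1\cdots\theta_l}\\
&\qquad= q^{-n}+q^{-n}(q-1)\sum_{i=1}^{n}\Bigl(\prod_{i'\ne i}\frac{qu_i-u_{i'}}{u_i-u_{i'}}\Bigr)\frac{(1-su_i)^{l}}{\prod_{j=1}^{l}(1+u_i\theta_j)}
\end{aligned}
\]
in the variables $u_1,\dots,u_n,\theta_1,\dots,\theta_l$.

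\emph{Strategy: comparison in one variable.} Fix all variables except $\theta_l$ and view both sides as rational functions of $\theta_l$. The right‑hand side is regular except for simple poles at $\theta_l=-1/u_k$, $k=1,\dots,n$, and it tends to $q^{-n}$ as $\theta_l\to\infty$. On the left‑hand side, the $j=l$ summand can contribute poles at $\theta_l=-1/u_k$, at $\theta_l=\theta_i$ ($i<l$), and an a priori pole of order $l$ at $\theta_l=0$ coming from $(\theta_j+s)^{l}/\theta_j^{l}$; the $j=i$ summands ($i<l$) contribute simple poles at $\theta_l=\theta_i$; and the last summand contributes a simple pole at $\theta_l=0$. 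Hence it suffices to check: (a) the residues at $\theta_l=-1/u_k$ agree; (b) the left‑hand side is regular at each $\theta_l=\theta_i$; (c) the left‑hand side is regular at $\theta_l=0$; (d) the left‑hand side tends to $q^{-n}$ as $\theta_l\to\infty$. Then the difference of the two sides is a polynomial in $\theta_l$ vanishing at infinity, hence identically $0$; as $\theta_l$ is an arbitrary one of the symmetric set of $\theta$‑variables, the lemma follows.

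\emph{The checks.} For (d), as $\theta_l\to\infty$ the $j=l$ term tends to $\prod_m(u_m/q)/u_m=q^{-n}$ while all other terms on the left tend to $0$; the same limit holds on the right. For (a), only the $j=l$ term of the left‑hand side has a pole at $\theta_l=-1/u_k$, and evaluating the remaining factors there gives residue $\tfrac{q-1}{q^{n}u_k}(1-su_k)^{l}\prod_{i<l}\tfrac{1}{1+u_k\theta_i}\prod_{m\ne k}\tfrac{qu_k-u_m}{u_k-u_m}$, which coincides with the residue of the $i=k$ term on the right. For (b), the residues at $\theta_l=\theta_i$ of the $j=l$ and $j=i$ summands are $\pm\,\theta_i\bigl(\tfrac{\theta_i+s}{\theta_i}\bigr)^{l}\prod_{i'\in\{1,\dots,l-1\}\setminus\{i\}}\tfrac{\theta_i}{\theta_i-\theta_{i'}}\prod_m\tfrac{1+u_m\theta_i/q}{1+u_m\theta_i}$ and cancel. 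For (c), the factor $\prod_{i<l}\tfrac{\theta_l}{\theta_l-\theta_i}$ vanishes to order $l-1$ at $\theta_l=0$, so it collapses the order‑$l$ pole of $(\theta_l+s)^{l}/\theta_l^{l}$ to a simple pole; using $\prod_m\tfrac{1+u_m\theta_l/q}{1+u_m\theta_l}=1$ at $\theta_l=0$, its residue is $\tfrac{(-1)^{l-1}s^{l}}{\theta_1\cdots\theta_{l-1}}$, which is exactly minus the residue $\tfrac{(-s)^{l}}{\theta_1\cdots\theta_{l-1}}$ of the additive term $\tfrac{(-s)^{l}}{\theta_1\cdots\theta_l}\,Id$ in $\mathfrak{E}$. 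The main (mild) obstacle is precisely this step (c): one has to track the degeneration of the order‑$l$ pole to a simple one and recognize that the constant $(-s)^{l}$ is chosen so as to cancel it — this is the structural reason that summand is present in $\mathfrak{E}$; the rest is routine residue arithmetic. (Given the lemma, \Cref{thm:sqW_eigen} then follows by expanding $\Pi$ via the Cauchy identity \eqref{eq:Cauchy-into1}, invoking \Cref{thm:sHL_eigen} with $t=q$ to see that $\widetilde{\mathfrak{D}}$ acts on $\mathsf{F}_{\lambda'}$ as multiplication by $q^{-\lambda_1}$, using linear independence of the $\mathsf{F}_{\lambda'}$, and rescaling through \eqref{eq:sqW_sqW_star_relation}.)
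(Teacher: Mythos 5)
Your proof is correct, and it takes a genuinely different route from the paper. You divide by $\Pi$, reduce the claim to an explicit rational identity in $(u_1,\dots,u_n,\theta_1,\dots,\theta_l)$, and then verify it in the single variable $\theta_l$ by matching residues at $\theta_l=-1/u_k$, checking cancellation of the spurious poles at $\theta_l=\theta_i$ and at $\theta_l=0$ (where the constant term $(-s)^l/(\theta_1\cdots\theta_l)\,Id$ in $\mathfrak{E}$ indeed exactly kills the simple pole surviving from the order-$l$ singularity of $((\theta_l+s)/\theta_l)^l$ against the order-$(l-1)$ zero of $\prod_{i<l}\theta_l/(\theta_l-\theta_i)$), and comparing the common value $q^{-n}$ at infinity; all of these computations check out, and the conclusion that a pole-free rational function vanishing at infinity is zero is sound (with genericity of the frozen variables handled by rational continuation). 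The paper instead proves the lemma by writing both $\mathfrak{E}$ and $\widetilde{\mathfrak{D}}$, acting on functions of product form, as single contour integrals (over a contour around $0$ and the $\theta_i$'s, respectively around $0$ and the $u_j$'s), specializing to $\Pi$, and observing that the two integrands coincide under the substitution $w=-1/z$. The two arguments are close in spirit — the paper's contour identity is residue calculus packaged globally, while yours is residue bookkeeping variable by variable — but yours is more elementary in that it avoids choosing and matching contours, at the cost of a slightly longer case-by-case verification; the paper's version makes the structural origin of the integral representations (and hence of later $q$-moment formulas such as \eqref{eq:sHL_op_integral_rep}) more visible. Your concluding derivation of \Cref{thm:sqW_eigen} from the lemma also parallels the paper, except that the paper extracts coefficients via the orthogonality relation of \Cref{prop:orthog_FF} rather than an appeal to linear independence.
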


\begin{proof}
	When $\mathop{\mathfrak{E}}$ acts on a factorized function 
	$G(\theta_1, \dots, \theta_l)=g(\theta_1) \cdots g(\theta_l)$, it admits the integral representation
	\begin{equation} \label{eq:sqW_op_integral_rep}
		\mathfrak{E}\, G(\theta_1, \dots, \theta_l) 
			=
			G(\theta_1, \dots, \theta_l) 
			\frac{1}{2\pi\mathrm{i}}\oint_{\gamma_{0,\boldsymbol \theta}} \,
			\prod_{i=1}^l \frac{z + s}{z-\theta_i} \frac{g(q^{-1} z )}{g(z)} \frac{dz}{z}.
	\end{equation}
	Here $\gamma_{0, \boldsymbol \theta}$ is a positively 
	oriented contour (or a union of contours)
	encircling $0$, $\theta_i$ for $i=1,\dots, l$, and no other singularity of the integrand. 
	Here the function $g(z)$ should be such that we can choose a contour 
	in whose neighborhood the expression $g(q^{-1}z)/g(z)$ is holomorphic, and such that 
	no singularities of $g(q^{-1}z)/g(z)$ fall inside $\gamma_{0,\boldsymbol\theta}$.
	
	Analogously, the action of $\widetilde{\mathop{\mathfrak{D}}}$ on 
	$H(u_1, \dots, u_n) = h(u_1) \cdots h(u_n)$, where $h(0)=1$, has the form
	\begin{equation} \label{eq:sHL_op_integral_rep}
			\widetilde{\mathfrak{D}} H (u_1, \dots, u_n) 
			=
			H (u_1, \dots, u_n) \frac{1}{2\pi\mathrm{i}}\oint_{\gamma_{0,\boldsymbol u}} 
			\prod_{j=1}^n \frac{w - q^{-1}u_j }{w - u_j} \frac{1}{h(w)} \frac{dw}{w}.  
	\end{equation}
	The contour $\gamma_{0, \boldsymbol u}$ is 
	positively oriented and contains $0$, $u_i$ for $i=1, \dots ,n$ 
	and no other singularity of the integrand
	(again, under suitable assumptions on $h(w)$).
	Both integral expressions \eqref{eq:sqW_op_integral_rep} and 
	\eqref{eq:sHL_op_integral_rep} follow by straightforward
	residue calculus.
	
	Let now both operators $\mathfrak{E}$ and $\widetilde{\mathfrak{D}}$
	act on the function $\Pi$ \eqref{eq:sHL_sqW_partition_function}
	which has product form in both families of variables $u_i$ and $\theta_j$.
	We assume that $1+\theta_ju_i\ne 0$ for all $i,j$ 
	(otherwise $\Pi$ is identically zero).
	From \eqref{eq:sqW_op_integral_rep} we have
	\begin{equation*}
		\frac{{\mathfrak{E}}\,  \Pi (u_1,\dots,u_n ; \theta_1,\dots,\theta_l)}
		{ \Pi (u_1,\dots,u_n ; \theta_1,\dots,\theta_l)} 
		=
		\frac{1}{2\pi\mathrm{i}}\oint_{\gamma_{0,\boldsymbol \theta}} 
		\prod_{i=1}^l \frac{z + s}{z-\theta_i} \prod_{j=1}^n \frac{1+q^{-1} u_j z}{1+u_j z} \frac{dz}{z}.
	\end{equation*}
	On the other hand, from \eqref{eq:sHL_op_integral_rep} we have
	\begin{equation*}
		\frac{\widetilde{{\mathfrak{D}}}\, \Pi(u_1,\dots,u_n ; \theta_1,\dots,\theta_l)}
		{ \Pi (u_1,\dots,u_n ; \theta_1,\dots,\theta_l)} 
		=
		\frac{1}{2\pi\mathrm{i}}\oint_{\gamma_{0,\boldsymbol u}} 
		\prod_{j=1}^n \frac{w - q^{-1}u_j }{w - u_j} 
		\prod_{i=1}^l \frac{1-sw}{1+\theta_i w} \frac{dw}{w}.
	\end{equation*}
	The previous two expressions
	are identical after a change of variables $w=-1/z$ (note that the
	extra minus sign corresponds to changing the contour's orientation).
\end{proof}

\begin{proof}[Proof of Theorem \ref{thm:sqW_eigen}]
    First recall the Cauchy identity 
		between the sHL and sqW functions (\Cref{sub:sHL_sqW_structure}):
    \begin{equation} \label{eq:Cauchy_id_sHL_sqW}
        \sum_{\lambda} \mathsf{F}_\lambda (u_1, \dots , u_n) \mathbb{F}^*_{\lambda'} (\theta_1, \dots, \theta_l) = \Pi (u_1, \dots, u_n ; \theta_1, \dots, \theta_l),
    \end{equation}
    Combining \eqref{eq:equality_sHL_sqW_operator} with \eqref{eq:Cauchy_id_sHL_sqW}
    and employing the eigenrelation of sHL functions given by \Cref{thm:sHL_eigen}, we obtain
    \begin{equation} 
			\sum_\mu \mathsf{F}_\mu(u_1, \dots, u_n) \,
			{\mathfrak{E}} \, 
			\mathbb{F}^*_{\mu'}( \theta_1, \dots, \theta_l) 
			=
			\sum_\mu q^{-\ell (\mu)} \mathsf{F}_\mu (u_1, \dots, u_n)
			\mathbb{F}^*_{\mu'}(\theta_1, \dots, \theta_l).
    \end{equation}
		This
		implies the equality between single terms of the summations due to 
		orthogonality of the sHL functions
		(\Cref{prop:orthog_FF}). Because $\ell(\mu)=\mu_1'$, we get the desired
		eigenrelation.
\end{proof}

\section{Fredholm determinants for marginal processes} \label{sub:fredholm_determinant_for_marginal_processes}

Here we derive Fredholm determinant expressions for the
$q$-Laplace transform of the random variable $-\ell (
\lambda^{(x,y)} )$, where $\boldsymbol\lambda=\{\lambda^{(x,y)}\}$
is one of the Yang-Baxter fields described in \Cref{sec:new_three_fields}.
In the sHL/sHL case, these Fredholm formulas are known
\cite{BCG6V}, \cite{Amol2016Stationary}.
In the sHL/sqW case, they are present in the literature
for the step and step-stationary boundary conditions
\cite{CorwinPetrov2015}, \cite{BorodinPetrov2016inhom},
\cite{BorodinPetrov2016Exp}.
A Fredholm determinantal formula for the stochastic higher spin six vertex model
appears also in the recent work \cite{imamura2019stationary}, 
though here we establish a different formula for this case.
In the sqW/sqW case, a similar Fredholm formula for the $q$-Hahn PushTASEP
was recently conjectured
in \cite{CMP_qHahn_Push}, and here we prove this conjecture.

In this section we return to calling the main quantization parameter
by $q$ throughout.

\subsection{Six vertex model observables through difference operators}

In this subsection we rederive known results about the $q$-moments of the
six vertex model 
\cite{BCG6V},
\cite{BorodinPetrov2016inhom} 
making use of the difference operators acting on spin
Hall-Littlewood functions.
Consider the inhomogeneous stochastic six vertex model with the step boundary
conditions and height function $\mathfrak{h}^{\mathrm{6V}}$. 
Recall that the model depends on
the parameters $v_x$ and $u_y$, $x,y\in \mathbb{Z}_{\ge1}$,
which we assume positive (cf.~\Cref{ssub:6v}).
Let $u_1, u_2, \dots$ be spaced in
such a way that
\begin{equation} \label{eq:condition_u_q_moments}
		q \sup_i\{u_i\} < \inf_i \{ u_i \}.
\end{equation}

\begin{proposition}\label{prop:6VM_q_moments}
	Under \eqref{eq:condition_u_q_moments} we have
    \begin{equation} \label{eq:6vm_q_moment_nested}
    \begin{split}
			\mathbb{E}^{\mathrm{step}}\bigl( q^{l\, \mathfrak{h}^{\mathrm{6V}} (x+1,y )} \bigr) 
			&= 
			q^{l(l-1)/2} \oint_{\gamma[\mathbf{u}|1] } \cdots \oint_{\gamma[\mathbf{u}|l] } 
			\prod_{1 \le A < B \le l} \frac{z_A - z_B}{z_A - q z_B}
			\\
			& \hspace{30pt} \times
			\prod_{k=1}^l \left\{ \prod_{i=1}^y \frac{q z_k - u_i}{z_k - u_i} \prod_{i=1}^x \frac{1 - z_k v_i}{1- q z_k v_i} \frac{dz_k}{2 \pi \mathrm{i} z_k} \right\},
    \end{split}
    \end{equation}
    where the positively orientated contour 
		$\gamma[\mathbf{u}|j]=\gamma_\mathbf{u} \cup r^{j-1}C_0$ for $z_j$
		is the union of a curve $\gamma_\mathbf{u}$ that encircles $u_1,\dots, u_y$
		and no other pole of the integrand, 
		and the dilation $r^{j-1}C_0$ of an arbitrary small 
		circle $C_0$ around $0$.
		Moreover, 
		$r>q^{-1}$, and the shifted contour $q\gamma_\mathbf{u}$ must
		lie completely to the left of $\gamma_\mathbf{u}$ and completely
		to the right of $r^{l-1}C_0$.
\end{proposition}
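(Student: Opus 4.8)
The plan is to transport the observable to the sHL/sHL Yang--Baxter field via the matching \Cref{thm:length_sHL_sHL_height_6VM}, express the $q$-moment through the action of a first-order Hall--Littlewood difference operator on the Cauchy kernel, and then unfold that action into a nested contour integral using the integral representation already obtained in the proof of \Cref{lemma:sqW_duality_eigen}.

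\textbf{Step 1: reduction to the field.} By \Cref{thm:length_sHL_sHL_height_6VM}, if $\boldsymbol\lambda=\{\lambda^{(x,y)}\}$ is the sHL/sHL Yang--Baxter field with step boundary conditions, then $\mathfrak{h}^{\mathrm{6V}}(x+1,y)$ and $y-\ell(\lambda^{(x,y)})$ are equal in distribution, hence
\[
\mathbb{E}^{\mathrm{step}}\bigl(q^{l\,\mathfrak{h}^{\mathrm{6V}}(x+1,y)}\bigr)=q^{ly}\,\mathbb{E}\bigl(q^{-l\,\ell(\lambda^{(x,y)})}\bigr).
\]
Using the single-point law of $\lambda^{(x,y)}$ from \Cref{sub:new_YB_field_sHL_sHL}, this equals
\[
q^{ly}\prod_{\substack{1\le i\le x\\ 1\le j\le y}}\frac{1-u_jv_i}{1-qu_jv_i}\;\sum_{\nu}q^{-l\,\ell(\nu)}\,\mathsf{F}_\nu(u_1,\dots,u_y)\,\mathsf{F}^*_\nu(v_1,\dots,v_x).
\]

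\textbf{Step 2: insert the operator and re-sum.} Let $\mathcal{D}:=q^{y}\widetilde{\mathfrak{D}}=Id+(q-1)\mathfrak{D}_1$ act in the $y$ variables $u_1,\dots,u_y$ (with quantization parameter $q$), in the notation of \eqref{eq:D_tilde}. By \Cref{thm:sHL_eigen} with $r=1$ the eigenvalue of $\mathfrak{D}_1$ on $\mathsf{F}_\nu$ is $\tfrac{1-q^{y-\ell(\nu)}}{1-q}$, so $\mathcal{D}^{l}\mathsf{F}_\nu=q^{l(y-\ell(\nu))}\mathsf{F}_\nu$; the specialisations $T_{0,u_i}$ inside $\mathcal{D}$ are harmless because of the stability \eqref{eq:sHL_stability}. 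Applying $\mathcal{D}^{l}$ term by term to the sum in Step~1 and re-summing by the $\lambda=\mu=\varnothing$ case of the skew Cauchy identity of \Cref{thm:skew_Cauchy_sHL_sHL}, namely $\sum_\nu\mathsf{F}_\nu(u_1,\dots,u_y)\mathsf{F}^*_\nu(v_1,\dots,v_x)=\prod_{i,j}\tfrac{1-qu_jv_i}{1-u_jv_i}$, gives
\[
\mathbb{E}^{\mathrm{step}}\bigl(q^{l\,\mathfrak{h}^{\mathrm{6V}}(x+1,y)}\bigr)
=\prod_{\substack{1\le i\le x\\ 1\le j\le y}}\frac{1-u_jv_i}{1-qu_jv_i}\;\cdot\;\mathcal{D}^{l}\Biggl[\ \prod_{\substack{1\le i\le x\\ 1\le j\le y}}\frac{1-qu_jv_i}{1-u_jv_i}\ \Biggr].
\]
The one point that needs care here is the exchange of $\mathcal{D}^{l}$ with the infinite sum over $\nu$: it follows from absolute summability of $\sum_\nu\lvert\mathsf{F}_\nu(u_1,\dots,u_y)\mathsf{F}^*_\nu(v_1,\dots,v_x)\rvert$ (together with the same bound after applying finitely many $T_{0,u_i}$'s and rational prefactors), in the spirit of \Cref{prop:sHL_absolute_integrability}, and this is exactly where the spacing hypothesis \eqref{eq:condition_u_q_moments} on the $u_i$ is used --- it ensures the curves introduced below can be chosen with the required separation.

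\textbf{Step 3: unfold into the nested integral, and the main obstacle.} Write the target as $\prod_{j=1}^y h(u_j)$ with $h(u)=\prod_{i=1}^x\tfrac{1-quv_i}{1-uv_i}$, $h(0)=1$. The integral representation \eqref{eq:sHL_op_integral_rep} from the proof of \Cref{lemma:sqW_duality_eigen}, multiplied by $q^{y}$ and rewritten via $q^{y}\prod_{j}\tfrac{w-q^{-1}u_j}{w-u_j}=\prod_{j=1}^y\tfrac{qw-u_j}{w-u_j}$, gives
\[
\mathcal{D}\Bigl[\textstyle\prod_{j=1}^y h(u_j)\Bigr]=\prod_{j=1}^y h(u_j)\cdot\frac{1}{2\pi\mathrm{i}}\oint_{\gamma_{0,\mathbf{u}}}\prod_{j=1}^y\frac{qw-u_j}{w-u_j}\prod_{i=1}^x\frac{1-wv_i}{1-qwv_i}\,\frac{dw}{w},
\]
which, combined with Step~2, is exactly \eqref{eq:6vm_q_moment_nested} for $l=1$ (note $q^{l(l-1)/2}=1$ there, and the prefactor cancels against the emergent $\prod_j h(u_j)$). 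For general $l$ one iterates this identity: writing $\mathcal{E}_{1}$ as a contour integral of product functions in $u$ and applying $\mathcal{D}$ under the integral sign, each further application produces one more integration variable, the cross-terms $\prod_{A<B}\tfrac{z_A-z_B}{z_A-qz_B}$ arising from the interaction of the new variable with the ones already present, and an extra factor of $q^{l-1}$ at the $l$-th step, yielding the overall $q^{l(l-1)/2}$; this is the standard unfolding of a Macdonald-type operator (cf.\ \cite{BorodinCorwinSasamoto2012}). \emph{The main obstacle is the contour bookkeeping in this iteration}: at the $j$-th step the residue realising $T_{0,u_i}$ forces the new variable $z_j$ to encircle $u_1,\dots,u_y$, but the shifts $u_i\mapsto qu_i$ produced by the earlier applications create poles that must be excluded from, or separated by dilation within, the $z_j$-contour; tracking these constraints through all $l$ steps is precisely what produces the requirement $\gamma[\mathbf{u}|j]=\gamma_{\mathbf{u}}\cup r^{j-1}C_0$ with $r>q^{-1}$ and with $q\gamma_{\mathbf{u}}$ squeezed strictly between $\gamma_{\mathbf{u}}$ and $r^{l-1}C_0$, as stated. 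Substituting the resulting expression for $\mathcal{D}^{l}[\prod_j h(u_j)]$ into Step~2 and cancelling the prefactor against $\prod_j h(u_j)$ completes the proof.
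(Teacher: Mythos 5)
Your proposal is correct and follows essentially the same route as the paper's proof: identify $\mathfrak{h}^{\mathrm{6V}}(x+1,y)$ with $y-\ell(\lambda^{(x,y)})$ for the sHL/sHL Yang-Baxter field, use the eigenrelation of \Cref{thm:sHL_eigen} (through $\widetilde{\mathfrak{D}}$, equivalently your $\mathcal{D}=q^y\widetilde{\mathfrak{D}}$) to write the $q$-moment as $q^{ly}\,\widetilde{\mathfrak{D}}^l\Pi/\Pi$, and then unfold via the integral representation \eqref{eq:sHL_op_integral_rep}. Your Steps 1--2 simply spell out, via the single-point measure and the Cauchy identity, what the paper states directly, and your Step 3 iteration with the contour bookkeeping is exactly the "recursive application" the paper invokes.
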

\begin{proof}
		This is an application of the eigenrelations from
		\Cref{thm:sHL_eigen}. 
		Recall the operator
		$\widetilde{\mathop{\mathfrak{D}}}$ \eqref{eq:D_tilde}
		acting diagonally on the sHL functions as
    \begin{equation*}
        \widetilde{\mathop{\mathfrak{D}}} \mathsf{F}_\lambda = q^{- \ell (\lambda)} \mathsf{F}_\lambda.
    \end{equation*}
    From \Cref{sub:new_YB_field_sHL_sHL} we 
		have the identification
		$\mathfrak{h}^{\mathrm{6V}}(x+1,y) = y - \ell ( \lambda^{(x,y)} )$,
		where $\lambda^{(x,y)}$ is the sHL/sHL field.
		Therefore, we have
    \begin{equation*}
			\mathbb{E}^{\mathrm{step}}\bigl( q^{l\, \mathfrak{h}^{\mathrm{6V}} (x+1,y )} \bigr)
			=
			q^{ly}\,
			\frac{\widetilde{\mathfrak{D}}^l \Pi(u_1,\dots,u_y ; v_1, \dots, v_x) }
			{ \Pi(u_1,\dots,u_y ; v_1, \dots, v_x) },
    \end{equation*}
    where 
    \begin{equation*}
        \Pi(u_1,\dots,u_y ; v_1, \dots, v_x) 
				= \prod_{i=1}^y \prod_{j=1}^x \frac{1-q u_i v_j}{ 1-u_i v_j }.
    \end{equation*}
		The nested contour formula
		\eqref{eq:6vm_q_moment_nested} follows by recursively applying integral
		expression \eqref{eq:sHL_op_integral_rep} for the action of
		$\widetilde{\mathfrak{D}}$ on factorized functions.
\end{proof}

\Cref{prop:6VM_q_moments} combined with well-known manipulations
of summations of nested contour integrals like 
\eqref{eq:6vm_q_moment_nested} 
(e.g., see \cite[Section 3]{BorodinCorwinSasamoto2012})
give rise to a Fredholm determinant\footnote{On Fredholm determinants in general see, e.g.,
\cite{Bornemann_Fredholm2010}.}
expression for the one-point distribution of
$\mathfrak{h}^{\mathrm{6V}}$.

\begin{theorem} \label{thm:6vm_q_laplace}
Consider the stochastic six vertex model with step boundary conditions. We have
\begin{equation} \label{eq:6vm_q_Laplace}
    \mathbb{E}^{\mathrm{step}}
		\left( \frac{1}{(\zeta q^{\mathfrak{h}^{\mathrm{6V}} (x+1,y) } ;q)_\infty} \right) 
		=
		\det\left(Id + \mathsf{K} \right)_{ L^2(\mathsf{C}) },
		\qquad 
		\zeta\in\mathbb{C} \setminus {\mathbb{R}_{>0}}.
\end{equation}
The expression in the right-hand side of \eqref{eq:6vm_q_Laplace} is the Fredholm determinant of the kernel
\begin{equation}
	\label{eq:6V_Fredholm_kernel}
	\mathsf{K}(w,w') 
	= 
	\frac{1}{2 \mathrm{i}} \int_{d + \mathrm{i}\mathbb{R} } 
	\frac{(-\zeta)^r}{ \sin(\pi r) } 
	\frac{\mathsf{f}(w)/\mathsf{f}(q^r w) }{q^r w - w'}\, dr,
\end{equation}
where $d \in (0,1)$, and 
\begin{equation*}
    \mathsf{f}(w) = \prod_{i=1}^y (w - u_i)^{-1} \prod_{i=1}^x (1- v_i w).
\end{equation*}
The kernel $\mathsf{K}$ is defined on the Hilbert space $L^2
(\mathsf{C})$, where $\mathsf{C}$ is a closed positively oriented curve encircling
$0,u_1,u_2, \dots$ such that, for all $r \in d +
\mathrm{i}\mathbb{R}$, $\mathsf{C}$ contains $q^r \mathsf{C}$ but not
$q^{-r} v_i^{-1}$ for $i=1,2,\dots$.
\end{theorem}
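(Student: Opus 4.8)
The plan is to derive the Fredholm determinant expression in \eqref{eq:6vm_q_Laplace} from the nested contour integral $q$-moment formula \eqref{eq:6vm_q_moment_nested} of \Cref{prop:6VM_q_moments} by the standard generating-function machinery, following the approach of \cite[Section 3]{BorodinCorwinSasamoto2012}. First I would observe that because $0\le \mathfrak{h}^{\mathrm{6V}}(x+1,y)\le x+1$ is bounded, all $q$-moments are automatically finite and the $q$-Laplace transform $\mathbb{E}^{\mathrm{step}}\bigl(1/(\zeta q^{\mathfrak{h}};q)_\infty\bigr)$ is a genuine convergent series in the $q$-moments via the $q$-binomial theorem identity
\begin{equation*}
	\frac{1}{(\zeta q^{h};q)_\infty}
	=
	\sum_{l\ge0}\frac{\zeta^l q^{lh}}{(q;q)_l},
\end{equation*}
valid for $\zeta\in\mathbb{C}\setminus\mathbb{R}_{>0}$. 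Substituting \eqref{eq:6vm_q_moment_nested} for $\mathbb{E}^{\mathrm{step}}(q^{l\,\mathfrak{h}^{\mathrm{6V}}})$ and interchanging the (absolutely convergent, after a suitable choice of contours) sum over $l$ with the contour integrations, I would recognize the resulting series as a Fredholm determinant.

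The key technical step is the contour deformation/rescaling lemma (the analogue of \cite[Proposition 3.2.1]{BorodinCorwinSasamoto2012}) that converts the $l$-fold nested contours $\gamma[\mathbf u|1],\ldots,\gamma[\mathbf u|l]$ — which live on nested dilations of a small circle around the origin together with a fixed loop $\gamma_{\mathbf u}$ around $u_1,\ldots,u_y$ — into $l$ copies of a single fixed contour $\mathsf C$, at the price of introducing the cross terms $\prod_{A<B}\frac{z_A-z_B}{z_A-qz_B}$ into a determinantal structure. Concretely, after this step the $l$-th term becomes
\begin{equation*}
	\frac{\zeta^l q^{l(l-1)/2}}{(q;q)_l}\,q^{ly}
	\oint_{\mathsf C}\cdots\oint_{\mathsf C}
	\det\Bigl[\frac{1}{q^{\,?}z_A-z_B}\Bigr]_{A,B=1}^{l}
	\prod_{k}\mathsf f(z_k)\ldots,
\end{equation*}
and one uses the Mellin--Barnes / sine-integral trick
\begin{equation*}
	\sum_{l\ge0}\frac{\zeta^l}{(q;q)_l}(\cdots)_l
	=
	\det\bigl(Id+\mathsf K\bigr),
	\qquad
	\mathsf K(w,w')=\frac{1}{2\mathrm i}\int_{d+\mathrm i\mathbb R}\frac{(-\zeta)^r}{\sin(\pi r)}\frac{\mathsf f(w)/\mathsf f(q^r w)}{q^r w-w'}\,dr,
\end{equation*}
where $\mathsf f(w)=\prod_{i=1}^y(w-u_i)^{-1}\prod_{i=1}^x(1-v_i w)$ absorbs the factors $\prod_i\frac{qz_k-u_i}{z_k-u_i}\prod_j\frac{1-z_kv_j}{1-qz_kv_j}$ together with the prefactor $q^y$ (so that $\mathsf f(w)/\mathsf f(qw)$ reproduces the integrand of \eqref{eq:6vm_q_moment_nested}). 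One must check that the contour $\mathsf C$ can be chosen to encircle $0,u_1,u_2,\ldots$ while excluding $q^{-r}v_i^{-1}$ for all $r\in d+\mathrm i\mathbb R$, which is possible precisely because $v_i>0$ and $q\in(0,1)$; the residue at $r=l\in\mathbb Z_{\ge1}$ of $\frac{\pi}{\sin(\pi r)}$ recovers the $l$-th moment term, and convergence of the Fredholm series follows from the exponential decay of $1/\sin(\pi r)$ along the vertical line together with boundedness of $\mathsf f$ on $\mathsf C$.

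The main obstacle I anticipate is the bookkeeping in the contour-rescaling step: tracking how the small-circle pieces $r^{j-1}C_0$ of the nested contours collapse onto a common contour and verifying that no spurious poles are crossed (in particular the poles of $\frac{1}{q z_k v_i-1}$ at $z_k=q^{-1}v_i^{-1}$ must stay outside throughout the deformation), and confirming that the interchange of summation and integration is legitimate with the chosen contours — this requires a uniform geometric bound on the $l$-fold integral in $l$, which is where the hypothesis \eqref{eq:condition_u_q_moments}, $q\sup_i u_i<\inf_i u_i$, and the freedom in choosing $r>q^{-1}$ and the radius of $C_0$ get used. Since all of this is essentially identical to the arguments already carried out in \cite{BCG6V}, \cite{BorodinCorwinSasamoto2012}, \cite{BorodinPetrov2016inhom} for the same six vertex model, I would phrase the proof as a reduction: cite \Cref{prop:6VM_q_moments} for the moments and then invoke the general formalism of \cite[Section 3]{BorodinCorwinSasamoto2012} (or the specific six vertex computation of \cite{BCG6V}) to assemble them into \eqref{eq:6vm_q_Laplace}, spelling out only the identification of $\mathsf f$, the kernel $\mathsf K$, and the admissible contour $\mathsf C$.
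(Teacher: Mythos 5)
Your route is essentially the paper's own: start from the nested-contour $q$-moment formula of \Cref{prop:6VM_q_moments}, unnest the contours onto a single curve $\mathsf{C}$ containing $q\mathsf{C}$ (picking up the residues at $z_A=qz_B$, which produces the determinantal structure indexed by partitions), form the generating series $\sum_{l\ge0}\zeta^l/(q;q)_l$ of the moments, and identify the result as a Fredholm determinant whose kernel $\sum_{n\ge0}\frac{\zeta^n}{w'-q^nw}\,\mathsf{f}(w)/\mathsf{f}(q^nw)$ is rewritten in Mellin--Barnes form as \eqref{eq:6V_Fredholm_kernel}; like the paper, you defer the standard manipulations to \cite{BorodinCorwinSasamoto2012} and \cite{BCG6V}, so the architecture matches.

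One step is misstated and should be fixed. The expansion $1/(\zeta q^{h};q)_\infty=\sum_{l\ge0}\zeta^l q^{lh}/(q;q)_l$ is \emph{not} valid for all $\zeta\in\mathbb{C}\setminus\mathbb{R}_{>0}$: the series has radius of convergence $|\zeta q^{h}|<1$, and since $\mathfrak{h}^{\mathrm{6V}}(x+1,y)$ can be $0$ this forces $|\zeta|$ small at this stage, in addition to the spacing condition \eqref{eq:condition_u_q_moments} already needed for the moment formula. The theorem carries neither restriction, so the argument must conclude with an explicit analytic-continuation step: both sides of \eqref{eq:6vm_q_Laplace} are analytic in $\zeta$ and in the parameters $u_i,v_j$ (the left side is a finite sum of functions $1/(\zeta q^{h};q)_\infty$ with $h\ge0$, whose poles lie in $\mathbb{R}_{>0}$; the right side by uniform bounds on the kernel over the compact contour $\mathsf{C}$), so the identity established for small $|\zeta|$ and spaced $u_i$'s extends to all $u_i,v_j\in(0,1)$ and $\zeta\in\mathbb{C}\setminus\mathbb{R}_{>0}$, after checking that an admissible contour $\mathsf{C}$ exists for some $d\in(0,1)$. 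This continuation is exactly how the paper finishes its proof; in your write-up it is asserted incorrectly at the outset and left implicit in the appeal to the general formalism, so it should be made explicit.
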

We present the main steps of the proof of
the Fredholm determinantal formula,
and refer to 
\cite{BorodinCorwin2011Macdonald} or 
\cite{BorodinCorwinSasamoto2012} 
for detailed explanations.
\begin{proof}[Idea of proof of \Cref{thm:6vm_q_laplace}]
	Assume first \eqref{eq:condition_u_q_moments} and $|\zeta|<1/q$, and
	consider the nested contour expression \eqref{eq:6vm_q_moment_nested}. 
	We can deform 
	all contours, one by one, to be the same $\mathsf{C}$ around 
	$0,u_1,u_2, \dots$, and such that $\mathsf{C}$
	contains its image under multiplication by
	$q$. This contour shift will cross poles $z_A=q z_B$, $A<B$,
	and one can rewrite \eqref{eq:6vm_q_moment_nested} as
	\begin{equation*}
			(q;q)_l \sum_{\lambda \vdash l} \frac{1}{m_1!m_2! \cdots}
			\int_{\mathsf{C}} \cdots \int_{\mathsf{C}} \,\det_{i,j=1}^{\ell
			(\lambda) }\left( \frac{1}{w_i - q^{\lambda_j}w_j } \right)
			\prod_{k=1}^{\ell (\lambda)} f(w_k) / f(q^{\lambda_k}w_k)
			\frac{dz_k}{2 \pi \mathrm{i}},
	\end{equation*}
	where the sum is taken over all partitions $\lambda$ of $l$,
	and $m_i=m_i(\lambda)$ are the multiplicities of the parts $i$ in $\lambda$.
	Summing over $l$, we have
	\begin{equation*}
		\sum_{l \ge 0} \frac{\zeta^l}{(q;q)_l} \,\mathbb{E}^{\mathrm{step}} 
		\bigl( q^{l\, \mathfrak{h}^{\mathrm{6V}} (x+1,y )} \bigr) 
		= 
		\mathbb{E}^{\mathrm{step}} 
		\left( \frac{1}{(\zeta q^{\mathfrak{h}^{\mathrm{6V}} (x+1,y) } ;q)_\infty} \right),
	\end{equation*}
	where we used the absolute summability of the left-hand side (since
	$0<q^{l\,\mathfrak{h}^{\mathrm{6V}}}<1$ and we assumed $|\zeta| <
	1/q$) to exchange the summation with the expectation sign and the
	$q$-binomial theorem. The result we obtain is the Fredholm determinant
	of the kernel 
    \begin{equation*}
        \sum_{n \geq 0} \frac{\zeta^n}{w'-q^n w} \mathsf{f}(w)/\mathsf{f}(q^n w) = \frac{1}{2 \mathrm{i}} \int_{ d + \mathrm{i} \mathbb{R} } \frac{(-\zeta)^r}{\sin(\pi r)} \frac{\mathsf{f}(w)/\mathsf{f}(q^r w)}{q^r w - w'}.
    \end{equation*}
		Once we reach \eqref{eq:6vm_q_Laplace}, we can relax
	conditions on $u_i$'s and $\zeta$ since both side are analytic
	functions of their parameters. Formula \eqref{eq:6vm_q_Laplace}
	holds for any choice of $u_i,v_j \in (0,1)$ and $\zeta \in
	\mathbb{C} \setminus q^{\mathbb{Z}_{\ge 0}}$
	(in particular, we can always find $d$ in \eqref{eq:6V_Fredholm_kernel}
	such that $\mathsf{C}$ satisfies the required properties).
\end{proof}

In the next theorem we perform fusion of the sHL parameters.
Recall the principal specializations
$\mathfrak{F}^{(J_0,\dots, J_y)}(u_0,\dots, u_y),
\mathfrak{G}^{(I_0,\dots, I_x)}(v_0,\dots, v_x)$ defined in
\eqref{eq:F_principal_spec}, \eqref{eq:G_principal_spec}. Parameters
$u_l,J_l,v_k,I_k$ are complex numbers satysfying 
\eqref{eq:condition_parameters_general_skew_Cauchy} which we reproduce here:
\begin{equation}
	\label{eq:condition_parameters_general_skew_Cauchy_S9}
	|s|,|u_k|,|v_l|,|q^{J_k} u_k|,|q^{I_l} v_l|,
	\left| \frac{q^i u_k - s}{1 - q^i s u_k} \right|, 
	\left| \frac{q^i v_l - s}{1 - q^i s v_l} \right|< \delta \qquad \text{for all }
	0\le k\le y,\ 
	0\le l\le x,\ 
	i\ge0.
\end{equation}
for sufficiently small $\delta>0$ which might depend on $x,y$, but not on the other parameters.
\begin{theorem} 
	\label{thm:q_Laplace_princ_spec}
	With the above notation, we have for all
	$\zeta\in\mathbb{C} \setminus {\mathbb{R}_{>0}}$:
	\begin{equation} 
		\label{eq:q_Laplace_princ_spec}
		\begin{split}&
		\prod_{\substack{0\le k\le y\\0\le j\le x}}
				\frac{(u_k v_l;q)_\infty (u_k v_l q^{I_l + J_k};q)_\infty}{(u_k v_l q^{I_l};q)_\infty (u_k v_l q^{J_k};q)_\infty} 
				\\&\hspace{70pt}\times\sum_{\lambda} 
				\frac{
				\mathfrak{F}_{\lambda}^{(J_0, \dots, J_y)}(u_0, \dots, u_y) 
				\mathfrak{G}^{(I_0, \dots, I_x)}_{\lambda} (v_0, \dots, v_x) 
				}{(\zeta q^{- \ell (\lambda) } ;q )_\infty} 
				= \det\left( Id + K \right)_{L^2( C )}.
			\end{split}
	\end{equation}
	The kernel $K$ is defined as
	\begin{equation*}
			K(w,w') = 
			\frac{1}{2 \mathrm{i}} \int_{d + \mathrm{i}\mathbb{R} } 
			\frac{\bigl(-\zeta \bigr)^r}{ \sin(\pi r) } \frac{f(w)/f(q^r w) }{q^r w - w'} dr,
	\end{equation*}
	where $d \in (0,1)$ and
	\begin{equation*}
			f(w) = \prod_{k=0}^y 
			\frac{(q^{J_k}u_k/w;q)_\infty}{(u_k/w;q)_\infty}
			\prod_{l=0}^x \frac{(v_l w ; q)_\infty}{(q^{I_l} v_l w ; q )_\infty}.
	\end{equation*}
	The contour $C$ is a closed positively oriented curve encircling
	$0,q^k u_i$ for $k,i\ge 0$ and such that, for all $r \in d +
	\mathrm{i}\mathbb{R}$, $C$ contains $q^r C$ and $q^{r+k} q^{J_l}u_l$
	for all $k,l\ge 0$, but leaves outside $1/(q^{r+k}v_l)$ and
	$1/(q^kq^{I_l} v_l)$ for all $k, l \ge 0$. 
\end{theorem}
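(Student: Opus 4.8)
\textbf{Proof strategy for Theorem~\ref{thm:q_Laplace_princ_spec}.} The plan is to mimic the proof of \Cref{thm:6vm_q_laplace} but starting from the \emph{fused} $q$-moment formula rather than the sHL/sHL one, and to treat the fused parameters $u_k, q^{J_k}, v_l, q^{I_l}$ as generic complex numbers by analytic continuation. First, for positive integers $J_0,\dots,J_y,I_0,\dots,I_x$ the function $\mathfrak F^{(J_0,\dots,J_y)}_\lambda \mathfrak G^{(I_0,\dots,I_x)}_\lambda$ is, by the fusion procedure and \Cref{thm:skew_Cauchy_sHL_sHL}, just the sHL/sHL single-point measure of \Cref{sub:new_YB_field_sHL_sHL} with spectral parameters replaced by the geometric progressions $u_k,qu_k,\dots,q^{J_k-1}u_k$ and $v_l,qv_l,\dots,q^{I_l-1}v_l$. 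Hence, when these parameters also satisfy the spacing condition \eqref{eq:condition_u_q_moments}, \Cref{prop:6VM_q_moments} gives a nested contour integral for $\mathbb E\bigl(q^{l\cdot(\text{shifted length})}\bigr)$. Carrying the shift through (as in the passage from \eqref{eq:6vm_q_moment_nested} to the statement of \Cref{thm:6vm_q_laplace}), we obtain nested contour integrals for $q^{-l\,\ell(\lambda)}$ whose integrand has the product form $\prod_k f(w_k)/f(q^{\lambda_k}w_k)$ with the $f$ from the statement; the $q$-Pochhammer infinite products appear because a finite product $\prod_{i}\frac{qz-q^{i-1}u}{z-q^{i-1}u}$ telescopes and, in the limit, the factors $\frac{u_kv_lq^{J_k}-\dots}{\dots}$ become the ratios $(q^{J_k}u_k/w;q)_\infty/(u_k/w;q)_\infty$ written in $f$.

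Second, I would run the standard generating-function machinery: deform all $l$ contours to a single contour $C$ enclosing $0$ and $q^k u_i$, picking up the residues at $z_A = q^{\lambda}z_B$, which converts the nested integral into the determinantal-kernel form $\det_{i,j}\bigl(1/(w_i - q^{\lambda_j}w_j)\bigr)$ summed over partitions $\lambda$; then sum over $l$ with weight $\zeta^l/(q;q)_l$ and use the $q$-binomial theorem to recognize $\sum_l \frac{\zeta^l}{(q;q)_l} q^{-l\ell(\lambda)} = 1/(\zeta q^{-\ell(\lambda)};q)_\infty$. This reproduces the Fredholm determinant of the kernel $K$ with the Mellin--Barnes contour integral over $r\in d+\mathrm i\mathbb R$, exactly as in \cite[Section 3]{BorodinCorwinSasamoto2012}. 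The interchange of $\sum_l$ and the $\lambda$-sum with the expectation is justified by absolute convergence, which for the $\lambda$-sum is precisely \Cref{prop:sHL_absolute_integrability} (this is where condition \eqref{eq:condition_parameters_general_skew_Cauchy_S9} enters), and for the $l$-sum by $0<q^{-l\ell(\lambda)}$ being summable against $\zeta^l/(q;q)_l$ for $|\zeta|$ small enough and $q<1$ together with the geometric decay of the $q$-moments.

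Third, I would remove the auxiliary restrictions. Both sides of \eqref{eq:q_Laplace_princ_spec} are analytic in the parameters $u_k, q^{J_k}, v_l, q^{I_l}$ on the region described by \eqref{eq:condition_parameters_general_skew_Cauchy_S9}: the left-hand side because the summand $\mathfrak F^{(J)}_\lambda \mathfrak G^{(I)}_\lambda$ is rational in $q^{J_k},q^{I_l}$ and the series converges absolutely and uniformly on compacts by \Cref{prop:sHL_absolute_integrability}, and the $q$-Pochhammer prefactor is analytic; the right-hand side because the Fredholm determinant is analytic in parameters on which the kernel depends analytically, once the contour $C$ and the vertical line $d+\mathrm i\mathbb R$ can be chosen uniformly (which the stated geometric conditions on $C$ guarantee — $C$ must contain $q^{r+k}q^{J_l}u_l$ and avoid $1/(q^{r+k}v_l)$ and $1/(q^kq^{I_l}v_l)$, which is possible under \eqref{eq:condition_parameters_general_skew_Cauchy_S9}). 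Since the two analytic functions agree on the nonempty open subset where $J_l,I_k$ are positive integers and \eqref{eq:condition_u_q_moments} holds, they agree on the whole region; finally the dependence on $\zeta$ is analytic on $\mathbb C\setminus\mathbb R_{>0}$ on both sides, so the identity extends from small $|\zeta|$ to all such $\zeta$.

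\textbf{Main obstacle.} The delicate point is not the residue bookkeeping (routine, and identical to \cite{BorodinCorwinSasamoto2012}) but the bookkeeping of the \emph{contour geometry after fusion}: one must verify that the single contour $C$ can simultaneously enclose all the poles $q^k u_i$ and all the ``pushed'' poles $q^{r+k}q^{J_l}u_l$ while excluding the growing families $1/(q^{r+k}v_l)$ and $1/(q^kq^{I_l}v_l)$, uniformly for $r$ on the critical line, and that the prefactor of infinite $q$-Pochhammer products is exactly the one produced by the telescoping of the finite products in \eqref{eq:6vm_q_moment_nested} under the specialization. Getting the product $\prod_{k,l}\frac{(u_kv_l;q)_\infty(u_kv_lq^{I_l+J_k};q)_\infty}{(u_kv_lq^{I_l};q)_\infty(u_kv_lq^{J_k};q)_\infty}$ to emerge with the correct normalization — matching the right-hand side of \eqref{eq:skew_Cauchy_general} — is the step that requires the most care, though it is ultimately a direct computation using the definitions \eqref{eq:F_principal_spec}, \eqref{eq:G_principal_spec} and \Cref{thm:skew_Cauchy_general}.
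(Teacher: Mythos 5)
Your overall strategy (specialize the six-vertex Fredholm formula to geometric progressions, telescope the finite products into the $q$-Pochhammer ratios in $f$, then continue analytically in $q^{J_k},q^{I_l}$ using \Cref{prop:sHL_absolute_integrability} for the left-hand side and a uniform kernel bound plus Hadamard for the right-hand side) is exactly the paper's. However, your base case has a genuine gap. You propose to apply \Cref{prop:6VM_q_moments} directly at the principally specialized parameters ``when these parameters also satisfy the spacing condition \eqref{eq:condition_u_q_moments}.'' But a geometric progression of ratio $q$ can never satisfy that condition: as soon as some $J_k\ge 2$, both $u_k$ and $qu_k$ appear in the parameter list, so $q\sup_i\{u_i\}\ge qu_k\ge\inf_i\{u_i\}$, contradicting $q\sup<\inf$. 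Consequently the set on which your Steps 1--2 actually establish the identity forces $J_k=1$ for all $k$; in particular it contains no variation of the $q^{J_k}$ variables beyond the single value $q$. Your Step 3 then claims agreement ``on the nonempty open subset where $J_l,I_k$ are positive integers and \eqref{eq:condition_u_q_moments} holds'' --- this set is not open in the $q^{J_k},q^{I_l}$ variables (agreement at integer $J$'s is usable only because the points $q^J$ accumulate at $0$ inside the domain), and, more seriously, with your hypotheses it provides only the single point $q^{J_k}=q$ in each $J$-direction, so the identity theorem gives you nothing in those variables. The analytic continuation to generic $q^{J_k}$, which is the whole point of the theorem, is therefore unjustified as written.

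The fix is the ordering the paper uses: first remove the spacing restriction at the \emph{unfused} level --- i.e., establish the Fredholm identity of \Cref{thm:6vm_q_laplace} under \eqref{eq:condition_u_q_moments} and small $|\zeta|$, and then relax to arbitrary $u_i,v_j\in(0,1)$ and $\zeta\in\mathbb{C}\setminus\mathbb{R}_{>0}$ by analyticity of both sides in those parameters (this is done at the end of the proof of \Cref{thm:6vm_q_laplace}). Only after that do you substitute the geometric progressions, perform the telescoping \eqref{eq:why_q_rJ_appears} and the replacement $\zeta\mapsto\zeta q^{-J_0-\cdots-J_y}$ to pass from the height function to $q^{-\ell(\lambda)}$, obtaining \eqref{eq:q_Laplace_princ_spec} for \emph{all} positive integer tuples $(J_k),(I_l)$ with $(u_k,v_l)$ ranging over open sets. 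That base case does give infinitely many values of each $q^{J_k}$ and $q^{I_l}$ accumulating at $0$, and then your continuation argument (absolute convergence of the $\lambda$-sum under \eqref{eq:condition_parameters_general_skew_Cauchy_S9} via \Cref{prop:sHL_absolute_integrability}, plus the uniform bound on $K$ and Hadamard's inequality for the Fredholm series) goes through exactly as you describe.
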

\begin{proof}
	Considering principal specializations in 
	\Cref{thm:6vm_q_laplace}, 
	we see that \eqref{eq:q_Laplace_princ_spec} holds
	for any $J_0, \dots J_y, I_0, \dots, I_x$ positive integers. 
	Indeed, this follows from the computation 
	for $I,J\in\mathbb{Z}_{\ge1}$:
	\begin{equation}
		\label{eq:why_q_rJ_appears}
		\begin{split}
			&
			\frac{q^r w-u}{w-u}
			\frac{q^r w-qu}{w-qu}
			\ldots 
			\frac{q^r w-u q^{J-1}}{w-u q^{J-1}}
			\frac{1-vw}{1-q^r vw}
			\frac{1-qvw}{1-q^r qvw}
			\ldots
			\frac{1-q^{I-1}vw}{1-q^r q^{I-1}vw}
			\\&\hspace{40pt}=
			q^{rJ}
			\frac{(uq^J/w;q)_\infty}{(u/w;q)_\infty}
			\frac{(q^{-r}u/w;q)_\infty}{(q^{-r}uq^J/w;q)_\infty}
			\frac{(vw;q)_\infty}{(vq^{I}w;q)_\infty}
			\frac{(vq^{I}wq^r;q)_\infty}{(vwq^r;q)_\infty}.
		\end{split}
	\end{equation}
	The factor $q^{rJ}$ (leading to 
	$q^{r(J_0+\ldots+J_y )}$ in the kernel) 
	disappears after replacing $\zeta$ by 
	$\zeta q^{-J_0-\ldots-J_y}$.
	This change of variable accounts for the fact 
	that in the left-hand side of 
	\eqref{eq:q_Laplace_princ_spec}
	we 
	take the $q$-Laplace transform of $q^{-\ell(\lambda)}$ as opposed to the height
	function in \Cref{thm:6vm_q_laplace}.
	
	By the absolute convergence result of 
	\Cref{prop:sHL_absolute_integrability} 
	and the 
	boundedness of $1/(\zeta q^{- \ell (\lambda)};q)_\infty$,
	the 
	left-hand side of \eqref{eq:q_Laplace_princ_spec} is an analytic
	function of $q^{J_k},q^{I_l}$ under the bounds 
	\eqref{eq:condition_parameters_general_skew_Cauchy_S9}.
	In order
	to establish the analyticity of the Fredholm determinant we first
	observe that, due to the compactness of $C$ and of the
	image of $r\to q^r$ for $r \in d + \mathrm{i}\mathbb{R}$, there
	exists a constant $M_1$ independent of $J_k$ or $I_l$ such that 
	\begin{equation*}
		\sup_{\substack{w,w' \in C,\\ r \in d + \mathrm{i}\mathbb{R}}} 
		\left| \frac{ f(w)/f(q^r w) }{q^r w - w'} \right| < M_1.
	\end{equation*}
	This implies that $|K(w,w')|<M_2$ integrating over $r$
	due to the exponential decay of $1/\sin\pi r$ for large $|r|$. 
	We can thus estimate the Fredholm determinant of $K$ with
	\begin{equation*}
		\sum_{l \ge 0} \frac{1}{l!} \int_C \cdots \int_C 
		\left|\det_{i,j=1}^l(K(w_i,w_j)) \right| dw_1 \cdots dw_l  \le \sum_{l \ge 0} \frac{ l^{l/2} M_3^l }{l!},
	\end{equation*}
	where we used the Hadamard inequality to bound the determinant of
	$K(w_i,w_j)$, and $M_3 = M_2 \ell(C)$. This shows that the
	right-hand side of \eqref{eq:q_Laplace_princ_spec} is an
	absolutely convergent sum of analytic functions and hence it is
	analytic. This completes the proof.
\end{proof}

\begin{remark}
	Fredholm determinantal expression \eqref{eq:q_Laplace_princ_spec}
	degenerates to a number of known results.
	In
	particular, considering the specialization
	$u_0=-\alpha\epsilon$, $v_0=-\beta\epsilon$ $q^{J_0}=q^{I_0}=1/\epsilon$, 
	$\epsilon\to0$, and 
	$J_1=\ldots=J_y=I_1=\ldots=I_x=1$,
	we recover the expression for the
	$q$-Laplace transform of the height function of the six vertex
	model with two-sided stationary bound conditions
	from
	\cite[Proposition 4.1]{Amol2016Stationary} (in the latter one has to set $\mu=0$). 
	The latter formula is obtained by a more involved
	analytic continuation in $q^{J_0}$ than in the proof of 
	\Cref{thm:q_Laplace_princ_spec}.
\end{remark}

\subsection{Higher spin six vertex model observables}

The
eigenrelations for the sqW or sHL functions 
give rise to moment formulas for the stochastic higher spin six vertex model.
Consider the model with step-Bernoulli boundary
conditions $\alpha=0$, $\beta\ne 0$ (see \Cref{sub:new_YB_field_sHL_sqW}).
Assume that the parameters
$\beta,\theta_1,\theta_2, \dots$ are spaced in such a way that
\begin{equation} \label{eq:conditions_hs6vm_q_moments}
    q \inf \left\{ (-1/\theta_i)_{i\ge 1} \cup (-1/\beta) \right\} >  \sup \left\{ (-1/\theta_i)_{i\ge 1} \cup (-1/\beta) \right\}.
\end{equation}
Let $\mathfrak{h}^{\mathrm{HS}}(x,y)$ be the height function of this model, i.e.,
the number of paths in the vertex model which are weakly to the right of the point $(x,y)$.

Following the same approach as in the proof of
\Cref{prop:6VM_q_moments} (applying
either $\widetilde{\mathfrak{D}}$ or $\mathfrak{E}$ from
\Cref{sec:diff_op} to the sum of the corresponding Cauchy identity), we obtain a
$q$-moment formula which was first written down in \cite{CorwinPetrov2015}:
\begin{proposition}
We have
\begin{equation} \label{eq:hs6vm_q_moments}
\begin{split}
	\mathbb{E} \left( q^{l \,\mathfrak{h}^{\mathrm{HS}} (x+1,y)  } \right) &= (-1)^l q^{l(l-1)/2} \oint_{\Gamma[\boldsymbol \theta,  \beta | 1 ]} \cdots \oint_{\Gamma[\boldsymbol \theta, \beta | l ]} \prod_{1 \le A < B \le l} \frac{z_A - z_B}{z_A - q z_B}
    \\
    & \hspace{30pt} \times
    \prod_{k=1}^l \left\{ \prod_{i=1}^y \frac{q z_k - u_i}{z_k - u_i } \prod_{i=1}^x \frac{1-z_k s}{1+z_k \theta_i} \frac{1}{1+\beta z_k} \frac{dz_k}{2 \pi \mathrm{i} z_k } \right\},
    \end{split}
\end{equation}
where the positively oriented
contour $\Gamma[\boldsymbol \theta,  \beta | j ]$ 
is around $-1/\beta, -1/\theta_1, \dots -1/\theta_x$, $q \Gamma[\boldsymbol \theta,  \beta | j + 1 ]$,
and no other pole of the integrand.
\end{proposition}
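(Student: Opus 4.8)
The plan is to mimic the derivation of \Cref{prop:6VM_q_moments}, replacing the role of the single difference operator $\widetilde{\mathfrak{D}}$ on the sHL side with a combination of eigenrelations: here we apply the operator $\widetilde{\mathfrak{D}}$ (diagonal in the sHL functions) acting on the variables $u_1,\dots,u_y$ to the Cauchy identity \eqref{eq:sHL_sqW_skew_Cauchy} between the sHL functions $\mathsf{F}_\lambda(u_1,\dots,u_y)$ and the sqW functions $\mathbb{F}^*_{\lambda'}(\theta_1,\dots,\theta_x)$, with the extra scaled geometric parameter $\beta$ on the vertical boundary incorporated as an additional sHL variable (in the sense of \Cref{sub:scaled_geometric_spec}). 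First I would recall from \Cref{sub:new_YB_field_sHL_sqW} the identification $\mathfrak{h}^{\mathrm{HS}}(x+1,y) = y - \ell(\lambda^{(x,y)})$, where $\lambda^{(x,y)}$ is the sHL/sqW Yang-Baxter field with step-Bernoulli boundary conditions ($\alpha=0$, $\beta\neq0$), so that by \Cref{prop:sHL_sqW_stat_bc} the single-point distribution of $\lambda^{(x,y)}$ is proportional to $\mathsf{F}_\nu(u_1,\dots,u_y;\widetilde\beta)\,\mathbb{F}^*_{\nu'}(\theta_1,\dots,\theta_x)$ up to an explicit normalization.

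The key steps, in order, are: (1) write $\mathbb{E}(q^{l\,\mathfrak{h}^{\mathrm{HS}}(x+1,y)}) = q^{ly}\,\mathbb{E}(q^{-l\,\ell(\lambda^{(x,y)})})$, and express the latter expectation as $\widetilde{\mathfrak{D}}^l$ (iterated $l$ times) applied to the mixed partition function $\Pi(u_1,\dots,u_y,\widetilde\beta;\theta_1,\dots,\theta_x)$, divided by $\Pi$ itself, using that $\widetilde{\mathfrak{D}}$ acts diagonally on $\mathsf{F}_\nu$ with eigenvalue $q^{-\ell(\nu)}$ (this is \Cref{thm:sHL_eigen} repackaged as in \eqref{eq:D_tilde}, and the scaled geometric variable $\beta$ does not alter the eigenvalue since it only enters the $\mathsf{F}$-side); (2) use the factorized integral representation \eqref{eq:sHL_op_integral_rep} for $\widetilde{\mathfrak{D}}$ acting on a product function to convert the $l$-fold application into an $l$-fold nested contour integral, being careful about the nesting of contours at each step; (3) track which poles appear: the factors $\prod_{i=1}^y \frac{qz_k - u_i}{z_k - u_i}$ come from the $u_i$-dependence of $\Pi$, the factors $\prod_{i=1}^x \frac{1-z_k s}{1+z_k\theta_i}$ come from the $\theta_i$-dependence (via the $(1+u_i\theta_j)/(1-su_i)$ structure of $\Pi$), and the single factor $\frac{1}{1+\beta z_k}$ per variable is the contribution of the extra scaled geometric parameter $\beta$; (4) collect the Vandermonde-type cross terms $\prod_{A<B}\frac{z_A-z_B}{z_A-qz_B}$ and the prefactor $(-1)^l q^{l(l-1)/2}$, and identify the contour $\Gamma[\boldsymbol\theta,\beta|j]$ as one enclosing $-1/\beta$, $-1/\theta_1,\dots,-1/\theta_x$ and its $q$-dilated copy $q\Gamma[\boldsymbol\theta,\beta|j+1]$ but no other pole.

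I expect the main obstacle to be the careful bookkeeping in step (3)–(4): getting the precise location and orientation of the contours right, and in particular verifying that the scaled geometric specialization $\widetilde{\beta}$ contributes exactly the single factor $1/(1+\beta z_k)$ with the pole $-1/\beta$ sitting inside the contour (as opposed to, say, a product of shifted factors as would arise from a genuine principal specialization). This is most cleanly handled by viewing $\widetilde{\beta}$ through the limit $q^{I_0}=1/\epsilon$, $v_0=-\beta\epsilon$, $\epsilon\to0$ of \eqref{eq:why_q_rJ_appears}-type identities — i.e., first prove the formula with $\beta$ replaced by a finite geometric progression $v_0,qv_0,\dots,q^{I_0-1}v_0$ (where it follows directly from \Cref{thm:6vm_q_laplace}-style manipulations or from \Cref{prop:6VM_q_moments} applied to the fused field), observe analyticity in $q^{I_0}$, and take the scaled geometric limit. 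The condition \eqref{eq:conditions_hs6vm_q_moments} is exactly what guarantees that the nested contours can be taken in the required order (the $q$-dilate of each contour lies strictly between consecutive poles), so this hypothesis enters precisely at the step where we justify the iterated contour integral. Once the step-Bernoulli case is established this way, no further work is needed for the statement as written, since it only asserts the formula in that regime.
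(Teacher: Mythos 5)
Your plan is essentially the paper's own proof: the paper obtains \eqref{eq:hs6vm_q_moments} exactly by applying $\widetilde{\mathfrak{D}}$ (or $\mathfrak{E}$) to the sHL/sqW Cauchy identity with the $\mathrm{sg}(\beta)$ specialization, as in \Cref{prop:6VM_q_moments}, and it likewise does not spell out the contour bookkeeping, deferring to \cite{CorwinPetrov2015}. Two corrections to your reasoning, though. First, your justification of the eigenvalue step is backwards: in the sHL/sqW field with $\alpha=0$, $\beta\neq 0$, the parameter $\beta$ is attached to the \emph{dual} (spin $q$-Whittaker) side — the one-point weight is proportional to $\mathsf{F}_\nu(u_1,\dots,u_y)\,\mathbb{F}^*_{\nu'}(\theta_1,\dots,\theta_x;\widetilde\beta)$ by \Cref{prop:sHL_sqW_stat_bc} with $\alpha=0$, not to $\mathsf{F}_\nu(u;\widetilde\beta)\,\mathbb{F}^*_{\nu'}(\theta)$. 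If $\beta$ really entered the $\mathsf{F}$-side as you assert, then $\widetilde{\mathfrak{D}}$ acting on $u_1,\dots,u_y$ would give $\sum_\varkappa \widetilde{\mathsf{F}}_{\nu/\varkappa}(\beta)\,q^{-\ell(\varkappa)}\mathsf{F}_\varkappa(u)$ rather than the eigenvalue $q^{-\ell(\nu)}$, and the step would fail. The argument works precisely because $\beta$ sits on the $\mathbb{F}^*$-side: $\mathsf{F}_\nu(u_1,\dots,u_y)$ is a pure sHL function, and the only trace of $\beta$ is the factor $\prod_j(1+u_j\beta)$ in the normalization, so that $h(u)=(1+u\beta)\prod_i\frac{1+u\theta_i}{1-su}$, which produces exactly the factor $1/(1+\beta z_k)$ you wrote — the slip is in the justification, not in the resulting integrand.

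Second, note that both the operator route via \eqref{eq:sHL_op_integral_rep} and your analytic-continuation route from \Cref{prop:6VM_q_moments} naturally produce nested contours encircling $0$ and the $u_i$'s. The contours $\Gamma[\boldsymbol\theta,\beta\,|\,j]$ in the statement encircle the complementary poles $-1/\beta,-1/\theta_1,\dots,-1/\theta_x$, and the prefactor $(-1)^l$ records the flip of each contour to that complementary set; this deformation uses $\beta\neq0$ (so the single-variable integrand decays like $z^{-2}$ and there is no residue at infinity) together with \eqref{eq:conditions_hs6vm_q_moments} to keep the nesting with respect to the cross poles $z_A=qz_B$ consistent during the deformation. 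This is the one piece of genuine content beyond the six vertex case; your write-up should either carry it out or, as the paper does, cite \cite{CorwinPetrov2015} where the formula with these contours is established.
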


The observable with both $\alpha,\beta$ nonzero 
(i.e., with the two-sided stationary boundary conditions)
admits the following
Fredholm determinantal expression:

\begin{theorem} \label{thm:hs6vm_q_laplace}
Consider the higher spin six vertex model with two-sided stationary 
boundary conditions with parameters $\alpha,\beta$. 
Let $\mathcal{H}^{\mathrm{HS}}$ be the centered height function of this model
(cf. \Cref{sub:new_YB_field_sHL_sqW}), and
let $\mathcal{M}\sim q \textnormal{-}\mathrm{Poi}(\alpha \beta)$ be independent of the vertex model.
Then we have
\begin{equation} \label{eq:hs6vm_q_laplace}
	\mathbb{E} \left( \frac{1}{(\zeta q^{\mathcal{H}^{\mathrm{HS}}(x,y) - \mathcal{M}};q)_\infty} \right) = \det(Id + \mathcal{K})_{L^2\left( \mathcal{C} \right)}.
\end{equation}
The kernel $\mathcal{K}$ is defined by
\begin{equation} \label{eq:kernel_hs6vm}
    \mathcal{K}(w,w') = \frac{1}{2 \mathrm{i}} \int_{d + \mathrm{i} \mathbb{R} } \frac{(- \zeta)^r}{\sin(\pi r)} \frac{\mathpzc{f}(w) / \mathpzc{f}(q^r w) }{q^r w - w'}dr,
\end{equation}
where $d \in (0,1)$ and
\begin{equation}
	\label{eq:f_hsm_formula}
    \mathpzc{f}(w) = \frac{(-\alpha/w ; q)_\infty}{(- \beta w ; q)_\infty} \prod_{l=1}^y 
		\frac{1}{w - u_l}
		\prod_{l=1}^x \frac{(s w ; q)_\infty}{(-\theta_l w ; q)_\infty}.
\end{equation}
Here $\mathcal{C}$ is a closed complex contour encircling $0,u_1,u_2, \dots$ and such that for all $r\in d+\mathrm{i}\mathbb{R}$, $\mathcal{C}$ contains $- q^{r+k} \alpha$ for all $k \ge 0$, but leaves outside $1/(q^{r+k} s)$ and $-1/(q^k \beta), 1/(q^k \theta_l)$ for all $k,l \ge 0$.
\end{theorem}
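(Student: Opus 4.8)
The strategy is to derive \eqref{eq:hs6vm_q_laplace} as a specialization of the general fused Fredholm formula of \Cref{thm:q_Laplace_princ_spec}, following the same analytic-continuation scheme used for the stochastic six vertex model with two-sided stationary boundary conditions. First I would invoke \Cref{thm:sHL_sqW_height_length}, which identifies the distribution of $\mathcal{H}^{\mathrm{HS}}(x,y)-\mathcal{M}$ with that of $y-\ell(\lambda^{(x,y)})$, where $\boldsymbol\lambda$ is the sHL/sqW Yang-Baxter field with $(\alpha,\beta)$-scaled geometric boundary conditions. By \Cref{prop:sHL_sqW_stat_bc} the single-point distribution of $\lambda^{(x,y)}$ is the measure
\begin{equation*}
	\mathrm{Prob}\{\lambda^{(x,y)}=\nu\}
	=
	\mathrm{const}\cdot
	\mathsf{F}_\nu(u_1,\dots,u_y;\widetilde\alpha)\,
	\mathbb{F}^*_{\nu'}(\theta_1,\dots,\theta_x;\widetilde\beta),
\end{equation*}
which is itself a scaled-geometric (mixed) degeneration of the principally specialized measure appearing in \Cref{thm:q_Laplace_princ_spec}. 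So the $q$-Laplace transform $\mathbb{E}\bigl(1/(\zeta q^{-\ell(\lambda^{(x,y)})};q)_\infty\bigr)$ is, after the substitution $\zeta\mapsto \zeta q^{-(J_0+\dots+J_y)}$ that converts the $q^{-\ell(\lambda)}$ transform into the height-function transform (cf.\ the $q^{rJ}$ bookkeeping in \eqref{eq:why_q_rJ_appears}), exactly the right-hand side of \eqref{eq:q_Laplace_princ_spec}.

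Second, I would perform the parameter specialization. On the horizontal side set $J_1=\dots=J_y=1$ and $u_0=-\alpha\epsilon$, $q^{J_0}=1/\epsilon$; on the vertical side set $q^{I_k}=-\theta_k/s$ for $k=1,\dots,x$ (this is the sHL$\to$sqW fusion recorded in \Cref{fig:table_positivity_YBE}) and $v_0=-\beta\epsilon$, $q^{I_0}=1/\epsilon$, then let $\epsilon\to0$. One checks that the prefactor $\prod (u_kv_l;q)_\infty(u_kv_lq^{I_l+J_k};q)_\infty / \bigl((u_kv_lq^{I_l};q)_\infty(u_kv_lq^{J_k};q)_\infty\bigr)$ collapses to the normalizing constant of \Cref{prop:sHL_sqW_stat_bc} together with the factor coming from the $q$-Poisson variable $\mathcal{M}$, using $\Pi(\mathrm{sg}(\alpha);\mathrm{sg}(\beta))=1/(\alpha\beta;q)_\infty$ and $\Pi(\mathrm{sg}(\alpha);\mathrm{sqW}(\theta))=(-s\alpha;q)_\infty/(\alpha\theta;q)_\infty$ from \eqref{eq:sg_Pi}. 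The kernel function $f(w)$ of \Cref{thm:q_Laplace_princ_spec} becomes, under these substitutions (using $(q^{J_0}u_0/w;q)_\infty/(u_0/w;q)_\infty\to(-\alpha/w;q)_\infty$ as $\epsilon\to0$, and similarly $(v_0 w;q)_\infty/(q^{I_0}v_0w;q)_\infty\to 1/(-\beta w;q)_\infty$, and $(q^{J_k}u_k/w;q)_\infty/(u_k/w;q)_\infty = (qu_k/w;q)_\infty/(u_k/w;q)_\infty = 1/(1-u_k/w)$ contributing the $\prod 1/(w-u_l)$ up to an overall $w$-power that cancels, and $(v_l w;q)_\infty/(q^{I_l}v_lw;q)_\infty$ with $q^{I_l}v_l = -\theta_l$, $v_l=-\theta_l/(q^{I_l})$ rearranging to $(sw;q)_\infty/(-\theta_lw;q)_\infty$), precisely $\mathpzc{f}(w)$ in \eqref{eq:f_hsm_formula}. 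The contour $C$ of \Cref{thm:q_Laplace_princ_spec} deforms continuously to $\mathcal{C}$: the poles $q^ku_i$ coalesce to $u_i$ (since $J_i=1$), the points $-q^{r+k}\alpha$ are the limits of $q^{r+k}q^{J_0}u_0$, and the excluded points $1/(q^{r+k}s)$, $-1/(q^k\beta)$, $1/(q^k\theta_l)$ are the limits of the corresponding excluded points in \Cref{thm:q_Laplace_princ_spec}.

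Third, I would address analyticity and the validity of the $\epsilon\to0$ limit. The left-hand side of \eqref{eq:hs6vm_q_laplace} is, by \Cref{prop:sHL_sqW_stat_bc} and the boundedness of $1/(\zeta q^{-\ell(\lambda)};q)_\infty$ together with the absolute-summability estimate of \Cref{prop:sHL_absolute_integrability} (applied in its scaled-geometric limit), a continuous function of $(u_0,q^{J_0},v_0,q^{I_0})$ on a neighborhood of the relevant parameter values; the same compactness-plus-Hadamard argument used at the end of the proof of \Cref{thm:q_Laplace_princ_spec} shows the Fredholm determinant side is continuous there as well. Since equality holds for the dense family of integer fusion parameters, it persists in the limit, and then by analyticity in $\zeta$ one relaxes to all $\zeta\in\mathbb{C}\setminus\mathbb{R}_{>0}$ and to $u_i\in[0,1)$, $\theta_i\in[-s,-s^{-1}]$.

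\textbf{Main obstacle.} The genuinely delicate step is the careful tracking of the contour $C\to\mathcal{C}$ and of the kernel $f\to\mathpzc{f}$ through the double degeneration (one side to sqW via $q^{I_l}=-\theta_l/s$, both $0$-th rows to scaled geometric via $q^{J_0}=q^{I_0}=1/\epsilon$), ensuring that the conditions on $\mathcal{C}$ stated in the theorem are exactly the $\epsilon\to0$ images of the admissibility conditions on $C$, and that no pole crossing occurs during the deformation. Relatedly, one must confirm that the $q^{rJ_0}$-type factor generated by the scaled-geometric $0$-th rows is precisely compensated by the shift in $\zeta$ that converts the $q^{-\ell(\lambda)}$-transform into the $q^{\mathcal{H}^{\mathrm{HS}}-\mathcal{M}}$-transform — this is where the role of $\mathcal{M}\sim q\text{-}\mathrm{Poi}(\alpha\beta)$ enters and must match \eqref{eq:lenght_sHL_sHL_centered_H} specialized via \eqref{eq:U00_qPoisson}. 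Everything else is routine residue calculus and the standard $q$-binomial-theorem manipulation already invoked in the proof of \Cref{thm:6vm_q_laplace}.
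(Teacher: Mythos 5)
Your proposal is correct and follows essentially the same route as the paper: the paper's proof also specializes the general fused identity of \Cref{thm:q_Laplace_princ_spec} (scaled geometric for $u_0,q^{J_0}$ and $v_0,q^{I_0}$, sqW for $v_l,q^{I_l}$), matches the observable to $\mathcal{H}^{\mathrm{HS}}-\mathcal{M}$ via the field identification, and then extends from a neighborhood of the origin by analytic continuation in the parameters. Your write-up simply spells out the kernel/contour bookkeeping that the paper leaves implicit.
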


\begin{proof}
	We use an analytic continuation argument staring from identity
	\eqref{eq:q_Laplace_princ_spec}. Considering specializations
	$\mathrm{sg}(\alpha)$ for $u_0, q^{J_0}$ and
	$\mathrm{sg}(\beta),\mathrm{sqW}(\theta_1),\mathrm{sqW}(\theta_2), \dots$,
	respectively for $v_0, q^{J_0}, v_1, q^{J_1}, v_2, q^{J_2}, \dots$, we
	can prove expression \eqref{eq:hs6vm_q_laplace} for values $\alpha,
	u_l, \beta, \theta_l$ is a small neighborhood of the origin. Once
	\eqref{eq:hs6vm_q_laplace} is established for parameters in
	an open set, we can perform an analytic continuation, always keeping
	them in a region where they define a probability measure. This is possible
	since both sides of \eqref{eq:hs6vm_q_laplace} can
	be written as absolutely convergent series of holomorphic functions in
	$\alpha, u_l, \beta, \theta_l$.
\end{proof}

Using the integral expression for the 
$q$-moments \eqref{eq:hs6vm_q_moments} we can obtain an alternative expression for the Fredholm determinant:

\begin{theorem} \label{thm:hs6vm_q_laplace_bis}
	Assume conditions \eqref{eq:conditions_hs6vm_q_moments}.
	Let $\widetilde{\mathcal{C}}$ be a closed positively oriented contour encircling
	$-1/\beta, -1/\theta_1, - 1/\theta_2 ,\dots$ and which does not
	contain any point of the interior of $q \, \widetilde{\mathcal{C}}$.
	Then the Fredholm determinantal formula 
	\eqref{eq:hs6vm_q_laplace} holds when replacing
	$\mathcal{C}$ with $\widetilde{\mathcal{C}}$.
\end{theorem}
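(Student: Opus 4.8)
The statement \Cref{thm:hs6vm_q_laplace_bis} asserts that the Fredholm determinantal formula of \Cref{thm:hs6vm_q_laplace} continues to hold when the contour $\mathcal{C}$ is replaced by the (a priori different) contour $\widetilde{\mathcal{C}}$ encircling $-1/\beta,-1/\theta_1,-1/\theta_2,\ldots$ rather than $0,u_1,u_2,\ldots$. The plan is to start from the $q$-moment formula \eqref{eq:hs6vm_q_moments}, which holds under the spacing conditions \eqref{eq:conditions_hs6vm_q_moments}, and organize the moments into a generating series exactly as in the proof sketch of \Cref{thm:6vm_q_laplace}. First I would write
\begin{equation*}
	\mathbb{E}\!\left(\frac{1}{(\zeta q^{\mathcal{H}^{\mathrm{HS}}(x,y)-\mathcal{M}};q)_\infty}\right)
	=
	\sum_{l\ge0}\frac{\zeta^l}{(q;q)_l}\,\mathbb{E}\!\left(q^{l(\mathfrak{h}^{\mathrm{HS}}(x+1,y)-\mathcal{M})}\right),
\end{equation*}
using the $q$-binomial theorem together with the fact that $0<q^{l(\mathcal{H}^{\mathrm{HS}}-\mathcal{M})}<1$ (so the series converges absolutely for $|\zeta|$ small, and the summation and expectation can be interchanged). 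The $\mathcal{M}$-dependence factors out as an independent $q$-Poisson expectation, and the remaining $q$-moments of $\mathfrak{h}^{\mathrm{HS}}$ are the nested contour integrals \eqref{eq:hs6vm_q_moments} over the contours $\Gamma[\boldsymbol\theta,\beta\,|\,j]$ (which encircle $-1/\beta,-1/\theta_i$ and the dilated inner contours, but \emph{not} $0$ or the $u_i$'s).

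Next I would apply the standard manipulation turning a sum of nested contour integrals into a Fredholm determinant (as in \cite[Section 3]{BorodinCorwinSasamoto2012}): deform all $l$ contours to a common contour $\widetilde{\mathcal{C}}$ around $-1/\beta,-1/\theta_1,-1/\theta_2,\ldots$ satisfying the condition that $\widetilde{\mathcal{C}}$ does not meet the interior of $q\widetilde{\mathcal{C}}$; collect the residues at $z_A=qz_B$; reindex the resulting sum over partitions $\lambda\vdash l$; sum over $l$; and recognize the answer as $\det(Id+\mathcal{K})_{L^2(\widetilde{\mathcal{C}})}$ with the kernel obtained by Mellin--Barnes summation of the geometric-type series, giving exactly \eqref{eq:kernel_hs6vm} with the same function $\mathpzc{f}$ as in \eqref{eq:f_hsm_formula}. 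The crucial point is that the integrand $\mathpzc{f}(w)/\mathpzc{f}(q^rw)$ and the kernel structure are identical to those in \Cref{thm:hs6vm_q_laplace}; only the contour of the ambient $L^2$ space changes, because here the contour is drawn around the ``$\theta,\beta$'' poles of the $q$-moment integrand rather than around $0$ and the $u_i$'s. Finally, both \eqref{eq:hs6vm_q_laplace} (with $\mathcal{C}$) and its $\widetilde{\mathcal{C}}$-analogue, once established in an open region of parameters, extend by analyticity to the full parameter range, since both sides are absolutely convergent series of holomorphic functions; hence the two Fredholm determinants agree wherever both contours are admissible.

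\textbf{Main obstacle.} The delicate step is the contour deformation: one must verify that under the spacing hypothesis \eqref{eq:conditions_hs6vm_q_moments} the contours $\Gamma[\boldsymbol\theta,\beta\,|\,j]$ can all be deformed to a single $\widetilde{\mathcal{C}}$ while correctly tracking the residues at the coincidences $z_A=qz_B$ (and no other poles of the integrand, in particular $w=u_i$, are crossed). Equivalently, one must check that with $\mathpzc f$ as in \eqref{eq:f_hsm_formula} the $q$-series $\sum_{n\ge0}\zeta^n \mathpzc f(w)/\mathpzc f(q^nw)$ converges on $\widetilde{\mathcal{C}}$ and that its Mellin--Barnes representation \eqref{eq:kernel_hs6vm} is valid with $\widetilde{\mathcal{C}}$ containing $q^r\widetilde{\mathcal{C}}$ for $r\in d+\mathrm{i}\mathbb{R}$ but excluding the poles $1/(q^{r+k}s)$, $-1/(q^k\beta)$, $1/(q^k\theta_l)$. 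This is a routine but careful pole-bookkeeping exercise of exactly the type carried out in \cite{BorodinCorwin2011Macdonald}, \cite{BorodinCorwinSasamoto2012}, and \cite{CorwinPetrov2015}; once the admissibility of $\widetilde{\mathcal{C}}$ is confirmed, the rest follows verbatim from the computations already used to prove \Cref{thm:hs6vm_q_laplace}.
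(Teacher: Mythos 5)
Your argument establishes the theorem only in the step-Bernoulli case $\alpha=0$, and there it coincides with the paper's first step (the generating-series manipulation of \Cref{thm:6vm_q_laplace} applied to the nested-contour moments \eqref{eq:hs6vm_q_moments}). The genuine gap is the case $\alpha>0$, which is part of the statement: there the $q$-moment expansion you start from is simply not available. With $\alpha>0$ the observable $\mathcal{H}^{\mathrm{HS}}(x,y)-\mathcal{M}$ (equivalently $y-\ell(\lambda^{(x,y)})$) is unbounded below, so the inequality $0<q^{\,l(\mathcal{H}^{\mathrm{HS}}-\mathcal{M})}<1$ you invoke is false; the $l$-th $q$-moment is infinite for all large $l$ (already the independent factor $\mathbb{E}\bigl(q^{-l\mathcal{M}}\bigr)$ diverges once $q^{-l}\alpha\beta\ge 1$), the uncentered height $\mathfrak{h}^{\mathrm{HS}}$ is a.s.\ infinite, and formula \eqref{eq:hs6vm_q_moments} is stated (and true) only for $\alpha=0$, $\beta\neq 0$. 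Consequently the interchange of expectation and summation, the factorization of the $\mathcal{M}$-contribution, and the residue bookkeeping all break down before the ``routine'' contour step you defer to. Nor does your closing appeal to analyticity repair this: you would be continuing in $\alpha$ from the single point $\alpha=0$, and agreement at one point does not determine the identity for $\alpha>0$.

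The missing idea is the fusion/analytic-continuation device used in the paper. One first restates the $\alpha=0$ identity on $\widetilde{\mathcal{C}}$ in terms of sHL/sqW functions, then replaces the sHL variables by a principal specialization with an extra block $u_0,qu_0,\dots,q^{J_0-1}u_0$, so that the identity holds for every positive integer $J_0$. Both sides are analytic in $q^{J_0}$ near the origin --- the left-hand side by the absolute-summability estimate of \Cref{prop:sHL_absolute_integrability}, the right-hand side by the uniform kernel bounds as in \Cref{thm:q_Laplace_princ_spec} --- so the identity extends to generic $q^{J_0}$ (the integer points accumulate, unlike the single point $\alpha=0$). After replacing $\zeta$ by $\zeta q^{-J_0}$, one takes the scaled geometric limit $u_0=-\epsilon\alpha$, $q^{J_0}=1/\epsilon$, $\epsilon\to0$, which produces exactly the $(\alpha,\beta)$-stationary boundary condition together with the $q$-Poisson shift $\mathcal{M}$, and a final analytic continuation in the remaining parameters (now over open sets, subject to \eqref{eq:conditions_hs6vm_q_moments}) gives the stated range. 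Without this step your proof covers only $\alpha=0$.
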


\begin{proof}
		The $\alpha=0$ case of this Theorem can be shown following the
		steps outlined in the proof of \Cref{thm:6vm_q_laplace} (which in
		turn goes along the lines of	\cite{BorodinCorwin2011Macdonald},
		\cite{BorodinCorwinSasamoto2012}). When $\alpha > 0$ a $q$-moment
		expansion of the $q$-Laplace transform is not possible since the
		$l$-th $q$-moment becomes infinite for $l$ large enough. In order to
		include the case where $\alpha > 0$, we first produce a result
		analogous to that of Theorem \ref{thm:q_Laplace_princ_spec} and
		subsequently we use analytic continuation.
    
    We start by restating the result for $\alpha = 0$ as
    \begin{equation} 
		\label{eq:q_Laplace_sHL_sqW}
		\prod_{ k=1 }^y \frac{1}{1+u_k \beta}
		\prod_{\substack{1\le k\le y\\1\le j\le x}}
		\frac{ 1 - u_k s}{1 + u_k \theta_j} \sum_{\lambda} \frac{	\mathsf{F}_{\lambda}(u_1, \dots, u_y) \mathbb{F}^*_{\lambda^\prime} (\theta_1, \dots, \theta_x; \widetilde{\beta}) }{(\zeta q^{y - \ell (\lambda) } ;q )_\infty} 
		=
		\det\left( Id + \mathcal{K}\,\big\vert_{\alpha=0} \right)_{L^2 ( \widetilde{\mathcal{C}} )},
	\end{equation}
	where we used $y-\ell(\lambda^{(x,y)})\stackrel{d}{=}\mathfrak{h}^{\mathrm{HS}}(x+1,y)$,
	and the summation in the
	left hand side of \eqref{eq:q_Laplace_sHL_sqW} makes sense for
	$u_i, \theta_i, \beta, s$ in a complex neighborhood of the origin
	(under
	\eqref{eq:conditions_hs6vm_q_moments}). We can consider principal
	specializations of the sHL function and write the more general
	identity
    \begin{equation*} 
		\label{eq:K_tilde_hsm}
		\begin{split}
		& \prod_{ k=0 }^y \frac{(- u_k q^{J_k} \beta ; q )_\infty }{(- u_k \beta ; q )_\infty }
		\prod_{\substack{0\le k\le y\\1\le j\le x}}
		\frac{ (u_k s;q)_\infty (- \theta_j u_k q^{J_k} ; q )_\infty }
		{ (u_k q^{J_k} s ;q)_\infty (- \theta_j u_k ;q )_\infty }
		\\& \hspace{40pt} \times
		\sum_{\lambda} \frac{	\mathfrak{F}^{(J_0,\dots,J_y)}_{\lambda}(u_0, \dots, u_y) \mathbb{F}^*_{\lambda^\prime} (\theta_1, \dots, \theta_x; \widetilde{\beta}) }{(\zeta q^{J_0+\cdots+J_y - \ell (\lambda) } ;q )_\infty} 
		=
		\det\bigl( Id + \widetilde{\mathcal{K}} \bigr)_{L^2 ( \widetilde{\mathcal{C}} )},
	    \end{split}
	\end{equation*}
	which again holds for $u_i, q^{J_i}u_i, \beta, \theta_i$ close to the origin. 
	Here $\widetilde{\mathcal{K}}$ is given by
	\eqref{eq:kernel_hs6vm} up to replacing $\zeta$ by $\zeta q^{J_0 + \dots
	+J_y - y }$, and $\mathpzc{f}$ by
	\begin{equation*}
		\widetilde{\mathpzc{f}}(w) = \frac{1}{(-\beta w ; q)_\infty} 
		\prod_{l=0}^y \frac{(q^{J_l}u_l/w ;q)_\infty}{(u_l/w ; q)_\infty} 
		\prod_{l=1}^x \frac{(sw ;q )_\infty}{(-\theta_l w ;q)_\infty}.
	\end{equation*}
	(here we used computation \eqref{eq:why_q_rJ_appears}).
	We can now replace $\zeta$
	by $\zeta q^{-J_0}$ in both sides of \eqref{eq:K_tilde_hsm},
	and
	specialize parameters $u_l, q^{J_l} u_l$ as
	$\mathrm{sg}(\alpha), \mathrm{sHL}(u_1), \dots \mathrm{sHL}(u_y)$ to
	deduce the claim of the theorem for $\alpha, u_l, \beta, \theta_l,s$ in a neighborhood of the origin. 
	Indeed, under this specialization 
	we take $J_1=\ldots=J_y=1 $, and so 
	in the left-hand side we obtain the observable
	$(\zeta q^{y-\ell(\lambda)};q)_\infty^{-1}$, 
	and in the right-hand side the extra power $q^{ry}$ is absorbed by going back from
	$\widetilde{\mathpzc{f}}(w)$
	to 
	$\mathpzc{f}(w)$
	\eqref{eq:f_hsm_formula}.
	The analytic restrictions on the parameters $\alpha,\beta,u_l,\theta_k,s$
	can be further
	relaxed since both the $q$-Laplace transform and the Fredholm
	determinant are well defined and analytic
	when the parameters
	correspond to a probability measure and, moreover, satisfy
	\eqref{eq:conditions_hs6vm_q_moments}.
\end{proof}

\begin{remark}
	Another Fredholm determinantal formula 
	for the stochastic higher spin six vertex model
	with two-sided stationary boundary conditions
	was obtained recently in \cite{imamura2019stationary}.
	While this formula differs from ours, 
	one should in principle be able to transform one to the other.
	We do not focus on this in the present work.
\end{remark}

\subsection{${}_4 \phi_3$ stochastic vertex model observables.}

By using the fact that the ${}_4 \phi_3$ vertex model is equivalent in
distribution to a marginal of the sqW/sqW field we can obtain 
contour integral expressions for the $q$-moments
of the height function $\mathbb{H}^\phi$
(described in \Cref{sub:new_YB_field_sqW_sqW}).
Indeed, this is possible by employing the eigenoperator
$\mathfrak{E}$.
However, only finitely many of the $q$-moments 
exist, and this also involves certain bounds
on the parameters.
Consider the model with step-stationary boundary conditions
$\alpha=0,\beta\ne 0$.
Assume that $\beta,\theta_1,\theta_2, \dots$ 
satisfy \eqref{eq:conditions_hs6vm_q_moments}.

\begin{proposition}
If $l$ is such that $q^l > \max_{1\le t \le y} \{ \beta \xi_t \}$, we have
\begin{equation}
\begin{split}
    \mathbb{E}^{\mathrm{step}} 
		\left( q^{-l\, \mathbb{H}^\phi (x,y) } \right) &= (-1)^l q^{l(l-1)/2} \oint_{\Gamma[\boldsymbol \theta, \beta | 1 ]} \cdots \oint_{\Gamma[\boldsymbol \theta, \beta | l ]} \prod_{1 \le A < B \le l} \frac{z_A - z_B}{z_A - q z_B}
    \\
    & \hspace{30pt} \times
    \prod_{k=1}^l \left\{ \prod_{i=1}^t \frac{z_k - s/q}{z_k + \xi_i/q } \prod_{i=1}^x \frac{1-z_k s}{1+z_k \theta_i} \frac{1}{1+\beta z_k} \frac{dz_k}{2 \pi \mathrm{i} z_k } \right\},
    \end{split}
\end{equation}
where $\Gamma[\boldsymbol \theta, \beta | j ]$ is a
positively oriented contour
around $-1/\beta, -1/\theta_1, \dots
-1/\theta_x$, $q \Gamma[\boldsymbol \theta,  \beta | j + 1 ]$, and no
other pole of the integrand. In case $q^l \le \max_{1\le t
\le y} \{ \beta \xi_t \}$ we have $\mathbb{E}^{\mathrm{step}}_{ \phi
\mathrm{VM} } \left( q^{-l\, \mathbb{H}^\phi (x,y) } \right)=\infty$.
\end{proposition}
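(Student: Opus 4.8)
The plan is to derive this $q$-moment formula by the same eigenoperator technique used for \Cref{prop:6VM_q_moments} and the higher spin case, applied now to the sqW/sqW Yang-Baxter field. By \Cref{thm:sqW_sqW_length}, with $\alpha=0$ (so $\mathcal{M}=0$ almost surely), the height function $\mathbb{H}^\phi(x,y)$ is equal in distribution to $\ell(\lambda^{(x,y)})$, where $\boldsymbol\lambda$ is the sqW/sqW field with step boundary conditions and a $\beta$-scaled geometric specialization on the vertical boundary. The single-point distribution of $\lambda^{(x,y)}$ is then given by the explicit formula in the corresponding proposition of \Cref{ssub:sqW_sqW_two_sided}, a product of $\mathbb{F}_{\nu'}(\xi_1,\dots,\xi_y)$ and $\mathbb{F}^*_{\nu'}(\theta_1,\dots,\theta_x;\widetilde\beta)$ against a normalization. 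The key point is that $\ell(\nu)=\nu_1'$, so that $q^{-l\,\ell(\lambda)}$ is exactly the $l$-th power of the eigenvalue of the operator $\mathfrak{E}$ from \Cref{thm:sqW_eigen} acting on $\mathbb{F}_\lambda$.

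First I would write $\mathbb{E}^{\mathrm{step}}(q^{-l\,\mathbb{H}^\phi(x,y)})$ as a normalized sum over $\nu$ of $q^{-l\,\ell(\nu)}\,\mathbb{F}_{\nu'}(\xi_1,\dots,\xi_y)\,\mathbb{F}^*_{\nu'}(\theta_1,\dots,\theta_x;\widetilde\beta)$, and use $\mathfrak{E}\,\mathbb{F}_\lambda=q^{-\lambda_1}\mathbb{F}_\lambda=q^{-\ell(\lambda')}\mathbb{F}_\lambda$ (applied with the transposed diagram) to pull the factor $q^{-l\,\ell(\nu)}$ out as $\mathfrak{E}^l$ acting on the $\xi$-variables. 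Then by the sqW/sqW Cauchy identity (the $\beta$-scaled geometric version of \Cref{thm:sqW_sqw_skew_Cauchy_identity}, i.e.\ the $\rho=\mathrm{sqW}(\xi)$, $\rho^*=\mathrm{sg}(\beta)$ and $\mathrm{sqW}(\theta)$ cases of \Cref{thm:skew_Cauchy_mixed_spec}), the full sum over $\nu$ collapses to a product $\Pi$ in the two families of variables, so the expectation becomes $\mathfrak{E}^l\Pi/\Pi$. Finally, I would use the integral representation \eqref{eq:sqW_op_integral_rep} for the action of $\mathfrak{E}$ on a factorized function, applied recursively $l$ times exactly as in the proof of \Cref{prop:6VM_q_moments}, to turn $\mathfrak{E}^l\Pi/\Pi$ into the nested contour integral; the contour nesting $\Gamma[\boldsymbol\theta,\beta\,|\,j]$ arises from the requirement in \eqref{eq:sqW_op_integral_rep} that successive contours enclose the poles coming from the previously introduced variables, together with the shift $z\mapsto q^{-1}z$ inside $\mathfrak{E}$ which produces the factors $(z_k-s/q)/(z_k+\xi_i/q)$ and the $q$-shifted poles. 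Condition \eqref{eq:conditions_hs6vm_q_moments} guarantees the $\Pi$ has product form and the contours can be chosen compatibly.

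The genuinely new feature compared to the six vertex and higher spin cases is the \emph{finiteness} threshold: one must show $\mathbb{E}^{\mathrm{step}}(q^{-l\,\mathbb{H}^\phi(x,y)})<\infty$ precisely when $q^l>\max_{1\le t\le y}\{\beta\xi_t\}$, and is $+\infty$ otherwise. This I would handle separately from the contour computation. For the lower bound on the threshold, the $q$-moment sum $\sum_\nu q^{-l\,\ell(\nu)}\,\mathrm{Prob}(\lambda^{(x,y)}=\nu)$ has terms growing like $q^{-l\,\ell(\nu)}$; the decay of the probability weights in $\ell(\nu)$ is governed by the boundary contribution $\mathbb{F}^*_{\nu'}(\theta_1,\dots,\theta_x;\widetilde\beta)$ together with the scaled geometric piece, and a direct estimate analogous to \Cref{lemma:bound_sHL}/\Cref{lemma:sum_for_bound} shows the weights decay geometrically in $\ell(\nu)$ with ratio governed by $\beta\xi_t$ in the relevant single-column chunks. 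For the divergence when $q^l\le\max\{\beta\xi_t\}$, I would exhibit an explicit subfamily of diagrams $\nu$ (e.g.\ single columns of height $k\to\infty$ at an appropriate location) along which $q^{-l\,\ell(\nu)}\,\mathrm{Prob}(\lambda^{(x,y)}=\nu)$ does not tend to zero, using the explicit product formula for the weight and the known one-variable sqW values. The main obstacle will be pinning down these tail estimates precisely enough to identify the sharp threshold $\max_{1\le t\le y}\{\beta\xi_t\}$ rather than just some threshold; everything else is a mechanical repetition of the already-established eigenoperator-to-contour-integral machinery.
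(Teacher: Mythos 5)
Your overall skeleton --- identify $\mathbb{H}^\phi(x,y)$ with $\ell(\lambda^{(x,y)})$ via \Cref{thm:sqW_sqW_length} at $\alpha=0$, act with the eigenoperator $\mathfrak{E}$ on the Cauchy sum, and convert to nested contour integrals --- is indeed what the paper intends (it gives no written proof beyond this hint). But your central computational claim is off: applying $\mathfrak{E}^l$ in the $\xi$-variables through \eqref{eq:sqW_op_integral_rep} does \emph{not} produce the stated integrand or contours. That representation yields kernel factors $\prod_{i=1}^{y}\frac{z+s}{z-\xi_i}$ with contours around $0$ and the $\xi_i$'s, while the ratio $g(q^{-1}z)/g(z)$ built from the single-variable factor $g(\xi)\propto(-s\xi;q)_\infty^{x}\big/\bigl(\prod_i(\xi\theta_i;q)_\infty\,(\beta\xi;q)_\infty\bigr)$ of $\Pi$ contributes $(1+sz/q)^{x}\prod_i(1-z\theta_i/q)^{-1}(1-\beta z/q)^{-1}$, i.e.\ excluded poles at $q/\theta_i$ and $q/\beta$. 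Nothing in this mechanism produces poles at $-\xi_i/q$, the factors $\frac{1-zs}{1+z\theta_i}$ and $\frac{1}{1+\beta z}$, or contours around $-1/\theta_i,-1/\beta$; moreover the hypothesis \eqref{eq:conditions_hs6vm_q_moments} (spacing of the $\theta$'s and $\beta$), which is exactly what makes the stated contours $\Gamma[\boldsymbol\theta,\beta\,|\,j]$ exist, plays no role in your route --- you would instead need a spacing condition on the $\xi$'s to nest yours. So your argument, if completed, terminates at a different nested-contour formula, and proving the proposition as stated would still require identifying the two, which is not a routine deformation in the multivariable case because of the cross factors $\frac{z_A-z_B}{z_A-qz_B}$.

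The stated formula is really the spin $q$-Whittaker specialization of the higher spin moment formula \eqref{eq:hs6vm_q_moments}: fusing a geometric string $u,qu,\dots,q^{J-1}u$ in the sHL factor gives $\prod_{j=0}^{J-1}\frac{qz-q^ju}{z-q^ju}=q^{J}\,\frac{z-u/q}{z-q^{J-1}u}$, and setting $u=s$, $q^{J}=-\xi/s$ turns this into $(-\xi/s)\,\frac{z-s/q}{z+\xi/q}$, the constants being absorbed by the power of $q$ relating $\mathfrak{h}^{\mathrm{HS}}$ to $\ell(\lambda)$. So the natural derivation is either this analytic continuation in the $q^{J_t}$ (equivalently, the $u$-side representation \eqref{eq:sHL_op_integral_rep} under that specialization), and the continuation of the \emph{infinite} moment sum is precisely where the threshold $q^l>\max_{t\le y}\{\beta\xi_t\}$ enters --- it is not an add-on but the condition that licenses the interchange/continuation. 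Relatedly, your plan for the divergence half is heavier than needed: with $\alpha=0$ the left-boundary entries are independent $q$-$\mathrm{NB}(-s/\xi_t,\beta\xi_t)$ and are dominated by $\mathbb{H}^\phi(x,y)$ for $t\le y$, and $\mathbb{E}\,q^{-lX}=\infty$ for $X\sim q$-$\mathrm{NB}(r,p)$ as soon as $q^{l}\le p$; this gives the sharp threshold immediately, without tail estimates on the full measure of Young diagrams.
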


Despite the fact that the distribution of
$\mathbb{H}^{\phi}$ is not characterized by its $q$-moments since
only finitely many of them exist, we can still write down
Fredholm determinant expressions for the $q$-Laplace transform of 
$\mathbb{H}^{\phi}$.

\begin{theorem} \label{thm:phiVM_q_laplace}
Consider the ${}_4 \phi_3$ stochastic vertex model with two-sided
stationary boundary conditions with parameters
$(\alpha,\beta)$.
Let $\mathcal{M}\sim q \textnormal{-}\mathrm{Poi}(\alpha \beta)$ be independent of the vertex model.
We have 
\begin{equation} \label{eq:phiVM_q_laplace}
    \mathbb{E}_{\phi \mathrm{VM}(\alpha, \beta)} 
		\bigl( \frac{1}{(\zeta q^{-\mathbb{H}^\phi(x,y) - \mathcal{M}};q)_\infty} \bigr) 
		=
		\det(Id + \mathbb{K})_{L^2\left( \mathfrak{C} \right)}.
\end{equation}
The kernel $\mathbb{K}$ is defined by
\begin{equation}
    \mathbb{K}(w,w') = \frac{1}{2 \mathrm{i}} \int_{d + \mathrm{i} \mathbb{R} } \frac{(- \zeta)^r}{\sin(\pi r)} \frac{\mathbb{f}(w) / \mathbb{f}(q^r w) }{q^r w - w'}dr,
\end{equation}
where $d \in (0,1)$ and
\begin{equation}
    \mathbb{f}(w) = \frac{(-\alpha/w ; q)_\infty}{(- \beta w ; q)_\infty} \prod_{l=1}^y \frac{( - \xi_l / w ; q )_\infty}{( s / w ; q)_\infty} \prod_{l=1}^x \frac{( s w ; q)_\infty}{( -\theta_l w ; q )_\infty}.
\end{equation}
Here $\mathfrak{C}$ is a closed complex contour encircling $0,q^k s$
for $k\ge 0$ and such that, for any $r \in d + \mathrm{i}\mathbb{R}$,
$\mathfrak{C}$ contains $q^r \mathfrak{C}$ and $-q^{r+k} \xi_l,
-q^{r+k} \alpha$ for all $k,l \ge 0$, but leaves outside $1/(q^{r+k}
s)$ and $-1/(q^k \theta_l), 1/(q^k \beta)$ for all $k,l \ge 0$.
\end{theorem}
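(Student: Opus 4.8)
The plan is to deduce \Cref{thm:phiVM_q_laplace} from the general fused Fredholm determinantal identity \eqref{eq:q_Laplace_princ_spec} of \Cref{thm:q_Laplace_princ_spec} by a fusion plus analytic continuation argument, exactly as in the proof of \Cref{thm:hs6vm_q_laplace}. First I would specialize the parameters in \eqref{eq:q_Laplace_princ_spec}: set $(u_0,q^{J_0})=\mathrm{sg}(\alpha)$ and $(v_0,q^{I_0})=\mathrm{sg}(\beta)$, i.e. $u_0=-\alpha\epsilon$, $v_0=-\beta\epsilon$, $q^{J_0}=q^{I_0}=1/\epsilon$ with $\epsilon\to0$; and for $1\le k\le y$, $1\le l\le x$ set $(u_k,q^{J_k})=\mathrm{sqW}(\xi_k)$, $(v_l,q^{I_l})=\mathrm{sqW}(\theta_l)$, i.e. $u_k=v_l=s$, $q^{J_k}=-\xi_k/s$, $q^{I_l}=-\theta_l/s$ (cf. \Cref{fig:table_positivity_YBE}). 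For all parameters in a small neighbourhood of the origin the bounds \eqref{eq:condition_parameters_general_skew_Cauchy} required by \Cref{thm:q_Laplace_princ_spec} are satisfied, so \eqref{eq:q_Laplace_princ_spec} stays valid after this substitution.

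Next I would identify the two sides. On the left, under the substitution $\mathfrak{F}^{(J_0,\dots,J_y)}_{\lambda}$ becomes $\mathbb{F}_{\lambda'}(\xi_1,\dots,\xi_y;\widetilde{\alpha})$ and $\mathfrak{G}^{(I_0,\dots,I_x)}_{\lambda}$ becomes $\mathbb{F}^*_{\lambda'}(\theta_1,\dots,\theta_x;\widetilde{\beta})$, while the product prefactor in \eqref{eq:q_Laplace_princ_spec} (which is the reciprocal of $\sum_{\lambda}\mathfrak{F}_{\lambda}\mathfrak{G}_{\lambda}$ by the Cauchy identity) collapses to the normalizing constant in the single-point distribution of the sqW/sqW Yang-Baxter field $\boldsymbol\lambda$ with $(\alpha,\beta)$-scaled geometric boundary conditions (the proposition preceding \Cref{thm:sqW_sqW_length}); hence the left-hand side of \eqref{eq:q_Laplace_princ_spec} equals $\mathbb{E}\bigl(1/(\zeta q^{-\ell(\lambda^{(x,y)})};q)_{\infty}\bigr)$, and by $\ell(\lambda^{(x,y)})\stackrel{d}{=}\mathbb{H}^{\phi}(x,y)+\mathcal{M}$ from \Cref{thm:sqW_sqW_length} this is the left-hand side of \eqref{eq:phiVM_q_laplace} (no shift of $\zeta$ is needed here, in contrast with \Cref{thm:q_Laplace_princ_spec}, because we already work with $q^{-\ell}$). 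On the right, I would push the function $f$ of \Cref{thm:q_Laplace_princ_spec} through the same substitution: the $k=0$ and $l=0$ factors tend to $(-\alpha/w;q)_{\infty}$ and $1/(-\beta w;q)_{\infty}$, the $\mathrm{sqW}(\xi_k)$ factors become $(-\xi_k/w;q)_{\infty}/(s/w;q)_{\infty}$, the $\mathrm{sqW}(\theta_l)$ factors become $(sw;q)_{\infty}/(-\theta_l w;q)_{\infty}$, and the result is precisely the function $\mathbb{f}$ in the statement; correspondingly the admissibility conditions on $C$ turn into those defining $\mathfrak{C}$ (encircling $0,q^ks$; containing $q^r\mathfrak{C}$, $-q^{r+k}\xi_l$ and $-q^{r+k}\alpha$; leaving outside $1/(q^{r+k}s)$, $-1/(q^k\theta_l)$, $1/(q^k\beta)$). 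This gives \eqref{eq:phiVM_q_laplace} for all parameters near the origin.

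Finally I would extend the identity to the full range of parameters defining a genuine probability measure ($q\in(0,1)$, $s\in[-\sqrt q,0)$, $\xi_y,\theta_x\in[-s,-s^{-1}]$, $\alpha,\beta\in[0,-s^{-1}]$). Both sides of \eqref{eq:phiVM_q_laplace} are analytic in $(\alpha,\beta,\xi_l,\theta_l,s)$: the left-hand side because $1/(\zeta q^{-\mathbb{H}^{\phi}-\mathcal{M}};q)_{\infty}$ is bounded and the one-point law depends analytically on the parameters, and the right-hand side by the same estimate as in the proof of \Cref{thm:q_Laplace_princ_spec} (uniform boundedness of the kernel on the compact contour $\mathfrak{C}$, Hadamard's inequality, and the exponential decay of $1/\sin(\pi r)$ making the Fredholm series an absolutely convergent sum of holomorphic functions). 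Since the two sides agree on an open set, they agree on the whole connected parameter region. The hard part will be precisely this continuation step when $\alpha>0$: unlike the $\alpha=0$ case one cannot reach the $q$-Laplace transform from a $q$-moment generating series (only finitely many $q$-moments of $\mathbb{H}^{\phi}$ are finite), so one has to be sure that along the continuation path the parameters never leave the region where both sides are well-defined and analytic, and that a single contour $\mathfrak{C}$ satisfying all of the (open) admissibility constraints can be chosen uniformly along the way.
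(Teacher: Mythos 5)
Your proposal is correct and follows essentially the same route as the paper: specialize the general fused Fredholm identity \eqref{eq:q_Laplace_princ_spec} with $(u_0,q^{J_0}),(v_0,q^{I_0})$ scaled geometric and the remaining parameters spin $q$-Whittaker, identify the left-hand side via the sqW/sqW field and \Cref{thm:sqW_sqW_length}, and then analytically continue the parameters away from a neighborhood of the origin while they remain in the probabilistic range. Your additional details (the explicit collapse of $f$ to $\mathbb{f}$, the contour bookkeeping, and the remark that no shift of $\zeta$ is needed) are consistent with, and slightly more explicit than, the paper's argument.
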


\begin{proof}
Expression \eqref{eq:phiVM_q_laplace} is derived from the general
summation identity \eqref{eq:q_Laplace_princ_spec} 
in the same way as 
\Cref{thm:hs6vm_q_laplace}. First we
establish \eqref{eq:phiVM_q_laplace} for parameters $\alpha, \beta, s,
\xi_l, \theta_l$ is a small neighborhood of the origin by considering
specializations of $u_0, q^{J_0}, u_1, q^{J_1}, \dots, v_0, q^{I_0},
v_1, q^{I_1}$ in \eqref{eq:q_Laplace_princ_spec}. Subsequently we
relax conditions on these parameters moving them away from the origin
but keeping them in real intervals in such a way that they always
define a probability measure. This 
is possible due to the analyticity of both
sides of 
\eqref{eq:phiVM_q_laplace} in the parameters.
\end{proof}

\begin{theorem} \label{thm:phiVM_q_laplace_bis}
Assume \eqref{eq:conditions_hs6vm_q_moments} and let $\widetilde{\mathfrak{C}}$ be a closed complex contour encircling $-1/\beta, -1/\theta_1, -1 / \theta_2, \dots$ and that does not contain any point of the interior of $q \, \widetilde{\mathfrak{C}}$. Then expression \eqref{eq:phiVM_q_laplace} holds  with contour $\mathfrak{C}$ replaced by $\widetilde{\mathfrak{C}}$.
\end{theorem}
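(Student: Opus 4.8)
The plan is to obtain \Cref{thm:phiVM_q_laplace_bis} from \Cref{thm:phiVM_q_laplace} purely by a contour deformation argument, exactly parallel to how \Cref{thm:hs6vm_q_laplace_bis} is deduced from \Cref{thm:hs6vm_q_laplace}. The two theorems assert equality of the same Fredholm determinant $\det(Id+\mathbb{K})$ computed on two different contours, $\mathfrak{C}$ and $\widetilde{\mathfrak{C}}$, so the content is that deforming $\mathfrak{C}$ into $\widetilde{\mathfrak{C}}$ does not change the value of the determinant. First I would recall the structure of the kernel: $\mathbb{K}(w,w')$ is given by an integral over $r\in d+\mathrm{i}\mathbb{R}$ of $\frac{(-\zeta)^r}{\sin(\pi r)}\frac{\mathbb{f}(w)/\mathbb{f}(q^r w)}{q^r w-w'}$, where
\begin{equation*}
	\mathbb{f}(w) = \frac{(-\alpha/w;q)_\infty}{(-\beta w;q)_\infty}\prod_{l=1}^y\frac{(-\xi_l/w;q)_\infty}{(s/w;q)_\infty}\prod_{l=1}^x\frac{(sw;q)_\infty}{(-\theta_l w;q)_\infty}.
\end{equation*}
The poles of $w\mapsto \mathbb{f}(w)/\mathbb{f}(q^r w)$ and of $1/(q^rw-w')$ in the region between $\mathfrak{C}$ and $\widetilde{\mathfrak{C}}$ must be tracked: $\mathfrak{C}$ encircles $0,q^ks$ and (for each $r$) $-q^{r+k}\xi_l,-q^{r+k}\alpha$, while $\widetilde{\mathfrak{C}}$ encircles $-1/\beta,-1/\theta_1,-1/\theta_2,\dots$ and excludes the interior of $q\widetilde{\mathfrak{C}}$; in either case $1/(q^{r+k}s)$, $-1/(q^k\theta_l)$, $1/(q^k\beta)$ stay outside.

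The key steps, in order, are: (1) verify that under the spacing hypothesis \eqref{eq:conditions_hs6vm_q_moments} on $\beta,\theta_1,\theta_2,\dots$ the contour $\widetilde{\mathfrak{C}}$ can indeed be chosen so that it contains no point of the interior of $q\widetilde{\mathfrak{C}}$ and so that it can be connected to $\mathfrak{C}$ by a homotopy of admissible contours — i.e., contours for which the Fredholm determinant is well-defined (the compactness/Hadamard bound from the proof of \Cref{thm:q_Laplace_princ_spec} applies uniformly along the homotopy); (2) track which residues are picked up as $\mathfrak{C}$ is deformed to $\widetilde{\mathfrak{C}}$, and check that the net contribution of all crossed poles vanishes — this is the algebraic heart of the matter and should mirror verbatim the computation behind \Cref{thm:hs6vm_q_laplace_bis}, since replacing the stochastic higher spin six vertex model by the $_4\phi_3$ model only changes the factor $\prod_{l}(w-u_l)^{-1}$ in $\mathpzc f$ into $\prod_l\frac{(-\xi_l/w;q)_\infty}{(s/w;q)_\infty}$, which moves finitely many simple poles into an explicit $q$-geometric family of poles at $q^ks$ and $-q^{r+k}\xi_l$ but does not disturb the cancellation mechanism; (3) alternatively, and more cleanly, deduce the identity by the analytic-continuation route used for \Cref{thm:hs6vm_q_laplace_bis}: first establish the formula on $\widetilde{\mathfrak{C}}$ for the step-stationary case $\alpha=0$ directly from the $q$-moment expansion (valid since the relevant finite $q$-moments are captured by the $\Gamma[\boldsymbol\theta,\beta|j]$ contours of the preceding Proposition, which are exactly of $\widetilde{\mathfrak{C}}$-type), then principally specialize the sqW parameters, replace $\zeta\mapsto\zeta q^{-J_0}$ using \eqref{eq:why_q_rJ_appears}, and finally take the $\mathrm{sg}(\alpha)$ specialization together with an analytic continuation in $\alpha,\beta,s,\xi_l,\theta_l$ within a region where the expressions define a probability measure and \eqref{eq:conditions_hs6vm_q_moments} holds.

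I would write up the analytic-continuation version (step (3)) as the primary argument, because it avoids the delicate bookkeeping of step (2) and reuses machinery already in place: \Cref{prop:sHL_absolute_integrability} gives absolute summability of the left-hand side, the Hadamard bound from \Cref{thm:q_Laplace_princ_spec} gives analyticity of the Fredholm determinant, and \Cref{thm:sqW_sqW_length} provides the identification $\ell(\lambda^{(x,y)})\overset{d}{=}\mathbb{H}^\phi(x,y)+\mathcal{M}$ that connects the symmetric-function sum to the vertex-model observable. The only genuinely new point relative to the existing proofs is checking that $\widetilde{\mathfrak{C}}$ is admissible for the $_4\phi_3$ kernel — that it can enclose $-1/\beta,-1/\theta_l$ and avoid the interior of $q\widetilde{\mathfrak{C}}$ while also, after the specialization, being deformable to a contour enclosing $0,q^ks,-q^{r+k}\xi_l,-q^{r+k}\alpha$ and excluding $1/(q^{r+k}s)$, $-1/(q^k\theta_l)$, $1/(q^k\beta)$. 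Under \eqref{eq:conditions_hs6vm_q_moments} the points $\{-1/\theta_l\}\cup\{-1/\beta\}$ and their $q$-dilations are well-separated from the excluded points, so such a contour exists; this is where I expect the only real care to be needed.

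\begin{proof}
The argument follows the same scheme as the proof of \Cref{thm:hs6vm_q_laplace_bis}, with the stochastic higher spin six vertex model replaced by the $_4\phi_3$ stochastic vertex model and the sHL/sqW structure replaced by the sqW/sqW one. We first treat the step-stationary case $\alpha=0$, $\beta\neq0$. Under \eqref{eq:conditions_hs6vm_q_moments} the contours $\Gamma[\boldsymbol\theta,\beta|j]$ appearing in the $q$-moment formula of the Proposition preceding this theorem may all be taken to coincide with a single contour $\widetilde{\mathfrak{C}}$ encircling $-1/\beta,-1/\theta_1,-1/\theta_2,\dots$ and containing no point of the interior of $q\,\widetilde{\mathfrak{C}}$; the only residues crossed in this collapse are those at $z_A=qz_B$ for $A<B$. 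Organizing the resulting sums of nested integrals into a generating series in the standard way (cf.\ \cite[Section 3]{BorodinCorwinSasamoto2012} and the proof of \Cref{thm:6vm_q_laplace}), and using the boundedness of $1/(\zeta q^{-\mathbb{H}^\phi};q)_\infty$ together with the finiteness of the relevant $q$-moments, yields \eqref{eq:phiVM_q_laplace} with $\mathfrak{C}$ replaced by $\widetilde{\mathfrak{C}}$ in the case $\alpha=0$.

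To include $\alpha>0$ we pass to principal specializations exactly as in the proof of \Cref{thm:hs6vm_q_laplace_bis}. Starting from \eqref{eq:q_Laplace_princ_spec}, specialize $u_l,q^{J_l}u_l$ as $\mathrm{sg}(\alpha),\mathrm{sHL}(u_1),\dots$ is not what is needed here; instead, for the sqW/sqW field one specializes the parameters governing the top row as $\mathrm{sg}(\alpha),\mathrm{sqW}(\xi_1),\dots,\mathrm{sqW}(\xi_y)$ and the bottom row as $\mathrm{sg}(\beta),\mathrm{sqW}(\theta_1),\dots,\mathrm{sqW}(\theta_x)$, using the replacement $\zeta\mapsto\zeta q^{-J_0-\cdots-J_y}$ and identity \eqref{eq:why_q_rJ_appears} to absorb the factor $q^{r(J_0+\cdots+J_y)}$ produced in the kernel. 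By \Cref{thm:sqW_sqW_length} the left-hand side of the specialized identity is the $q$-Laplace transform of $-\mathbb{H}^\phi(x,y)-\mathcal{M}$ with $\mathcal{M}\sim q\textnormal{-}\mathrm{Poi}(\alpha\beta)$, and the right-hand side is $\det(Id+\mathbb{K})$ on a contour of $\widetilde{\mathfrak{C}}$-type. Thus \eqref{eq:phiVM_q_laplace} with $\mathfrak{C}$ replaced by $\widetilde{\mathfrak{C}}$ holds for $\alpha,\beta,s,\xi_l,\theta_l$ in a complex neighborhood of the origin.

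Finally, both sides of the claimed identity are, for parameters satisfying \eqref{eq:conditions_hs6vm_q_moments} and corresponding to a genuine probability measure, absolutely convergent series of functions holomorphic in $\alpha,\beta,s,\xi_l,\theta_l$: the left-hand side by \Cref{prop:sHL_absolute_integrability} and the boundedness of the $q$-Laplace kernel $1/(\zeta q^{\bullet};q)_\infty$, and the right-hand side by the uniform Hadamard estimate from the proof of \Cref{thm:q_Laplace_princ_spec}, which applies on $\widetilde{\mathfrak{C}}$ exactly as on $\mathfrak{C}$ because $\widetilde{\mathfrak{C}}$ and the image of $r\mapsto q^r$ for $r\in d+\mathrm{i}\mathbb{R}$ are compact. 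Analytic continuation from the neighborhood of the origin to the full admissible parameter range then gives \eqref{eq:phiVM_q_laplace} with contour $\widetilde{\mathfrak{C}}$, as claimed.
\end{proof}
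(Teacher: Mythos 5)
Your plan has the right general shape (analytic continuation rather than contour deformation), but the two concrete steps you wrote down both have problems, and the first one is a genuine gap. In your opening paragraph you establish the $\alpha=0$ case of the $\widetilde{\mathfrak{C}}$-formula for the $_4\phi_3$ model by collapsing the nested contours of the preceding $q$-moment proposition and ``organizing the resulting sums into a generating series in the standard way.'' That machinery is not available here: as that very proposition states, $\mathbb{E}^{\mathrm{step}}\bigl(q^{-l\,\mathbb{H}^\phi(x,y)}\bigr)=\infty$ once $q^l\le\max_t\{\beta\xi_t\}$, so only finitely many $q$-moments exist and the series $\sum_{l\ge0}\frac{\zeta^l}{(q;q)_l}\,\mathbb{E}\bigl(q^{-l\,\mathbb{H}^\phi}\bigr)$ contains infinite terms — this is precisely the obstruction the paper flags as the reason the generating-series route of \Cref{thm:6vm_q_laplace} cannot be used for the $_4\phi_3$ model, even at $\alpha=0$. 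Your appeal to ``the finiteness of the relevant $q$-moments'' is exactly what fails. In the second paragraph you then ``start from \eqref{eq:q_Laplace_princ_spec}''; but the Fredholm determinant in \Cref{thm:q_Laplace_princ_spec} lives on the large contour $C$ encircling $0$ and the $q^ku_i$ (the $\mathfrak{C}$-type contour), so specializing it can only reproduce \Cref{thm:phiVM_q_laplace}, not the $\widetilde{\mathfrak{C}}$ statement you are trying to prove.

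The paper's proof repairs both points at once: it starts not from \eqref{eq:q_Laplace_princ_spec} but from \Cref{thm:hs6vm_q_laplace_bis} — equivalently from its fused form \eqref{eq:K_tilde_hsm} on the contour $\widetilde{\mathcal{C}}$ — which is legitimately proved by the $q$-moment expansion because in the sHL/sqW (higher spin) setting with $\alpha=0$ one has $q^{l\,\mathfrak{h}^{\mathrm{HS}}}\le1$ and all moments exist. That identity is rational/analytic in the fused parameters $u_l,q^{J_l}$, so one may specialize the remaining sHL slots to sqW values ($u_l=s$, $q^{J_l}=-\xi_l/s$, together with the $\mathrm{sg}(\alpha)$ slot and the shift $\zeta\mapsto\zeta q^{-J_0}$ via \eqref{eq:why_q_rJ_appears}) and then continue analytically in $\alpha,\beta,s,\xi_l,\theta_l$; \Cref{thm:sqW_sqW_length} identifies the resulting left-hand side with the $q$-Laplace transform of $-\mathbb{H}^\phi-\mathcal{M}$. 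In other words, the small-contour formula must be transported from the higher spin model by specialization — it cannot be rederived from the $_4\phi_3$ moments nor extracted from the large-contour identity. Your final analyticity/Hadamard-bound paragraph is fine and matches the paper, but it only works once the correct starting identity is in place.
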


\begin{proof}
		This alternative determinantal expression for the $q$-Laplace
		transform follows from
		\Cref{thm:hs6vm_q_laplace_bis}
		using the sqW specializations and 
		subsequent analytic continuation.
\end{proof}

\begin{remark}
	Both \Cref{thm:phiVM_q_laplace,thm:phiVM_q_laplace_bis} 
	degenerate to Fredholm determinantal formulas
	for the $q$-Hahn pushTASEP. In particular, expression given by
	\Cref{thm:phiVM_q_laplace_bis} was conjectured in
	\cite{CMP_qHahn_Push} (Conjecture 3.11)
	for step initial conditions.
	Therefore, we have established this conjecture.
	Moreover, by sending all parameters to $1$,
	one can also get the proof of
	\cite[Conjecture 4.6]{CMP_qHahn_Push}
	on the Laplace transform of the one-point
	observable in the beta polymer like model 
	introduced in \cite{CMP_qHahn_Push}.
\end{remark}

\appendix
\section{Yang-Baxter equations} 
\label{app:YBE}

Here we review the Yang-Baxter equations used
throughout the paper. 

\subsection{Basic cases}
\label{sub:app_basic}

All Yang-Baxter equations we use
can be traced to the following basic
one:
\begin{proposition}
	\label{prop:YBE_rww}
	Consider the vertex weights $w,r$ defined respectively in 
	\Cref{fig:table_w}
	and \Cref{fig:table_r}. Then we have 
\begin{equation} \label{eq:YBE_rww}
\begin{split}
&
\sum_{k_1,k_2,k_3}
r_{u/v}(i_2, i_1; k_2, k_1)\, w_{v,s} (i_3, k_1; k_3, j_1)\, w_{u,s}(k_3,k_2; j_3,j_2) \\
&\hspace{50pt}
= 
\sum_{k_1,k_2,k_3} 
w_{v,s} (k_3, i_1; j_3, k_1)\, w_{u,s}(i_3,i_2; k_3,k_2)\, r_{u/v}(k_2, k_1; j_2, j_1)
\end{split}
%checked in Mathematica - L.
\end{equation}
for all $i_1,i_2,j_1,j_2\in\left\{ 0,1 \right\}$
and $i_3,j_3\in \mathbb{Z}_{\ge0}$.
A
visual representation of this equation
is given in Figure \ref{fig:YBE}.
\end{proposition}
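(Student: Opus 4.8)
The plan is to prove \eqref{eq:YBE_rww} by reducing it to a finite list of elementary identities. First I would observe that the weights $r_{u/v}(a,b;c,d)$ vanish unless all of $a,b,c,d$ lie in $\{0,1\}$ and satisfy arrow conservation $a+b=c+d$; combined with the arrow conservation property \eqref{eq:sHL_arrow_conservation} for the $w$-weights, this forces the internal summation indices $k_1,k_2$ into $\{0,1\}$ and pins down $k_3$ from $i_3,j_3$ in each summand. Consequently, for every fixed choice of boundary data $(i_1,i_2,i_3,j_1,j_2,j_3)$ with $i_1,i_2,j_1,j_2\in\{0,1\}$ and $i_3,j_3\in\mathbb{Z}_{\ge0}$, each side of \eqref{eq:YBE_rww} is a sum of at most two nonzero terms, and the total arrow count $i_1+i_2+i_3=j_1+j_2+j_3$ is a necessary condition for either side to be nonzero. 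A visual check against \Cref{fig:YBE} confirms which configurations contribute.

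Next I would organize the verification by the values of the ``light'' indices $(i_1,i_2,j_1,j_2)$ allowed by conservation. In the cases where at most one path can flow through the $r$-vertex in each direction, both sides collapse to a single product of three weights, and the identity reduces to a straightforward rearrangement of the rational prefactors read off from \Cref{fig:table_w} and \Cref{fig:table_r}. The only genuinely nontrivial cases are those in which a color can be routed through the configuration in two ways; there the left and right sides are each sums of two terms, and the equality becomes a single rational-function identity in $u$, $v$ (and $q$), with $i_3$ or $j_3$ entering only through $q^{i_3}$ or $q^{j_3}$. Treating $q^{i_3}$ (resp.\ $q^{j_3}$) as a formal variable, each such identity is checked by clearing denominators and comparing polynomials; a reflection symmetry of the hexagon (swapping the two $w$-rows together with $u\leftrightarrow v$) cuts the number of independent checks roughly in half.

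An alternative and essentially equivalent route, which I would mention rather than carry out, is to note that \eqref{eq:YBE_rww} is the stable specialization of the Yang-Baxter equation for the higher-spin six vertex model already established in \cite{Borodin2014vertex}, \cite{CorwinPetrov2015}, \cite{BorodinPetrov2016inhom}. One starts from the non-stable version (with the left-boundary modification discussed in \Cref{sub:rmk_non_stable}), takes the limit \eqref{eq:sHL_from_non_stable_1} in which the leftmost column accumulates infinitely many arrows, and tracks the accompanying change of spectral variable and the gauge factors relating the two parametrizations. Since the limit is termwise and the number of nonzero terms stays bounded, the equation passes to the limit without convergence issues.

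The main obstacle is purely bookkeeping: one must ensure that the $q^{i_3}$- and $q^{j_3}$-dependence of the ``heavy'' third edge matches on both sides, i.e.\ that the nontrivial rational identities hold identically in $q^{i_3}$ (resp.\ $q^{j_3}$) rather than merely for a few small integer values, and one must keep consistent which index labels which edge in accordance with \Cref{fig:YBE}. Once the case list is laid out, each individual check is routine.
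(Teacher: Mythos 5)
Your main route --- reducing \eqref{eq:YBE_rww} by arrow conservation to a short list of boundary data, with each side a sum of at most two explicit products of weights, and checking the resulting rational identities in $u,v,q$ (with $q^{i_3},q^{j_3}$ treated formally) --- is exactly the paper's proof, which simply declares the identity a straightforward verification and cites \cite{Mangazeev2014}, \cite{Borodin2014vertex}, \cite{BorodinWheelerSpinq}. One small caveat on your sketched alternative: the Yang--Baxter equation is a purely local statement about three vertices with finite edge occupations, so no limit involving the infinitely-occupied leftmost column (as in \eqref{eq:sHL_from_non_stable_1}) is needed or relevant; citing the known higher-spin Yang--Baxter equation only requires matching the parametrization/gauge of the weights.
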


\begin{figure}[htbp]
  \centering
	\includegraphics{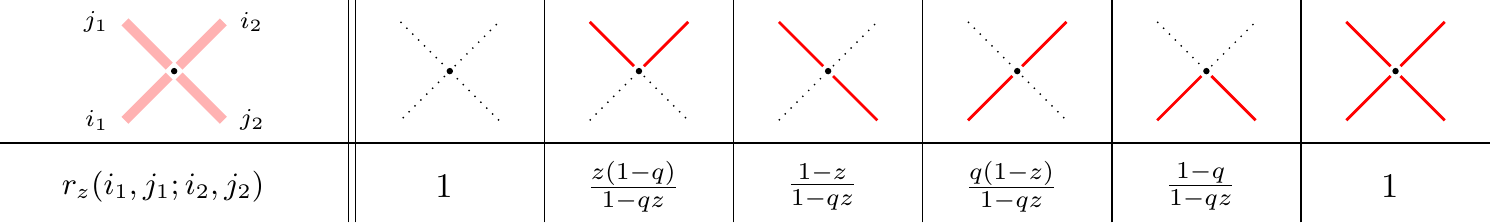}
  \caption{In the top row we see all acceptable configurations of paths entering and exiting a vertex; below we reported the corresponding vertex weights $r_z(i_1, j_1; i_2, j_2)$.} \label{fig:table_r}
\end{figure}

\begin{figure}[htbp]
\centering
\includegraphics[width=.7\textwidth]{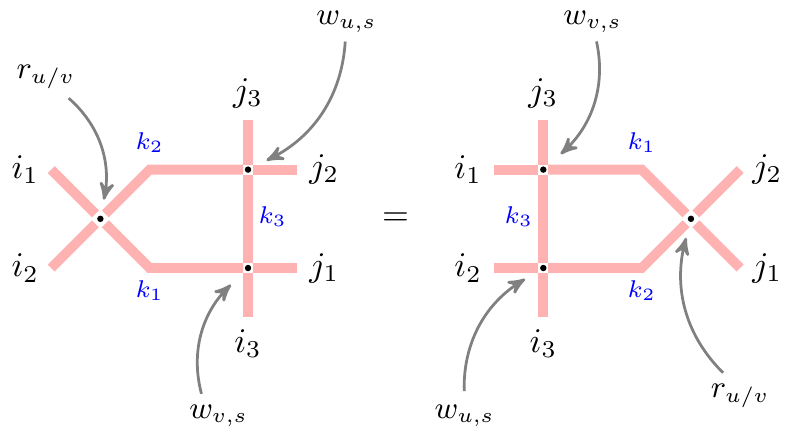}
\caption{A schematic representation of the Yang-Baxter equation \eqref{eq:YBE_rww}.} 
\label{fig:YBE}
\end{figure}

\begin{proof}[Proof of \Cref{prop:YBE_rww}]
	This is established by a straightforward verification.
	Equation \eqref{eq:YBE_rww} appeared in several other
	works, including \cite{Mangazeev2014}, \cite{Borodin2014vertex},
	\cite{BorodinWheelerSpinq}.
\end{proof}

As explained in Section \ref{sub:dual_sHL}, from vertex weights $w_{u,s}$ one
can define the dual weights $w^*_{v,s}$ by changing $u$ to $1/v$, swapping the
value of horizontal occupation numbers $0 \leftrightarrow 1$,
and 
multiplying by $(s-v)/(1 - s v)$ in order
to assign weight $1$ to the empty configuration. 
These manipulations clearly
preserve the structure of the Yang-Baxter equation, provided that the same swapping of the
occupation numbers is applied to the cross weight $r_z$. This leads to the 
definition of the cross weight $R_z$, see \Cref{fig:table_R}, 
also normalized so that the empty configuration has weight $1$.

\begin{figure}[htbp]
  \centering
	\includegraphics{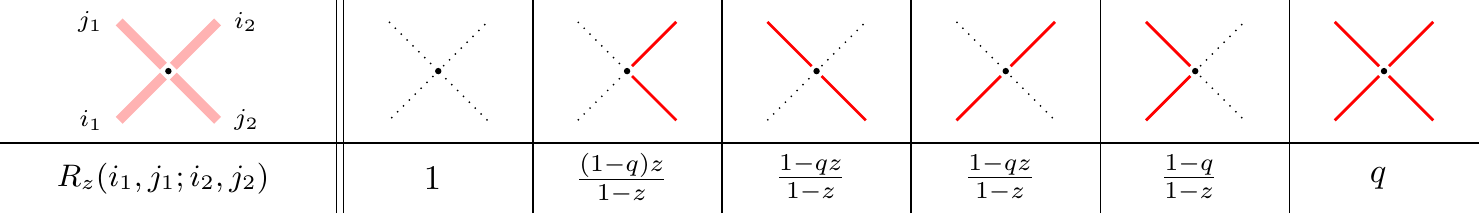}
  \caption{The cross vertex weights $R_z(i_1, j_1; i_2, j_2)$.}
	\label{fig:table_R}
\end{figure}

\begin{proposition}\label{prop:sHL_YBE}
Consider the vertex weights $w, w^*$ and $R$, defined respectively in \Cref{fig:table_w,fig:table_w_tilde},
and \Cref{fig:table_R}. 
Then we have 
\begin{equation} 
	\label{eq:sHL_YBE}
	\begin{split}
		&\sum_{k_1,k_2,k_3}R
		_{u v}(i_2, i_1; k_2, k_1)\,
		w^*_{v,s} (i_3, k_1; k_3, j_1)\,
		w_{u,s}(k_3,k_2; j_3,j_2) \\
		&\hspace{50pt}
		= 
		\sum_{k_1,k_2,k_3} w^*_{v,s} (k_3, i_1; j_3, k_1)\,
		w_{u,s}(i_3,i_2; k_3,k_2)\,
		R_{u v}(k_2, k_1; j_2, j_1)
	\end{split}
	%checked in Mathematica - L.
\end{equation}
for all 
$i_1,i_2,j_1,j_2\in \left\{ 0,1 \right\}$
and 
$i_3,j_3\in \mathbb{Z}_{\ge0}$.
\end{proposition}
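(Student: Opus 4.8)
The plan is to deduce \eqref{eq:sHL_YBE} from the basic Yang-Baxter equation \eqref{eq:YBE_rww} of \Cref{prop:YBE_rww} by applying the ``swapping'' transformation that, as recalled in \Cref{sub:dual_sHL} and at the start of this appendix, turns $w$ into the dual weight $w^*$ and $r$ into the cross weight $R$. In \eqref{eq:YBE_rww} the horizontal occupation numbers attached to the \emph{bottom} row (the one weighted by $w_{v,s}$) are $i_1,k_1,j_1$; these also appear in the cross vertex $r_{u/v}$, whereas the top row $w_{u,s}$ only involves the line-$2$ horizontal numbers $i_2,k_2,j_2$ and is untouched by what follows. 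First I would set $v\mapsto 1/v$ in \eqref{eq:YBE_rww}, so that the bottom row becomes $w_{1/v,s}$ and the cross spectral parameter $u/v$ becomes $u/(1/v)=uv$, i.e.\ $r_{u/v}$ becomes $r_{uv}$.

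Next I would apply the involution $0\leftrightarrow1$ simultaneously to every line-$1$ horizontal occupation number, that is, replace $i_1,k_1,j_1$ by $1-i_1,1-k_1,1-j_1$ in all three vertex factors on both sides of the identity. Since $i_1,j_1,j_2$ are binary this is a bijection of the summation variable $k_1$, so the equation is preserved. By the construction of the dual weights, $\frac{v-s}{1-sv}\,w_{1/v,s}$ with its two horizontal indices swapped equals $w^*_{v,s}$, and likewise $r_{uv}$ with its line-$1$ horizontal indices swapped and normalized so that the all-empty configuration has weight $1$ is exactly the cross weight $R_{uv}$ of \Cref{fig:table_R}. On each side of the transformed equation there is precisely one $w^*$ factor and one cross factor, so the two prefactors $\frac{v-s}{1-sv}$, together with the normalizing constants relating $r_{uv}$ and $R_{uv}$, cancel between the two sides; what remains is precisely \eqref{eq:sHL_YBE}. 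The ranges $i_1,i_2,j_1,j_2\in\{0,1\}$ and $i_3,j_3\in\mathbb{Z}_{\ge0}$ are inherited from \eqref{eq:YBE_rww}.

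An alternative, purely mechanical route would be to substitute the relation \eqref{eq:w_w_tilde_relation} expressing $w^*_{v,s}$ through $w_{v,s}$ into both sides of \eqref{eq:sHL_YBE}, use the arrow-conservation constraints \eqref{eq:sHL_arrow_conservation} to cancel the $(s^2;q)/(q;q)$ ratios telescopically along each row, and reduce the claim to \eqref{eq:YBE_rww} after the change $v\mapsto1/v$; this avoids the pictorial swapping at the cost of bookkeeping the Pochhammer prefactors. In either approach the only genuinely delicate point is verifying that the swapping and renormalization are applied \emph{consistently} to the dual row and to the cross weight --- i.e.\ that the $R_z$ defined in \Cref{fig:table_R} is exactly the output of the line-$1$ swap of $r_z$ --- after which the equality is immediate. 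A direct case-by-case verification over the finitely many admissible configurations with $i_1,i_2,j_1,j_2\in\{0,1\}$ is available as a fallback, exactly as for \Cref{prop:YBE_rww}.
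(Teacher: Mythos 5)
Your proposal is correct and is essentially the paper's own argument: Proposition \ref{prop:sHL_YBE} is obtained from the basic equation \eqref{eq:YBE_rww} exactly by the substitution $v\mapsto 1/v$, the $0\leftrightarrow 1$ swap of the line-$1$ horizontal occupation numbers (which by construction turns $w_{1/v,s}$ into $w^*_{v,s}$ and $r_{uv}$ into $R_{uv}$), and cancellation of the normalizing prefactors, which works since each side carries exactly one dual vertex and one cross vertex. No further comment is needed.
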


\subsection{Fusion} \label{app:fusion}

Through a fusion procedure we generalize vertex weights $w_{u,s}$ and allow
configurations with multiple paths crossing a vertex in the horizontal
direction. 
This technique of generalizing solutions to the Yang-Baxter
equation 
was originally introduced in
\cite{KulishReshSkl1981yang} 
and consist in collapsing together a series of
vertically attached vertices with spectral parameters 
forming a
geometric
progression of ratio $q$. 
The fusion of vertex weights also admits a
probabilistic interpretation 
\cite{CorwinPetrov2015},
\cite{BorodinPetrov2016inhom},
\cite{BorodinWheelerSpinq}.  

Define the fused vertex weight
\begin{equation}\label{eq:w_fused_J}
\begin{split}
	w_{u,s}^{(J)} (i_1,j_1;i_2,j_2) &= \mathbf{1}_{i_1+j_1=i_2+j_2}\,
	\frac{(-1)^{i_1+j_2}q^{\frac{1}{2} i_1(i_1-1+2 j_1)} s^{j_2-i_1} u^{i_1} (u/s;q)_{j_1-i_2} (q;q)_{j_1} }{(q;q)_{i_1} (q;q)_{j_2} (s u;q)_{j_1+i_1}}\\
&
\qquad \qquad \times \setlength\arraycolsep{1pt}
{}_4 \overline{ \phi}_3\left(\begin{minipage}{4cm}
\center{$q^{-i_1}; q^{-i_2},  s u q^J,  qs/u$}\\\center{$s^2,q^{1+j_2-i_1}, q^{1-i_2-j_2+J}$}
\end{minipage}
\Big\vert\, q,q\right),
\end{split}
\end{equation}
where $\setlength\arraycolsep{1pt}{}_4 \overline{ \phi}_3$ is the regularized $q$-hypergeometric series
\eqref{eq:hypergeom_series}.
Here $J$ is originally a positive integer 
representing the number of vertices which were
fused together. 
However, it is easy to see that $w^{(J)}$ depends 
on $q^J$ in a rational way, thus $q^J$ can be regarded as the fourth independent
parameter in \eqref{eq:w_fused_J} (along with $u,s$, and $q$).
Since the regularized series 
$\setlength\arraycolsep{1pt}{}_4 \overline{ \phi}_3$
terminates, \eqref{eq:w_fused_J}
depends on all these parameters in a rational way.
Moreover, in case $i_1,i_2\to \infty$, the weight $w$ loses its dependence of $j_1$ and we have
\begin{equation} \label{eq:w_fused_infinity_normalized}
    \lim_{n \to \infty} w^{(J)}_{u,s} (n,j_1;n+j_1-j_2,j_2 )
		= 
		(-u q^J)^{j_2} \frac{(q^{-J} ; q)_{j_2} }{(q;q)_{j_2}} \frac{(suq^J;q)_\infty}{(su;q)_\infty}.
\end{equation}
Just as in the $J=1$ case, the fused boundary weight is obtained removing the normalization factor from \eqref{eq:w_fused_infinity_normalized}, and we define 
\begin{equation}\label{eq:w_fused_infinity}
    w^{(J)}_{u,s} \biggl(\begin{tikzpicture}[baseline=-2.5pt]
    	\draw[fill] (0,0) circle [radius=0.025];
			\node at (0,.3) {$\infty$};
			\node at (0,-.3) {$\infty$};
			\draw [red] (0.1,0) --++ (0.4, 0) node[right, black] {$k$};
			\draw [red] (0.1,0.05) --++ (0.4, 0);
			\draw [red] (0.1,-0.05) --++ (0.4, 0);
        \addvmargin{1mm}
        \addhmargin{1mm}
	  \end{tikzpicture} \biggr)\,= (-u q^J)^k \frac{(q^{-J} ; q)_k }{(q;q)_k}.
\end{equation}
This normalization is needed
to assign weight 1 to the empty configuration of paths in the grid. The fused analog of the dual weights $w$
is defined similarly to 
\eqref{eq:w_w_tilde_relation}:
\begin{equation} \label{eq:w_fused_I_dual}
	w^{*,(I)}_{v,s}(i_1,j_1;i_2,j_2) 
	= 
	\frac{(s^2;q)_{i_1} (q;q)_{i_2} }{ (q;q)_{i_1} (s^2;q)_{i_2} }
	\,
	w^{(I)}_{v,s}(i_2,j_1;i_1;j_2).
\end{equation}
These quantities also depend on $v,s,q$, and $q^I$ in a rational way.

What makes the fused weights remarkable 
is that they satisfy a general version of the Yang-Baxter equation
(previously in \Cref{sub:app_basic}
the horizontal occupation numbers had to be either 0 or 1). 
In
order to state this equation 
we need to consider the fusion of the cross weights~$R_z$
leading to
\begin{equation} \label{eq:R_fused_I_J}
\begin{split}
R_{z}^{(I,J)} \left( i_1,j_1;i_2,j_2 \right) &:= 
\mathbf{1}_{i_2 + j_1 = i_1 + j_2 }\,
\frac{ q^{ i_2 i_1 +\frac{1}{2}j_2(j_2 -1) + j_2 J } ( -z )^{j_2} (q;q)_{j_1}  }{ (z;q)_{j_1 + i_2} (q;q)_{j_2} (q;q)_{i_2} (q^{1-J}/z;q)_{i_1 -j_1} } \\
& \qquad \qquad \times 
{}_4 \overline{ \phi}_3\left(\begin{minipage}{4.2cm}
\center{$q^{-i_2}; q^{-i_1}, z q^I,  q^{1-J}/z$}\\\center{$q^{-J},q^{1+j_2-i_2}, q^{1-i_1-j_2+I}$}
\end{minipage} \Big\vert\, q,q\right).
\end{split}
\end{equation}

\begin{proposition}\label{prop:YBE_general_fused}
Consider the weights $w^{(J)}, w^{*,(I)}$ and $R^{(I,J)}$ defined in \eqref{eq:w_fused_J}, \eqref{eq:w_fused_I_dual}, \eqref{eq:R_fused_I_J}. 
Then we have 
\begin{equation} \label{YBE}
\begin{split}
	&\sum_{k_1,k_2,k_3}R_{uv}^{(I,J)}(i_2, i_1; k_2, k_1) \,
	w^{*,(I)}_{v,s} (i_3, k_1; k_3, j_1) \,
	w^{(J)}_{u,s}(k_3,k_2; j_3,j_2) \\
	&\hspace{50pt} = 
	\sum_{k_1,k_2,k_3} w^{*,(I)}_{v,s} (k_3, i_1; j_3, k_1) \,
	w^{(J)}_{u,s}(i_3,i_2; k_3,k_2)\, R_{uv}^{(I,J)}(k_2, k_1; j_2, j_1),
	%checked in Mathematica - L. 
\end{split}
\end{equation}
for all admissible values of $i_1,i_2,j_1,j_2$
(that is, $i_1,j_1\in \left\{ 0,1,\ldots,I-1  \right\}$
for $I$ a positive integer,
or $i_1,j_1\in \mathbb{Z}_{\ge0}$
if $q^{I}$ is generic, and similarly for $i_2,j_2$),
and $i_3,j_3\in \mathbb{Z}_{\ge0}$.
See \Cref{fig:IJ_YBE_illustration_S4} for an illustration.
\end{proposition}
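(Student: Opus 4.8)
The plan is to derive \eqref{YBE} from the basic Yang--Baxter equation \eqref{eq:sHL_YBE} by the fusion procedure of \cite{KulishReshSkl1981yang}, following the stochastic treatment in \cite{CorwinPetrov2015}, \cite{BorodinPetrov2016inhom}, \cite{BorodinWheelerSpinq}. First I would assume $I,J\in\mathbb{Z}_{\ge1}$ and record the fusion construction: $w^{(J)}_{u,s}(i_1,j_1;i_2,j_2)$ is a partition function of a column of $J$ vertices carrying the weights $w_{q^{a}u,s}$, $a=0,\ldots,J-1$, stacked vertically, where the $J$ incoming (resp. outgoing) horizontal edges of the column carry occupation numbers in $\{0,1\}$ summing to $j_1$ (resp. $j_2$), the bottom vertical edge carries $i_1$ arrows and the top one carries $i_2$, and one sums over the internal configurations with the appropriate $q$-binomial coefficients so that the answer depends only on the totals $j_1,j_2$ and not on how the arrows are distributed among the $J$ horizontal edges. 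The nontrivial input here is the fusion/stabilization lemma (a terminating $q$-hypergeometric identity): this symmetrized column weight is independent of the chosen distribution and equals the explicit expression \eqref{eq:w_fused_J}; the same applies to $w^{*,(I)}$ via \eqref{eq:w_fused_I_dual}, and to the cross weight, whose $I\times J$-fold fusion is $R^{(I,J)}_{uv}$ in \eqref{eq:R_fused_I_J}. The boundary weight \eqref{eq:w_fused_infinity} is recovered as the $i_1,i_2\to\infty$ limit \eqref{eq:w_fused_infinity_normalized} with the normalization stripped off.

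With the fused weights realized as such partition functions, the fused Yang--Baxter equation becomes a ``train'' argument. Consider the lattice consisting of one column of $J$ horizontal $w$-rows (parameters $u,qu,\ldots,q^{J-1}u$), one column of $I$ horizontal $w^{*}$-rows (parameters $v,qv,\ldots,q^{I-1}v$), and a single rapidity line crossing all of them; fix the occupation numbers on all external edges. The basic equation \eqref{eq:sHL_YBE} (together with \eqref{eq:YBE_rww} for the crossings involving the $r$-weights, which become $R$-weights after the $0\leftrightarrow1$ swap described in \Cref{sub:dual_sHL}) lets one slide the rapidity line one elementary vertex at a time from one side of the lattice to the other. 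After $IJ$ such moves the line has passed through the whole bundle, and the two resulting lattice partition functions are equal. Summing both sides over the internal horizontal configurations with the fusion $q$-binomial weights collapses the $J$ $w$-rows into a single $w^{(J)}$ vertex, the $I$ $w^{*}$-rows into a single $w^{*,(I)}$ vertex, and the fused crossings into $R^{(I,J)}_{uv}$; this is exactly \eqref{YBE} for $i_1,j_1\in\{0,\ldots,I-1\}$, $i_2,j_2\in\{0,\ldots,J-1\}$, $i_3,j_3\in\mathbb{Z}_{\ge0}$.

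Finally, the passage to generic $q^{I},q^{J}$ is an analytic continuation. Each of $w^{(J)}_{u,s}$, $w^{*,(I)}_{v,s}$, $R^{(I,J)}_{uv}$ depends on $q^{J}$ and $q^{I}$ in a rational way, since the regularized series ${}_{4}\overline{\phi}_{3}$ in \eqref{eq:w_fused_J} and \eqref{eq:R_fused_I_J} terminate; hence both sides of \eqref{YBE}, being finite sums of finite products of these weights (arrow conservation makes the sums over $k_1,k_2,k_3$ finite), are rational functions of $q^{I}$ and $q^{J}$. Having established the identity for all positive integers $I,J$, i.e.\ at infinitely many distinct values of the pair $(q^{I},q^{J})$, we conclude that it holds identically as an equality of rational functions, hence for generic values and in particular for the real ranges required in the paper.

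I expect the main obstacle to be the fusion/stabilization lemma in the first step: verifying that the symmetrized column partition function is genuinely independent of how arrows are distributed among the horizontal edges, and that it matches the precise normalization in \eqref{eq:w_fused_J} --- the powers of $(-1)$, $q$, $s$, and the various $q$-Pochhammer factors, together with the prefactor discrepancy already flagged in the remark after \eqref{eq:w_fused_J_text}. This is the technical heart and is carried out in \cite{BorodinWheelerSpinq}; the train argument and the analytic continuation are then routine.
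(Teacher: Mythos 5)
Your proposal is correct and is essentially the argument the paper itself relies on: the paper states \Cref{prop:YBE_general_fused} without a written proof, delegating it to the fusion procedure of \cite{KulishReshSkl1981yang} as developed in \cite{CorwinPetrov2015}, \cite{BorodinPetrov2016inhom}, \cite{BorodinWheelerSpinq}, which is exactly your route — stack $J$ rows of $w$ and $I$ rows of $w^*$ with $q$-geometric spectral parameters, drag the $I\times J$ block of elementary crossings through via \eqref{eq:sHL_YBE}, collapse using the $q$-exchangeability/stabilization lemma to get the identity for $I,J\in\mathbb{Z}_{\ge1}$, and then extend to generic $q^{I},q^{J}$ by rationality (done one variable at a time, since $\{q^{n}\}_{n\ge1}$ accumulates at $0$). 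No gaps beyond the fusion lemma you correctly attribute to \cite{BorodinWheelerSpinq}.
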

Note that in \eqref{YBE} (and in all other Yang-Baxter equations in this
Appendix) for fixed boundary occupation numbers $i_1,i_2,i_3,j_1,j_2,j_3$ 
the sums over $k_1,k_2,k_3$ in both sides are finite due to arrow preservation, 
so there are no convergence 
issues when $i_3$ and $j_3$ are finite.
For situations with infinitely many paths
one has to impose certain restrictions on parameters,
cf. \Cref{def:adm_rho} and 
\Cref{prop:U_well_defined}. 
\begin{remark} \label{rem:symmetry_R_fused}
    The fused cross weights $R^{(I,J)}$ inherit 
		symmetries of the unfused weight $R$ of Figure \ref{fig:table_R}. One of these is given by the identity
    \begin{equation} \label{eq:symmetry_R_fused}
        R^{(I,J)}_{z}(i_1,j_1;i_2,j_2)=R^{(J,I)}_{z}(j_1,i_1;j_2,i_2)
    \end{equation}
		for all $i_1,j_1,i_2,j_2\in \mathbb{Z}_{\ge0}$.
\end{remark}

\begin{proposition}
	\label{prop:emergence_of_the_cross_vertex_weight}
	Consider the vertex weight $R^{(I,J)}_{z}$ 
	defined in \eqref{eq:R_fused_I_J}.
	Then we have
\begin{equation} \label{eq:sum_cross_R}
    \sum_{k_1, k_2} R_z^{(I,J)}(a_2, a_1; k_1, k_2) 
		= 
		R_z^{(I,J)} (0, I; 0, I)
		=
		R_z^{(I,J)} (J,0 ; J,0)
		= 
		\frac{(z q^I;q)_{\infty} (z q^J ; q)_\infty}
		{(z;q)_\infty (zq^{I+J}; q)_\infty}
\end{equation}
for all $a_1,a_2 \in \mathbb{Z}_{\geq 0}$.
%This proposition was checked in Mathematica
\end{proposition}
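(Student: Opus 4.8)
\textbf{Proof plan for Proposition \ref{prop:emergence_of_the_cross_vertex_weight}.}
The plan is to prove the three asserted equalities in \eqref{eq:sum_cross_R} separately and then combine them. The first equality, $\sum_{k_1,k_2} R_z^{(I,J)}(a_2,a_1;k_1,k_2) = R_z^{(I,J)}(0,I;0,I)$, is a ``sum rule'' that does not depend on $a_1,a_2$; I would derive it from the Yang-Baxter equation \eqref{YBE} of \Cref{prop:YBE_general_fused} by sending the vertical occupation numbers to infinity. Concretely, take $i_3=j_3=\infty$ in \eqref{YBE}. On both sides the $w^{*,(I)}$ and $w^{(J)}$ vertices degenerate to the $\infty$-boundary weights of \eqref{eq:w_fused_infinity} (and its dual), which are \emph{independent of the left horizontal input}; moreover arrow conservation with infinitely many vertical arrows forces the through-vertical counts to stay $\infty$. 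After factoring out these boundary weights, the Yang-Baxter equation collapses to $\sum_{k_1,k_2} R_{uv}^{(I,J)}(i_2,i_1;k_2,k_1) \cdot (\text{boundary factors depending only on }j_1,j_2) = (\text{same boundary factors}) \cdot \sum_{k_2,k_1}R_{uv}^{(I,J)}(k_2,k_1;j_2,j_1)$, i.e. the row sum of $R$ over its output is constant in the input; specializing the input to $(0,I)$ or $(J,0)$ and using that $R_z^{(I,J)}(0,I;k_2,k_1)$ is supported on $(k_2,k_1)=(0,I)$ (resp. $R_z^{(I,J)}(J,0;k_2,k_1)$ on $(J,0)$) by arrow conservation and the explicit formula \eqref{eq:R_fused_I_J} with a terminating $_4\overline{\phi}_3$, gives the first two displayed equalities at once.

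The remaining task is the closed-form evaluation $R_z^{(I,J)}(0,I;0,I) = \dfrac{(zq^I;q)_\infty (zq^J;q)_\infty}{(z;q)_\infty (zq^{I+J};q)_\infty}$. Here I would simply plug $(i_1,j_1,i_2,j_2)=(0,I,0,I)$ into \eqref{eq:R_fused_I_J}. The indicator $\mathbf 1_{i_2+j_1=i_1+j_2}$ is satisfied; the Pochhammer prefactor becomes an explicit ratio involving $(z;q)_I$, $(z;q)_{\infty}$-type factors and powers of $q$ and $z$; and the $_4\overline{\phi}_3$ reduces because the top parameter $q^{-i_2}=q^0=1$ makes the regularized series terminate at $j=0$, so it equals its single $j=0$ term, which is the product of the $(b_i;q)_n$ normalizing factors. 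Multiplying everything out, the finite $q$-Pochhammer symbols should telescope/reorganize into the infinite-product form on the right-hand side; this is the routine but slightly fiddly bookkeeping step. As a consistency check one can alternatively evaluate $(J,0;J,0)$ instead of $(0,I;0,I)$: now it is the \emph{second} top parameter $q^{-i_1}=1$ that truncates the $_4\overline{\phi}_3$ to its $j=0$ term (using the non-symmetry of $_4\overline{\phi}_3$ in its first two upper arguments, cf.\ the \Cref{rmk:generic}-style remark after \eqref{eq:w_fused_J_text} and \Cref{rem:symmetry_R_fused}'s symmetry \eqref{eq:symmetry_R_fused}), and one obtains the same infinite product, which both re-derives the second equality in \eqref{eq:sum_cross_R} and cross-checks the computation.

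I expect the main obstacle to be the infinite-sum convergence/limit argument in the first step: one must justify taking $i_3,j_3\to\infty$ in \eqref{YBE} term by term. This is where I would invoke that the relevant sums are governed by the $\mathsf{Adm}$-type conditions and the estimates behind \Cref{prop:U_well_defined} and \Cref{prop:sHL_absolute_integrability}; in the generic-parameter regime of \Cref{rmk:generic} one can instead argue that both sides of the identity to be proved are rational in $q^I,q^J,z$, verify it for $I,J$ positive integers (where \eqref{YBE} holds with genuinely fused finite weights and all sums are finite by arrow conservation), and conclude by analytic continuation in $q^I$ and $q^J$. The purely algebraic step — the $q$-Pochhammer rearrangement producing the infinite product — is standard and I would not belabor it.
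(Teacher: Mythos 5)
Your evaluation of the closed-form value is the same as the paper's: the paper also obtains the second and third equalities in \eqref{eq:sum_cross_R} by direct algebraic manipulation of \eqref{eq:R_fused_I_J}, exactly as in your substitution $(i_1,j_1;i_2,j_2)=(0,I;0,I)$ where the $_4\overline{\phi}_3$ trivializes. Where you genuinely diverge is the first equality: the paper verifies the sum rule by a trivial check at $I=J=1$ and then lifts it to general $I,J$ on the grounds that fusion does not affect the structure of the identity, while you extract it from the fused Yang--Baxter equation \eqref{YBE} with $i_3=j_3=\infty$, arguing that the $\infty$-column weights lose their dependence on the horizontal inputs and hence force $\sum_{k_1,k_2}R_z^{(I,J)}(i_2,i_1;k_2,k_1)$ to be independent of $(i_1,i_2)$. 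Your route gives a cleaner structural reason for the constancy of the total mass of the cross vertex, at the price of having to justify the infinite-column limit and the convergence of the sums for generic $q^I,q^J$; the paper's route is shorter but leaves the fusion lift (and the implicit continuation off integer $I,J$) unexamined. Your fallback --- prove the identity for positive integer $I,J$, where all sums are finite by arrow conservation, and continue analytically in $q^I,q^J$ --- is sound and is the same kind of argument the paper uses elsewhere (cf.\ the proof of \Cref{thm:skew_Cauchy_general}).

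Three local corrections. First, in your collapsed Yang--Baxter equation the right-hand side is $\sum_{k_1,k_2}C(k_1)D(k_2)\,R_z^{(I,J)}(k_2,k_1;j_2,j_1)$, because the weights \eqref{eq:w_fused_infinity_normalized} depend on their horizontal \emph{outputs} $k_1,k_2$, which are summed over; it is not ``the same boundary factors times a row sum''. The conclusion you need survives, since the right-hand side is independent of $(i_1,i_2)$ and the left-hand side factors as $C(j_1)D(j_2)\sum_{k_1,k_2}R_z^{(I,J)}(i_2,i_1;k_2,k_1)$, so the row sum is constant. Second, for $(J,0;J,0)$ the upper parameter is $q^{-i_1}=q^{-J}\neq 1$, so the $_4\overline{\phi}_3$ does \emph{not} truncate to its $j=0$ term by the mechanism you describe; the clean route is the symmetry \eqref{eq:symmetry_R_fused} (which you also invoke), reducing this case to the $(0,I;0,I)$ computation with $I\leftrightarrow J$. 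Third, do not lean on \Cref{prop:U_well_defined} for the convergence of the infinite sums: in the paper the proof of that proposition cites the present statement, so that reference would be circular; your integer-case-plus-continuation argument is the right way to close the gap.
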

\begin{proof}
		The second and the third
		equalities in \eqref{eq:sum_cross_R} follow, after algebraic
		manipulations, from the definition of the fused cross weight $R^{(I,J)}_z$
		given in \eqref{eq:R_fused_I_J}.
    
		The first equality in \eqref{eq:sum_cross_R} is a trivial
		check in the case when $I=J=1$, using the definition of $R_z$ of
		\Cref{fig:table_R}. It lifts to more general $I,J$ as the fusion
		procedure does not affect the structure of the identity.
\end{proof}

\subsection{Spin \texorpdfstring{$q$}{q}-Whittaker specialization} \label{app:sqW_specialization}

The spin $q$-Whittaker specialization of the general fused weights \eqref{eq:w_fused_J},
\eqref{eq:w_fused_I_dual}
is obtained
by setting $u=s$ and $q^J=-\xi/s$ (recall that one can regard $q^J$ as a
generic parameter).
After this specialization the complicated
expression 
$w^{(J)}_{u,s}(i_1,j_1;i_2,j_2)$
\eqref{eq:w_fused_J} factorizes 
and becomes $W_{\xi,s}(i_1,j_1;i_2,j_2)$ given by 
\eqref{eq:Whit_W}.
Analogously, 
the dual fused weight $w^{*,(I)}_{v,s}(i_1,j_1;i_2,j_2)$
\eqref{eq:w_fused_I_dual}
turns into $W^*_{\theta,s}(i_1,j_1;i_2,j_2)$
\eqref{eq:W_W_tilde_relation}
after setting $v=s$ and $q^I=-\theta/s$.

The most general Yang Baxter equation \eqref{YBE}
specializes to Yang-Baxter equations involving
$W_{\xi,s}$ and $W^*_{\theta,s}$ as long as the corresponding
specializations are applied to the cross weight 
$R_{uv}^{(I,J)}$, too. Let us record the 
resulting identities:

\begin{proposition}
	\label{prop:sHL_sqW_YBE}
	We have the following Yang-Baxter equations:
	\begin{align}
		\label{eq:YBE_ws_W}
		\begin{split}
		&
		\sum_{k_1,k_2,k_3}
		\mathcal{R}_{\xi,v,s}(i_2, i_1; k_2, k_1)\,
		w^*_{v,s} (i_3, k_1; k_3, j_1)\,
		W_{\xi,s}(k_3,k_2; j_3,j_2) \\
		&\hspace{50pt}
		= 
		\sum_{k_1,k_2,k_3} \,
		w^*_{v,s} (k_3, i_1; j_3, k_1)\,
		W_{\xi,s}(i_3,i_2; k_3,k_2) \,
		\mathcal{R}_{\xi,v,s}(k_2, k_1; j_2, j_1);
		% checked in Mathematica - L.
		\end{split}
		\\
		\label{eq:YBE_W_w}
		\begin{split}
		&
		\sum_{k_1,k_2,k_3}
		\mathcal{R}^*_{\theta,u,s}(i_2, i_1; k_2, k_1)\,
		W^*_{\theta,s} (i_3, k_1; k_3, j_1)\,
		w_{u,s}(k_3,k_2; j_3,j_2) \\
		&\hspace{50pt} 
		= 
		\sum_{k_1,k_2,k_3}\,
		W^*_{\theta,s} (k_3, i_1; j_3, k_1)\,
		w_{u,s}(i_3,i_2; k_3,k_2)\,
		\mathcal{R}^*_{\theta,u,s}(k_2, k_1; j_2, j_1);
		\end{split}
		% checked in Mathematica - L.
		\\
		\label{eq:YBE_W_W}
		\begin{split}
		&
		\sum_{k_1,k_2,k_3}
		\mathbb{R}_{\xi,\theta,s}(i_2, i_1; k_2, k_1)\,
		W^*_{\theta,s} (i_3, k_1; k_3, j_1)\,
		W_{\xi,s}(k_3,k_2; j_3,j_2) \\
		&\hspace{50pt} 
		= 
		\sum_{k_1,k_2,k_3}
		W^*_{\theta,s} (k_3, i_1; j_3, k_1)\,
		W_{\xi,s}(i_3,i_2; k_3,k_2)\,
		\mathbb{R}_{\xi,\theta,s}(k_2, k_1; j_2, j_1).
		% checked in Mathematica - L.
		\end{split}
	\end{align}
	The cross vertex weights in \eqref{eq:YBE_ws_W} and \eqref{eq:YBE_W_w}
	are given in \Cref{fig:table_R_I,fig:table_R_J},
	respectively.
	Unlike with these two cases, in the third identity
	\eqref{eq:YBE_W_W}
	the cross vertex weights do not factorize
	(here $i_{1},i_{2},j_{1},j_{2}\in \mathbb{Z}_{\ge0}$):
	\begin{equation}\label{eq:Whittaker_cross_weight}
		\begin{split}
			\mathbb{R}_{\xi,\theta,s}(i_1,j_1;i_2,j_2)
			&=
			\mathbf{1}_{i_2 + j_1 = i_1 + j_2 }\,
			\frac{ q^{ i_2 i_1 +\frac{1}{2}j_2(j_2 -1) }(s \xi)^{j_2} (q;q)_{j_1}  }
			{ (s^2;q)_{j_1 + i_2} (q;q)_{j_2} (q;q)_{i_2} (-q/(s \xi );q)_{i_1 -j_1} } 
			\\
			& \qquad \qquad \times 
			{}_4 \overline{ \phi}_3
			\left(\begin{minipage}{5.2cm}
			\center{$q^{-i_2}; q^{-i_1}, -s \theta,  -q/(s \xi)$}
			\\
			\center{$-s/ \xi,q^{1+j_2-i_2}, -\theta q^{1-i_1-j_2}/s$}
			\end{minipage} \Big\vert\, q,q\right).
		\end{split}
	\end{equation}
\end{proposition}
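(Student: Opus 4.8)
The plan is to obtain all three identities \eqref{eq:YBE_ws_W}, \eqref{eq:YBE_W_w}, \eqref{eq:YBE_W_W} from the master fused Yang--Baxter equation \eqref{YBE} of \Cref{prop:YBE_general_fused} by passing to the spin $q$-Whittaker specialization recalled at the beginning of \Cref{app:sqW_specialization}. Here one regards $q^I$ and $q^J$ as free complex parameters in \eqref{eq:w_fused_J}, \eqref{eq:w_fused_I_dual}, \eqref{eq:R_fused_I_J}, and \eqref{YBE} holds identically in $q^I,q^J$ and in the spectral parameters; hence any substitution of values produces a valid equation. For \eqref{eq:YBE_ws_W} I would set $I=1$, $u=s$, $q^J=-\xi/s$; for \eqref{eq:YBE_W_w} I would set $J=1$, $v=s$, $q^I=-\theta/s$; and for \eqref{eq:YBE_W_W} I would set $u=v=s$, $q^J=-\xi/s$, $q^I=-\theta/s$. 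By construction the resulting row weights and cross weight are the corresponding specializations of $w^{(J)}_{u,s}$, $w^{*,(I)}_{v,s}$, and $R^{(I,J)}_{uv}$, and the unfused factors ($J=1$ in \eqref{eq:YBE_ws_W}, $I=1$ in \eqref{eq:YBE_W_w}) are the $J=1$ resp. $I=1$ degenerations, which coincide with $w_{u,s}$, $w^*_{v,s}$.

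It then remains to identify these specializations explicitly. First, that $u=s$, $q^J=-\xi/s$ collapses the terminating $_4\overline{\phi}_3$ in \eqref{eq:w_fused_J} to the single factorized expression $W_{\xi,s}$ of \eqref{eq:Whit_W} (and dually $v=s$, $q^I=-\theta/s$ turns \eqref{eq:w_fused_I_dual} into $W^*_{\theta,s}$ of \eqref{eq:W_W_tilde_relation}) is precisely the spin $q$-Whittaker fusion statement of \cite{BorodinWheelerSpinq}; I would either cite it or record it as a short lemma whose proof is a $q$-Chu--Vandermonde summation: after setting $u=s$ the upper parameter $qs/u$ equals $q$, its $q$-Pochhammer factor cancels the $(q;q)_j$ in the denominator of \eqref{eq:hypergeom_series}, and the remaining sum telescopes against the $q$-Pochhammer prefactor.

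Second, for the cross weight: in the case of \eqref{eq:YBE_W_W} the computation is purely mechanical, since the spectral parameter becomes $z=uv=s^2$ and substituting $z=s^2$, $q^I=-\theta/s$, $q^J=-\xi/s$ into \eqref{eq:R_fused_I_J} yields $(-z)^{j_2}q^{j_2J}=(s\xi)^{j_2}$, $zq^I=-s\theta$, $q^{1-J}/z=-q/(s\xi)$, $q^{-J}=-s/\xi$, and $q^{1-i_1-j_2+I}=-\theta q^{1-i_1-j_2}/s$, which reproduces \eqref{eq:Whittaker_cross_weight} verbatim. In the cases of \eqref{eq:YBE_ws_W} and \eqref{eq:YBE_W_w} one checks that, after the substitution, a numerator parameter of the $_4\overline{\phi}_3$ in \eqref{eq:R_fused_I_J} equals $q$, so the series collapses by the same $q$-hypergeometric reduction, leaving the factorized cross weights $\mathcal{R}_{\xi,v,s}$, $\mathcal{R}^*_{\theta,u,s}$ listed in \Cref{fig:table_R_I,fig:table_R_J}. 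Finally, I would note the bookkeeping point that once $q^J$ (resp. $q^I$) is generic rather than $q$ to a positive integer power, the admissibility bound $i_2,j_2\in\{0,\dots,J-1\}$ (resp. $i_1,j_1\in\{0,\dots,I-1\}$) in \Cref{prop:YBE_general_fused} is vacuous, so the specialized equations hold for all $i_1,i_2,j_1,j_2\in\mathbb{Z}_{\ge0}$, the sums over $k_1,k_2,k_3$ being finite by arrow conservation since $i_3,j_3$ are finite.

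The only genuinely nontrivial point is the fusion factorization of the row weights at $u=s$: this is the main obstacle, and I would handle it either by citing \cite{BorodinWheelerSpinq} directly or by carrying out the routine but slightly involved $q$-series summation. Everything else is direct substitution into \eqref{eq:R_fused_I_J} and elementary manipulation.
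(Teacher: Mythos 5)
Your proposal follows the paper's own route exactly: the paper obtains all three identities by specializing the general fused Yang--Baxter equation \eqref{YBE} with $u=s$, $q^J=-\xi/s$ and/or $v=s$, $q^I=-\theta/s$ (with $I=1$ or $J=1$ in the mixed cases), using the factorization of the fused weights into $W_{\xi,s}$, $W^*_{\theta,s}$ from \cite{BorodinWheelerSpinq} and reading off the corresponding specializations of the cross weight $R^{(I,J)}_{uv}$, which is precisely your argument, including the verbatim substitution check for \eqref{eq:Whittaker_cross_weight}. The one soft spot --- your sketched $q$-Chu--Vandermonde ``telescoping'' is not obviously the identity that produces the factorization of $w^{(J)}_{u,s}$ at $u=s$ --- is harmless, since your primary option of citing \cite{BorodinWheelerSpinq} for that fact is exactly what the paper does.
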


\begin{figure}[htbp]
  \centering
	\includegraphics{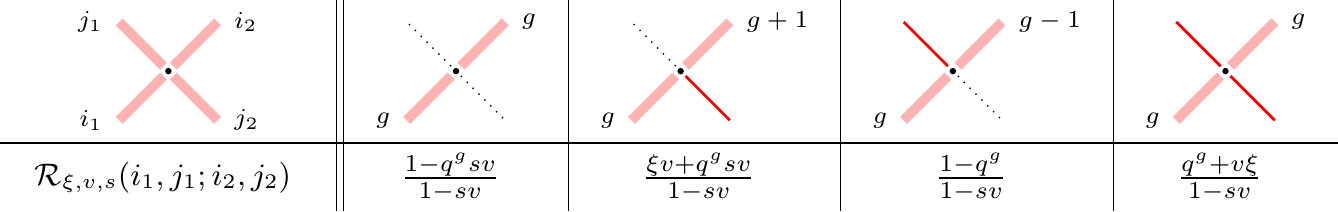}
	\caption{The cross vertex weights $\mathcal{R}_{\xi,v,s}(i_1, j_1; i_2, j_2)$, $j_1,j_2\in \left\{ 0,1 \right\}$, $i_1,i_2\in \mathbb{Z}_{\ge0}$.} 
	\label{fig:table_R_J}
\end{figure}

\begin{figure}[htbp]
  \centering
	\includegraphics{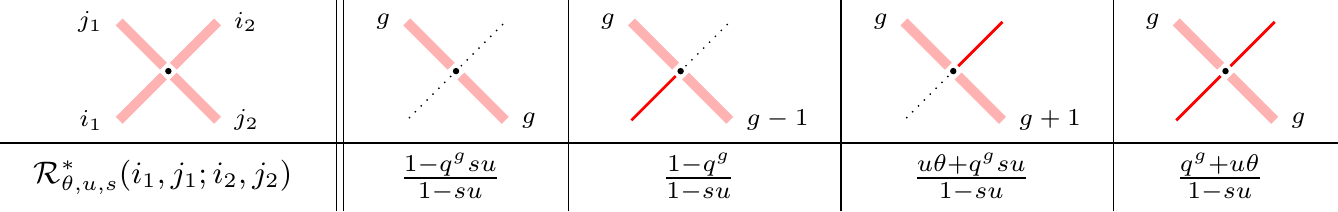}
  \caption{The cross 
	vertex weights $\mathcal{R}^*_{\theta,u,s}(i_1, j_1; i_2, j_2)$,
	$i_1,i_2\in \left\{ 0,1 \right\}$, $j_1,j_2\in \mathbb{Z}_{\ge0}$.}
	\label{fig:table_R_I}
\end{figure}

\subsection{Scaled geometric specialization}
\label{app:YBE_scaled_geometric}
The scaled geometric specialization of the general fused weight $w^{(J)}_{u,s}$ is given by setting $u=- \epsilon \alpha$, $q^J = 1/\epsilon$ and taking the limit $\epsilon \to 0$. Analogously we can specialize the dual weight $w^{*,(I)}_{v,s}$ taking $v=-\beta \epsilon$, $q^I = 1/\epsilon$ and again $\epsilon \to 0$. In this case the expessions \eqref{eq:w_fused_J}, \eqref{eq:w_fused_infinity} simplify:
\begin{equation} \label{eq:w_tilde}
    \widetilde{w}_{\alpha,s}(i_1,j_1;i_2,j_2) 
		=
		\mathbf{1}_{i_1 + j_1 = i_2 + j_2 }\, 
		\frac{(-\alpha/s)^{i_1} (-s)^{j_2} (q;q)_{j_1}}{(q;q)_{i_1}(q;q)_{j_2}} 
		\,
    {}_3\overline{\phi}_2 \left(\begin{minipage}{2.5cm}
			\center{$q^{-i_1}; q^{-i_2}, -s\alpha$}
			\\	\center{$s^2,q^{1+j_2-i_1}$}
	\end{minipage} \Big\vert\, q,-\frac{sq^{1+i_2+j_2}}{\alpha}\right),
		%Checked in Mathematica
\end{equation}
and
\begin{equation}\label{eq:w_tilde_infinity}
    \widetilde{w}_{\alpha,s} \biggl(\begin{tikzpicture}[baseline=-2.5pt]
    	\draw[fill] (0,0) circle [radius=0.025];
			\node at (0,.3) {$\infty$};
			\node at (0,-.3) {$\infty$};
			\draw [red] (0.1,0) --++ (0.4, 0) node[right, black] {$k$};
			\draw [red] (0.1,0.05) --++ (0.4, 0);
			\draw [red] (0.1,-0.05) --++ (0.4, 0);
        \addvmargin{1mm}
        \addhmargin{1mm}
	  \end{tikzpicture} \biggr)\,=  \frac{ \alpha^k }{(q;q)_k}.
\end{equation}
The dual weights $\widetilde{w}^*_{\beta,s}$
are defined in the usual way as in \eqref{eq:w_w_tilde_relation}.
\begin{figure}[ht]
    \centering
    \includegraphics{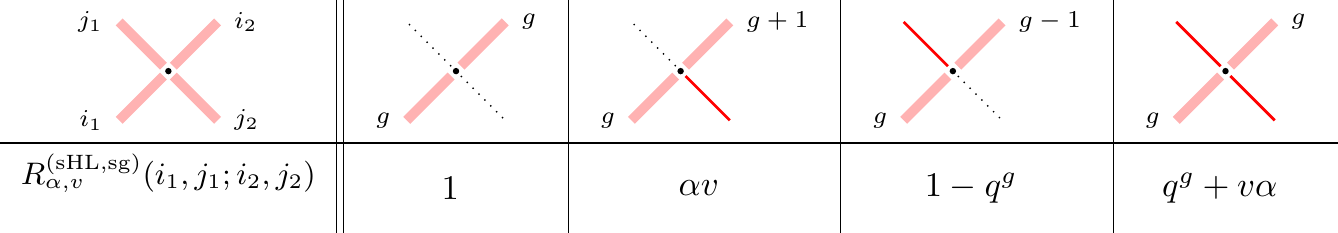}
		\caption{The cross vertex weight 
		$R^{(\mathrm{sHL,sg})}_{\alpha,v}(i_1,j_1;i_2,j_2)$, 
		$j_1,j_2\in\{0,1\}$, $i_1,i_2\in\mathbb{Z}_{\geq 0}$.}
    \label{fig:table_R_sHL_sg}
		%checked the formulas for sHL-sg R matrix in Mathematica
\end{figure}

We also consider the scaled geometric specialization of the fused cross weight  $R^{(I,J)}$, in this case in the parameters $v,q^I$, defining 
\begin{equation}\label{eq:R_tilde_J}
\begin{split}
    R_{u,\beta}^{(\mathrm{sg},J)}(i_1,j_1;i_2,j_2) 
		&=
		\mathbf{1}_{i_2 + j_1 = i_1 + j_2 }\,
		\frac{(-uq^J\beta)^{i_2} (q^{-J};q)_{i_2}}{(q;q)_{i_2}(q^{-J};q)_{i_1}} \,
    {}_3\overline{\phi}_2 \left(\begin{minipage}{2.5cm}
			\center{$q^{-i_1}; q^{-i_2}, -u \beta$}
			\\	\center{$q^{-J},q^{1+j_2-i_2}$}
			\end{minipage} \Big\vert\, q,-\frac{q^{1+i_1+j_2}}{u q^J \beta}\right).
			%checked in Mathematica that this R matrix is a degeneration of the R^IJ
			%also checked that the YBE holds for this R, w-sg^*, and w^J_u
\end{split}
\end{equation}
The scaled geometric specialization of $R^{(I,J)}$ in the parameters $u,q^J$ can be derived from \eqref{eq:R_tilde_J} using the symmetry \eqref{eq:symmetry_R_fused} and it is 
\begin{equation} \label{eq:R_tilde_dual}
    R^{(I,\mathrm{sg})}_{\alpha,v}(i_1,j_1;i_2,j_2) = R^{(\mathrm{sg},I)}_{v,\alpha}(j_1,i_1;j_2,i_2).
\end{equation}
Further degenerations of $R^{(I,\mathrm{sg})}_{\alpha, v}$ involve specializations of parameters $v,q^I$ in one of the three cases, $\mathrm{sHL}(v)$
(which is simply $I=1$), 
$\mathrm{sqW}(\theta)$, or $\textrm{sg}(\beta)$.
These cross vertex weights
are given, respectively, in \Cref{fig:table_R_sHL_sg} and below:
\begin{align}
		R^{\mathrm{(sqW, sg)}}_{\alpha,\theta}(i_1,j_1;i_2,j_2)
		&
		=
		\mathbf{1}_{i_2 + j_1 = i_1 + j_2 }\, 
		\frac{(\alpha \theta)^{j_2} (-s/\theta;q)_{j_2}}
		{(q;q)_{j_2}(-s/\theta;q)_{j_1}} 
		\,
    {}_3\overline{\phi}_2 \left(\begin{minipage}{2.5cm}
			\center{$q^{-j_1}; q^{-j_2}, -s \alpha$}
			\\	\center{$-s/\theta,q^{1+i_1-j_1}$}
			\end{minipage} \Big\vert\, q,\frac{q^{1+j_1+i_2}}{\alpha \theta}\right), \label{eq:R_sqW_sg}
    \\
		R^{\mathrm{(sg, sg)}}_{\alpha,\beta}(i_1,j_1;i_2,j_2)
		&=
		\mathbf{1}_{i_2 + j_1 = i_1 + j_2 }\,
		\frac{(\alpha \beta)^{j_2}}{(q;q)_{j_2}} \,
    {}_2\overline{\phi}_1 \left(\begin{minipage}{2cm}
			\center{$q^{-j_1}; q^{-j_2}$}
			\\	\center{$q^{1+i_1-j_1}$}
			\end{minipage} \Big\vert\, q,\frac{q^{1+j_1+i_2}}{\alpha \beta}\right). \label{eq:R_sg_sg}
\end{align}

These cross vertex weights enter a number of 
Yang-Baxter equations which are specializations
of the general fused one \eqref{YBE}:
\begin{proposition}
	\label{prop:YBE_scaled_geometric}
	We have the following Yang-Baxter equations:
	\begin{align}
		\label{eq:YBE_sHL_sg}
		\begin{split}
		&
		\sum_{k_1,k_2,k_3}
		R^{\mathrm{(sHL,sg)}}_{\alpha,v}(i_2, i_1; k_2, k_1)\,
		w^*_{v,s} (i_3, k_1; k_3, j_1)\,
		\widetilde{w}_{\alpha,s}(k_3,k_2; j_3,j_2) \\
		&\hspace{50pt}
		= 
		\sum_{k_1,k_2,k_3} \,
		w^*_{v,s} (k_3, i_1; j_3, k_1)\,
		\widetilde{w}_{\alpha,s}(i_3,i_2; k_3,k_2) \,
		R^{\mathrm{(sHL,sg)}}_{\alpha,v}(k_2, k_1; j_2, j_1);
		\end{split}
		\\
		\label{eq:YBE_sqW_sg}
		\begin{split}
		&
		\sum_{k_1,k_2,k_3}
		R^{\mathrm{(sqW,sg)}}_{\alpha,\theta}(i_2, i_1; k_2, k_1)\,
		W^*_{\theta,s} (i_3, k_1; k_3, j_1)\,
		\widetilde{w}_{\alpha,s}(k_3,k_2; j_3,j_2) \\
		&\hspace{50pt} 
		= 
		\sum_{k_1,k_2,k_3}\,
		W^*_{\theta,s} (k_3, i_1; j_3, k_1)\,
		\widetilde{w}_{\alpha,s}(i_3,i_2; k_3,k_2)\,
		R^{\mathrm{(sqW,sg)}}_{\alpha,\theta}(k_2, k_1; j_2, j_1);
		\end{split}
		\\
		\label{eq:YBE_sg_sg}
		\begin{split}
		&
		\sum_{k_1,k_2,k_3}
		R^{\mathrm{(sg,sg)}}_{\alpha,\beta}(i_2, i_1; k_2, k_1)\,
		\widetilde{w}^*_{\beta,s} (i_3, k_1; k_3, j_1)\,
		\widetilde{w}_{\alpha,s}(k_3,k_2; j_3,j_2) \\
		&\hspace{50pt} 
		= 
		\sum_{k_1,k_2,k_3}
		\widetilde{w}^*_{\beta,s} (k_3, i_1; j_3, k_1)\,
		\widetilde{w}_{\alpha,s}(i_3,i_2; k_3,k_2)\,
		R^{\mathrm{(sg,sg)}}_{\alpha,\beta}(k_2, k_1; j_2, j_1).
		\end{split}
	\end{align}
Dual cases of \eqref{eq:YBE_sHL_sg},\eqref{eq:YBE_sqW_sg},\eqref{eq:YBE_sg_sg} obtained swapping the specializations are easily derived making use of the symmetry of the cross weight \eqref{eq:R_tilde_dual}.
\end{proposition}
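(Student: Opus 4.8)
The plan is to derive all three Yang-Baxter equations \eqref{eq:YBE_sHL_sg}, \eqref{eq:YBE_sqW_sg}, \eqref{eq:YBE_sg_sg} (together with their dual versions) as limiting specializations of the single general fused Yang-Baxter equation \eqref{YBE} of \Cref{prop:YBE_general_fused}. The key structural fact, emphasized in \Cref{sub:app_basic} and \Cref{app:fusion}, is that the weights $w^{(J)}_{u,s}$, $w^{*,(I)}_{v,s}$, and $R^{(I,J)}_{uv}$ depend on $q^J$ and $q^I$ in a rational way; hence for any fixed admissible boundary occupation numbers $i_1,i_2,i_3,j_1,j_2,j_3$ the identity \eqref{YBE} is an identity of rational functions in all of $u,v,s,q,q^J,q^I$. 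Since for finite $i_3,j_3$ the sums over $k_1,k_2,k_3$ on both sides are finite by arrow conservation, the identity persists under any substitution of parameters, and in particular under the scaled geometric substitution $u=-\epsilon\alpha,\ q^J=1/\epsilon$ (resp.\ $v=-\epsilon\beta,\ q^I=1/\epsilon$) followed by $\epsilon\to0$, provided each factor has a limit.

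First I would record the limits of the individual weights. For the vertical weights this is exactly the content of \eqref{eq:w_tilde} and \eqref{eq:w_tilde_infinity}: the scaled geometric limit collapses the terminating series $_4\overline{\phi}_3$ in \eqref{eq:w_fused_J} to the $_3\overline{\phi}_2$ in \eqref{eq:w_tilde} (one argument tends to $0$ and one prefactor $q$-Pochhammer tends to $1$), and $w^{*,(I)}_{v,s}$ limits to $\widetilde{w}^*_{\beta,s}$ through the relation \eqref{eq:w_fused_I_dual}. For the cross weight, the scaled geometric limit of $R^{(I,J)}_{uv}$ in the parameters $(u,q^J)$ is the weight $R^{(\mathrm{sg},J)}_{u,\beta}$ of \eqref{eq:R_tilde_J}, and the limit in the parameters $(v,q^I)$ is given by \eqref{eq:R_tilde_dual}; both are again direct collapses of $_4\overline{\phi}_3$ to $_3\overline{\phi}_2$ obtained from \eqref{eq:R_fused_I_J}. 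Each of these is a finite termwise limit of a terminating series, so no interchange-of-limit subtlety arises.

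Now for \eqref{eq:YBE_sHL_sg} I would take \eqref{YBE} with $I=1$ (so $w^{*,(I)}_{v,s}=w^*_{v,s}$ and $R^{(1,J)}_{uv}$), apply the scaled geometric substitution in $(u,q^J)$, and pass $\epsilon\to0$; by the preceding paragraph the three vertical weights on each side converge to $w^*_{v,s}$ and $\widetilde{w}_{\alpha,s}$, and the two cross weights converge to $R^{(\mathrm{sHL},\mathrm{sg})}_{\alpha,v}$ of \Cref{fig:table_R_sHL_sg}, yielding \eqref{eq:YBE_sHL_sg}. For \eqref{eq:YBE_sqW_sg} I would instead first specialize $v=s,\ q^I=-\theta/s$ in \eqref{YBE} — which by \Cref{app:sqW_specialization} turns $w^{*,(I)}_{v,s}$ into $W^*_{\theta,s}$ and $R^{(I,J)}_{uv}$ into the associated cross weight — and only then apply the scaled geometric limit in $(u,q^J)$, producing $\widetilde{w}_{\alpha,s}$ and the cross weight $R^{\mathrm{(sqW,sg)}}_{\alpha,\theta}$ of \eqref{eq:R_sqW_sg}. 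For \eqref{eq:YBE_sg_sg} I would apply the scaled geometric limit in both pairs $(u,q^J)$ and $(v,q^I)$; since the two limits are independent, the order is immaterial, and the cross weight converges to $R^{\mathrm{(sg,sg)}}_{\alpha,\beta}$ of \eqref{eq:R_sg_sg}. Finally, the dual versions of all three equations follow by interchanging the roles of the two rows and invoking the symmetry \eqref{eq:symmetry_R_fused} (equivalently \eqref{eq:R_tilde_dual}), exactly as in the unfused case.

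The main obstacle, such as it is, is bookkeeping rather than analysis: one must check that after each substitution every prefactor — the $q$-Pochhammer products and the monomials in front of the $q$-hypergeometric series — has a finite nonzero limit, i.e.\ that no spurious $0\cdot\infty$ appears, and that the limiting expressions coincide with the closed forms \eqref{eq:w_tilde}, \eqref{eq:R_tilde_J}, \eqref{eq:R_sqW_sg}, \eqref{eq:R_sg_sg} already displayed. This amounts to a routine computation of leading powers of $\epsilon$ in each $q$-Pochhammer symbol such as $(q^{-J};q)_k=(\epsilon;q)_k$. Since these closed forms are stated in \Cref{sec:scaled_geometric} and \Cref{app:YBE_scaled_geometric}, the argument reduces to citing them together with the rationality-in-$q^J,q^I$ persistence of \eqref{YBE}; no genuine analytic difficulty arises, because all sums stay finite throughout.
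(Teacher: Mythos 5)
Your proposal is correct and follows essentially the same route as the paper: the paper treats \eqref{eq:YBE_sHL_sg}--\eqref{eq:YBE_sg_sg} precisely as specializations of the general fused Yang-Baxter equation \eqref{YBE}, using its rational dependence on $u,q^J,v,q^I$, the finiteness of the internal sums, and the termwise scaled geometric limits producing $\widetilde{w}_{\alpha,s}$, $\widetilde{w}^*_{\beta,s}$ and the cross weights \eqref{eq:R_tilde_J}--\eqref{eq:R_sg_sg}, with the dual cases obtained from the symmetry \eqref{eq:symmetry_R_fused}. Your bookkeeping remarks about checking that no $0\cdot\infty$ arises in the prefactors match the (implicit) verification underlying the displayed closed forms, so nothing further is needed.
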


In Section \ref{sec:summary_sHL_sqW}, 
Cauchy Identities for spin Hall-Littlewood 
and spin $q$-Whittaker functions were stated 
as corollaries of the Yang-Baxter equations given in this appendix.
In particular, 
the emergence of the prefactors
in the right-hand sides
of all the skew Cauchy identities 
can be traced to Proposition 
\ref{prop:emergence_of_the_cross_vertex_weight}.

\subsection{Nonnegativity of terms in the Yang-Baxter equations}
\label{sub:YBE_nonnegativity}

Here we list conditions which are sufficient
for the nonnegativity of all terms in both sides of the Yang-Baxter equations
described in the previous parts of this Appendix.
We will not discuss which of these assumptions are necessary.
If the terms are nonnegative, then by
\Cref{prop:Bij_exists}
a stochastic bijectivization of the 
Yang-Baxter equation exists.
We assume that $s\in(-1,0)$ and $q\in(0,1)$ throughout the rest of
the subsection.

First, 
the weights $w_{u,s}$ and $w^*_{v,s}$ given in 
\Cref{fig:table_w}
and 
\Cref{fig:table_w_tilde} 
are nonnegative for
$u,v\in[0,1]$.
The cross vertex weights $r_{u/v}$ from \Cref{fig:table_r} are nonnegative when in addition $u<v$. 
Thus, 
\begin{equation*}
	\label{eq:nonneg_YBE_A1}
	\begin{minipage}{.8\textwidth}
		All summands in both sides of the Yang-Baxter
		equation \eqref{eq:YBE_rww}
		containing the weights $w_{u,s}, w_{v,s}$, and $r_{u/v}$
		are nonnegative if 
		$0\le u<v\le 1$.
	\end{minipage}
\end{equation*}
Next, the cross vertex weights 
$R_{uv}$ from \Cref{fig:table_R} are nonnegative 
when $0\le uv<1$. Therefore, 
\begin{equation*}
	\label{eq:nonneg_YBE_A2}
	\begin{minipage}{.8\textwidth}
		All summands in both sides of the Yang-Baxter
		equation \eqref{eq:sHL_YBE}
		containing the weights $w_{u,s}, w^*_{v,s}$, and $R_{uv}$
		are nonnegative if 
		$u,v\in[0,1)$. This in fact implies that $(u,v)\in \mathsf{Adm}$
		for the sHL/sHL skew Cauchy structure (\Cref{def:sHL_sHL_structure}).
	\end{minipage}
\end{equation*}

Let us now turn to the spin $q$-Whittaker weights. 
The weights $W_{\xi,s}$ and $W^*_{\theta,s}$ are nonnegative when 
$\xi , \theta \in[-s,-s^{-1}]$. The weights $\mathcal{R}_{\xi,v,s}$ and $\mathcal{R}^*_{\theta,u,s}$ 
from \Cref{fig:table_R_I,fig:table_R_J} are nonnegative when $u,v\in[0,1)$ and 
$\xi,\theta \in [-s,-s^{-1}]$. Thus, we have 
\begin{equation*}
	\label{eq:nonneg_YBE_A7_A8}
	\begin{minipage}{.8\textwidth}
		All summands in both sides of the Yang-Baxter
		equation \eqref{eq:YBE_ws_W}
		containing the weights $W_{\xi,s}, w^*_{v,s}$, and $\mathcal{R}_{\xi,v,s}$
		are nonnegative if 
		$v\in[0,1)$ and $\xi\in [-s,-s^{-1}]$.
		Similarly, the summands in \eqref{eq:YBE_W_w} are nonnegative
		for $u\in[0,1),\theta\in[-s,-s^{-1}]$.
	\end{minipage}
\end{equation*}

Further, let us consider 
\eqref{eq:YBE_W_W} containing $W_{\xi,s}$, $W_{\theta,s}^*$, 
and the non-factorized weights
$\mathbb{R}_{\xi,\theta,s}$~\eqref{eq:Whittaker_cross_weight}.
Their nonnegativity
is not as straightforward, and requires an additional restriction on the parameters $s$ and $q$:
\begin{proposition}
	\label{prop:mathbbR_nonnegative}
	For $\xi,\theta \in[-s,-s^{-1}]$, $q\in(0,1)$ and $s\in[-\sqrt q,0)$, 
	we have
	\begin{equation*}
		\mathbb{R}_{\xi,\theta,s}(i_1,j_1;i_2,j_2)\ge0\qquad \text{for all $i_1,j_1,i_2,j_2\in \mathbb{Z}_{\ge0}$}.
	\end{equation*}
\end{proposition}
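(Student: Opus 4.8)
\emph{Approach.} The plan is to reduce \Cref{prop:mathbbR_nonnegative} to the termwise nonnegativity of a single \emph{balanced terminating} $q$-hypergeometric series, and to reach manifest positivity by a Sears-type transformation that removes the one parameter, namely $-q/(s\xi)$, whose sign and zero set genuinely move with $\xi$.

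First I would fix conventions and record elementary sign information. For $s\in(-1,0)$ and $\xi,\theta\in[-s,-s^{-1}]$ we have $\xi,\theta>0$, so $s\xi,s\theta<0$, while $-s\xi,-s\theta,-s/\xi,-s/\theta\in[s^2,1]$ and $-\theta/s,-\xi/s\in[1,s^{-2}]$; on the support of $\mathbf{1}_{i_2+j_1=i_1+j_2}$ we also have $i_1-j_1=i_2-j_2\ge 0$. The factors $(q;q)_{j_1},(q;q)_{j_2},(q;q)_{i_2}$ and $(s^2;q)_{j_1+i_2}$ in the prefactor of \eqref{eq:Whittaker_cross_weight} are then strictly positive, but the sign of the remaining pieces $(s\xi)^{j_2}$ and $(-q/(s\xi);q)_{i_1-j_1}$ is not determinate on its own — the denominator $(-q/(s\xi);q)_{i_1-j_1}$ can even vanish when $-s\xi\in\{q,q^2,\dots\}$, with cancellation against the numerator of the ${}_4\overline\phi_3$. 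This is precisely why a direct sign analysis of \eqref{eq:Whittaker_cross_weight} is awkward and a transformation is called for.

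Next I would check that the ${}_4\overline\phi_3$ in \eqref{eq:Whittaker_cross_weight} terminates (because of $q^{-i_2}$, equivalently $q^{-i_1}$) and is balanced in argument $q$: the product of its four numerator parameters times $q$ equals the product of its three denominator parameters, which is a direct computation. For such series I would apply a Sears transformation in which $q^{-i_2}$ is the termination variable, $q^{-i_1}$ is kept fixed, and the pair $\bigl(-s\theta,\,-q/(s\xi)\bigr)$ is mapped to $\bigl((\xi\theta)^{-1},\,s^2/q\bigr)$. This replaces $-q/(s\xi)$ by $s^2/q$ and contributes only an explicit prefactor, which is a power of $q$ times $q$-Pochhammer symbols in $q^{1+i_1+j_2-i_2}$, $-\theta q^{1-j_2}/s$, $q^{1+j_2-i_2}$, and $-\theta q^{1-i_1-j_2}/s$ (all $\xi$-independent). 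Since $s\ge-\sqrt q$ forces $s^2q^{-1}\le 1$, the symbols $(s^2/q;q)_\bullet$ appearing in the transformed series are $\ge0$; this is the unique point where the hypothesis on $s$ is used. If some numerator parameter of the transformed series (for instance $(\xi\theta)^{-1}$) still fails to be sign-definite, I would iterate with a further transformation from the orbit of the balanced ${}_4\overline\phi_3$ until every parameter that is not a nonpositive integer power of $q$ lies in $[s^2,1]$.

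The remaining work is sign bookkeeping: in the final form every $q$-Pochhammer symbol has a determinate sign — those of type $(q^{-m};q)_k$ contribute $(-1)^k$ and vanish outside the termination range, those with argument in $[s^2,1]$ or $(-\infty,0]$ are $\ge0$, and $(s^2/q;q)_\bullet\ge0$ — and one verifies that the powers of $-1$ coming from the transformation prefactor, from $(s\xi)^{j_2}$, and from the terminating symbols all cancel, exhibiting $\mathbb{R}_{\xi,\theta,s}(i_1,j_1;i_2,j_2)$ as a nonnegative prefactor times a sum of nonnegative terms. I expect this last step, together with pinning down the correct transformation, to be the main obstacle; the individual estimates are routine. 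As a consistency check, setting $i_1,i_2\le1$ or $j_1,j_2\le1$ should reproduce the nonnegativity of $\mathcal R_{\xi,v,s}$ and $\mathcal R^*_{\theta,u,s}$ read off from \Cref{fig:table_R_J,fig:table_R_I}; these low-occupation cases can also serve as the base of an induction on $i_1+i_2$ using the Yang--Baxter equation \eqref{eq:YBE_W_W}, should the direct transformation prove unwieldy.
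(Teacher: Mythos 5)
Your plan shares the paper's general strategy—view the series in \eqref{eq:Whittaker_cross_weight} as a terminating balanced (Saalsch\"utzian) ${}_4\phi_3$, transform it, and let the hypothesis $s\ge-\sqrt q$ enter through $(s^2/q;q)_k\ge0$—but it stops exactly at the decisive step, and the specific route you choose does not close. The Sears transformation you describe (termination in $q^{-i_2}$, with $d=-s/\xi$, so that $-q/(s\xi)\mapsto s^2/q$ and $-s\theta\mapsto(\xi\theta)^{-1}$) yields the series with numerator parameters $q^{-i_2},q^{-i_1},s^2/q,(\xi\theta)^{-1}$ and denominator parameters $-s/\xi,\,-s q^{j_2-i_2}/\theta,\,q^{-i_1-j_2}$. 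This is not termwise nonnegative: $(\xi\theta)^{-1}$ ranges over $[s^2,s^{-2}]$ and so $((\xi\theta)^{-1};q)_k$ has no fixed sign; the denominator entry $q^{-i_1-j_2}$ contributes a factor $(-1)^k$ per term with nothing left to pair it against (the two terminating numerator symbols already cancel each other's signs); for $j_2<i_2$ the entry $-sq^{j_2-i_2}/\theta$ can exceed $1$; and the accompanying prefactor (Pochhammers in $-\theta q^{1-j_2}/s$ etc.) is not sign-definite either. Your proposed remedy—iterate within the balanced ${}_4\phi_3$ orbit "until every parameter lies in $[s^2,1]$"—is exactly the missing idea, not a routine bookkeeping step, and there is evidence it cannot be done uniformly in $(i_1,j_1,i_2,j_2)$: any single manifestly positive form one finds tends to be positive only in a restricted regime such as $i_2\le j_2$. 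Your fallback (induction on occupation numbers via the Yang--Baxter equation \eqref{eq:YBE_W_W}) also does not give the statement, since the equation only constrains sums of products of weights and cannot isolate the sign of an individual cross weight.

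The paper resolves both difficulties differently. It applies Watson's transformation (Gasper--Rahman (III.19)), leaving the ${}_4\phi_3$ family for a very-well-poised ${}_8\phi_7$, and—crucially—splits into two cases. For $i_2\le j_2$ the Watson prefactor is shown nonnegative by pairing the negative factors, and every summand of the ${}_8\phi_7$ is nonnegative because the two terminating symbols $(q^{-i_1};q)_k(q^{-i_2};q)_k$ cancel signs, $(s^2/q;q)_k\ge0$ by $s\ge-\sqrt q$, and the very-well-poised parameter $\sigma=s^2q^{j_2-i_2-1}$ lies in $(0,1)$ precisely because $i_2\le j_2$ and $s^2\le q$. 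The complementary case $i_2>j_2$ is then reduced to the first via the exact symmetry $\mathbb{R}_{\xi,\theta,s}(i_1,j_1;i_2,j_2)=\mathbb{R}_{\theta,\xi,s}(j_1,i_1;j_2,i_2)$ of \eqref{eq:symmetry_R_fused}. Both ingredients—the case split tied to the sign of $j_2-i_2$ and the use of this symmetry—are absent from your proposal, and without them (or a genuinely new uniform positive form) the argument has a real gap.
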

The proof of this proposition is similar to 
\cite[Proposition 3.1]{CMP_qHahn_Push},
with an additional simplification 
in the second case
due to a 
symmetry of $\mathbb{R}_{\xi,\theta,s}$.
\begin{proof}[Proof of \Cref{prop:mathbbR_nonnegative}]
	Throughout the proof we will assume that $i_2+j_1=i_1+j_2$.
	We need to show that
	\begin{equation}
		\label{eq:mathbbR_nonneg_proof1}
		\frac{ s^{j_2} 
		(-\theta q^{1-i_1-j_2}/s;q)_{i_2}
		}
		{  (-q/(s \xi);q)_{i_1 -j_1} } 
		\,{}_4 { \phi}_3
		\left(\begin{minipage}{4.2cm}
				\center{$q^{-i_2}, -\frac{q}{s \xi},-s \theta,q^{-i_1}$}
		\\
		\center{$-\frac{\theta}{s}q^{1-i_1-j_2},-\frac{s}{\xi},q^{1+j_2-i_2}$}
	\end{minipage} \bigg\vert\, q,q\right)\ge0.
	\end{equation}
	Here we used \eqref{eq:hypergeom_series} 
	to get to the usual $q$-hypergeometric function, 
	and also the fact that the remaining 
	prefactor in $\mathbb{R}_{\xi,\theta,s}$ having the form
	\begin{equation*}
		\frac{
			q^{ i_2 i_1 +\frac{1}{2}j_2(j_2 -1) } \xi^{j_2}(q;q)_{j_1} 
			(-s/\xi;q)_{i_2}
			(q^{1+j_2-i_2};q)_{i_2}
		}
		{
			(s^2;q)_{j_1 + i_2} (q;q)_{j_2} (q;q)_{i_2}
		}
	\end{equation*}
	is nonnegative under our parameter restrictions 
	in a straightforward way.
	
	We will use
	Watson's transformation formula
	\cite[(III.19)]{GasperRahman}
	\begin{equation}
		\label{eq:Watson}
		{_{4}}\phi_{3} 
		\left(
			\begin{minipage}{2cm}
				\center{$q^{-n},a,b,c$}
			\\
			\center{$d,e,f$}
			\end{minipage}
			\bigg\vert\, q,q
		\right)
		=
		\frac{(d/b;q)_n (d/c; q)_{n}}{(d;q)_n(d/(bc); q)_{n}}\,
		{_{8}}\phi_{7} 
		\left(
			\begin{minipage}{5.2cm}
				\center{$q^{-n},\sigma,q\sigma^{1/2},-q\sigma^{1/2},\frac{f}{a},\frac{e}{a},b,c$}
			\\
			\center{$\sigma^{1/2},-\sigma^{1/2},e,f,\frac{ef}{ab},\frac{ef}{ac},\frac{efq^n}{a}$}
			\end{minipage}
			\bigg\vert\, q,\frac{efq^n}{bc}
		\right),
	\end{equation}
	where $d e f= a b c q^{1-n}$ and $\sigma = ef/aq$.

	\medskip\noindent
	\textbf{Case 1.} When $i_2\le j_2$, we apply \eqref{eq:Watson} to 
	\eqref{eq:mathbbR_nonneg_proof1} with $n=i_2$.
	The prefactor in \eqref{eq:Watson} combined with the one from 
	\eqref{eq:mathbbR_nonneg_proof1} becomes
	\begin{equation*}
		\frac{ s^{j_2} 
		}
		{  (-q/(s \xi);q)_{i_1 -j_1} } 
		\frac{(q^{1-i_1-j_2}/s^{2};q)_{i_2}(- \theta q^{1-j_2}/s;q)_{i_2}}{(q^{1-j_2}/s^2;q)_{i_2}}.
	\end{equation*}
	We have
	\begin{equation*}
		\frac{(q^{1-i_1-j_2}/s^{2};q)_{i_2}}{(q^{1-j_2}/s^2;q)_{i_2}}
		=\prod_{m=1}^{i_2}\frac{q^{m-j_2}q^{-i_1}-s^2}{q^{m-j_2}-s^2}\ge0,
	\end{equation*}
	since $m-j_2\le 0$ in the product.
	We also have
	\begin{equation*}
		\frac
		{s^{j_2}(- \theta q^{1-j_2}/s;q)_{i_2}}
		{(-q/(s \xi);q)_{i_1 -j_1}}
		=s^{j_2}\prod_{k=1}^{i_2}\left( 1+\frac{\theta}{s}\,q^{k-j_2} \right)\prod_{m=1}^{j_2-i_2}
		\left( 1+\frac{q^{1-m}}{ s \xi } \right)\ge0,
	\end{equation*}
	since all factors above (including $s^{j_2}$)
	are nonpositive, and there is a total of $2j_2$ of them.

	The $q$-hypergeometric function 
	after applying \eqref{eq:Watson} to 
	\eqref{eq:mathbbR_nonneg_proof1}
	takes the form
	\begin{equation*}
		{_{8}}\phi_{7} 
		\left(
			\begin{minipage}{8.2cm}
				\center{$q^{-i_2},\sigma,q\sigma^{1/2},-q\sigma^{1/2},-s \xi q^{j_2-i_2},\frac{s^2}{q},-s \theta,q^{-i_1}$}
			\\
			\center{$\sigma^{1/2},-\sigma^{1/2},-\frac{s}{\xi},q^{1+j_2-i_2},
				-\frac{s}{\theta}q^{j_2-i_2}
				,
			s^2q^{j_1},
			s^2q^{j_2}
			$}
			\end{minipage}
			\bigg\vert\, q,
			\frac{q^{i_1+j_2+1}}{\xi\theta}
		\right),
	\end{equation*}
	with $\sigma=s^2 q^{j_2-i_2-1}\in(0,1)$ because $s^2\le q$.
	One readily sees that each summand in this
	(terminating) $q$-hypergeometric series is nonnegative.
	Indeed, the only negative signs may come from
	$(q^{-i_2};q)_k$,
	$(q^{-i_1};q)_k$,
	and 
	$(s^2 q^{-1};q)_k$.
	However, the product of the former two factors is always nonnegative,
	and 
	$(s^2 q^{-1};q)_k\ge0$ also due to our additional parameter 
	restriction $s^2\le q$.
	This implies the nonnegativity of $\mathbb{R}_{\xi,\theta,s}(i_1,j_1;i_2,j_2)$
	when $i_2\le j_2$.

	\medskip\noindent
	\textbf{Case 2.}
	When $i_2>j_2$, the claim follows 
	due to the symmetry of $\mathbb{R}_{\xi,\theta,s}$. Namely, by means of Remark \ref{rem:symmetry_R_fused}, we have
	\begin{equation*}
		\mathbb{R}_{\xi,\theta,s}(i_1,j_1;i_2,j_2)
		=
		\mathbb{R}_{\theta,\xi,s}(j_1,i_1;j_2,i_2)
	\end{equation*}
	for all $i_1,j_1,i_2,j_2\in \mathbb{Z}_{\ge0}$.
	This completes the proof.
\end{proof}

\Cref{prop:mathbbR_nonnegative} implies that
\begin{equation*}
	\label{eq:nonneg_YBE_A9}
	\begin{minipage}{.8\textwidth}
		All summands in both sides of the Yang-Baxter
		equation \eqref{eq:YBE_W_W}
		containing the weights $W_{\xi,s}, W^*_{\theta,s}$, and $\mathbb{R}_{\xi,\theta,s}$
		are nonnegative if 
		$\xi,\theta\in [-s,-s^{-1}]$, $q\in(0,1)$, and $s\in[-\sqrt q,0)$.
	\end{minipage}
\end{equation*}

Finally, we address the nonnegativity
of terms of the Yang-Baxter equations 
involving scaled geometric 
specializations from \Cref{prop:YBE_scaled_geometric}. 

\begin{proposition} \label{prop:w_tilde_positivity}
For $\alpha \in[0, -s^{-1}]$, $q\in(0,1)$ and $s\in(-1,0)$ we have
\begin{equation*}
    \widetilde{w}_{\alpha,s}(i_1,j_1;i_2,j_2) \geq 0 \qquad \text{for all }i_1,j_1,i_2,j_2\in\mathbb{Z}_{\geq0}.
\end{equation*}
\end{proposition}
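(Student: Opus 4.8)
The plan is to establish nonnegativity term by term directly from the explicit formula \eqref{eq:w_tilde}, with no recourse to hypergeometric transformation identities. Throughout one may assume the arrow conservation relation $i_1+j_1=i_2+j_2$ holds, since otherwise $\widetilde{w}_{\alpha,s}=0$. First I would expand the regularized series ${}_3\overline{\phi}_2$ appearing in \eqref{eq:w_tilde} using definition \eqref{eq:hypergeom_series} (with $n=i_1$, $k=2$, $a_1=q^{-i_2}$, $a_2=-s\alpha$, $b_1=s^2$, $b_2=q^{1+j_2-i_1}$, and $z=-sq^{1+i_2+j_2}/\alpha$), obtaining
\begin{equation*}
	\widetilde{w}_{\alpha,s}(i_1,j_1;i_2,j_2)
	=
	\frac{(-\alpha/s)^{i_1}(-s)^{j_2}(q;q)_{j_1}}{(q;q)_{i_1}(q;q)_{j_2}}
	\sum_{j=0}^{i_1}
	\Bigl(\tfrac{-sq^{1+i_2+j_2}}{\alpha}\Bigr)^{\!j}
	\frac{(q^{-i_1};q)_j(q^{-i_2};q)_j(-s\alpha;q)_j(q^{j}s^2;q)_{i_1-j}(q^{j+1+j_2-i_1};q)_{i_1-j}}{(q;q)_j}.
\end{equation*}
The goal is then to show that each summand, multiplied by the prefactor, is nonnegative.

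The key observation is that the odd powers of the negative number $s$ coming from $z^j$ and from $(-\alpha/s)^{i_1}$ cancel: since $j\le i_1$,
\begin{equation*}
	\Bigl(\tfrac{-sq^{1+i_2+j_2}}{\alpha}\Bigr)^{\!j}(-\alpha/s)^{i_1}
	=
	|s|^{\,j-i_1}\,\alpha^{\,i_1-j}\,q^{(1+i_2+j_2)j}\ \ge\ 0,
\end{equation*}
and this is a genuine polynomial in $\alpha$, so the boundary value $\alpha=0$ requires no separate argument. The remaining factors are handled by elementary sign checks: (i) $(q;q)_m>0$ for all $m\ge0$ and $(-s)^{j_2}>0$, because $q\in(0,1)$ and $s<0$; (ii) $(q^{-i_1};q)_j$ has sign $(-1)^j$ while $(q^{-i_2};q)_j$ has sign $(-1)^j$ for $j\le i_2$ and vanishes for $j>i_2$, so the product $(q^{-i_1};q)_j(q^{-i_2};q)_j\ge0$; (iii) $(-s\alpha;q)_j\ge0$, since the hypothesis $0\le\alpha\le -s^{-1}$ forces $-s\alpha\in[0,1]$ and hence $1-(-s\alpha)q^m\in[0,1]$ for all $m$ --- this is the only place where the bound $\alpha\le-s^{-1}$ is used; (iv) $(q^{j}s^2;q)_{i_1-j}>0$, because $s\in(-1,0)$ gives $s^2\in(0,1)$ so every factor $1-q^{j+m}s^2>0$; and (v) $(q^{j+1+j_2-i_1};q)_{i_1-j}\ge0$, which is positive when $j\ge i_1-j_2$ (all exponents occurring are then $\ge1$) and equals zero when $j<i_1-j_2$ (the factor with exponent $0$ lies inside the product). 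Point (v) is where the \emph{regularized} series \eqref{eq:hypergeom_series} matters: the vanishing Pochhammer factor is kept rather than dropped, so it does not flip the sign of the term but simply kills it.

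Putting these together, every summand is a product of nonnegative finite quantities, hence nonnegative, and so is the sum, proving $\widetilde{w}_{\alpha,s}(i_1,j_1;i_2,j_2)\ge0$. I do not expect a genuine obstacle here: in contrast with \Cref{prop:mathbbR_nonnegative}, whose hypergeometric argument is simply $q$ and which required Watson's transformation \eqref{eq:Watson} together with the extra restriction $s\ge-\sqrt q$, the argument $z=-sq^{1+i_2+j_2}/\alpha$ of the present ${}_3\overline{\phi}_2$ carries compensating powers of $s$ and $\alpha$ that render the series termwise positive. The only mildly delicate points are the sign cancellation displayed above and the realization that the factor $(q^{j+1+j_2-i_1};q)_{i_1-j}$ takes care of itself (by vanishing) in the range $j<i_1-j_2$ rather than contributing a negative sign.
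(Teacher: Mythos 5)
Your proposal is correct and follows essentially the same route as the paper's own proof: expand the terminating regularized ${}_3\overline{\phi}_2$ via \eqref{eq:hypergeom_series} and check each summand factor by factor, with the same sign bookkeeping — $(q^{-i_1};q)_j(q^{-i_2};q)_j\ge0$, $(-s\alpha;q)_j\ge0$ from $\alpha\le-s^{-1}$, $(s^2q^j;q)_{i_1-j}>0$, and $(q^{1+j_2-i_1+j};q)_{i_1-j}$ either positive or vanishing because the exponent $0$ falls inside the product. The only cosmetic difference is that you fold the monomial $z^j$ into the prefactor $(-\alpha/s)^{i_1}$ (which also handles $\alpha=0$ by polynomiality), whereas the paper notes both factors are separately nonnegative since $-s>0$ and $\alpha\ge0$.
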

\begin{proof}
    Under our assumptions the prefactor
    \begin{equation*}
        \frac{(-\alpha/s)^{i_1} (-s)^{j_2} (q;q)_{j_1}}{(q;q)_{i_1}(q;q)_{j_2}}
    \end{equation*}
    is nonnegative. 
		To check the remaining term, we write down the generic summand of the terminating $q$-hypergeometric series as
		(cf. \eqref{eq:hypergeom_series}):
    \begin{equation*}
        \left( \frac{-s q^{1+j_2+i_2}}{\alpha} \right)^k 
				\frac{(q^{-i_1};q)_k}{(q;q)_k}
				(q^{-i_2};q)_k
				(-s\alpha;q)_k (s^2q^k;q)_{i_1-k} 
				(q^{1+j_2-i_1+k};q)_{i_1-k},
    \end{equation*}
    where $k=0, \dots, i_1$. 
		The leading monomial term, along with 
		$(s^2q^k;q)_{i_1-k}$ and $(q;q)_k$ are always nonnegative.
		The
		$q$-Pochhammer symbols of $q^{-i_1}$ and $q^{-i_2}$ either vanish, or they both carry a sign $(-1)^k$, so that their contribution is nonnegative too. 
		Next, $(q^{1+j_2-i_1+k};q)_{i_1-k}$ is either nonnegative if $1+j_2-i_1+k > 0$, 
		or vanishes if $1+j_2-i_1+k \le 0$ (in the latter case, the last term of the product has power $j_2\ge0$,
		which means that that product passes through $1-q^0=0$).
		Finally, $(-s\alpha;q)_k\ge0$ because $\alpha\le -s^{-1}$.
\end{proof}
\Cref{prop:w_tilde_positivity} 
and the explicit form of 
$R^{(\mathrm{sHL,sg})}_{\alpha,v}$
(\Cref{fig:table_R_sHL_sg})
implies that
\begin{equation*}
    \begin{minipage}{.8\textwidth}
		All summands in both sides of the Yang-Baxter
		equation \eqref{eq:YBE_sHL_sg}
		containing the weights $\widetilde{w}_{\alpha,s}, w^*_{v,s}$, and $R^{(\mathrm{sHL,sg})}_{\alpha,v}$
		are nonnegative if 
		$\alpha\in [0,-s^{-1}]$, $v\in[0,1)$.
	\end{minipage}
\end{equation*}
In order to demonstrate the nonnegativity of \eqref{eq:YBE_sqW_sg} we consider the corresponding
cross vertex weight:
\begin{proposition} \label{prop:R_sqw_sg_positivity}
	For $\alpha \in [0,-s^{-1}]$ and $\theta \in [-s,-s^{-1}]$, we have
\begin{equation*}
    R^{(\mathrm{sqW,sg})}_{\alpha,\theta}(i_1,j_1;i_2,j_2) \geq 0 \qquad \text{for all }i_1,j_1,i_2,j_2\in\mathbb{Z}_{\geq0}.
\end{equation*}
\end{proposition}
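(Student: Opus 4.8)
The plan is to follow the term-by-term strategy used in the proof of \Cref{prop:w_tilde_positivity}; here it works directly, with no hypergeometric transformation needed (unlike in \Cref{prop:mathbbR_nonnegative}). Fix $q\in(0,1)$ and $s\in(-1,0)$, and assume the arrow conservation $i_2+j_1=i_1+j_2$, since otherwise $R^{(\mathrm{sqW,sg})}_{\alpha,\theta}=0$. First I would record the elementary consequences of the hypotheses $\alpha\in[0,-s^{-1}]$, $\theta\in[-s,-s^{-1}]$: one has $\theta>0$, hence $\alpha\theta\ge0$; one has $-s\alpha\in[0,1]$; and $-s/\theta$ lies in $[s^2,1]\subset(0,1]$. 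Consequently $(-s/\theta;q)_{j_1}$, $(-s/\theta;q)_{j_2}$ and $(q;q)_{j_2}$ are all strictly positive, so the prefactor $\dfrac{(-s/\theta;q)_{j_2}}{(q;q)_{j_2}\,(-s/\theta;q)_{j_1}}$ in \eqref{eq:R_sqW_sg} is positive.

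Next I would expand the regularized ${}_3\overline{\phi}_2$ in \eqref{eq:R_sqW_sg} by its definition \eqref{eq:hypergeom_series}, taken with $n=j_1$, $(a_1,a_2)=(q^{-j_2},-s\alpha)$, $(b_1,b_2)=(-s/\theta,q^{1+i_1-j_1})$ and $z=q^{1+j_1+i_2}/(\alpha\theta)$, and absorb the external factor $(\alpha\theta)^{j_2}$ into the sum. Since the $l$-th summand carries the factor $(q^{-j_2};q)_l$, which vanishes once $l>j_2$, the range of $l$ is effectively $0\le l\le\min(j_1,j_2)$, and on this range $(\alpha\theta)^{j_2}z^l=(\alpha\theta)^{j_2-l}q^{l(1+j_1+i_2)}$ is a genuine nonnegative power of $\alpha\theta$ (so the formula also makes sense, by continuity, at $\alpha=0$). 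This yields
\begin{equation*}
	R^{(\mathrm{sqW,sg})}_{\alpha,\theta}(i_1,j_1;i_2,j_2)
	=
	\frac{(-s/\theta;q)_{j_2}}{(q;q)_{j_2}\,(-s/\theta;q)_{j_1}}
	\sum_{l=0}^{\min(j_1,j_2)}
	(\alpha\theta)^{j_2-l}\,q^{l(1+j_1+i_2)}\,
	\frac{(q^{-j_1};q)_l\,(q^{-j_2};q)_l\,(-s\alpha;q)_l}{(q;q)_l}\,
	(q^l(-s/\theta);q)_{j_1-l}\,(q^{1+i_1-j_1+l};q)_{j_1-l}.
\end{equation*}

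Finally I would verify that every summand is $\ge0$. For $0\le l\le\min(j_1,j_2)$: clearly $(\alpha\theta)^{j_2-l}\ge0$, $q^{l(1+j_1+i_2)}>0$, $(q;q)_l>0$; the product $(q^{-j_1};q)_l(q^{-j_2};q)_l$ is nonnegative because $l$ does not exceed either of $j_1,j_2$, so each of these two Pochhammer symbols equals $(-1)^l$ times a positive number; $(-s\alpha;q)_l\ge0$ since $-s\alpha\in[0,1]$; $(q^l(-s/\theta);q)_{j_1-l}\ge0$ since $q^l(-s/\theta)\in(0,1]$. For the last factor, $(q^{1+i_1-j_1+l};q)_{j_1-l}=\prod_{m=1}^{j_1-l}\bigl(1-q^{\,i_1-j_1+l+m}\bigr)$ has exponents ranging over the consecutive integers from $i_1-j_1+l+1$ up to $i_1\ge0$: either all of these are positive, in which case the product is positive, or $0$ is among them, in which case one factor vanishes and the product is $0$; in either case it is $\ge0$. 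Together with the positivity of the prefactor this gives $R^{(\mathrm{sqW,sg})}_{\alpha,\theta}(i_1,j_1;i_2,j_2)\ge0$.

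The only point requiring a little care — the mild ``obstacle'' — is the bookkeeping that rewrites $(\alpha\theta)^{j_2}$ times the regularized ${}_3\overline{\phi}_2$ (whose argument $z=q^{1+j_1+i_2}/(\alpha\theta)$ is singular at $\alpha=0$) as the manifestly polynomial sum displayed above, using that $(q^{-j_2};q)_l$ truncates the series at $l=\min(j_1,j_2)$. Once this is in place, the proof is a pure sign check of $q$-Pochhammer symbols, entirely parallel to the proof of \Cref{prop:w_tilde_positivity}, and in particular it also covers the stochasticity statement needed for the ${}_4\phi_3$ vertex model via \Cref{prop:Bij_exists}.
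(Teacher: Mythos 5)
Your expansion of the regularized ${}_3\overline{\phi}_2$ and the term-by-term sign check reproduce the paper's first step, and they are correct — but only on the open part of the parameter range, $\theta\in(-s,-s^{-1}]$. The genuine gap is at the endpoint $\theta=-s$, which the hypothesis $\theta\in[-s,-s^{-1}]$ includes: there $-s/\theta=1$, so $(-s/\theta;q)_{j_1}=(1;q)_{j_1}=0$ whenever $j_1\ge1$, and your claim that $(-s/\theta;q)_{j_1}$ and $(-s/\theta;q)_{j_2}$ are \emph{strictly} positive fails. At that point the prefactor of \eqref{eq:R_sqW_sg}, and hence your displayed formula, is of the form $0/0$, so the argument as written does not even define the weight there, let alone prove it is nonnegative. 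This is not a hypothetical corner: the paper's own proof devotes its entire second half to exactly this case, first proving nonnegativity for $\theta>-s$ (as you do) and then showing the weight ``survives'' the limit $\theta\to-s$.

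The repair is a one-line cancellation, which is precisely the paper's device: inside your sum, combine the $\theta$-dependent ratio with the factor $(q^l(-s/\theta);q)_{j_1-l}$, using, for $l\le\min(j_1,j_2)$ and $x=-s/\theta$,
\begin{equation*}
	\frac{(x;q)_{j_2}\,(q^l x;q)_{j_1-l}}{(x;q)_{j_1}}
	=\frac{(x;q)_{j_2}}{(x;q)_{l}}
	=(q^l x;q)_{j_2-l},
\end{equation*}
which is polynomial in $x$, nonsingular at $x=1$, and nonnegative for $x\in(0,1]$. With this substitution your sum becomes manifestly well defined and term-by-term nonnegative on the whole closed interval $\theta\in[-s,-s^{-1}]$ (either directly, or by continuity in $\theta$ as the paper phrases it), and your proof then coincides with the paper's. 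Your treatment of the other delicate endpoint, $\alpha=0$, by absorbing $(\alpha\theta)^{j_2}$ into the truncated sum so that only powers $(\alpha\theta)^{j_2-l}$ with $j_2-l\ge0$ appear, is fine.
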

\begin{proof}
    Assume first that $\theta > -s$. 
		In \eqref{eq:R_sqW_sg}, the factors outside ${}_3\overline{\phi}_2$
		are nonnegative. In the expansion of
		${}_3\overline{\phi}_2$
		using \eqref{eq:hypergeom_series},
		one readily sees that all terms are nonnegative 
		similarly to the proof of \Cref{prop:w_tilde_positivity}
		above (here we use the fact that $-s\alpha$ and $-s/\theta$ are less than 1 because of our assumptions). 
    
		We can now take the limit $\theta \to -s$ and show that the weight
		$R^{(\mathrm{sqW,sg})}$ survives this transition.
		To do so, expand 
		${}_3\overline{\phi}_2$ using \eqref{eq:hypergeom_series}, and collect terms containing
		$-s/\theta$:
    \begin{equation*}
        \frac{(-s/\theta;q)_{j_2} (-q^k s/\theta;q)_{j_1-k}}{(-s/\theta;q)_{j_1}} = (-q^k s/\theta;q)_{j_2-k},
    \end{equation*}
		with $k=0,\dots, \min(j_1,j_2)$. The last
		expression is nonsingular at $\theta=-s$, and is nonnegative.
\end{proof}
Therefore, 
\begin{equation*}
    \begin{minipage}{.8\textwidth}
		All summands in both sides of the Yang-Baxter
		equation \eqref{eq:YBE_sqW_sg}
		containing $\widetilde{w}_{\alpha,s}, W^*_{\theta,s}$, and $R^{(\mathrm{sqW,sg})}_{\alpha,\theta}$
		are nonnegative if 
		$\alpha\in [0,-s^{-1}]$, $\theta\in[-s,-s^{-1}]$.
	\end{minipage}
\end{equation*}

We come now to the last Yang-Baxter equation we stated \eqref{eq:YBE_sg_sg},
in which one readily sees (similarly to  
\Cref{prop:w_tilde_positivity,prop:R_sqw_sg_positivity} above)
that $R^{(\mathrm{sg,sg})}_{\alpha,\beta}$ is nonnegative when $0\le \alpha,\beta\le-s^{-1}$.
Therefore, 
\begin{equation*}
    \begin{minipage}{.8\textwidth}
		All summands in both sides of the Yang-Baxter
		equation \eqref{eq:YBE_sg_sg}
		containing $\widetilde{w}_{\alpha,s}, \widetilde{w}^*_{\beta,s}$, and $R^{(\mathrm{sg,sg})}_{\alpha,\beta}$
		are nonnegative if 
		$\alpha ,\beta \in [0,-s^{-1}]$.
	\end{minipage}
\end{equation*}

\printbibliography

\end{document}